%% file: main.tex
\documentclass[11pt,letterpaper]{amsart}

\usepackage{thmtools}   
\usepackage{hyperref}
\usepackage[capitalise]{cleveref}

\usepackage{style}

\usepackage[pagewise]{lineno}
    
\title{Real sutured Heegaard Floer homology}
\author{Fraser Binns}
\address{Department of Mathematics, Princeton University}
\email{fb1673@princeton.edu}
\author{Gary Guth}
\address{Department of Mathematics, Stanford University}
\email{gmguth@stanford.edu}
\author{Yonghan Xiao}
\address{Department of Mathematics, School of Mathematical Sciences\\Peking University}
\email{judy\_xyh0530@stu.pku.edu.cn}

\date{\today}
\keywords{real Heegaard Floer homology, sutured manifolds}

\subjclass{57K30, 57K31, 57R58}

\definecolor{cellgray}{gray}{0.875}

\begin{document}

\begin{abstract}
We develop a theory of real sutured manifolds and a real Heegaard Floer theory for these manifolds, in line with~\cite{guth2025real}.  We develop a notion of real nice diagrams, and prove that our invariant is combinatorially computable. Our theory shares many structural properties with Juh\'asz's sutured Floer homology~\cite{juhasz2006holomorphic}, as does the topological theory of real sutured manifolds with Gabai's original sutured manifold theory~\cite{gabai1983foliations}. We also show that our invariant has several new structural properties differentiating it from sutured Floer homology. 

\end{abstract}
\maketitle

\tableofcontents

\section{Introduction}
\input{Introduction}

\input{background}

\section{Real Surface Decompositions}\label{sec:decomposition}

\input{decomposition}

\section{Real Arc Decompositions}\label{sec:Guided 2-Handle attachment and Deletion}

\input{guided_2_handle}

\section{A Cylindrical Reformulation of Real Sutured Floer Homology}\label{sec:cylindrical}

\input{cylindrical}

\section{Punctures and K\"unneth Formulae}\label{sec:Quasi-stabilization and Kunneth Formulae}
\input{quasistabilization_Kunneth}

\section{Nice Real Heegaard Diagrams}\label{sec:Nice Real Heegaard Diagrams and Combinatorial Computation}

\input{nice_diagrams}

\section{Real-Tautness and More Real Decompositions}\label{sec:Adjunction Inequalities}

\input{adjunction_inequalities}

\section{Real Sutured Hierarchies}\label{sec:Real Sutured Hierarchies}
\input{hierarchies}

\bibliographystyle{alpha}
\bibliography{bibliography}

\end{document}

%% file: Introduction.tex
In recent years, there has been a growing interest in low-dimensional \emph{real manifolds}: pairs $(Y,\tau)$ where $Y$ is a smooth manifold and $\t$ is an orientation-preserving involution with some extra assumption on the codimension of the fixed set. Gauge theoretic invariants of such pairs were first introduced by Tian-Wang \cite{TianWangrealSW} and Nakamura \cite{Nakamura13}.

To define them, one counts \emph{invariant} solutions to the Seiberg-Witten equations, and thereby obtains an invariant of $(Y,\tau)$. There has been much development since: a real Seiberg-Witten Floer homotopy type was defined by Konno, Miyazawa and Taniguchi in \cite{Konno_Miyazawa_Taniguchi_2025} and real monopole Floer homology was defined by Jiakai Li in \cite{li2022monopolefloerhomologyreal}. These theories fit into the general structure of a TQFT, assigning objects to real 3-manifolds and morphisms to real cobordisms between them.

While the study of symmetry as such has long been of interest to low-dimensional topologists, these invariants do also give rise to new invariants of unreal objects, the paradigmatic example being branched covers. Of course, any branched double cover comes with a natural real structure, given by action of the deck transformation on the cover. In this way, we obtain new invariants of the branching locus in the quotient.

Invariants of manifold-involution pairs have recently been used to answer important open questions regarding sliceness of knots. The $(2n, 1)$-cables of the figure-$8$ knot are known not to be ribbon, by work of Miyazaki~\cite{miyazaki}. However, it was only recently shown that these knots are not slice, using a slice-disk obstruction coming from branched double-covers equipped with their covering involutions~\cite{DKMPS,kang2025smoothconcordancecablesfigureeight,Kang_2026}. The involution plays a vital role in these proofs.

In another direction, Miyazawa recently produced an (infinite family of) exotic $\RP^2$-knots in $S^4$~\cite{miyazawa2023gaugetheoreticinvariantembedded}. These 2-knots are distinguished by real Seiberg-Witten invariants of their branched double covers equipped with their covering involutions. It was proved in \cite{hughes2025branchedcoverstwistrollspun} that these knots have double branched cover diffeomorphic to $\overline{\CP}^2$, which implies that the covering involutions distinguish these $\RP^2$- knots and that there is an infinite family of exotic involutions on $\overline{\CP}^2$. See \cite{AKSconnectedFloerhomology}, and \cite{Dai2020CorksIA} for other examples where the manifold-involution perspective has been very fruitful.

Despite their successes, the drawback of gauge theoretic invariants is their difficulty to compute. In the classical (unreal) setting for $3$-manifolds, monopole Floer homology~\cite{Kronheimer_Mrowka_2007} is equivalent to Ozsv\'ath and Szab\'o's Heegaard Floer homology~\cite{ozsvath2004holomorphic}, by~\cite{HFHM1,HFHM2,HFHM3,HFHM4,HFHM5,MR4750571,MR4750570,MR4750569}. Here, Heegaard Floer homology is a family of invariants defined using symplectic topology for which computations have proven more tractable. Indeed, the hat version of Heegaard Floer homology is algorithmically computable~\cite{SarkarWang2010}.

In~\cite{guth2025real}, the second author and Manolescu introduced \emph{real Heegaard Floer homology} aiming to provide a more computable counterpart to Li's real monopole Floer homology. One of our main results is that the hat version of this invariant is indeed algorithmically computable:

\begin{theorem}\label{intro-thm: combinatorial computation for HFR}
For any pointed closed real $3$-manifold $(Y,\tau,w)$, $\widehat{\HFR}(Y,\tau,w)$ can be computed combinatorially from a nice real Heegaard diagram.
\end{theorem}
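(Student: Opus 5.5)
The plan is to follow the Sarkar--Wang strategy, adapted to the real setting, and to route it through the real sutured theory developed in the body of the paper.

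\textbf{Step 1: pass to a sutured manifold.} Given a pointed real $3$-manifold $(Y,\tau,w)$ with $w$ a fixed point of $\tau$, remove a $\tau$-invariant ball around $w$ and place a single $\tau$-invariant suture on the boundary sphere. This produces a real sutured manifold $Y(1)$. Using the cylindrical reformulation of \cref{sec:cylindrical} and the puncture/quasi-stabilization results of \cref{sec:Quasi-stabilization and Kunneth Formulae}, identify $\widehat{\HFR}(Y,\tau,w)$ with the real sutured Floer homology of $Y(1)$. At the level of diagrams this is the usual identification: a real Heegaard diagram pointed at $w$ becomes a real sutured diagram by deleting a $\tau$-invariant disk about $w$.

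\textbf{Step 2: make the diagram nice.} Starting from any real Heegaard diagram $(\Sigma,\alpha,\beta)$ compatible with $\tau$, modify $\beta$ by finger moves and handleslides until every region not meeting the boundary is a bigon or a square. In the real setting the $\beta$ curves are determined by the $\alpha$ curves via $\tau$, so each move must be performed equivariantly.

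\begin{itemize}
\item Follow Sarkar--Wang's ordering of regions by distance from the basepoint region.
\item Finger-move an $\alpha$ arc into a bad region, and simultaneously perform the $\tau$-image move on the corresponding $\beta$ arc.
\item Check that this equivariant pair of moves still strictly decreases the badness complexity.
\end{itemize}

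This is the step I expect to be the main obstacle. The paired moves can interact near the fixed locus of the involution on $\Sigma$. A finger pushed through a region that meets its own $\tau$-image can create new bad regions on the other side.

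My first attempt would be to choose fingers along arcs disjoint from their $\tau$-images, pushing toward the boundary along paths in a fundamental domain. Where that fails, I would use the real stabilization and handleslide invariance from \cite{guth2025real} to first arrange that the fixed set meets the diagram in a controlled way. This is exactly what the paper's notion of \emph{real nice diagram} should encode.

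\textbf{Step 3: compute combinatorially.} For a nice diagram, the Sarkar--Wang argument shows that the relevant index-one classes avoiding the basepoint are embedded bigons and rectangles. In the real theory the differential counts $\tau$-invariant holomorphic disks, that is, real disks in the real symmetric product. I would show that, for nice diagrams, the moduli spaces of real index-one domains consist exactly of such embedded invariant polygons, each contributing a count of one modulo $2$. This uses the Riemann mapping theorem together with its equivariant uniqueness, since the involution on a disk with given corners is determined.

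The complex, and hence $\widehat{\HFR}(Y,\tau,w)$, is then read off from the diagram. Invariance of the answer is inherited from the invariance theorem of \cite{guth2025real}, transferred through the identification in Step 1.
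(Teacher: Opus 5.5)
Your Steps 1 and 2 follow the paper's outline. $\widehat{\HFR}(Y,\tau,w)$ is the real sutured Floer homology of the complement of an invariant ball about $w$; this is essentially definitional (\Cref{ex:closedhatrecovery}), and no puncture or quasi-stabilization results are needed. Nice diagrams are then produced by an equivariant version of Juh\'asz's complexity reduction. In the paper, distances are measured along paths avoiding $C$, and the finger is extended when the finger from a bad region $D$ runs into $\tau(D)$.

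One point you leave vague is essential: a nice real diagram must also normalize the picture along $C$. Every interior region meeting $C$ must be a rectangle, and every intersection point on $C$ must have triple index $\sigma(\bm\alpha,x)=+1$. This normalization is what makes every term of $\mu_R=\tfrac12(V_{\Z_2}+V_{\{1\}}+e)$ nonnegative. Without it, the index-one domains cannot be classified at all.

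The genuine gap is Step 3. Sarkar--Wang classify positive domains of \emph{ordinary} Maslov index one. A real class, however, lifts to $\tilde\phi$ with $\mu(\tilde\phi)=2\mu_R(\phi)+\tfrac12(\sigma(\mathbb{T}_\alpha,\mathbf{x})-\sigma(\mathbb{T}_\alpha,\mathbf{y}))$. This equals $2$ whenever no corner lies on $C$, and can be $3$ otherwise (octagons, punctured tori). So the real index-one domains of a nice diagram are not just invariant embedded bigons and rectangles. The actual list (\Cref{prop:a list of all possible real domains}) contains:
\begin{itemize}
\item pairs of bigons or rectangles swapped by $\tau$;
\item symmetric bigons with a $270^\circ$ corner on $C$, and symmetric rectangles, hexagons and octagons;
\item free-boundary reflection and antipodal annuli, and $(2,2)$, $(2,4)$ and $(4,4)$ reflection annuli;
\item a once-punctured torus.
\end{itemize}
Several of these can be immersed rather than embedded, with doubly covered rectangles. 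Deriving this list needs the real index formula, Lipshitz's Euler characteristic formula and a tiling analysis of the sources, none of which your plan contains.

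For annular and toroidal sources the Riemann mapping argument gives nothing, and you need new counting arguments. The paper uses:
\begin{itemize}
\item neck-stretching along the invariant core for $\{1\}$-reflection annuli;
\item a one-parameter family of symmetric cuts with an odd-number-of-zeros angle argument for free-boundary reflection annuli;
\item an indirect argument on an explicit real lens space diagram for antipodal annuli and punctured tori, where the known rank of $\widehat{\HFR}$ together with $\partial^2=0$ forces odd counts.
\end{itemize}
Even for disks with $270^\circ$ corners on $C$, the ordinary moduli space is positive-dimensional (parametrized by cut lengths). You must use the symmetry to see that only the uncut curve is real.

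Without the full classification and these counts, the differential is not determined by the diagram, so combinatorial computability does not follow from your argument.
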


A \emph{nice real Heegaard diagram} is a natural extension of Sarkar and Wang's definition of a nice diagram in the classical setting; see~\cite[Definition~3.1]{SarkarWang2010} for a precise definition in the unreal setting and~\Cref{definition: nice diagram} in the real setting. Given a nice diagram, the differential of the associated Floer chain complex can be computed combinatorially. The existence of diagrams in the real settings follows much of the same way as in \cite[Section~6]{juhasz2008floer}, though some added care is needed in the presence of the involution. While the real Heegaard Floer chain complex's differential initially seems more complicated, we prove that it can still be computed combinatorially. An independent proof that the hat version of real Heegaard Floer homology is computable for closed real $3$-manifolds with connected fixed set is given in~\cite{LO_Real_bordered}, using the framework of bordered Floer homology.

As another consequence of the existence of nice real Heegaard diagrams, we obtain an answer to a question posed by Hendricks. In \cite[Theorem~1.2]{hendricks2025noterealheegaardfloer}, Hendricks constructed a spectral sequence from $\widehat{\HF}(Y,w)\otimes \F[\theta,\theta^{-1}]$ to $\widehat{\HFR}(Y,\tau,w)\otimes \F[\theta,\theta^{-1}]$ and showed that if there is a real Heegaard diagram $\cH=(\Sigma,\bm\alpha,\bm\beta,\tau)$ and an $R$-symmetric family of almost complex structures $j_{\Sigma}$ on $\Sigma$ such that $\text{Sym}^g(j_{\Sigma})$ achieves transversality for $\text{Sym}^g(\Sigma-\{w\},\T_\alpha,\T_\beta)$, then the $d_1$ differential is given by $(1+\iota_*\tau_*)\theta$. As pointed out in \cite[Section~9]{LO_Real_bordered}, the nice real diagrams we constructed in Section~\ref{sec:Nice Real Heegaard Diagrams and Combinatorial Computation} satisfy this property, so the $d_1$ differential is as above.

The second goal of this paper is to investigate relationships between real $3$-dimensional topology and real Heegaard Floer homology. This invariant splits as a direct sum over \emph{real $\SpinC$ structures}, which we denote by $\s^R$. See~\cite{guth2025real} for details. One result we prove in this direction is the following adjunction inequality, which plays an identical role to that of~\cite[Theorem~1.6]{OS3mnfldspropex}:

\begin{theorem}\label{intro-thm: adjunction inequality for closed surface}
Let $(Y,\tau,w)$ be a pointed, closed, real $3$-manifold and $\alpha\in H_2(Y;\Z)^{-\tau_*}$. For any $\s^R\in\rspinc(Y,\tau)$, if $|\langle c_1(\s^R),\alpha\rangle|> x(\alpha)$, then ${\widehat{\HFR}(Y,\tau,w,\s^R)=0}$.
\end{theorem}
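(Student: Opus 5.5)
Throughout, write $x$ for the Thurston norm. I would follow the strategy used for sutured Floer homology: pass from the closed real manifold to a real sutured manifold, and get vanishing from a decomposition formula together with the observation that a surface decomposition can only be nontrivial in $\SpinC$ structures satisfying an extremal Chern class condition.

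\emph{Step 1: a $\tau$-adapted norm-minimizing representative.} Since $\alpha \in H_2(Y;\Z)^{-\tau_*}$, the involution $\tau$ (orientation preserving on $Y$) sends $\alpha$ to $-\alpha$. Choose a norm-minimizing surface $S$ representing $\alpha$, so $x(\alpha)=-\chi(S)$ after discarding sphere components. Then $\tau(S)$ represents $-\alpha$ and is also norm minimizing. I would try to replace $S$ by a surface $S$ with $\tau(S)=-S$ (setwise invariant, orientation reversing). The first attempt is equivariant cut-and-paste, or double-curve sum of $S$ and $-\tau(S)$, which yields an invariant representative of $2\alpha$. If only $2\alpha$ can be made invariant, this suffices: both sides of the inequality scale linearly, since $x$ is a seminorm. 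The surface should also meet the fixed set generically, as required by the real surface decompositions of \cref{sec:decomposition}. This step is where the real-topology content of \cref{sec:Adjunction Inequalities} on real-tautness should enter.

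\emph{Step 2: reduce to sutured Floer homology.} Remove an invariant ball around $w$, or a pair of balls swapped by $\tau$ if $w$ is not fixed, to obtain a real sutured manifold $Y(1)$ with $\widehat{\HFR}(Y,\tau,w,\s^R)\cong \SFR(Y(1),\s^R)$. Then decompose $Y(1)$ along the invariant surface $S$, perturbed to be nice or well-groomed relative to the suture. The real surface decomposition theorem of \cref{sec:decomposition} identifies $\SFR$ of the decomposed manifold with the direct sum of $\SFR(Y(1),\s^R)$ over the \emph{outer} real $\SpinC$ structures $\s^R$ relative to $S$. These are the ones with $\langle c_1(\s^R,t),[S]\rangle = c(S,t)$, where $c(S,t)=\chi(S)+I(S)-r(S,t)$ as in Juh\'asz. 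Applying the theorem to $-S$ as well shows that the opposite extremal value is also realized by outer structures.

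\emph{Step 3: the adjunction argument.} Adapt Juh\'asz's proof of the sutured adjunction inequality. Let $\s^R$ have $\SFR(Y(1),\s^R)\neq 0$. Choose a real Heegaard diagram adapted to $S$, using the construction behind nice diagrams in \cref{sec:Nice Real Heegaard Diagrams and Combinatorial Computation}. Every intersection point in the relevant $\SpinC$ structure then evaluates on $[S]$ between $\chi(S)$ and $-\chi(S)$ up to suture corrections, which vanish for closed $S$ in $Y(1)$. Hence $|\langle c_1(\s^R),\alpha\rangle|\le -\chi(S)=x(\alpha)$, and the contrapositive is the statement. The evaluation bound is a count of Euler measures of periodic domains, and it carries over because $c_1(\s^R)$ is the first Chern class of the underlying ordinary $\SpinC$ structure.

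I expect the main obstacle to be Step 1 together with its interaction with Step 3. One must produce an invariant norm-minimizing surface compatible with a real Heegaard diagram, handle components of $S$ that $\tau$ swaps versus those it preserves, and ensure transversality to the fixed locus. If equivariant surgery fails, I would fall back on working with $S\cup\tau(S)$ representing $2\alpha$ and invoke linearity.
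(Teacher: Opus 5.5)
Your skeleton (pass to $2\alpha$, represent it by the real surface $T=DCS(S,-\tau(S))$, then bound $\langle c_1,[T]\rangle$ by an Ozsv\'ath--Szab\'o periodic-domain count in a real Heegaard diagram) is the route the paper takes via \Cref{thm:adjnuctioninequalityclosed}. However, the step you flag as the obstacle is exactly the missing idea, and the claim that ``if only $2\alpha$ can be made invariant, this suffices'' is not correct.

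Scaling only helps if the invariant representative of $2\alpha$ itself satisfies $\chi_-(T)=2x(\alpha)$. The double curve sum has $\chi(T)=2\chi(S)$, but it may contain sphere components, and then $\chi_-(T)=-\chi(T)+2\#\{\text{sphere components}\}>2x(\alpha)$. A sphere can only contribute the trivial bound $0$ to any periodic-domain estimate (think of $S^1\times S^2$), so with spheres present the inequality is lost. The paper removes this obstruction with \Cref{lem:doublenormreal}:
\begin{itemize}
\item take $S$ incompressible and norm minimizing;
\item if the sum has a sphere component, take an innermost disk $D\subset\tau(S)$; its boundary bounds a disk $D'\subset S$ by incompressibility;
\item $D\cup D'$ bounds a ball by irreducibility, and isotoping $S$ across it reduces $S\cap\tau(S)$, until no spheres remain.
\end{itemize}
Your fallback ``$S\cup\tau(S)$'' represents $0$, not $2\alpha$. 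This argument needs $Y$ irreducible. For an arbitrary closed $(Y,\tau)$ the paper notes that the statement follows directly from the classical adjunction inequality together with Hendricks's spectral sequence from $\widehat{\HF}(Y,w)\otimes\F[\theta,\theta^{-1}]$ to $\widehat{\HFR}(Y,\tau,w)\otimes\F[\theta,\theta^{-1}]$. That route avoids all of the equivariant topology.

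Steps~2 and~3 also need repair.
\begin{itemize}
\item \Cref{thm:decomposition formula for RSFH} applies only to nice, in particular open, decomposing surfaces. Decomposing $Y(1)$ along a closed interior surface creates closed components of $R_\pm$, which the theorem does not cover.
\item Even where the formula applies, it only identifies the outer summand, so it gives no vanishing beyond the extremal value. In the sutured setting that vanishing comes from decomposing along a stabilized, non-taut surface, as in \Cref{thm:betteradjunctionopen}.
\item In Step~3, nice diagrams are irrelevant, and from a given diagram the periodic-domain count yields only a one-sided bound.
\end{itemize}
What you need is a real diagram in which $T$ is a nonnegative periodic domain $\mathcal{P}$. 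Decompose along $T$ and take a real diagram $\Sigma'$ of the result. Tube $\Sigma'$ to $T$ along arcs $a_i$ and $\tau(a_i)$, and add the tube meridians and their $\tau$-images, which form $\partial\mathcal{P}$. Then \cite[Proposition~7.5]{OS3mnfldspropex}, $\langle c_1(\s(\xv)),H(\mathcal{P})\rangle=e(\mathcal{P})+2n_{\xv}(\mathcal{P})$, controls every real generator. Admissibility can only be arranged for the particular $\s^R$, by winding away from $\mathcal{P}$, since a nonnegative $\mathcal{P}$ violates weak admissibility. The opposite inequality comes from applying the argument to $-\alpha$. The paper also performs $n$ free stabilizations inside $T$; the new curves then lie in the interior of $\mathcal{P}$, so no generator realizes the extremal value $\chi(T)-4n$ of the stabilized surface.
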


Here, $x(\alpha)$ denotes the Thurston norm of $\alpha$, and $H_2(Y;\Z)^{-\tau_*}$ indicates the fixed points of $-\tau_*:H_2(Y;\Z)\to H_2(Y;\Z)$, and $c_1(-)$ is the first Chern class. \Cref{intro-thm: adjunction inequality for closed surface} can be obtained as a corollary of the usual adjunction inequality~\cite{juhasz2010sutured} and Hendricks's spectral sequence~\cite{hendricks2025noterealheegaardfloer}.  Unlike in the unreal setting, the inequality in~\Cref{intro-thm: adjunction inequality for closed surface} is not tight; see~\Cref{ex:closedadjunctionnotight}. More general versions of this result are provided in Section~\ref{sec:Adjunction Inequalities}, as well as a version in the real link Floer homology setting,~\Cref{cor:HFLRadjunction}.

In the classical case, the interplay of topology and Floer homology is exemplified by that of Gabai's theory of \emph{sutured manifolds}~\cite{gabai1983foliations} and Juh\'asz's \emph{sutured Floer homology}, a generalization of the hat version of Heegaard Floer homology to the sutured manifold setting~\cite{juhasz2006holomorphic}. Here, a sutured manifold is a manifold equipped with a particular decomposition of its boundary. Foundational results in sutured manifold theory include the existence of sutured hierarchies for \emph{taut} sutured manifolds~\cite[Theorem~4.2]{gabai1983foliations}. Here, a taut sutured manifold is an irreducible sutured manifold with non-degeneracy assumptions on its sutured structure~\cite[Definition~2.10]{gabai1983foliations}.

The primary topological objects of study in this paper are \emph{real sutured manifolds}; that is, sutured manifolds equipped with orientation-preserving involutions with codimension $2$ fixed point set. See~\Cref{def:realsuturedmnfld}. There is a natural notion of real-tautness in this setting --- see ~\Cref{def:real-taut} --- though it turns out to be equivelant to the classical notion by~\Cref{prop:tautrealtautequiv}. We are able to show that real-taut real sutured manifolds admit real sutured hierarchies;

\begin{theorem}\label{intro-thm:real hierarchy}
Any real-taut real sutured manifold $(Y,\gamma,\tau)$ admits a sequence of real-taut decompositions
\begin{align*}
(Y,\gamma,\tau)\overset{S_1}{\rightsquigarrow}(Y_1,\gamma_1,\tau_1)\overset{S_2}{\rightsquigarrow}\dots \overset{S_n}{\rightsquigarrow} (Y_n,\gamma_n,\tau_n)
\end{align*}
such that $H_1(\partial Y_n)^{-\tau_*}=0$. 
\end{theorem}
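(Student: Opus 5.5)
We prove the existence of the hierarchy by induction, mimicking Gabai's proof of \cite[Theorem~4.2]{gabai1983foliations}, but restricting every decomposing surface to be \emph{real}. By this we mean that $\tau(S)=S$ and $\tau$ reverses the orientation of $S$ (so that $[S,\partial S]$ lies in the $(-\tau_*)$-invariant part of relative homology). Decomposing along such a surface yields a real sutured manifold $(Y',\gamma',\tau')$, with $\tau'$ induced by $\tau$. The invariant we aim to kill is $H_1(\partial Y)^{-\tau_*}$. We measure progress with a complexity adapted from Gabai's, for instance the lexicographically ordered pair (sum of genera of the boundary components, rank of $H_1(\partial Y)^{-\tau_*}$), possibly refined by the number of sutures. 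We show that as long as $H_1(\partial Y_i)^{-\tau_*}\neq 0$ there is a real-taut decomposition that strictly decreases this complexity.

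The steps are as follows.

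\textbf{Step 1.} Suppose $H_1(\partial Y)^{-\tau_*}\neq0$. We produce a nonzero class in $H_2(Y,\partial Y)^{-\tau_*}$. By the half-lives-half-dies principle, the image of $\partial\colon H_2(Y,\partial Y)\to H_1(\partial Y)$ has half rank. Both $\partial$ and the long exact sequence are $\tau$-equivariant. Averaging $x\mapsto x-\tau_*x$ (working rationally, then clearing denominators) therefore gives a nonzero anti-invariant relative class whose boundary pairs nontrivially with a chosen anti-invariant boundary curve, unless all anti-invariant boundary classes are killed. In that remaining case, we instead decompose along product annuli or disks around the relevant curves.

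\textbf{Step 2.} We represent this class by an equivariant norm-minimizing surface $S$ with $\tau(S)=S$ in general position with the fixed set. Here we use the earlier result that real-tautness coincides with tautness (\Cref{prop:tautrealtautequiv}). We also use the equivariant surface results from the decomposition section: an equivariant Thurston-norm-minimizing representative exists, for example via equivariant Meeks--Yau type or equivariant Dehn-lemma arguments, and it can be isotoped so that $\partial S$ meets $\gamma$ efficiently and no component of $S$ bounds or is compressible. By Gabai's criterion (norm-minimizing, well-groomed boundary), the decomposition is taut, and hence real-taut.

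\textbf{Step 3.} We check that the complexity drops. This follows Gabai's argument: the decomposed boundary either has smaller genus or cuts an anti-invariant class. Termination then follows.

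The main obstacle is Step 2: obtaining a norm-minimizing representative that is genuinely $\tau$-invariant, rather than a pair $S\cup\tau(S)$ with double intersections. My first approach is to start with a norm-minimizing $S_0$ and take $S_0\cup\tau(S_0)$, which represents $2[S]$ by anti-invariance. Applying equivariant cut-and-paste at the double curves (the Oertel/Gabai double-curve sum, done symmetrically) yields an invariant surface of norm at most $2x(\alpha)$. Then by \cite{gabai1983foliations}, Lemma 0.7, a norm-minimizing surface in class $2\alpha$ splits as two surfaces, and we take the component structure to be compatible with $\tau$. If this fails, I would instead allow decomposing surfaces of the form $S\sqcup\tau(S)$, which are still real, and which suffice for the hierarchy.
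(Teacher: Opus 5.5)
Your Step~3 is where the argument breaks. The complexity you propose (sum of boundary genera, then the rank of $H_1(\partial Y)^{-\tau_*}$) does not decrease under taut decompositions, because decomposing replaces a neighbourhood of $\partial S$ in $\partial Y$ by the two copies $S_\pm$ of $S$.

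For example, take a strongly invertible fibered knot such as the trefoil, and decompose its sutured exterior along an invariant fiber. This is a taut real decomposition, yet it turns a torus boundary into a genus-$2$ boundary, with anti-invariant $H_1$ of rank $2$ both before and after. Gabai's proof does not rest on any dichotomy of the form ``boundary genus drops or an anti-invariant class is cut''.

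The paper instead uses Scharlemann's complexity $\widehat{C}$ of the underlying sutured manifold \cite{scharlemann1989sutured}. This complexity strictly decreases under a taut decomposition along a homologically nontrivial surface \emph{provided there are no non-trivial product disks}, that is, no index-$0$ $\partial$-compressing disks.

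Arranging that hypothesis in the real category is the idea you are missing. At each stage one first decomposes along real product disks. One then needs an equivariant innermost-circle/arc argument showing that a non-trivial product disk $D$ forces a non-trivial \emph{real} one: either a flipping product disk or a disjoint pair of product disks interchanged by $\tau$. The argument uses irreducibility and incompressibility of $R(\gamma)$. For arcs of $D\cap\tau(D)$ running from $R_+$ to $R_-$, it splits $D$ into two product disks, at least one of which is non-trivial. Without this step you cannot remove product disks while staying real, and Scharlemann's complexity reduction does not apply.

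Step~2 also fails as first stated, for two reasons:
\begin{itemize}
\item An invariant representative of $\alpha$ realizing the classical Thurston norm need not exist.
\item A surface that minimizes among invariant representatives need not decompose tautly.
\end{itemize}
The strongly invertible $8_3$ of Section~\ref{sec:8_3} shows both: it has equivariant genus $2$ but genus $1$, and decomposing along its genus-$2$ invariant Seifert surface gives a real-compressible, hence non-taut, manifold. For the same reason you cannot in general split a minimizer of $2\alpha$ into two $\tau$-invariant representatives of $\alpha$. The fallback $S\sqcup\tau(S)$ is also unavailable in general, since $S$ and $\tau(S)$ typically intersect.

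Your double-curve-sum surface in class $2\alpha$ is the right one, and it is what the paper uses, but two steps are missing.
\begin{itemize}
\item First, you must move $\partial S$ beforehand using Gabai's Lemma~0.6, which keeps the decomposition taut. This makes $\partial S$ and $\partial(-\tau(S))$ disjoint, so that the double-curve sum is nice once closed components are discarded.
\item Second, you need an actual reason that the decomposition along the double-curve sum is taut; norm-minimality in class $2\alpha$ does not supply one. The paper argues through sutured Floer homology. A $\SpinC$ structure with nonzero $\SFH$ that is outer for $S$ is also outer for $-\tau(S)$, and hence for the double-curve sum $S'$. Juh\'asz's decomposition formula then gives $\SFH(Y_1,\gamma_1)\neq 0$. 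So $(Y_1,\gamma_1)$ is taut, and therefore real-taut by the equivalence of tautness and real-tautness.
\end{itemize}

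A minor point: the ``remaining case'' in your Step~1 never arises. The $-\tau_*$-eigenspace version of half-lives-half-dies shows that the kernel of $H_1(\partial Y;\R)^{-\tau_*}\to H_1(Y;\R)^{-\tau_*}$ is half-dimensional, hence nonzero whenever $H_1(\partial Y)^{-\tau_*}\neq 0$.
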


Our proof strategy for~\Cref{intro-thm:real hierarchy} is to reduce to the classical case using techniques from sutured Floer homology and the equivalance of tautness and real tautness. A related fundamental result is that if $H_1(\partial Y_n)^{-\tau_*}\neq 0$ then we can obtain real surfaces --- i.e., properly embedded oriented surfaces $S$ which are preserved setwise by $\tau$ and such that $\tau|_S$ is orientation-reversing --- via the following proposition:

\begin{proposition}\label{intro-prop:real surface representative}
Let $(Y,\tau)$ be a real $3$-manifold. Any class in $H_2(Y, \partial Y;\Z)^{-\tau_*}$ can be represented by a real embedded surface.
\end{proposition}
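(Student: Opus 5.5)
Given a class $A\in H_2(Y,\partial Y;\Z)$ with $\tau_*A=-A$, the approach is to pass to the quotient orbifold, build a surface there, and lift it back.

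\textbf{Step 1: setting up the quotient.} Let $\pi\colon Y\to Y/\tau=:Q$ be the quotient map. The fixed set $F=\mathrm{Fix}(\tau)$ has codimension $2$ (or is empty), so $Q$ is a $3$-manifold and $\pi$ is a double cover branched along the $1$-manifold $\pi(F)$. Since $\tau$ preserves orientation, $Q$ is oriented.

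\textbf{Step 2: the key observation about real surfaces.} A surface $S$ with $\tau(S)=S$ and $\tau|_S$ orientation-reversing must meet $F$ transversely at isolated points. Indeed, near a fixed point $\tau$ is rotation by $\pi$ about $F$. An invariant surface containing an arc of $F$ would have $\tau|_S$ a reflection fixing that arc. That reflection does not extend to a rotation of the normal direction compatibly, so it is excluded by orientation considerations.

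\textbf{Step 3: the plan via transfer.} Write $\pi_!$ for the transfer on homology, so that $\pi^!\pi_*=1+\tau_*$. For anti-invariant classes this combination vanishes, so instead we work with twisted coefficients. Let $\Z^-$ be the local system on $Q\setminus\pi(F)$ associated to the double cover. Anti-invariant classes of $Y$ correspond, up to $2$-torsion issues, to $H_2(Q,\partial Q;\Z^-)$. A real surface $S$ projects to $\pi(S)$, a possibly nonorientable surface $\Sigma\subset Q$ whose orientation character equals the restriction of the covering's character. Conversely, any properly embedded surface $\Sigma\subset Q$ satisfying these conditions has preimage an oriented surface on which $\tau$ reverses orientation. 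The conditions are: $\Sigma$ is transverse to $\pi(F)$, and $w_1(\Sigma)$ equals the covering class restricted to $\Sigma$ (suitably interpreted at the branch points).

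\textbf{Step 4: a more hands-on alternative.} I would likely carry out this concrete version instead of the twisted-coefficient argument.
\begin{enumerate}
\item Start with any embedded oriented surface $S_0$ representing $A$.
\item Perturb $S_0$ to be transverse to $F$ and to $\tau(S_0)$.
\item Consider the $\tau$-invariant $2$-chain $S_0-\tau(S_0)$, which is orientation-compatible with $\tau$ reversing it. It represents $A-\tau_*A=2A$.
\item Resolve double curves equivariantly. At each double curve of $S_0\cup\tau(S_0)$, perform the oriented cut-and-paste; the pair of double curves $c,\tau(c)$ is resolved symmetrically. Double curves that are themselves $\tau$-invariant need a local check, ensuring that $\tau$ swaps the two resolution sheets correctly; this uses that $\tau$ reverses the orientation of the chain.
\end{enumerate}
The result is a real surface representing $2A$, not $A$. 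To recover $A$, I would instead cut along an invariant surface in the quotient: represent the twisted class in $Q$ by a map to $\RP^\infty$-type data, or use the Thom–Pontryagin construction. Concretely, take an equivariant map $f\colon Y\to S^1$ with $f\circ\tau=\bar f$ (complex conjugation) whose Poincaré–Lefschetz dual is $A$; such a map exists because $A$ is anti-invariant, by averaging $f$ and $\bar f\circ\tau$ in the circle-valued sense. The preimage $f^{-1}(1)$ of a fixed point of conjugation is then $\tau$-invariant, with $\tau$ reversing its coorientation and hence its orientation.

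\textbf{Main obstacle.} The hardest step is producing the equivariant map $f$ and getting $1$ to be a regular value. The averaging must be done in $H^1(Y;\Z)$, where the anti-invariance is used. Regularity must be achieved by an equivariant transversality argument. This is possible because $f$ can be perturbed equivariantly away from $F$, and near $F$ it is locally modeled on a rotation-equivariant function whose level set meets $F$ transversely. I expect that equivariant obstruction, namely whether $A$ lifts to an equivariant cohomology class, may fail by a $2$-torsion term. I would handle that term by adjusting $f$ near the fixed circles.
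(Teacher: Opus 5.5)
Your final strategy (an equivariant map $f\colon Y\to S^1$ with $f\circ\tau=\bar f$, followed by taking $f^{-1}(1)$) is the paper's strategy. Two steps as you wrote them do not hold up.

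\textbf{Step~2 is false, and it breaks your transversality step.} Suppose a real surface $S$ met $F$ transversely at $p$. Then $T_pS$ would be a $d\tau_p$-invariant plane meeting the $(+1)$-eigenline $T_pF$ trivially, so it would equal the normal plane $N_p$. But $d\tau_p=-\mathrm{id}$ on $N_p$, which preserves the orientation of $T_pS$, contradicting that $\tau|_S$ reverses it. So real surfaces contain whole components of $F$, as the paper's real Heegaard surfaces and flipping disks do.

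Your own construction confirms this. Since $f\circ\tau=\bar f$, $f$ maps every component of $F$ into $\{\pm1\}$. Hence $f^{-1}(1)$ contains entire components of $F$ and can never ``meet $F$ transversely.'' What must actually be arranged is this: at $p\in F\cap f^{-1}(1)$, $df_p$ automatically vanishes on $T_pF$ and is unconstrained on $N_p$, and you need it nonzero on $N_p$. You get this by replacing $f$ with $f e^{i\epsilon\ell}$ for a generic anti-invariant function $\ell$ (linear in normal coordinates, cut off near $F$), which keeps the map real. This is exactly the paper's ``add $ay+bz$'' step. The same misconception makes the conditions in your Step~3 wrong, since the preimage of a surface transverse to the branch locus is rotated, not reflected, by $\tau$.

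\textbf{The existence of the equivariant map is not yet justified.} ``Averaging $f$ and $\bar f\circ\tau$ in the circle-valued sense'' is not an argument as stated. Your fallback, a possible $2$-torsion obstruction to be repaired ``near the fixed circles,'' is not one either. In fact there is no obstruction, and your averaging can be made exact:
\begin{enumerate}
\item Take any smooth $f$ representing $PD(A)$. Because $\tau^*[f]=-[f]$, the map $u:=\overline{f\cdot(f\circ\tau)}$ is null-homotopic and $\tau$-invariant, so $u=e^{i\theta}$ for some smooth $\theta\colon Y\to\mathbb{R}$.
\item The function $\theta$ can be taken $\tau$-invariant. On a $\tau$-invariant component, $\theta\circ\tau-\theta$ is a constant $c\in 2\pi\mathbb{Z}$, and evaluating at $y$ and at $\tau(y)$ gives $c=-c$. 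On a swapped pair of components, redefine $\theta$ on one of them.
\item Set $h:=f e^{i\theta/2}$. Then $h$ is smooth, homotopic to $f$, and satisfies $h(\tau y)\,h(y)=f(\tau y)f(y)e^{i\theta(y)}=1$, that is, $h\circ\tau=\bar h$.
\end{enumerate}
The paper instead builds an anti-invariant integral cocycle on an equivariant cell structure, extends it to an equivariant map cell by cell, and then invokes Wasserman's equivariant smoothing. The formula above is a shorter existence proof. With these two corrections, the last paragraph of your Step~4 is a complete proof. The transfer detour and the double-curve-sum detour can be dropped; the latter only ever yields $2A$, which is how the paper uses double curve sums elsewhere.
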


This contrasts with the $4$-manifold setting, where the corresponding statement is false~\cite[Theorem 1.1]{baraglia2025adjunctioninequalityrealembedded}; see~\Cref{rem:compwith4d} for some discussion.

To study real sutured manifolds, we define a version of sutured Floer homology that we call \emph{real sutured Floer homology}. This invariant generalizes the hat version of real Heegaard Floer homology, and can likewise be computed combinatorially using nice real Heegaard diagrams; see~\Cref{thm:RSFHcombin}.  
Similarly to the case of the $3$-manifold invariants, \cite{li2022monopolefloerhomologyreal,guth2025real}, real sutured Floer homology splits over \emph{relative real $\SpinC$ structures}. See \Cref{subsec:Real relativeSpinC-structures} for details. 

Sutured Floer homology is well-behaved under a variety of natural operations on sutured manifolds. In particular, there is a decomposition theorem describing the change in the sutured Floer homology under surface decompositions~\cite[Theorem 1.3]{juhasz2008floer}. There is an analogue of this result in the real setting.

\begin{theorem}\label{intro-thm:real decomposition formula}
If $(Y,\gamma,\tau)\overset{S}{\rightsquigarrow}(Y',\gamma',\tau')$ is a taut real surface decomposition then
    \begin{align*}
        \RSFH(Y',\gamma',\tau') \cong \bigoplus_{\frak{s}^R\in O_S^R} \RSFH(Y,\gamma,\tau,\frak{s}^R),
    \end{align*}
\noindent provided that $(Y,\gamma,\tau)$ is strongly balanced, $S$ is open, and that for every component $V$ of $R(\gamma)$, the set of closed components of $S\cap V$ consists of parallel boundary-coherent simple closed curves. 
    
\end{theorem}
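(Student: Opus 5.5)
We will follow Juh\'asz's proof of the decomposition formula~\cite[Theorem~1.3]{juhasz2008floer}, adapting each step to the presence of the involution. The first step is a \emph{real surface diagram} adapted to $S$. Concretely, we want a real balanced diagram $(\Sigma,\bm\alpha,\bm\beta,\tau)$ for $(Y,\gamma,\tau)$ with a quasi-polygon $P\subset\Sigma$ satisfying the following:
\begin{itemize}
\item $\tau(P)=P$;
\item $\tau|_\Sigma$ maps $\bm\alpha$ to $\bm\beta$ in the manner required of real diagrams;
\item $S$ is obtained from $P$ by smoothing the corners of $P\cup(\text{disks attached along the $\alpha$ and $\beta$ curves})$.
\end{itemize}

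To construct it, we take a $\tau$-equivariant triangulation or handle decomposition of $Y$ in which $S$ is a subcomplex. This is possible because $S$ is real: $\tau(S)=S$ with $\tau|_S$ orientation-reversing. Since $S$ is open, every component has nonempty boundary, so $S$ can be pushed into a neighbourhood of $\partial Y\cup(\text{1-skeleton})$. The hypothesis on closed components of $S\cap V$ lets us arrange $\partial S\cap R(\gamma)$ as in Juh\'asz's good-position lemma. We then symmetrize his construction: build the diagram on one side of a $\tau$-invariant splitting and reflect.

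Given such a diagram, cut $\Sigma$ along $\partial P$, double $P$ into $P_A$ and $P_B$, and glue appropriately. This produces a diagram $\cH'$ for $(Y',\gamma',\tau')$, which inherits a real structure because $\tau$ preserves $P$ and exchanges the roles of the $A$- and $B$-sides compatibly with the orientation reversal on $S$. There is a natural injection from the generators of $\cH'$ into those of $\cH$. Its image consists of the generators $\mathbf{x}$ with $\mathbf{x}\cap P=\emptyset$, called the outer generators. Since the map commutes with $\tau$, it restricts to an injection on $\tau$-invariant generators, which are the generators of the real chain complex.

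We then show that the outer $\tau$-invariant generators are exactly those whose relative real $\SpinC$ structures lie in $O^R_S$. Here we use the definition of $O^R_S$ (pulling back the classical outer condition $\langle c_1(\s,t),[S]\rangle=c(S,t)$ via the forgetful map $\rspinc\to\spinc$) and Juh\'asz's computation. We then need a real analogue of Juh\'asz's \emph{nice} modification: isotope $\bm\alpha$ and $\bm\beta$ equivariantly, away from $P$ and via $\tau$-symmetric finger moves, so that the diagram is nice and still adapted to $S$. This uses the nice-diagram construction of \Cref{sec:Nice Real Heegaard Diagrams and Combinatorial Computation}, done relative to $P$. For a nice real diagram, \Cref{thm:RSFHcombin} computes the real differential combinatorially through empty bigons and rectangles, possibly together with the extra invariant-domain contributions that exist in the real setting. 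Any such domain connecting two outer generators has multiplicity zero on $P$, because a domain with positive multiplicity in $P$ must have a corner in $P$. Hence the subcomplex on outer generators agrees with $\widehat{\CFR}(\cH')$. The direct sum over $O^R_S$ of the $\s^R$-summands then gives the stated isomorphism, provided the surface $S$ is taut and strong balancedness gives admissibility.

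The main obstacle is the equivariant construction of the adapted, nice real diagram. In particular, the real-differential contributions unique to $\widehat{\HFR}$ (domains $\phi$ with $\tau\phi=\phi$ counted via the involution) must also avoid $P$. We would handle these by making the niceness moves symmetric and checking that invariant domains counted in \Cref{thm:RSFHcombin} are still empty polygons, which forces zero multiplicity on $P$.
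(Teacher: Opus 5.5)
Your overall architecture matches the paper's: reduce to a good surface, build a real surface diagram, make it nice by real permissible moves, pass to the doubled diagram $D(P)$, and match outer generators with $O^R_S$. The gap is in the step that identifies the outer subcomplex with the complex of $\mathcal{H}'$.

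You assert that an index-one domain connecting two outer generators has zero multiplicity on $P$, because positive multiplicity in $P$ would force a corner in $P$. This is false, already in Juh\'asz's unreal setting. Since $\partial P=A\cup B$ with $A\cap\bm\beta=\emptyset$ and $B\cap\bm\alpha=\emptyset$, $\alpha$-arcs run across $P$ from one $A$-edge to another. Take a rectangle whose two $\alpha$-sides follow two such arcs and whose $\beta$-sides and corners lie outside $P$. It connects outer generators, yet meets $P$ in a strip bounded by $\alpha$-arcs and $A$-edges. Correspondingly, domains in $\Sigma'$ can have positive multiplicity on $P_A$ or $P_B$.

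What is needed is a lifting argument, not an avoidance argument. The corners of $\mathcal{D}$ lie outside $P$ and $A\cap B\subset\partial\Sigma$, so no component $\mathcal{C}$ of $\mathcal{D}\cap P$ has a corner where $\bm\alpha\cup A$ meets $\bm\beta\cup B$. One writes $\mathcal{C}=\mathcal{C}_A+\mathcal{C}_B$ with $\partial\mathcal{C}_A\subset\bm\alpha\cup A$ and $\partial\mathcal{C}_B\subset\bm\beta\cup B$; in the embedded unreal case one summand is empty. One lifts $\mathcal{C}_A$ into $P_A$ and $\mathcal{C}_B$ into $P_B$, and so lifts the source map $u\colon F\to\Sigma$ to $h\colon F\to\Sigma'$ with $p\circ h=u$. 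Checking that this inverts push-forward by $p$ gives an index-preserving bijection of positive real index-one domains (\Cref{prop:admissibility of D(P) and identification of chain complex with outer}). Your description of the adapted diagram also omits exactly the conditions that make this lift well defined: $A\cap\bm\beta=B\cap\bm\alpha=\emptyset$, $\tau(A)=B$, and $S$ is the smoothing of $(P\times\{0\})\cup(A\times[0,1])\cup(B\times[-1,0])$.

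In the real setting this lifting is where the new work lies, and your sketch understates it. By \Cref{prop:a list of all possible real domains}, the index-one real domains of a nice real diagram are not just empty bigons, rectangles or polygons. They include hexagons, octagons, reflection and antipodal annuli, and punctured tori, and they may be immersed. A $\tau$-invariant component of $\mathcal{D}\cap P$ cannot have boundary entirely in $\bm\alpha\cup A$ or entirely in $\bm\beta\cup B$, since $\tau$ swaps these. The paper uses the classification of tilings and the standard form of immersions (\Cref{lem:standard pattern for immersed real domain}: multiplicities are at most $2$, and the doubly covered part is a union of rectangles). With these it shows such a component is an immersed sum of two rectangles $\mathcal{C}_A+\mathcal{C}_B$, and it rules out annular components with one boundary circle in each family, so the lift exists and is unique.

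Matching the differentials also requires matching the holomorphic counts. In a nice real diagram this comes from \Cref{prop:each real domain has odd reps}: every positive real index-one domain is counted $1$ mod $2$. Two smaller points. Tautness of $S$ is not used in this argument. Admissibility is not a consequence of strong balancedness; it is arranged by winding the initial diagram and inherited by $D(P)$ as in Juh\'asz.
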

Here, $O_S^R$ denotes a particular class of real $\SpinC$ structures; see Definition~\ref{def:outer real spinc}. See also~\Cref{thm:decomposition formula for RSFH} for a stronger version of this statement. 

Our proof strategy for this theorem follows that of the corresponding result in the classical case, namely to find nice real Heegaard diagrams for the two real sutured manifolds from which the differentials can be related in the relevant real $\SpinC$ structures. This is more involved in the real setting, since, for example, the domains with index $1$ are more complicated than in the classical case.

While real sutured Floer homology enjoys many  structural properties similar to those of the classical theory, it also satisfies some new, surprising properties. For example, unlike in the unreal setting, real sutured Floer homology does not always satisfy a K\"unneth formula.

\begin{proposition}\label{intro-prop:failure of connected sum}
There exist real sutured manifolds $(Y_i,\gamma_i,\tau_i)$ for $i\in\{1,2\}$ such that rank of $\RSFH$ is not multiplicative under real interior connected sums.
\end{proposition}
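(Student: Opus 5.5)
The plan is to exhibit an explicit counterexample rather than argue abstractly. We take each $(Y_i,\gamma_i,\tau_i)$ as simple as possible, so that $\RSFH$ of each summand and of the real interior connected sum can be read off from small real Heegaard diagrams.

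\emph{Choice of examples.} The natural candidates are product sutured manifolds $([-1,1]\times F,\{0\}\times\partial F)$. Here $\tau$ is the product of an orientation-reversing involution of $F$ with the flip $t\mapsto -t$, so $\tau$ preserves the orientation of $Y$ and has codimension-$2$ fixed set. We could also take the real sutured manifold obtained from a closed real $3$-manifold by removing an invariant ball around a fixed point, as in the relationship between $\widehat{\HFR}$ and $\RSFH$. For a product, the empty diagram (or a diagram with a single pair of curves) gives $\RSFH\cong\F$. The real interior connected sum must be taken along points of the fixed sets, gluing invariant balls equivariantly. Topologically, the result is the sutured manifold obtained by attaching a $1$-handle, or equivalently by removing an invariant ball from an $S^2\times S^1$-like summand, equipped with a specific involution.

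\emph{Computing the connected sum.} We build a real Heegaard diagram for the connected sum by taking the real diagrams $\cH_1$ and $\cH_2$ and joining them by an invariant tube. This adds one $\alpha$-curve and one $\beta$-curve that are interchanged or preserved by the real structure in the way dictated by how $\tau$ acts on the new $S^2$ neck. The key point is that in the real setting the relevant chain complex is generated by the \emph{real} intersection points, namely those fixed by the induced involution on $\T_\alpha\cap\T_\beta$. The differential counts real (index-$1$) domains, which can differ from the classical ones. We would arrange the neck so that the new curves meet in a configuration contributing two real generators. Depending on how $\tau$ acts on the neck sphere, either the classical Künneth factor $\F^2$ survives, or the involution pairs the generators so that only one real generator appears, or it produces a real bigon whose count is nonzero. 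Hence the rank is $\mathrm{rk}\,\RSFH(Y_1)\cdot\mathrm{rk}\,\RSFH(Y_2)\cdot c$ with $c\neq 2$, or more simply the rank differs from $\mathrm{rk}\,\RSFH(Y_1)\,\mathrm{rk}\,\RSFH(Y_2)$ times the fixed universal factor that a Künneth formula would require.

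\emph{Cleaner alternative.} A cleaner route uses the combinatorial computability of \Cref{thm:RSFHcombin} together with the decomposition formula (\Cref{intro-thm:real decomposition formula}). Decomposing the connected sum along the real separating sphere-with-disks (or a real product disk) recovers the disjoint union. We would compare ranks across two different choices of summand pairs, $(Y_1,Y_2)$ and $(Y_1',Y_2')$, whose individual $\RSFH$ ranks agree but whose connected sums have different ranks. This shows that no formula multiplicative in the ranks can hold.

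\emph{Main obstacle.} The hardest step is the honest computation of the real differential on the neck region, that is, identifying all real index-$1$ domains there, since real domains can be unions of a domain and its $\tau$-image. To handle this, we would first make the diagram real-nice (\Cref{definition: nice diagram}) near the neck, so that only bigons and rectangles and their real symmetric counterparts contribute, and then enumerate them by hand.
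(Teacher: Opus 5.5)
Your proposal never exhibits a counterexample, and the concrete choices it suggests cannot produce one.

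\textbf{Products always satisfy the formula.} For real product sutured manifolds the diagram $(\Sigma,\emptyset,\emptyset,\tau)$ has no interior regions. Consider the diagram for a fixed-point sum: two such surfaces joined by a neck carrying $\alpha_0,\beta_0$, which meet at two points $x,y$ of $C$ and cut off two bigons $D_1,D_2$ swapped by $\tau$. Both regions flanking the neck touch $\partial\Sigma$, so they have multiplicity zero. A real domain has equal multiplicity $b$ on $D_1$ and $D_2$. The corner conditions at $x,y$ therefore become $2b=\pm1$ (switching) or $2b=0$. So nothing crosses the neck, the complex is $\F\langle x,y\rangle$ with zero differential, and the rank is $2=2\cdot1\cdot1$. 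This computation applies at every fixed point of a product, so products never witness failure.

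\textbf{Other suggested mechanisms also fail.} Your suggestion that the involution might pair away one neck generator is wrong: both $x$ and $y$ lie on $C$ and are always real generators. The ``cleaner alternative'' fails too. The separating sphere of an interior sum is closed, so the decomposition formula, which needs an open nice surface, does not apply. Real product disks exist only for boundary sums, where K\"unneth actually holds (Proposition~\ref{kunnethformulae}(2),(4)). That route can only confirm multiplicativity.

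\textbf{The missing idea is where non-multiplicativity comes from.} Sums along arc components obey K\"unneth (Proposition~\ref{kunnethformulae}(1)), because the sum point can be placed in a region adjacent to $\partial\Sigma$. Failure requires summing at a point $p$ of a \emph{closed} fixed circle lying in an interior region.

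Puncturing at such a $p$ is the fixed-point sum with the rank-one rotating ball. Its neck is flanked by the region containing $p$ (multiplicity $n_p$) and a boundary region (multiplicity $0$). So the corner condition reads $2b-n_p=\pm1$ at a switching corner and $0$ otherwise, and the symmetry ties switching $x\leftrightarrow y$ to the parity of $n_p$. The neck-stretching analysis of Proposition~\ref{prop:quasistab formula} upgrades this to
$\partial(\bm{x}\times x)=\partial^0\bm{x}\times x+\partial^1\bm{x}\times y$ (and symmetrically for $y$), where $\partial^1$ counts real index-one classes with $n_p$ odd.

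You then still need explicit summands in which $\partial^1$ changes the homology, computed combinatorially. This is what Examples~\ref{ex:punturing S1timesS2} and~\ref{ex:puncturing link complements} supply:
\begin{itemize}
\item The once-punctured $S^1\times S^2$ with reflection$\times$reflection has rank $2$. It becomes rank $4$ or $2$ depending on which fixed point is punctured.
\item The strongly invertible Hopf link and two-component unlink exteriors both have rank $4$, computed from nice real grid diagrams. After puncturing at $q$ they have ranks $8$ and $6$.
\end{itemize}
Without such a choice of summands and this computation, your plan does not establish the statement.
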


Various notions of connected sums for real manifolds will be discussed in Section~\ref{sub:Kunneth formula}. Specific examples of such real sutured manifolds are given in Example~\ref{ex:punturing S1timesS2} and Example~\ref{ex:puncturing link complements}.

In the presence of an involution, there is a new kind of decomposition that one may consider. Given an arc component $C$ of the fixed point set of a real sutured manifold $(Y,\gamma,\tau)$, one can form a new real sutured manifold by an operation we call a \emph{real arc decomposition}, whereby one removes equivariant tubular neighborhoods of an equivariant pair of push-offs of $C$; see \Cref{def:guideddeletion} for details. This operation interacts nicely with real sutured Floer homology:

\begin{theorem}\label{intro-thm:real arc decomposition}
Suppose $(Y,\gamma,\tau)\overset{D,c}{\rightsquigarrow}(Y',\gamma',\tau')$ is a real arc decomposition. Then $$\RSFH(Y',\gamma',\tau')\cong \underset{\s^R\not\in O_{D,c}}{\bigoplus}\RSFH(Y,\gamma,\tau,\s^R).$$ 
\end{theorem}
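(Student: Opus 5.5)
The plan is to model the argument on Juhász's proof of the surface decomposition formula, and on the classical fact that removing a product neighborhood of a properly embedded arc behaves like a product disk decomposition. A real arc decomposition removes equivariant tubular neighborhoods of a pair of push-offs $c$, $\tau(c)$ of a fixed arc $C$. These push-offs together with $C$ cobound an invariant disk $D$. Cutting along $D$ is equivalent to the operation in the statement, up to real sutured manifold equivalence.

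First I would build a real Heegaard diagram $(\Sigma,\bm\alpha,\bm\beta,\tau)$ for $(Y,\gamma,\tau)$ adapted to $D$. I would arrange that $D\cap\Sigma$ is an invariant properly embedded arc $\delta$ whose endpoints lie on $\partial\Sigma$. The arc $\delta$ meets the fixed arc of $\tau|_\Sigma$ coming from $C$, and it is disjoint from $\bm\beta$ except in a controlled way. Removing the neighborhoods of $c\cup\tau(c)$ then corresponds to cutting $\Sigma$ along push-offs of $\delta$ and $\tau(\delta)$. This is the real analogue of Juhász's surface diagrams with $P=\Sigma\cap D$.

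The cut produces a real diagram $\cH'$ for $(Y',\gamma',\tau')$. Its generators should correspond to the generators of $\cH$ whose real $\SpinC$ structure avoids $O_{D,c}$. Concretely, these are the generators that have no coordinate on the part of $\bm\alpha\cap\bm\beta$ lying in the removed strip.

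Next I would use the earlier sections on nice real diagrams to make both diagrams real-nice. This can be done by equivariant finger moves supported away from $\delta$, so that the correspondence of generators persists. The combinatorial computation (\Cref{thm:RSFHcombin}) then identifies the differentials. Every index-one domain counted in $\cH'$ avoids the cut region, so it is a domain in $\cH$. Conversely, a domain in $\cH$ connecting two generators outside $O_{D,c}$ has zero multiplicity along $\delta$, because the boundary multiplicities vanish there. Such a domain therefore descends to $\cH'$.

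The main obstacle is the real index-one domains that are not embedded bigons or rectangles. These include invariant domains whose count involves the real moduli spaces, and doubled domains of the form $\phi+\tau_*\phi$. I must show these also have zero multiplicity across $\delta\cup\tau(\delta)$ and that their counts agree in the two diagrams. My first attempt would use the $\SpinC$ splitting: $\s^R(\mathbf{x})-\s^R(\mathbf{y})$ is detected by the multiplicity of the domain along $\delta$, paired with the relative class of $D$. This pairing forces vanishing whenever both endpoints lie outside $O_{D,c}$.

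Finally I would check that the splitting is exactly $\s^R\notin O_{D,c}$, not a more complicated set, by evaluating $c_1$ of generators on $D$. I would also confirm that invariance of $\RSFH$ makes the result independent of these choices.
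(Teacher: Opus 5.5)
Your proof treats the arc decomposition as a cut along $D$, and that model is wrong. As a result, neither your generator correspondence nor your domain comparison applies.

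Write $a,\tau(a)$ for your push-offs $c,\tau(c)$. In the original manifold $D$ is not properly embedded, because $\partial D\supset a\cup\tau(a)$ lies in the interior. What survives after removing $\nu(a)\cup\nu(\tau(a))$ is a flipping product disk of the \emph{decomposed} manifold. Cutting along it is a further operation, and it preserves rank by Proposition~\ref{prop:product disk decomposition}, whereas the statement you are proving generally discards the summands in $O_{D,c}$.

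Your diagram also cannot present the decomposed manifold. Cutting $\Sigma$ along arcs while keeping the same number of attaching curves raises $\chi(\Sigma)$, and hence raises $\chi(R_\pm)=\chi(\Sigma)+2|\bm\alpha|$. Removing $\nu(a)\cup\nu(\tau(a))$ does the opposite: it replaces two disks of $R_+$ (and of $R_-$) by an annulus, so $\chi(R_\pm)$ drops by $2$.

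The correct picture, which is the paper's, keeps $\Sigma$ and deletes curves. Take a diagram $(\Sigma,\bm\alpha,\bm\beta,\tau)$ of the decomposed manifold in which the fixed arc $C_i=D\cap\Sigma$ misses all curves; this is possible because $D$ is a flipping product disk there. Present the original manifold by adding $\alpha'$, the flow image of a meridian of $a$, together with $\beta'=\tau(\alpha')$. These curves meet $C_i$ in a single point $x$, and both halves of $C_i\setminus\{x\}$ lie in regions touching $\partial\Sigma$. So decomposed generators have one fewer coordinate, and they correspond to original generators that \emph{contain} $x$. This is the opposite of the ``no coordinate in the removed strip'' rule you imported from Juh\'asz's outer generators.

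With this diagram the argument is a local computation at $x$. It needs neither nice diagrams nor the annular and toroidal domains you flag as the main obstacle. For a real domain, the four quadrants at $x$ have multiplicities $0,k,0,k$: the two containing $C_i$ are boundary regions, and the other two are exchanged by $\tau$. The corner relation at $x$ then gives $2k\in\{0,\pm1\}$, so $k=0$ and $x$ is never a corner. Hence the generators containing $x$ span a union of real $\SpinC$ summands.

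For a domain between two such generators, the multiplicity jump across $\alpha'$ (resp.\ $\beta'$) is constant, since $x$ is the only coordinate there, and it is zero at $x$. So these domains are exactly the domains of the smaller diagram. Their holomorphic representatives are those of the smaller diagram together with a trivial strip at $x$. The summand is therefore identified with $\RSFC(\Sigma,\bm\alpha,\bm\beta,\tau)$, once admissibility is arranged by winding with $C_i$ among the basis arcs.

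Finally, you cannot identify this summand by evaluating $c_1$ on $D$. First, $D$ is not a relative cycle in the original manifold. Second, membership in $O_{D,c}$ is decided by the parity $[\s^R]_{C_i}$ of the framing a real vector field induces on the fixed arc, and the underlying $\SpinC$ structure does not see this. For instance, the $S^2\times[0,1]$ example following Lemma~\ref{lem:fxinj} has one $\SpinC$ structure but two real ones. The missing input is Lemma~\ref{lem:diagramaticouterguided}: a generator is outer if and only if it omits $x$. It is proved by a mod $2$ degree count for a real extension of the gradient field over half of an invariant neighbourhood of the flow line through $x$.
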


Here $O_{D,c}$ is a particular class of real $\SpinC$ structures; see~\Cref{subsec:fixedsetframings} and~\Cref{sec:Guided 2-Handle attachment and Deletion} for details. Also see~\Cref{Prop:adapteddecompfomrula} for a stronger version of this statement. Theorem~\ref{intro-thm:real arc decomposition} allows one to reduce the computation of real sutured Floer homology of an arbitrary real sutured manifold to that of real sutured manifolds without arc components of their fixed sets; see~\Cref{rmk:removearcs} for details. Indeed, in combination with~\Cref{prop:decomposition along separating taut surfaces-disconnected case.} and~\ref{prop:product disk decomposition}, this allows us to reduce the computation of the real sutured Floer homology of real sutured manifolds satisfying certain stringent topological conditions to the computation of the classical sutured Floer homology of certain associated sutured manifolds, recovering~\cite[Proposition 9.12]{LO_Real_bordered}; see~\Cref{cor:genPR}. 

In the classical case, an irreducible sutured manifold has non-zero sutured Floer homology if and only if $R(\gamma)$ is Thurston norm minimizing and incompressible. This is false in the real setting. A real surface is \emph{real Thurston norm minimizing} or \emph{real-compressible} if it satisfies real analogues of the classical notions of Thurston norm minimization and compressibility; see~\Cref{def:realcompressible} for details.

\begin{proposition}\label{intro-prop:nontautvanish}
Suppose that $(Y,\gamma,\tau)$ is a real sutured manifold such that  $R(\gamma)$ is either not real Thurston norm minimizing in $H_2(Y,\gamma)^{-\tau_*}$ or real-compressible. Then $\RSFH(Y,\gamma,\tau)=0$.    
\end{proposition}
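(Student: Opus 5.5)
The plan is to mirror Juh\'asz's proof that non-taut sutured manifolds have vanishing $\SFH$ (\cite[Proposition~9.18]{juhasz2006holomorphic}), with the classical surface decomposition formula replaced by the real one. We treat the two hypotheses separately, and in each case find a real surface $S$ whose decomposition isolates a piece of $R(\gamma)$. We then read off vanishing from the real decomposition formula (\Cref{thm:decomposition formula for RSFH}, or \Cref{intro-thm:real decomposition formula}) together with the real adjunction inequality of Section~\ref{sec:Adjunction Inequalities}.

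\emph{Real-compressible case.} Suppose $D$ is a real compressing disk for $R(\gamma)$. By \Cref{def:realcompressible}, either $D$ is invariant with $\tau|_D$ orientation-reversing, or $D$ is part of an equivariant pair $D\sqcup\tau(D)$; the union is then a real surface. Decompose along it. In the resulting real sutured manifold, some component of $R(\gamma')$ is a sphere, or a disk bounded by a suture lying in a closed component. I will first prove a small lemma: if $R(\gamma')$ has a closed sphere component (equivalently, the reduced manifold is not "real-taut" in the trivial sense), then $\RSFH(Y',\gamma',\tau')=0$. This should follow from a real nice diagram: a $2$-sphere component of $R_\pm$ with no sutures forces the diagram to fail the balanced condition, or there are no generators. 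The decomposition formula then gives $\RSFH(Y',\gamma',\tau')\cong\bigoplus_{\s^R\in O^R_S}\RSFH(Y,\gamma,\tau,\s^R)$. This only controls the outer real $\SpinC$ structures, so we still need to show that every other $\s^R$ vanishes. For that I would use the real adjunction inequality. The class $[D\cup\tau D]$ has Thurston norm $0$, and its pairing with $c_1(\s^R,t)$ is nonzero unless $\s^R$ is outer. So all non-outer summands vanish as well.

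\emph{Non-norm-minimizing case.} Suppose there is a real surface $S$ with $[S]=[R_+(\gamma)]$ in $H_2(Y,\gamma)^{-\tau_*}$, $\partial S=\partial R_+$, and $x(S)<x(R_+)$. Such an $S$ exists by \Cref{intro-prop:real surface representative} applied to the relevant class, after equivariantly cleaning up its boundary. Apply the relative real adjunction inequality with this class. For every $\s^R$, evaluate $\langle c_1(\s^R,t),[S]\rangle$. The paper's analogue of \cite[Lemma~3.10]{juhasz2008floer} should compute this on $R_+$: since $R_+$ is a boundary-parallel surface, the pairing equals $\chi(R_+)$ for every supported $\s^R$. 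The adjunction inequality forces $|\chi(R_+)|=x(R_+)\le x(S)$ whenever $\RSFH(Y,\gamma,\tau,\s^R)\neq0$. This contradicts $x(S)<x(R_+)$, so every summand vanishes.

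\emph{Main obstacle.} The hardest step is likely the boundary bookkeeping in the real adjunction inequality. We must check that the chosen real surfaces meet the hypotheses of the real decomposition/adjunction results (open components, boundary-coherent curves, strongly balanced), and that $\langle c_1,[R_+]\rangle$ is computed correctly in the real relative setting. If the relative inequality is only available for decomposing surfaces, I would instead tube $S$ equivariantly to $R_-$ so that it becomes admissible.
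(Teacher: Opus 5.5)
Your proposal has a genuine gap, and it sits at the step you rely on most: the relative real adjunction inequality, Theorem~\ref{thm:betteradjunctionopen}.

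First, that theorem applies only to a nice \emph{taut} surface, that is, one decomposing a taut $(Y,\gamma)$ into a taut manifold. Under your hypotheses $(Y,\gamma)$ is not taut: a real compressing disk is a compressing disk, and a real surface of smaller norm in the class $[R(\gamma)]$ shows $R(\gamma)$ is not norm minimizing. So its hypotheses are never met.

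More seriously, the argument is circular. The proof of Theorem~\ref{thm:betteradjunctionopen} concludes $\RSFH(Y_V,\gamma_V,\tau_V)=0$ from the fact that $(Y_V,\gamma_V,\tau_V)$ is not real-taut, which is exactly the statement you are trying to prove. The same holds classically: Juh\'asz's adjunction inequality is derived from the vanishing of $\SFH$ for non-taut manifolds, not conversely.

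Your compressible case has the same circularity. After decomposing along $D\sqcup\tau(D)$ you still need $\RSFH(Y',\gamma',\tau')=0$, and your ``small lemma'' does not supply it. Decomposing along a compressing disk does not create a closed sphere component of $R(\gamma')$. It creates a disk component of $R_\mp(\gamma')$ cut off by a new suture. In any case \Cref{thm:decomposition formula for RSFH} only sees the outer summands.

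In the norm case the pairing computation is also wrong. A surface homologous to $R(\gamma)$ rel $\gamma$ maps to $0$ in $H_2(Y,\partial Y)$, so $\langle c_1(\s^R,t),[S]\rangle=0$, not $\chi(R_+)$. Also, $[R_+]$ by itself is not $-\tau_*$-invariant; the real class is $[R(\gamma)]$.

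The idea you are missing is that no $\SpinC$ bookkeeping is needed. The paper, following Juh\'asz's proof of \cite[Proposition 9.18]{juhasz2006holomorphic}, shows that $\RSFC$ has no generators at all:
\begin{enumerate}
\item Take a separating real surface $S$ with $[S]=[R(\gamma)]$ in $H_2(Y,\gamma)$ that is either a real compression of a push-off of $R(\gamma)$ or satisfies $x(S)<x(R(\gamma))$.
\item Decomposing along $S$ splits off a piece $(Y_1,\gamma_1,\tau_1)$ consisting of two components swapped by $\tau_1$. Each is unbalanced, with $\chi(R_+)\neq\chi(R_-)$, so a sutured Morse function on one of them has strictly more index-$1$ than index-$2$ critical points.
\item Glue real sutured Morse functions on the pieces with $S$ as a level set. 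In the resulting real diagram, the $\alpha$-curves from that component meet only the strictly fewer $\beta$-curves from the same component.
\item Hence $\mathbb{T}_\alpha\cap\mathbb{T}_\beta=\emptyset$. This persists after symmetric winding to achieve admissibility, so $\RSFH(Y,\gamma,\tau)=0$.
\end{enumerate}
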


Unlike in the classical case, the converse of~\Cref{intro-prop:nontautvanish} is false; see ~\Cref{prop:periodicfree}. Indeed, this failure implies real knot Floer homology fails to detect the real genus of a knot, or the real fiberedness of knots, or at least the failure of the most naive potential versions of detection. See ~\Cref{sec:8_3} for examples. Nevertheless, we can ask what topological hypotheses need to be added to obtain a converse to~\Cref{intro-prop:nontautvanish}:

\begin{question}\label{q:topcharvanishing}
Is there a topological characterization of real sutured manifolds with trivial real sutured Floer homology?
\end{question}

One can further ask if there is a topological characterization of real sutured manifolds $(Y,\gamma,\tau)$ with $\mathrm{rank}\RSFH(Y,\gamma,\tau)=1$. In the classical case, if $H_2(Y;\Z)=0$ and $(Y,\gamma)$ is irreducible, then $\mathrm{rank}\SFH(Y,\gamma)=1$ if and only if $(Y,\gamma)$ is a product sutured manifold~\cite[Theorem~9.7]{juhasz2008floer}. This fails in the real setting; see~\Cref{ex:nofiberdetection}.

\subsection*{Organization}

In Section~\ref{sec:Preliminaries}, we introduce basic notions in real sutured manifold theory and recall relevant definitions in real Heegaard Floer theory. Then, we define real sutured Floer homology for balanced real sutured manifolds in Section~\ref{sec:Real Sutured Heegaard Floer Homology}. In Section~\ref{sec:decomposition}, we define real surface decompositions and discuss several basic cases. In Section~\ref{sec:Guided 2-Handle attachment and Deletion}, we introduce real arc decompositions and prove Theorem~\ref{intro-thm:real arc decomposition}. In Section~\ref{sec:cylindrical}, we give a cylindrical reformulation of real sutured Floer homology. In Section~\ref{sec:Quasi-stabilization and Kunneth Formulae}, we analyze the behavior of real sutured Floer homology under puncturing and connected sums and provide examples witnessing the inequality in~\Cref{intro-prop:failure of connected sum}. In Section~\ref{sec:Nice Real Heegaard Diagrams and Combinatorial Computation}, we define and construct nice real Heegaard diagrams, then classify index $1$ domains in nice real Heegaard diagrams, analyze their moduli spaces, and prove Theorem~\ref{intro-thm: combinatorial computation for HFR}. We prove the surface decomposition formula, ~\Cref{intro-thm:real decomposition formula} in Section~\ref{sec:Applications of nice diagrams}. In Section~\ref{sec:Adjunction Inequalities}, we prove \Cref{intro-prop:real surface representative}, consider concepts related to real-tautness, prove adjunction inequalities and vanishing results, and showcase further interesting surface decompositions and their effect on real sutured Floer homology. Finally, we prove \Cref{intro-thm:real hierarchy} in Section~\ref{sec:Real Sutured Hierarchies}.

\begin{ack}
The authors would like to thank John Baldwin, Ciprian Bonciocat, David Gabai, Jiakai Li, Zhenkun Li, Robert Lipshitz, Ciprian Manolescu, Yi Ni, Peter Ozsv\'ath, Masaki Taniguchi, Shunyu Wan, Jiajun Wang, and Zhengyi Zhou for helpful discussions. 

FB and GG were supported by the Simons Collaboration Grant on New Structures in Low-Dimensional Topology. The authors are grateful for the hospitality of Princeton and Stanford Universities, where parts of this work were carried out.

\end{ack}

%% file: background.tex
\section{Preliminaries}\label{sec:Preliminaries}
In this section, we define real sutured manifolds, real sutured Heegaard diagrams, and relative real $\SpinC$ structures.
\subsection{Real sutured manifolds}

Recall that a \emph{sutured manifold} $(Y,\gamma)$ is a compact oriented $3$-manifold $Y$ with boundary together with a subset $\gamma \subset \partial Y$ consisting of pairwise disjoint annuli $A(\gamma)$ and tori $T(\gamma)$. In each component of $A(\gamma)$, there is a suture, i.e., a homologically non-trivial oriented simple closed curve. This collection of curves form a set denoted $s(\gamma)$. We require that $\partial Y\setminus \mathrm{int}(\gamma)$ decompose as a union $R_+(\gamma)\sqcup R_-(\gamma)$, in which $R_+(\gamma)$ ($R_-(\gamma)$) consists of connected components of $\partial Y\setminus \mathrm{int}(\gamma)$ whose normal vector fields point out of (into) $Y$. These manifolds were introduced by Gabai to study taut foliations in~\cite{gabai1983foliations}. As is standard when studying sutured manifolds using Floer theoretic invariants, we assume that $T(\gamma)=\emptyset$ henceforth.

\begin{definition}\label{def:realsuturedmnfld}

A \emph{real sutured manifold} is a triple $(Y,\gamma,\tau)$, where $(Y,\gamma)$ is a sutured manifold and $\tau$ is an orientation-preserving involution whose fixed locus is a (possibly empty) properly embedded codimension-2 submanifold of $Y$. We require that $\tau|_{\partial Y}$ interchanges $R_{\pm}(\gamma)$ and that $\mathrm{fix}(\tau|_{\partial Y})$ is a finite set of points in $s(\gamma)$. A real sutured manifold is \emph{balanced} if the inclusion map induces a surjection $\pi_0(s(\gamma))\to\pi_0(\partial Y)$.
\end{definition}

Note that in Juh\'asz's definition of a balanced sutured manifold (see~\cite[Definition 2.2]{juhasz2006holomorphic}),  there is an additional requirement that $\chi(R_+(\gamma))=\chi(R_-(\gamma))$. This is unnecessary in the real setting, since $\tau$ interchanges $R_\pm(\gamma)$.

For the rest of the paper, as is standard, we fix orientations on $R_\pm:=R_\pm(\gamma)$ so that ${\tau(R_+)=-R_-}$ and the surface ${R:=R_+ \cup R_-}$ has fundamental class fixed by $-\tau_*:H_2(R,\partial R )\to H_2(R,\partial R)$. We also set $C$ to be the fixed point set of $\tau$.

\begin{example}
Pick a surface $\Sigma$ with (non-empty) boundary equipped with an orientation-reversing involution, $\eta$, with properly embedded codimension $1$ fixed locus. $\Sigma\times[-1,1]$ is naturally a sutured manifold with $\partial\Sigma\times [-1,1]$ as sutures. The involution naturally extends to an involution of $\Sigma\times[-1,1]$ given by $\eta:(x,t)\mapsto (\eta(x),-t)$. We call real sutured manifolds   of the form $(\Sigma\times[-1,1],\partial\Sigma\times \{0\},\eta)$ \emph{real product sutured manifolds.}
\end{example}

\begin{example}\label{ex:closedaspunctured}
Consider a real closed $3$-manifold $(Y,\tau)$ with non-empty fixed set. Remove a small $\tau$-equivariant open ball $N$ containing a point on some connected component $C_i$ of the fixed point set. Such a ball exists---say by~\cite[Proposition 4.2]{bao2025morsehomologyequivariance}---and models on the $3$-ball rotating by $\pi$ about an axis. In particular, $\partial (Y-N)$ is a copy of $S^2$ on which $\tau$ restricts to a rotation by $\pi$ about some axis. We can choose an equivariant neighborhood, $\gamma_0$, of an equivariant embedded circle containing the two fixed points as the suture. Then $(Y-N,\gamma_0,\tau|_{Y-N})$ is a real sutured manifold. It is clear that the real diffeomorphism type of this manifold is independent of the choice of $3$-ball and point on $C_i$. 
\end{example}

\begin{lemma}[{\cite[Section 2]{Hartley1980}}]\label{lem:Classification of real structure on solid torus}
Parametrize the solid torus $S^1\times D^2$ using coordinates $(e^{i\theta},r,e^{i\phi})$, where $e^{i\theta}$ parametrizes $S^1$ as the unit circle in $\C$ and $(r,e^{i\phi})$ are polar coordinates on $D^2$. Up to isotopy, there are exactly four real structures on the solid torus:
    \begin{itemize}
        \item $c_1:(e^{i\theta},r,e^{i\phi})\mapsto (e^{-i\theta},r,e^{-i\phi})$;
        \item $c_2:(e^{i\theta},r,e^{i\phi})\mapsto (e^{i\theta},r,e^{i(\phi+\pi)})$;
        \item $c_3:(e^{i\theta},r,e^{i\phi})\mapsto (e^{i(\theta+\pi)},r,e^{i\phi})$;
        \item $c_4:(e^{i\theta},r,e^{i\phi})\mapsto (e^{i(\theta+\pi)},r,e^{i(\phi+\pi)})$.
    \end{itemize}
\end{lemma}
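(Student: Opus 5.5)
The plan is to reduce the classification to the known classification of involutions on the $3$-ball, by cutting the solid torus $V=S^1\times D^2$ along a meridian disk that is well placed with respect to $\tau$. Throughout, a real structure means an orientation-preserving involution $\tau$ whose fixed set is empty or a properly embedded $1$-manifold. Since the argument is standard, this is a sketch rather than a complete proof, and it follows the lines of~\cite[Section 2]{Hartley1980}.

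\textbf{Step 1 (equivariant meridian disk).} I would first produce a meridian disk $D\subset V$ such that either $\tau(D)=D$ or $\tau(D)\cap D=\emptyset$. The tool is the equivariant Dehn's lemma of Meeks--Yau: a least-area meridian disk, taken for a $\tau$-invariant metric, has this property. A cut-and-paste alternative is to start from any meridian disk transverse to its image and remove the curves of $D\cap\tau(D)$ by innermost-disk exchanges performed symmetrically.

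\textbf{Step 2 (cut and split into cases).} Since $\tau$ preserves orientation, it acts on $H_1(V)\cong\Z$ by $\pm1$, and it acts on the class of $\partial D$ in $H_1(\partial V)$ by the same sign. I would split into cases according to Step 1.
\begin{itemize}
\item $\tau(D)=D$ and $\tau|_D$ is a reflection. Then $\tau$ reverses the core, and the expected model is $c_1$.
\item $\tau(D)=D$ and $\tau|_D$ is a rotation by $\pi$. Then $\tau$ preserves the core and fixes it setwise, and the expected model is $c_2$.
\item $\tau(D)\cap D=\emptyset$. Then $D$ and $\tau(D)$ cut $V$ into two balls that $\tau$ interchanges, so $\tau$ is free, as in $c_3$ and $c_4$.
\end{itemize}
The case where $\tau|_D$ is the identity is excluded, because then $\tau$ would be orientation-reversing.

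\textbf{Step 3 (extend over the complement).} In each case, cut $V$ along the invariant disk, or along the pair of disks, to obtain one or two balls. In the invariant cases the result is a ball $B$ with an involution preserving two boundary disks $D_\pm$. By the classification of involutions of $B^3$ (Livesay, or the Smith conjecture in dimension $3$), $\tau|_B$ is conjugate to a linear involution. Its restriction to $D_\pm$ is already standardized, and the regluing is then forced up to isotopy, giving $c_1$ or $c_2$. In the free case, the quotient $V/\tau$ is again a solid torus, and $V\to V/\tau$ is a connected double cover. Such a cover is classified by a surjection $H_1(V/\tau)\to\Z/2$. The two models $c_3$ and $c_4$ should then correspond to the two ways the translation $\theta\mapsto\theta+\pi$ can twist the fibers, that is, to whether $\tau$ acts on $\partial V$ with or without the meridional half-rotation. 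I would distinguish $c_3$ from $c_4$ by an isotopy-invariant such as the action of $\tau$ on $\partial V$ relative to framings, and check that the four models are pairwise non-isotopic by comparing their fixed sets together with this invariant.

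\textbf{Expected main obstacle.} The main difficulty is Step 1, the existence of the well-placed meridian disk, because it requires a genuine equivariant surgery or minimal-surface input. A secondary subtlety is separating $c_3$ from $c_4$, which is not detected by the fixed set or by the action on $H_1(V)$.
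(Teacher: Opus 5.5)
Your outline (an equivariant meridian disk via Meeks--Yau, Livesay's classification on the complementary balls, covering theory in the free case) is the standard route. However, two steps fail as written.

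First, the disjoint case of Step~2 is mis-analysed. The condition $\tau(D)\cap D=\emptyset$ does not force $\tau$ to interchange the two complementary balls. For $c_1$, the meridian disk $D=\{e^{i\pi/2}\}\times D^2$ is disjoint from $c_1(D)=\{e^{-i\pi/2}\}\times D^2$, yet $c_1$ preserves each of the two balls. Nothing in the Meeks--Yau argument excludes this alternative: take a $c_1$-invariant metric making $V$ thin near $\theta=\pm\pi/2$. So you need a further subcase. If $\tau$ preserves each ball $B_j$, then it swaps the two disks on $\partial B_j$, and by the ball classification it is a rotation of $B_j$. It therefore reverses the core, and you recover $c_1$. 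The two subcases are separated by the sign of $\tau_*$ on $H_1(V)$: only when the sign is $+1$ are the balls swapped, and only then is $\tau$ free.

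Second, and more seriously, your free case cannot produce two classes in the way you describe. There is exactly one surjection $H_1(V/\tau)\cong\Z\to\Z/2$, so covering theory shows that all free involutions are conjugate by a diffeomorphism. Indeed, the meridional twist $h(e^{i\theta},r,e^{i\phi})=(e^{i\theta},r,e^{i(\phi+\theta)})$ satisfies $hc_3h^{-1}=c_4$. Hence the count of four holds only for conjugation by diffeomorphisms isotopic to the identity, i.e.\ with the framing fixed. Neither your covering argument nor your regluing in Step~3 produces conjugating maps of that kind.

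What is missing is a mapping-class computation. Let $\mu=\{1\}\times\partial D^2$ and $\lambda=S^1\times\{1\}$.
\begin{itemize}
\item The mapping class group of $V$ acts on $H_1(\partial V)$ by $\mu\mapsto\pm\mu$, $\lambda\mapsto\pm\lambda+k\mu$. It is generated by $h$ and $c_1$.
\item One checks directly that $h$ and $c_1$ commute with $c_1$ and $c_2$, so those two classes do not split.
\item In the free case, write the conjugating map $g$ (with $g\tau g^{-1}=c_3$) as $c_1^{a}h^{k}g_0$ with $g_0$ isotopic to the identity. Since $c_1$ and $h^2$ commute with $c_3$, $g_0$ conjugates $\tau$ to $c_3$ or $c_4$ according to the parity of $k$.
\end{itemize}
To show that $c_3$ and $c_4$ are not conjugate by a map isotopic to the identity, use the class of $\gamma\cdot\tau(\gamma)$ in $H_1(\partial V;\Z/2)$, where $\gamma\subset\partial V$ is a path from a point to its image. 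This class is well defined whenever $\tau_*=\mathrm{id}$ on $H_1(\partial V)$, and it transforms by $g_*$ under conjugation by $g$. It equals $[\lambda]$ for $c_3$ and $[\lambda+\mu]$ for $c_4$, so it is an honest isotopy invariant. This is the precise form of the ``action on $\partial V$ relative to framings'' you allude to.
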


This lemma provides local models for neighborhoods of invariant knots in real $3$-manifolds. Knots that are fixed pointwise by the involution or strongly invertible have unique models for their equivariant neighborhoods, while doubly periodic knots have two possible models for their standard neighborhoods.

\begin{example}\label{ex:invertible link complements as real sutured manifold}
Consider a real closed $3$-manifold $(Y,\tau)$. Let $L$ be a link in $Y$ that is fixed setwise by $\tau$ and transverse to $\mathrm{fix}(\tau)$.  Each component $L_i$ of $L$ either:\begin{enumerate}

        \item intersects $\mathrm{fix}(\tau)$ in exactly two points and have either orientation reversed by $\tau$,
        \item intersects $\mathrm{fix}(\tau)$ emptily and have either orientation preserved by $\tau$, or
        \item is mapped to another component $L_i'$ of $L$ by $\tau$.
      \end{enumerate}

These submanifolds have equivariant tubular neighborhoods
(See~\cite[Chapter V Section 25]{MR4205963} and Lemma~\ref{lem:Classification of real structure on solid torus}). The manifold obtained by removing a standard neighborhood of $L$ can be endowed with a canonical real sutured structure by adding pairs of oppositely oriented meridional sutures to each component of the boundary corresponding to a component of $L$ as follows. 
In case (1), we add a pair of meridional sutures that are fixed set-wise by $\tau$, each of which intersects $\mathrm{fix}(\tau)$ in two points. In other words, they are oriented boundaries of the equivariant meridian disks over the two fixed points. In case (2), we choose a pair of meridians interchanged by $\tau$ and having opposite orientations. The remaining components belongs to (3). They come in pairs, so we can write them as $(L_i,\tau(L_i)=L_i')$. Pick an arbitrary pair of oppositely oriented meridians on $\partial\nu(L_i)$ as sutures and take their images under $\tau$ for $L_i'$. Note that the real homeomorphism type of the resulting sutured manifold is independent of the choices of standard neighborhood of $L$ and the choices of meridians satisfying the above conditions.

\end{example}

\begin{remark}\label{remark:standard models for sutured boundary}
Let $(Y,\gamma,\tau)$ be a real sutured manifold. Observe that $C\cap \gamma$ consists of a collection of points. It is readily seen that each orbit of $\tau|_{\gamma}$ is given up to real isotopy by either:\begin{enumerate}
    \item A pair of annuli exchanged by $\tau$.
    \item An annulus, $\{z\in\C:|z|=1\}\times[-1,1]$, equipped with the involution ${(z,t)\mapsto (\overline{z},-t)}$. In this case, $s(\gamma)$ is a strongly invertible knot, though it is not properly embedded. 
\end{enumerate} 
\end{remark}

This analysis can be generalized to higher genus boundary components.

\subsection{Real balanced sutured Heegaard diagrams}

Let $(Y, \tau)$ be a real 3-manifold. Let $\R^{-}$ be $\R$ equipped the action $\sigma(x) = -x$. Consider $C_{\Z/2}^\infty(Y, \R^-)$, the space of smooth {$\Z/2$-equivariant} functions from $Y$ to $\R$, i.e., functions $f$ satisfying $f \circ \tau = \sigma \circ f$. We call such functions \emph{real}.

\begin{definition}
    A \emph{real sutured Morse function} on a real sutured manifold $(Y,\gamma,\tau)$ is a Morse function $f:Y\to [-1,1]$ satisfying the following conditions:
    \begin{enumerate}
        \item $f^{-1}(\pm 1) = R_\pm$ and $s(\gamma) \subset f^{-1}(0)$,
        \item $f$ has no critical points along $R(\gamma)$,
        \item $f|_\gamma$ has no critical points, and
        \item $f$ is real. 
    \end{enumerate}
    \end{definition}
   
\begin{lemma}\label{lem:realmorseffunctionsexist}
    Every real sutured manifold $(Y, \gamma, \tau)$ admits a real sutured Morse function. 
\end{lemma}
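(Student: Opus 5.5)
The plan is to first build a real sutured Morse function on a collar of $\partial Y$, then extend it over all of $Y$ by averaging an arbitrary extension, and finally perturb equivariantly to make it Morse.

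\emph{Step 1: the collar.} Using the standard models of Remark~\ref{remark:standard models for sutured boundary}, we choose a $\tau$-equivariant collar $\partial Y\times[0,\epsilon)$. Such a collar exists by taking an inward-pointing vector field and averaging it under $\tau$. On $\gamma$ we choose an equivariant identification $A(\gamma)\cong s(\gamma)\times[-1,1]$ in which $\tau$ acts on the second factor by $t\mapsto -t$. For a pair of annuli exchanged by $\tau$, we parametrize one annulus and transport the parametrization by $\tau$. For a $\tau$-invariant annulus, we use the model $(z,t)\mapsto(\bar z,-t)$. On the collar we define $f_0$ to be $\pm1$ near $R_\pm$ and $t$ on $\gamma$, interpolated in the collar coordinate $s$ so that $f_0$ has nonvanishing derivative in the normal direction to $R_\pm$. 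Because $\tau$ swaps $R_+$ and $R_-$ and negates $t$, the function $f_0$ is real, i.e.\ $f_0\circ\tau=-f_0$. Conditions (1)--(3) then hold near the boundary.

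\emph{Step 2: extension.} We extend $f_0$ to an arbitrary smooth function $g\colon Y\to[-1,1]$ that agrees with $f_0$ near $\partial Y$ and takes values in $(-1,1)$ in the interior. We then replace $g$ by its anti-average $f_1=\tfrac12(g-g\circ\tau)$. This function is real, still equals $f_0$ near $\partial Y$ because $f_0$ is already real, and still has image in $[-1,1]$ with preimages of $\pm1$ equal to $R_\pm$.

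\emph{Step 3: equivariant Morse perturbation (the main obstacle).} We need a perturbation of $f_1$ supported in the interior and within the class of anti-invariant functions that has only nondegenerate critical points. Such anti-invariant functions are Morse for a generic choice. This is the equivariant Morse theory statement of~\cite{bao2025morsehomologyequivariance}, or a Wasserman-type genericity theorem for the anti-invariant representation $\R^-$.

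The subtle point is the fixed set $C$. Since $f(\tau x)=-f(x)$, we have $f\equiv0$ on $C$. Near a point of $C$, take coordinates $(x,y,z)$ in which $\tau(x,y,z)=(-x,-y,z)$ and $C=\{x=y=0\}$. An anti-invariant function then has the form $f=x\,a(x,y,z)+y\,b(x,y,z)$ with $a,b$ odd under the rotation. At a critical point on $C$, the Hessian has $\partial_z^2 f=0$ and has block form in $(x,y)$ versus $z$. So a critical point on $C$ is degenerate unless the mixed terms $\partial_x\partial_z f$ and $\partial_y\partial_z f$ are nonzero.

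I would handle $C$ by perturbing by $\epsilon\,\rho\,(x\,h_1(z)+y\,h_2(z))$ and checking transversality directly. On $C$, the differential of $f$ is $(a(0,0,z),b(0,0,z),0)$. This defines a section of the rank-2 normal bundle over the 1-manifold $C$. Generically this section vanishes at isolated points where its derivative in $z$ is injective, and that injectivity is exactly the nondegeneracy condition on the mixed terms. Off $C$ the action is free, so on the quotient $(Y\setminus C)/\tau$ a standard Thom-transversality argument applies. Here we use anti-invariant bump functions $\rho-\rho\circ\tau$ built from $\rho$ with support disjoint from its image under $\tau$.

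We keep all perturbations $C^1$-small and compactly supported in the interior, so that (1)--(3) are preserved.
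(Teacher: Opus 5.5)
\textbf{The gap is in Step 3, at the fixed set $C$: your nondegeneracy criterion there is false.} If $p\in C$ is a critical point of an anti-invariant $f$, differentiating $f\circ\tau=-f$ twice gives $H_p(d\tau\,v,d\tau\,w)=-H_p(v,w)$ for the Hessian. With $d\tau_p=\mathrm{diag}(-1,-1,1)$, this kills the normal--normal $2\times 2$ block and also the $\partial_z^2$ entry. Hence
\[
H_p=\begin{pmatrix} 0&0&f_{xz}\\ 0&0&f_{yz}\\ f_{xz}&f_{yz}&0\end{pmatrix},
\]
which has rank at most $2$ whatever the mixed terms are. So \emph{every} critical point on $C$ is degenerate, and a real sutured Morse function must have no critical points on $C$ at all.

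Your transversality count is also off. $(a(0,0,z),b(0,0,z))$ is a section of a rank-$2$ bundle over a $1$-manifold. ``Vanishing with injective derivative'' is not transversality; transversality would need a derivative surjecting onto the $2$-dimensional fibre, which is impossible. The generic behaviour is that such a section has no zeros. As written, your argument expects isolated critical points on $C$ and declares them nondegenerate, so it would output a function that is not Morse. Separately, for $f=x\,a+y\,b$ to be odd you need $a,b$ to be $\tau$-invariant, not odd. With odd $a,b$, your formula for $df$ along $C$ would vanish identically.

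\textbf{The repair is what the paper does: make $f$ a submersion along $C$ before any Morse perturbation.} The paper takes an equivariant tubular neighbourhood $C\times D^2$ with $\tau$ acting by $-1$ on $D^2$, using the classification of real solid tori and the analogous model $I\times D^2$ for arcs. It sets $f(y,(x_1,x_2))=a\,x_2$ there. It chooses the coordinates and the constant $a$ to match the boundary model near the endpoints of arcs of $C$, where your collar function $f_0=t$ already has nonvanishing derivative in the normal directions. It then makes $f$ Morse by perturbing only in the free part of $Y$, on balls swapped in pairs by $\tau$. These perturbations are kept $C^1$-small near $C$, so no critical points reappear there.

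In your own framework, the same conclusion follows from a correct use of genericity: a generic choice of $h_1,h_2$ in your perturbation makes the normal derivative nowhere zero on $C$. With that correction, plus the Thom transversality argument on the free part, your collar construction and anti-averaging step go through as written.
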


Here, we do not require that $\pi_0(\gamma)\to\pi_0(\partial Y)$ is a surjection, but we do require that $\gamma$ contains only annular components.

\begin{proof}[Proof sketch] 
Choose an equivariant framing $\nu(C) \cong C \times D^2$. Each circle component of $C$ has a neighborhood that is equivariantly diffeomorphic to $S^1\times D^2$ with involution $(y,(x_1,x_2))\mapsto (y,(-x_1,-x_2))$ by Lemma~\ref{lem:Classification of real structure on solid torus}. Each arc component of $C$ has a neighborhood equivariantly diffeomorphic to $I\times D^2$ with involution ${(y,(x_1,x_2))\mapsto (y,(-x_1,-x_2))}$. In either case, we can define $f$ to be $(y,(x_1,x_2))\mapsto ax_2$, for any constant $a\in \R_{>0}$.

Next, we define $f$ on the boundary of $Y$ as in the proof of~\cite[Proposition 2.13]{juhasz2006holomorphic}. By choosing appropriate coordinates for the $D^2$ factor and a suitable value for $a$ for each component of $C$, we can ensure that the definition of $f$ near the boundary is compatible with the definition of $f$ near $C$. Now extend $f$ to a real function on the rest of $Y$. We can do this by covering $Y$ with equivariant balls; in each ball a generic function is Morse and we can patch these together to define the desired real sutured Morse function using a partition of unity (compare to~\cite{bao2025morsehomologyequivariance}).\end{proof}

Standard arguments show that a Morse function on a connected sutured manifold can be modified so that each index $0$ critical point get canceled with an index $1$ critical point. Doing so equivariantly simultaneously cancels each index $3$ critical point with an index $2$ critical point.

\begin{remark}\label{rmk:selfindexingrealmorse}
Every real sutured manifold $(Y,\gamma,\tau)$ admits a real sutured Morse function, $f$, that is self-indexing in the sense that $f$ takes value $-1/3$ on the critical points of index $1$, and $f$ takes value $1/3$ at critical points of index $2$. This can be arranged by modifying any real sutured Morse function on $(Y,\gamma,\tau)$ essentially as in the unreal setting.   
\end{remark}

\begin{definition}
A \emph{real sutured Heegaard diagram} is a tuple $${\mathcal{H}:=(\Sigma,\bm\alpha=\{\alpha_1,\alpha_2,\dots,\alpha_m\},\bm\beta=\{\beta_1,\beta_2,\dots,\beta_m\},\tau )},$$ in which $(\Sigma,\{\alpha_1,\alpha_2,\dots,\alpha_m\},\{\beta_1,\beta_2,\dots,\beta_m\})$ is a sutured Heegaard diagram and $\tau$ is an orientation-reversing involution on $\Sigma$ with the property that\begin{enumerate}
    \item $\tau(\alpha_i)=\beta_i$ for all $i$;
    \item $\mathrm{fix}(\tau )$ is a (possibly empty) properly embedded 1-dimensional submanifold of $\Sigma$.
\end{enumerate}

A real sutured diagram is called \emph{balanced} if the map $\pi_0(\partial\Sigma)\to \pi_0(\Sigma\setminus \bm\alpha)$ induced by the inclusion is surjective. This is the same as saying that the real sutured Heegaard diagram is balanced in the sense of \cite[Proposition~2.9]{juhasz2006holomorphic}, since by the real assumption, $\vert \bm\alpha\vert =\vert\bm\beta\vert$ and $\pi_0(\partial\Sigma) \twoheadrightarrow \pi_0(\Sigma\setminus \bm\alpha)$ and $\pi_0(\partial\Sigma) \twoheadrightarrow \pi_0(\Sigma\setminus \bm\beta)$ are equivalent. 

\end{definition}

Note that we neither require $\mathrm{fix}(\tau)$ to be non-empty nor $\Sigma$ to be connected. 
An example of a real sutured Heegaard diagram is given in Figure~\ref{fig:basic_ex}.

\begin{figure}[h]
\def\svgwidth{.6\linewidth}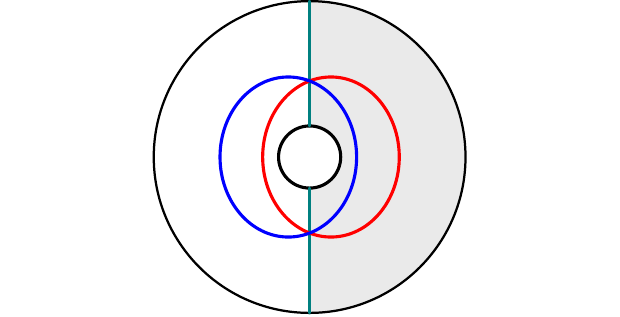
\caption{A real sutured manifold for $S^2 \times [0,1]$ equipped with the involution $\mathrm{rot}\times \mathrm{id}$, where $\mathrm{rot}$ is $\pi$-rotation of $S^2$.}
    \label{fig:basic_ex}
\end{figure}

\begin{remark}
From a real (balanced) sutured Heegaard diagram $(\Sigma,\bm{\alpha},\bm{\beta},\tau)$, we can build a real (balanced) sutured manifold $(Y,\gamma,\tau')$ in which we can view $\Sigma$ as an embedded real surface follows. Take the product $\Sigma\times [-1,1]$ and attach 2-handles along curves ${\alpha_{i}\times \{-1\}}$ and ${\beta_i\times \{1\}}$. Set $\gamma:=\partial\Sigma\times [-1,1]$. The real structure on $Y$ is defined by $(x,t)\mapsto (\tau(x),-t)$ on $\Sigma\times [-1,1]$ and extended using any diffeomorphism that interchanges the 2-handles associated to $\alpha_i$ and $\beta_i$. The real homeomorphism type of $(Y,\gamma,\tau')$ is of course independent of the choice of the diffeomorphism in the construction. From this perspective, $\mathrm{fix}(\tau') = \mathrm{fix}(\t)$, so we will abuse notation and refer to both fixed point sets as $C$, and write $\tau$ for the involution on $Y$ as well as its restriction to $\Sigma$. 

We will work exclusively with embedded diagrams, i.e., we require the Heegaard surface $\Sigma$ to be an embedded submanifold of $Y$.
\end{remark}

\begin{proposition}\label{prop:existence of a real balanced sutured diagram}
Every real (balanced) sutured manifold admits a real (balanced) sutured Heegaard diagram.

\end{proposition}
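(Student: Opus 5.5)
We will follow the unreal argument of \cite[Proposition~2.13]{juhasz2006holomorphic}, replacing its Morse function by a real one. By Lemma~\ref{lem:realmorseffunctionsexist} and Remark~\ref{rmk:selfindexingrealmorse}, pick a self-indexing real sutured Morse function $f\colon Y\to[-1,1]$. Using the cancellation remark preceding Remark~\ref{rmk:selfindexingrealmorse}, we arrange that $f$ has no index $0$ or $3$ critical points. Because $f\circ\tau=-f$, the involution $\tau$ sends index $1$ critical points bijectively to index $2$ critical points. We also choose an equivariant gradient-like vector field $v$. Averaging a Riemannian metric $g$ to $(g+\tau^*g)/2$ gives a $\tau$-invariant metric. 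Since $f\circ\tau=-f$, the gradient then satisfies $\tau_*\nabla f=-\nabla f$, so $\tau$ reverses flow lines.

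The Heegaard surface will be the level set $\Sigma:=f^{-1}(0)$. Since $0$ is a regular value and $f\circ\tau=-f$, $\Sigma$ is $\tau$-invariant. Its boundary is $\partial\Sigma=f^{-1}(0)\cap\gamma$, which is isotopic to $s(\gamma)$. The restriction $\tau|_\Sigma$ is orientation-reversing: $\tau$ preserves the orientation of $Y$ but sends the normal direction $\nabla f$ to $-\nabla f$. Its fixed set is $C\cap\Sigma$. Since $C$ is codimension $2$ and $f$ is odd, $f$ vanishes on $C$, so $C\subset\Sigma$, and $C$ is a properly embedded $1$-manifold in $\Sigma$.

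Next we define the attaching curves. Let $\alpha_i$ be the intersection of $\Sigma$ with the ascending manifold of the $i$-th index $1$ critical point $p_i$. This is a circle flowing from $R_-$ side up to level $0$. Set $\beta_i:=\tau(\alpha_i)$. Because $\tau$ reverses the flow, $\tau$ carries the ascending manifold of $p_i$ to the descending manifold of $\tau(p_i)$. Hence $\beta_i$ is exactly the belt circle of the index $2$ critical point $\tau(p_i)$, and the $\beta$ curves are precisely the descending circles of all index $2$ points. Then $(\Sigma,\bm\alpha,\bm\beta)$ is the sutured Heegaard diagram associated to $f$ in the usual sense, as in Juh\'asz's construction. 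Together with $\tau|_\Sigma$ it is a real sutured Heegaard diagram, and it recovers $(Y,\gamma,\tau)$ by the construction in the preceding remark.

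For the balanced case we argue as Juh\'asz does. The standard argument gives surjectivity of $\pi_0(\partial\Sigma)\to\pi_0(\Sigma\setminus\bm\alpha)$, since each component of $\Sigma\setminus\bm\alpha$ flows down to a component of $R_-$ meeting $s(\gamma)$. The corresponding statement for $\bm\beta$ then follows automatically by applying $\tau$.

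The main obstacle is that the equivariant gradient-like vector field must be chosen compatibly with the boundary model from Lemma~\ref{lem:realmorseffunctionsexist}, and the $\alpha$ and $\beta$ curves need to meet transversely. Equivariance pairs $\alpha_i$ and $\beta_i$ rigidly, so generic perturbation cannot be done independently for the two families. We will handle this by perturbing $v$ only near level $-1/6$ by an arbitrary small isotopy, and defining the perturbation near $1/6$ as its image under $\tau$. This moves $\bm\alpha$ freely while $\bm\beta=\tau(\bm\alpha)$ follows along. Transversality of $\bm\alpha$ with $\tau(\bm\alpha)$ is then achieved by an isotopy $\phi$ of $\bm\alpha$ in $\Sigma$ (so that $\beta_i=\tau(\phi(\alpha_i))$). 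We expect that an isotopy in general position relative to $\tau$ and to $C$ makes $\phi(\bm\alpha)\pitchfork\tau\phi(\bm\alpha)$, as in Section~\ref{sec:Preliminaries} of \cite{guth2025real}; this is the step we have not yet checked.
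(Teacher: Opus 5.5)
This is essentially the paper's own argument: a self-indexing real sutured Morse function, a $\tau$-invariant metric, $\Sigma=f^{-1}(0)$, $\alpha_i=\Sigma\cap W^u(p_i)$ and $\beta_i=\tau(\alpha_i)=\Sigma\cap W^s(\tau(p_i))$; the paper dispatches transversality simply by asserting that equivariant Morse--Smale pairs are generic. The step you left unchecked does go through, for two reasons:
\begin{itemize}
\item Intersections of $\alpha_i$ with $\tau(\alpha_j)$ off $C$ come in $\tau$-swapped pairs $\{x,\tau(x)\}$. Perturbing $\alpha_i$ near $x$ only makes the intersection at $x$ transverse, and transversality at $\tau(x)$ then follows by symmetry.
\item At a point $x\in\alpha_i\cap C$, the only $\beta$-curve through $x$ is $\tau(\alpha_i)$. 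There you need only that $T_x\alpha_i$ avoids the two eigenlines of $d\tau_x|_{T_x\Sigma}$, which is an open dense condition.
\end{itemize}
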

While we will primarily be interested in the balanced case of this result, we note that the result in the non-balanced setting will play a role later in the paper, for example in the proof of an adjunction inequality, Theorem~\ref{thm:adjnuctioninequalityclosed}.

\begin{proof}
By Remark~\ref{rmk:selfindexingrealmorse}, we may equip any real sutured manifold $(Y, \gamma, \tau)$ with a self-indexing real sutured Morse function. We obtain a real Heegaard splitting in the usual way: $\Sigma = f^{-1}(0)$, $U_\alpha = f^{-1}([-1,0])$, and $U_\beta = f^{-1}([0,1])$. Let $g$ be a $\tau$-invariant Riemannian metric on $Y$. Generically, $(f, g)$ is Morse-Smale, according to the techniques of~\cite{bao2025morsehomologyequivariance}. Hence, we can define $\alpha$-curves by 
    \begin{align*}
        \alpha_i = \Sigma \cap W^u(p_i)
    \end{align*}
    where $p_i$ is the $i$-th index 1 critical point of $f$ and $W^u(p_i)$ is its unstable manifold. Likewise, define $\beta$-curves by 
    \begin{align*}
        \beta_i = \Sigma \cap W^s(\tau(p_i))
    \end{align*}
    where $\tau(p_i)$ is the $i$-th index 2 critical point of $f$ and $W^s(\tau(p_i))$ is its stable manifold. The symmetry conditions on $f$ and $g$ imply that $\beta_i = \tau(\alpha_i)$. This data specifies a real (balanced) sutured Heegaard diagram for $(Y, \gamma, \tau)$.
\end{proof}

\begin{figure}[h]
\def\svgwidth{.5\linewidth}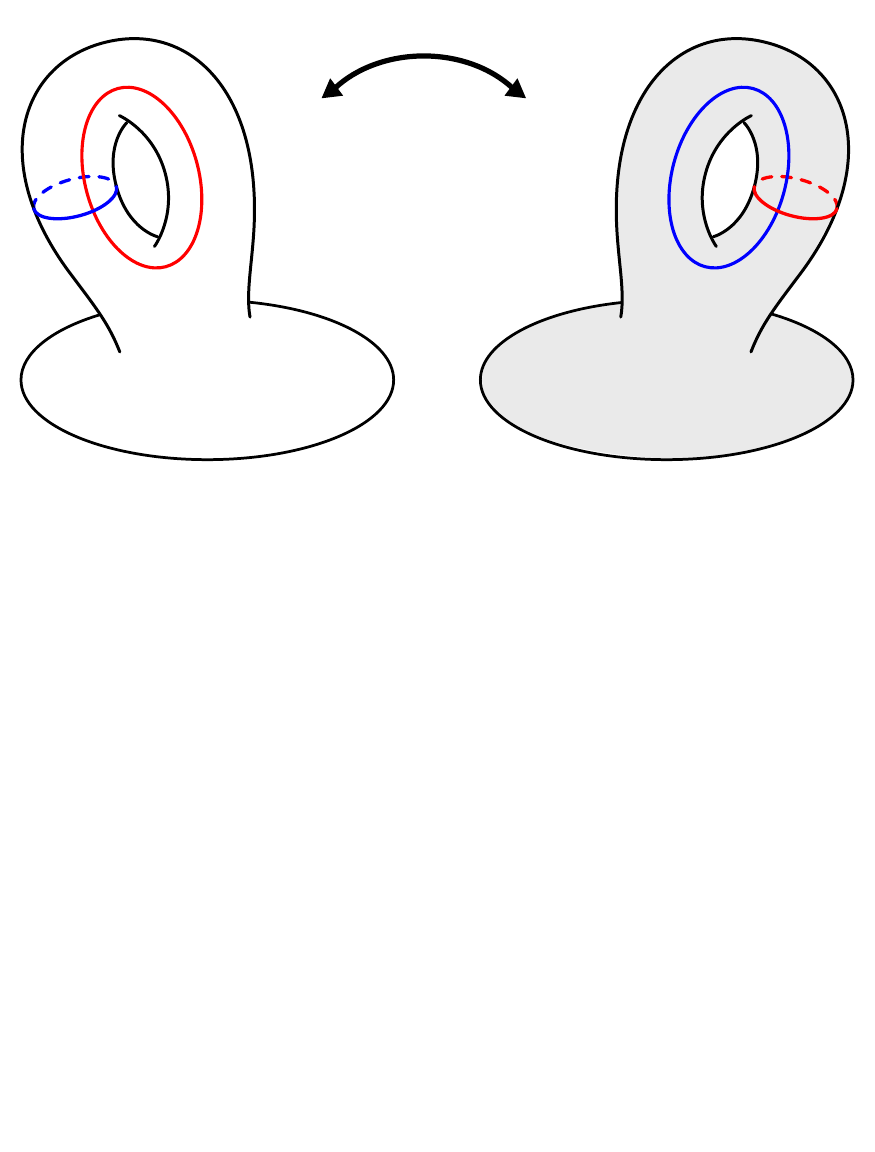
\caption{Top: The result of a free stabilization. Bottom: The result of a fixed point stabilization.}
    \label{fig:stabilizations}
\end{figure}

\begin{definition}
Let $(\cH_0, \tau_0 )= (\Sigma_0, \bm \alpha_0,\bm \beta_0, \tau_0 )$ and $(\cH_1, \tau_1) = (\Sigma_1, \bm \alpha_1,\bm \beta_1, \tau_1)$ be two real (balanced) sutured Heegaard diagrams for $(Y, \gamma, \tau)$. Then we say that:

 \begin{enumerate}
        \item $(\cH_1, \tau_1)$ is obtained from $(\cH_0, \tau_0 )$ by a \emph{real diffeomorphism} if there exists an orientation preserving diffeomorphism $\varphi:Y \ra Y$ isotopic to the identity such that $\varphi \circ\tau = \tau\circ \varphi$, $\Sigma_0$ is taken to $\Sigma_1$, and the curves in $\bm \alpha_0$ are mapped to those curves in $\bm\alpha_1$. 
        \item $(\cH_1, \tau_1)$ is obtained from $(\cH_0, \tau_0 )$ by a \emph{real isotopy} (of attaching curves) if $\Sigma_0 = \Sigma_1$, $\tau_0 = \tau_1$, and the curves of $\bm \alpha_0$ are isotopic to the curves of $\bm\alpha_1$; symmetrically, the curves of $\bm \beta_0$ are isotopic to the curves of $\bm\beta_1$.
        \item  $(\cH_1, \tau_1)$ is obtained from $(\cH_0, \tau_0)$ by a \emph{$\Z/2$-stabilization} if:
        \begin{itemize}
            \item there is a punctured torus $T\subset \Sigma_1$ which is disjoint from $\tau_1(T)$ which contains a single pair of curves $\alpha_0$ and $\beta_0$ which meet in a single point;
            \item there is a disk $D \subset \Sigma_0$ disjoint from $\tau(D)$ containing no $\alpha$ or $\beta$-curves;
            \item $\Sigma_0\smallsetminus (D \cup \tau_0(D)) = \Sigma_1 \smallsetminus (T \cup \tau_1(T))$,
            \item $\bm \alpha_1 = \{\alpha_0, \tau_1(\beta_0)\} \cup \bm \alpha $,
            \item $\bm \beta_1 = \{\beta_0, \tau_1(\alpha_0)\} \cup \bm \beta$.
        \end{itemize}
    \noindent    See the top diagram in \Cref{fig:stabilizations}.  
        \item $(\cH_1, \tau_1)$ is obtained from $(\cH_0, \tau_0)$ by a \emph{$\{1\}$-stabilization} if
        \begin{itemize}
            \item there is a punctured torus $T\subset \Sigma_1$ which is fixed by $\t_1$, has fixed point set an arc, and contains a single pair of curves $\alpha_0$ and $\beta_0$ which meet in a single point;
            \item there is a disk $D \subset \Sigma_0$ sent to itself by $\t$ containing no $\alpha$ or $\beta$-curves;
            \item $\Sigma_0\smallsetminus D = \Sigma_1 \smallsetminus T$,
            \item $\bm \alpha_1 = \{\alpha_0\} \cup \bm \alpha_0 $,
            \item $\bm \beta_1 = \{\beta_0\} \cup \bm \beta_0$.
        \end{itemize}
        See the bottom diagram in \Cref{fig:stabilizations}. 

        \item $(\cH_1, \tau_1)$ is obtained from $(\cH_0, \tau_0 )$ by a \emph{real handleslide} if $\Sigma_0 = \Sigma_1$, $\tau_0 = \tau_1$, and $(\alpha_i)_0 = (\alpha_i)_1$ for $i > 1$ and $(\alpha_1)_1$ is isotopic to the curve obtained by handlesliding $(\alpha_1)_0$ over $(\alpha_2)_0$ and, symmetrically for the $\beta$-curves.
    \end{enumerate}
\end{definition}

We refer to these moves as \emph{real Heegaard moves}.

\begin{remark}
    Before proceeding, we make several remarks about our definition. First, we have chosen to work with embedded Heegaard diagrams, as in \cite{JTZ2021}. For this reason, we only include real diffeomorphisms isotopic to the identity in our collection of real Heegaard moves. Second, as in the case of closed real manifolds in~\cite{guth2025real}, there are two distinct $\{1\}$-stabilizations of real sutured Heegaard diagrams. A real Heegaard splitting induces a framing for $C$; one type of $\{1\}$-stabilization increases the framing by one, the other decreases it by one. For simplicity, we will usually refer to either move as a $\{1\}$-stabilization.
\end{remark}

\begin{proposition}\label{prop:relationsbetweenHDs}
Any two real sutured diagrams for a real balanced sutured manifold are related by a sequence of real Heegaard moves.
\end{proposition}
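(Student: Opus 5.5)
The plan is to mimic the standard Cerf-theoretic proof of Juh\'asz (and of Guth--Manolescu in the closed real setting), working throughout in the space of real functions $C^\infty_{\Z/2}(Y,\R^-)$ with $\tau$-invariant metrics. Given two real sutured diagrams $\cH_0,\cH_1$ for $(Y,\gamma,\tau)$, we first realize each one by a self-indexing real sutured Morse function $f_i$ and an invariant metric $g_i$, as in the proof of Proposition~\ref{prop:existence of a real balanced sutured diagram}. If a diagram has extra index $0$/$3$ critical points, we cancel them in pairs as in the discussion following Lemma~\ref{lem:realmorseffunctionsexist}. We may assume both functions agree near $\partial Y$, since the space of real functions with the prescribed boundary behaviour is convex.

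Next we connect $f_0$ to $f_1$ by a generic path $f_t$ of real sutured functions. The key point is that the condition $f\circ\tau=-f$ pairs each critical point $p$ of index $k$ at level $c$ with the critical point $\tau(p)$ of index $3-k$ at level $-c$. No critical point lies on $f^{-1}(0)$ off the fixed set, and by the equivariant Morse theory of~\cite{bao2025morsehomologyequivariance} there are no critical points on $C$, since $f$ is odd in the normal direction. Hence the codimension-one events of a generic real path are symmetric pairs: births and deaths of index $1$/$2$ pairs, which occur together with their $\tau$-images, and handleslides among the index-$1$ gradient flows, which occur together with their mirrored index-$2$ handleslides. There is one new event: a birth of a cancelling $1$/$2$ pair whose two critical points are exchanged by $\tau$. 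Such a pair must be born at level $0$ at a point of $C\cap\Sigma$.

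We then translate each event into a diagram move:
\begin{itemize}
\item A generic symmetric birth of a $1$/$2$ pair away from $C$, together with its mirror, gives a $\Z/2$-stabilization.
\item A symmetric birth on $C$ gives a $\{1\}$-stabilization, of either framing sign.
\item A pair of mirrored handleslides gives a real handleslide.
\item Variations of the metric and of the function without events give real isotopies and real diffeomorphisms isotopic to the identity.
\end{itemize}
We follow Juh\'asz's argument to make the path self-indexing, keeping index-$1$ critical values below $0$; this is automatic by symmetry once we arrange it for the negative half. Because we work with embedded diagrams, as in~\cite{JTZ2021}, the final step of identifying the two diagrams only needs real isotopies and diffeomorphisms isotopic to the identity.

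The main obstacle is the genericity statement: we need equivariant Cerf theory for the odd-function space, which ensures that only the listed codimension-one singularities occur and that the degenerate births on $C$ are the only non-free phenomenon. I would first try to reduce this to a local normal-form computation near $C$. In coordinates $(y,x_1,x_2)$ with $\tau(y,x)=(y,-x)$, an odd function has the form $x_2 a(y)+x_1 b(y)+O(|x|^3)$, and a generic one-parameter family crosses $a=b=0$ transversally at finitely many points. This is exactly where the two framing-changing $\{1\}$-stabilizations arise. Away from $C$, transversality follows by working on the free quotient, as in~\cite{guth2025real}.
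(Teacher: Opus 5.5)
Your proposal is correct and follows essentially the same route as the paper: a generic path of real sutured functions whose codimension-one degenerations are either births in free orbits (giving $\Z/2$-stabilizations) or degenerate critical points on $C$ (giving $\{1\}$-stabilizations), with handleslides and isotopies occurring in $\tau$-mirrored pairs. The differences are minor. The paper gets the handleslides and isotopies from the non-equivariant handlebody result applied to the $\alpha$-curves and transported by $\tau$, and it cites the equivariant normal form $x^2-y^2+z^3$ with $\tau(x,y,z)=(y,x,-z)$. You instead read handleslides off the gradient-flow degenerations and use a transverse-zero analysis of the normal derivative $(a,b)$ along $C$, which has the side benefit of explaining the two framing signs of the $\{1\}$-stabilization.
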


This is proven carefully in \cite{GM_real_naturality}. For the sake of completeness we provide a sketch.
\begin{proof}[Proof Sketch] Let $(Y,\gamma,\tau)$ be a balanced real sutured manifold. It suffices to show that any two real splittings for $(Y, \gamma, \tau)$ can be related by a sequence of stabilizations and diffeomorphisms. If the splittings can be related, it suffices to show there is a sequence of isotopies and handleslides relating the two sets of $\alpha$-curves. This fact follows from the non-equivariant case. But, since the $\alpha$ and $\beta$-curves transform simultaneously, in relating the two sets of $\alpha$-curves, we simultaneously relate the $\beta$-curves as well. 

Real sutured Morse functions are generic in $C_{\Z/2}^\infty(Y, \R^-)$. This can be seen as follows. Our argument follows \cite{bao2025morsehomologyequivariance}. Cover $Y$ by small equivariant balls. On a ball $B$ containing an arc of $C$, for a generic real linear function $\ell$, the sum $f|_{B} + \ell$ will be real Morse; using a partition of unity we may then construct a real sutured function $g$ which is close to $f$ and Morse on $C$. Away from $C$, by standard Morse theory, we may modify $g|_B$ to be Morse on a small ball $B$, and perform the symmetric perturbation on $\tau(B)$. This produces a real sutured Morse function on $Y$ which is arbitrarily close to $f$.

We consider a subset $\cF_1$ of real sutured functions $f$ which have a single critical orbit which fails the Morse condition in the mildest manner, i.e., consists of real sutured functions with a single $\Z/2$-codimension-1 singularity. According to \cite[Chapter VI]{golubitsky1985singularities} or \cite[Section 3]{wall_equivariant_jets}, near a critical point $p \in C$, there are local coordinates near which
\begin{align*}
    f(x, y, z) = x^2 - y^2 +z^3
\end{align*}
and $\tau(x, y, z) = (y, x, -z)$; near a critical point $p \in Y \smallsetminus C$ appearing in a $\Z/2$-orbit, classical singularity theory tells that there are local coordinates such that
\begin{align*}
    f(x, y, z) = \pm x^2 \pm y^2 \pm z^3.
\end{align*}
The space $\cF_1$ is a smooth codimension 1 submanifold of $C^\infty_{\Z/2}(Y, \R^-)$ (again, compare to \cite[Section 3]{wall_equivariant_jets}). Therefore, any two real sutured Morse functions can be connected by a path intersecting this subset in finitely many points. The first class of degenerate critical points corresponds to $\{1\}$-stabilizations and the second corresponds to $\Z/2$-stabilizations. \end{proof}

\subsection{Real relative $\SpinC$ structures}\label{subsec:Real relativeSpinC-structures}
We adapt~\cite[Section 3.6]{guth2025real}. Given a vector field $v$, let $\overline{v}$ denote its reverse.

\begin{definition}
A \emph{real vector field} is a vector field $v$ such that $\tau^* v=\overline{v}$.
\end{definition}

Fix a real vector field $v_0$ on $\partial Y$ that points into $Y$ on $R_-(\gamma)$, out of $Y$ on $R_+(\gamma)$ and is a gradient vector field of $s(\gamma)\times I \to I$ along $\gamma$ for some choice of Riemannian metric on $Y$, and identification $s(\gamma)\times[-1,1]\cong\gamma $. It is clear that such a vector field exists. Indeed, the space of such vector fields is contractible since any convex combination of a finite family of such vector fields yields another.

\begin{definition}
Two non-vanishing extensions of $v_0$, $v$ and $v'$, are \emph{real homologous relative to $v_0$} if they are homotopic through nowhere vanishing real vector fields that extend $v_0$ in the complement of a finite collection of invariant balls and pairs of balls switched by $\tau$ in the interior of $Y$. We define a \emph{relative real $\SpinC$ structure} (\emph{real Euler structure}) to be a real homology class of non-vanishing real vector fields on $Y$. For the sake of brevity, we will suppress the word ``relative" henceforth. We will use $\s^R$ to denote a real $\SpinC$ structure and $\rspinc(Y,\gamma,\tau)$ to denote the set of real $\SpinC$ structures on $(Y,\gamma,\tau)$. 
\end{definition}

There is a natural map $\rspinc(Y,\gamma,\tau)\to\SpinC(Y,\gamma)$. Under a mild assumption (namely, that $(Y,\gamma,\tau)$ is \emph{strongly balanced}; i.e. $\chi(R_+(\gamma)\cap V)=\chi(R_-(\gamma)\cap V)$ for every component $V$ of $\partial Y$) on $(Y,\gamma)$, this allows us to define the first Chern class of a real $\SpinC$ structure relative to a \emph{trivialization} $t$ of $v_0^{\perp}$. 
The existence of such a trivialization is proven in~\cite[Proposition~3.4]{juhasz2008floer}.
\begin{definition}
Let $(Y,\tau,\gamma)$ be a strongly balanced real sutured manifold. Fix a trivialization $t$ of $v_0^{\perp}|_{\partial Y}$. We define the \emph{first Chern class} of a real $\SpinC$ structure $\s^R$ by\[c_1(\s^R,t)=c_1(\left \langle v \right \rangle^{\perp},t)\in H^2(Y,\partial Y;\mathbb{Z}).\] 
\end{definition}

\begin{remark}
$R_+$ and $R_-$ are diffeomorphic --- via $\tau$ --- which is stronger than ${\chi(R_+)=\chi(R_-)}$. When distinct boundary components of $Y$ are interchanged by $\tau$, $(Y,\gamma)$ might not be strongly balanced. However, every balanced sutured manifold can be made into a strongly balanced one by gluing in product 1-handles by~\cite[Remark 3.6]{juhasz2010sutured}. This remains true in the real case. As we shall see in Section~\ref{sec:decomposition}, gluing in product 1-handles has no effect on the rank of real sutured Floer homology. 
\end{remark}

\begin{proposition}[{\cite[Lemma 3.5, Lemma 3.12]{li2022monopolefloerhomologyreal}}]\label{prop:classification of real spinc structure}
Let $(Y,\gamma,\tau)$ be a real sutured manifold with $\mathrm{fix}(\tau)\ne \emptyset$ and $\s$ be a $\SpinC$ structure on it. Then $\s$ admits a real structure if and only if $\Theta(c_{1}(\s))=0$, where $\Theta: H^2(Y;\partial Y;\Z)\to H^2(Y/\tau, \partial Y/\tau;\Z)$ is induced by a chain-level map that measures the anti-invariance of cochains, see \cite[Section 3.1]{li2022monopolefloerhomologyreal} for its precise definition. When $(Y,\gamma,\tau)$ admits a real $\SpinC$ structure, there is a non-canonical isomorphism between the set of real $\SpinC$ structures on $(Y,\gamma,\tau)$ and \[\mathrm{ker}(\Theta)\times \frac{H^1(Y,\partial Y;\Z)^{\tau^*}}{\mathrm{im}(1+\tau^*:H^1(Y,\partial Y;\Z) \to H^1(Y,\partial Y;\Z)^{\tau^*})}.\]

\end{proposition}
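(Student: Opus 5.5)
We will follow Li's argument for closed real 3-manifolds, adapted to the relative setting. The statement has two parts: an existence criterion, and a description of the set of lifts as a torsor. Throughout, $v_0$ is the fixed real boundary vector field, and $\rspinc(Y,\gamma,\tau)$ consists of real homology classes of real non-vanishing extensions of $v_0$.

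\emph{Existence.} A relative $\SpinC$ structure $\s$ is a homology class of extensions $v$ of $v_0$. The question is whether some representative can be homotoped, rel boundary, to a real vector field, i.e.\ one with $\tau^*v=\overline v$.
\begin{enumerate}
\item Choose a real relative cellulation of $(Y,\partial Y)$ in which $C=\mathrm{fix}(\tau)$ is a subcomplex. It descends to a cellulation of $(Y/\tau,\partial Y/\tau)$.
\item Near $C$, use the local model from the proof of Lemma~\ref{lem:realmorseffunctionsexist}. The gradient of $(y,(x_1,x_2))\mapsto ax_2$ is real and non-vanishing, which gives a real non-vanishing field on $\nu(C)$ compatible with $v_0$. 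This is where $C\neq\emptyset$ is used: it fixes a base real structure.
\item Extend over the equivariant cells of $Y\setminus\nu(C)$. Pairs of cells swapped by $\tau$ can be handled by extending on one cell and transporting by $v\mapsto -\tau_*v$. Since $\tau$ acts freely away from $C$, this is the same as extending a section of an associated $S^2$-bundle over $(Y\setminus \nu C)/\tau$.
\item The 0- and 1-cells give no obstruction because $\pi_0(S^2)=\pi_1(S^2)=0$. The primary obstruction lives on the 2-skeleton in $H^2(Y/\tau,\partial Y/\tau;\Z)$, with twisted coefficients coming from the twisting of the $S^2$-bundle.
\item Show this obstruction equals $\Theta(c_1(\s))$. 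Upstairs it should agree with the difference cochain of $v$ and $\overline{\tau^*v}$, which computes $c_1(\s)-\tau^*c_1(\s)$-type anti-invariance. This is exactly what Li's chain-level map $\Theta$ measures.
\item The remaining obstruction, in degree 3, vanishes after allowing modification inside invariant balls and swapped pairs of balls, which is how real homology is defined.
\end{enumerate}

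\emph{Classification of lifts.} Fix a real structure $\s^R_0$ whose underlying class is $\s_0$. The underlying $\SpinC$ structures admitting real lifts form a torsor over $\ker\Theta$, via $c_1$ and the standard $H^2$-action. For two real structures over the same $\s$, the difference is a real homotopy class of real fields relative to $v_0$, which should be measured by $H^1$.
\begin{enumerate}
\item Compare the two real fields on the 1-skeleton, cell by cell. The difference class is an equivariant element of $H^1(Y,\partial Y;\Z)$; the sign conventions make it $\tau^*$-invariant.
\item Show that changing the real homotopy by an arbitrary non-equivariant modification $a$ on one cell, together with its mirror, changes the invariant by $a+\tau^*a$. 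Hence the invariant is well defined only modulo $\mathrm{im}(1+\tau^*)$.
\item Argue that nothing further is lost, so the lifts form a torsor over $H^1(Y,\partial Y;\Z)^{\tau^*}/\mathrm{im}(1+\tau^*)$.
\end{enumerate}
Choosing a splitting of the extension gives the non-canonical product isomorphism.

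\textbf{Main obstacle.} The hardest step is identifying the equivariant primary obstruction with $\Theta(c_1(\s))$. This needs careful local analysis near $C$, where $\tau$ has fixed points and the quotient is only an orbifold. It also needs the boundary condition, since $\tau$ fixes finitely many points of $s(\gamma)$. I would handle this by applying Li's closed-case computation \cite{li2022monopolefloerhomologyreal} on the double $DY$ along $\partial Y$. Using the reflection of $v_0$ there, one can pass relative classes to absolute ones on $DY$ via excision, and the relative statement follows by restriction. I expect naturality of $\Theta$ under this doubling to be the delicate point.
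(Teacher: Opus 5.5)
The paper gives no proof of this statement. It imports Li's Lemmas~3.5 and~3.12 from the closed case and takes the relative version for granted, so your sketch has to stand on its own, and as written it does not.

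\textbf{Existence part.} The fixed set is mishandled.
\begin{itemize}
\item At $p\in C$, $d\tau$ is rotation by $\pi$ about $T_pC$, so a real field must lie in its $(-1)$-eigenspace. Over $C$ the fibre of your section problem is therefore the unit normal \emph{circle}, not $S^2$. The framing of $C$ you fix in Step~2 is genuine data: its parity is an invariant of $\s^R$ by Lemma~\ref{lem:mod2framingdefined}.
\item Which framings extend is a global question. For the punctured $(S^3,\text{rotation})$ of Example~\ref{ex:closedaspunctured}, $H^1(Y,\partial Y)=H^2(Y,\partial Y)=0$. The formula you are proving then allows at most one real structure, hence only one parity of relative framing of the fixed arc. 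Yet your Steps~2--6 apply verbatim to either parity.
\item The error is in Step~6. The clause about balls in the definition of real homology only identifies fields that already exist; it does not produce nowhere-vanishing representatives. The top obstruction has coefficients twisted by $v\mapsto-\tau_*v$, which has degree $-1$ on the fibre sphere. It is a genuine $\Z/2$-valued class that depends on the framing chosen along $C$, and adjusting that framing is where $C\neq\emptyset$ should enter.
\item Your ``primary obstruction in $H^2$'' is really a difference class. The difference of $v$ and $\overline{\tau^*v}$ only detects $\overline{\tau^*\s}=\s$, which is anti-invariance in cohomology. $\Theta$ is the finer obstruction to achieving that anti-invariance at the cochain level, and you only assert that it ``should agree''. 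That identification is the content of the existence statement.
\end{itemize}

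\textbf{Classification part.} The mechanism is misidentified.
\begin{itemize}
\item Off $C$, any two real fields are homotopic over the $1$-skeleton because the fibre is simply connected. So there is no $H^1(Y,\partial Y;\Z)$-valued cochain to compare ``cell by cell''.
\item The factor $H^1(Y,\partial Y;\Z)^{\tau^*}/\mathrm{im}(1+\tau^*)$ is a degree-two effect. It is the term $E_2^{1,1}=H^1(\Z/2;H^1(Y,\partial Y;\Z)\otimes\Z_-)$ in the spectral sequence for the sign-twisted group $H^2_{\Z/2}(Y,\partial Y;\Z_-)$ (isomorphism classes of Real line bundles), the group one expects to act simply transitively on real structures.
\item That term is the kernel of the forgetful map to $H^2(Y,\partial Y;\Z)$, and the image $E_\infty^{0,2}$ is what $\ker\Theta$ should be. This extension is what yields a non-canonical product. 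You need to set it up and account for the framings along $C$.
\item Since $c_1(\s+a)=c_1(\s)+2a$, ``torsor over $\ker\Theta$'' and the criterion $\Theta(c_1(\s))=0$ agree only modulo $2$-torsion in the target of $\Theta$. You must say which class $\Theta$ is applied to.
\end{itemize}

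\textbf{Fallback via the double.} This fails as stated. $v_0$ is normal to $\partial Y$ on $R(\gamma)$ but tangent to it along $\gamma$. With $r$ the reflection of $DY$, neither $r_*v_0$ nor $-r_*v_0$ matches $v_0$ along all of $\partial Y$. Even with a suitable closure, you would still need to identify which real structures upstairs come from $Y$ and match their classification with the relative one. ``Follows by restriction'' does not do this.
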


Note that the proposition above applies only in the case that $\mathrm{fix}(\tau)\ne \emptyset$. When $\mathrm{fix}(\tau)=\emptyset$, a sufficient condition for the existence of real $\SpinC$ structures, as well as a partial classification thereof, is given in~\cite[Section 2.1]{miyazawa2025satelliteformularealseibergwitten} using techniques from \cite{Nakamura13} and \cite{Nakamura15}.

\subsubsection{Fixed point set framings}\label{subsec:fixedsetframings}
Let $v$ be a non-vanishing real vector field on a component of the fixed point set $C_i$. Observe that $v_p\not\in TC_i|_p$ for any point $p\in C_i$. Consequently, $v$ defines a framing of $C_i$ (or equivalently a trivialization of the normal bundle in the presence of a metric). For two such non-vanishing real vector field $v$ and $w$, we write $[v]_{C_i}=[w]_{C_i}$ if these framings differ by an even integer. If $\s^R$ is a real $\SpinC$ structure we define $[\s^R]_{C_i}:=[v]_{C_i}$, where $v$ is any non-vanishing real vector field representing $\s^R$. This is conceptually similar to (a relative version of) the first Stiefel-Whitney class of the vector bundle over $\text{span}(v)$ over $C_i$. (See \cite{li2026multiframedrealmonopolefloer}) We need to check that this is well-defined.

\begin{lemma}\label{lem:mod2framingdefined}
    For each connected component of the fixed point set $C_i$, $[\s^R]_{C_i}$ is independent of the choice of representative in its definition.
\end{lemma}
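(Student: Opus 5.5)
Fix a component $C_i$ of the fixed point set. Let $v$ and $w$ be non-vanishing real vector fields representing the same real $\SpinC$ structure $\s^R$. By definition they are homotopic through non-vanishing real vector fields extending $v_0$ away from a finite collection of balls $B_1,\dots,B_k$ in the interior of $Y$. Each ball is either $\tau$-invariant or one of a pair exchanged by $\tau$. The plan is to show that the framing of $C_i$ induced by $v$ changes only by even integers as we pass from $v$ to $w$. We do this in two stages: first along the homotopy, then across the balls.

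\textbf{Invariance under homotopy.} Along $C_i$ the differential $d\tau$ acts by $+1$ on $TC_i$ and by $-1$ on the normal bundle $N C_i$. A real vector field satisfies $d\tau(v_p)=-v_p$ at $p\in C_i$, so $v|_{C_i}$ is a nowhere-zero section of $NC_i$. A homotopy through non-vanishing real vector fields therefore gives a homotopy of nowhere-zero normal sections, and hence does not change the framing at all. For an arc component, the endpoints lie in $s(\gamma)\subset\partial Y$, where $v=v_0$ is fixed, so the relative framing is also well defined. The only possible change thus comes from the modifications inside the balls.

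\textbf{Changes inside the balls.} Pairs of balls swapped by $\tau$ can be shrunk to avoid $C$, since a free orbit of balls can be chosen disjoint from the fixed set. Likewise, an invariant ball not meeting $C_i$ is irrelevant. The key case is an invariant ball $B$ meeting $C_i$. By the equivariant ball model (rotation by $\pi$ about an axis, cf.~\Cref{ex:closedaspunctured}), $B\cap C_i$ is a single diameter arc $a$. So $v$ and $w$ agree on $\partial B$ and differ on $a$. We must show that their normal sections over $a$, relative to the common boundary values, differ by an even number of twists.

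\textbf{The key computation.} Glue $v|_B$ and $w|_B$ along $\partial B$ to obtain a real non-vanishing vector field on $S^3$ with $\tau$ the rotation about an unknotted circle $\tilde a$. Its framing of $\tilde a$ is the difference of the two framings. Non-vanishing vector fields on $S^3$ correspond to maps $S^3\to S^2$. The real condition forces equivariance with respect to the rotation on $S^3$ and the antipodal-type involution $x\mapsto -x$ restricted to the relevant directions. The framing number is then read off from the restriction to $\tilde a$, a degree of a map $S^1\to S^1$.

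The main obstacle is proving that this degree is always even. I would first attempt a direct obstruction-theoretic argument. Pass to the quotient $S^3/\tau\cong S^3$, where the vector field descends to a section of a bundle twisted along the branch locus. Compute the parity via the relative Stiefel--Whitney/$\mathrm{Pin}$ class of $\mathrm{span}(v)$ over $C_i$, as suggested by the reference to~\cite{li2026multiframedrealmonopolefloer}.

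If that becomes messy, the fallback is an explicit model. In coordinates $D^2\times[-1,1]$ with $\tau(x_1,x_2,t)=(-x_1,-x_2,t)$, show that any real deformation rel boundary can be connected by real homotopy to one that twists the normal section by a full turn of the disk. Such a twist, combined with the $\pi$-rotation symmetry, necessarily changes the framing by $\pm 2$.

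Combining the two stages gives $[v]_{C_i}=[w]_{C_i}$.
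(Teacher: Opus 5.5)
\textbf{There is a genuine gap: the evenness of the framing change inside an invariant ball is never proved.}

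Your reduction is the same as the paper's. Pairs of balls swapped by $\tau$ automatically miss $C$. So everything comes down to an invariant ball $B$ meeting $C_i$ in a diameter. There you must show that two real fields agreeing on $\partial B$ induce framings of $B\cap C_i$ (rel endpoints) that differ by an even number. (Minor point: the homotopy in the definition lives only on the complement of the balls. Before saying it ``does not change the framing,'' you should extend it over the balls by equivariant homotopy extension.)

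That evenness is the entire content of the lemma, and you leave it as ``the main obstacle'' with two strategies you do not carry out. The fallback simply asserts that a twist compatible with the $\pi$-rotation changes the framing by $\pm 2$, which is the claim to be proved. Also, the target involution is $\sigma=-d\tau$. This is a reflection of $S^2$ whose fixed circle $E$ consists of the directions normal to the axis, not an antipodal map. That fact is exactly what the parity argument needs.

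\textbf{The missing idea is a mod $2$ degree count on an invariant surface through the fixed arc.}
\begin{itemize}
\item Regard a real field on $B$ as a map $f\colon B\to S^2$ with $f\circ\tau=\sigma\circ f$, so $f(B\cap C_i)\subset E$.
\item Let $B^u$ be a half-ball with $\partial B^u=(\partial B\cap B^u)\cup D$. Here $D$ is an invariant disk containing $B\cap C_i$, which splits it into halves $D^\pm$ swapped by $\tau$. The map $f$ extends over $B^u$ only if $\deg f|_{\partial B^u}=0$, and the hemisphere $\partial B\cap B^u$ contributes a fixed amount.
\item Altering $f$ on the interior of $D^+$ by a class $k\in\pi_2(S^2)$ forces the alteration $\sigma\circ(\cdot)\circ\tau$ on $D^-$. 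That alteration also has degree $k$, because $\tau|_D$ and $\sigma$ both reverse orientation. So the total degree changes by $2k$.
\item Changing $f$ along the axis by one full turn in $E$ changes the degree by an odd number. The difference is a disk whose boundary loop winds once around $E$, glued to its reflected copy. Counting preimages of $p$ and $\sigma(p)$ shows such a sphere has degree congruent to the winding mod $2$.
\end{itemize}
Hence the parity of the twisting along $B\cap C_i$ is determined by $f|_{\partial B}$, which is the lemma; this is the paper's argument.

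In your doubled $S^3$ picture the same argument reads as follows. The fixed unknot $\tilde a$ bounds a disk $\Delta$ such that $\Delta\cup\tau(\Delta)$ is an invariant sphere bounding a ball $U$, with $\tau(U)$ the complementary ball. Since $F$ extends over $U$, we get $0=\deg F|_{\Delta\cup\tau(\Delta)}\equiv\deg(F|_{\tilde a})\pmod 2$.
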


\begin{proof}
    We need to verify that $[\s^R]_{C_i}$ is unchanged under modifying a representative $v_\s^R$ of $\s^R$ in a neighborhood of a pair of $3$-balls interchanged by the action of $\tau$, or a single $3$-ball fixed by the action of $\tau$. The former case is obvious since $v_\s^R|_{C_i}$ remains unchanged. 

    We consider the latter case. Let $B$ be a $3$-ball fixed setwise by $\tau$. Observe that $v_\s^R|_p$ is necessarily not contained in $TC_i|_p$ for any point $p\in C_i$. It thus suffices to show that if we modify $v_\s^R$ in a real way in the interior of ${C_i\cap B}$, we must change the relative framing by an even number; i.e., that there is no way to extend a real vector field $w$ on $\partial B\cup (B\cap C)$ which agrees with $v_\s^R$ on $\partial B$ and differs by an odd number of twists on $B\cap C$ over the ball $B$.

   To see this, consider the upper half of the real ball in a standard local model; $B^u:=\{\bm{x}\in \R^3:|\bm{x}|\leq 1,z\geq 0\}$ in $\{\bm{x}\in \R^3:|\bm{x}|\leq 1\}$ under rotation about the $x$-axis. The boundary of $B^u$ consists of $\partial B\cap B^u$, along with a disk $D=\{\bm{x}\in \R^3:|\bm{x}|\leq 1,z= 0\}$.  Let $w$ be a real vector field with an odd number of twists on $B\cap C_i$. Observe that up to isotopy there are two choices of real extension of $w\cup v_\s^R|_{\partial D}$ over $D$. Let $w'$ denote either of these two extensions, a mild abuse of notation. Observe that the obstruction to extending $w'\cup v_\s^R|_{\partial B\cap B^u}$ over $B^u$ is an element of $\pi_2(S^2)\cong H_2(S^2)\cong\Z$. Since $v_\s^R|_{\partial B^u}$ extends over $B^u$ we know that $[v_\s^R|_{\partial B^u}]=0\in\Z$. The vector field $w'$ is obtained by modifying $v_\s^R$ in the interior of $D$. Since this modification is real, the only change to the mod $2$ reduction of the degree of the map arises from the change $v_\s^R|_{C_i\cap B}$ to $w|_{C_i\cap B}$. This changes the degree of the map by an odd number, so that $[v'|_{\partial B^u}]=1\in\Z/2$. Thus $w$ does not extend, as desired. \end{proof}

\section{Real Sutured Heegaard Floer Homology}\label{sec:Real Sutured Heegaard Floer Homology}

In this section, we define our invariant of real sutured manifolds, real sutured Heegaard Floer homology, prove its invariance, and discuss various gradings with which it can be equipped.

\subsection{The definition}

Fix a real sutured Heegaard diagram $$\mathcal{H}:=(\Sigma,\bm\alpha=\{\alpha_1,\alpha_2,\dots,\alpha_m\},\bm\beta=\{\beta_1,\beta_2,\dots,\beta_m\},\tau )$$ for $(Y, \gamma, \tau)$. By work of Perutz \cite{PerutzHamiltonianhandleslide}, the symmetric product $M:=\mathrm{Sym}^m(\Sigma)$ can be equipped with a symplectic form $\omega$ which, away from the diagonal, agrees with a symplectic form induced by a volume form on $\Sigma$. This symplectic manifold $(M, \omega)$ contains two Lagrangian tori ${\mathbb{T}_\alpha:= \prod_{i=1}^m\alpha_i}$, and $\mathbb{T}_\beta:=\prod_{i=1}^m\beta_i$. There is an induced involution $R$ on $M$, which we assume to be anti-symplectic with respect to $\omega$. This gives rise to a third Lagrangian submanifold; the fixed point set of $R$, which we denote $M^R$. Since the $\alpha$ and $\beta$-curves intersect transversely in $\Sigma$, the set $(\mathbb{T}_\alpha \cap \mathbb{T}_\beta)^R = \T_\alpha \cap M^R$  consists of a finite number of points.

We define the vector space $\RSFC(\mathcal{H})$ as the one freely generated by such intersection points. To define the differential on this vector space, we require a preliminary definition.

\begin{definition}
    A real sutured diagram $(\Sigma,\bm\alpha,\bm\beta,\tau )$ is \emph{(weakly) admissible} if every periodic domain has both negative and positive coefficients.

\end{definition}
\begin{remark}\label{rmk:weak admissiblity}
A real Heegaard diagram $(\Sigma,\bm\alpha,\bm\beta,\tau)$ is \emph{(weakly) admissible} if and only if the underlying Heegaard diagram $(\Sigma,\bm\alpha,\bm\beta )$ is. As in the proofs of \cite[Lemma 3.30 and 3.31]{guth2025real}, it can be shown that each real diagram is real isotopic to a real admissible diagram with an isotopy given by winding $\alpha$ and $\beta$-curves. Moreover, any two real admissible diagrams can be related by a finite sequence of real Heegaard moves with each intermediate real Heegaard diagram weakly admissible.
\end{remark}

Let $\pi_2^R(\xv,\yv)$ denote the set of homotopy classes of Whitney disks $u:\D\to \mathrm{Sym}^m(\Sigma)$ with $u(-i)=\xv$ and $u(i)=\yv$ the usual segments of whose boundary are mapped to $\T_\alpha$ and $M^R$. Here, we identify $\D$ with $\{z\in\C:|z|\leq 1\}$. Such disks naturally correspond to symmetric Whitney disks with boundary contained in $\T_\alpha$ and $\T_\beta = R(\T_\alpha)$. Let $\mu_{R}(\phi)$ denote the real Maslov index of a class $\phi$ in $\pi_2^R(\xv,\yv)$; see~\cite[Section 2.3]{guth2025real} for details. 

Given a symmetric almost complex structure $J$ on $M$, let $\mathcal{M}_R(\phi)$ be the moduli space of equivariant strips ${u:\R\times[0,1]\to M}$ in class $\phi$ satisfying Floer's equation for $J$~\cite[Equation 4]{guth2025real}. These moduli spaces are smooth manifolds for generic $J$ (see \cite[Section~2.1]{guth2025real}.) These moduli spaces carry an $\R$ action (given by translation in the domain). Let $\widehat{\mathcal{M}}_R(\phi)$ denote the quotient of $\mathcal{M}_R(\phi)$ by this action, and $\#\widehat{\mathcal{M}}_R(\phi)$ denote the mod 2 count of points in $\widehat{\mathcal{M}}_R(\phi)$, when it is zero-dimensional.
    
\begin{definition}\label{def:RSFC and RSFH}
    Let $(Y,\gamma,\tau)$ be a real balanced sutured manifold with a real admissible Heegaard diagram $\mathcal{H}=(\Sigma,\bm\alpha,\bm\beta,\tau )$.  Let $\RSFC(\mathcal{H})$ be defined as above. Define     $\partial:\RSFC(\mathcal{H}) \to \RSFC(\mathcal{H})$ by the linear extension of
    \begin{equation}\label{eq:defdif}
\mathbf{x}:=\sum_{\yv\in (\TT_\alpha\cap \TT_\beta)^R}\underset{\substack{\phi\in\pi_2^R(\mathbf{x},\mathbf{y})\\ \mu_R(\phi)=1}}{\sum}\#\widehat{\mathcal{M}}_R(\phi)\mathbf{y}.
    \end{equation} 

\end{definition}

Note that there is a forgetful map 
\begin{align*} 
\pi_2^R(\bm x, \bm y) \ra \pi_2(\bm x, \bm y),\; \phi \mapsto \Tilde{\phi},
\end{align*}
and the real Fredholm and Maslov indices of the symmetric classes are related to the unreal Fredholm and Maslov indices by the following formula:
\begin{align*}
    \mu(\Tilde{\phi}) = 2 \mu_R(\phi) + \dfrac{\sigma(\mathbb{T}_\alpha, \bm x)-\sigma(\mathbb{T}_\alpha, \bm y)}{2}.
\end{align*}
Here, $\sigma(\mathbb{T}_\alpha, \bm x)$ is the triple Maslov index of $(\mathbb{T}_\alpha, \mathbb{T}_\beta, M^R)$; see \cite[Section 2.3]{guth2025real}. In particular, the real Maslov index $\mu_R$ can be computed combinatorially following \cite{Lipshitz2005ACR}. See \Cref{sec:Nice Real Heegaard Diagrams and Combinatorial Computation} (specifically Equation \eqref{eqn:combinatorial index}) for further discussion. 

\begin{lemma}\label{lem:d^2 = 0}
    For a generic symmetric almost complex structure $J$, the pair $(\RSFC(\cH), \partial)$ is a chain complex.
\end{lemma}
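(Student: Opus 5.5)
The plan is to run the standard Gromov compactness argument for the Lagrangian pair $(\T_\alpha, M^R)$, following the closed case \cite[Section~2]{guth2025real} and the sutured adaptations of \cite{juhasz2006holomorphic}. Fix generators $\xv,\zv\in(\T_\alpha\cap\T_\beta)^R$ and a class $\psi\in\pi_2^R(\xv,\zv)$ with $\mu_R(\psi)=2$. The coefficient of $\zv$ in $\partial^2\xv$ is
\[
\sum_{\yv}\ \sum_{\phi_1\#\phi_2=\psi}\#\widehat{\mathcal{M}}_R(\phi_1)\,\#\widehat{\mathcal{M}}_R(\phi_2),
\]
summed over all $\psi$. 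For generic symmetric $J$ the space $\widehat{\mathcal{M}}_R(\psi)$ is a smooth $1$-manifold. I will show that its compactification is a compact $1$-manifold whose boundary is exactly the set of broken pairs counted above. The claim then follows because a compact $1$-manifold has an even number of boundary points.

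The first step is finiteness. By weak admissibility (\Cref{rmk:weak admissiblity}), only finitely many classes $\psi\in\pi_2^R(\xv,\zv)$ with fixed index have holomorphic representatives. These representatives have nonnegative domains, so they admit an a priori energy bound. This follows exactly as in the unreal setting, via the doubled class $\tilde\psi$ and positivity of domains.

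The sutured setting also needs a maximum principle. The Heegaard surface has boundary, so we work in $\mathrm{Sym}^m(\Sigma)$ with $\Sigma$ non-compact or with cylindrical ends. Following Juh\'asz, we use the fact that $\partial\Sigma$ meets no attaching curves and choose $J$ standard near the boundary. Holomorphic curves then cannot escape to $\partial\Sigma$ by the maximum principle, and equivariance is compatible with this, since $\tau$ preserves $\partial\Sigma$.

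The next step is to list the possible degenerations of a sequence in $\widehat{\mathcal{M}}_R(\psi)$:
\begin{itemize}
\item strip breaking into $\phi_1\#\phi_2$ with $\mu_R(\phi_i)=1$;
\item disk bubbles with boundary on $\T_\alpha$;
\item disk bubbles with boundary on $M^R$;
\item sphere bubbles.
\end{itemize}
Sphere bubbles and $\T_\alpha$-disk bubbles are handled as in the closed real case. The former are excluded by the standard index argument: they carry a multiple of $[S]$, and their domains would have to cover all of $\Sigma$, including components meeting $\partial\Sigma$, which have multiplicity $0$ for domains in the sutured setting. The latter are excluded because $\pi_2(\mathrm{Sym}^m(\Sigma),\T_\alpha)$ classes with nonnegative domain avoiding $\partial\Sigma$ are trivial.

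Bubbles on $M^R$ are the new feature. Equivalently, after doubling, they appear as symmetric bubbles on $\T_\alpha$ or $\T_\beta$ in the symmetric picture. I will rule them out the same way: their domains are $\tau$-symmetric periodic-type domains with boundary on $\bm\alpha\cup\bm\beta$. Every region of $\Sigma$ setminus the curves meets $\partial\Sigma$, because the diagram is balanced and the domain has multiplicity zero near the boundary. Hence such a domain vanishes, and so does the bubble.

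The main obstacle is transversality and the index bookkeeping for broken configurations in the equivariant setting. Ordinarily, generic $J$ need not achieve transversality for symmetric curves that have non-symmetric components. Here the curves themselves are maps of strips with boundary on $\T_\alpha$ and $M^R$, so symmetry is built into the boundary condition. Standard Floer theory for a pair of Lagrangians then gives generic transversality for time-dependent symmetric $J$, as cited from \cite[Section~2.1]{guth2025real}.

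Finally, gluing shows that every broken pair of index-one classes arises as an end of $\widehat{\mathcal{M}}_R(\psi)$, with exactly one end per pair. The real Maslov index is additive under concatenation, so $\mu_R(\phi_1)+\mu_R(\phi_2)=2$, and only the $(1,1)$ splittings contribute. Index-$0$ nonconstant strips do not exist generically because of the $\R$-action. This completes the plan for $\partial^2=0$.
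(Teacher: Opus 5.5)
Your overall strategy is the paper's. Its proof says only that $\partial^2=0$ follows from counting ends of the compactified one-dimensional moduli spaces, with bubbling ruled out because curves must have multiplicity zero on elementary domains meeting $\partial\Sigma$. Your treatment of strip breaking, sphere bubbles and $\mathbb{T}_\alpha$-bubbles (via every component of $\Sigma\setminus\bm\alpha$ meeting $\partial\Sigma$) is fine.

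However, the step you single out as new, excluding disk bubbles with boundary on $M^R$, does not work as written. There are three problems.
\begin{enumerate}
\item Doubling across $M^R$ does not turn such a bubble into a bubble on $\mathbb{T}_\alpha$ or $\mathbb{T}_\beta$. In the doubled strip the $M^R$-boundary becomes the interior symmetry line. So an $M^R$-bubble $v$ doubles to a symmetric holomorphic \emph{sphere} $v\cup(R\circ v\circ c)$, where $c$ is conjugation on the disk.
\item The domain of $v$ does not have boundary on $\bm\alpha\cup\bm\beta$. A point of $M^R$ is a $\tau$-invariant tuple whose entries either lie on $C$ or move freely in pairs $\{x,\tau(x)\}$. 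Hence $\partial\mathcal{D}(v)$ is an essentially arbitrary chain of the form $\gamma+\tau(\gamma)$ plus arcs of $C$.
\item The claim that every region of $\Sigma\setminus(\bm\alpha\cup\bm\beta)$ meets $\partial\Sigma$ is false. Balancedness gives this only for $\Sigma\setminus\bm\alpha$ and $\Sigma\setminus\bm\beta$ separately; the interior bigons and rectangles of a nice diagram are counterexamples.
\end{enumerate}

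The correct argument goes through the sphere. The doubled sphere has multiplicity $n_p(v)+n_{\tau(p)}(v)$ at $p$. The domain of a sphere is a closed $2$-chain, so it is a constant multiple $k\geq 0$ of the fundamental class of the surface obtained by capping off $\partial\Sigma$. Equivalently, in the cylindrical picture, a closed curve with constant projection to $[0,1]\times\mathbb{R}$ must cover all of $\Sigma$. By positivity of domains, $v$ inherits multiplicity zero near $\partial\Sigma$ from the curve it bubbled off. Hence $k=0$, so $\mathcal{D}(v)=0$ and $v$ is constant. This is exactly the paper's remark that in the sutured setting no nearly-symmetric hypothesis is needed to exclude spheres, since the class $[\Sigma]$ is not permitted. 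With this substitution the rest of your proposal goes through.
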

\begin{proof}
    This follows as usual, from counting ends of the compactified 1-dimensional moduli spaces. This is particularly straightforward in our case, as we do not count curves which have nonzero multiplicity on elementary domains which intersect the boundary.
\end{proof}

The real sutured Floer chain complex of a Heegaard diagram $\mathcal{H}$ splits as a direct sum over real $\SpinC$ structures as we shall now describe. To each $\xv\in (\mathbb{T}_\alpha \cap \mathbb{T}_\beta)^R$, we assign a non-vanishing real vector field $v_{\xv}$ --- and thus a real $\SpinC$ structure, $\s^R(\xv)$ --- as follows. Let $f$ be a real sutured Morse function which induces the real Heegaard splitting and let $g$ be a $\tau$-invariant Riemannian metric. Then $\xv$ corresponds to a multi-trajectory $\gamma_{\xv}$ of $\mathrm{grad}_g(f)$ connecting index 1 and 2 critical points that is fixed setwise by $\tau$. Note that $f$ can be chosen so that $\mathrm{grad}_g(f)$ is standard near $\partial Y$. Away from a small neighborhood of $\gamma_{\xv}$, which we denote $\nu(\gamma_{\xv})$, the vector field $\mathrm{grad}_g(f)$ is non-vanishing. By applications of the Poincar\'e-Hopf theorem, we can extend $\mathrm{grad}_g(f)|_{Y\setminus\nu(\gamma_{\xv})}$ 
over the remainder of $Y$ equivariantly. Note that this is possible since $\xv$ belongs to $ (\mathbb{T}_\alpha \cap \mathbb{T}_\beta)^R$.  The non-vanishing real vector field so obtained is $v_{\xv}$. The real $\SpinC$ class of $v_{\xv}$ is independent of the choices in this construction.

In view of this, we define $\RSFC(\cH,\s^R)$ to be the chain complex generated by ${\xv\in (\TT_{\alpha}\cap\TT_{\beta})^{R}}$ with $\s^R(\xv)=\s^{R}$ equipped with differential defined in Equation~\eqref{eq:defdif}. It follows that 
\[\RSFC(\cH)=\bigoplus_{\s^R\in \rspinc(Y,\gamma,
\tau)}\RSFC(\cH,\s^R).\]

In \cite{juhasz2006holomorphic}, Juh\'asz defined a relative Maslov grading on $\SFH(Y,\gamma,\s)$ for each $\SpinC$ structure $\s$. A real analogue can be defined as follows. Let $(\Sigma,\bm{\alpha},\bm{\beta},\tau)$ be a real Heegaard diagram for $(Y,\gamma,\tau)$. Note that for $\xv,\yv \in (\T_{\alpha}\cap \T_{\beta})^R$, $\s^R(\xv)=\s^R(\yv)$ if and only if $\pi_2^R(\xv,\yv)\ne \emptyset$, as in \cite{guth2025real}. Thus we can define the real relative Maslov grading by \[\mathrm{gr}_R(\xv,\yv)=\mu_R(\phi) \in \Z/\frak d,\] for any $\phi\in \pi_2^R(\xv,\yv)$, where the divisibility of $c_1(\s^R)$ is $2\frak d$.

\subsection{Invariance}
Invariance of $\RSFH(Y,\gamma,\tau)$ essentially follows verbatim from~\cite[Section 5]{guth2025real}. 
\begin{theorem}
The real sutured Heegaard Floer homology of a real balanced sutured manifold, equipped with its real $\SpinC$-grading and relative real Maslov gradings, is independent of the choice of a weakly admissible real Heegaard diagram, and a generic symmetric almost complex structure up to isomorphism.
\end{theorem}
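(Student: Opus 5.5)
The plan is to follow the structure of the invariance proof for closed real manifolds in \cite[Section 5]{guth2025real} and Juh\'asz's sutured adaptation \cite{juhasz2006holomorphic}. The argument has three inputs. First, independence of the symmetric almost complex structure. Second, \Cref{prop:relationsbetweenHDs}, which connects any two real sutured diagrams by real Heegaard moves. Third, \Cref{rmk:weak admissiblity}, which lets us keep every intermediate diagram weakly admissible. It then remains to build, for each real Heegaard move, a chain homotopy equivalence of $\RSFC$ that respects the splitting over $\rspinc(Y,\gamma,\tau)$ and shifts the relative grading $\mathrm{gr}_R$ by zero.

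\emph{Almost complex structures.} For two generic symmetric $J_0,J_1$, choose a generic path $J_t$ of symmetric almost complex structures. Define the continuation map by counting real index-$0$ strips in $\mathcal{M}_R$ with time-dependent $J_t$. By the real transversality of \cite[Section 2.1]{guth2025real}, the usual argument with $1$-dimensional moduli spaces shows this is a chain map. A homotopy of paths gives a chain homotopy. Concatenating with the reverse path gives a homotopy inverse.

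\emph{Real isotopies and handleslides.} A real isotopy is generated by a Hamiltonian isotopy of $\T_\alpha$. Its $R$-conjugate moves $\T_\beta=R(\T_\alpha)$ simultaneously. The induced maps are counts of real strips with moving boundary on $\T_\alpha$ and $M^R$. Equivalently, one works in the doubled picture with symmetric strips, as in \cite{guth2025real}. Admissibility gives finiteness of the counts.

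For a real handleslide, use triangle maps. Let $\bm\alpha'$ be the handleslid $\alpha$-curves. Count real holomorphic triangles with boundary on $\T_\alpha$, $\T_{\alpha'}$ and $M^R$. Equivalently, these are symmetric triangles for $(\T_\alpha,\T_{\alpha'},R(\T_{\alpha'}),R(\T_\alpha))$ cut along the fixed locus. The key input is a top-graded cycle $\Theta\in\T_\alpha\cap\T_{\alpha'}$. Its existence is the unreal computation on the $\#^k(S^1\times S^2)$-type diagram $(\Sigma,\bm\alpha,\bm\alpha')$, which here is sutured. Associativity of real triangle counts then shows the triangle map is a quasi-isomorphism.

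\emph{Stabilizations.} For a $\Z/2$-stabilization, the new torus $T$ and its mirror $\tau(T)$ are disjoint. Each contains a single intersection point. A neck-stretching argument along $\partial T\cup\partial\tau(T)$ identifies real moduli spaces before and after the stabilization. For a $\{1\}$-stabilization, $T$ is $\tau$-invariant and the fixed arc meets $\alpha_0\cap\beta_0$ at the unique new point. Stretching along the invariant circle $\partial T$ identifies the complexes, as in the closed case of \cite{guth2025real}. Sutured boundary causes no new bubbling, since domains meeting $\partial\Sigma$ are excluded. Each map preserves $\s^R$, because it can be realized by a real Morse-theoretic homotopy of the vector fields $v_{\xv}$. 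It also preserves $\mu_R$ differences, by the index formula relating $\mu$ and $\mu_R$.

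\emph{Main obstacle.} I expect the handleslide step to be hardest, specifically transversality and the Gromov compactness needed for real triangles and quadrilaterals. The cleanest approach is to cite \cite[Section 5]{guth2025real} verbatim. The only check is that sutured diagrams add no ends: holomorphic curves avoid $\partial\Sigma$, so the closed-case analysis applies unchanged.
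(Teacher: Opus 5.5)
Your proposal is correct and follows the paper's proof: continuation maps for $J$, then \Cref{prop:relationsbetweenHDs} with admissibility preserved along the way, then move-by-move quasi-isomorphisms defined exactly as in \cite[Section 5]{guth2025real}. The one difference is the handleslide step. There \cite{guth2025real} uses Perutz's symplectic form on $\Sym^m(\Sigma)$, for which a handleslide is a Hamiltonian isotopy of $\T_\alpha$ and is handled by the same continuation maps as isotopies; your route via $\bm\Theta_{\alpha,\alpha'}$ and associativity is the classical triangle-map argument, and \Cref{lem:isoquads} shows the two maps are chain homotopic (the doubles of your real triangles are symmetric quadrilaterals on $\T_{\alpha'},\T_\alpha,\T_\beta,\T_{\beta'}$, not triangles).
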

\begin{proof}
Independence of the generic symmetric almost complex structure follows from the usual continuation map argument. Independence of the real Heegaard diagram follows from the fact that if $\cH_1$ and $\cH_2$ are two real Heegaard diagrams which differ by a real Heegaard move, then there is an associated quasi-isomorphism
    \begin{align*}
        \RSFC(\cH_1) \ra \RSFC(\cH_2),
    \end{align*}
defined in exactly the same way as in \cite{guth2025real}. Since any two (weakly) admissible real balanced real sutured Heegaard diagrams are related by a finite sequence of real Heegaard moves through admissible diagrams, the result follows.
\end{proof}

\begin{example}
All real product sutured manifolds have real sutured Floer homology of rank one. This follows immediately from the fact that each of them admits a weakly admissible real Heegaard diagram without $\alpha$ or $\beta$-curves.
\end{example}

\begin{example}\label{ex:closedhatrecovery}
The real sutured Floer homology of the real sutured manifolds in Example~\ref{ex:closedaspunctured} recovers the hat version of the real Heegaard Floer homology of the singly-based real closed $3$-manifolds with a basepoint on the component $C_i$ defined in~\cite{guth2025real}. Multi-based cases can be recovered similarly.
\end{example}
\begin{example}\label{ex:recoversclassical}

$\SFH(Y,\gamma)$ can be computed as the real sutured Floer homology of ${(Y,\gamma)\sqcup(Y,-\gamma)}$ equipped with the involution that interchanges the two disjoint summands.
\end{example}

\subsection{Real singular homology and a relative grading} 
In this section, we seek to define a relative grading on $\RSFH(Y,\gamma,\tau)$ that will behave nicely with respect to different operations on real sutured Floer homology that we will introduce in later sections. This relative grading will take value in a version of $H_1(Y)$ that accounts for the action of $\tau$.

We define our target group in slightly greater generality than required for our current purposes. Let $Y$ be a CW-complex and $\tau$ be an involution of $Y$. A \emph{real $i$-chain} is an oriented $i$-chain $a$ in $Y$ that is preserved as a set by $\tau$ and satisfies $\tau(a)=-a$. A {real $i$-cycle} is a real $i$-chain $a$ with $\partial a=0$. Denote the set of real $i$-cycles by $Z^R_i$. A \emph{real $i$-boundary} is a real $i$-chain that is the boundary of a real $i+1$-chain. Denote the set of $i$-boundaries by $B^R_i$. Define $H^R_i(Y,\tau):=Z^R_i/B^R_i$.

We can now define our $H^R_1$-valued grading. Let $(Y,\gamma,\tau)$ be a real sutured manifold with real Heegaard diagram ${\mathcal{H}=(\Sigma,\bm{\alpha},\bm{\beta},\tau)}$.  Given generators $\bm{x}$ and $\bm{y}$ of ${\RSFC(\mathcal{H})}$, consider the coset of real $1$-chains 
$$\phi-\tau(\phi)+\underset{\alpha\in\bm{\alpha}}\sum\langle\alpha-\tau(\alpha)\rangle,$$
where $\phi$ is a multi-path from $\bm{x}$ to $\bm{y}$ in $\bm{\alpha}$. Since $\alpha-\tau(\alpha)$ is a real $1$-cycle for every $\alpha$, this descends to an element of $H^R_1(Y,\tau)$, which we denote by $\epsilon^R(\bm{x},\bm{y})$. Showing that this grading is independent of the choice of Heegaard diagram requires more work. We begin with a digression on the naturality of $\RSFH(Y,\gamma,\tau)$.

Let $\Delta$ denote the two-simplex with vertices labeled $v_{\alpha'},v_\alpha,v_c$ clockwise, and with edge $e_i$ opposite to the vertex $v_i$.  Let $\Sigma$ be a surface equipped with an orientation-reversing involution $\tau$. Let $\bm{\alpha}$ and $\bm{{\alpha'}}$ be collections of $m$ homologically linearly independent simple closed curves in $\Sigma$ that intersect $C$ transversely. We write $\bm \beta$ and $\bm \beta'$ for their images under $\tau$. A \emph{real Whitney triangle} is a map $u:\Delta\to \Sym^m(\Sigma)$ subject to the boundary conditions that $u(e_i)\subset \T_i$ for $i\in\{\alpha,{\alpha'}\}$ and $u(e_c)\in M^R$. Let $\bm{y}\in\T_\alpha\cap M^R$, $\bm{z}\in\T_{\alpha'}\cap M^R$, $\bm{x}\in\T_\alpha\cap\T_{\alpha'}$ and $\pi_2(\bm{x},\bm{y},\bm{z})$ denote the homotopy classes of Whitney triangles such that $u(v_{\alpha'})=\bm{y},u(v_c)=\bm{x},$ and $u(v_\alpha)=\bm{z}$. Let $\mathcal{M}(\phi)$ denote the moduli space of Whitney triangles satisfying Floer's equation in the homotopy class $\phi$. Set $\mathcal{H}_{\alpha,\beta}:=(\Sigma,\bm{\alpha},\bm{\beta},\tau )$, $\mathcal{H}_{{\alpha'},\beta'}:=(\Sigma,\bm{{\alpha'}},\bm{{\beta'}},\tau )$ as real Heegaard diagrams, and ${\mathcal{H}_{{\alpha'},{\alpha}}:=(\Sigma,\bm{{\alpha'}},\bm{\alpha})}$ be a classical Heegaard diagram. The triple of Lagrangians $(\bT_{\alpha'}, \bT_\alpha, M^R)$ gives rise to a triangle counting map 
\begin{align}\label{eq:triangle-counting}
    f_{{\alpha'},\alpha}:\CFh(\bT_{\alpha'}, \bT_\alpha)\otimes \CFh(\bT_\alpha, M^R) \ra \CFh(\bT_{\alpha'}, M^R),
\end{align} 
defined as the linear extension of 
$$\bm{x}\otimes\bm{y}\mapsto \sum_{\bm{z}\in\T_{\alpha'}\cap M^R}\sum_{\phi\in \pi_2(\bm{x},\bm{y},\bm{z})}\#\mathcal{M}(\phi)\bm{z}.$$
Here $\mathcal{M}(\phi)$ is the moduli space of index $0$ Whitney triangles. Note that these holomorphic triangles are in bijection with holomorphic quadrilaterals with appropriate segments of their bounday in $\T_\alpha,\T_\beta,\T_{\alpha'},$ and $\T_{\beta'}$. By unraveling the definitions, \eqref{eq:triangle-counting} induces a map 
$$f_{{\alpha'},\alpha}: \CFh(\mathcal{H}_{{\alpha'},\alpha})\otimes \RSFC(\mathcal{H}_{\alpha,\beta})\to\RSFC(\mathcal{H}_{\alpha',\beta'}).$$

Recall that a Heegaard quadruple diagram $(\Sigma,\bm{\alpha},\bm{\beta},\bm{\gamma},\bm{{\alpha'}})$ is \emph{weakly admissible} if each non-trivial  $4$-periodic domain can be written as a sum of doubly periodic domains with positive and negative coefficients. We say a real Heegaard diagram $\mathcal{H}_{{\alpha}',{\alpha},\tau(\alpha),\tau(\alpha')}$ is weakly admissible if $(\Sigma,\bm{\alpha},\bm{\alpha'},\tau(\bm{\alpha}),\tau(\bm{\alpha'}))$ is weakly admissible. See~\cite[Section 8.4.2]{ozsvath2004holomorphic} for details.

\begin{lemma} 
If the Heegaard quadruple diagram $\mathcal{H_{{\alpha}',{\alpha},\tau(\alpha),\tau(\alpha')}}$ is weakly admissible then $f_{{\alpha'},{{\alpha}}}$ is a chain map.
\end{lemma}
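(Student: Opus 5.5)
We will argue in the standard way for triangle maps in Lagrangian Floer theory, by counting the ends of one-dimensional moduli spaces of real Whitney triangles. Concretely, we will show that
\[
\partial\circ f_{\alpha',\alpha}(\bm x\otimes\bm y)+f_{\alpha',\alpha}(\partial\bm x\otimes \bm y)+f_{\alpha',\alpha}(\bm x\otimes\partial\bm y)=0,
\]
where the three differentials are those of $\RSFC(\mathcal{H}_{\alpha',\beta'})$, $\CFh(\mathcal{H}_{\alpha',\alpha})$ and $\RSFC(\mathcal{H}_{\alpha,\beta})$. The real differentials are the ones of \Cref{def:RSFC and RSFH}, i.e.\ counts of strips with boundary on $\T_\alpha$ (resp.\ $\T_{\alpha'}$) and $M^R$. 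Fix generators $\bm x$, $\bm y$, $\bm z$ and a class $\phi\in\pi_2(\bm x,\bm y,\bm z)$ of index $1$. For generic symmetric $J$ (more precisely, a generic $J$ on $\Sym^m(\Sigma)$ for the triple $(\T_{\alpha'},\T_\alpha,M^R)$, which is what the real setup amounts to after passing to the quotient description), the moduli space $\mathcal{M}(\phi)$ is a smooth $1$-manifold. Transversality is obtained exactly as for the strips in \cite[Section~2.1]{guth2025real}.

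Next we will identify the boundary of the Gromov compactification of $\mathcal{M}(\phi)$. Its ends come from three sources:
\begin{enumerate}
\item strip breaking at the $\bm x$ corner, along $(\T_{\alpha'},\T_\alpha)$;
\item strip breaking at the $\bm y$ corner, along $(\T_\alpha,M^R)$;
\item strip breaking at the $\bm z$ corner, along $(\T_{\alpha'},M^R)$.
\end{enumerate}
In each case the broken configuration is an index-$1$ strip glued to an index-$0$ triangle, and these three types contribute exactly the three terms above. We must also rule out sphere bubbling and disk bubbling. As in the closed case, sphere bubbles are excluded because all relevant Lagrangians avoid the diagonal-type issues, and spheres through the sutured boundary are prevented since we only count curves with zero multiplicity on elementary domains meeting $\partial\Sigma$ (compare the proof of \Cref{lem:d^2 = 0}). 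Disk bubbles on $\T_\alpha$ and $\T_{\alpha'}$ are excluded because these are tori of curves with homologically independent components, so $\pi_2(\Sym^m(\Sigma),\T_\alpha)$ carries no positive-area Maslov index $2$ disks in the sutured (boundary-bearing) setting. Disk bubbles on $M^R$ correspond, under the doubling trick mentioned after \eqref{eq:triangle-counting}, to spheres or symmetric disks with boundary on $\T_\alpha\cup\T_\beta$, and are excluded by the same argument. Gluing then identifies each broken configuration with a unique end of $\mathcal{M}(\phi)$, and since a compact $1$-manifold has an even number of ends, the stated identity holds mod $2$.

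The main obstacle is finiteness: we must show that for fixed $\bm x,\bm y,\bm z$ only finitely many classes $\phi$ with $\mathcal{M}(\phi)\ne\emptyset$ contribute, so that $f_{\alpha',\alpha}$ and the compactness argument make sense. Here we will use the admissibility hypothesis. Through the bijection between real triangles and holomorphic quadrilaterals with boundary on $\T_{\alpha'},\T_\alpha,\T_\beta,\T_{\beta'}$, two classes with the same corners differ by a $4$-periodic domain of $(\Sigma,\bm\alpha',\bm\alpha,\tau(\bm\alpha),\tau(\bm\alpha'))$. Weak admissibility of the quadruple diagram, together with fixing the relevant real $\SpinC$ structure and index, gives an energy/area bound in the usual way \cite[Section~8.4.2]{ozsvath2004holomorphic}. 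This bound leaves only finitely many classes with nonnegative domains. The care needed is to check that restricting to $\tau$-symmetric domains does not break the standard argument; we expect this to follow because the symmetric periodic domains form a subgroup of all periodic domains, so the positivity argument applies verbatim.
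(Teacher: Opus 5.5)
Your proposal follows the same route as the paper: count the ends of the one-dimensional moduli spaces of index-one real triangles, match the three strip-breaking ends with the terms of $\partial\circ f_{\alpha',\alpha}+f_{\alpha',\alpha}\circ\partial$, and exclude sphere and disk bubbles because curves must have zero multiplicity on regions meeting $\partial\Sigma$; your transversality and admissibility-based finiteness remarks only make explicit what the paper leaves implicit. One small correction: disk bubbles on $\mathbb{T}_\alpha$ are ruled out not by the homological independence of the $\alpha$-curves but by the balanced condition, namely that every component of $\Sigma\setminus\bm\alpha$ meets $\partial\Sigma$, which together with the zero-multiplicity requirement forces the bubble's domain to vanish.
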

\begin{proof}
    As in usual Lagrangian Floer homology, the result follows from examining the ends of 1-dimensional moduli spaces of Whitney triangles.  No disk or sphere bubbles appear in our context, as we require our holomorphic disks to have multiplicity zero near boundary components of $\Sigma$. There are thus three kinds of ends which correspond to strip breaking near the vertices of $\Delta$. The compactification of the 1-dimensional moduli space has an even number of ends, which correspond precisely to terms of $f_{{\alpha'},{{\alpha}}}\circ \partial + \partial \circ f_{{\alpha'},{{\alpha}}}$. 
\end{proof}

\begin{lemma}[\cite{GM_real_naturality}]\label{lem:isoquads}
Suppose $\mathcal{H}_{\alpha',\beta'}$ is obtained from a weakly admissible diagram $\mathcal{H_{\alpha,\beta}}$ by a real handleslide or real isotopy. Then, the triangle counting map $$f:\RSFC(\mathcal{H}_{\alpha,\beta})\to\RSFC(\mathcal{H}_{\alpha',\beta'})$$ defined by $f:\bm{x}\mapsto f_{\alpha',\alpha}(\bm{x}\otimes\bm{\Theta}_{\alpha, \alpha'})$ is homotopic to the continuation map
\begin{align*}
    \Phi_{\cH_\alpha \ra \cH_{\alpha'}}: \RSFC(\mathcal{H}_{\alpha,\beta})\to\RSFC(\mathcal{H}_{\alpha',\beta'}),
\end{align*}
appearing in \cite[Propositions 5.3, 5.4]{guth2025real} induced by the Hamiltonian isotopy taking $\mathbb{T}_{\alpha}$ to $\mathbb{T}_{\alpha'}$. In particular, $f$ is a homotopy equivalence.
\end{lemma}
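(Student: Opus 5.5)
The strategy is to transplant the classical argument that triangle maps and continuation maps agree (Ozsv\'ath--Szab\'o, and Lipshitz in the cylindrical setting) into the real framework by working in the fixed-point model, where both maps count strips or triangles with boundary on $\T_\alpha$, $\T_{\alpha'}$ and $M^R$. The key observation is that both maps arise from a single Lagrangian triple $(\T_{\alpha'},\T_\alpha,M^R)$. Here $\T_{\alpha'}$ is Hamiltonian isotopic to $\T_\alpha$: for a real isotopy this is immediate, and for a handleslide one uses Perutz's theorem \cite{PerutzHamiltonianhandleslide} that handleslide tori are Hamiltonian isotopic for a suitable symplectic form on $\Sym^m(\Sigma)$. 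Meanwhile $M^R$ is held fixed. The statement therefore becomes an instance of a general fact in Lagrangian Floer theory: if $L'=\psi_1(L)$ for a Hamiltonian flow $\psi_t$, then the product with the top generator $\Theta\in \CFh(L',L)$ is chain homotopic to the continuation map $\CFh(L,K)\to\CFh(L',K)$.

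\textbf{Step 1: the top class.} Identify $\Theta_{\alpha,\alpha'}$ as the top-graded cycle of $\CFh(\T_{\alpha'},\T_\alpha)$, which is the classical (unreal) complex for a diagram of $\#^m(S^1\times S^2)$. Check that it is a cycle there, so that $f$ is a chain map; this follows from the previous lemma together with weak admissibility of the quadruple.

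\textbf{Step 2: the continuation map as a triangle count.} Realize $\Phi_{\cH_\alpha\to\cH_{\alpha'}}$ as a count of strips with moving boundary $\psi_t(\T_\alpha)$ on one side and $M^R$ on the other. Then degenerate the isotopy: shrink the Hamiltonian to a small one, so that $\T_{\alpha'}$ is a $C^1$-small perturbation of $\T_\alpha$. This gives a one-parameter family of moduli spaces interpolating between moving-boundary strips and triangles with corner at $\Theta$. Counting the ends of the resulting $1$-dimensional parametrized moduli spaces produces a homotopy $H$ with $f+\Phi=\partial H+H\partial$. One should also use the small-isotopy case, where the triangle map is the nearest-point map (upper triangular with identity diagonal via small triangles), to handle the base case. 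General isotopies and handleslides then follow by factoring into small steps and using associativity of triangle maps (a quadrilateral count) together with the functoriality of continuation maps.

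\textbf{Step 3: transversality, compactness and admissibility.} Verify these for the equivariant or fixed-point moduli spaces. Transversality holds for generic symmetric $J$, as in \cite[Section~2.1]{guth2025real}, because $M^R$ is only a boundary condition in the fixed-point model. Weak admissibility of the quadruple gives finiteness of the counts. No boundary bubbling occurs, because multiplicities vanish near $\partial\Sigma$.

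The main obstacle will be the handleslide case, where $\T_{\alpha'}$ is not obtained from $\T_\alpha$ by a small isotopy on $\Sigma$. There one must use Perutz's Hamiltonian isotopy in $\Sym^m(\Sigma)$, which is not induced from the surface. This requires checking that the symplectic form and the isotopy can be chosen compatibly with $R$, so that $M^R$ stays Lagrangian, and that the resulting continuation map is the one in \cite[Propositions~5.3, 5.4]{guth2025real}. The first thing to try is to make Perutz's construction $R$-anti-symplectic: average the form under $-R^*$ and the Hamiltonian under the involution. This works because the handleslide is supported away from where $R$ mixes $\alpha$ and $\beta$, since only $\T_\alpha$ moves.

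Finally, since $\Phi$ is a homotopy equivalence (invariance of $\RSFH$), so is $f$.
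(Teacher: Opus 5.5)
Your framework is right. In the fixed-point model both maps live on the Lagrangian triple $(\T_{\alpha'},\T_\alpha,M^R)$ with $M^R$ fixed. For a real isotopy of attaching curves, your plan is the standard argument and goes through: a base case in which small triangles give the nearest-point map for a $C^1$-small translate, followed by concatenation using associativity and the fact that the small-step top generators multiply to the top generator.

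The gap is in the handleslide case, and it is not the one you identify. First, a handleslide cannot be factored into small steps of the kind your base case treats. The intermediate Lagrangians $\psi_t(\T_\alpha)$ along Perutz's isotopy are not product tori of attaching curves. So for them there is no diagrammatic top generator, no domain-level admissibility or positivity, and no cylindrical reformulation, and your nearest-point/small-triangle analysis says nothing about them.

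Second, the ``general fact'' you invoke is a statement about the continuation element, not about a top generator. Run the degeneration correctly: do not shrink the Hamiltonian, which changes $\T_{\alpha'}$, but push the portion of the boundary where the condition moves off into the incoming end. The moving-boundary strips then break into a once-punctured disk with boundary moving along $\psi_t$ and a triangle. This gives $\Phi\simeq f_{\alpha',\alpha}(\,\cdot\,\otimes c_\psi)$, where $c_\psi\in\CFh(\T_{\alpha'},\T_\alpha)$ is the continuation element. Writing $\bm\Theta_{\alpha,\alpha'}$ in place of $c_\psi$ assumes exactly the crux.

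So the missing step is to show that $c_\psi$ is homologous to $\bm\Theta_{\alpha,\alpha'}$ in the unreal complex $\CFh(\T_{\alpha'},\T_\alpha)$. Once that is known, the Leibniz rule from the preceding lemma makes $f_{\alpha',\alpha}(\,\cdot\,\otimes c_\psi)$ chain homotopic to $f$, and you are done. One way to see it:
\begin{itemize}
\item $c_\psi$ is the image of the unit (the top generator for a small translate of $\T_\alpha$) under a continuation isomorphism, which is homogeneous for the relative Maslov grading.
\item Both homologies have the graded dimensions of $\Lambda^*\F^m$, so the grading shift is zero.
\item Hence $c_\psi$ is a nonzero class in the top grading, which is one-dimensional and spanned by $\bm\Theta_{\alpha,\alpha'}$.
\end{itemize}

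Finally, the point you single out as the main obstacle is already part of the hypothesis. $\Phi$ is the continuation map from the cited propositions, so the $R$-anti-symplectic setup is given. Since only $\T_\alpha$ moves while $M^R$ stays fixed, no $R$-symmetry of the Hamiltonian is needed. Averaging the Hamiltonian under $R$ would in fact be harmful: it changes the time-one map, which would then no longer carry $\T_\alpha$ to $\T_{\alpha'}$.
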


    In the unreal setting, this is proven in \cite[Lemma 9.7]{JTZ2021}; the proof is essentially the same in the real setting, and is established in \cite{GM_real_naturality}.

    We now return to the relative $H_1^R(Y,\tau)$-grading on $\RSFH(Y,\gamma,\tau)$.

\begin{proposition}
The obstruction $\epsilon^R$ defines a relative $H^R_1(Y,\tau)$-grading on $\RSFH(Y,\gamma,\tau)$. 
\end{proposition}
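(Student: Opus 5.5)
The proof has two parts. First, on a fixed diagram, $\epsilon^R$ is a well-defined relative grading compatible with the differential. Second, it is independent of the diagram, which is where the naturality digression (Lemma~\ref{lem:isoquads}) enters.

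\textbf{Well-definedness and additivity.} Fix $\mathcal{H}=(\Sigma,\bm\alpha,\bm\beta,\tau)$ and generators $\bm x,\bm y\in(\T_\alpha\cap\T_\beta)^R$. Choosing a different multi-path $\phi'$ from $\bm x$ to $\bm y$ in $\bm\alpha$ changes $\phi$ by a 1-cycle $\phi'-\phi$ supported in $\bm\alpha$. The real chain $\phi-\tau(\phi)$ therefore changes by an integer combination of the cycles $\alpha-\tau(\alpha)$, which we have quotiented out. Note that $\phi-\tau(\phi)$ is a cycle in $Y$: its endpoints cancel because $\tau$ fixes $\bm x$ and $\bm y$ setwise, and $\tau(\phi)$ is a path in $\bm\beta$ from $\bm x$ to $\bm y$. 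Additivity $\epsilon^R(\bm x,\bm y)+\epsilon^R(\bm y,\bm z)=\epsilon^R(\bm x,\bm z)$ follows by concatenating paths, and $\epsilon^R(\bm x,\bm x)=0$.

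\textbf{Compatibility with the differential.} Suppose $\pi_2^R(\bm x,\bm y)\neq\emptyset$, for instance if $\bm y$ appears in $\partial\bm x$. Take $\psi\in\pi_2^R(\bm x,\bm y)$ and its symmetric lift $\tilde\psi$. The domain $D(\tilde\psi)$ is a 2-chain in $\Sigma$ with $\tau(D)=-D$, since $\tau$ reverses the orientation of $\Sigma$. Its boundary is $a+b$, where $a$ is a multi-path in $\bm\alpha$ from $\bm x$ to $\bm y$ and $b=-\tau(a)$ is a multi-path in $\bm\beta$. After coning off the $\alpha$- and $\beta$-circles with the cores of the 2-handles, which the involution interchanges, $D$ extends to a real 2-chain in $Y$ with boundary $a-\tau(a)$ modulo the classes $\langle\alpha-\tau(\alpha)\rangle$. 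Hence $\epsilon^R(\bm x,\bm y)=0$ whenever $\bm y$ appears in $\partial\bm x$, and $\epsilon^R$ descends to a relative grading on homology.

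\textbf{Invariance.} By Proposition~\ref{prop:relationsbetweenHDs} and Remark~\ref{rmk:weak admissiblity}, it suffices to check that each real Heegaard move induces a map preserving $\epsilon^R$. For real diffeomorphisms isotopic to the identity, this is immediate, since the diffeomorphism is $\tau$-equivariant and acts trivially on $H_1^R(Y,\tau)$ by real isotopy. For stabilizations, generators of the new diagram are $\bm x\times\{p\}$ (or $\bm x\times\{p,\tau p\}$), and the same multi-paths work, the new curve contributing nothing.

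For real isotopies and handleslides, I use Lemma~\ref{lem:isoquads}: the map is the triangle map $\bm x\mapsto f_{\alpha',\alpha}(\bm x\otimes\bm\Theta)$. If a real triangle class $\psi\in\pi_2(\bm\Theta,\bm x,\bm z)$ is counted, its domain gives a real 2-chain whose boundary relates a path in $\bm\alpha$ from $\bm x$, a path in $\bm\alpha'$ to $\bm z$, and a path through $\bm\Theta$ in $\bm\alpha\cup\bm\alpha'$. For two generators $\bm x_1,\bm x_2$ mapped to $\bm z_1,\bm z_2$, subtracting the two triangle relations shows that $\epsilon^R(\bm z_1,\bm z_2)-\epsilon^R(\bm x_1,\bm x_2)$ is represented by a chain supported in $\bm\alpha\cup\bm\alpha'$ together with its $\tau$-image. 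In $Y$, $\bm\alpha'$ curves bound the same 2-handle cores (isotopy) or differ by a handleslide, so each $\alpha'-\tau(\alpha')$ is a combination of the $\alpha-\tau(\alpha)$ up to real boundaries.

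\textbf{Main obstacle.} The main difficulty is the handleslide bookkeeping: identifying $H_1^R$ of the two diagrams canonically, and verifying that $\bm\Theta$-paths contribute zero. I expect this to reduce to the unreal argument of Ozsv\'ath--Szab\'o for $\epsilon$, applied to the symmetrized chains $c-\tau(c)$.
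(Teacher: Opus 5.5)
Your proposal is correct and follows the paper's argument. The real domain of a differential-contributing class is a real $2$-chain bounding a representative of $\epsilon^R(\bm x,\bm y)$, and invariance under real isotopies and handleslides comes from subtracting the shadows $Q_{\bm x}-Q_{\bm y}$ of the quadrilaterals counted by the map of Lemma~\ref{lem:isoquads}, which yields a real $2$-chain in $Y$ relating the two representatives.

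The obstacle you flag is milder than you expect, for three reasons:
\begin{itemize}
\item The diagrams are embedded in $Y$, so no identification of $H_1^R$ between diagrams is needed.
\item The $\bm\Theta$-corners are shared by both quadrilaterals and cancel in $Q_{\bm x}-Q_{\bm y}$.
\item Each $\alpha'-\tau(\alpha')$ is already a real boundary, namely $\partial\bigl(D_{\alpha'}-\tau(D_{\alpha'})\bigr)$ for the disk $D_{\alpha'}\subset U_\alpha$ that $\alpha'$ bounds.
\end{itemize}
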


\begin{proof}
    
We first show that the differential preserves the relative $H^R_1(Y,\tau)$-grading for a fixed real Heegaard diagram $\mathcal{H}$ of $(Y,\gamma,\tau)$. Let $\bm{x}$ and $\bm{y}$ be generators of $\RSFC(\mathcal{H})$. Suppose that there is a component of the differential corresponding to a curve $u$ from $\bm{x}$ to $\bm{y}$. The shadow of $u$ is a real domain in the Heegaard diagram that gives rise to a real $2$-chain with boundary a particular representative of the coset of real chains connecting $\bm{x}$ to $\bm{y}$. It follows that $\epsilon^R(\bm{x},\bm{y})=0$ as desired.

We now show that the relative grading is independent of the choice of a real Heegaard diagram, $\mathcal{H}$. Recall that any two real Heegaard diagrams for a real sutured manifold are related by a sequence of real (de)stabilizations (of which there are two sorts), real Hamiltonian isotopies of the $\alpha,\beta$ pairs, and real handleslides, per Proposition~\ref{prop:relationsbetweenHDs}. The isomorphism induced by the first operation is the obvious ones which clearly preserves the relative $H^R_1(Y,\tau)$-grading. The cases of isotopy and handle-slides are slightly more subtle. For each of these operations, note that there is an isomorphism induced by counts of pseudo-holomorphic quadrilaterals per Lemma~\ref{lem:isoquads}. 
It suffices to show that these maps preserve the relative $H^R_1(Y,\tau)$-grading. To that end, let $\bm{x}$, $\bm{y}$ be generators of $\RSFC(\mathcal{H})$. Let $\bm{x}'$ and $\bm{y}'$ be generators of  $\RSFC(\mathcal{H}')$ with $\langle f(\bm{x}),\bm{x}'\rangle,\langle f(\bm{y}),\bm{y}'\rangle\neq 0$, where $\mathcal{H'}$ is obtained from $\mathcal{H}$ by a real handleslide or real isotopy. Here, $\langle f(\bm{x}),\bm{x}'\rangle$ indicates the coefficient of $\bm{x}'$ in $f(\bm{x})$.  Let $\phi$ be a real $1$-cycle contained in $\bm{\alpha}\cup\bm{\beta}$ connecting $\bm{x}$ to $\bm{y}$. Since $\langle f(\bm{x}),\bm{x}'\rangle,\langle f(\bm{y}),\bm{y}'\rangle\neq 0$  there is a shadow, $Q_x$, of a holomorphic quadrilaterals in the Heegaard quadruple diagram $(\Sigma,\bm{\alpha},\bm{\beta},\bm{\alpha'},\bm{\beta}')$ with vertices on generators on $\bm{\Theta}_{\alpha,\alpha'},\bm{\Theta}_{\beta,\beta'},\bm{x},\bm{x}'$ satisfying appropriate boundary conditions. There is a similar such shadow $Q_y$, of a holomorphic quadrilaterals in the Heegaard quadruple diagram $(\Sigma,\bm{\alpha},\bm{\beta},\bm{\alpha'},\bm{\beta}')$ with vertices on generators on $\bm{\Theta}_{\alpha,\alpha'},\bm{\Theta}_{\beta,\beta'},\bm{y},\bm{y}'$ satisfying appropriate boundary conditions. Consider the domain $Q_x -Q_y$. This can be viewed as a real $2$-chain, having two boundary components, one --- which we denote $a$ --- supported in $\bm{\alpha}\cup\bm{\beta}$ which computes $\epsilon^R(\bm{x},\bm{y})$, and another --- which we denote $a'$ --- in $\bm{\alpha}'\cup\bm{\beta}'$ which computes $\epsilon^R(\bm{x}',\bm{y}')$. Viewing each of the two real Heegaard diagrams as embedded, we see that that $Q_x-Q_y$ represents a real homology between by $a$ and $a'$, concluding the proof.
\end{proof} 

Note that homogeneity with respect to the relative $H^R_1(Y,\tau)$-grading on $\RSFH(\mathcal{H})$ is no finer an equivalence relation on generators than that with respect to the real $\SpinC$-grading. To see this, observe that if $\bm{x}$ and $\bm{y}$ are generators of $\RSFH(\Sigma,\bm{\alpha},\bm{\beta},\tau)$ with $\s^R(\bm{x})=\s^R(\bm{y})$, then they are connected by a real domain, just as in the closed manifold case~\cite[Section~3]{guth2025real}. It follows that $\epsilon^R(\bm{x},\bm{y})=0$. Indeed, for a real Heegaard diagram $(\Sigma,\bm{\alpha},\bm{\beta},\tau)$ with a chosen generator $\bm{x}$ of $\RSFC(\Sigma,\bm{\alpha},\bm{\beta},\tau)$, there is a map $$f_{\bm{x}}:\{\s^R\in\rspinc(Y,\gamma,\tau):\s^R=\s^R(\bm{y})\text{ for some generator }\bm{y}\}\to H^R_1(Y,\tau)$$ given by $\s^R\mapsto \epsilon^R(\bm{y},\bm{x})$, where $\bm{y}$ is any generator with $\s^R(\bm{y})=\s^R$. To see this is well defined, observe that if $\s^R(\bm{y})=\s^R(\bm{z})$, then $$0=\epsilon^R(\bm{y},\bm{z})=\epsilon^R(\bm{y},\bm{x})+\epsilon^R(\bm{x},\bm{z})=\epsilon^R(\bm{y},\bm{x})-\epsilon^R(\bm{z},\bm{x}).$$ Note that $f_{\bm{x}}$ need not be surjective, since, for example, not every real sutured manifold admits a real $\SpinC$ structure but $H^R_1(Y,\tau)$ cannot be empty. 

\begin{lemma}\label{lem:fxinj}
    The map $f_{\bm{x}}$ is injective.
\end{lemma}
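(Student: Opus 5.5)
We must show that if $\epsilon^R(\bm{y},\bm{x})=\epsilon^R(\bm{z},\bm{x})$ then $\s^R(\bm{y})=\s^R(\bm{z})$. By the additivity used just above, this is equivalent to showing that $\epsilon^R(\bm{y},\bm{z})=0\in H^R_1(Y,\tau)$ forces $\s^R(\bm{y})=\s^R(\bm{z})$. By the observation preceding the lemma (as in~\cite[Section~3]{guth2025real}), $\s^R(\bm{y})=\s^R(\bm{z})$ holds exactly when $\pi_2^R(\bm{y},\bm{z})\neq\emptyset$, i.e. when there is a real domain on $\Sigma$ connecting $\bm{y}$ to $\bm{z}$. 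So the plan is to build such a real domain out of a real $2$-chain witnessing $\epsilon^R(\bm{y},\bm{z})=0$.

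\emph{Step 1: a real $1$-cycle on $\Sigma$.} Choose a multi-path $\phi$ in $\bm{\alpha}$ from $\bm{y}$ to $\bm{z}$. Since $\bm{y},\bm{z}$ are $R$-invariant, $\phi-\tau(\phi)$ is a closed real $1$-cycle in $\bm{\alpha}\cup\bm{\beta}\subset\Sigma$ with $\tau(\phi)\subset\bm{\beta}$. By hypothesis, after adding suitable multiples of the cycles $\alpha-\tau(\alpha)$ (which only changes the choice of $\phi$ within $\bm\alpha$), this cycle bounds a real $2$-chain $W$ in $Y$.

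\emph{Step 2: push into $\Sigma$.} Use the real handle decomposition $Y=\Sigma\times[-1,1]\cup(\alpha\text{-}2\text{-handles})\cup(\beta\text{-}2\text{-handles})$ from the real sutured Morse function. Ordinary cellular homology shows that a $1$-cycle in $\Sigma$ that is null-homologous in $Y$ relative to $\partial Y$ is homologous in $\Sigma$ (rel $\partial\Sigma$) to a combination of the $\alpha_i$ and $\beta_i$. Hence there is a $2$-chain $D$ on $\Sigma$ with $\partial D=\phi-\tau(\phi)+\sum a_i\alpha_i+\sum b_i\beta_i$. To make this real, replace $D$ by $D-\tau(D)$ (remember that $\tau$ reverses the orientation of $\Sigma$). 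This requires care because of the factor of $2$ it introduces. The correct move is to work directly with real chains: project $W$ equivariantly onto $\Sigma\cup$ cores, using the $\tau$-equivariant retraction along the gradient flow, which interchanges $\alpha$- and $\beta$-handles. Then $W$ is real homologous to a real $2$-chain $D$ on $\Sigma$ plus real combinations of cores. Reading off the boundary, $D$ has boundary $\phi-\tau(\phi)$ plus integer multiples of $\alpha_i-\tau(\alpha_i)$, with no $\partial\Sigma$ contribution after removing multiples of components of $\Sigma$ minus curves that meet $\partial\Sigma$ (these contribute zero because $\pi_0(\partial\Sigma)\to\pi_0(\Sigma\setminus\bm\alpha)$ is surjective).

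\emph{Step 3: conclude.} After absorbing the $\alpha_i-\tau(\alpha_i)$ terms into $\phi$, such a $D$ is exactly a real domain in $\pi_2^R(\bm{y},\bm{z})$. Hence $\s^R(\bm{y})=\s^R(\bm{z})$.

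The main obstacle is Step 2: ensuring that a real null-homology in $Y$ yields a real domain on $\Sigma$ with the correct corner conditions, and not merely twice one. This forces an equivariant cellular chain argument rather than averaging, in particular near the fixed set $C$, where cells are invariant and the real-chain condition $\tau(a)=-a$ restricts the coefficients. I would handle this by choosing a $\tau$-CW structure compatible with the Heegaard splitting, in which $C\cap\Sigma$ is a subcomplex, and then checking the relevant real cellular chain groups directly.
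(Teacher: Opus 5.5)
Your argument is correct and is essentially the paper's. Both use the equivariant deformation retraction of $Y$ onto $\Sigma$ together with the $2$-handle cores to turn a real null-homology of $\phi-\tau(\phi)$ into a real domain from $\bm{y}$ to $\bm{z}$. The only difference is the last step: you quote the criterion $\s^R(\bm{y})=\s^R(\bm{z})\iff\pi_2^R(\bm{y},\bm{z})\neq\emptyset$, which the paper records (citing Guth--Manolescu) when it defines the relative real Maslov grading, whereas the paper's proof builds the real homotopy from $v_{\bm{y}}$ to $v_{\bm{z}}$ directly, by equivariantly shrinking a real $2$-chain bounding $\gamma_{\bm{y}}-\gamma_{\bm{z}}$ inside $Y\times[0,1]$.

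One correction: $H^R_1(Y,\tau)$ is absolute homology, and absolute is exactly what you need. Your preliminary phrase ``null-homologous relative to $\partial Y$'' names a weaker condition that would not yield a domain. With absolute real chains, the retracted $2$-chain has no $\partial\Sigma$ edges in its boundary, so its multiplicities on regions meeting $\partial\Sigma$ vanish automatically, and the appeal to surjectivity of $\pi_0(\partial\Sigma)\to\pi_0(\Sigma\setminus\bm{\alpha})$ is unnecessary.
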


\begin{proof}
Suppose that $\s^R$ and $\mathfrak{t}^R$ are real $\SpinC$ structures with $f_{\bm{x}}(\s^R)=f_{\bm{x}}(\mathfrak{t}^R)$. Then there exist generators $\bm{y}$ and $\bm{z}$ with $\s^R=\s^R(\bm{y})$ and $\mathfrak{t}^R=\s^R(\bm{z})$ such that ${\epsilon^R(\bm{x},\bm{y})=\epsilon^R(\bm{x},\bm{z})}$, and so in turn that $\epsilon^R(\bm{y},\bm{z})=0$. Observe that $(Y,\gamma,\tau)$ admits a real deformation retraction, $g$, onto the union of the Heegaard surface and the co-cores of the $\alpha$ and $\beta$-handles. Let $(\Delta,\tau)$ denote this real space. It is clear that $g$ induces an isomorphism $g_*:H^R_1(Y,\tau)\to H^R_1(\Delta,\tau)$. Since $\epsilon^R(\bm{y},\bm{z})=0$, we have that $g(\epsilon^R(\bm{y},\bm{z}))=0$, so that for a chosen real $1$-chain, $\eta$, connecting $g(\bm{y})$ to $g(\bm{z})$ we have a real chain $\zeta\subset \Delta$ with $\partial\zeta=\eta$. Consider the real domain defined as the linear combination of the elementary domains, with coefficients given by the algebraic intersection number of a point in that elementary domain with $\zeta$. Using this domain, one can construct a real isotopy from the non-vanishing real vector fields $\s^R(\bm{y})$ to $\s^R(\bm{z})$ as follows.

    Let $v_{\bm y}$ and $v_{\bm z}$ be the real vector fields defining $\s^R(\bm{y})$ and $\s^R(\bm{z})$. By construction, these vector fields agree outside of an equivariant tubular neighborhood of the real 1-cycle $\gamma:=\gamma_{\bm y} - \gamma_{\bm z}$. Let $F$ be a real 2-chain which bounds $\gamma$.  View $F$ as sitting inside $Y \times \{0\} \sub Y \times [0,1]$. Choose an ambient equivariant isotopy of $Y\times I$ taking $F$ to a surface $\Tilde{F}$ with the property that $\Tilde F_t :=\Tilde{F}\cap (Y \times \{t\})$ is a real 1-cycle for $t \in [0,1)$ and a collection of equivariant points at $t = 1$.

    Using $\tilde F$, we can construct a real vector field $V$ on $Y\times [0,1]$, which restricts to $v_{\bm z} - v_{\bm y}$ on $Y\times \{0\}$ and is supported in a tubular neighborhood of $\tilde F_t \sub Y\times \{t\}$ for $t \in [0,1]$. This can be done by pushing $v_{\bm z} - v_{\bm y}$ forward under the derivative of the isotopy of $F$. In particular, $V|_{Y\times \{1\}}$ is supported on a collection of equivariant balls, and is therefore homotopic to the trivial vector field (since $v_s = (1-s)v$ is anti-invariant for all $s \in [0,1]$ if $v$ is anti-invariant). It follows that in the complement of a collection of 3-balls, the vector field $v_{\bm z} + V_t$ is a real homotopy from $v_{\bm y}$ to $v_{\bm z}$, as desired.
\end{proof}

\begin{example}
    Consider the real sutured manifold $(Y,\gamma,\tau)$ with real Heegaard diagram shown in Figure~\ref{fig:basic_ex}. Observe that $H^R_1(Y,\tau)\cong H_1^R(S^2,\text{rot})$, where $\text{rot}$ is $\pi$-rotation of $S^2$ about an axis. Pick some real embedded circle $S^1$ in $S^2$. Consider a real $1$-cycle in $(S^2,\text{rot})$. This real $1$-cycle can be real-homotoped so that it is given by $mS^1$ for some $m$. Observe that $m[S^1]=0\in H^R_1(S^2,\text{rot})$ for $m$ even; there is a real $2$-chain given by the difference of the two hemispheres separated by $S^1$. It follows that $H^R_1(S^2,\text{rot})\leq \Z/2$. On the other hand, we know that there are two real $\SpinC$ structures on $(Y,\gamma,\tau)$ both of which can be realized as $\s^R(\bm{x})$ for some generator $\bm{x}$ on a real Heegaard diagram. Therefore Lemma~\ref{lem:fxinj} implies that $|H^R_1(Y,\tau)|\geq 2$. It follows that $H^R_1(S^2,\text{rot})\cong \Z/2$ and that both maps $f_{\bm{x}}$ are isomorphisms.
\end{example}

%% file: basic_ex.pdf_tex
\begingroup%
  \makeatletter%
  \providecommand\color[2][]{%
    \errmessage{(Inkscape) Color is used for the text in Inkscape, but the package 'color.sty' is not loaded}%
    \renewcommand\color[2][]{}%
  }%
  \providecommand\transparent[1]{%
    \errmessage{(Inkscape) Transparency is used (non-zero) for the text in Inkscape, but the package 'transparent.sty' is not loaded}%
    \renewcommand\transparent[1]{}%
  }%
  \providecommand\rotatebox[2]{#2}%
  \newcommand*\fsize{\dimexpr\f@size pt\relax}%
  \newcommand*\lineheight[1]{\fontsize{\fsize}{#1\fsize}\selectfont}%
  \ifx\svgwidth\undefined%
    \setlength{\unitlength}{297.5625bp}%
    \ifx\svgscale\undefined%
      \relax%
    \else%
      \setlength{\unitlength}{\unitlength * \real{\svgscale}}%
    \fi%
  \else%
    \setlength{\unitlength}{\svgwidth}%
  \fi%
  \global\let\svgwidth\undefined%
  \global\let\svgscale\undefined%
  \makeatother%
  \begin{picture}(1,0.51115313)%
    \lineheight{1}%
    \setlength\tabcolsep{0pt}%
    \put(0,0){\includegraphics[width=\unitlength,page=1]{basic_ex.pdf}}%
    \put(0.50549001,0.39924107){\color[rgb]{0,0,0}\makebox(0,0)[lt]{\smash{\begin{tabular}[t]{l}{\small$y$}\end{tabular}}}}%
    \put(0.50198016,0.09799869){\color[rgb]{0,0,0}\makebox(0,0)[lt]{\smash{\begin{tabular}[t]{l}{\small$x$}\end{tabular}}}}%
    \put(0,0){\includegraphics[width=\unitlength,page=2]{basic_ex.pdf}}%
  \end{picture}%
\endgroup%

%% file: stabs.pdf_tex
\begingroup%
  \makeatletter%
  \providecommand\color[2][]{%
    \errmessage{(Inkscape) Color is used for the text in Inkscape, but the package 'color.sty' is not loaded}%
    \renewcommand\color[2][]{}%
  }%
  \providecommand\transparent[1]{%
    \errmessage{(Inkscape) Transparency is used (non-zero) for the text in Inkscape, but the package 'transparent.sty' is not loaded}%
    \renewcommand\transparent[1]{}%
  }%
  \providecommand\rotatebox[2]{#2}%
  \newcommand*\fsize{\dimexpr\f@size pt\relax}%
  \newcommand*\lineheight[1]{\fontsize{\fsize}{#1\fsize}\selectfont}%
  \ifx\svgwidth\undefined%
    \setlength{\unitlength}{419.52755906bp}%
    \ifx\svgscale\undefined%
      \relax%
    \else%
      \setlength{\unitlength}{\unitlength * \real{\svgscale}}%
    \fi%
  \else%
    \setlength{\unitlength}{\svgwidth}%
  \fi%
  \global\let\svgwidth\undefined%
  \global\let\svgscale\undefined%
  \makeatother%
  \begin{picture}(1,1.31756757)%
    \lineheight{1}%
    \setlength\tabcolsep{0pt}%
    \put(0,0){\includegraphics[width=\unitlength,page=1]{stabs.pdf}}%
    \put(0.47690976,1.27361461){\color[rgb]{0,0,0}\makebox(0,0)[lt]{\smash{\begin{tabular}[t]{l}{$\t$}\end{tabular}}}}%
    \put(0,0){\includegraphics[width=\unitlength,page=2]{stabs.pdf}}%
    \put(0.47718532,0.69162879){\color[rgb]{0,0,0}\makebox(0,0)[lt]{\smash{\begin{tabular}[t]{l}{$\t$}\end{tabular}}}}%
  \end{picture}%
\endgroup%

%% file: decomposition.tex
In this section, we begin by defining a real analogue of Gabai's notion of a decomposing surface, in Section~\ref{subsec:decompositionsuturedtheory}. We then show that in various simple cases this operation leaves the rank of real sutured Floer homology unchanged in Section~\ref{subsec:simpledecomps}. We will return to more general cases in~\Cref{sec:Applications of nice diagrams}.

\subsection{The sutured theory}\label{subsec:decompositionsuturedtheory}
We begin with the following topological definition.
\begin{definition}\label{def:realdecomposing surface}
    Let $(Y,\gamma,\tau)$ be a real balanced sutured manifold. A properly embedded oriented surface $S$ in $Y$ is called a \emph{real decomposing surface} if the following conditions hold:
    \begin{itemize}
        \item $S$ is fixed setwise by $\tau$ and has its orientation reversed by $\tau$;
        \item every component $\lambda$ of $S\cap\gamma$ is either a properly embedded non-separating arc in $\gamma$ or a simple closed curve in an annulus $A\subset A(\gamma)$ which shares same homology class with $A\cap s(\gamma)$.
    \end{itemize}  
\end{definition}

Combining the two conditions in \Cref{def:realdecomposing surface} and~\Cref{remark:standard models for sutured boundary}, components of $S\cap \gamma$ can be classified as follows: For each component $\lambda$ of $S\cap\gamma$ one of the following holds:
\begin{enumerate}
    \item $\lambda$ is a properly embedded arc in $\gamma$ which is fixed as a set by $\tau$ and $\lambda\cap s(\gamma)$ consists of a fixed point of $\tau$;
    \item $\lambda$ is a properly embedded arc in $\gamma$ and there is another component $\lambda'$ in $S\cap \gamma$ such that $\tau$ interchanges $\lambda$ and $\lambda'$;
    \item $\lambda$ is a simple closed curve in a strongly invertible component of $\gamma$ and $\lambda$ passes through both fixed points on that component of $s(\gamma)$;
    \item $\lambda$ is a homologically non-trivial simple closed curve in a connected component $\gamma_0\subset \gamma$ with $s(\gamma_0)$ a strongly invertible knot, and there exists a component $\lambda'\subset \gamma_0$ of $S\cap \gamma$ so that $\tau$ interchanges $\lambda$ and $\lambda'$; 
    \item $\lambda$ is a homologically non-trivial simple closed curve in a connected component of $\gamma$ that is not fixed setwise by $\tau$ and $\lambda'=\tau(\lambda)$ is another component of $S\cap \gamma$ living in another component of the suture.
\end{enumerate}

As we do not consider real sutured manifolds with toroidal sutures in this paper, we do not give a real analogue of case (3) in \cite[Definition 3.1]{gabai1983foliations}.

\begin{definition}
Let $S$ be a real decomposing surface in a real sutured manifold $(Y,\gamma,\tau)$. The \emph{real sutured manifold obtained by a real decomposition along $S$} is the real sutured manifold $(Y',\gamma',\tau')$ where $Y':=Y\setminus \nu(S)$ and $\tau':=\tau|_{Y'}$. Here, $\nu(S)$ is a $\tau$-equivariant neighborhood of $S$.\footnote{We in fact equivariantly smooth the corners of $(Y',\gamma',\tau')$ to make it a real balanced sutured manifold, so this is an abuse of notation. The real diffeomorphism type of the resulting real sutured manifold is independent of the choice of smoothing, so we suppress it in our notation.} The suture $\gamma'$ is specified by:
    \begin{enumerate}
    
        \item $\gamma'= (Y'\cap \gamma)\cup \nu(S_+\cap R_-(\gamma))\cup\nu(S_-\cap R_+(\gamma))$.
        \item $R_+(\gamma')=((R_+(\gamma)\cap Y')\cup S_+)\setminus \mathrm{int}(\gamma')$.
        \item $R_-(\gamma')=((R_-(\gamma)\cap Y')\cup S_-)\setminus \mathrm{int}(\gamma')$.
      
    \end{enumerate}
   Here, $S_+$ and $S_-$ are the images of the positive and negative push-offs of $S$ in $\partial(\nu(S))$. 
\end{definition}

\subsection{Simple decompositions}\label{subsec:simpledecomps}
In this section, we investigate the behavior of real sutured Floer homology under certain simple but fundamental real surface decompositions. We begin by recalling the classification of  disks equipped with involutions with codimension $1$ fixed point sets:
\begin{itemize}
    \item \emph{flipping disks}, which are modeled by $(D,\tau)$, with $D=\{z\in\C:|z|\leq 1\}$ and $\tau$ acting on it by $z\mapsto\overline{z}$.
    \item \emph{pairs of disks}, which are disjoint disks interchanged by the involution.
\end{itemize}

Decomposing along product disks --- i.e., properly embedded disks which intersect the sutures geometrically twice --- plays an important role in sutured manifold theory. In the real setting, there are two natural analogues of a product disk:
\begin{definition}

Let $(Y,\gamma,\tau)$ be a real sutured manifold. A \emph{product flipping disk} is a properly embedded flipping disk that is a product disk. Likewise, a \emph{pair of product disks} is a properly embedded pair of disks $(D,D')$ such that each component of which is a product disk.
\noindent A \emph{real product disk decomposition} is a sutured manifold decomposition along a flipping product disk or a pair of product disks.
\end{definition}

Involutions on closed surfaces were classified by Dugger in \cite{dugger_involutions_surfaces}. The $2$-sphere has two orientation-reversing involutions: the reflection and the antipodal map. By removing neighborhoods of orbits of points in either sphere we obtain involutions on the annulus. An annulus equipped with the restriction of the antipodal map will be referred to as an \emph{antipodal annulus}; likewise an annulus equipped with the restriction of the reflection map will be called a \emph{reflection annulus}. Reflection annuli come in two varieties, depending on whether we remove neighborhoods of points on the fixed set or a symmetric pair off the fixed set. We shall refer to the former as \emph{$\{1\}$-reflection annuli} and the latter as \emph{free-boundary reflection annuli}. Finally, we have \emph{pairs of annuli} which are two disjoint annuli interchanged by the involution. 
In addition to product disk decompositions, we also consider decompositions along product annuli, for which there are three real analogues.

\begin{definition}
A \emph{reflecting product annulus} (resp. \emph{antipodal product annulus}) is a properly embedded reflecting (resp. antipodal) annulus $A\cong S^1\times [-1,1]$ such that $S^1\times \{\pm1\}\subset R_\pm(\gamma)$. A \emph{pair of product annuli} consists of a properly embedded pair of annuli $A_1\sqcup A_2$  such that $\partial_\pm A_1\subset R_\pm(\gamma)$, and $\partial_\pm A_2\subset R_\pm(\gamma)$. A \emph{real product annulus decomposition} is a sutured manifold decomposition along a reflecting product annulus, an antipodal product annulus or a pair of product annuli.
\end{definition}

\begin{proposition}\label{prop:product disk decomposition}
Suppose $(Y',\gamma',\tau')$ is obtained from $(Y,\gamma,\tau)$ by a real product disk decomposition. Then $\rank \RSFH(Y',\gamma',\tau')=\rank\RSFH(Y,\gamma,\tau)$.
\end{proposition}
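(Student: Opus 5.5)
The plan is to follow Juh\'asz's proof for product disk decompositions, \cite[Lemma 9.13]{juhasz2006holomorphic}: find a real Heegaard diagram for $(Y,\gamma,\tau)$ in which the decomposing disk(s) meet $\Sigma$ in arcs disjoint from all $\bm\alpha$ and $\bm\beta$ curves. Cutting $\Sigma$ along these arcs then gives a real Heegaard diagram for $(Y',\gamma',\tau')$ with the same curves, the same generators, and the same holomorphic curves.

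The first step is to construct such a diagram. Take a product disk $D$; in the pair case, $D$ and $D'=\tau(D)$. Because $D$ is a product disk, it has a collar identification $D\cong a\times[-1,1]$ in which $a\times\{\pm1\}\subset R_\pm(\gamma)$ and $\partial a\times[-1,1]\subset\gamma$. In the flipping case we pick this identification so that $\tau|_D(x,t)=(\rho(x),-t)$, where $\rho$ is the reflection of the arc $a$ fixing its midpoint. This matches the classification of flipping disks and the fact that $\tau$ swaps $R_\pm$. We then choose a real sutured Morse function $f$ (Lemma~\ref{lem:realmorseffunctionsexist}, Remark~\ref{rmk:selfindexingrealmorse}) with $f|_D(x,t)=t$ near $D$ and no critical points in an equivariant neighborhood $\nu(D)$. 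In the pair case we prescribe $f$ on $D$ and set $f=-f\circ\tau$ on $D'$. The real $\tau$-invariant metric is chosen as a product near $D$, so that $D$ is a union of flow lines. The extension argument of Lemma~\ref{lem:realmorseffunctionsexist} is local, using equivariant balls and a partition of unity. Applied relative to the closed invariant set $\nu(D)\cup\partial Y$, it extends $f$ to all of $Y$.

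The resulting real diagram $(\Sigma,\bm\alpha,\bm\beta,\tau)$ has $\Sigma\cap D=a\times\{0\}$, a properly embedded arc (or a $\tau$-swapped pair of arcs) joining boundary components of $\Sigma$. No stable or unstable manifold of a critical point enters $\nu(D)$, so these arcs are disjoint from $\bm\alpha\cup\bm\beta$. Cutting $\Sigma$ along them gives $\Sigma'$ with the restricted involution, and $(\Sigma',\bm\alpha,\bm\beta,\tau|_{\Sigma'})$ is a real sutured diagram for $(Y',\gamma',\tau')$. This holds because decomposing along a product disk corresponds to cutting the product region $\nu(D)$ in the standard (unreal) way, and the cut is equivariant. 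Balancedness is preserved, since each new boundary component of $\Sigma'$ comes from the cut arc.

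Next we compare the chain complexes. The generators $(\T_\alpha\cap\T_\beta)^R$ are unchanged. Every region of $\Sigma$ that touches the cut arcs also touches $\partial\Sigma$, and such regions have multiplicity zero in the domain of any curve counted by the differential. Hence the real domains counted in $\Sigma$ and in $\Sigma'$ are the same. The moduli spaces agree as well if we choose the symmetric almost complex structure near $\partial\Sigma$ compatibly, since holomorphic representatives avoid a neighborhood of the cut arcs. Admissibility transfers because periodic domains of $\Sigma'$ are periodic domains of $\Sigma$ avoiding the cut, and we may first make the diagram admissible by a real isotopy winding away from $D$ (Remark~\ref{rmk:weak admissiblity}). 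Therefore the complexes are isomorphic and the ranks agree. The real $\SpinC$ decompositions may differ by a relabeling, which is why the statement is about rank only.

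I expect the main obstacle to be the equivariant relative construction of $f$ in the flipping case. Near the fixed point of $\tau$ on $D$, which lies on an arc of $C$ through the midpoint, we must check that the prescribed $f(x,t)=t$ is compatible with the normal form $(y,(x_1,x_2))\mapsto ax_2$ near $C$ used in Lemma~\ref{lem:realmorseffunctionsexist}. It is, since $D$ can be taken to contain the direction of $x_2$ transverse to $C$. We must also check that generic real Morse-Smale perturbation can be done supported away from $\nu(D)$, following \cite{bao2025morsehomologyequivariance}.
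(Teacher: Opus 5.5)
Your overall route is the paper's. You make a real sutured Morse function equal to the height function on an equivariant neighborhood of the disk(s), so that $D\cap\Sigma$ is an arc (or a swapped pair of arcs) with endpoints on $\partial\Sigma$ and disjoint from $\bm\alpha\cup\bm\beta$. You cut $\Sigma$ along it and note that generators, real domains (which have zero multiplicity on regions meeting $\partial\Sigma$) and periodic domains are unchanged. The pair-of-disks case is fine as you wrote it.

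The flipping case, however, uses an impossible local model. You cannot choose $D\cong a\times[-1,1]$ with $\tau|_D(x,t)=(\rho(x),-t)$, where $\rho$ is a reflection of $a$. That map is rotation by $\pi$: it preserves the orientation of $D$ and has an isolated fixed point. A flipping disk is modelled on $z\mapsto\bar z$, and a real decomposing surface must have its orientation reversed by $\tau$. Equivalently, $\tau$ must swap the push-offs $D_\pm$, which is exactly what makes $\tau'$ interchange $R_\pm(\gamma')$. So $\tau|_D$ is a reflection swapping $a\times\{1\}\subset R_+(\gamma)$ with $a\times\{-1\}\subset R_-(\gamma)$. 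Its fixed arc must therefore join the two side arcs $\partial a\times[-1,1]$, each of which is preserved. The correct model is $\tau|_D(x,t)=(x,-t)$, i.e.\ the paper's $(y_1,y_2,y_3)\mapsto(-y_1,y_2,-y_3)$ on $[-1,1]^3$ with $D=\{y_1=0\}$ and $f=y_3$.

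Consequently your final paragraph analyzes a configuration that does not occur. $C$ does not cross $D$ transversally at the midpoint. Instead, the cut arc $\delta=D\cap\Sigma=a\times\{0\}$ is an entire arc component of $C$, joining the two fixed points on $s(\gamma)$. After cutting, its two copies are interchanged by $\tau'$, so the decomposed diagram has one fewer fixed arc. With the corrected model, $f(x,t)=t$ is still real and matches the normal form $ax_2$ near $C$ once you take $D=\{x_1=0\}$. The rest of your argument then goes through verbatim.
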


\begin{proof}
As observed in \cite[Lemma 9.13]{juhasz2006holomorphic}, we can find real Heegaard diagrams from which we can see decompositions clearly by constructing a real sutured Morse function starting from a neighborhood of these disks. We adapt this strategy to the real setting.

First, consider the case of a flipping disk, $D$. Let $\nu(D)$ be a closed equivariant neighborhood of $D$, and choose an equivariant diffeomorphism $t:\nu(D)\to [-1,1]^3$ such that $D$ is mapped to $\{0\}\times [-1,1]^2$ and $s(\gamma)\cap \nu(D)$ is mapped to ${[-1,1]\times \partial[-1,1]\times \{0\}}$. Here, $[-1,1]^3$ is equipped with the involution $(y_1,y_2,y_3)\mapsto (-y_1,y_2,-y_3)$  (so that the fixed point set is $\{0\}\times [-1,1]\times \{0\}$).

Let ${p_3:[-1,1]^3\to [-1,1]}$ denote the projection onto the third factor. The function $p_3\circ t:\nu(D)\to\R$ extends to a real sutured Morse function $f: Y\to \R$ as in the proof of Lemma \ref{lem:realmorseffunctionsexist}. Note that $f$ has no critical points in $\nu(D)$ and that if we choose a generic $\tau$-invariant metric, $g$, then $D$ is a union of flowlines of $\mathrm{grad}_g(f)$ connecting $R_-(\gamma)$ and $R_+(\gamma)$. From $f$, we obtain a real balanced diagram $(\Sigma,\bm\alpha,\bm\beta,\t)$ where $\Sigma=f^{-1}(0)$. Note that $\delta:=D\cap\Sigma$ is an arc component of $C$. This arc is disjoint from the $\alpha$ and $\beta$ curves by construction. Since $\partial\delta\subset \partial\Sigma$, every domain $\cD$ that is away from the boundary has multiplicity zero at this elementary domain. Cutting $\Sigma$ open along $\delta$ yields a new surface $\Sigma'$, which inherits an involution $\tau'$ from $\tau$. Observe that $(\Sigma',\bm\alpha,\bm\beta,\tau')$ is a real balanced Heegaard diagram describing $(Y',\gamma',\tau')$. $(\Sigma, \bm \alpha, \bm \beta, \t)$ can be made weakly-admissible by standard techniques. Since periodic domains in $(\Sigma, \bm \alpha, \bm \beta, \t)$ avoid $\delta$, the diagram $(\Sigma',\bm\alpha',\bm\beta',\tau')$ is admissible as well. From this pair of real Heegaard diagrams, we get isomorphic chain complexes $\RSFC(\Sigma,\bm\alpha',\bm\beta',\t)$ and $\RSFC(\Sigma',\bm\alpha',\bm\beta',\t')$ which compute $\RSFH(Y,\gamma)$ and $\RSFH(Y',\gamma')$, respectively. 

For a pair of product disks $D_1$ and $D_2$, let $\nu(D_1)\cup \nu(D_2)$ be a closed equivariant neighborhood of the disks. Choose an equivariant diffeomorphism $$t:\nu(D_1)\cup \nu(D_2) \to [-1,1]^3 \sqcup [-1,1]^3$$ such that $D_1$ and $D_2$ are mapped to $\{0\}\times [-1,1]^2\sqcup \{0\}\times [-1,1]^2$ and $s(\gamma)\cap (\nu(D_1)\cup \nu(D_2))$ is mapped to $[-1,1]\times \partial[-1,1]\times \{0\}\sqcup [-1,1]\times \partial[-1,1]\times \{0\}$. Here $[-1,1]^3 \sqcup [-1,1]^3$ is equipped with the involution $(-x_1,x_2,x_3,y_1,y_2,y_3) \mapsto (-y_1,y_2,y_3,-x_1,x_2,x_3)$ where $(x_1,x_2,x_3)$ and $(y_1,y_2,y_3)$ are coordinates for the two $3$-balls, respectively. We can repeat the argument at the level of Heegaard diagrams as in the case of the flipping disk to conclude the proof. \end{proof}
A quicker proof can be obtained by modifying a real Heegaard diagram for $(Y',\gamma',\tau')$ by identifying appropriate arcs on its sutures to obtain a Heegaard diagram for $(Y,\gamma,\tau)$.

\begin{proposition}\label{prop:decomposition along product annuli}
Suppose that $(Y,\gamma,\tau)$ is a real balanced sutured manifold with ${H_2(Y;\Z)=0}$. We have the following real analogue of~\cite[Lemma 8.9]{juhasz2008floer}.
\begin{enumerate}
 \item Let $S\subset (Y,\gamma,\tau)$ be a reflecting  or antipodal product annulus such that the components of $\partial S$ are not zero in $H_1(R(\gamma);\Z)$ or are both boundary-coherent (see Definition~\ref{def:boundary-coherent}) in $R(\gamma)$. Then $S$ gives a surface decomposition $(Y,\gamma,\tau)\overset{S}{\rightsquigarrow}(Y',\gamma',\tau')$ for which   \[\RSFH(Y,\gamma,\tau)\cong\RSFH(Y',\gamma',\tau').\]
\item Let $S=S_1\sqcup S_2\subset (Y,\gamma,\tau)$ be a pair of product annuli such that at least one component of $\partial S_1$(thus also for $\partial S_2$) is not zero in $H_1(R(\gamma);\Z)$ or both components are boundary-coherent in $R(\gamma)$. Then $S$ gives a surface decomposition  $(Y,\gamma,\tau)\overset{S}{\rightsquigarrow}(Y',\gamma',\tau')$ for which  \[\RSFH(Y,\gamma,\tau)\cong\RSFH(Y',\gamma',\tau').\]
\end{enumerate} 
\end{proposition}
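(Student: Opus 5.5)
We follow the proof of~\cite[Lemma~8.9]{juhasz2008floer}, which reduces a product annulus decomposition to product disk decompositions plus a computation in a standard local model. Since $H_2(Y;\Z)=0$ and the boundary components of the annulus are nonzero in $H_1(R(\gamma))$ or boundary-coherent, the decomposition is taut, and $R(\gamma')$ is again Thurston norm minimizing. The key topological trick in the classical case is as follows. Choose an arc in $R_+(\gamma)$ (or $R_-(\gamma)$) from a component of $\partial S$ to $s(\gamma)$, and attach a product $1$-handle along it. This produces a product disk $D$ in the enlarged manifold that meets the annulus $S$. One then decomposes along $D$ and along $S$, and compares the result via~\Cref{prop:product disk decomposition}. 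We carry this out equivariantly.

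\textbf{Step 1: an equivariant arc.} For a reflecting or antipodal annulus $S$, we pick a $\tau$-invariant family of arcs $a\cup\tau(a)$ with $a\subset R_+(\gamma)$ running from $\partial_+S$ to $s(\gamma)$. Then $\tau(a)\subset R_-(\gamma)$ runs from $\partial_-S$ to $s(\gamma)$. The hypothesis that $\partial S$ is nonzero in $H_1(R(\gamma))$ or boundary-coherent is used exactly as in Juh\'asz's argument: it guarantees that such arcs exist in the relevant component of $R_\pm$ and that the resulting decomposition is well defined. For a pair of annuli $S_1\sqcup S_2$, we take $a$ from $\partial_+S_1$ and its image from $\partial_-S_2$. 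We then attach an equivariant pair of product $1$-handles along $a$ and $\tau(a)$. By the remark following the definition of product disk decompositions, this does not change the rank of $\RSFH$; it is the inverse of decomposing along a pair of product disks, which is the case covered by~\Cref{prop:product disk decomposition}. The cocores of the handles together with $S$ can then be arranged so that $S\cup(\text{handle cores})$ is cut by an equivariant family of product disks. These are a pair of product disks, or a flipping product disk if the arc through the fixed point of the reflecting annulus is used.

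\textbf{Step 2: reduce to product disks.} Cutting $S$ along these disks turns the annulus decomposition into a product disk decomposition, up to real diffeomorphism. We will check that decomposing the handled manifold along the disks yields $(Y',\gamma',\tau')$ with a product $1$-handle attached, or a real product sutured manifold factor split off. Applying~\Cref{prop:product disk decomposition} twice then gives the isomorphism of ranks. To get a genuine isomorphism rather than a rank equality, we instead use the diagrammatic proof of~\Cref{prop:product disk decomposition}: an adapted real Heegaard diagram in which $S\cap\Sigma$ is a $\tau$-invariant union of curves disjoint from $\bm\alpha\cup\bm\beta$, and in which cutting along $S\cap\Sigma$ identifies the chain complexes.

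\textbf{Main obstacle.} The hard step is the antipodal annulus. Here $\tau$ acts freely on $S$, so no flipping disk is available. The arcs must be chosen so that $a$ and $\tau(a)$ land on opposite sides, and the annulus $S\cap\Sigma$ meets $\Sigma$ in a single $\tau$-invariant closed curve on which $\tau$ acts freely. We must check that cutting $\Sigma$ along this curve and capping yields a real diagram for $(Y',\gamma',\tau')$. We must also check that $H_2(Y)=0$ guarantees that no periodic domain crosses the curve, so that admissibility and the set of index-one domains are unchanged. If the arc reduction fails in this case, the fallback is to directly build a real sutured Morse function standard near $\nu(S)$, modeled on $S\times[-1,1]$ with the antipodal involution, as in the proof of~\Cref{prop:product disk decomposition}.
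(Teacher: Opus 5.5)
There is a genuine gap in both of your mechanisms.

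\textbf{The product $1$-handle reduction.} This step is not well defined. A product $1$-handle is attached along two disks centered on $s(\gamma)$, so it cannot be attached ``along'' an arc that ends on $\partial S$. Its cocore is a product disk disjoint from $S$. Nothing in your construction produces a product disk meeting $S$ in a single arc from $\partial_+S$ to $\partial_-S$, which is what cutting an annulus into product disks requires.

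This cannot be repaired in a case your hypotheses allow: $\partial_\pm S$ null-homologous in $R_\pm(\gamma)$ but boundary-coherent. Then $\partial_+S$ bounds a subsurface $F\subset R_+(\gamma)$ away from the sutures. Product $1$-handles only add bands along $\partial R_+(\gamma)$, so every proper arc in the enlarged $R_+$ meets $\partial_+S$ an even number of times, and no product disk crosses $S$ once. In that case, using $H_2(Y)=0$, decomposing along $S$ cuts off the region bounded by $F\cup S\cup F'$. The statement is then a genuine gluing formula, not a consequence of Proposition~\ref{prop:product disk decomposition}.

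You also misread the role of the hypothesis. Arcs from $\partial S$ to $s(\gamma)$ always exist. The condition ``nonzero in $H_1(R(\gamma))$ or boundary-coherent'' is exactly the condition that $S$ is a nice decomposing surface in the sense of Definition~\ref{def:boundary-coherent}. If necessary you may pass to $-S$, which gives the same decomposition for a product annulus.

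\textbf{The fallback of cutting a vertical diagram.} Cutting a diagram in which $S$ is a union of flow lines does not identify the chain complexes. In the product-disk case, $D\cap\Sigma$ is an arc with endpoints on $\partial\Sigma$, so it lies in a boundary region. Here $S\cap\Sigma$ is a closed curve whose elementary region in general misses $\partial\Sigma$. Domains connecting generators can have nonzero multiplicity there; they contribute to the differential on $\Sigma$ but not on the cut surface. The hypothesis $H_2(Y)=0$ only excludes periodic domains, so it gives admissibility of this diagram and nothing more.

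\textbf{The missing ingredient.} What you need is the real decomposition formula, Theorem~\ref{thm:decomposition formula for RSFH}. This is what the paper's adaptation of Juh\'asz's argument for \cite[Lemma~8.9]{juhasz2008floer} rests on. Keep the vertical diagram, but use it for a different purpose:
\begin{enumerate}
\item For every generator $\bm{x}$, the vector field $v_{\bm{x}}$ agrees with the gradient near $S$, since the trajectories through $\bm{x}$ avoid $S$. Hence $v_{\bm{x}}$ is tangent to $S$ and never equal to $-\nu_S$.
\item Therefore every real $\SpinC$ structure carrying a generator lies in $O^R_S$.
\item The diagram is admissible because $H_2(Y)=0$, and $S$ (or $-S$) is nice by hypothesis.
\item Theorem~\ref{thm:decomposition formula for RSFH} then gives $\RSFH(Y',\gamma',\tau')\cong\bigoplus_{\s^R\in O^R_S}\RSFH(Y,\gamma,\tau,\s^R)=\RSFH(Y,\gamma,\tau)$.
\end{enumerate}
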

\begin{proof}
The proof of \cite[Lemma 8.9]{juhasz2008floer} is readily adapted to the real setting. 
\end{proof}

Note that the complicated case in the proof of \cite[Proposition 8.10]{juhasz2008floer} doesn't occur in our setting. More precisely, the real assumption forces the homology information on boundaries to be the same for components in $R_-$ and $R_+$, so it never occurs that one of $\partial S\cap R_\pm$ is boundary coherent while the other is not.

%% file: guided_2_handle.tex
In this section, we introduce an operation on real sutured manifolds that doesn't have a natural analogue in the case of classical sutured manifolds.

\begin{definition}\label{def:guideddeletion}
Let $(Y,\gamma,\tau)$ be a real sutured manifold containing a flipping disk. Given a simple closed curve $c\subset R_+(\gamma)$ that intersects $\partial D$ exactly once geometrically, we can define a real sutured manifold $(Y',\gamma',\tau')$ where $Y'$ is obtained by attaching a real pair of $2$-handles to $c\subset R_+(\gamma)$ and $\tau(c)\subset R_-(\gamma)$. We say $(Y',\gamma',\tau')$ is obtained by a \emph{real arc recomposition adapted to $(D,c)$} and write $(Y',\gamma',\tau')\overset{D,c}{\rightsquigarrow} (Y,\gamma,\tau)$. We call the operation of undoing a real arc recomposition a \emph{real arc decomposition adapted to $(D,c)$}.
\end{definition}

 Alternatively, $(Y,\gamma,\tau)$ is obtained by a real arc decomposition from $(Y',\gamma',\tau')$ if it is obtained from $(Y',\gamma',\tau')$ by removing an equivariant tubular neighborhood of a pair of arcs $(a,\tau(a))$ with $\partial a\subset R_+(\gamma')$ provided that:
 \begin{enumerate}
    \item $a\cap\tau(a)=\emptyset$;
    \item after removing $\nu(a)\cup \nu(\partial a)$, there exists a properly embedded disk $D$ containing an arc component of the fixed point set, $C_i$, such that $\partial D$ consists of eight consecutive arcs $a_1,\ldots, a_8$. We require that $a_1$ and $a_5$ lie in $\nu(a)$ and $\nu(\tau(a))$ respectively; $a_2$ and $a_8$ lie in $R_+$ while $a_4$ and $a_6$ lie in $R_-$; finally, $a_3$ and $a_7$ are separating arcs in $s(\gamma')$. See Figure~\ref{fig:arcdecomp} for an illustration.
\end{enumerate}

\begin{figure}

\centering 
\begin{tikzpicture}[scale=0.8]

\fill[purple!10] (-3,2) -- (-2,2)  arc[start angle=270, end angle=360, radius=1] (-1,3) -- (-1,3.5)  -- (1,3.5) --(1,3) arc[start angle=180, end angle=270, radius=1] --(3,2) -- (3,-2) --(2,-2) arc[start angle=90, end angle=180, radius=1] -- (1,-3.5) -- (-1,-3.5) --(-1,-3) arc[start angle=0, end angle=90, radius=1] -- (-3,-2) -- ( -3,2);


\draw[black, thick] (-3,2) -- (-2,2)  arc[start angle=270, end angle=360, radius=1] (-1,3) -- (-1,4) ;

\draw[dashed, thick ] (-1,4) --(1,4);

\draw[dashed, thick ] (-1,-4) --(1,-4);

\draw[black, thick] (1,4) --(1,3) arc[start angle=180, end angle=270, radius=1] --(3,2);

\draw[orange, thick] (-3,-2) --  (-3,2);

\draw[orange, thick] (3,-2) --  (3,2);

\draw[green, thick] (-3,0) --  (3,0);

\draw[brown, thick] (-1,3.5) --  (1,3.5);

\draw[brown, thick] (-1,-3.5) --  (1,-3.5);

\draw[red, thick] (0,0) --  (0,4);

\draw[blue, thick] (0,-4) --  (0,0);

\draw[black, thick] (-3,-2) -- (-2,-2)  arc[start angle=90, end angle=0, radius=1] (-1,-3) -- (-1,-4) ;

\draw[black, thick] (1,-4) --(1,-3) arc[start angle=180, end angle=90, radius=1] --(3,-2);

\filldraw[black] (3,0) circle (2pt);
\node at (3.5,0) {\( s(\gamma)\)};
\filldraw[black] (-3,0) circle (2pt);
\node at (-3.5,0) {\( s(\gamma)\)};
\filldraw[black] (0,0) circle (2pt);
\node at (-0.2,0.2) {\( x\)};

\node at (0.5,3) {\( a\)};
\node at (0.5,-3) {\( \tau(a)\)};
\node at (1.5,0.5) {\( C_i\)};

\node at (2,3) {\( R_+(\gamma)\)};
\node at (-2,3) {\( R_+(\gamma)\)};
\node at (2,-3) {\( R_-(\gamma)\)};
\node at (-2,-3) {\( R_-(\gamma)\)};

\node at (-0.3,3.75) {\textcolor{purple}{$a_1$}};
\node at (-0.3,-3.75) {\textcolor{purple}{$a_5$}};
\node at (-2.5,1.4) {\textcolor{purple}{$a_7$}};;
\node at (2.5,1.4) {\textcolor{purple}{$a_3$}};

\node at (0.9,2.3) {\textcolor{purple}{$a_2$}};
\node at (-0.9,2.3) {\textcolor{purple}{$a_8$}};
\node at (0.9,-2.1) {\textcolor{purple}{$a_4$}};
\node at (-0.9,-2.1) {\textcolor{purple}{$a_6$}};

\end{tikzpicture}
\caption{A disk, $D$, guiding an arc decomposition is shown in light purple. $D$ contains an arc component of the fixed point set $C_i$, shown in green. The orange curves on the right and left are arcs in the sutures. The cores of two handles in an adapted Heegaard diagram are show in red and blue. $x$ is an intersection point that plays an important role in this section.}
\label{fig:arcdecomp}
\end{figure}

One can pass from Definition~\ref{def:guideddeletion} to the alternative definition by taking arcs as the co-core of the handle attachments, and in the other direction by taking $c$ to be a meridian of $a$.

Note that we can alternatively view $D$ as a framing, $\xi_D$, of an arc component of the fixed point set, $C_i$, of $(Y',\gamma',\tau')$, that agrees with the real vector field $v_0$ (fixed in Section~\ref{subsec:Real relativeSpinC-structures}) at the two endpoints. We define the set of real $\SpinC$ structures that are \emph{outer} with respect to the real arc decomposition by
$$O_{D,c}:=\{\s^R\in\rspinc(Y,\gamma,\tau):[\s^R]_{C_i}=[\xi_D]\}.$$

Let $(Y,\gamma,\tau)$ a real sutured manifold containing a flipping disk containing a component $C_i$ of the fixed point set. We can obtain a real Heegaard diagram adapted to $D$ by Proposition~\ref{prop:decomposingdiagramsexist}; namely, we can find an embedded Heegaard diagram $\mathcal{H}=(\Sigma,\bm{\alpha},\bm{\beta},\tau)$ such that $C_i$ is a fixed circle in $\Sigma$ that does not intersect any $\alpha$ or $\beta$-curve. Let  $(Y',\gamma',\tau')$ be the real sutured manifold obtained by real arc recomposition adapted to $(D,c)$. $(Y',\gamma',\tau')$ admits a Heegaard diagram $\mathcal{H}'=(\Sigma,\bm{\alpha}\cup\{\alpha'\},\bm{\beta}\cup\{\beta'\},\tau')$ where $\alpha'$ is the image of (a small generic perturbation of) $c$ under the gradient flow of a real sutured Morse function adapted to $\mathcal{H}$ and $\beta'$ is the image of $\alpha'$ under $\tau'$. See Figure~\ref{fig:local picture near x}. Observe that $\mathcal{H'}$ contains a unique intersection point on $C_i$. We say such $\mathcal{H}'$ is a Heegaard diagram \emph{adapted to $(D,c)$}. 

 \begin{figure}
\begin{tikzpicture}

\draw[dashed] (-2,-2) rectangle (2,2);

\draw[black, thick] (-0.5,2) -- (0.5,2);

\draw[green, thick] (0,2) -- (0,-2);

\draw[black, thick] (-0.5,-2) -- (0.5,-2);

\draw[red, thick] (-2,2) -- (2,-2);

\draw[blue, thick] (-2,-2) -- (2,2);

     \filldraw[black] (0,0) circle (2pt);
    \node at (0.3,0) {\( x\)};


\node at (0.3,1) {\( C_i\)};


\node at (1,0) {\( k\)};
\node at (-1,0) {\( k\)};

\end{tikzpicture}
\caption{A neighborhood of an arc component of the fixed set $C_i$ in a real Heegaard diagram adapted to an arc decomposition. The solid black lines are boundary components of the Heegaard surface. $k$ indicates the multiplicity of any potential domain connecting a generator containing $x$ to another generator.}
\label{fig:local picture near x}
\end{figure}

We have the following formula for the behavior of real sutured Floer homology under real arc decompositions:

\begin{theorem}\label{Prop:adapteddecompfomrula}
Suppose $(Y,\gamma,\tau)\overset{D,c}{\rightsquigarrow}(Y',\gamma',\tau')$ is a real arc decomposition. Then $$\RSFH(Y',\gamma',\tau')\cong \underset{\s^R\not\in O_{D,c}}{\bigoplus}\RSFH(Y,\gamma,\tau,\s^R).$$ Moreover, there are maps ${p:H^R_1(Y',\tau')\to H^R_1(Y,\tau)}$ and $p':\RSFH(Y',\gamma',\tau')\to \RSFH(Y,\gamma,\tau)$ such that $p(\epsilon^R(\bm{x},\bm{y}))=\epsilon^R(p'(\bm{x}),p'(\bm{y}))$ for homogeneous elements $\bm{x},\bm{y}\in\RSFH(Y',\gamma',\tau')$, viewed as a relatively $H^R_1(Y',\tau')$-graded module.

\end{theorem}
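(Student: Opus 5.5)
We work with a Heegaard diagram adapted to $(D,c)$. By Proposition~\ref{prop:decomposingdiagramsexist}, $(Y,\gamma,\tau)$ has a weakly admissible real diagram $\mathcal{H}=(\Sigma,\bm\alpha,\bm\beta,\tau)$ in which the fixed arc $C_i=D\cap\Sigma$ meets no $\alpha$- or $\beta$-curve. Adding $\alpha'$ (the flowed-down pushoff of $c$) and $\beta'=\tau(\alpha')$ gives the adapted diagram $\mathcal{H}'$ for $(Y',\gamma',\tau')$. Since $c$ meets $\partial D$ once, $\alpha'$ crosses $C_i$ exactly once, at the point $x=\alpha'\cap\beta'\cap C_i$ of Figure~\ref{fig:local picture near x}.

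\textbf{Step 1: the generators.} A real generator of $\mathcal{H}'$ is an $R$-fixed point of $\T_{\alpha\cup\alpha'}\cap\T_{\beta\cup\beta'}$, so its $\alpha'$-coordinate is a $\tau$-fixed point of $\alpha'$, or it pairs with its $\beta'$-coordinate under $\tau$. I split generators into two types. Those containing $x$ are in bijection with generators $\bm y$ of $\mathcal{H}$ via $\bm y\mapsto \bm y\cup\{x\}$. I will show that the generators not containing $x$ have real $\SpinC$ structures lying in the part that dies.

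\textbf{Step 2: the $\SpinC$ comparison.} Let $\bm y'$ be a generator of $\mathcal{H}'$ and let $p'(\bm y')$ be the corresponding generator or class on the $Y$ side. I compare $[\s^R(\bm y)]_{C_i}$ with $[\xi_D]$ using Lemma~\ref{lem:mod2framingdefined}. The flowline through $x$ twists the vector field along $C_i$ by one relative to $\xi_D$, whereas the vector fields of generators off $x$ agree with $\xi_D$ near $C_i$. The result should be that $\s^R(\bm y\cup\{x\})$ corresponds to structures not in $O_{D,c}$. I expect to reconcile the leftover generators either by a filtration argument or by identifying them with $O_{D,c}$.

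A cleaner alternative: the $\mathcal{H}'$ generators containing $x$ are exactly those whose structure restricts to $\s^R\notin O_{D,c}$, and the remaining ones are absent on the $Y'$ side. So only the chain isomorphism in Step 3 is needed.

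\textbf{Step 3: the differential.} The local multiplicity near $x$ is the same value $k$ in all four quadrants of Figure~\ref{fig:local picture near x}. Two of those quadrants touch $\partial\Sigma$, so $k=0$ for any domain between generators containing $x$. Hence every real domain connecting $\bm y\cup\{x\}$ to $\bm z\cup\{x\}$ avoids a neighborhood of $x$ and of $\alpha'\cup\beta'$ near $x$. It is therefore a real domain of $\mathcal{H}$ plus some multiple of periodic pieces. This argument needs the arcs of $\alpha'$ to run near $\partial\Sigma$, i.e. $c$ isotoped close to the boundary, in the style of Juh\'asz's decomposition lemma.

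I then need the moduli spaces to agree. I plan to use a neck-stretching or standard "$x$ is a constant component" argument: the $x$-coordinate sits in a region of multiplicity zero on both sides. So $\mathcal{M}_R(\phi\cup x)\cong\mathcal{M}_R(\phi)$, using the cylindrical reformulation of Section~\ref{sec:cylindrical} and the fact that the curve component through $x$ is a trivial strip. This gives the chain isomorphism onto the summand over $\s^R\notin O_{D,c}$.

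\textbf{Step 4: the grading.} The map $p$ is induced by the inclusion $Y\hookrightarrow Y'$ reversed. Concretely, $Y'$ retracts off the 2-handles, or equivalently real 1-chains in $\Sigma$ avoiding $x$ are pushed forward. The grading $\epsilon^R$ is computed from multipaths in $\bm\alpha$, and $x$ is a common constant coordinate. So the formula $p(\epsilon^R)=\epsilon^R(p',p')$ follows from naturality of the chain-level definition.

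\textbf{Main obstacle.} The main obstacle I expect is Step 2: pinning down precisely which generators survive, i.e. which $\mathcal{H}'$ generators lie in structures outside $O_{D,c}$, and showing that those off $x$ contribute nothing. My first attempt would be a direct computation of the twist of $v_{\bm y}$ along $C_i$ from the local model.
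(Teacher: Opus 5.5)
This is essentially the paper's argument. It uses an adapted diagram with a single point $x\in\alpha'\cap\beta'\cap C_i$ and the framing comparison of Lemma~\ref{lem:diagramaticouterguided}: generators containing $x$ are exactly those with $\s^R\notin O_{D,c}$. It then shows the multiplicities at $x$ vanish, identifies the summand with $\RSFC(\Sigma,\bm\alpha,\bm\beta,\tau)$ via $\bm y\mapsto\bm y\cup\{x\}$, and takes $p'$ to be ``append $x$''. Your trivial-strip matching of moduli spaces is more explicit than the paper, which leaves it implicit. Several steps need tightening.

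In Step~2 you need both implications of that lemma, and your proposed local computation gives both. Since $f\circ\tau=-f$, the fixed set lies in $\Sigma=f^{-1}(0)$. Flowlines through points of $\Sigma\setminus C$ therefore miss $C_i$, so a generator avoiding $x$ has $v_{\bm y}$ equal to the gradient near $C_i$, which induces $\xi_D$. For a generator containing $x$, a mod~$2$ degree count on the boundary of half of an invariant neighborhood of the flowline through $x$ shows that the framing changes by an odd amount.

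No filtration argument is needed; the splitting is even visible combinatorially. At $x$ the two quadrants containing arcs of $C_i$ touch $\partial\Sigma$, so they have multiplicity $0$. The other two are exchanged by $\tau$, so they share a multiplicity $k$. Hence $n_{\mathrm{top}}+n_{\mathrm{bottom}}-n_{\mathrm{left}}-n_{\mathrm{right}}=-2k$ is even, and $x$ is never a corner of a real domain. So no real domain joins a generator containing $x$ to one not containing it. When $x$ is not a corner, the same relation forces $k=0$; this, rather than ``the same $k$ in all four quadrants'', is why the multiplicities vanish.

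In Step~3 you need neither $\alpha'$ near $\partial\Sigma$ nor periodic corrections. The point $x$ is the only coordinate of either generator on $\alpha'$ and on $\beta'$. So the jump of $\mathcal{D}$ across $\alpha'$ (resp.\ $\beta'$) is constant along that curve, and since it vanishes at $x$ it vanishes identically. Thus $\mathcal{D}$ is literally a domain of $\mathcal{H}$.

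Two further points. First, you never check that the enlarged diagram $\mathcal{H}'$ is admissible, which is needed for its summand to compute $\bigoplus_{\s^R\notin O_{D,c}}\RSFH(\cdot,\s^R)$. The same local relation handles this. A real periodic domain of $\mathcal{H}'$ has no corners, so $k=0$ at $x$ and its $\alpha'$- and $\beta'$-coefficients vanish. It is therefore a periodic domain of the admissible diagram $\mathcal{H}$, and real and ordinary weak admissibility agree. The paper instead rewinds along arcs completing $C_i$ to a basis and uses $A\cdot C_i=0$.

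Second, Step~4 is wrong as stated. The recomposed manifold does not in general retract onto the decomposed one, since the added $2$-handles can kill real $1$-classes, and nothing needs reversing. The map $p$ is the pushforward under the inclusion of the decomposed manifold into the recomposed one. Your labels ($Y'$ recomposed) follow Definition~\ref{def:guideddeletion}, whereas in the theorem statement $Y'$ is the decomposed manifold; that is why the arrow looked reversed. Compatibility with $\epsilon^R$ holds because the $\alpha'$-component of the multipath can be taken constant at $x$.
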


To prove this theorem, we find it helpful to have the following diagrammatic characterization of real $\SpinC$-structures that are outer with respect to $(D,c)$.

\begin{lemma}\label{lem:diagramaticouterguided}
Suppose $\mathcal{H}$ is a real Heegaard diagram adapted to a real arc recomposition adapted to $(D,c)$, where $D$ contains a component of the fixed point set $C_i$. Let $x$ denote the unique intersection point of the $\alpha$ and $\beta$-curves on $C_i$. Then $\bm{x}$ is outer with respect to $(D,c)$ if and only if $x\not\in\bm{x}$.
\end{lemma}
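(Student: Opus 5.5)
The plan is to compare the framing $[v_{\bm x}]_{C_i}$ induced by the vector field $v_{\bm x}$ with the framing $\xi_D$ determined by the guiding disk. By Lemma~\ref{lem:mod2framingdefined}, this mod $2$ invariant depends only on $\s^R(\bm{x})$, so we may compute it from the explicit representative $v_{\bm x}$ built from a real sutured Morse function $f$ adapted to $\mathcal{H}$ and $D$. Everything is local near $C_i$. In the adapted diagram (Figure~\ref{fig:local picture near x}), $C_i\cap\Sigma$ is an arc of the fixed set. It meets $\alpha'\cup\beta'$ only at $x$, and $x$ is the unique index-one/index-two flow trajectory through $C_i$: it runs from the critical point $p'$ of $\alpha'$ to $\tau(p')$, and this flowline lies in $C_i$ itself.

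\textbf{Step 1: generators avoiding $x$.} Suppose $x\notin\bm{x}$. Then the multi-trajectory $\gamma_{\bm x}$ does not meet $C_i$ except possibly at fixed points of other components. In fact it avoids $C_i$ entirely, since every point of $\bm{x}$ on the fixed set lies on some fixed arc, and $C_i$ carries only $x$. Hence $v_{\bm x}$ equals $\mathrm{grad}_g(f)$ along $C_i$. Because $D$ is a union of gradient flowlines from $R_-$ to $R_+$ (as in the proof of Proposition~\ref{prop:product disk decomposition}, modified by the pair of handles attached along $c,\tau(c)$), $\mathrm{grad}_g(f)$ along $C_i$ is tangent to $D$ and transverse to $C_i$. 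It therefore induces exactly the framing $\xi_D$, so $[\s^R(\bm x)]_{C_i}=[\xi_D]$ and $\bm x$ is outer.

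\textbf{Step 2: generators containing $x$.} Suppose $x\in\bm{x}$. Near the trajectory through $x$, we replace $\mathrm{grad}_g(f)$ on an equivariant tubular neighborhood by the standard real nonvanishing extension that cancels the index-one/index-two pair $p',\tau(p')$. We then compute how the resulting field twists along the segment of $C_i$ between $p'$ and $\tau(p')$ relative to $\xi_D$. The claim is that this local model contributes an odd number of twists.

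To verify this, we use the local model $f=x_1^2 - x_2^2 + \dots$ near the flowline, with $\tau$ rotating by $\pi$ about it, and the cancellation of two critical points along a fixed flowline. The turning of the modified vector field in the normal plane to $C_i$ is half a rotation near each critical point, giving one full (odd) twist in total. An alternative is a comparison argument. Two real vector fields that agree outside a neighborhood of the $x$-trajectory differ by one twist on $C_i$; this can be checked via the obstruction argument of Lemma~\ref{lem:mod2framingdefined}, where the degree on the half-ball $B^u$ changes by the local index at $x$.

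\textbf{Main obstacle.} The main obstacle is this odd-twist computation in Step 2. I would first carry it out in the simplest model, the product-disk real sutured manifold with one pair of handles attached along $c$ and $\tau(c)$, where $\mathcal{H}'$ has exactly two generators, with and without $x$, living in the two real $\SpinC$ structures of the $H^R_1\cong\Z/2$ computation. I would then transport it to the general case, since the construction is local near $D$.
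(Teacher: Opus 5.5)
Your Step~1 is correct and matches the paper. When $x\notin\bm x$, the modification producing $v_{\bm x}$ is supported near trajectories that miss $C_i$. So $v_{\bm x}$ equals the gradient along $C_i$, and the gradient induces $\xi_D$.

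The gap is Step~2. It carries all the content of the lemma, and you do not carry it out. Moreover, the local model in your route (a) rests on a wrong picture:
\begin{itemize}
\item Since $f\circ\tau=-f$, every fixed point satisfies $f=0$, so $C\subset f^{-1}(0)=\Sigma$.
\item A real Morse function has no critical points on $C$: at a fixed critical point the Hessian is forced to be degenerate.
\item Hence the trajectory through $x$ does \emph{not} lie in $C_i$. It is a $\tau$-invariant flowline crossing $\Sigma$ transversely, reversed by $\tau$, whose only fixed point is $x$.
\item Consequently $p'$ and $\tau(p')$ lie off $C_i$, and $\tau$ rotates about $C_i$, not about the flowline. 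There is no ``segment of $C_i$ between $p'$ and $\tau(p')$,'' so the ``half a rotation near each critical point'' count has no meaning.
\end{itemize}

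Route (b) is the right idea, but as stated it is also off. Two \emph{nonvanishing} real fields that agree outside an invariant neighbourhood $N$ of the trajectory differ by an \emph{even} number of twists; that is exactly \Cref{lem:mod2framingdefined}. Also, no index is located ``at $x$.'' The comparison that matters is between the gradient $v$, which vanishes at $p'$ and $\tau(p')$ inside $N$, and the nonvanishing field $v_{\bm x}$.

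The paper makes route (b) precise as follows. Cut $N$ by $\Sigma$ into $N_\pm$, so that $\partial N_+=(\partial N\cap N_+)\cup(N\cap\Sigma)$. Here $N\cap\Sigma$ is a $\tau$-invariant disk containing the short arc $C_i\cap N$. Take any real field $w$ on $\partial N_+$ agreeing with $v$ on $\partial N\cap N_+$, and compute the degree of $w\colon\partial N_+\to S^2$ mod $2$ piece by piece:
\begin{itemize}
\item The piece $\partial N\cap N_+$ contributes $1$. Indeed, with $w=v$ on the disk the total degree is the index $\pm1$ of the single zero $\tau(p')$ in $N_+$, and $v$ is nearly constant on the disk, so the disk contributes nothing.
\item The disk minus $C_i$ contributes an even number, because $w$ is real and $\tau$ swaps the two halves.
\item The arc $C_i\cap N$ contributes the parity of the twisting of $w$ relative to $v$.
\end{itemize}
A nonvanishing extension over $N_+$ forces degree $0$, so the relative twisting along $C_i$ must be odd. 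This is the half-ball computation of \Cref{lem:mod2framingdefined}, now with a zero of the gradient inside the half-ball.

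Your fallback of checking a model diagram and transporting the result does not avoid this step. Knowing that two generators lie in different real $\SpinC$ structures does not by itself show they differ in the framing of $C_i$. And the locality needed for the transport is exactly what the obstruction argument supplies.
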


\begin{proof}
Consider a real Heegaard diagram $\mathcal{H}$ for a real sutured manifold $(Y,\gamma,\tau)$ as in the statement of the lemma. Equipping $Y$ with a real Riemannian metric, we can take the gradient vector field $v$, of a real sutured Morse function which gives rise to the real Heegaard diagram $\mathcal{H}$. By regarding the flipping disk $D$ as a framing $\xi_D$  for the circle $C_i$, $v$ agrees with $\xi_D$. It follows that if $x\not\in\bm{x}$ then $[\s^R(\bm{x})]=[\xi_D]$, as desired.

On the other hand, if $x\in\bm{x}$, on $C_i$, then $v$ must be replaced by a new real vector field. We claim that this changes the framing induced on $C_i$ by an odd number, so that in turn that $[\s^R(\bm{x})]\neq[\xi_D]$. To see this, first observe that to obtain $v_{\s^R(\bm{x})}$, the vector field $v$ is modified in an invariant neighborhood $N$ of a real gradient trajectory containing $x$. The real Heegaard surface splits $N$ into two pieces, $N_\pm$. Let $v'$ denote at extension of $v|_{\partial N\cap N_+}$ over $C_i$ such that the framing of $C_i$ induced by $v'$ differs from that induced by $v$ by an even integer. It suffices to show that in this case the mod $2$ reduction of the degree of any real extension of $v'$ to $\partial N_+$, viewed as a map $\partial N_+\to S^2$, is odd, since then $v'|_{\partial N_+}$ cannot be extended over $N_+$.

Let $v''$ be a real extension of $v'$ to $\partial N_+$. We compute the degree of $v''$ as a map from $\partial N_+$ to $S^2$ modulo $2$. It can be checked that there is a contribution of $1$ modulo $2$ from $\partial N\cap N_+$. The contribution from $\partial N_+\setminus (\partial N\cup C_i)$ is even since $v''$ is real. Finally, the contribution from  $C_i$ is exactly the parity of framing of $C_i$ induced by $v''$. This concludes the proof.\end{proof}

We can now prove the real arc decomposition formula.

\begin{proof}[Proof of Theorem~\ref{Prop:adapteddecompfomrula}]
Let $D$ be the underlying flipping disk for the real arc recomposition. Pick a Heegaard diagram for $(Y',\gamma',\tau')$ adapted to the real arc recomposition 
\[{\mathcal{H}:=(\Sigma,\bm{\alpha}\cup\alpha',\bm{\beta}\cup\beta',\tau)}.\]
We write $\bm \alpha' = \bm{\alpha}\cup\alpha'$ and $\bm \beta' = \bm \beta \cup \beta'$.

First modify $\mathcal{H}$ so that it is real-admissible. Let $x\in\alpha'\cap\beta'$ be the unique intersection point on the component of the fixed point set, $C_i$, corresponding to $D$, as shown in Figure~\ref{fig:local picture near x}.  Extend $\{C_i\}$ to a basis for $H_2(\Sigma,\partial\Sigma)$ by adding properly embedded arcs $\gamma_j$ whose endpoints are not fixed by $\tau$. Perform zig-zag isotopies near the arcs $\gamma_j$ as in the proof of~\cite[Proposition 3.15]{juhasz2006holomorphic}. Suppose that $P$ is a real periodic domain with non-negative multiplicity in each domain. Write $\partial P=A-\tau(A)$ where $A=\sum_ka_k\alpha_k$. We have that $A\cdot \gamma_j=0$ for all $j$ just as in the proof of~\cite[Proposition 3.15]{juhasz2006holomorphic}. Since the domains $D_x$ and $D_x'$ adjacent to $x$ containing components of $C_i$ have the same multiplicities, it follows that $A\cdot C_i=0$. Consequently $a_k=0$ for all $k$, from which it follows that the multiplicities in every domain are zero, since the $\alpha$-curves are non-separating.

.
Let us define 
$$S:=\underset{\bm{x}\in(\T_{\bm{\alpha}}\cap \T_{\bm{\beta}})^R :x\in\bm{x}}{\bigoplus}\F[\bm{x}].$$
According to Lemma~\ref{lem:diagramaticouterguided}, we have that
$$S=\underset{\bm{x}\in O_{D,c}}{\bigoplus}\F[\bm{x}].$$ 
It follows that $S$ is subcomplex, in fact, a summand of $\RSFC(\mathcal{H})$. Observe that there are boundary components of $\Sigma$ in the elementary domains directly above and below $x$, as shown in Figure~\ref{fig:local picture near x}, so that the multiplicities of any homotopy class of disks in these two domains are zero. The symmetry of the pseudo-holomorphic disks under consideration forces the multiplicities of the associated domains in the regions immediately to the left and right of $x$, as shown in Figure~\ref{fig:local picture near x}, to agree, and hence must be zero. It follows that $S$ can be identified with the chain complex $\RSFC(\Sigma,\bm{\alpha},\bm{\beta},\tau)$. See Figure~\ref{fig:local picture near x}. The ungraded statement follows.

To upgrade to the graded statement, we simply take $p$ to be the map induced by the obvious inclusion $(Y,\gamma,\tau)\inj (Y',\gamma',\tau')$ and $p'$ to be the map defined as the linear extension of the map $(\T_{{\alpha}}\cap\T_{{\beta}})^R\inj (\T_{\alpha'}\cap\T_{{\beta}'})^R$ defined by appending the intersection point $x$. The result follows immediately.
\end{proof}

\begin{lemma}\label{lem:framingsrealized}
Suppose $(Y,\gamma,\tau)$ is a real sutured manifold and consider an arc component of the fixed set, $C_i$. For each of the two relative mod $2$ framings $v$ of the fixed point set, there is a real arc decomposition adapted to $(D,c)$ such that $O_{D,c}=\{[\s^R]=v\}$.
\end{lemma}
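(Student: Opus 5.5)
Here $(Y,\gamma,\tau)$ plays the role of the real sutured manifold \emph{after} the arc decomposition, so in the notation of Definition~\ref{def:guideddeletion} we must produce $(Y',\gamma',\tau')$ together with a flipping disk $D\subset Y'$ containing $C_i$ and a curve $c\subset R_+(\gamma')$ meeting $\partial D$ once. In fact we argue in the reverse direction: we start with $(Y,\gamma,\tau)$ and an arc component $C_i$ and build a flipping disk through it. The plan is to construct $D$ directly inside $(Y,\gamma,\tau)$ and then attach the real pair of $2$-handles. The parity of $[\xi_D]$ relative to $v_0$ is controlled by how many times $D$ twists around $C_i$.

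First, choose an equivariant tubular neighborhood $\nu(C_i)\cong I\times D^2$ with involution $(y,(x_1,x_2))\mapsto(y,(-x_1,-x_2))$, as in the proof of Lemma~\ref{lem:realmorseffunctionsexist}. The endpoints of $C_i$ lie in $s(\gamma)$, and near them $v_0$ fixes the normal direction. Any framing $\xi$ of $C_i$ agreeing with $v_0$ at the endpoints determines a band $B_\xi=\{(y,t\xi(y))\}$, which is a flipping disk invariant under $\tau$ since $t\mapsto -t$. Its boundary consists of two arcs in the sutures near $\partial C_i$ together with the pushoffs $\{(y,\pm\xi(y))\}$. Those pushoffs are interior arcs $a$ and $\tau(a)$, interchanged by $\tau$ and disjoint from each other.

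Second, enlarge $B_\xi$ slightly so that $a$ and $\tau(a)$ are properly embedded arcs with endpoints on $R_+$ and $R_-$ respectively. We do this by extending along the boundary annulus so that the picture matches Figure~\ref{fig:arcdecomp}. Now glue $\nu(a)\cup\nu(\tau(a))$ back in, which is exactly attaching a real pair of $2$-handles whose attaching curve $c$ is a meridian of $a$ pushed into $R_+$. In the alternative description following Definition~\ref{def:guideddeletion}, $(Y,\gamma,\tau)$ is then obtained from the result $(Y',\gamma',\tau')$ by a real arc decomposition adapted to $(D,c)$ with $D=B_\xi$. The meridian $c$ meets $\partial D$ once, as required.

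Third, compute $O_{D,c}$. By definition $O_{D,c}=\{\s^R:[\s^R]_{C_i}=[\xi_D]\}$, and $\xi_D=\xi$. Changing $\xi$ by one full twist rel endpoints changes its class mod $2$, since relative framings rel endpoints form a $\Z$-torsor. So choosing $\xi$ in either parity class realizes either value $v$. Lemma~\ref{lem:mod2framingdefined} ensures that $[\s^R]_{C_i}$ is well defined, so $O_{D,c}$ is exactly $\{[\s^R]=v\}$.

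The main obstacle is checking that the twisted band really gives a legitimate real arc recomposition for either twist. We must confirm that $a\cap\tau(a)=\emptyset$ and that the arcs can be made properly embedded with endpoints in $R_\pm$ compatibly with the sutured structure near $\partial C_i$. We must also confirm that twisting is realized by an equivariant isotopy supported in $\nu(C_i)$, so that it does not interfere with $\gamma$. I would handle this in the local model, by inserting the twist in the middle of $C_i$ away from the endpoints.
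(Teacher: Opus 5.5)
Your argument is correct, but it follows a different route from the paper.

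The paper argues diagrammatically. It starts from a real Heegaard diagram for $(Y,\gamma,\tau)$ and performs a $\{1\}$-stabilization at a point of $C_i$, followed by real handleslides (as in Figure~\ref{fig:remove_C_intersections}), until $C_i$ meets the $\alpha$-curves in a single point. It then observes that such a diagram is adapted to some real arc decomposition $(D,c)$. If $[\xi_D]$ has the wrong parity, it performs one more $\{1\}$-stabilization and handleslides, using the fact that a $\{1\}$-stabilization shifts the framing of $C_i$ induced by the splitting by $\pm1$.

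You instead fix the framing $\xi$ first. You then write down the pushoff arcs $a$, $\tau(a)$ and the band $B_\xi$ by hand in an equivariant tubular neighborhood of $C_i$, so $\xi_D=\xi$ holds by construction. Your version is more elementary, and it shows that every framing rel endpoints arises as some $\xi_D$, not just each parity class. The paper's version has the advantage of producing, at the same time, a real Heegaard diagram adapted to $(D,c)$, which is exactly what the proof of Theorem~\ref{Prop:adapteddecompfomrula} consumes.

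The step you flag as the main obstacle is not one. For any nonvanishing section $\xi$ with the prescribed endpoint values, $B_\xi$ is automatically embedded and $\tau$-invariant. So you simply pick $\xi$ in the desired class, and no isotopy is needed.

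You do need to fix the direction in your write-up. Your first paragraph and the sentence ``$(Y,\gamma,\tau)$ is then obtained from the result $(Y',\gamma',\tau')$ by a real arc decomposition'' have it reversed. Since $a$ and $\tau(a)$ are arcs inside the given $Y$, $Y$ is the manifold being decomposed:
\begin{itemize}
\item $Y\setminus\nu(a\cup\tau(a))$ is the result;
\item the trimmed band is the product flipping disk $D\supset C_i$ in it;
\item $c$ is a meridian of $a$ on the new part $\partial\nu(a)$ of $R_+$;
\item regluing $\nu(a)\cup\nu(\tau(a))$ is the real pair of $2$-handle attachments exhibiting $Y$ as the recomposition.
\end{itemize}
This is the reading in which the lemma is used (Remark~\ref{rem:nontrivialarc}, Lemma~\ref{prop:computingasummand}), and the one in which $O_{D,c}$ consists of real $\SpinC$ structures on the given $(Y,\gamma,\tau)$. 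Under the reading you announce, $B_\xi$ would not be properly embedded in $Y$, and a general fixed arc need not lie in any product flipping disk.
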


\begin{proof}
    Let $\mathcal{H}$ be a real Heegaard diagram for $(Y,\gamma,\tau)$. First we claim that for each component of the fixed point set, $C_i$, one can find a Heegaard diagram adapted to some real arc decomposition, say along $(D,c)$. To see this, observe that one can perform a $\{1\}$-stabilization at some point on $C_i$ and then several real handleslides to remove all but one of the intersection points between $C_i$ and an $\alpha$ curve. See Figure~\ref{fig:remove_C_intersections}. If $O_{D,c}=\{[\s^R]=v\}$ we are done. If not, then perform another $\{1\}$-stabilization at some point on $C_i$ and appropriate handleslides to remove all but one of the intersection points between $C_i$ and an $\alpha$ curve. The resulting Heegaard diagram is adapted to a new real arc decomposition, say along $(D',c')$. Observe that $O_{D',c'}=\{[\s^R]=v\}$, so we are done.
\end{proof}

\begin{figure}[h]
\def\svgwidth{.8\linewidth}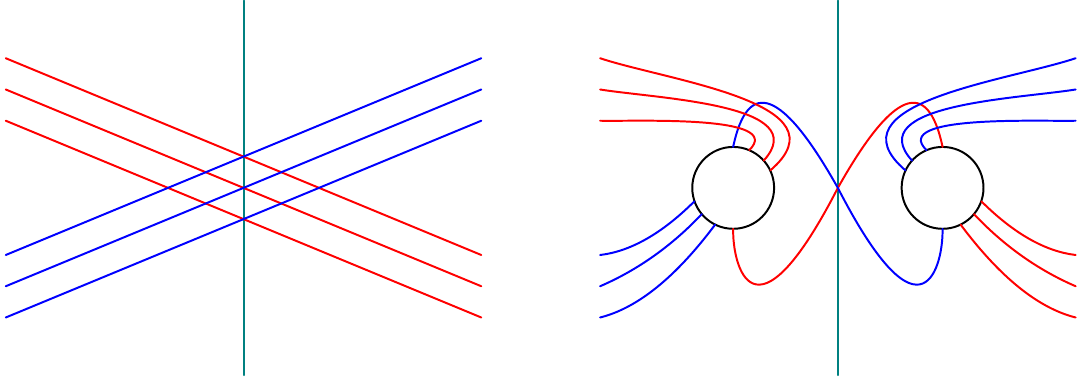
\caption{Using a $\{1\}$-stabilization to reduce the number of intersections with a component of the fixed set to one.}
    \label{fig:remove_C_intersections}
\end{figure}

\begin{remark}\label{rem:nontrivialarc}  
Every real sutured manifold $(Y,\gamma,\tau)$ with $\RSFH(Y,\gamma,\tau)\neq 0$ and an arc component of its fixed point set admits a real arc decomposition to a real sutured manifold, $(Y',\gamma',\tau')$ such that $\RSFH(Y',\gamma',\tau')\neq 0$. To see this, suppose that ${\RSFH(Y,\gamma,\tau,\s^R)\neq 0}$. By Lemma~\ref{lem:framingsrealized}, we can find a pair $(D,c)$ such that $[\s^R]\in O_{D,c}$, whence the claim follows from Proposition~\ref{Prop:adapteddecompfomrula}.
\end{remark}

\begin{remark}\label{rmk:removearcs}
The real sutured Floer homology of a real sutured manifold whose fixed point set contains exactly $n$ arc components can be obtained by computing the real sutured Floer homology of a collection of $2^n$ real sutured manifolds, each of which has no arc components in its fixed point set. In a sense, this reduces the problem of understanding the real sutured Floer homology of arbitrary real sutured manifolds to the problem of understanding the real sutured Floer homology of real sutured manifolds without arc components in their fixed point sets.

\end{remark}

Note that it might also happen that a real sutured manifold $(Y,\gamma,\tau)$ with ${\RSFH(Y,\gamma,\tau)\neq 0}$ admits a real arc decomposition resulting in a $(Y',\gamma',\tau')$ with $\RSFH(Y',\gamma',\tau')= 0$. Examples include any real sutured manifold with real sutured Floer homology of rank one and at least one arc component of the fixed point set (for example, any real product sutured manifold with an arc components in its fixed point set).
To see this, let $(Y,\gamma,\tau)$ be such a manifold, $C_i$ be an arc component of $\mathrm{fix}(\tau)$ and $\s^R$ be the unique real $\SpinC$ structure such that $\RSFH(Y,\gamma,\tau,\s^R)\neq 0$. By Lemma~\ref{lem:framingsrealized}, we can find a pair $(D,c)$ so that $[\xi_{D}]=[\s^R]$ along $C_i$. Proposition~\ref{Prop:adapteddecompfomrula} then tells us that the real sutured manifold obtained from the real arc decomposition adapted to $(D,c)$ has vanishing real sutured Floer homology. 

It would be interesting to give a topological classification of real arc decompositions that preserve the non-triviality of real sutured Floer homology; that is, to give an analogue of Gabai's notion of taut surface decompositions~\cite{gabai1983foliations} for real arc decompositions. This is closely related to Question~\ref{q:topcharvanishing}.

%% file: remove_C_intersections.pdf_tex
\begingroup%
  \makeatletter%
  \providecommand\color[2][]{%
    \errmessage{(Inkscape) Color is used for the text in Inkscape, but the package 'color.sty' is not loaded}%
    \renewcommand\color[2][]{}%
  }%
  \providecommand\transparent[1]{%
    \errmessage{(Inkscape) Transparency is used (non-zero) for the text in Inkscape, but the package 'transparent.sty' is not loaded}%
    \renewcommand\transparent[1]{}%
  }%
  \providecommand\rotatebox[2]{#2}%
  \newcommand*\fsize{\dimexpr\f@size pt\relax}%
  \newcommand*\lineheight[1]{\fontsize{\fsize}{#1\fsize}\selectfont}%
  \ifx\svgwidth\undefined%
    \setlength{\unitlength}{518.74015748bp}%
    \ifx\svgscale\undefined%
      \relax%
    \else%
      \setlength{\unitlength}{\unitlength * \real{\svgscale}}%
    \fi%
  \else%
    \setlength{\unitlength}{\svgwidth}%
  \fi%
  \global\let\svgwidth\undefined%
  \global\let\svgscale\undefined%
  \makeatother%
  \begin{picture}(1,0.34972678)%
    \lineheight{1}%
    \setlength\tabcolsep{0pt}%
    \put(0,0){\includegraphics[width=\unitlength,page=1]{remove_C_intersections.pdf}}%
  \end{picture}%
\endgroup%

%% file: cylindrical.tex
In this section, we briefly discuss the cylindrical reformulation of real Heegaard Floer homology, analogous to~\cite{Lipshitz2005ACR} in the unreal case. This will be useful in our discussion of nice real diagrams. This is similar to the setting of real bordered Floer homology \cite{LO_Real_bordered}. Lipshitz's proof of the equivalence of the two theories holds in the real setting as well. This is discussed in more detail in \cite{GM_real_naturality}.

Fix a real sutured Heegaard diagram $\cH = (\Sigma, \bm \alpha, \bm \beta,\tau)$. Set $W:= \Sigma \times [0,1]\times \R$ and define $\pi_{\Sigma}: W \ra \Sigma$ and $\pi_\D: W \ra [0,1]\times \R$ to be the natural projections. An involution $\t$ of $\Sigma$ induces an involution $\ut$ of $W$ defined by $\ut(x,s,t) = (\t(x), 1-s, t)$. This involution clearly interchanges the cylinders $C_\alpha = \bm{\alpha}\times \{1\}\times \R$ and $C_\beta = \bm{\beta}\times \{0\}\times \R$. Fix a $\tau$-anti-invariant area form, $dA$, on $\Sigma$ and a complex structure $j_\Sigma$ tamed by $dA$ satisfying
\begin{align*}
    j_\Sigma \circ \t_* = -\t_* \circ j_\Sigma. 
\end{align*}
Let $\omega = ds \wedge dt + dA$ be a split symplectic form on $W$. Note that $\ut$ is an anti-symplectic involution: $\underline{\t}^* \omega = -\omega.$ For each region $R_i$ of $\Sigma \smallsetminus (\bm{\alpha}\cup\bm{\beta})$, fix a point $z_i\in R_i$ such that $\tau(z_k) = z_j$ if $\tau(R_k) = R_j$. We write $P$ for this collection of points. We are interested in the set of almost complex structures $J$ on $W$ which satisfy the following properties:
\begin{enumerate}
    \item[(J1)] $J$ is tamed by $\omega$;
    \item[(J2)] In a cylindrical neighborhood of $P\times [0,1]\times \R$, $J = j_\Sigma \times j_\D$ is split;
    \item[(J3)] $J$ is translation invariant in the $\R$-direction;
    \item [(J4)] $J(\partial_t) = \partial_s$;
    \item[(J5)] $J$ preserves $T(\Sigma \times (s, t))$ for all $(s, t) \in [0,1]\times \R$.  
\end{enumerate}

We will write $\cJ$ for the space of almost complex structures satisfying (J1)-(J5); we will write $\cJ_R\subset \cJ$ for the subset of \emph{symmetric almost complex structures}, i.e., those satisfying 
\begin{align*}
    J \circ \ut_* = -\ut_* \circ J. 
\end{align*}

Let $(S, j)$ be a Riemann surface with boundary which contains $m = |\bm\alpha| = |\bm\beta|$ negative punctures $\bm{p} = \{p_1,\hdots,p_m\}$ and $m$ positive punctures $\bm{q} = \{q_1,\hdots,q_m\}$. 
For a symmetric almost complex structure $J$ satisfying (J1)-(J5), we are interested in $J$-holomorphic maps $u: S \ra W$ for which:
\begin{enumerate}
    \item[(M0)] The source $S$ is smooth;
    \item[(M1)] $u(\partial S)\subset \bm{\alpha}\times \{1\}\times \R \cup \bm{\beta}\times \{0\}\times \R$;
    \item[(M2)] There are no components of $S$ on which $\pi_\D\circ u$ is constant;
    \item[(M3)] For each $i$, $u^{-1}(\alpha_i \times \{1\}\times \R)$ and  $u^{-1}(\beta_i \times \{0\}\times \R)$ consist of exactly one component of $\partial S \smallsetminus (\bm{p}\cup \bm{q})$;
    \item[(M4)] $\lim_{w \ra p_i}\pi_\R \circ u(w) = -\infty$ and $\lim_{w \ra q_i}\pi_\R \circ u(w) = +\infty$;
    \item[(M5)] The energy (as defined in~\cite[Section 5.3]{MR2026549}) of $u$ is finite;
    \item[(M6)] $u$ is an embedding.
\end{enumerate}
We will write $\cM^J(\phi, \bm x,\bm y)$ for the moduli space of $J$-holomorphic curves satisfying (M0)-(M6) in the class $\phi \in \pi_2(\bm x, \bm y)$. Let $\cM_R^J(\phi, \bm x,\bm y)$ be the subspace of real holomorphic curves,
\begin{align*}
    u: S \ra W
\end{align*}
such that there is an anti-holomorphic involution $\sigma$ of $S$ such that 
\begin{align*}
    u \circ \sigma = \ut \circ u.
\end{align*}
We say a symmetric complex structure $J$ is \emph{generic} if $\cM^J_R(\phi, \bm x,\bm y)$ is a smooth manifold for all triples $(\phi, \bm x,\bm y)$.

\begin{proposition}\label{prop:cyl_transversality}
    There exist symmetric almost complex structures such that $\cM^J_R(\phi, \bm x,\bm y)$ is a smooth manifold. Moreover, for  regular values of the projection $\cM_R \ra \cJ_R$, the moduli space of real holomorphic curves satisfying (J0)-(J5) are smooth manifolds.
\end{proposition}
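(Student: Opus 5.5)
The plan is to follow the standard Sard--Smale argument, as in \cite[Section 3]{Lipshitz2005ACR}, but restricted to the fixed locus of the involution. We set up a Banach bundle over a Banach manifold $\mathcal{B}_R \times \cJ_R$. Here $\mathcal{B}_R$ consists of $W^{1,p}$ maps $u\colon S \to W$ satisfying (M0)--(M6), together with an anti-holomorphic involution $\sigma$ of $S$ such that $u\circ\sigma = \ut\circ u$. The space $\cJ_R$ is completed to a Banach manifold of $C^\ell$ symmetric structures satisfying (J1)--(J5). The section is $\bar\partial_J u = \tfrac12(du + J\,du\circ j)$. When $u\circ\sigma=\ut\circ u$ and $J$ is symmetric, $\bar\partial_J u$ is $\sigma$-equivariant in the sense that
\[
\ut_*\circ \bar\partial_J u = \bar\partial_J u\circ d\sigma .
\]
So $\bar\partial$ defines a section of the bundle $\mathcal{E}_R$ of equivariant $(0,1)$-forms. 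The first step is to check this formally. It shows that the real moduli space $\cM_R$ is the zero set of a Fredholm section whose linearization $D_u^R$ is the restriction of Lipshitz's operator $D_u$ to the $(+1)$-eigenspaces of the induced involutions on the domain and target spaces.

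The second and main step is to show that the universal section is transverse to zero. This means that the map
\[
(\xi, Y) \mapsto D_u^R\xi + \tfrac12\, Y\circ du\circ j
\]
is surjective, where $Y$ ranges over tangent vectors to $\cJ_R$, that is, anti-invariant infinitesimal perturbations supported away from the cylindrical neighborhood of $P\times[0,1]\times\R$. Suppose $\eta$ is an equivariant element of the cokernel. For the unreal problem one finds a point $w\in S$ where $u$ is injective, $du(w)\neq 0$, and $\pi_\Sigma\circ u(w)\notin P$, and chooses $Y$ localized near $u(w)$ to pair nontrivially with $\eta$. In the real setting, $Y$ must be $\ut$-anti-invariant. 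I would first take $w$ with $u(w)$ off the fixed locus of $\ut$ and $w \neq \sigma(w)$. Since $u$ is an embedding by (M6), $u(w)\neq u(\sigma(w)) = \ut(u(w))$. Then take $Y_0$ supported in a small ball $B$ around $u(w)$ disjoint from $\ut(B)$, and symmetrize by setting $Y = Y_0 - \ut^*Y_0$ (with the appropriate sign convention). Because $\eta$ is equivariant, the pairing of $Y$ with $\eta$ equals twice the pairing of $Y_0$ with $\eta$, which is nonzero for a suitable choice of $Y_0$. Such points $w$ are dense, since $\mathrm{fix}(\ut)$ has positive codimension and $u^{-1}(\mathrm{fix}(\ut))\cup \mathrm{fix}(\sigma)$ is a closed set. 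Equivariant unique continuation shows that $\eta$ cannot be supported only there.

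The main obstacle is ensuring that the perturbations respect conditions (J2), (J4), and (J5). For (J5), the perturbation $Y$ must preserve the $\Sigma$-slices. This is handled as in Lipshitz, using the fact that $\eta$ has a nonzero component in the $T\Sigma$-direction on an open set, which follows from (M2) and the holomorphicity of $\pi_\D\circ u$. Here I expect to need a separate argument that the relevant region avoids the fixed set. (J2) only excludes a neighborhood of $P$, which is fine.

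With universal transversality established, the universal real moduli space is a Banach manifold. The projection to $\cJ_R$ is Fredholm, with index equal to the index of $D_u^R$, and Sard--Smale then gives a comeager set of regular values. For each such $J$, $\cM_R^J(\phi,\bm x,\bm y)$ is a smooth manifold. Passing from $C^\ell$ to $C^\infty$ structures, and taking the countable intersection over classes $\phi$ and generators, is done by Taubes' trick exactly as in the unreal case. This gives both assertions of Proposition~\ref{prop:cyl_transversality}.
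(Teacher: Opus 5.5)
Your proposal is correct and is essentially the paper's argument: set up the equivariant bundle $\cE_R$, run Lipshitz's transversality argument, and get a symmetric perturbation by averaging a non-equivariant one, which is exactly your $Y=Y_0-\ut^*Y_0$. One simplification and one correction: the doubling identity $\langle\eta, Y\circ du\circ j\rangle=2\langle\eta,Y_0\circ du\circ j\rangle$ follows from equivariance of $\eta$ for an arbitrary $Y_0$, so you do not need to localize away from the fixed locus, and by (J3) any localization must be $\R$-invariant (supported in $B'\times\R$ with $B'\subset\Sigma\times[0,1]$) rather than in a small ball of $W$.
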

\begin{proof}
    This follows from combining the arguments in \cite[Proposition 3.7]{Lipshitz2005ACR} and \cite[Proposition 2.2]{guth2025real}. 

    As in \cite{Lipshitz2005ACR}, we consider the space $\cB_R$ of triples $(j, J, u)$ consisting of symmetric almost complex structures on a fixed source $S$, $W$ and smooth maps $u:S\to W$ so that $\overline{\partial}_{j,J}(u) = 0$. Let $\cE_R$ be the bundle of of symmetric (0,1)-forms on $S$ taking values in $u^*TW$, i.e., those satisfying $\alpha \circ d\sigma = \ut \circ \alpha$. The linearization of $\overline \partial$ is Fredholm. Let $\cM_R$ be the zero section of $\cE_R \ra \cB_R$. As usual, we argue that its linearization is surjective. This follows just as in \cite{Lipshitz2005ACR}, by arguing that we can achieve transversality for a non-equivariant perturbation; as in \cite{guth2025real}, we can average to achieve transversality in the symmetric setting. 
\end{proof}
    
As in the unreal setting, there is a tautological correspondence between symmetric holomorphic strips in $\Sym^m(\Sigma)$ and symmetric holomorphic curves in $W$.
\begin{lemma}\label{lem:tautcor}
    Let $\t$ be an involution on $\Sigma$, $j_\Sigma$ be a symmetric complex structure on $\Sigma$, and let $J:=\Sym^m(j_\Sigma)$ be the induced almost complex structure on $\Sym^m(\Sigma)$. There is a one-to-one correspondence between $J$-holomorphic maps
    \begin{align*}
        u: \D \ra \Sym^m(\Sigma)
    \end{align*}
    satisfying $R\circ u =u\circ \rho$ and diagrams of the form 
    \begin{align}\label{ali:cylindrical reformulation of differential}
        \begin{tikzcd}[ampersand replacement = \&]
            \& S\ar[rr,"\hat{u}"]\ar[dd] \& \& \Sigma \\
            S\ar[ur,"\sigma"]\ar[dd]\ar[rr,crossing over,"{\hspace{.3cm}}\hat{u}" below ] \& \& \Sigma \ar[ur,"\t"]\& \\
            \& \D \& \&  \\
            \D \ar[ur,"\rho"]\& \&  \& \\
        \end{tikzcd}
    \end{align}
    where $(S, \sigma)$ is a Riemann surface with a real structure, $\hat{u}$ is a $j$-holomorphic map, and $S \ra \D$ is a holomorphic $m$-fold branched cover. The latter data is equivalent to a $j_\Sigma \times j_\D$-holomorphic map $S \ra \Sigma \times [0,1]\times \R$.
\end{lemma}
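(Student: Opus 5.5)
We will prove this by following Lipshitz's tautological correspondence \cite{Lipshitz2005ACR} and checking at each step that it intertwines the involutions. Start from a $J$-holomorphic map $u:\D\to\Sym^m(\Sigma)$ with $R\circ u=u\circ\rho$. Form the pullback of the branched cover $\Sigma\times\Sym^{m-1}(\Sigma)\to\Sym^m(\Sigma)$ along $u$. This is the incidence variety $\widetilde S=\{(z,w)\in\D\times\Sigma : w\in u(z)\}$, with projections $\widetilde S\to\D$ and $\hat u:\widetilde S\to\Sigma$. Let $S$ be the normalization of $\widetilde S$ (the smooth holomorphic model). Then $S\to\D$ is an $m$-fold branched cover, $\hat u$ is holomorphic for $j_\Sigma$, and the graph map $S\to\Sigma\times\D$ is $j_\Sigma\times j_\D$-holomorphic. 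This is exactly the unreal correspondence, so none of it needs to be redone.

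Next we produce $\sigma$. Define $\sigma_0(z,w)=(\rho(z),\tau(w))$ on $\widetilde S$. It preserves $\widetilde S$ because $u(\rho(z))=R(u(z))=\tau(u(z))$ as unordered tuples, and it is an involution. Since $\rho$ and $\tau$ are antiholomorphic (the latter because $j_\Sigma\circ\tau_*=-\tau_*\circ j_\Sigma$), $\sigma_0$ is antiholomorphic on the smooth locus. By the universal property of normalization, applied to the conjugate complex structure, $\sigma_0$ lifts uniquely to an antiholomorphic involution $\sigma$ of $S$. By construction the two squares in \eqref{ali:cylindrical reformulation of differential} commute: $\hat u\circ\sigma=\tau\circ\hat u$, and the projection to $\D$ intertwines $\sigma$ with $\rho$.

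For the converse, start from a diagram as above and set $u(z)=\hat u(\pi^{-1}(z))$, counted with multiplicity. Lipshitz shows this is holomorphic into $\Sym^m(\Sigma)$. Commutativity of the diagram gives $u(\rho(z))=\hat u(\sigma(\pi^{-1}(z)))=\tau(\hat u(\pi^{-1}(z)))=R(u(z))$, since $\sigma$ maps fibers of $\pi$ over $z$ bijectively onto fibers over $\rho(z)$, multiplicities included. The two constructions are mutually inverse by the unreal statement. The only extra point is that $\sigma$ is determined by the data, which follows from uniqueness of the lift on a dense open set. This gives the bijection.

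The main obstacle is lifting $\sigma_0$ across the singular points of $\widetilde S$ and showing the lift is an involution. The cleanest route is to view $\sigma_0$ as a holomorphic isomorphism $\widetilde S\to\overline{\widetilde S}$ and apply functoriality of normalization. The lift is then an involution, because its square is a lift of the identity and so equals the identity by uniqueness. Boundary conditions ($\partial S$ mapping to $\bm\alpha\cup\bm\beta$, swapped by $\tau$) carry over verbatim.
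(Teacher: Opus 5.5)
Your proposal is correct and follows the same route as the paper, whose proof consists of the single remark that the statement follows from \cite[Lemma 3.6]{ozsvath2004holomorphic} by keeping track of the real structures. Your pullback-and-normalize construction supplies exactly the details that remark leaves implicit. So does your lift of $(z,w)\mapsto(\rho(z),\tau(w))$ to an antiholomorphic involution $\sigma$ of the normalization via functoriality. The same goes for your uniqueness argument for $\sigma$, which uses the dense set where $\hat u$ separates the points of each fiber of $S\to\D$.
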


\begin{proof}
    This follows just as in~\cite[Lemma 3.6]{ozsvath2004holomorphic} while keeping track of the real structures. 
\end{proof}

As in \cite[Section 13]{Lipshitz2005ACR}, we can identify the moduli spaces of strips in $\Sym^m(\Sigma)$ with the moduli space of symmetric curves in $W$. We will go through his identification there, noting when the real structures play a role.

Recall that $P$ consists of points $z_i$ in each region $R_i$ of $\Sigma \smallsetminus (\bm{\alpha}\cup\bm{\beta})$ such that $\tau(z_k) = z_j$ if $\tau(R_k) = R_j$.

Fix an open neighborhood $V_1$ of $P \times \Sym^{d-1}(\Sigma)$ and an open neighborhood $V_2$ of the diagonal $\Delta$. Let $\pi: \Sigma^m \ra \Sym^m(\Sigma)$ be the natural projection.

\begin{definition}{\cite[Definition 13.1]{Lipshitz2005ACR}}
    We say that a path of almost complex structure $\Tilde{J}$ on $\Sym^m(\Sigma)$ is a \emph{quasi-nearly-symmetric almost complex structure} (or a QNS almost complex structure) if:
    \begin{enumerate}
        \item $\Tilde{J}$ is tamed by $\pi_*((dA)^m)$ on $\Sym^m(\Sigma)\smallsetminus V_2$.
        \item $\Tilde{J}$ agrees with $\Sym^m(j_\Sigma)$ on $V_1$.
        \item There is some complex structure $j$ on $\Sigma$ such that $\Sym^m(j)$ agrees with $\Tilde{J}$ on $V_2$.
    \end{enumerate}
\end{definition}

We will be interested in symmetric QNS almost complex structures. If $J$ is a symmetric almost complex structure on $W$ satisfying (J1)-(J5), then $J$ determines a path, $J_s$, of symmetric complex structures on $\Sigma$, by restricting to $\Sigma \times \{s\}$. Conversely, if $\Tilde{J}_s$ is a path of symmetric QNS almost complex structures on $\Sym^m(\Sigma)$, we write $j_s$ for the complex structure on $\Sigma$ which agrees with $\Tilde{J}_s$ near the diagonal. 

We state the real analogues of Propositions 13.4, 13.5, and 13.6 in \cite{Lipshitz2005ACR}. In each case, the proofs go through with no change, though we state the needed results for the sake of completeness.

\begin{lemma}
    With respect to any symmetric QNS almost complex structure, the moduli spaces of real homolomorphic strips in $\Sym^g(\Sigma)$ are compact.
\end{lemma}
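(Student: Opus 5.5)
The plan is to reduce to the non-equivariant compactness statement of \cite[Proposition 13.4]{Lipshitz2005ACR} (ultimately Gromov compactness as in \cite{ozsvath2004holomorphic}). Then we check that the real condition $R\circ u=u\circ\rho$ is closed under the relevant limits, and that the extra boundary condition on $M^R$ introduces no new degenerations.

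First, fix a symmetric QNS almost complex structure $\tilde J_s$ and a class $\phi\in\pi_2^R(\bm x,\bm y)$. Under the correspondence recalled before Definition~\ref{def:RSFC and RSFH}, each real strip $u$ in class $\phi$ is the same as a strip $\tilde u$ in $\Sym^m(\Sigma)$ in the class $\tilde\phi$ with boundary on $\T_\alpha$ and $\T_\beta=R(\T_\alpha)$, satisfying $R\circ\tilde u=\tilde u\circ\rho$. The energy of $\tilde u$ is computed from the domain of $\tilde\phi$ against $\pi_*((dA)^m)$ away from $V_2$, so it is bounded uniformly in terms of $\phi$. This is exactly the topological energy bound used in \cite[Proposition 13.4]{Lipshitz2005ACR}, and it is unaffected by symmetry.

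Second, take a sequence of real strips in class $\phi$. By the unreal result, a subsequence converges to a broken strip, possibly with sphere and disk bubbles. Since $R$ is continuous and the relation $R\circ\tilde u_n=\tilde u_n\circ\rho$ holds for every $n$, the limit configuration is invariant under the combined involution. In particular, the broken levels come in a $\rho$-symmetric configuration, and since $\rho$ preserves the $\R$-direction, each level is itself real. Thus the limit lies in the compactified real moduli space.

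Third, we rule out bubbling. This is the step I expect to be the main obstacle, and the one place where the sutured hypotheses are used. Disk bubbles on $\T_\alpha$ or $\T_\beta$ and sphere bubbles in $\Sym^m(\Sigma)$ would have domains which are nonnegative combinations of components of $\Sigma\setminus\bm\alpha$, of $\Sigma\setminus\bm\beta$, or of all of $\Sigma$. By the balanced condition each such domain meets $\partial\Sigma$. Here $\Sigma$ has boundary, and $V_1$ contains points near $\partial\Sigma$ where $\tilde J=\Sym^m(j_\Sigma)$ is integrable, so positivity of intersections with $\{z\}\times\Sym^{m-1}$ forces these multiplicities to be zero. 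Consequently no bubbles occur, exactly as in the proof of Lemma~\ref{lem:d^2 = 0}. Bubbles along the corner where the real strip meets $M^R$ are excluded in the same way: doubling, any such bubble is a symmetric disk with boundary on $\T_\alpha\cup\T_\beta$ whose domain would have to be a periodic domain meeting $\partial\Sigma$.

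Together, these steps give compactness of the (unquotiented) moduli spaces up to breaking, which is the statement.
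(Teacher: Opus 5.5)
Your proposal is correct and follows the paper's route: the paper's proof is just the one-line reduction to \cite[Proposition 13.4]{Lipshitz2005ACR}, and your Steps 2 and 3 (the symmetry condition is closed under Gromov limits, each level stays real because $\rho$ preserves the $\R$-coordinate, and bubbles are excluded because their domains would have positive multiplicity near $\partial\Sigma$) spell out what the paper leaves implicit. One small slip: a disk bubble with boundary on $M^R$ doubles to a real \emph{sphere} bubble, not a disk with boundary on $\T_\alpha\cup\T_\beta$, so it is already covered by your sphere case; also, positivity of intersections only gives nonnegative multiplicities, and the vanishing comes from their additivity in the limit together with the zero-multiplicity constraint near $\partial\Sigma$.
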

\begin{proof}
    This follows immediately from \cite[Proposition 13.4]{Lipshitz2005ACR}.
\end{proof}

\begin{lemma}
    The class of paths of almost complex structures $\Sym^m(J_s)$ is sufficient to achieve transversality for strips $u: (\D, \partial \D) \ra (\Sym^m(\Sigma), \T_\alpha \cup M^R)$. 
\end{lemma}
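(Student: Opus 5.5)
The statement is the real analogue of \cite[Proposition 13.5]{Lipshitz2005ACR}: paths of the form $\Sym^m(J_s)$, where $J_s$ is a path of symmetric complex structures on $\Sigma$, suffice to achieve transversality for real strips with boundary on $\T_\alpha$ and $M^R$. The strategy is to route everything through the cylindrical picture. Lemma~\ref{lem:tautcor} (with the path $j_s$ in place of a single $j_\Sigma$) identifies real $\Sym^m(J_s)$-holomorphic strips $u$ with $R\circ u=u\circ\rho$ with real holomorphic curves $\hat u\colon (S,\sigma)\to W$ for the symmetric almost complex structure $J$ on $W$ determined by $J_s$. Proposition~\ref{prop:cyl_transversality} then supplies transversality for generic $J\in\cJ_R$. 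So it suffices to show that surjectivity of the linearized real $\overline\partial$-operator in the cylindrical setting implies surjectivity of the linearized operator on the real strip in $\Sym^m(\Sigma)$.

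The key steps, in order, are as follows.

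First, for a real strip $u$ whose image avoids the diagonal $\Delta$, compare tangent spaces directly. The pullback $u^*T\Sym^m(\Sigma)$ splits as $\bigoplus_i \hat u_i^*T\Sigma$ over the sheets of the branched cover $S\to\D$. Under this splitting, the real structures $R$ on $\Sym^m(\Sigma)$ and $\sigma$, $\ut$ on the cylindrical side correspond, so the two real linearized operators agree. Their cokernels are then identified as the $\Z/2$-invariant parts of the unreal cokernels, as in \cite{guth2025real}.

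Second, for strips meeting $\Delta$, follow the argument of \cite[Proposition 13.5]{Lipshitz2005ACR}. The kernels and cokernels of the two linearizations are identified via the tautological correspondence, because branch points of $S\to\D$ correspond exactly to tangencies of $u$ with $\Delta$. This identification is natural with respect to $\sigma$ and $\rho$, so it restricts to the $\Z/2$-invariant subspaces.

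Third, combine the two previous steps. For a generic symmetric path $J_s$ (equivalently, a generic $J\in\cJ_R$ satisfying (J1)--(J5), obtained by averaging a non-equivariant perturbation), the real linearization on the symmetric product side is surjective. Hence the moduli spaces $\cM_R(\phi)$ are smooth of dimension $\mu_R(\phi)$.

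The main obstacle is the second step near $\Delta$. The tautological correspondence must be checked to be equivariant at the level of the deformation theory, not just for the maps themselves. In particular, the anti-holomorphic involution $\sigma$ on $S$ may fix branch points lying over the real locus of $\D$, and one must verify that the identification of cokernels at these points respects the real structure. I would first handle this by working locally near a real branch point, where $S\to\D$ looks like $z\mapsto z^2$ with $\sigma$ given by conjugation, and checking that the local contributions to the kernel and cokernel are invariant. After that, I would patch the local checks together, using the fact that averaging over $\Z/2$ preserves surjectivity.
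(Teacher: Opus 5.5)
Your plan works, but it is organized differently from the paper, whose proof does not use the tautological correspondence. The paper takes the unreal statement from \cite{Lipshitz2005ACR} that variations of the path $j_s$ alone suffice for transversality of strips in $\Sym^m(\Sigma)$. It then averages directly on the symmetric product. For a real strip, the real cokernel is the $\Z/2$-invariant part of the unreal cokernel. So a variation $Y$ of $j_s$ that pairs nontrivially with an invariant cokernel element $\eta$ can be replaced by its symmetrization, which has the same nonzero pairing with $\eta$, and Sard--Smale then produces generic symmetric paths. You instead average on $W$ (through Proposition~\ref{prop:cyl_transversality}) and then carry surjectivity across an equivariant version of Lipshitz's comparison of the two pictures. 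This makes the interaction of $\sigma$ with the branched cover $S\to\D$ explicit, but it costs more work. The obstacle you single out, equivariance of the kernel/cokernel identification at $\sigma$-fixed branch points, does not arise in the paper's argument, which only uses the $\Z/2$-equivariance of the linearized operator on $\Sym^m(\Sigma)$ itself.

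If you keep your route, correct the justification in your second step. Branch points of $S\to\D$ do not correspond to tangencies of $u$ with $\Delta$.
\begin{itemize}
\item A transverse intersection of $u$ with $\Delta$ gives a simple branch point.
\item A simple tangency gives no branching at all. It gives instead a double point of the cylindrical curve: locally the incidence curve is $z(z-2w)=0$.
\end{itemize}
At such strips (M6) fails, so Proposition~\ref{prop:cyl_transversality}, which is stated for curves satisfying (M0)--(M6), does not literally apply. Moreover, by \eqref{eqn:immersed real index formula} the cylindrical index drops below $\mu_R$ there, so the two kernels cannot simply be identified. Those strips need a separate argument, and this is exactly the kind of point the paper's direct averaging on $\Sym^m(\Sigma)$ avoids.
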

\begin{proof}
    This follows just as above and in \cite{guth2025real}, by using the fact that transversality can be achieved in the unreal setting and then averaging.
\end{proof}

Therefore, we have a tautological correspondence in the real setting.
\begin{proposition}\label{prop:real-taut-corr}
    Equipped with symmetric almost complex structures $J_s$ and $\Sym^m(J_s)$, the moduli spaces $\cM_R(\phi; \bm x, \bm y)$ and $\cM_R(\phi)$ agree.
\end{proposition}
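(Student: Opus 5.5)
The plan is to follow Lipshitz's proof of~\cite[Proposition 13.6]{Lipshitz2005ACR} and add the real structures at each stage. The statement is a real version of the tautological correspondence, so its proof splits into two steps. First, establish a bijection of sets between symmetric $\Sym^m(J_s)$-holomorphic strips in $\Sym^m(\Sigma)$ with boundary on $\T_\alpha\cup\T_\beta$, and real $J$-holomorphic curves $u\colon S\to W$ satisfying (M0)--(M6). Second, show that this bijection identifies the smooth structures, so that a structure regular for one moduli space is regular for the other.

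For the first step I would apply Lemma~\ref{lem:tautcor} with a path $J_s$ of symmetric complex structures in place of a single $j_\Sigma$. Let $u\colon\D\to\Sym^m(\Sigma)$ satisfy $R\circ u=u\circ\rho$. Pulling back the branched cover $\Sigma\times\Sym^{m-1}(\Sigma)\to\Sym^m(\Sigma)$ gives a surface $S$ with a branched $m$-fold cover $S\to\D$ and a map $\hat u\colon S\to\Sigma$. The relation $R\circ u=u\circ\rho$ lifts to an anti-holomorphic involution $\sigma$ of $S$ covering $\rho$ with $\hat u\circ\sigma=\tau\circ\hat u$. The lift is unique because the cover is determined by $u$ and $R$ is induced by $\tau$ coordinatewise.

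Combining $\hat u$ with $S\to\D$ gives a map into $W$ that is holomorphic for the split structure determined by $J_s$. The identity $\ut(x,s,t)=(\tau(x),1-s,t)$, together with the fact that $\rho$ corresponds to $s\mapsto 1-s$ on the strip, shows this map is real. Conversely, a real curve in $W$ determines a $\rho$-equivariant map to $\Sym^m(\Sigma)$ by taking the unordered fiber over each point of $[0,1]\times\R$. I would then check conditions (M0)--(M6) case by case, as in the unreal argument. The reality conditions are preserved in both directions, so the two constructions are mutually inverse on the subspaces of real objects.

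The second step is the main obstacle: the linearized operators must match, so that the symmetric, transversally cut out moduli spaces agree as manifolds and not only as sets. In the unreal setting, Lipshitz shows the kernels and cokernels of the two linearized $\overline\partial$-operators are identified. I would argue that this identification is equivariant under the involutions induced by $\ut$ (with $\sigma$) and by $R$ (with $\rho$). The real moduli spaces are then the fixed loci, and their linearizations are the invariant parts of the linearized operators.

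Regularity then transfers between the two sides in both directions. Taking invariant parts commutes with an equivariant isomorphism, so surjectivity of the invariant part on one side is equivalent to surjectivity on the other. Transversality for $\Sym^m(J_s)$ is available from the preceding lemma, and in $W$ it is available from Proposition~\ref{prop:cyl_transversality}.

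Compactness follows from the earlier lemma for symmetric QNS structures. Translation invariance in $t$ is preserved by the correspondence, so the same identification descends to the quotients $\widehat{\cM}_R$.
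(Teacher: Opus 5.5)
Your proposal is correct and follows the paper's route. The paper proves the statement by combining the real tautological correspondence (Lemma~\ref{lem:tautcor}) with the real analogues of Lipshitz's compactness and transversality lemmas, and then appealing to \cite[Proposition 13.2]{Lipshitz2005ACR}. Your equivariant identification of the linearized operators, with real regularity read off from their invariant parts, makes explicit what that citation leaves implicit.

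One point deserves more care: (M6) does not follow automatically from the tautological construction, since a symmetric strip tangent to the diagonal produces a non-embedded curve. Your step ``as in the unreal argument'' should therefore explicitly use $\mathrm{ind}_R(\phi)=\mu_R(\phi)-\mathrm{Sing}(\phi)$ together with genericity. That is what excludes non-embedded curves in the index-one classes that actually enter the differential.
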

\begin{proof}
    This follows immediately from \cite[Proposition 13.2]{Lipshitz2005ACR} and the lemmas above.
\end{proof}

\begin{rem}
 Note that in \cite{guth2025real}, the symmetric almost complex structures used were not required to be nearly-symmetric as they were in \cite{ozsvath2004holomorphic}. One advantage of this is that we do not need to rely on \cite[Lemma 3.13]{ozsvath2004holomorphic} to rule out sphere bubbles, as the argument there does not apply in the real setting. However, in the sutured setting, there is no issue, as disks in the class $[\Sigma]$ are not permitted.
\end{rem}

%% file: quasistabilization_Kunneth.tex
In this section, we investigate the behavior of real sutured Floer homology under various forms of connected sum operations.

\subsection{Puncturing real sutured manifolds}\label{subsec:puncturingclosedfixed}

We first investigate the behavior of real sutured Floer homology under the operation given by puncturing (or equivalently adding basepoints if one prefers to think of closed 3-manifolds). In the unreal setting, adding a basepoint doubles the rank of the Floer homology \cite[Corollary~9.16]{juhasz2006holomorphic}. Indeed, for appropriate choices of almost complex structures, the doubling can be seen at the chain level and respects the differential. The behavior is more complicated in the real setting.  

Let $\mathcal{H}:=(\Sigma,\bm\alpha,\bm\beta,\t)$ be a real Heegaard diagram for $(Y,\gamma,\tau)$. Fix a point $p$ contained in a circle component of $C$. We will write $(\mathring{Y},\mathring{\gamma},\mathring{\tau})$ for the real manifold obtained by removing a neighborhood of $p$. Observe that $(\mathring{Y},\mathring{\gamma},\mathring{\tau})$ admits a real Heegaard diagram, $\mathring{\mathcal{H}}$, obtained by removing an equivariant neighborhood of a point $p$ in $C$ and adding one additional set of $\alpha$ and $\beta$ curves as shown in \Cref{fig:local_picture_for_quasi_stabilization}.

\begin{figure}[h]
\def\svgwidth{.4\linewidth}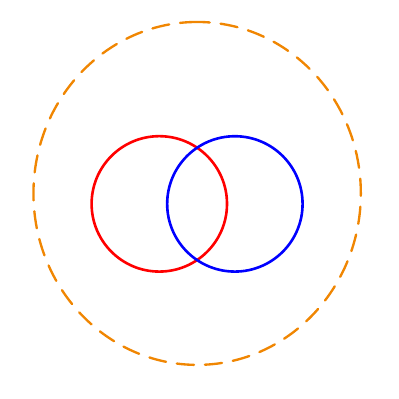
\caption{The real sutured Heegaard diagram $\cH_0$ used in analyzing the effect of adding punctures.}
\label{fig:local_picture_for_quasi_stabilization}
\end{figure}

Alternatively, let $\cH_0 = (D^2, \alpha_0, \beta_0, \t_0)$ be the real Heegaard diagram obtained from \Cref{fig:local_picture_for_quasi_stabilization} by collapsing the orange circle to a point $p_0$. Then, $(\mathring{Y},\mathring{\gamma},\mathring{\tau})$ is represented by $\cH\# \cH_0$, where the connected sum is performed at $p$ and $p_0$.

\begin{lemma}\label{lem: qstab index computation}
Let $\cH_0 = (D^2, \alpha_0, \beta_0, \t_0)$ as above. If $\phi$ is the homology class of a real disk, then
    \begin{align*}
        \mu_R(\phi) = n_{p_0}(\phi_0).
    \end{align*}
\end{lemma}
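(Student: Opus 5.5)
The plan is to reduce the statement to a purely local computation on $\cH_0$ and then use the relation between the real and unreal Maslov indices recalled in Section~\ref{sec:Real Sutured Heegaard Floer Homology}:
\[
\mu(\tilde\phi)=2\mu_R(\phi)+\tfrac{1}{2}\bigl(\sigma(\T_\alpha,\bm x)-\sigma(\T_\alpha,\bm y)\bigr),
\]
together with Lipshitz's combinatorial formula $\mu(\tilde\phi)=e(\tilde\phi)+n_{\bm x}(\tilde\phi)+n_{\bm y}(\tilde\phi)$. Both $\mu$ and $\mu_R$ are additive under juxtaposition of classes. Both formulas are also local in the domain. Hence for a class on $\cH\#\cH_0$, the contribution of the $\cH_0$ part can be isolated as the index of the induced class $\phi_0$ on $\cH_0$ with $p_0$ filled in. So it suffices to prove $\mu_R(\phi_0)=n_{p_0}(\phi_0)$ for every real class $\phi_0$ on $\cH_0$ itself.

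First I will enumerate the regions and generators of $\cH_0$. I expect $\alpha_0$ and $\beta_0=\tau_0(\alpha_0)$ to meet in exactly two points $\theta^+,\theta^-$, both lying on the fixed arc. The complement of $\alpha_0\cup\beta_0$ in $D^2$ then consists of the following regions:
\begin{itemize}
\item a $\tau_0$-invariant lens $L$ containing $p_0$;
\item a pair of regions $A$ and $B=\tau_0(A)$, bounded respectively by $\alpha_0$ only and by $\beta_0$ only;
\item an outer region touching $\partial D^2$, whose multiplicity is forced to be $0$.
\end{itemize}
A real domain must have equal multiplicities on $\tau_0$-swapped regions, so it has the form $\mathcal D=a(A+B)+cL$. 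The corner conditions at $\theta^\pm$ then determine which pairs $(a,c)$ give classes from $\bm x$ to $\bm y$ for $\bm x,\bm y\in\{\theta^+,\theta^-\}$. I expect the outcome to be that $\pi_2^R$ is generated by one bigon-type class between $\theta^+$ and $\theta^-$ together with a periodic class. Because the real classes form a rank-small lattice and both sides of the identity are additive, it suffices to check the identity on these generators.

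For each generator I will compute $e$ and the point measures directly; the Euler measures of the bigon and annular pieces are standard. I will then compute the $\sigma$-correction. For the periodic (constant-endpoint) class the $\sigma$-term vanishes, so the check reduces to $\mu=2c$, which should follow since $e+n_{\bm x}+n_{\bm y}$ of $a(A+B)+cL$ with $\bm x=\bm y$ comes out to $2c$. For the class from $\theta^+$ to $\theta^-$, I need $\sigma(\T_\alpha,\theta^+)-\sigma(\T_\alpha,\theta^-)$. The two points differ only in the local triple Maslov index of the triple $(\alpha_0,\beta_0,\text{fixed arc})$ at a fixed point, and these local indices should have opposite sign at $\theta^+$ and $\theta^-$. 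The bigon $L$ has $e=1/2$, $n_{\bm x}=n_{\bm y}=1/4$, giving $\mu=1$. Since $n_{p_0}(L)=1$, the required value of the $\sigma$-difference is $2\cdot(1-1/2)\cdot(-1)\cdot\ldots$; concretely, it is whatever makes $\mu_R=1$, and I must verify this directly from the definition of $\sigma$.

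The main obstacle is pinning down the sign and size of the $\sigma$-correction at the two fixed intersection points. I would try first to compute it in the model $\C$ with $\alpha_0,\beta_0$ transverse lines reflected into each other by complex conjugation, using the definition in \cite[Section 2.3]{guth2025real}. As a cross-check, I would use the fact that the bigon class in this model has a unique real holomorphic representative, which forces $\mu_R=1$ and hence fixes the correction. Once the identity holds on the generators, additivity gives $\mu_R(\phi)=n_{p_0}(\phi_0)$ for all real classes.
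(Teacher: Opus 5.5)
Your setup matches the paper's proof. Real domains on $\cH_0$ have the form $cL+a(A+B)$, the corner conditions force $c=2a-1$, $c=2a+1$ or $c=2a$, and $\mu_R$ is then read off the combinatorial formula. That formula is exactly your combination of $\mu(\tilde\phi)=2\mu_R(\phi)+\tfrac12(\sigma(\T_\alpha,\bm x)-\sigma(\T_\alpha,\bm y))$ with Lipshitz's formula. Reducing to the bigon and the periodic class by additivity is fine.

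Your periodic check also works once the regions are described correctly. $A$ and $B$ are bigons, each with one $\alpha$-edge and one $\beta$-edge, so $e=1/2$ each and there are no annular pieces. With $c=2a$ this gives $e+2n_x=(a+c/2)+(c+2a)/2=2c$.

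Two side remarks. The opening reduction to $\cH\#\cH_0$ is unnecessary, since the lemma is already a statement about $\cH_0$. As phrased it is also inaccurate: under $\#$ the real index picks up a $-n_{p_0}(\phi_0)$ correction, which is exactly what this lemma is used to cancel in the quasi-stabilization formula.

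\textbf{The gap.} The one step with real content is the value of $\sigma(\bm\alpha,\cdot)$ at the two fixed intersection points, and you do not supply it. Saying the difference is ``whatever makes $\mu_R=1$'' assumes the conclusion. This is not a formality. The two values are $\pm1$ with opposite signs. For the bigon $D_0\in\pi_2^R(q,q')$ one gets
\[
\mu_R(D_0)=\tfrac12\bigl(1-\tfrac12(\sigma(\bm\alpha,q)-\sigma(\bm\alpha,q'))\bigr).
\]
This equals $1$ exactly when $\sigma(\bm\alpha,q)=-1$ and $\sigma(\bm\alpha,q')=+1$, and equals $0$ otherwise. With the wrong assignment, every class with odd $n_{p_0}$ would violate the lemma by one. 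The paper reads the correct assignment off its local picture: in its labeling $D_0\in\pi_2^R(y,x)$, with $\sigma(\bm\alpha,y)=-1$ and $\sigma(\bm\alpha,x)=+1$.

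\textbf{How to close it.} Your holomorphic ``cross-check'' can be made into the proof, but it must then be the argument itself. Let $u$ be the bigon given by Riemann mapping, unique up to translation.
\begin{itemize}
\item $u$ is real. Uniqueness gives $\tau\circ u\circ\rho=u(\cdot+t_0)$ for some $t_0$. Applying this twice shows $u$ is $2t_0$-periodic, so $t_0=0$.
\item $\mu_R(D_0)=1$. The linearization commutes with the involution, so its surjectivity passes to the invariant part. The kernel of the invariant part is spanned by $\partial_t u$, which is itself invariant.
\end{itemize}

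There is a simpler route that avoids $\sigma$ entirely. Since $\cH_0$ has a single $\alpha$-curve, a real class is literally a disk in $D^2$ with boundary on $\alpha_0\cup C$, namely half of the symmetric domain. The bigon class is then a half-lens with two convex corners, so $\mu_R=1$. The periodic generator $2L+A+B$ is the disk bounded by $\alpha_0$, so $\mu_R=2$. Additivity then gives the lemma.
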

\begin{proof}
We begin by introducing some notations. The Heegaard diagram $\cH_0$ has three interior regions: $D_0$ (which contains $p_0$), and $D_1$ and $D_2$ (which are interchanged by the involution). Write $a$ for the multiplicity of $\phi_0$ at $D_0$ and $b$ for the multiplicity at $D_1$ (and therefore also at $D_2$). We now analyze domains according to the intersection points they connect.

Any domain from $x$ to $y$ --- the two generators indicated in Figure~\ref{fig:local_picture_for_quasi_stabilization} --- must satisfy
    \begin{align*}
        b - 0 + b - a = 1\\
        a - b + 0 - b = -1,
    \end{align*}
    forcing $a = 2b-1$. It follows that any such domain must be of the form $(2b-1)D_0 + b(D_1 + D_2)$. Similar calculations show that
    \begin{align*}
        \pi_2^R(y, x) = \{(2b+1)D_0 + b(D_1 + D_2) \}_{b\in \Z}\\
        \pi_2^R(x, x) = \pi_2^R(y, y) =  \{2b D_0 + b(D_1 + D_2) \}_{b\in\Z}.
    \end{align*}
    Several applications of the combinatorial formula for the real Maslov index in~\cite[Corollary 4.3]{guth2025real} show
    \begin{align*}
        \mu_R((2b-1)D_0 + b(D_1 + D_2)) = 2b-1\\
        \mu_R((2b+1)D_0 + b(D_1 + D_2)) = 2b+1 \\
        \mu_R(2bD_0 + b(D_1 + D_2)) = 2b.
    \end{align*}
    In each case, the real index agrees exactly with the multiplicity of $\phi_0$ on $D_0$, which is $n_{p_0}(\phi_0)$.
\end{proof}

Now, fix symmetric almost complex structures $J$ and $J_0$ on $\Sigma \times [0,1] \times \R$ and $D^2 \times [0,1] \times \R$, respectively. Let $J(T)$ be the almost complex structure on $(\Sigma\# D^2) \times [0,1] \times \R$ obtained by inserting a neck of length $T > 0$. Define an endomorphism of $\RSFC(\cH)$ as follows:
\begin{align*}
    \partial^0(\bm x) = \sum_{\substack{\phi\in\pi_2(\bm x, \bm y), \\
    \mu_R(\phi) = 1,\\
    n_p(\phi) = 0 \mod 2}} \# \widehat{\cM}_R(\phi) \bm y.
\end{align*}
Define $\partial^1$ similarly; clearly $\partial = \partial^0 + \partial^1$.

\begin{proposition}\label{prop:quasistab formula}
    Let $\mathcal{H}:=(\Sigma,\bm\alpha,\bm\beta,\t)$ be a real Heegaard diagram for $(Y,\gamma,\tau)$ and let $\mathcal{H}^+:=\cH \# \cH_0$. Then, for $T$ sufficiently large, we have that
    \begin{align*}
        \partial_{\cH^+, J(T)}(\bm x \times x) = \partial^0(\bm x) \times x + \partial^1(\bm x) \times y\\
        \partial_{\cH^+, J(T)}(\bm x \times y) = \partial^0(\bm x) \times y + \partial^1(\bm x) \times x.
    \end{align*}
\end{proposition}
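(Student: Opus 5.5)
We follow the neck-stretching argument of \cite[Proposition~6.5]{ozsvath2004holomorphic} and of the quasi-stabilization arguments in the unreal setting, using the cylindrical reformulation of Section~\ref{sec:cylindrical}. First we identify the generators: since $\cH_0$ contributes one new pair $(\alpha_0,\beta_0)$ and its real generators are exactly $x$ and $y$, the real generators of $\cH^+$ are $\bm x\times x$ and $\bm x\times y$. Next we relate domains: any real domain $\phi^+$ on $\cH^+$ from $\bm x\times a$ to $\bm y\times b$ splits as $\phi\# \phi_0$, where $\phi\in\pi_2^R(\bm x,\bm y)$ and $\phi_0\in\pi_2^R(a,b)$ on $\cH_0$, subject to the matching condition $n_{p_0}(\phi_0)=n_p(\phi)$. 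By the index additivity for connected sums along a fixed point, together with Lemma~\ref{lem: qstab index computation}, we obtain $\mu_R(\phi^+)=\mu_R(\phi)+\mu_R(\phi_0)-n_p(\phi)=\mu_R(\phi)$. We will verify this correction term with the combinatorial formula of~\cite[Corollary~4.3]{guth2025real}: the point $p$ lies on $C$, so the local contribution there is $n_p$, counted once rather than twice. Given $\phi$ with $n_p(\phi)=k$, the parity computation in the proof of Lemma~\ref{lem: qstab index computation} determines the endpoints of $\phi_0$. The class of multiplicity $k$ at $D_0$ connects $x\to x$ or $y\to y$ when $k$ is even, and $x\to y$ or $y\to x$ when $k$ is odd. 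Moreover, $\phi_0$ is then unique, with $b=\lceil k/2\rceil$ or $\lfloor k/2\rfloor$ as appropriate. This already produces the shape of the claimed formula.

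It remains to show that, for $T\gg0$, $\#\widehat{\cM}_R(\phi^+)=\#\widehat{\cM}_R(\phi)$ whenever $\mu_R(\phi)=1$. We degenerate $J(T)$ as $T\to\infty$. By Gromov compactness in the cylindrical setting, a sequence of $J(T)$-holomorphic real curves converges to a pair $(u,u_0)$: here $u$ is a real curve in $\Sigma\times[0,1]\times\R$ and $u_0$ is a real curve in $D^2\times[0,1]\times\R$. They match at the puncture, in the sense that the $\R$-coordinates of the points $u^{-1}(p\times[0,1]\times\R)$ agree with those of $u_0^{-1}(p_0\times[0,1]\times\R)$. Since $\mu_R(\phi)=1$, transversality (Proposition~\ref{prop:cyl_transversality}) gives rigid $u$ modulo translation. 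We then need the count of curves $u_0$ in class $\phi_0$ with prescribed intersection data with $\{p_0\}\times[0,1]\times\R$ to be $1$ mod $2$, i.e.\ the evaluation map from the moduli space of $\cH_0$-curves has odd degree. We will check this by hand, since $\cH_0$ is a disk with one fixed arc and the domains are explicit. Gluing then gives a bijection for large $T$.

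We expect the main obstacle to be exactly this degree computation in the real setting, together with the real index bookkeeping. The intersection points of $u$ with $p\times[0,1]\times\R$ lie on the fixed locus $\{s=1/2\}$, so the matching condition only involves heights in $\R$ restricted to real points. The relevant evaluation map is therefore $\mathcal M_R(\phi_0)\to\Sym^k(\R)$, and its degree must be computed mod $2$ rather than as in the unreal case, where one maps to $\Sym^k(\D)$. To handle this, we would first treat $k=0,1$ directly and then degenerate $\cH_0$ further along its fixed arc to reduce to products of these cases.
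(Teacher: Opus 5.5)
Your outline matches the paper's proof in its main steps: neck-stretching $J(T)$, the index identity $\mu_R(\phi\#\phi_0)=\mu_R(\phi)$ via \Cref{lem: qstab index computation}, a matching condition, and a reduction to an odd count in the model diagram $\cH_0$. However, the matching step is set up incorrectly. The points of a real curve $u$ lying over $\{p\}\times[0,1]\times\R$ do \emph{not} lie on $\{s=1/2\}$. Since $\ut(p,s,t)=(p,1-s,t)$, they only form a divisor invariant under $(s,t)\mapsto(1-s,t)$. Any preimage point not fixed by the source involution $\sigma$ contributes a pair $(s,t),(1-s,t)$, and in general $s\neq 1/2$. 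This already happens in $\cH_0$: for $2D_0+D_1+D_2\in\pi_2^R(y,y)$, a symmetric double cover of $D_0$ branched at a point of $C\cap D_0$ has both preimages of $p_0$ on the axis on one side of the branch point and a conjugate pair on the other. So the matching condition is $\rho^p(u)=\rho^{p_0}(u_0)$ in $\mathrm{Sym}^k([0,1]\times\R)^R$, using both coordinates, and the rigid curves $u$ on the $\Sigma$ side will in general produce divisors containing conjugate pairs. Consequently $\cM_R(\phi_0)\to\mathrm{Sym}^k(\R)$ is not the right object. $\mathrm{Sym}^k(\R)$ is only a closed region with corners inside the $k$-dimensional space of real divisors, and along its boundary two axis points collide and leave the axis as a conjugate pair. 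A count over $\mathrm{Sym}^k(\R)$ alone says nothing about $\#\cM_R(\phi_0,\rho^p(u))$ at the divisors that actually occur.

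What is missing is that $\#\cM_R(\phi_0,\bm d)$ is independent of the generic divisor $\bm d\in\mathrm{Sym}^k([0,1]\times\R)^R$. The paper proves this in the lemma right after the proposition, by counting ends of $\bigcup_t\cM_R(\phi_0,\bm d_t)$ along a path of real divisors. Note that in $\cH_0$ every nonconstant positive class has $n_{p_0}\geq 1$, so no level free of divisor points can break off. Once you have independence, you may compute at any convenient divisor, and your reduction to $k=0,1$ then works, provided you degenerate the \emph{divisor} rather than $\cH_0$. Take all $k$ points on the axis, which is an open set of real divisors, and spread them apart. Each limiting level then carries exactly one point, so it has $n_{p_0}=1$ and is one of the two index-one classes $D_0\in\pi_2^R(y,x)$ or $D_0+D_1+D_2\in\pi_2^R(x,y)$. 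Gluing expresses the count as a product of your $k=1$ counts. This is slightly simpler than the paper's degeneration, which produces index-two pieces in $\pi_2^R(y,y)$ and appeals to \cite[Lemma 9.58]{JTZ2021}. ``Degenerating $\cH_0$ along its fixed arc'', as you wrote it, is not a mechanism that produces such a product. Finally, your compactness step must also exclude ghost components and show that the $\cH_0$-side limit is a single embedded curve in class $\phi_0$. The paper does this with an Euler characteristic count and a dimension count involving $\mathrm{codim}\,X(\phi_0)$.
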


\begin{proof}
    This argument is adapted from \cite[Section 6]{Zemke2015GraphCA} and \cite[Proposition 6.5]{HolomorphicdiskslinkinvariantsandthemultivariableAlexanderpolynomial}. Let $\Tilde{\phi}$ be the image of $\phi$ under the forgetful map 
    \begin{align*}
        \pi_2^R(x, y) \ra \pi_2(x,y).
    \end{align*}
    For an embedded curve, the Fredholm index $\ind(\Tilde{\phi})$ agrees with the Maslov index $\mu(\Tilde{\phi})$, while for non-embedded curves, we have 
    \begin{align}\label{eqn:immersed index formula}
        \ind(\Tilde{\phi}) = \mu(\Tilde{\phi}) - 2 \mathrm{sing}(\Tilde{\phi}).
    \end{align}
    See \cite[Proposition 4.2']{lipshitz_errata}. According to \cite[Proposition 2.3]{guth2025real}, 
    \begin{align*}
        \ind(\Tilde{\phi}) - 2 \ind_R(\phi) = \mu(\Tilde{\phi}) - 2 \mu_R(\phi),
    \end{align*}
     and hence
    \begin{align*}
         \ind(\Tilde{\phi}) - \mu(\Tilde{\phi}) =  2 (\ind_R(\phi)-\mu_R(\phi)).
    \end{align*}
    Combining this with \Cref{eqn:immersed index formula} yields the relation 
    \begin{align}\label{eqn:immersed real index formula}
        \ind_R(\phi) = \mu_R(\phi) - \mathrm{Sing}(\phi).
    \end{align}
    Let $\phi\# \phi_0$ be a class of real Maslov index 1. Combining  \Cref{lem: qstab index computation} with Equation \eqref{eqn:immersed real index formula} yields 
    \begin{align*}
        1 = \mu_R(\phi\# \phi_0) &= \mu_R(\phi) + \mu_R(\phi_0) - n_{p_0}(\phi_0)\\
        &= \mu_R(\phi) + n_{p_0}(\phi_0) - n_{p_0}(\phi_0).
    \end{align*}

    Now, choose a sequence of neck lengths $T_i$ limiting to infinity, and consider a sequence $u_i$ of $J(T_i)$-holomorphic curves in the class $\phi\# \phi_0$. We can extract a subsequence converging to a collection broken curves $\cU$ into $\Sigma \times [0,1] \times \R$ representing $\phi$, broken curves $\cU_0$ into $\Sigma_0 \times [0,1] \times \R$ representing $\phi_0$, and a collection $\cV$ mapping into the connected sum region $(S^1 \times \R) \times [0,1] \times \R$.

    For simplicity, we begin by trimming off ghost components of $\cU$ and $\cU_0$; since these classes are constant, the real Maslov index is unchanged. For a definition of ghost curves, see \cite[Definition 5.46]{Lipshitz_2018}. Ghosts will be ruled out later. To ensure that the remaining curves achieve transversality, it must be shown that the limiting curves satisfy (M1)-(M5) as in \Cref{sec:cylindrical}. The first four axioms are clear, so we verify (M5). Since we have assumed $\cU$ and $\cU_0$ contain no ghost components, any curve $v$ with $\pi_{[0,1]\times \R}\circ v$ constant must have $\pi_{\Sigma}\circ v$ non-constant. However, there can be no such curves in our theory, as we do not allow curves which reach the boundary. Source degenerations along arcs connecting boundary components of the same type are ruled out similarly. It follows that $\cU$ consists of a single curve (after trimming off ghosts) and furthermore, by \eqref{eqn:immersed index formula}, this curve must be embedded (and so satisfies (M6)).

    We turn now to the collection $\cV$. By the maximum modulus principle, the projection to $[0,1]\times \R$ of any curve $v$ in $\cV$ must be constant. Since the asymptotics must agree with those of $u$, this forces $\cV$ to consist of $n_{p_0}(u)$ once-covered cylinders each projecting to a different point in $[0,1]\times \R$. 

    Finally, consider $\cU_0$. Define a map 
    \begin{align*}
        \rho^{p}: \cM_R(\phi) \ra \Sym^{k}([0,1]\times \R)^R
    \end{align*}
    which assigns to $u$ the element $(\pi_{[0,1]\times \R}\circ u)((\pi_{\Sigma}\circ u)^{-1}(p))$. Here $k = n_p(\phi)$. Note that $\underline{\t}(\rho^{p}) = \rho^{p}$, as $u$ is assumed to be real. Choose some component $u_0$ of $\cU_0$ with source curve $S_0$ which satisfies 
    \begin{align*}
        \rho^{p_0}(u_0) = \rho^p(u).
    \end{align*}
    Again, assuming there is no ghost component, by an argument similar to that above, we may assume that $u_0$ satisfies (M1)-(M5).

    Let $S_0$ be the source curve of $u_0$ and let $\phi_0' = [u_0]$. Let $X(\phi) = \{\rho^p(u): u \in \cM_R(\phi)\}$ and consider the moduli space $\cM_R(S_0, \phi_0',X(\phi_0))$ consisting of pseudo-holomorphic curves with source $S_0$ in the class $\phi_0'$ with asymptotics in $X(\phi_0)$. For a generic choice of almost complex structure, $\cM_R(S_0, \phi_0',X(\phi_0))$ is a smooth manifold of dimension 
    \begin{align*}
        \mu_R(\phi_0') - \mathrm{Sing}(u_0) - \mathrm{codim}(X(\phi_0)),
    \end{align*}
    where $\mathrm{codim}(X(\phi_0))$ is the codimension of $X(\phi_0)$ in $\Sym^{k}([0,1]\times \R)^R$. See \cite[Sections 3 and 4]{Lipshitz2005ACR}, \cite{lipshitz_errata}, and \cite[Section 9.3]{JTZ2021}. Hence, 
    \begin{align*}
        \dim \cM_R(S_0, \phi_0',X(\phi_0)) &\le n_{p_0}(\phi_0) - \mathrm{Sing}(u_0) - (n_{p_0}(\phi_0) - 1)\\
        &= 1- \mathrm{Sing}(u_0),
    \end{align*}
    with equality if and only if $\mu_R(\phi_0) = \mu_R(\phi_0')$. Hence, this space is generically empty unless $u_0$ is embedded and $\phi_0' = \phi_0$. It follows that $\cU_0$ consists of an index $n_{p_0}(\phi_0)$ curve satisfying (M1)-(M6) in~\Cref{sec:cylindrical}.

    Finally, we justify our assumption that no ghost components appear. This follows from a standard index argument (for instance, see~\cite[Lemma 5.57]{Lipshitz_2018}). Let $\hat{S}$ be the source curve for $u$. Utilizing \cite[Equation 5]{Lipshitz2005ACR}, we can compare $\ind_R(S, \phi)$, $\ind_R(S_0, \phi_0)$, and $\ind_R(\hat{S}, \phi\#\phi_0)$ to deduce that
    \begin{align*}
        \chi(\hat{S}) = \chi(S) + \chi(S_0) - 2n_p(\phi).
    \end{align*}
    The surface $\hat{S}$ can be constructed topologically by gluing $S$ and $S_0$ along the punctures and then gluing in ghost components. But, since each ghost component decreases the Euler characteristic by 2, their existence would contradict the equation above.

    Therefore, any sequence $u_i$ of $J(T_i)$-holomorphic curves representing $\phi \# \phi_0$ has a subsequence which converges to a pair $(u, u_0)$ satisfying (M1)-(M6) with $\rho^{p}(\phi) = \rho^{p_0}(\phi_0)$. We may then view the pair $(u, u_0)$ as an element of the compactification of the 1-dimensional space
    \begin{align*}
        \bigcup_{T > 0} \cM_R^{J(T)}(\phi \# \phi_0).
    \end{align*}
    Hence, it follows from \cite[Proposition A.2]{Lipshitz2005ACR} that for large $T$, the compactification is modeled on 
    \begin{align*}
        [0,1) \times \left( (\cM_R(\phi)\times_\rho \cM_R(\phi_0))/\R\right).
    \end{align*}
    Since $\mu_R(\phi) =1$, the moduli space $\widehat{\cM}_R(\phi)$ is zero dimensional, and 
    \begin{align*}
        \# \widehat{\cM}_R(\phi \# \phi_0) \equiv \sum_{u \in \widehat{\cM}_R(\phi)}\# \cM_R(\phi_0, \rho^p(u)) \mod 2.
    \end{align*}
    Given a generic divisor $\bm d \in \mathrm{Sym}^k([0,1] \times \R)$, define 
    \begin{align*}
        \cM_R^{\bm x, \bm y}(\bm d) :=\coprod_{\substack{\phi_0\in \pi_2(\bm x, \bm y), \\
        n_{p_0}(\phi_0) = k}} \cM_R(\phi_0, \bm d).
    \end{align*}
    There are now two cases to consider. When $n_p(\phi) = 2\ell$ is even, $\phi_0$ is the class $\ell (D_0 + D_1) + \ell(D_0 + D_2)$. It follows from \cite[Lemma 6.4]{HolomorphicdiskslinkinvariantsandthemultivariableAlexanderpolynomial} that for divisors missing the diagonal in $\mathrm{Sym}^k([0,1] \times \R)$ we have
    \begin{align*}
        \cM_R^{\theta, \theta}(\bm d) \equiv 1\mod 2,
    \end{align*}
    where, $\theta$ represents either $\xv$ or $\yv$, and hence 
    \begin{align*}
        \# \widehat{\cM}_R(\phi \# \phi_0) \equiv \# \widehat{\cM}_R(\phi) \mod 2.
    \end{align*}
    When $n_p(\phi) = 2\ell+1$ is odd, then $\phi_0 = D_0 + \ell (D_0 + D_1) + \ell(D_0 + D_2).$ In the following lemma, we prove that for a generic divisor,
    \begin{align*}
        \#\cM_R^{x, y}(\bm d) \equiv \#\cM_R^{y, x}(\bm d) \equiv 1\mod 2,
    \end{align*}
    as well, from which it follows that 
    \begin{align*}
        \# \widehat{\cM}_R(\phi \# \phi_0) \equiv \# \widehat{\cM}_R(\phi) \mod 2
    \end{align*}
    in the odd case as well.
\end{proof}

\begin{lemma}
    Consider the real Heegaard diagram $\cH_0$ above. Fix a generic ${\bm d \in \Sym^{2\ell+1}(\D)^R}$ for some $\ell \ge 0$. Then, 
    \begin{align*}
        \cM_R^{a, b}(\bm d) \equiv 1\mod 2
    \end{align*}
    for $(a, b) \in \{(x,y), (y,x)\}$.
\end{lemma}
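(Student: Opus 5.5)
The plan is to reduce the count to a classical, non-real count of holomorphic curves in the local diagram $\cH_0$ via the tautological correspondence (Lemma~\ref{lem:tautcor} and Proposition~\ref{prop:real-taut-corr}). The relevant class is $\phi_0=(2\ell+1)D_0+\ell(D_1+D_2)$, which by Lemma~\ref{lem: qstab index computation} has $\mu_R(\phi_0)=2\ell+1$. Its image $\tilde\phi_0$ in the doubled (non-real) Heegaard diagram, namely the disk $D^2$ with the two curves $\alpha_0,\beta_0=\tau_0(\alpha_0)$ meeting at $x$ and $y$, is an ordinary domain. A real curve in class $\phi_0$ through a real divisor $\bm d$ is the same as a non-real curve $\tilde u$ in class $\tilde\phi_0$ which is invariant under the involution $\sigma$ on the source and $\ut$ on $W$, and which passes through $\bm d\subset [0,1]\times\R$ at the preimages of $p_0$.

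Step one is to handle the case $\ell=0$ directly. The class $D_0$ is a bigon from $x$ to $y$ (respectively from $y$ to $x$, which is the class $D_0$ in $\pi_2^R(y,x)$ with $b=0$). Its source is a disk. By the Riemann mapping theorem the non-real moduli space of bigons is a single point modulo translation. The symmetric almost complex structure of the form $j_\Sigma\times j_\D$ makes this unique bigon automatically real, because its image under the involution is again a bigon in the same class. Constraining $\rho^{p_0}(u)=\bm d$, a single real point of $[0,1]\times \R$ (so it lies on $s=1/2$), cuts the $\R$-action down to exactly one solution. Hence the count is $1$.

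Step two is the general case, which I would do by a degeneration argument. Write $\bm d=d_0\cup\{d_1,\tau d_1,\dots,d_\ell,\tau d_\ell\}$, where $d_0$ is a real point and the remaining points form $\ell$ conjugate pairs. The count is independent of the generic real divisor: a path of real divisors gives a $1$-dimensional cobordism, whose ends (strip breaking, or divisor points colliding) cancel in pairs or are excluded, as in \cite[Lemma 6.4]{HolomorphicdiskslinkinvariantsandthemultivariableAlexanderpolynomial}. I would therefore send the $\ell$ conjugate pairs far off in the $t$-direction relative to $d_0$, and separated from each other. Gluing and compactness then split a curve into a real bigon through $d_0$, counted as $1$ by Step one, together with $\ell$ real index-$2$ pieces in classes $2D_0+D_1+D_2$ from $x$ to $x$ (or from $y$ to $y$), each through one conjugate pair. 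For these periodic pieces I would invoke the even case already cited, $\cM_R^{\theta,\theta}(\bm d)\equiv 1$, with $k=2$. The product of the counts is $1\bmod 2$.

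The main obstacle is the degeneration and gluing in Step two. Specifically, I need to show that the only limits are these chains and that no additional broken configurations contribute: for instance, a periodic piece placed before the bigon versus after it, or breakings at intermediate generators. These must cancel, or else be matched bijectively with glued curves. If this proves troublesome, my fallback is a direct branched-cover description. Real curves in $\tilde\phi_0$ correspond to $\sigma$-invariant holomorphic maps from a disk-with-branch-points to the local model. The condition at $\bm d$ then becomes a real polynomial condition, and I would count its real solutions mod $2$ by a symmetry argument: non-real solutions come in conjugate pairs, leaving the parity of the real count equal to the parity of the full count computed classically.
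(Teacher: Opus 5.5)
For $(a,b)=(y,x)$ your argument is the paper's. You show the count is independent of the generic real divisor. You then degenerate so that the $\ell$ conjugate pairs separate and run off along the strip, and the curve breaks into two kinds of pieces. The first are $\ell$ real periodic pieces in class $2D_0+D_1+D_2$, each counted as $1$ mod $2$; the paper cites \cite[Lemma 9.58]{JTZ2021} for this factor, you cite the even case. The second is the rigid symmetric bigon $D_0$ through the remaining real point.

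Where the proposal goes wrong is the case $(a,b)=(x,y)$, which your Step 1 conflates with the other one.

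\begin{itemize}
\item By \Cref{lem: qstab index computation}, $\pi_2^R(x,y)=\{(2b-1)D_0+b(D_1+D_2)\}$, so $D_0$ is not a class from $x$ to $y$.
\item The degree $2\ell+1$ class from $x$ to $y$ is $(2\ell+1)D_0+(\ell+1)(D_1+D_2)=\ell(2D_0+D_1+D_2)+(D_0+D_1+D_2)$. So the odd level produced by your degeneration is $D_0+D_1+D_2$, the union of the disks bounded by $\alpha_0$ and $\beta_0$.
\item This domain has $270^\circ$ corners at both $x$ and $y$ and ordinary Maslov index $3$ ($e=3/2$, $n_x=n_y=3/4$).
\item Hence the Riemann-mapping uniqueness you invoke does not apply. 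Modulo translation, the non-real moduli space is two-dimensional, parametrized by cuts from the two obtuse corners along $\alpha_0$ or $\beta_0$.
\end{itemize}

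You need an additional argument for this base case. One option is the reasoning at the end of the proof of \Cref{prop:polygon-moduli-space}. There, $\tau$ exchanges an $\alpha_0$-cut with a $\beta_0$-cut at each fixed corner, so the only real representative is the uncut Riemann map onto $D_0+D_1+D_2$. That map is symmetric by uniqueness up to translation, and it meets a given real point for exactly one translate.

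With this base case in place of the bigon, your degeneration argument goes through unchanged for $(x,y)$. The paper itself writes out only $(y,x)$ and asserts the other case is symmetric, so this step is missing there as well.
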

\begin{proof}
    This is an adaptation of the proof of \cite[Lemma 6.4]{HolomorphicdiskslinkinvariantsandthemultivariableAlexanderpolynomial}. For simplicity, we just consider the case $(a, b) = (y, x)$; the other case is symmetric. First, as in that proof, we note that the sum 
    \begin{align*}
        S(y, x,\bm d) = \sum_{\substack{\phi \in \pi_2^R(y,x), \\ \mu_R(\phi) = 2\ell+1}} \# \cM_R(\phi)
    \end{align*}
    is independent of the choice of generic $\bm d$. To see this, we choose a path $\bm d_t$ in $\Sym^{2\ell+1}(\D)^R$ connecting $\bm d_0$ to $\bm d_1$, and consider the 1-dimensional moduli space $\bigcup_{t\in [0,1]} \cM_R(\phi, \bm d_t)$. This moduli space has four kinds of ends:
    \begin{align*}
        \bigcup_{\substack{\phi_1 \in \pi_2^R(y,x), \phi_2 \in\pi_2^R(y,y) :\\ \mu_R(\phi_1) = 1, \mu_R(\phi_2) = 2\ell}} \widehat{\cM}_R(\phi_1)\times \cM_R(\phi_2, \bm d_t)\\
        \bigcup_{\substack{\phi_1 \in \pi_2^R(y,y), \phi_2 \in\pi_2^R(y,x) :\\ \mu_R(\phi_1) = 2\ell, \mu_R(\phi_2) = 1}} \cM_R(\phi_1,\bm d_t)\times \widehat{\cM}_R(\phi_2)\\
        \bigcup_{\substack{\phi \in \pi_2^R(y, y):\\ \mu_R(\phi) = 2\ell+1}}\cM_R(\phi, \bm d_0) \\
        \bigcup_{\substack{\phi \in \pi_2^R(y, y):\\ \mu_R(\phi) = 2\ell+1}}\cM_R(\phi, \bm d_1).
    \end{align*}
    There are an even number of ends, so the mod 2 count of points in the compactifications of these spaces must be zero. In \cite[Lemma 6.4]{HolomorphicdiskslinkinvariantsandthemultivariableAlexanderpolynomial}, ends of the first two kinds do not contribute to the mod 2 count, as both intersection points in the diagram are cycles. In our case, neither $x$ nor $y$ are cycles, rather the first two ends have the same contribution and therefore cancel. The last two terms contribute $S(y, x,\bm d_0) + S(y, x,\bm d_1)$, which must therefore be equal. Hence, it suffices to prove the claim for any particular $\bm d$. 

   To do so, we combine arguments from \cite[Lemma 9.58]{JTZ2021} and \cite{HolomorphicdiskslinkinvariantsandthemultivariableAlexanderpolynomial}. Fix a path $${\bm p: [0, \infty) \ra \Sym^{2\ell+1}(\D)^R}$$ starting at some divisor $\bm d$. We require that $\bm p$ has the property that the points of $\bm p(T)$ are all spaced out by distance at least $T$ and approach $-i$ in $\D$ as $T \ra\infty$. We then consider the space $$\cM_R(\phi, \bm p) := \bigcup_{T \in [1, \infty)} \cM_R(\phi, \bm p(T)).$$ Just as above, we count the ends of the compactification of this moduli space; the only ends with ultimate contributions are those which correspond to curves in $\cM_R(\phi, \bm d)$ and curves appearing as $T \ra \infty.$ 
    
    We consider the limit as $T \ra \infty$. Since the points of $\bm p(T)$ approach $-i$ and get far apart as $T$ becomes large, there must be $2\ell$ curves (or, $\ell$ pairs of curves) which satisfy the matching condition, which must be in  class $e_y + (D_0 +D_1) + (D_0 + D_2) \in \pi_2^R(y,y)$, where $e_y$ is the constant disk at $y$. This class has real index $2\ell$. The remaining curves $u$ must have real index 1 and $n_p(u) = 1$. Hence, by appealing to the gluing results of \cite[Appendix]{Lipshitz2005ACR}, we have
    \begin{align*}
        \cM_R^{y, x}(\bm d)= (\#\cM_R^{y, y}(d))^\ell \cdot \sum_{\phi \in\pi_2^R(y,x): n_p(\phi) = 1} \# \widehat{\cM}_R(\phi),
    \end{align*}
    where $d \in (\D)^R = \{1/2\} \times \R$. By the same argument as in \cite[Lemma 9.58]{JTZ2021}, $$\# \cM_R^{y, y}(d)\equiv 1 \mod 2$$ and likewise, $$\sum_{\phi \in\pi_2^R(y,x): n_p(\phi) = 1} \# \widehat{\cM}_R(\phi) \equiv 1\mod 2,$$ as the only class with $n_p(\phi) = 1$ is the obvious symmetric bigon. The claim follows.
\end{proof}

We now turn to situations in which puncturing real sutured manifolds has a simpler effect on their real sutured Floer homology.

\begin{proposition}\label{prop:cases that rank doubles obsviously}
 Suppose that $(Y',\gamma',\tau')$ is obtained from another real sutured manifold $(Y,\gamma,\tau)$ by first deleting either an equivariant tubular neighborhood of:\begin{itemize}
    \item a point on an arc component of $\mathrm{fix}(\tau)$, or 
    \item a pair of points interchanged by $\tau$
\end{itemize}

\noindent and decorating the resulting $S^2$ boundary components with $\tau$-invariant equatorial sutures. Then \[\RSFH(Y',\gamma',\tau')\cong \RSFH(Y,\gamma,\tau)\otimes (\F \oplus \F).\]
\end{proposition}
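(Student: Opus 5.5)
We prove both cases by exhibiting an explicit real Heegaard diagram for $(Y',\gamma',\tau')$ whose chain complex visibly splits as two copies of $\RSFC(\mathcal{H})$, with $\mathcal{H}$ a diagram for $(Y,\gamma,\tau)$. We do not use the neck-stretching analysis of Proposition~\ref{prop:quasistab formula}. That argument was needed only because the puncture sat on a circle component of $C$, where the new region $D_0$ is $\tau$-invariant and can carry positive multiplicity. In both present cases, a boundary component of $\Sigma$ lies in every new region that a domain could cross.

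\emph{Pair of points.} Choose a real admissible diagram $\mathcal{H}=(\Sigma,\bm\alpha,\bm\beta,\tau)$ and a point $q\in\Sigma$ away from $C$ and from all attaching curves, so that $q\neq\tau(q)$. Puncturing $Y$ at $q$ and at $\tau(q)$ with an equatorial suture is the non-equivariant puncturing of Juh\'asz~\cite[Proposition~9.14]{juhasz2006holomorphic}, performed at $q$ and mirrored at $\tau(q)$. Diagrammatically, we remove small disks around $q$ and $\tau(q)$ and add one new curve $\alpha_0$ near $q$ and one $\beta_0$ near $q$, then place $\tau(\beta_0)$ and $\tau(\alpha_0)$ near $\tau(q)$. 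The local picture near $q$ is the standard two-point $\alpha_0\cap\beta_0=\{a,b\}$ configuration bounding two bigons. Each bigon contains a boundary component: one is the new puncture boundary, the other is a boundary component of $\Sigma$ (or the disk we removed). Real generators are therefore of the form $\bm x\times\{a,\tau(a)\}$ or $\bm x\times\{b,\tau(b)\}$.

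Every domain has multiplicity zero in the regions containing boundary, so the local multiplicities are zero and the two bigons contribute nothing. It remains to identify the moduli spaces with those of $\mathcal{H}$. Any domain in the new diagram has zero multiplicity near $q$ and $\tau(q)$, so the new curves sit in a product region. A standard stretching or degeneration argument then applies: the $\alpha_0,\beta_0$ components become constant trivial strips, as in Juh\'asz's proof, and the argument averages under $\tau$ in the real setting. Hence $\RSFC(\mathcal{H}')\cong\RSFC(\mathcal{H})\otimes\F^2$ with differential $\partial\otimes 1$. Admissibility is inherited because periodic domains avoid the new boundary-adjacent regions.

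\emph{Point on an arc component.} Choose $\mathcal{H}$ so that the arc component $C_i$ meets $\Sigma$ in an arc $\delta$ of $\mathrm{fix}(\tau|_\Sigma)$ running from $\partial\Sigma$ to $\partial\Sigma$ (as in Proposition~\ref{prop:product disk decomposition}). Take $p\in\delta$ and model the puncture by $\cH\#\cH_0$ with $\cH_0$ from Figure~\ref{fig:local_picture_for_quasi_stabilization}. The key difference from the circle case is that the invariant region $D_0$ containing $p_0$ is glued to the two regions of $\Sigma$ adjacent to $\delta$ near $p$. These regions are connected along $\delta$ to a component of $\partial\Sigma$ without crossing any $\alpha$ or $\beta$ curve, because $\delta$ is disjoint from the attaching curves. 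We can also isotope $p$ along $\delta$ so that the merged region touches $\partial\Sigma$. This forces $n_{p_0}(\phi_0)=0$.

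By Lemma~\ref{lem: qstab index computation}, $\phi_0$ then has real index $0$. Its only classes are $0\cdot D_0+b(D_1+D_2)$ with $a=2b$ forcing $b=0$, and these exist only between $x$ and $x$ or between $y$ and $y$. Consequently $\partial(\bm x\times x)=\partial\bm x\times x$ and $\partial(\bm x\times y)=\partial\bm x\times y$. The identification of moduli counts is the $n_p(\phi)=0$ case of Proposition~\ref{prop:quasistab formula}, where the neck region carries no cylinders. In this case $\partial^1=0$, and the formula gives exactly two copies of $\RSFC(\mathcal{H})$.

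\emph{Main obstacle.} The hardest step is to verify that the merged region really contains a boundary component, that is, to build the diagram so that $\delta$ near $p$ is unobstructed from $\partial\Sigma$ without destroying admissibility. If that fails, we will instead observe directly that $n_p(\phi)$ equals the multiplicity on a region adjacent to $\partial\Sigma$ after sliding $p$ along $\delta$, and then invoke Proposition~\ref{prop:quasistab formula} with $\partial^1=0$.
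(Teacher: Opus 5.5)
Your strategy of writing down a diagram for the punctured manifold and seeing the splitting at the chain level is the natural real version of the classical argument; the paper's proof simply defers to Juh\'asz's Proposition~9.14. However, your pair-of-points case rests on a false local claim. Around $q$, the curves $\alpha_0,\beta_0$ (two circles around the new boundary meeting in $a,b$) cut out four regions, not two:
\begin{itemize}
\item the lens containing the new boundary;
\item the old region containing $q$;
\item the crescent bigon $c_\alpha$, inside $\alpha_0$ only;
\item the crescent bigon $c_\beta$, inside $\beta_0$ only.
\end{itemize}
The two crescents contain no boundary at all. So it is false that every domain vanishes near $q$, and false that every curve is an old curve plus trivial strips. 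Indeed, $c_\alpha+\tau(c_\alpha)$ and $c_\beta+\tau(c_\beta)$ are positive classes of real index one from $\bm x\times\{a,\tau(a)\}$ to $\bm x\times\{b,\tau(b)\}$. Each is a pair of bigons swapped by $\tau$ and has exactly one real representative.

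Your conclusion $\partial\otimes 1$ survives, but for reasons you have not given. First, both crescents run from $a$ to $b$, so their contributions cancel mod $2$. Second, a crescent pair combined with a nonconstant class of $\mathcal{H}$ forces that class to have real index $0$, so there are no such curves for generic $J$. You must also choose $q\notin C$ explicitly in an elementary domain adjacent to $\partial\Sigma$. This costs nothing, since the pair of points can be moved equivariantly in the complement of $C$. Without that choice, the old region around $q$ can carry multiplicity, and you are back in a free analogue of Proposition~\ref{prop:quasistab formula}. Your admissibility remark should also account for the new periodic domain $c_\alpha-c_\beta$; it is harmless because it has both signs. A cleaner route for this case: $(Y',\gamma',\tau')$ contains a pair of product disks cutting off two swapped copies of $S^2\times[0,1]$. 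Proposition~\ref{prop:product disk decomposition} and Example~\ref{ex:recoversclassical} then give the doubling with no curve counting.

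In the arc case, your claim that $\mathcal{H}$ can be chosen with $C_i$ disjoint from all attaching curves is false in general. Flowing such an arc along the gradient would produce a product flipping disk containing $C_i$. No such disk exists, for example, in $S^2\times[0,1]$ with $\mathrm{rot}\times\mathrm{id}$ (Figure~\ref{fig:basic_ex}): each fixed arc joins the two boundary spheres, while a disk's boundary is connected.

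You do not need that claim, and your ``main obstacle'' is not actually an obstacle. Since $f\circ\tau=-f$, all of $C$ lies in $\Sigma=f^{-1}(0)$, and the ends of $C_i$ lie on $s(\gamma)\subset\partial\Sigma$. So take $p$ on an end segment of $C_i$, inside the elementary domain adjacent to $\partial\Sigma$; this is the same choice the paper makes in the proof of Proposition~\ref{kunnethformulae}. Then $n_{p_0}=0$ for every class, and your use of Lemma~\ref{lem: qstab index computation} is correct. Local classes between $x$ and $y$ need odd multiplicity on $D_0$, so only the constant classes at $x$ and at $y$ survive, and the differential is $\partial\otimes 1$. With these repairs in both cases, your argument proves the proposition.
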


See~\Cref{kunnethformulae} for a generalization of this result.

\begin{proof}
    This follows just as in the proof of the classical case~\cite[Proposition~9.14]{juhasz2006holomorphic}.
\end{proof}

We will use two pairs of examples to illustrate the difference between the effect of puncturing on real sutured Floer homology and usual sutured Floer homology.

\begin{example}\label{ex:punturing S1timesS2}
Figure~\ref{fig:examples_for_quasistabilization} shows a genus $1$ real Heegaard diagram for $Y=S^1\times S^2$ with involution $\text{reflection}\times \text{reflection}$ and a single puncture $p$. We can puncture the fixed set at the points $p_i$ to get new sutured manifolds $Y_i$. One can calculate directly that \[\dim\RSFH(Y,\t)=2, \quad \dim\RSFH(Y_1,\t_1)=4, \quad \dim \RSFH(Y_2,\t_2)=2.\] This example demonstrates the basepoint dependence of real Heegaard Floer homology. 
\end{example}

\begin{figure}[h]
\def\svgwidth{0.8\linewidth}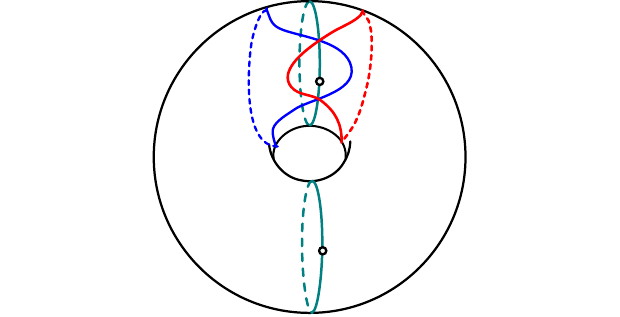
\caption{A real Heegaard diagram for $S^1\times S^2$ and two choices of extra punctures.}
\label{fig:examples_for_quasistabilization}
\end{figure}

\begin{example}\label{ex:puncturing link complements} 
Figure~\ref{fig:examples_grid_diagram} shows a (multi-based) real Heegaard diagrams for the two component unlink $U_2$ and the Hopf link $H$. In these diagrams, the upper (right) and lower (left) boundaries should be identified to form a torus. These are \emph{real grid diagrams}, as defined in \cite[Section~3]{YXHFLR}. By deleting neighborhoods of the basepoints from the Heegaard surface, one can obtain real sutured diagrams for real sutured manifolds $(Y_u:=S^3-\nu(U_2),\gamma_{u},\tau_{u})$ and $(Y_H:=S^3-\nu(H),\gamma_{H},\tau_{H})$. Since these diagrams are nice (see Definition~\ref{definition: nice diagram}), we can calculate $\RSFH$ from them combinatorially: 
\[\dim \RSFH(Y_H,\gamma_{H},\tau_{H})=4=\dim\RSFH(Y_u,\gamma_{u},\tau_{u}).\] Now puncture $Y_H$ and $Y_u$ at the points $q$ on the fixed set shown in Figure~\ref{fig:examples_for_quasistabilization}. Denote the resulting real sutured manifolds by $(Y_H',\gamma_{H}',\tau_{H}')$ and $(Y_u',\gamma_{u}',\tau_{u}')$. Proposition~\ref{prop:quasistab formula} implies that $\dim \RSFH(Y_H',\gamma_{H}',\tau_{H}')=8$ while $ \dim\RSFH(Y_u',\gamma_{u}',\tau_{u}')=6. $
\end{example}

\begin{figure}[h]
\def\svgwidth{0.8\linewidth}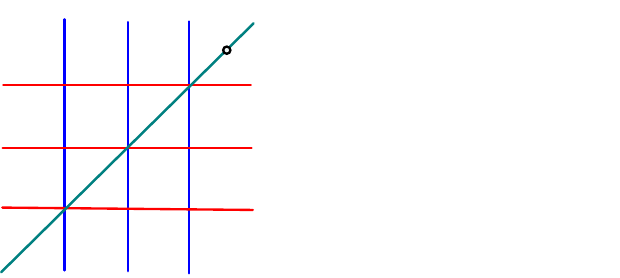
\caption{Real grid diagrams for complement of strongly invertible unlink and Hopf link, both with a choice of an extra puncture.}
\label{fig:examples_grid_diagram}
\end{figure}

\subsection{K\"unneth formulae}\label{sub:Kunneth formula}

The usual versions of Heegaard Floer homology, knot Floer homology and sutured Floer homology satisfy K\"unneth formulae for connected sums. (cf. \cite[Section~6]{OS3mnfldspropex}, \cite[Section~7]{Holomorphicdisksandknotinvariants} and \cite[Section~9]{juhasz2006holomorphic}.) In this subsection, we introduce several notions of real connected sums and prove K\"unneth formulae for each of them. 

\begin{definition}\label{def:fixed point connected sum}
Let $(Y,\gamma,\tau)$ and $(Z,\delta,\iota)$ be real sutured manifolds. Pick components $C_1$ of $\mathrm{fix}(\tau)$ and $C_2$ of $\mathrm{fix}(\iota)$.

The \emph{fixed-point connected sum} of $(Y,\gamma,\tau)$ and $(Z,\delta,\iota)$ $$(Y\#_{C_1\sim C_2} Z, \gamma\cup \delta, \tau\#\iota)$$ is the real sutured manifold obtained as follows. Pick interior points $p_1\in C_1$ and $p_2\in C_2$ along with equivariant ball neighborhoods $B_{p_1}$ and $B_{p_2}$. Delete the interiors of $B_{p_1}$ and $B_{p_2}$ from $Y$ and $Z$ and glue the resulting $S^2$ boundary together using a diffeomorphism of $S^2$ that intertwines the involution on $\partial B_{p_1}$ and $\partial B_{p_2}$. In this way, we obtain a balanced sutured manifold $(Y\#_{C_1\sim C_2} Z,\gamma\cup \delta)$ which comes equipped with a canonical involution $\tau\#\iota$ inherited from $\tau$ and $\iota$.

Now assume that $C_1$ and $C_2$ are both arc components. Fix points $q_1\in \partial C_1$ and $q_2\in\partial C_2$. We define the \emph{fixed-point boundary sum} of $(Y,\gamma,\tau)$ and $(Z,\delta,\iota)$ 
$$(Y\natural_{q_1\sim q_2} Z, \gamma\natural \delta, \tau\natural\iota)$$ 
as the real sutured manifold obtained as follows. Pick equivariant disk neighborhoods $D_{q_1}$ and $D_{q_2}$ of $q_1$ and $q_2$ in $\partial Y$ and $\partial Z$, respectively. Here, we require each $D_{q_i}$ to satisfy the following condition: if the sutured containing $q_i$ is parametrized as $\R/\Z \times [-1,1]$ with $q_i$ at the point $(0,0)$, then $D_{q_i}$ contains $(-\epsilon,\epsilon)\times [-1,1]$ for some $\epsilon>0$. Both disks are equipped with involutions given by a $\pi$-rotation. The manifold $Y\natural_{q_1\sim q_2} Z$ is obtained by gluing $D_{q_1}$ and $D_{q_2}$ together equivariantly by an orientation-reversing diffeomorphism.
This manifold can be equipped with the obvious involution $\tau\natural\iota$ inherited from $\tau$ and $\iota$, and the sutured structure inherited from $\gamma$ and $\delta$.

\end{definition}

Note that the diffeomorphism identifying the two real spherical boundary components is unique up to isotopy.

\begin{definition}\label{def:paired connected sum}
Let $(Y,\gamma,\tau)$ and $(Z,\delta,\iota)$ be real sutured manifolds. Pick a pair of points $(p,p')$ interchanged by $\tau$ in $\mathrm{int}(Y)$ as well as a pair of points $(q,q')$ interchanged by $\iota$ in $\mathrm{int}(Z)$. The \emph{paired connected sum} of   $(Y,\gamma,\tau)$ and $(Z,\delta,\iota)$ $$(Y\#_{(p,p')\sim (q,q')} Z, \gamma\cup \delta, \tau\#\iota)$$ is the real sutured manifold obtained as follows. 
Delete equivariant ball neighborhoods $B_{p}\cup B_{p'}$ and $B_{q}\cup B_{q'}$ of $\{p\}\cup\{p'\}$ and $\{q\}\cup \{q'\}$ respectively.

Now glue the resulting $S^2$ boundaries together equivariantly.

This manifold inherits a real sutured structure $(\gamma\cup\delta,\tau\#\iota)$. 
 
Pick a pair of points $(p,p')$ interchanged by $\tau$ in $s(\gamma)\subset \partial Y$ and a pair of points $(q,q')$ interchanged by $\iota$ in $s(\delta)\subset \partial Z$. The \emph{paired boundary sum} $(Y\natural_{(p,p')\sim (q,q')} Z, \gamma\natural \delta, \tau\natural\iota)$ is obtained as follows. Find small equivariant disk neighborhoods $D_{p}\cup D_{p'}$ and $D_{q}\cup D_{q'}$ in $\partial Y$ and $\partial Z$, respectively.  Again, we require $D_{x}$ to contain a standard neighborhood of $x$  for $x=p,q,p',q'$  as in the suture as in Definition~\ref{def:fixed point connected sum}. Glue $D_p$ and $D_q$ together in an orientation-reversing way. This specifies an identification of $D_{p'}$  with $D_{q'}$ using the symmetry. In this way, we obtain a manifold $(Y\natural_{\partial,(p,p')\sim(q,q')} Z)$. It still has an obvious involution $\tau\natural\iota$ inherited from $Y$ and $Z$ and sutured structure $\gamma\natural\delta$ obtained from $\gamma$ and $\delta$ by gluing the annulus containing $p$, $p'$ with that containing $q$, $q'$ together. 
\end{definition}

Note that these boundary connected sum operations are equivalent to undoing one of the two types of real product disk decomposition. 

\begin{proposition}\label{kunnethformulae}
Let $(Y,\gamma,\tau)$ and $(Z,\delta,\iota)$ be real sutured manifolds with $C_1$, and $C_2$ arc components of their respective fixed sets. Let $q_i\in C_i$, $p\in Y\setminus \mathrm{fix}(\tau), q\in Z\setminus \mathrm{fix}(\iota) $. We have the following connected sum formulae: \begin{enumerate}
    \item $\RSFH(Y\#_{C_1\sim C_2} Z, \gamma\cup \delta, \tau\#\iota)\cong\RSFH (Y,\gamma,\tau)\otimes \RSFH (Z,\delta,\iota)\otimes W$;
    \item $\RSFH(Y\natural_{q_1\sim q_2} Z, \gamma\natural \delta, \tau\natural\iota)\cong\RSFH (Y,\gamma,\tau)\otimes \RSFH (Z,\delta,\iota)$;
    \item $\RSFH(Y\#_{(p,p')\sim(q,q')} Z, \gamma\cup \delta, \tau\#\iota)\cong\RSFH (Y,\gamma,\tau)\otimes \RSFH (Z,\delta,\iota)\otimes W$;
    \item $\RSFH (Y\natural_{(p,p')\sim(q,q')} Z, \gamma\natural \delta, \tau\natural\iota)\cong\RSFH (Y,\gamma,\tau)\otimes \RSFH (Z,\delta,\iota)$.
\end{enumerate}
Here, all the isomorphisms are relative Maslov grading-preserving and $W$ is a homogeneous rank 2 vector space over $\F$.
\end{proposition}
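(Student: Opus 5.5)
The plan is to reduce all four statements to two facts already available. The first is the boundary-sum case: a disjoint union behaves multiplicatively at the chain level. The second is that interior sums are boundary sums followed by a puncture of the type handled in Proposition~\ref{prop:cases that rank doubles obsviously}.

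I would begin with (2) and (4). As remarked after Definition~\ref{def:paired connected sum}, the fixed-point boundary sum along $q_1\sim q_2$ is undone by a real product disk decomposition along a flipping product disk. Likewise, the paired boundary sum is undone by a decomposition along a pair of product disks. In each case the result is the disjoint union $(Y\sqcup Z,\gamma\sqcup\delta,\tau\sqcup\iota)$.

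To get an actual isomorphism rather than just equality of ranks, I would rerun the proof of Proposition~\ref{prop:product disk decomposition}. Choose real Heegaard diagrams $\mathcal{H}_Y$ and $\mathcal{H}_Z$. Glue them along an arc of $\partial\Sigma_Y$ through a fixed point of $s(\gamma)$ and an arc of $\partial\Sigma_Z$ through a fixed point of $s(\delta)$; in the paired case, glue along an equivariant pair of such arcs. The glued arc, which is a fixed arc $\delta$ in the flipping case, lies in elementary domains touching $\partial\Sigma$. Every counted domain therefore has multiplicity zero there, and no holomorphic curve crosses between the two halves. Generators are pairs $(\bm x,\bm z)$, and the differential is $\partial_Y\otimes 1+1\otimes\partial_Z$ for a split symmetric almost complex structure. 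This gives a relative-Maslov-graded isomorphism $\RSFC(\mathcal{H}_Y\natural\mathcal{H}_Z)\cong\RSFC(\mathcal{H}_Y)\otimes\RSFC(\mathcal{H}_Z)$, and the Künneth theorem over $\F$ finishes (2) and (4). Admissibility is inherited, since periodic domains avoid the glued arcs.

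For (1) and (3), I would observe that the interior fixed-point connected sum $Y\#_{C_1\sim C_2}Z$ is real diffeomorphic to the fixed-point boundary sum $Y\natural Z$ punctured at a point on the fixed arc created by joining $C_1$ and $C_2$. Here the new $S^2$ boundary carries a $\tau$-invariant equatorial suture, as in Example~\ref{ex:closedaspunctured}. To see this, push the connect-sum sphere toward the boundary along $C_1$ and $C_2$. Then $C_1\cup C_2$ becomes an arc component of the fixed set of the boundary sum, and the sphere splits off a puncture on that arc. Similarly, the paired interior sum is the paired boundary sum punctured at an equivariant pair of points. Proposition~\ref{prop:cases that rank doubles obsviously} then supplies the factor $W=\F\oplus\F$. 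Its two generators come from the two intersection points of the standard puncture diagram and lie in the same relative grading, so $W$ is homogeneous. Combining with (2) and (4) gives (1) and (3).

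The main obstacle is the topological identification in (1), namely checking that the sum sphere really splits off a puncture on an \emph{arc} component. This is essential because Proposition~\ref{prop:quasistab formula} shows that punctures on circle components behave differently. The hypothesis that $C_1$ and $C_2$ are arcs is exactly what guarantees it. A second point needing care is the claim that the two generators of $W$ have equal Maslov grading. I would verify this directly on the local diagram, where the two generators are joined by a pair of symmetric bigons whose counts cancel. The ingredients for this check are Proposition~\ref{prop:quasistab formula} in the arc setting, or the classical argument of \cite[Proposition~9.14]{juhasz2006holomorphic}.
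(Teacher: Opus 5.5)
\textbf{Parts (2) and (4).} These are fine and follow the paper's argument. You undo the boundary sum by a (pair of) product disk decomposition(s), which diagrammatically means gluing $\Sigma_Y$ and $\Sigma_Z$ along boundary arcs, so the complex is a tensor product on the nose.

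\textbf{The gap in (1) and (3).} Your argument rests on a real diffeomorphism $Y\#_{C_1\sim C_2}Z\cong (Y\natural_{q_1\sim q_2}Z)\setminus B$, and this is false in general. Compare boundaries:
\begin{itemize}
\item The left side has boundary $\partial Y\sqcup\partial Z$.
\item On the right, the components $\partial_0Y\ni q_1$ and $\partial_0Z\ni q_2$ are replaced by the single surface $\partial_0Y\#\partial_0Z$, and a new $S^2$ appears.
\end{itemize}
As soon as $\partial_0Y$ and $\partial_0Z$ both have positive genus, the two sides have different numbers of non-spherical boundary components, so they cannot be diffeomorphic. Geometrically, the sum sphere separates $Y\#Z$ into $Y\setminus B_{p_1}$ and $Z\setminus B_{p_2}$, so it cannot be pushed into a puncture of a boundary sum. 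The error is hidden only because $(Y\natural Z)\setminus B$ happens to have the rank you want. Your description of the paired interior sum as a punctured paired boundary sum fails for the same reason.

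\textbf{How to repair it.} Puncture first and boundary-sum second.
\begin{itemize}
\item Slide $B_{p_2}$ along $C_2$ toward an endpoint $q_2\in\partial C_2$. This gives an equivariant identification $Z\setminus B_{p_2}\cong Z\natural(S^2\times I)$.
\item Hence $Y\#_{C_1\sim C_2}Z\cong\mathring{Y}\natural Z$. Here $\mathring{Y}$ is $Y$ punctured at $p_1\in C_1$, and the boundary sum is taken at $q_2$ and at one of the two fixed points on the new equatorial suture (an endpoint of one half of $C_1$).
\item Part (2) together with Proposition~\ref{prop:cases that rank doubles obsviously} then gives (1). Both arc hypotheses are used here.
\item The paired analogue, puncturing $Y$ at $p,p'$ and then taking the paired boundary sum, gives (3).
\end{itemize}
The paper takes a different, more direct route. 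It sums $\Sigma_Y$ and $\Sigma_Z$ at points of $C_1,C_2$ chosen in boundary-adjacent elementary domains. It then adds a $\tau$-exchanged pair $\alpha_0,\beta_0$ parallel to the neck circle and meeting twice, so the complex is a tensor product at chain level.

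\textbf{Your description of $W$.} This is also off in the real setting. In the local puncture diagram the two bigons between $x$ and $y$ are interchanged by $\tau$. By Lemma~\ref{lem: qstab index computation}, any real class from $x$ to $y$ has odd multiplicity on $D_0$, and that multiplicity is zero when the puncture sits next to $\partial\Sigma$. So no real class joins the two generators, and they in fact lie in different real $\mathrm{Spin}^c$ structures. The differential on $W$ vanishes for this reason, not because a pair of bigon counts cancel.
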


\begin{proof}
The K\"unneth formulae for the two kinds of boundary connected sums follow from the observation that after performing an appropriate real product disk decomposition on $(Y\natural_{(p,p')\sim(q,q')} Z, \gamma\natural \delta, \tau\natural\iota)$ or $(Y\natural_{q_1\sim q_2} Z, \gamma\natural \delta, \tau\natural\iota)$ we obtain  $(Y,\gamma,\tau)\sqcup (Z,\delta,\iota)$. In the former case, we decompose along a pair of product disks and in the later we decompose along a flipping disk, both of which are contained in the connected sum region. The result then follows from Proposition~\ref{prop:product disk decomposition} and the fact that \begin{align*}\RSFC((Y,\gamma,\tau)\sqcup(Z,\delta,\iota))\cong \RSFC(Y,\gamma,\tau)\otimes\RSFC(Z,\delta,\iota).\end{align*}

For interior connected sums, we will only prove the case of fixed point connected sums, since the paired case can be proven analogously. Let $\cH=(\Sigma,\bm\alpha,\bm\beta,\t)$ and $\cH'=(\Sigma',\bm\alpha',\bm\beta',\iota)$ be real sutured Heegaard diagrams for $(Y,\gamma,\tau)$ and $(Z,\delta,\iota)$, respectively.

A real Heegaard diagram for $(Y\#_{C_1\sim C_2} Z, \gamma\cup \delta, \tau\#\iota)$ can be constructed as follows.

Pick points $p_i\in C_i$ in the interiors of elementary domains adjacent to $\partial C_i$, as well as standard closed flipping disk neighborhoods $D_i$ of $P_i$ in $\Sigma$ and $\Sigma'$. Set $\Sigma^\#:=(\Sigma\setminus \mathring{B_1})\cup (\Sigma'\cup \mathring{B_2})/\sim$, where $\sim$ identifies $\partial B_1$ and $\partial B_2$ equivalently. $\Sigma^\#$ comes equipped with a canonical involution, which we denote by $\tau\#\iota$. Set $\bm\alpha^{\#}:=\bm\alpha\cup \bm\alpha'\cup \alpha_0$ and $\bm\beta^{\#}:=\bm\beta\cup \bm\beta'\cup\beta_0$. Here, $\beta_0$ and $\alpha_0$ are a pair of circles each isotopic to the image of $\partial B_1$ in $\Sigma^\#$ which are interchanged by $\t^{\#}$ and intersect each other twice transversely. These lead to a real sutured Heegaard diagram $\cH^{\#}:=(\Sigma^{\#},\bm\alpha^{\#},\bm\beta^{\#},\t^{\#})$ for $(Y\#_{C_1\sim C_2} Z, \gamma\cup \delta, \tau\#\iota)$. The desired isomorphism of homology groups is clear from this real Heegaard diagram, since it is a tensor product even at the chain level.

\end{proof}

%% file: local_picture_for_quasi_stabilization.pdf_tex
\begingroup%
  \makeatletter%
  \providecommand\color[2][]{%
    \errmessage{(Inkscape) Color is used for the text in Inkscape, but the package 'color.sty' is not loaded}%
    \renewcommand\color[2][]{}%
  }%
  \providecommand\transparent[1]{%
    \errmessage{(Inkscape) Transparency is used (non-zero) for the text in Inkscape, but the package 'transparent.sty' is not loaded}%
    \renewcommand\transparent[1]{}%
  }%
  \providecommand\rotatebox[2]{#2}%
  \newcommand*\fsize{\dimexpr\f@size pt\relax}%
  \newcommand*\lineheight[1]{\fontsize{\fsize}{#1\fsize}\selectfont}%
  \ifx\svgwidth\undefined%
    \setlength{\unitlength}{191.3625bp}%
    \ifx\svgscale\undefined%
      \relax%
    \else%
      \setlength{\unitlength}{\unitlength * \real{\svgscale}}%
    \fi%
  \else%
    \setlength{\unitlength}{\svgwidth}%
  \fi%
  \global\let\svgwidth\undefined%
  \global\let\svgscale\undefined%
  \makeatother%
  \begin{picture}(1,1)%
    \lineheight{1}%
    \setlength\tabcolsep{0pt}%
    \put(0,0){\includegraphics[width=\unitlength,page=1]{local_picture_for_quasi_stabilization.pdf}}%
    \put(0.74527962,0.89911882){\color[rgb]{0,0,0}\makebox(0,0)[lt]{\smash{\begin{tabular}[t]{l}{\small$p_0$}\end{tabular}}}}%
    \put(0.24287577,0.72243614){\color[rgb]{0,0,0}\makebox(0,0)[lt]{\smash{\begin{tabular}[t]{l}{\small$D_0$}\end{tabular}}}}%
    \put(0.27412578,0.47003226){\color[rgb]{0,0,0}\makebox(0,0)[lt]{\smash{\begin{tabular}[t]{l}{\small$D_1$}\end{tabular}}}}%
    \put(0.62685946,0.47003226){\color[rgb]{0,0,0}\makebox(0,0)[lt]{\smash{\begin{tabular}[t]{l}{\small$D_2$}\end{tabular}}}}%
    \put(0.51090462,0.67556113){\color[rgb]{0,0,0}\makebox(0,0)[lt]{\smash{\begin{tabular}[t]{l}{\small$y$}\end{tabular}}}}%
    \put(0.51090462,0.28363481){\color[rgb]{0,0,0}\makebox(0,0)[lt]{\smash{\begin{tabular}[t]{l}{\small$x$}\end{tabular}}}}%
    \put(0,0){\includegraphics[width=\unitlength,page=2]{local_picture_for_quasi_stabilization.pdf}}%
  \end{picture}%
\endgroup%

%% file: examples_for_quasistabilization.pdf_tex
\begingroup%
  \makeatletter%
  \providecommand\color[2][]{%
    \errmessage{(Inkscape) Color is used for the text in Inkscape, but the package 'color.sty' is not loaded}%
    \renewcommand\color[2][]{}%
  }%
  \providecommand\transparent[1]{%
    \errmessage{(Inkscape) Transparency is used (non-zero) for the text in Inkscape, but the package 'transparent.sty' is not loaded}%
    \renewcommand\transparent[1]{}%
  }%
  \providecommand\rotatebox[2]{#2}%
  \newcommand*\fsize{\dimexpr\f@size pt\relax}%
  \newcommand*\lineheight[1]{\fontsize{\fsize}{#1\fsize}\selectfont}%
  \ifx\svgwidth\undefined%
    \setlength{\unitlength}{297.5625bp}%
    \ifx\svgscale\undefined%
      \relax%
    \else%
      \setlength{\unitlength}{\unitlength * \real{\svgscale}}%
    \fi%
  \else%
    \setlength{\unitlength}{\svgwidth}%
  \fi%
  \global\let\svgwidth\undefined%
  \global\let\svgscale\undefined%
  \makeatother%
  \begin{picture}(1,0.51115313)%
    \lineheight{1}%
    \setlength\tabcolsep{0pt}%
    \put(0,0){\includegraphics[width=\unitlength,page=1]{examples_for_quasistabilization.pdf}}%
    \put(0.53831935,0.10594853){\color[rgb]{0,0,0}\makebox(0,0)[lt]{\smash{\begin{tabular}[t]{l}{\small$p_1$}\end{tabular}}}}%
    \put(0.52602436,0.37927557){\color[rgb]{0,0,0}\makebox(0,0)[lt]{\smash{\begin{tabular}[t]{l}{\small$p_2$}\end{tabular}}}}%
    \put(0,0){\includegraphics[width=\unitlength,page=2]{examples_for_quasistabilization.pdf}}%
    \put(0.53020157,0.15262627){\color[rgb]{0,0,0}\makebox(0,0)[lt]{\smash{\begin{tabular}[t]{l}{\small$p$}\end{tabular}}}}%
  \end{picture}%
\endgroup%

%% file: examples_grid_diagram.pdf_tex
\begingroup%
  \makeatletter%
  \providecommand\color[2][]{%
    \errmessage{(Inkscape) Color is used for the text in Inkscape, but the package 'color.sty' is not loaded}%
    \renewcommand\color[2][]{}%
  }%
  \providecommand\transparent[1]{%
    \errmessage{(Inkscape) Transparency is used (non-zero) for the text in Inkscape, but the package 'transparent.sty' is not loaded}%
    \renewcommand\transparent[1]{}%
  }%
  \providecommand\rotatebox[2]{#2}%
  \newcommand*\fsize{\dimexpr\f@size pt\relax}%
  \newcommand*\lineheight[1]{\fontsize{\fsize}{#1\fsize}\selectfont}%
  \ifx\svgwidth\undefined%
    \setlength{\unitlength}{297.5625bp}%
    \ifx\svgscale\undefined%
      \relax%
    \else%
      \setlength{\unitlength}{\unitlength * \real{\svgscale}}%
    \fi%
  \else%
    \setlength{\unitlength}{\svgwidth}%
  \fi%
  \global\let\svgwidth\undefined%
  \global\let\svgscale\undefined%
  \makeatother%
  \begin{picture}(1,0.44814115)%
    \lineheight{1}%
    \setlength\tabcolsep{0pt}%
    \put(0,0){\includegraphics[width=\unitlength,page=1]{examples_grid_diagram.pdf}}%
    \put(0.09834741,0.43243265){\color[rgb]{0,0,0}\makebox(0,0)[lt]{\smash{\begin{tabular}[t]{l}{\small$c$}\end{tabular}}}}%
    \put(0.19916658,0.43243265){\color[rgb]{0,0,0}\makebox(0,0)[lt]{\smash{\begin{tabular}[t]{l}{\small$b$}\end{tabular}}}}%
    \put(0.29998574,0.43243265){\color[rgb]{0,0,0}\makebox(0,0)[lt]{\smash{\begin{tabular}[t]{l}{\small$a$}\end{tabular}}}}%
    \put(0.04417789,0.34912901){\color[rgb]{0,0,0}\makebox(0,0)[lt]{\smash{\begin{tabular}[t]{l}{$X$}\end{tabular}}}}%
    \put(0.14566578,0.24897859){\color[rgb]{0,0,0}\makebox(0,0)[lt]{\smash{\begin{tabular}[t]{l}{$X$}\end{tabular}}}}%
    \put(0.24581621,0.14244974){\color[rgb]{0,0,0}\makebox(0,0)[lt]{\smash{\begin{tabular}[t]{l}{$X$}\end{tabular}}}}%
    \put(0.34663534,0.04163059){\color[rgb]{0,0,0}\makebox(0,0)[lt]{\smash{\begin{tabular}[t]{l}{$X$}\end{tabular}}}}%
    \put(0.14499704,0.34912901){\color[rgb]{0,0,0}\makebox(0,0)[lt]{\smash{\begin{tabular}[t]{l}{$O_1$}\end{tabular}}}}%
    \put(0.04417789,0.24830986){\color[rgb]{0,0,0}\makebox(0,0)[lt]{\smash{\begin{tabular}[t]{l}{$O_4$}\end{tabular}}}}%
    \put(0.34663534,0.1474907){\color[rgb]{0,0,0}\makebox(0,0)[lt]{\smash{\begin{tabular}[t]{l}{$O_2$}\end{tabular}}}}%
    \put(0.24581621,0.04163059){\color[rgb]{0,0,0}\makebox(0,0)[lt]{\smash{\begin{tabular}[t]{l}{$O_3$}\end{tabular}}}}%
    \put(0.11444816,0.32110953){\color[rgb]{0,0,0}\makebox(0,0)[lt]{\smash{\begin{tabular}[t]{l}{\small$d$}\end{tabular}}}}%
    \put(0.21810586,0.32280667){\color[rgb]{0,0,0}\makebox(0,0)[lt]{\smash{\begin{tabular}[t]{l}{\small$f$}\end{tabular}}}}%
    \put(0.1157857,0.22048652){\color[rgb]{0,0,0}\makebox(0,0)[lt]{\smash{\begin{tabular}[t]{l}{\small$e$}\end{tabular}}}}%
    \put(0.00003987,-0.01697355){\color[rgb]{0,0,0}\makebox(0,0)[lt]{\smash{\begin{tabular}[t]{l}{\small$x_1$}\end{tabular}}}}%
    \put(0.12612622,0.11847542){\color[rgb]{0,0,0}\makebox(0,0)[lt]{\smash{\begin{tabular}[t]{l}{\small$x_4$}\end{tabular}}}}%
    \put(0.22911511,0.21745183){\color[rgb]{0,0,0}\makebox(0,0)[lt]{\smash{\begin{tabular}[t]{l}{\small$x_3$}\end{tabular}}}}%
    \put(0.33344152,0.32177826){\color[rgb]{0,0,0}\makebox(0,0)[lt]{\smash{\begin{tabular}[t]{l}{\small$x_2$}\end{tabular}}}}%
    \put(0,0){\includegraphics[width=\unitlength,page=2]{examples_grid_diagram.pdf}}%
    \put(0.6881395,0.43243265){\color[rgb]{0,0,0}\makebox(0,0)[lt]{\smash{\begin{tabular}[t]{l}{\small$c$}\end{tabular}}}}%
    \put(0.78895866,0.43243265){\color[rgb]{0,0,0}\makebox(0,0)[lt]{\smash{\begin{tabular}[t]{l}{\small$b$}\end{tabular}}}}%
    \put(0.88977782,0.43243265){\color[rgb]{0,0,0}\makebox(0,0)[lt]{\smash{\begin{tabular}[t]{l}{\small$a$}\end{tabular}}}}%
    \put(0.83560829,0.34912901){\color[rgb]{0,0,0}\makebox(0,0)[lt]{\smash{\begin{tabular}[t]{l}{$X$}\end{tabular}}}}%
    \put(0.63463871,0.14815943){\color[rgb]{0,0,0}\makebox(0,0)[lt]{\smash{\begin{tabular}[t]{l}{$X$}\end{tabular}}}}%
    \put(0.73478913,0.04163059){\color[rgb]{0,0,0}\makebox(0,0)[lt]{\smash{\begin{tabular}[t]{l}{$X$}\end{tabular}}}}%
    \put(0.93642744,0.2432689){\color[rgb]{0,0,0}\makebox(0,0)[lt]{\smash{\begin{tabular}[t]{l}{$X$}\end{tabular}}}}%
    \put(0.73478913,0.24830986){\color[rgb]{0,0,0}\makebox(0,0)[lt]{\smash{\begin{tabular}[t]{l}{$O_1$}\end{tabular}}}}%
    \put(0.63396994,0.34912901){\color[rgb]{0,0,0}\makebox(0,0)[lt]{\smash{\begin{tabular}[t]{l}{$O_4$}\end{tabular}}}}%
    \put(0.83560829,0.1474907){\color[rgb]{0,0,0}\makebox(0,0)[lt]{\smash{\begin{tabular}[t]{l}{$O_2$}\end{tabular}}}}%
    \put(0.93642744,0.04163059){\color[rgb]{0,0,0}\makebox(0,0)[lt]{\smash{\begin{tabular}[t]{l}{$O_3$}\end{tabular}}}}%
    \put(0.70424025,0.32110953){\color[rgb]{0,0,0}\makebox(0,0)[lt]{\smash{\begin{tabular}[t]{l}{\small$d$}\end{tabular}}}}%
    \put(0.80789794,0.32280667){\color[rgb]{0,0,0}\makebox(0,0)[lt]{\smash{\begin{tabular}[t]{l}{\small$f$}\end{tabular}}}}%
    \put(0.70557779,0.22048652){\color[rgb]{0,0,0}\makebox(0,0)[lt]{\smash{\begin{tabular}[t]{l}{\small$e$}\end{tabular}}}}%
    \put(0.58983194,-0.01697355){\color[rgb]{0,0,0}\makebox(0,0)[lt]{\smash{\begin{tabular}[t]{l}{\small$x_1$}\end{tabular}}}}%
    \put(0.71591831,0.11847542){\color[rgb]{0,0,0}\makebox(0,0)[lt]{\smash{\begin{tabular}[t]{l}{\small$x_4$}\end{tabular}}}}%
    \put(0.81890719,0.21745183){\color[rgb]{0,0,0}\makebox(0,0)[lt]{\smash{\begin{tabular}[t]{l}{\small$x_3$}\end{tabular}}}}%
    \put(0.92323354,0.32177826){\color[rgb]{0,0,0}\makebox(0,0)[lt]{\smash{\begin{tabular}[t]{l}{\small$x_2$}\end{tabular}}}}%
    \put(0,0){\includegraphics[width=\unitlength,page=3]{examples_grid_diagram.pdf}}%
    \put(0.33781648,0.38136119){\color[rgb]{0,0,0}\makebox(0,0)[lt]{\smash{\begin{tabular}[t]{l}{\small$q$}\end{tabular}}}}%
    \put(0.92760854,0.38136119){\color[rgb]{0,0,0}\makebox(0,0)[lt]{\smash{\begin{tabular}[t]{l}{\small$q$}\end{tabular}}}}%
  \end{picture}%
\endgroup%

%% file: nice_diagrams.tex
In this section, we define \emph{nice real Heegaard diagrams}, a notion analogous to that of \emph{nice diagrams} in the unreal case~\cite{SarkarWang2010}. We will show that each real balanced sutured manifold admits a nice real Heegaard diagram (\Cref{thm: existence of nice diagram}). This result together with the moduli space analysis in Section~\ref{sub:Finding holomorphic representatives} implies that $\RSFH$ and $\widehat{\HFR}$ can be computed combinatorially.

\subsection{Definition and construction}

\begin{definition}\label{definition: nice diagram}
    A real balanced sutured Heegaard diagram $\mathcal{H}=(\Sigma,\bm\alpha,\bm\beta,\t)$ is called \emph{nice} if  the following conditions hold.\begin{enumerate}
        \item\label{def:rectangleorbigon} Each connected component $D$ of $\Sigma-(\bm\alpha\cup\bm\beta)$ such that $D\cap \partial\Sigma=\emptyset$ is a bigon or a rectangle. Such a component will be called an \emph{interior elementary domain} in $\Sigma$. (A connected component of $\Sigma-\bm\alpha-\bm\beta$ is called an \emph{elementary domain}.)
        \item\label{def:onlyrectangleonC} If an interior elementary domain $D$ contains a segment of the fixed set, then $D$ is a rectangle. 
        \item\label{def:patternnice} Each pair of $\alpha$ and $\beta$-curves intersects the fixed set in the local pattern as shown in the left of Figure~\ref{fig:triple_index}. 
    \end{enumerate}
\end{definition}

\begin{figure}[h]
\def\svgwidth{.8\linewidth}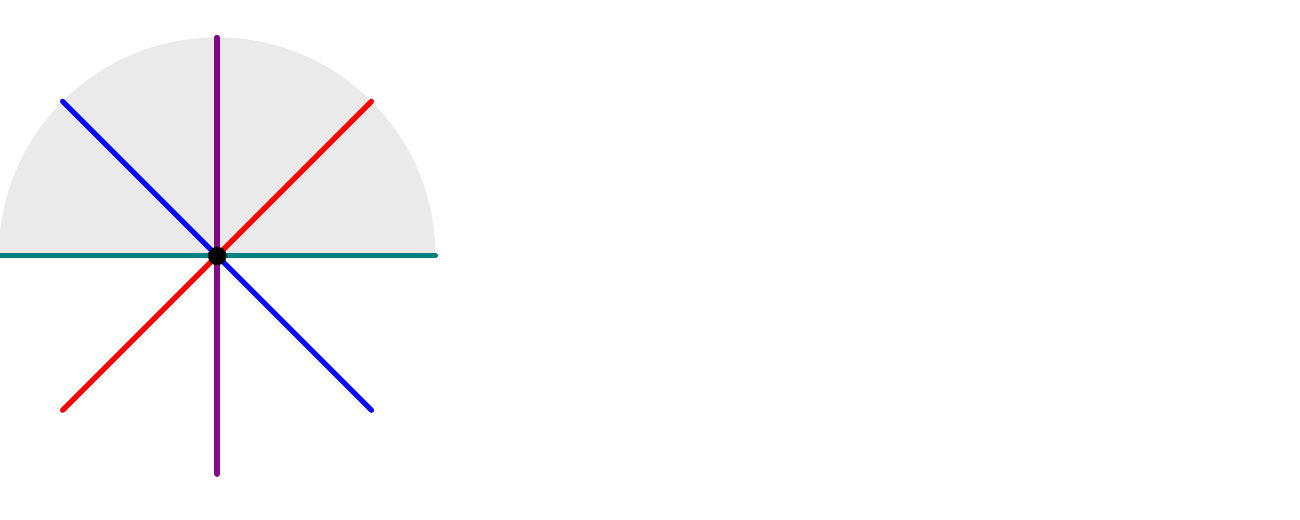
\caption{Left: a triple intersection with ${\sigma(\bm \alpha, x) = +1}$. Right: a triple intersection with $\sigma(\bm \alpha,x) = -1$.}
    \label{fig:triple_index}
\end{figure}

Note that under Assumption~\ref{def:patternnice}, the contribution to the triple Maslov index $\sigma(\bm\alpha,x)=+1$ for any intersection point of $\alpha$ and $\beta$-curves on $C$. See \Cref{eqn:combinatorial index}.

\begin{theorem}\label{thm: existence of nice diagram}
  Any real balanced sutured Heegaard diagram $\mathcal{H}$ can be made nice by a finite sequence of real isotopies and handleslides. If $\mathcal{H}$ is admissible then the resulting nice real Heegaard diagram is also admissible. 
\end{theorem}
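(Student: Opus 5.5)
Our plan is to run the Sarkar--Wang algorithm for sutured diagrams (as adapted by Juh\'asz in \cite[Section~6]{juhasz2008floer}) equivariantly. The key point is that in a real diagram every move on the $\alpha$-curves has a mirror move on the $\beta$-curves, namely its image under $\tau$. We only ever perform such symmetric pairs of finger moves and handleslides on $\bm\alpha$ together with their $\tau$-images on $\bm\beta$, so each intermediate diagram is again a real diagram. The complication is that a finger move on an $\alpha$-curve and its $\tau$-image on a $\beta$-curve can interfere with each other near the fixed set $C$. Therefore the fixed set must be handled first and separately.

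\textbf{Step 1: normalize the fixed set.} By real isotopies we arrange condition~\eqref{def:patternnice}: near every point of $\bm\alpha\cap C$ (hence of $\bm\beta\cap C$), the curves $\alpha$ and $\tau(\alpha)$ cross $C$ transversely at a common point in the standard pattern of Figure~\ref{fig:triple_index} (left). If an intersection point has the wrong local pattern, we flip it by a small symmetric twist of $\alpha$ across $C$. Such a twist introduces a canceling pair of crossings, and we can choose it so that the new triple intersections have $\sigma=+1$. Next we make every elementary domain that meets $C$ either a rectangle or a boundary domain, which is condition~\eqref{def:onlyrectangleonC}. For this we cut $C$ into segments by its intersections with $\bm\alpha$. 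Each interior segment lies in a $\tau$-invariant domain, and we do symmetric finger moves of an $\alpha$-arc along $C$ together with the mirror $\beta$-arc. These moves subdivide the invariant domain so that the piece containing the $C$-segment is bounded by exactly two $\alpha$-arcs and their two $\tau$-images, which gives a rectangle. Where needed, we first push $C$-segments whose adjacent domain is a boundary domain toward $\partial\Sigma$ (these pieces are allowed), using that each arc component of $C$ ends on $\partial\Sigma$.

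\textbf{Step 2: fix the remaining domains, away from $C$.} The remaining non-nice interior domains are either $\tau$-invariant domains not touching $C$, which happens only if $\tau$ acts freely on them, or pairs of domains swapped by $\tau$. For a swapped pair we run the Sarkar--Wang complexity reduction (distance to the boundary, then badness) on one domain using finger moves and handleslides of its $\alpha$-arcs, and apply the $\tau$-image to the other. All these moves can be supported in a small neighborhood disjoint from its $\tau$-image and from $C$, so the two reductions do not interact. A $\tau$-invariant domain with free action has even complexity, and we reduce it by symmetric pairs of finger moves starting from antipodal edges. We must check that the Sarkar--Wang complexity (the ordered tuple counting bad domains by distance to $\partial\Sigma$) still strictly decreases when the moves come in symmetric pairs. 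This holds because the two moves act on disjoint regions and the mirror move acts identically on the mirror domain. The rectangles created along $C$ in Step~1 are never touched again, since all later moves avoid a neighborhood of $C$.

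\textbf{Step 3: admissibility.} The moves are isotopies and handleslides. Admissibility of a real diagram is equivalent to that of the underlying diagram (Remark~\ref{rmk:weak admissiblity}), and the Sarkar--Wang finger moves preserve weak admissibility exactly as in \cite{SarkarWang2010,juhasz2008floer}: a finger move pushed toward the boundary never creates a nonnegative periodic domain, because periodic domains have multiplicity zero at $\partial\Sigma$. Handleslides change the periodic domains only by the obvious correspondence. \textbf{Main obstacle:} I expect Step~1 to be the hard part. Near $C$ a finger move and its mirror meet at the fixed arc, so we must show that symmetric finger moves along $C$ produce rectangles with the correct $\sigma=+1$ triple pattern and do not create new bigons straddling $C$. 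I would first try to settle this in an explicit local model (a neighborhood of $C$ modeled as $I\times[-1,1]$ with reflection), verifying by hand that each symmetric move reduces the number of non-rectangular invariant domains.
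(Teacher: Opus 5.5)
Your architecture matches the paper's: run the Sarkar--Wang/Juh\'asz complexity reduction equivariantly, perform every move on an $\alpha$-curve together with its $\tau$-image on a $\beta$-curve, and normalize a collar of $C$ first. The gap is in Step~2, where you claim each move ``can be supported in a small neighborhood disjoint from its $\tau$-image and from $C$''; this is false as stated. Sarkar--Wang finger moves are not local. The finger of the edge $b_*$ of a bad $2n$-gon $D_m$ must follow the forced chain of rectangles $R_i^1,\dots,R_i^{k_i}$ across successive $\alpha$-edges until it reaches a domain that is not a rectangle of distance at least $d_0$, so you have no freedom in its support. Two things then go wrong.

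First, the chain can pass through one of the rectangles along $C$ built in your Step~1, with corners $x,p,x',\tau(p)$ and $x,x'\in C$. Crossing such a rectangle from one $\alpha$-edge to the opposite one necessarily crosses the diagonal $C$, and the mirror finger crosses it at the same point, so the rectangle is cut into nine pieces. With an ordinary distance function the finger and its mirror may even terminate inside it, so that it stops being a rectangle. Hence the $C$-rectangles \emph{are} touched again, and Part~\ref{def:patternnice} of Definition~\ref{definition: nice diagram} has to be re-verified. Second, and more seriously, the chain from $D_m$ can end in $\tau(D_m)$. Then the mirror finger ends in $D_m$, which is cut by its own finger while also absorbing the tip of the mirror finger. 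The pieces then have total badness $b(D_m)$ rather than $b(D_m)-1$, and $c_{d_0}$ does not decrease. Your justification that ``the two moves act on disjoint regions'' is exactly what fails here.

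Closing this gap is the substance of the paper's Step~4 (Proposition~\ref{prop:reducing complexity of a diagram}), which uses several devices:
\begin{enumerate}
\item Distances are measured by paths in the complement of $C$, which requires Lemma~\ref{lem:well-definedness of distance function}.
\item The rectangles meeting $C$ are given the artificial distance $d_0+5$, so they may occur as intermediate $R_i^j$ but never as $D_*$ or $R_i^{k_i}$.
\item The finger is run parallel to the $\beta$-curves near $C$, so that the nine-rectangle subdivision keeps the required local pattern.
\item When $R_i^{k_i}=\tau(D_m)$, the finger and its mirror are lengthened along an auxiliary arc $\delta'$ from the tip back to $b_*$. This pushes the tips on into $\tau(D_*)$ and $D_*$, which have distance $d_0-1$, so the extra badness there is harmless. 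The paper checks each of Juh\'asz's subcases separately to recover a strict decrease of $c_{d_0}$.
\end{enumerate}
Some device of this kind is indispensable. By contrast, the normalization near $C$ that you flagged as the main obstacle is done in the paper by a single explicit equivariant winding isotopy in a collar of $C$.

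You also omit the preliminary step that makes every interior domain a disk. Once that is done, a $\tau$-invariant interior domain automatically contains an arc of $C$, since an involution of a disk has fixed points and $\tau$ reverses orientation. So your case of $\tau$-invariant domains on which $\tau$ acts freely does not arise.
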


To prove Theorem~\ref{thm: existence of nice diagram}, we modify the proof of the corresponding result in \cite[Section 6]{juhasz2008floer}, borrowing ideas from \cite{SarkarWang2010} and \cite[Chapter 8]{Lipshitz_2018}. Our proof strategy is as follows:
    
{\bf Step 1.} Modify $\mathcal{H}$ to arrange that each interior elementary domain is a disk topologically.
    
{\bf Step 2.} Modify $\mathcal{H}$ to make each interior elementary domain that contains a segment of $C$ into a rectangle and so that all $\alpha$ and $\beta$-curves intersect $C$ as required in Part~\ref{def:patternnice} of Definition \ref{definition: nice diagram}.
    
{\bf Step 3.} Define the badness of interior elementary domains, notions of distance, and the complexity of real Heegaard diagrams for which all interior elementary domains are disks. These invariants satisfy that a real Heegaard diagram is nice if and only if it has distance zero. 
    
{\bf Step 4.} Show that whenever the complexity of $\mathcal{H}$ is non-zero, we can decrease it using a finite sequence of real isotopies and handleslides.

\subsubsection{Step 1}\label{subsub:nice digram step 1}

We arrange that every interior elementary domain is a $2n$-gon for some $n$. This can be achieved as in the unreal case --- \cite[Section 6]{juhasz2008floer} --- by performing finger moves of $\alpha$-curves through elementary domains that are not topologically disks to increase the Euler characteristics of non-disk domains. At each stage we perform the corresponding finger move of a $\beta$-curve induced by $\tau$, so that the resulting Heegaard diagram remains real.

\subsubsection{Step 2}\label{subsub:nice digram step 2}

We now arrange that the interior elementary domains containing components of $C$ are rectangles and that $\sigma(\bm{\alpha},x)=+1$ for all $x\in C\cap \bm\alpha$. To do so, we isotope the $\beta$-curves in a tubular neighborhood $\nu(C)$ of components of $C$. Model $(\nu(C),\tau)$ by $C\times(-\varepsilon,\varepsilon)$ with involution $(c,x)\mapsto (c,-x)$, so that the $\beta$-curves intersect $\nu(C)$ in straight lines $\{b_i\}$, ordered according to their position on the connected component $C_j$ of $C$, with the property that the only intersections points contained in $C\times (-\varepsilon,\varepsilon)$ occur on $C$ itself. See Figure~\ref{fig:windingnearC}. We now isotope the $\beta$ curves in a neighborhood of $C\times[0,1]$ as shown in Figure~\ref{fig:windingnearC}, so that:\begin{enumerate}
    \item  $\sigma(\bm{\alpha},x)=+1$ for all $x\in C\cap\bm{\alpha}$;
    \item if $C_i$ is a connected component with $ |C_i\cap\bm{\alpha}|>1$, then $b_i$ intersects $\tau(b_{i+1})$ or is a boundary component of an elementary domain that is adjacent to $\partial\Sigma$. 
\end{enumerate}
We simultaneously isotope the $\alpha$-curves using the isotopy induced by $\tau$. The claim follows.

\begin{figure}
\centering

      \begin{subfigure}{0.4\linewidth}
\centering
   \begin{tikzpicture}

   \draw[ thick] (-2,1)--(2,1);

   \draw[ thick] (-2,-3)--(2,-3);

     \draw[thick, green] (0,1) --(0,-3);


   \draw[thick, red] (-2,0.5) --(2,-0.5);
   
   \draw[ thick, blue] (-2,-0.5)--(2,0.5);

     \draw[thick, blue] (-2,-1.5) --(2,-2.5);
   
   \draw[ thick, red] (-2,-2.5)--(2,-1.5);
   \end{tikzpicture}
\caption{}\label{fig:nearCa}
\end{subfigure}
\hfill
\begin{subfigure}{0.4\linewidth}
    \centering
   \begin{tikzpicture}
   \draw[ thick] (-2,1)--(2,1);

  \draw[ thick] (-2,-3)--(2,-3);

     \draw[thick, green] (0,1) --(0,-3);
  \draw[ thick, red] (2,-0.5)--(0,0)
   .. controls (-0.5,0.125) and (-1,0.5) ..
(-1,1) ;

 \draw[ thick, red](-1,-3)
   .. controls (-1,-1) and (-1.5,-1) ..
(-1.5,-3);

 \draw[ thick, red]
 (-1.5,1)  .. controls (-1.5,0) and (-1.75,0.375) ..
(-2,0.5);

   \draw[ thick, blue] (-2,-0.5)--(0,0)
   .. controls (0.5,0.125) and (1,0.5) ..
(1,1) ;

 \draw[ thick, blue](1,-3)
   .. controls (1,-1) and (1.5,-1) ..
(1.5,-3);

 \draw[ thick, blue](1.5,1)
   .. controls (1.5,0) and (1.75,0.375) ..
(2,0.5);


   \draw[ thick, red] (2,-1.5) .. controls (1.5,-1.625) and (0.5,-2)..(0,-2);
 \draw[ thick, red](0,-2)
   .. controls (-0.5,-2) and (-1.5,3) ..
(-2,-2.5);


     \draw[thick, blue] (-2,-1.5) .. controls (-1.5,-1.625) and (-0.5,-2)..(0,-2);

    \draw[ thick, blue](0,-2)
   .. controls (0.5,-2) and (1.5,3) ..
(2,-2.5);

   \end{tikzpicture}
   \caption{}\label{fig:nearCb}
         \end{subfigure}
  
    \caption{An isotopy changes a neighborhood of $C$ as shown in $(A)$ to that shown in $(B)$. In each subfigure the top and bottom edges should be identified.  }
    \label{fig:windingnearC}
\end{figure}

\subsubsection{Step 3}\label{subsub:nice digram step 3}
Following \cite{SarkarWang2010} and \cite{juhasz2008floer}, define the \emph{badness} of an interior elementary $2n$-gon, $D$, by $b(D):=\max
(0,n-2)$. A disk with strictly positive badness will be called \emph{bad}.

\begin{lemma}\label{lem:well-definedness of distance function}
Let $\mathcal{H}=(\Sigma,\bm\alpha,\bm\beta,\t)$ be a real balanced sutured Heegaard diagram. The inclusion map induces a surjection $\pi_0(\partial\Sigma) \to \pi_0(\Sigma-(C-\nu(\partial \Sigma)))$.
\end{lemma}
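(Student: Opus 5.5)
Write $C^\circ := C - \nu(\partial\Sigma)$. The claim is that every component of $\Sigma - C^\circ$ meets $\partial\Sigma$. I will reduce this to the balancedness of $\mathcal{H}$, which says that every component of $\Sigma - \bm\alpha$ meets $\partial\Sigma$, and then use $\tau$ to transport information between $\bm\alpha$ and $\bm\beta$.

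Fix a component $U$ of $\Sigma - C^\circ$ and a point $p\in U$.

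\textbf{Step 1 (reduce to a $\tau$-invariant set).} Consider the union $U\cup\tau(U)$. Since $C^\circ$ is $\tau$-invariant, $\tau$ permutes the components of $\Sigma - C^\circ$. If $\tau(U)\neq U$, the cut locus $C^\circ$ lies in the closure of both. I will show $U\cup\tau(U)$ meets $\partial\Sigma$. Since $\tau(\partial\Sigma)=\partial\Sigma$, this shows $U$ itself meets $\partial\Sigma$.

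\textbf{Step 2 (path avoiding $C^\circ$).} By balancedness, choose an arc $\eta$ in $\Sigma$ from $p$ to $\partial\Sigma$. It is disjoint from $\bm\alpha$, and it may be taken transverse to $C$ and disjoint from $\partial C$.
\begin{itemize}
\item If $\eta$ misses $C^\circ$, we are done.
\item Otherwise, let $q$ be the first point of $\eta$ on $C^\circ$. The initial segment $\eta_1$ from $p$ to $q$ lies in $\overline U$. Set $\eta' := \eta_1 \cup \tau(\eta_2)$, where $\eta_2$ is the remaining segment of $\eta$. This is a path from $p$ to $\partial\Sigma$ that reflects at $q$.
\end{itemize}
The reflected path $\eta'$ still crosses $C^\circ$ where $\tau(\eta_2)$ does, so this trick alone does not obviously terminate.

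\textbf{Step 2$'$ (cleaner route).} Instead, I will take $\eta$ minimizing the number of intersections with $C^\circ$ among arcs from $p$ to $\partial\Sigma$. Each crossing point $q$ lies on a component $C_j$.
\begin{itemize}
\item If $C_j$ is an arc component of $C$, it ends on $\partial\Sigma$. Sliding along $C_j$ inside $U$ from $q$ reaches $\nu(\partial\Sigma)$, which is not part of $C^\circ$. This gives a path in $U$ to $\partial\Sigma$.
\item So the problematic case is when every component of $C^\circ$ adjacent to $U$ is a fixed circle.
\end{itemize}

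\textbf{Step 3 (fixed circles: main obstacle).} Here I use that a neighbourhood of a fixed circle $C_j$ is an annulus with $\tau$ reflecting across $C_j$. Suppose $U$ were bounded only by fixed circles. Then $U$ is a $\tau$-invariant subsurface, or $U\cup\tau(U)$ is one, and it does not meet $\partial\Sigma$. I will derive a contradiction from balancedness via 3-dimensional topology. Flowing $U$ by the real sutured Morse function gives a closed real surface component in $Y$ whose $\tau$-double, glued along fixed circles, misses $\partial Y$. Alternatively, argue combinatorially: the $\alpha$-curves meeting $U$ and their $\tau$-images cut $U\cup\tau(U)$. Balancedness for both $\bm\alpha$ and $\bm\beta=\tau(\bm\alpha)$ says a point of $U$ escapes to $\partial\Sigma$ avoiding $\bm\alpha$, and also avoiding $\bm\beta$. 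Combining the two paths by reflection on the $\tau$-invariant region $U\cup\tau(U)$ should produce a path that never leaves $U\cup\tau(U)$, yet must end on $\partial\Sigma$. This is the contradiction.

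Making this reflection argument rigorous, i.e. ensuring the folded path can be chosen entirely on one side of $C$, is the step I expect to be hardest. I would first try folding via the quotient surface $\Sigma/\tau$, where $C$ becomes boundary. There $U$ maps to a component of the quotient with no original boundary, and I would apply balancedness to the image of the $\alpha$-curves in the quotient.
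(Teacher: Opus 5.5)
Your Steps 1 and 2$'$ are fine, but there is a genuine gap at Step 3, which is where the content of the lemma lies. You leave the fixed-circle case unproved. The routes you sketch there (3-dimensional topology, combining $\bm\alpha$- and $\bm\beta$-avoiding paths, balancedness in $\Sigma/\tau$) aim at the wrong thing, because the attaching curves play no role in whether a path can get past $C$.

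The missing idea is how $\tau$ behaves along its fixed set. Since $\tau$ is orientation-reversing on $\Sigma$ and fixes $C$ pointwise, $d\tau_q$ is a reflection across $T_qC$ at every $q\in C$. So $\tau$ interchanges the two local sides of $C$. With this, Step 3 is immediate. Suppose $U$ were bounded only by fixed circles. Then the far side of each such circle lies in $\tau(U)$. Hence $U\cup\tau(U)$, together with these circles, is open and closed, and so is a whole component of $\Sigma$. That component has nonempty boundary (this is all balancedness is needed for), and that boundary lies in $U\cup\tau(U)$, since the fixed circles are interior. Because $\partial\Sigma$ is $\tau$-invariant, $U$ itself then meets $\partial\Sigma$, contradicting the assumption. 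This is essentially the paper's argument. When $C$ separates a connected $\Sigma$, the complement $\Sigma\setminus C$ has exactly two components interchanged by $\tau$, and since $\partial\Sigma$ is nonempty and $\tau$-invariant, both components reach it.

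The reflection trick you abandoned in Step 2 actually does terminate, for the same reason. Because $C$ is fixed pointwise, $\tau(\eta_2)$ meets $C^\circ$ at exactly the points where $\eta_2$ does. Because $\tau$ swaps the local sides at $q$, the path $\eta_1$ arrives at $q$ from the $U$-side and $\tau(\eta_2)$ leaves $q$ into the $U$-side. So $\eta_1\cup\tau(\eta_2)$ only touches $C$ at $q$, and pushing it off lowers the number of crossings with $C^\circ$ by one. Inducting on this number gives a path from $p$ to $\partial\Sigma$ inside $U$. This needs no case split between arc and circle components and no use of the $\alpha$-curves beyond the existence of some path from $p$ to $\partial\Sigma$.
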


We include the result since it is crucial for defining our distance function later in this subsection. The proof is elementary.
\begin{proof} 
We assume that $\Sigma$ is connected; the disconnected case is a  straightforward generalization of this. Note that the lemma automatically holds if $C$ is non-separating on $\Sigma$. Hence, we will assume that $C$ is separating. By gluing in marked disks (with compatible real structure), we can think of $\partial\Sigma$ as punctures instead of boundary components. According to the classification of surfaces with involution (cf. \cite{dugger_involutions_surfaces}), $\Sigma-C$ must have exactly two components, since $C$ is separating. If there is a marked point on $C$, then the lemma clearly holds. Otherwise, there is at least one pair of marked points interchanged by $\t$. This implies that the set of marked points on each component is non-empty, so we still have the desired surjection.
\end{proof}

By Lemma~\ref{lem:well-definedness of distance function}, we know that each interior elementary domain can be connected to $\partial \Sigma$ by a path in the complement of $C$, so we can define a notion of distance between an interior elementary domain and the boundary as follows. 

\begin{enumerate}
    \item If $D$ is an interior elementary domain that is not in $\cR_C$, set $$d(D):=\min \{\vert \phi \cap (\bm\alpha \cup\bm\beta) \vert: \phi \text{ is a path in }\Sigma-BC \text{ with } \phi(0) \in D \text{ and } \phi(1)\in \partial \Sigma \}.$$ Here, the intersection points on $\bm\alpha\cap\bm \beta$ are counted twice.
    \item The \emph{distance} $d_0$, of the Heegaard diagram is defined as the maximal distance among all bad interior elementary domains.
    \item For $D\in \cR_C$, we set $d(D)=d_0+5$.
\end{enumerate}

This function $d$ is not a distance function in the sense that there may be adjacent domains $D_1$ and $D_2$ with $\vert d(D_1)-d(D_2) \vert>1$. Nevertheless, $d$ has the property that if $D$ is not in $\cR_C$ then there exists a domain $D'$ adjacent to $D$ with $d(D')=d(D)-1$, whenever $d(D)>0$.

For a diagram $\mathcal{H}$ of distance $d_0$, let $D_1$, $D'_1$ $\ldots$ $D_m$ and $D_m'$ be the bad interior elementary domains of distance $d_0$. Here $D_i'=\t(D_i)$. We define the \emph{distance $d_0$ complexity} of $\mathcal{H}$ by \[c_{d_0}(\mathcal{H}):=\left(\sum_{i=1}^m b(D_i), -b(D_1),\ldots,-b(D_m)\right).\] Here, the $D_i$'s are ordered in a way such that $b(D_1)\ge \ldots\ge b(D_m)$. The set of distance $d_0$ complexities is ordered lexicographically. The quantity $c_{d_0}(\mathcal{H'})$ is defined to be zero if $d(\mathcal{H'})<d_0$.

\subsubsection{Step 4}\label{subsub:nice diagram step 4} With the definitions in ~\Cref{subsub:nice digram step 3} at hand we can now state main technical ingredient for the proof of~\Cref{thm: existence of nice diagram}.
\begin{proposition}\label{prop:reducing complexity of a diagram}
Let $\mathcal{H}$ be a real sutured Heegaard diagram satisfying the assumptions in the statement of Theorem \ref{thm: existence of nice diagram}. We apply {\bf Step 1-3} to $\mathcal{H}$ and call the resulting Heegaard diagram $\mathcal{H}$ by a mild abuse of notation. If $d(\mathcal{H})=d_0>0$, then we can construct a real Heegaard diagram $\mathcal{H'}$ with $d(\mathcal{H})\ge d(\mathcal{H'})$ and $c_{d_0}(\mathcal{H})> c_{d_0}(\mathcal{H'})$. 
\end{proposition}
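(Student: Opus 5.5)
Our plan is to adapt the Sarkar--Wang finger-move argument, as refined in \cite[Section 6]{juhasz2008floer}, performing every move together with its $\tau$-image. Fix a bad interior elementary domain $D_1$ of maximal badness among those at distance $d_0$, with partner $D_1'=\tau(D_1)$. Since $d_0$ is finite and the domains of $\cR_C$ are assigned distance $d_0+5$, the domain $D_1$ is not in $\cR_C$. By Step 2 it is therefore disjoint from $C$, and so $D_1\neq D_1'$. Because $d(D_1)=d_0>0$, there is an adjacent domain $E$ with $d(E)=d_0-1$, reached across an edge lying on some curve. Since $\tau$ swaps $\bm\alpha$ and $\bm\beta$, we may assume, replacing $D_1$ by $D_1'$ if necessary, that this edge lies on a $\beta$-curve $\beta_j$. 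The partner $D_1'$ is then adjacent across $\alpha_j=\tau(\beta_j)$ to $\tau(E)$.

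Next we choose an $\alpha$-edge $a$ of $D_1$ not adjacent to the $\beta$-edge $e$ that faces $E$. This is possible because $D_1$ is a $2n$-gon with $n\geq 3$, and following Sarkar--Wang we pick $a$ so that the combinatorics favor the badness count. We then perform a finger move of the $\alpha$-arc $a$ through $e$ into $E$, and continue along a shortest path of decreasing distance toward $\partial\Sigma$. This path stays in $\Sigma-C$ by Lemma~\ref{lem:well-definedness of distance function}, and the finger either ends at the boundary or, when it would cross its own curve, is replaced by a handleslide as in Sarkar--Wang. Simultaneously we perform the $\tau$-image finger move of $\tau(a)$, a $\beta$-arc, through $\tau(e)$ along the image path. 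Both fingers avoid $C$, so the new diagram is still real, Step 2 is preserved, and the domains of $\cR_C$ are untouched. If the fingers avoided the two images and isotopies were realized equivariantly, admissibility would be preserved, as in the unreal setting.

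The main step is the badness bookkeeping. For a single finger this is exactly the unreal computation: $D_1$ is split into two pieces of strictly smaller badness, and the new domains created along the finger path have distance strictly less than $d_0$, possibly except the terminal one, which meets $\partial\Sigma$ and so is not interior. The same holds for the $\tau$-image finger. As a result, the pair $D_1, D_1'$ of bad domains at distance $d_0$ is replaced by domains of lower badness, and the distance does not increase, so $c_{d_0}$ decreases lexicographically.

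The anticipated obstacle is interaction between the two fingers. The path from $D_1$ toward $\partial\Sigma$ and its $\tau$-image may cross each other, or pass through the same domain, since $\tau$ maps the path to a path starting at $D_1'$. Where an $\alpha$-finger crosses a $\beta$-finger, new intersection points and small domains appear that the unreal count does not see. We expect these to be bigons or rectangles of distance less than $d_0$, but this must be checked locally. If necessary we will perturb the path, using that the distance-decreasing path may be chosen among several, to make it transverse to its image, and we will verify that each crossing creates only rectangles.

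A second point to check is that a path may not pass through a domain of $\cR_C$. This is where the convention $d=d_0+5$ is used: such domains are never on a distance-decreasing path, so the fingers stay away from $C$.
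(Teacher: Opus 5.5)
Your construction has a genuine gap. You push an $\alpha$-edge $a$ of $D_1$ outward, first across $e$ and then along a shortest distance-decreasing path to $\partial\Sigma$, and two things go wrong.

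\textbf{The finger is not an isotopy.} An $\alpha$-finger may only cross $\beta$-arcs. A shortest path to $\partial\Sigma$ will in general cross $\alpha$-curves, and pushing an $\alpha$-curve across any $\alpha$-curve (its own or another) is not an isotopy of the attaching set. Your fallback, ``replace it by a handleslide as in Sarkar--Wang,'' is not a general device. In this argument the handleslide (Juh\'asz's Case A, which the paper uses when $n=3$ and $I\cap\{2,\dots,n-1\}=\emptyset$) belongs to one specific configuration. An arbitrary handleslide over another $\alpha$-curve adds intersections along that whole curve with no control on badness.

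\textbf{The badness count omits the host of the finger.} The part of the finger between $a$ and $e$ belongs to the region $F$ on the far side of $a$. The two new points on $e$ become corners of $F$, so $b(F)$ goes up by one, and likewise for $\tau(F)$. Since $F$ is adjacent to $D_1$, typically $d(F)\in\{d_0-1,d_0,d_0+1\}$; if instead $F\in\mathcal{R}_C$, you destroy Step 2. If $d(F)=d_0$, the badness sum at distance $d_0$ need not drop. If $d(F)=d_0+1$ and $F$ was a rectangle, you create a bad region at distance $d_0+1$, so $d(\mathcal{H}')\le d_0$ fails. Letting the tip end at $\partial\Sigma$ only makes the terminal region harmless; it does nothing for $F$.

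This is why the paper, following \cite[Lemma 6.6]{juhasz2008floer} and \cite{SarkarWang2010}, pushes the other way. The $\beta$-edge $b_*$ shared with the lower-distance neighbour $D_*$ is pushed into $D_m$, out through a middle $\alpha$-edge $a_i$, and along the chain $R_i^1,\dots,R_i^{k_i}$ of rectangles glued along $\alpha$-edges. So the finger only ever crosses $\alpha$-arcs, and its host is $D_*$, whose distance stays below $d_0$. The only badness increase that matters is at $R_i^{k_i}$, which is a bigon, a region of distance $<d_0$, or some $D_l$ with $b(D_l)\ge b(D_m)$. That last case is why $D_m$ is chosen of \emph{minimal} badness, not maximal as in your proposal.

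In that scheme, the genuinely real difficulty is the interaction of the two fingers, which you flag but do not resolve. The chain can terminate in $D_m'=\tau(D_m)$; the $\tau$-finger then re-enters a piece of $D_m$, a bigon is created, and $\sum_i b(D_m^i)=b(D_m)$, so nothing decreases. The paper resolves this by lengthening the fingers into $\tau(D_*)$ and $D_*$, with separate treatment in subcases B1 and B2. Contrary to your claim that the fingers stay away from $C$, the paper must also allow $\delta$ and $\tau(\delta)$ to cross inside a rectangle of $\mathcal{R}_C$, which is then cut into nine rectangles.
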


The strategy for reducing complexity used in the proof of~\cite[Lemma 6.6]{juhasz2008floer} can be adapted to our case.
\begin{proof}

We start with a pair of interior elementary domains $D_m$ and $D_m'$ interchanged by $\tau$ which have maximal distance among bad regions of minimal badness. These are $2n$-gons for some $n>2$. As noted, there are elementary domains $D_*$ and $\t(D_*)$ with distances $d_0-1$ that share edges with $D_m$ and $D_m'$, respectively. Without loss of generality, we assume that $D_m$ and $D_*$ share an edge $b_*$ which is a sub-arc of a $\beta$-circle.
 
Label the $\alpha$-edges of $D_m$ by $a_1$,$\ldots$ $a_n$ counterclockwise starting from right-hand side of $b_*$. Define $R_i^{j}$, $1\le j\le k_i$ as in the proof of \cite[Lemma 6.6]{juhasz2008floer}. In particular, for $1\le j< k_i$, $R_i^{j}$ is a rectangle having distance greater than or equal to $d_0$ and $R_i^{k_i}$ is an elementary domain which does not have this property. Furthermore, $R_i^1\cap a_i\ne\emptyset$ and $R_i^{j}\cap R_i^{j+1}\subset \bm\alpha$ for $1\le j< k_i$. Even though the distance function jumps near $C$, each sequence $\{R_i^j\}_j$ is finite since there are only finitely many rectangles on $\Sigma$ and a path of rectangles never has a self-intersection. Note that a domain $D\in \cR_C$ can appear as $R_i^{j}$, for $1\le j< k_i$, but cannot be $D_*$ or $R_i^{k_i}$ for any $i$ since $D$ is a rectangle of distance larger than $d_0$. 
 
Let $I=\{i| R_i^{k_i}\ne D_m\}$. The proof that $I\ne \emptyset$ in \cite[Lemma 6.6]{juhasz2008floer} applies in our setting without change. Note that $R_i^{k_i}$ can only be a bigon, $D_i$, or some other elementary domain with $d<d_0$. We proceed case by case according to the property of $I$: 
\begin{enumerate}
    \item\label{item1pf8.5} If there exists some $i\in I\cap\{2,\ldots,n-1\}$, then we can choose an arc $\delta$ contained in $D_m\cup R_i^{1}\cup\ldots R_i^{k_i}$ and perform a pair of finger moves on $b_*$ and $\t(b_*)$ along $\delta$ and $R(\delta)$. This corresponds to the first case discussed in the proof of~\cite[Lemma 6.6]{juhasz2008floer}. It is easy to see that the new diagram has distance at most $d_0$, so $c_{d_0}$ is well-defined. We claim that unless the finger from $D_m$ ends in $D_m'$, the complexity is strictly reduced. Under this assumption, the finger from $D_m'$ doesn't intersect $D_m$, so $D_m$ is divided into $D_m^1$ and $D_m^2$ with $b(D_m^1)+b(D_m^2)=b(D_m)-1$. The same is true with the roles of $D_m$ and $D_m'$ interchanged. Regardless of whether $R_{i}^{k_i}$ is a bigon with $d\ge d_0$, a region with $d<d_0$ or $D_l(D_l')$ for some $l<m$, $c_{d_0}$ strictly decreases. The paths $\delta$ and $\t(\delta)$ may intersect at a point contained in a rectangle $D$ in $\cR_C$. In this case, the two finger moves subdivide $D$ into nine smaller rectangles. Clearly we can arrange that Part~\ref{def:patternnice} of Definition \ref{definition: nice diagram} is still true by letting $\delta$ run parallel to $\beta$-curves near $C$.
        
    \item Next, suppose that $I\cap\{2,\ldots,n-1\}=\emptyset$. We assume without loss of generality that $1\in I$.
    
    If $n=3$, we perform a pair of handleslides as in \cite[Lemma 6.6 case A]{juhasz2008floer} using the real property of our diagram to the $\beta$-curve that $b_*$ belongs to and to the $\alpha$-curve which appears as its $\t$-image. It is easy to see that the new diagram has distance at most $d_0$, so $c_{d_0}$ is well-defined. Let $b_*^1$ and $D_*^1$ denote the images of $b_*$ and $D_*$ under the handleslide. Consider the change of badness: $b(D_*^1)=b(D_*)+2$, but $D_*^1$ has distance strictly less than $d_0$ and $b_*^1$ cuts $D_m$ into a bigon and a rectangle, which does not increase the badness; $R_2^j$ is cut into two rectangles if $\t(R_2^j)\ne R_2^j$; into four rectangles if $\t(R_2^j)=R_2^j$. In conclusion, the complexity $c_{d_0}$ is reduced. The curves after handleslides may cut some of the rectangles in $\cR_C$, but the resulting pieces can still only be rectangles. By arranging the curves we slide over so that the guided arc is parallel to original curves, we can still ensure that Part~\ref{def:patternnice} of Definition \ref{definition: nice diagram} holds.
    
    If $n>3$, there is some $2<l\le n$ with $a_l\subset R_2^{k_2-1}\cap D_m$. We perform a pair of finger moves using $\delta$ characterized as in \cite[Lemma 6.6 case B1 and B2]{juhasz2008floer} and $\t(\delta)$ (see \Cref{fig:extra_finger_move_3a} and \ref{fig:extra_finger_move_3b}). Again, the new diagram has distance less than or equal to $d_0$, so $c_{d_0}$ is well-defined. Unless the finger from $D_m'$ intersects $D_m$ (so that $R_{i}^{k_i}=D_m'$), the complexity is reduced. As in the case $I\cap\{2,\ldots,n-1\}\ne \emptyset$, under this assumption, the finger from $D_m'$ doesn't intersect $D_m$. So, as in \cite[Lemma 6.6 case B1 and B2]{juhasz2008floer}, $D_m$ is divided into $D_m^1,\ldots, D_m^4$ when $l<n$ and into $D_m^1,\ldots, D_m^6$ when $l=n$. In the first case, $D_m^3$ and $D_m^4$ are rectangles while $d(D_m^1)=d(D_m^2)=d_0$ and  $b(D_m^1)+b(D_m^2)=b(D_m)-1$. Similar statements hold for $D_m'$. Therefore the distance is non-increasing and $c_{d_0}$ strictly decreases. In the latter case, $D_m^1$, $D_m^2$, $D_m^5$ and $D_m^6$ are rectangles while $d(D_m^3)=d(D_m^4)=d_0$ and $b(D_m^3)+b(D_m^4)=b(D_m)-1$. Similar statements hold for $D_m'$, so the distance is non-increasing and $c_{d_0}$ decreases. Lastly, it can be argued as in Step~\ref{item1pf8.5} that $\cR_c$ is kept nice during this procedure. See the left pictures in Figure~\ref{fig:extra_finger_move_1} for an illustration. 

    \item Now we deal with the case that $R_{i}^{k_i}=D_m'$. Observe that $\sum b(D_m^i)=b(D_m)$ due to the newly created bigon. In this case, we extend the finger move. The notation in the discussion below is based on that in the proof of \cite[Lemma 6.6]{juhasz2008floer}. To simplify the notation, we will abuse notation and write $D_*$ for the corresponding domain in the Heegaard diagram obtained via the finger moves.

    If $I\cap\{2,\ldots,n-1\}\ne \emptyset$, the finger from $D_m'$ may enter $D_m^1$ or $D_m^2$ through a $\beta$-edge, say $b'$. We can always find an arc $\delta'$ satisfying $\delta'\cap\delta=\emptyset$, $\delta'(0)=\t\circ \delta (1)$ and $\delta'(1)\in b_*$. Push the finger from $b_*$ along $\t\circ \delta'$ to go into $\t(D_*)$ and do the same for $\t(b_*)$, to let the finger go into $D_*$. To be concrete, we assume that $\delta'\subset D_m^1$. Then $D_m^1$ is divided into domains $D_m^{1,1}$ and $D_m^{1,2}$ with $b(D_m^{1,1})+ b(D_m^{1,2})=b(D_m^{1})-1$, since we push the bigon into $D_*$. Also, $D_m^{1,1}$ and $D_m^{1,2}$ are both adjacent to $D_*$, so they have distance at most $d_0$. The same hold for its image under $\tau$. This shows that at the cost of further increasing the badness of $D_*$, we can strictly decrease the complexity. See \Cref{fig:extra_finger_move_1} for an illustration.
    
    \begin{figure}[h]
    \def\svgwidth{.9\linewidth}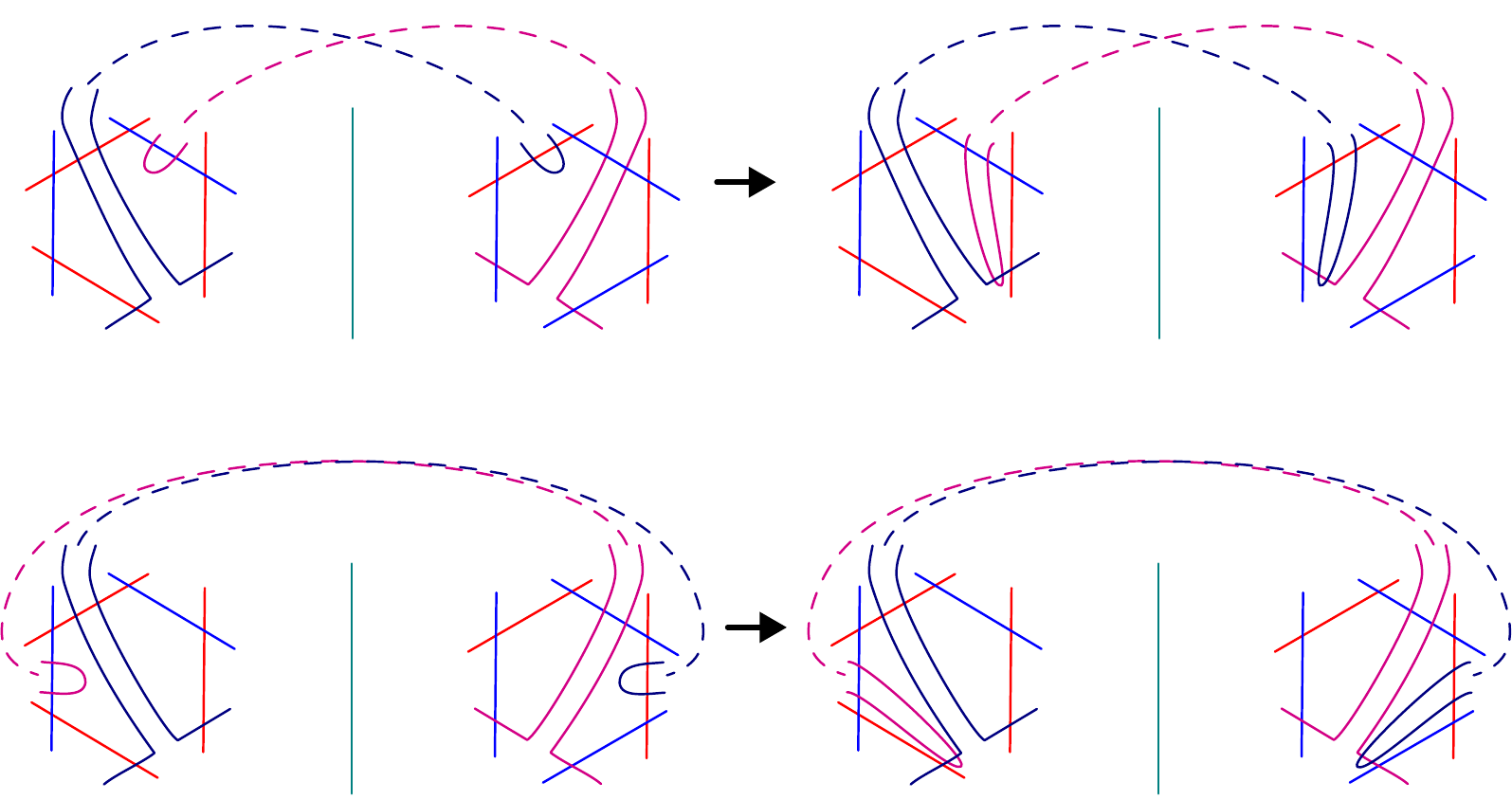
    \caption{Extra finger moves when $I\cap\{2,\ldots,n-1\}\ne \emptyset$.}
    \label{fig:extra_finger_move_1}
    \end{figure}

    Next assume that we are in the case of B1 (labeled as in \cite[Lemma~6.6]{juhasz2008floer}). If the finger from $D_m'$ enters $D_m$ from a $\beta$-edge of $D_m^1$ or $D_m^2$, we can find an arc $\delta'$ with the same property as in previous case and the strategy of lengthening the finger as above applies. If it enters from an edge of $D_m^4$ (it won't enter from an edge of $D_m^3$; see Figure \ref{fig:extra_finger_move_2}), we choose $\delta'$ to be an arc from $\t\circ\delta(1)$ to a point in $b_*$ that is parallel to $a_1$ and intersect $\delta$ emptily. Using this arc, we lengthen the finger move from $\t(b_*)$ and do the same with its image under $\tau$. Now each of $D_m^3$ and $D_m^4$ will become unions of three rectangles and the badness of $D_*$ will further increase, but the sum of badness of domains with distance $d_0$ decreases by $1$.
    
    \begin{figure}[h]
    \def\svgwidth{.8\linewidth}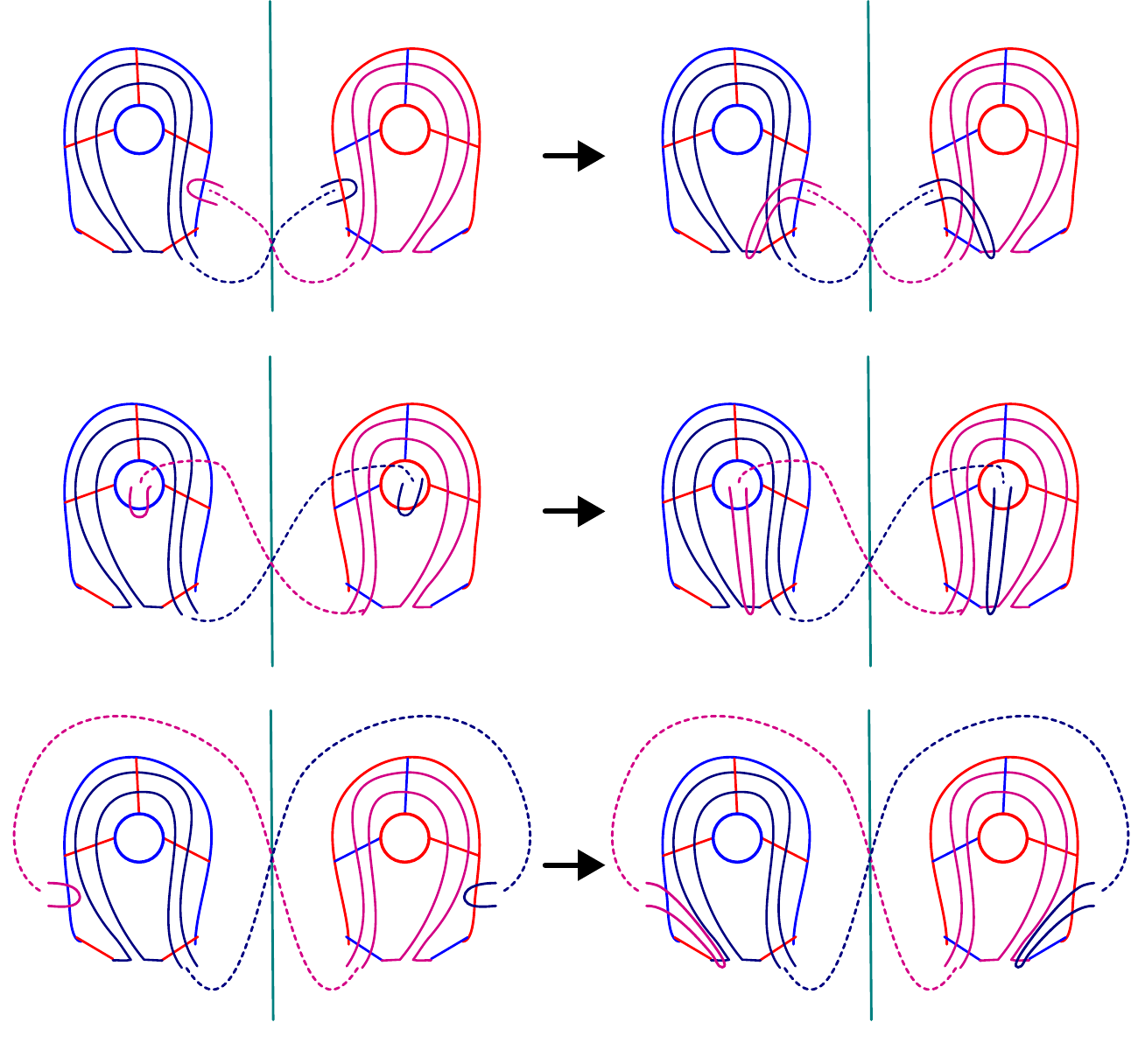
    \caption{Extra finger moves for subcase $B1$}
    \label{fig:extra_finger_move_2}
    \end{figure}

    Lastly, we deal with case B2 (labeled again as in \cite[Lemma~6.6]{juhasz2008floer}). As noted above, the domains $D_m^i$, $i=1,2,5,6$ are already rectangles and the finger from $\t(b_*)$ may enter $D_m'$ from $D_m^i$ for $i=1,3,4,6$. If the finger enters from $D_m^3$ or $D_m^4$, we can follow the strategy in the case~\ref{item1pf8.5}. If the finger enters $D_m'$ from $D_m^1$ or $D_m^6$, we can follow the strategy in the subcase of entering from $D_m^4$ in case B1. Examples are shown in Figure~\ref{fig:extra_finger_move_3a} and \ref{fig:extra_finger_move_3b}. Again, we obtain the desired decrease in $c_{d_0}$.

     \begin{figure}[h]
    \def\svgwidth{1\linewidth}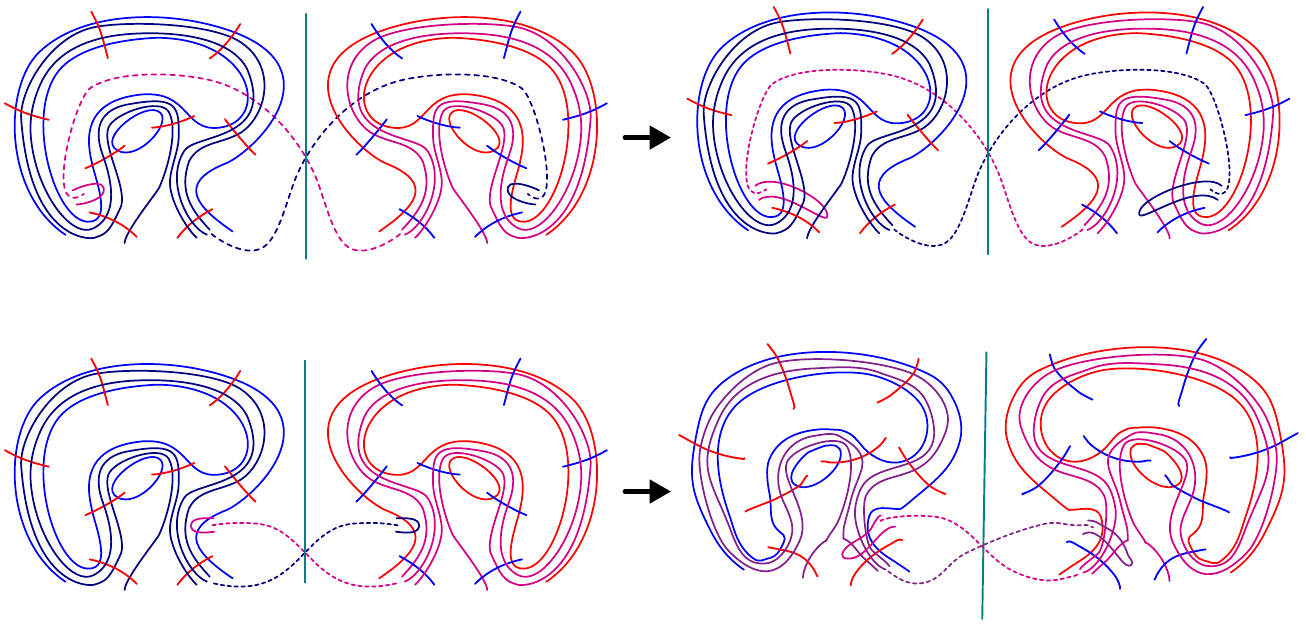
    \caption{Extra finger moves for subcase $B2$, when the finger enters $D_m$ from $D_m^1$ or $D_m^6$.}
    \label{fig:extra_finger_move_3a}
    \end{figure}

    \begin{figure}[h]
    \def\svgwidth{1\linewidth}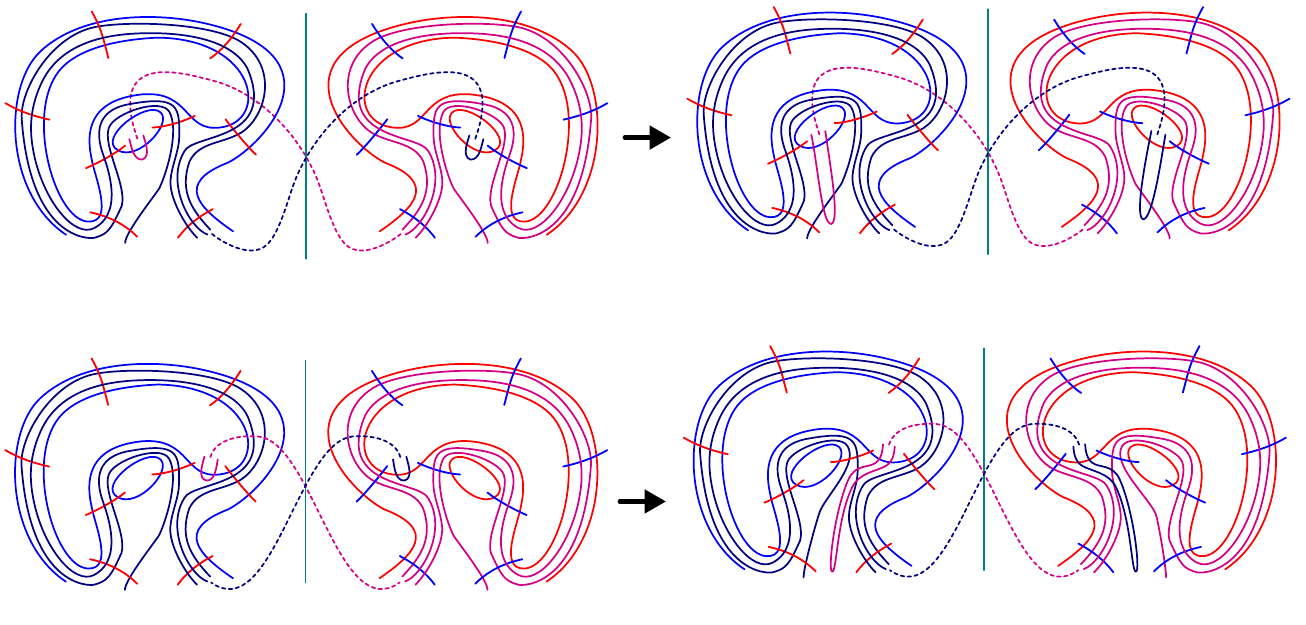
    \caption{Extra finger moves for subcase $B2$, when the finger enters $D_m$ from $D_m^3$ or $D_m^4$.}
    \label{fig:extra_finger_move_3b}
    \end{figure}   
    \end{enumerate}

We have now obtained a real Heegaard diagram $\mathcal{H'}$ satisfying all those properties guaranteed for $\cH$ in {\bf Step 1-3}. During the discussion, we claimed that this new diagram has distance $\le d_0$ and proved that $c_{d_0}$ is indeed reduced. Now we do a sanity check on our claim. To this end, observe that there are only three sources of bad domains $D$ in $\mathcal{H'}$:\begin{enumerate}
    \item $D$ was a bad domain in $\mathcal{H}$ that was unaffected by the finger moves or handleslides. Then $d'(D)\le d(D)$.
    \item $D$ was part of $D_m$ (or $D_m'$) after the finger move. By construction, $D$ has distance less than $d_0$, since it is still adjacent to $D_*$.
    \item $D$ was some $D_l$ (or $D_l'$) with $l\ne m$ with badness increased by the finger move. Then $d'(D_l)=d_0$ by construction.
 \end{enumerate}
These show that $d(\mathcal{H'})\le d_0$ and complete our proof. 
\end{proof}

\begin{proof}[Proof of Theorem~\ref{thm: existence of nice diagram}]
This follows directly from Proposition~\ref{prop:reducing complexity of a diagram} and the fact that complexity zero real Heegaard diagrams are nice real Heegaard diagrams.
\end{proof}

\begin{remark}
Here, we have insisted that for each $x \in C\cap \bm\alpha$, the contribution $\sigma(\bm\alpha, x)$ is positive. Though, we could just as well have demanded that $\sigma(\bm\alpha, x) =-1$. The former convention corresponds to \emph{positive} nice real diagrams in \cite{LO_Real_bordered} and the later to \emph{negative} nice real diagrams.
\end{remark}

\subsection{Domains of real index 1}\label{sub:Domains of real index 1}
In this subsection, we classify the real index $1$ domains which may appear in nice real diagrams. The work of this section is summarized in the following proposition.

\begin{proposition}\label{prop:a list of all possible real domains}
    In a real nice Heegaard diagram, the only domains of real index 1 which appear are:
    \begin{enumerate}
        \item[(D)] Domains which are topologically disks. These  appear in six varieties: 
        \begin{enumerate}[label = (\arabic*)]
            \item A pair of bigons (with $90^\circ$ corners) which are interchanged by the involution;
            \item A pair of rectangles (with $90^\circ$ corners) which are interchanged by the involution;
            \item A bigon with one $90^\circ$ corner and one $270^\circ$ corner, both on the fixed point set;
            \item A rectangle with four $90^\circ$ corners, two of which are on the fixed point set while the other two are not and are interchanged by the involution. 
            \item A hexagon with one $90^\circ$ corner and one $270^\circ$ corner on the fixed point set and four $90^\circ$ corners off the fixed point set, interchanged in pairs. 
            \item A octagon with two $270^\circ$ corners on the fixed point set and six $90^\circ$ corners off the fixed point set, interchanged in pairs. 
        \end{enumerate}
        See \Cref{fig:domains_without_vertex_on_C}(a), (b) and \Cref{fig:disk_domains}.
    \item[(A)] Domains which are topologically annuli. These appear in five varieties:
    \begin{enumerate}[label = (\arabic*)]
        \item A free-boundary reflection annulus $A$ (whose quotient is an annulus) with two $90^\circ$ corners and two $270^\circ$ corners;
        \item An antipodal annulus $A$ (whose quotient is a M\"obius band) with two $90^\circ$ corners and two $270^\circ$ corners;
        \item A reflection annulus (whose quotient is a disk) with two $90^\circ$ corners and two $270^\circ$ corners on the fixed point set (we refer to such annuli as \emph{annuli of type (2,2)});
        \item A reflection annulus (whose quotient is a disk) with one $90^\circ$ corner and three $270^\circ$ corners on the fixed point set as well as two $90^\circ$ corners which are interchanged by the involution (we refer to such annuli as \emph{annuli of type (2,4)});
        \item A reflection annulus (whose quotient is a disk) with four $270^\circ$ corners on the fixed point set as well as four $90^\circ$ corners which are interchanged by the involution  in pairs (we refer to such annuli as \emph{annuli of type (4,4)}).
    \end{enumerate}
    See \Cref{fig:domains_without_vertex_on_C}(c), (d) and \Cref{fig:annular_domains}.
    \item[(T)] Domains which are topologically punctured tori. There is a single kind:
    \begin{enumerate}[label = (\arabic*)]
        \item A torus (whose quotient is a M\"obius band) with two $270^\circ$ corners on the fixed point set and two $270^\circ$ corners interchanged by the involution. 
    \end{enumerate}
    See \Cref{fig:toroidal_real_domain}.
    \end{enumerate}
\end{proposition}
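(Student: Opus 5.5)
The plan is to translate the index condition into combinatorial constraints on the domain and its quotient, then enumerate the cases. Let $\phi\in\pi_2^R(\xv,\yv)$ have real index $1$ and admit holomorphic representatives, so its domain $\cD$ has non-negative multiplicities. We first use the doubling relation $\mu(\tilde\phi)=2\mu_R(\phi)+\tfrac12(\sigma(\T_\alpha,\xv)-\sigma(\T_\alpha,\yv))$. In a nice diagram, condition~\ref{def:patternnice} of \Cref{definition: nice diagram} makes every triple-point contribution $+1$, so the correction term is determined by how many corners of $\cD$ lie on $C$ at $\xv$ versus $\yv$. Hence $\mu(\tilde\phi)$ is $2$ plus a bounded correction, and it is computed by Lipshitz's formula $\mu=e(\cD)+n_{\xv}(\cD)+n_{\yv}(\cD)$. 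In a nice diagram every interior region is a bigon or rectangle, and every region meeting $C$ is a rectangle, so each region has $e\ge 0$.

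The first step follows Sarkar--Wang and \cite[Lemma 9.7]{juhasz2008floer}. The small value of $\mu(\tilde\phi)$ together with $e\ge 0$ forces $\cD$ to have multiplicities in $\{0,1\}$. It also forces $n_{\xv}$ and $n_{\yv}$ to be small, so every corner is either a $90^\circ$ corner (a generator point at the corner of one region) or a $270^\circ$ corner (contained in three regions). Next we use the real structure. Since $\cD=\tau(\cD)$, corners off $C$ come in $\tau$-swapped pairs. Corners on $C$ are generator points of $\xv$ or $\yv$, and the local pattern there fixes their angles and their $\sigma$-contributions. The Euler measure is additive, and by Gauss--Bonnet it equals $\chi(\cD)-\tfrac14\#(90^\circ)+\tfrac14\#(270^\circ)$. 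Combining this with the point measures gives a linear identity in $\chi(\cD)$, the number of corners of each type on and off $C$, and the number of generator points shared by $\xv$ and $\yv$ inside $\cD$.

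Then we enumerate. Because $e(\cD)\ge 0$ and the corner counts are small, $\chi(\cD)\in\{1,0,-1\}$. Using Dugger's classification \cite{dugger_involutions_surfaces} of orientation-reversing involutions on surfaces with boundary, we list the real structures on a disk, an annulus and a once-punctured torus. For each of these we solve the identity for the allowed corner data, subject to the constraint that the boundary alternates between $\alpha$- and $\beta$-arcs, interchanged by $\tau$.
\begin{itemize}
\item Disks give either a swapped pair of disjoint bigons or rectangles, or a flipping disk with $0$ or $2$ corners on $C$ and $4$ or $6$ paired corners off it. This produces (D1)--(D6).
\item Annuli: the free-boundary reflection, antipodal and $\{1\}$-reflection structures give (A1)--(A5).
\item The torus case survives only with a M\"obius-band quotient, giving (T1).
\end{itemize}
Genus $\ge1$ with more boundary components, and higher Euler characteristic deficits, are excluded because the identity would then make $e<0$.

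The main obstacle is the corner and Maslov bookkeeping on the fixed set. There we must check carefully how $90^\circ$ and $270^\circ$ corners on $C$ contribute both to $n_{\xv}+n_{\yv}$ and to the $\sigma$-correction. We also need to rule out configurations that satisfy the numerics but cannot be realized: for example, annuli with an odd number of corners on $C$, or the case where $\cD$ is a disjoint union of a $\tau$-invariant piece and a swapped pair. I would handle these by passing to the quotient $\cD/\tau$ and doing the count in the orbifold quotient, where corners on $C$ become boundary reflector points. The quotient Euler characteristic must then be $1$ or $0$, which gives a disk, an annulus or a M\"obius band. This matches the list.
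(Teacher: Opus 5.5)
Your overall plan — enumerate numerically from the index formula, then identify the surface — is the same as the paper's. But the step that fails is your claim that, as in Sarkar--Wang, the index bound forces $\mathcal{D}$ to have multiplicities in $\{0,1\}$. You then apply Gauss--Bonnet to $\chi(\mathcal{D})$ with $\mathcal{D}$ viewed as a subsurface of $\Sigma$, and that application inherits the error.

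In the real setting $\mu(\tilde\phi)$ is not small enough for the Sarkar--Wang argument. For the octagon (two $270^\circ$ corners on $C$, six $90^\circ$ corners off $C$) and for the punctured torus, one has $n_{\mathbf{x}}+n_{\mathbf{y}}=3$ and $e=0$, so $\mu(\tilde\phi)=3$. Worse, the conclusion is false. The paper exhibits real index $1$ octagons, $(2,4)$-annuli, punctured tori, and swapped pairs of rectangles whose images overlap. All of these are only immersed and have multiplicity $2$ on some rectangles (\Cref{prop:tiling structure and standard immersion}, \Cref{lem:standard pattern for immersed real domain}).

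For such a domain the support in $\Sigma$ has a different Euler characteristic from the source: identifying two rectangles of a disk gives $\chi=0$. So your identity does not determine the topology the proposition is about. The count has to be done on the source $F$ of a holomorphic representative, keeping only nonconstant components. For example, use $\chi(S)=m-n_{\mathbf{x}}(\mathcal{D})-n_{\mathbf{y}}(\mathcal{D})+e(\mathcal{D})$ from \Cref{prop:chi(S)}. The topology of $F$ is then pinned down as follows. The induced involution acts on each invariant boundary circle of $F$ as a reflection, and its two fixed points must be corners on $C$. Hence $2n$ corners on $C$ force at least $n$ boundary components.

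Two further steps are asserted rather than proved.
\begin{itemize}
\item The index alone does not force every corner to be $90^\circ$ or $270^\circ$. When $e(\mathcal{D})=0$ and $(V_{\{1\}},V_{\mathbb{Z}_2})$ is $(2,0)$ or $(3/2,1/2)$, an $\mathbf{x}$-corner on $C$ of multiplicity $7/4$, or a $\mathbf{y}$-corner of multiplicity $5/4$, is numerically allowed. The paper rules these out by a separate vertex/edge/face count on the rectangular tiling that $\mathcal{D}$ pulls back to $F$.
\item Some configurations pass every numerical test and must be excluded geometrically. A flipping rectangle with two $270^\circ$ corners on $C$ and a swapped pair of $90^\circ$ corners has $e=1$, $\chi(F)=1$ and real index $1$. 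It is ruled out only by following the $\alpha$-arcs through the two $C$-corners. They must create either four bigons (too much index) or bigons containing a segment of $C$, which contradicts condition~\ref{def:onlyrectangleonC} of \Cref{definition: nice diagram}. Your orbifold-quotient count is purely numerical and cannot detect this.
\end{itemize}

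Your disk enumeration is also off. A $\tau$-invariant disk always has exactly two corners on $C$. Cases (D3) and (D4) have zero and two paired corners off $C$, not four or six, so your list as written does not produce them.
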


\subsubsection{Index Computations}\label{subsub:Preliminary computations}

To begin our analysis, we consider the combinatorial description of the real Maslov index and explore the ways in which the index constrains possible domains in a nice diagram. 

Let $\phi\in\pi_2^R(\bm{x},\bm{y})$ have domain $\mathcal{D}$. According to \cite[Corollary~4.3]{guth2025real}, the real Maslov index of $\phi$ is given by
\begin{equation}\label{eqn:combinatorial index}
    \mu_R(\phi)=\frac{1}{2}\left(n_{\xv}(\DCAL)+n_{\yv}(\DCAL)+ e(\DCAL)-\frac{\sum_{x\in\xv\cap C} \sigma(\bm\alpha,x)-\sum_{y\in\yv\cap C} \sigma(\bm\alpha,y)}{2}\right).
\end{equation}
Given a point $x\in \bm\alpha\cap \bm\beta\cap C$, the quantity $\sigma(\bm\alpha,x)$ can be computed diagrammatically as shown in Figure~\ref{fig:triple_index}.

For a domain in a nice diagram, the terms contributing to the Maslov index are strictly non-negative -- a property which is used frequently in \cite{SarkarWang2010}. This need not be the case in the present setting, as the triple Maslov index can be negative. So, in order to obtained a formula for the real Maslov index expressed as a sum of non-negative terms, we introduce the following quantities:
\begin{align*}
    V_{\Z_2}(\cD)&=\sum_{x\in \xv\smallsetminus C} n_x(\DCAL)+\sum_{y\in \yv\smallsetminus C} n_y(\DCAL),\\
    V_{\{1\}}(\cD)&=\sum_{x\in \xv\cap C} \left(n_x(\DCAL)-\frac{\sigma(\bm\alpha,x)}{2}\right)+\sum_{y\in \yv\cap C} \left(n_y(\DCAL)+\frac{\sigma(\bm\alpha,y)}{2}\right).
\end{align*}
Equation~\eqref{eqn:combinatorial index} can then be rewritten as:
\begin{equation}\label{eq:indexformula2}
\mu_R(\phi)=\frac{1}{2}\left(V_{\Z_2}(\cD)+ V_{\{1\}}(\cD) + e(\DCAL) \right).
\end{equation}

For  future reference, we record a collection of useful facts about domains in real nice diagrams.

\begin{lemma}\label{lem:facts_V_1_V_Z2}
    Let $\cD$ be a domain in a real nice diagram representing a class $\phi \in \pi_2^R(\bm x, \bm y)$. Endowing $\Sigma$ with a metric such that all $\alpha$ and $\beta$-curves intersect in right angles. Then, the following hold:
    \begin{enumerate}
        \item The terms $V_{\Z_2}(\cD)$, $V_{\{1\}}(\cD)$, and $e(\DCAL)$ are non-negative;
        \item If $\ind_R(\cD) = 1$, then $e(\cD) \in \{0,1\}$;
        \item if a vertex $p$ of a positive real index 1 domain $\cD$ belongs to both $\bm x$ and $\bm y$, then $n_p(\cD) = 0$;
        \item $\cD$ has an even number of vertices away from $C$ (interchanged in pairs by $\t$) and every (non-constant) component of $\DCAL$ contains an even number of vertices on $C$.\footnote{Here, we think of $\cD$ as a 2-chain -- i.e., if $\cD$ is the domain of a curve with source $S$, the components that are mapped constantly to the fixed point set are not considered as part of the domain.} 
    \end{enumerate}
\end{lemma}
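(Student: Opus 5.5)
I will prove the four items in order, working with the rewritten index formula~\eqref{eq:indexformula2}. For item (1), I first note that $V_{\Z_2}(\cD)\ge 0$ because each $n_x(\cD)$ is the average of the multiplicities in the four quadrants at $x$, and a positive domain has non-negative multiplicities. (I am implicitly treating positive domains here; that is the relevant case for counting holomorphic curves.) For $V_{\{1\}}$, niceness condition~\ref{def:patternnice} gives $\sigma(\bm\alpha,x)=+1$ at every $x\in\bm\alpha\cap\bm\beta\cap C$. So I need $n_x(\cD)\ge 1/2$ for $x\in\xv\cap C$, while the terms $n_y+1/2$ for $y\in\yv\cap C$ are automatically positive.

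To get the bound at $x$, I will use the local picture of Figure~\ref{fig:triple_index}. Since $\tau$ swaps the two quadrants adjacent along $C$ on each side, symmetry forces a relation among the four quadrant multiplicities. The boundary condition at a corner in $\xv$, namely that $\partial\cD$ runs from $\alpha$ to $\beta$ through $x$, then says the two pairs differ by one. This gives $n_x\in\{1/2,\ldots\}$ when $x$ is a genuine corner, and $n_x$ an integer, hence $\ge 0$ with the $-1/2$ absorbed, when it is not. I expect to have to be careful here: at a non-corner point of $\xv\cap C$ one has $x\in\yv$ as well, so the two terms $n_x-1/2$ and $n_x+1/2$ pair up.

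For $e(\cD)$, I will choose the right-angled metric so that every interior elementary domain is a flat bigon or rectangle. Then $e$ of each interior domain is $\ge0$: rectangles have $e=0$, bigons have $e=1/2$. Positivity and additivity of the Euler measure then give $e(\cD)\ge 0$.

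Item (2) is immediate from (1) and~\eqref{eq:indexformula2}: $1=\tfrac12(V_{\Z_2}+V_{\{1\}}+e)$ gives $e\le 2$. Because $e$ is a sum of multiples of $1/2$ coming from bigons, I then need to rule out $e\in\{1/2,3/2,2\}$. I will do this using parity: $V_{\Z_2}$ is even by $\tau$-symmetry, since points off $C$ come in pairs with equal multiplicities. Also, the bigons off $C$ come in $\tau$-pairs (condition~\ref{def:onlyrectangleonC} puts no bigons on $C$), so $e$ is an integer. If $e=2$, all vertex terms must vanish; I will show that a nonconstant domain with no corners cannot have $e=2$. Such a domain is a periodic-type domain, and in a sutured diagram it must miss $\partial\Sigma$, which excludes this. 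This step is the one I expect to need the most care.

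For item (3), a vertex $p\in\xv\cap\yv$ contributes $2n_p(\cD)$ to $V_{\Z_2}$, or $2n_p$ to $V_{\{1\}}$ after the $\pm1/2$ terms cancel. If $n_p>0$, then $n_p\ge 1/2$; for an off-$C$ point its $\tau$-partner contributes equally, so the total is already $\ge 2$. Together with the remaining contributions, I will show this forces index $>1$. For item (4), $\tau$ maps the corner set off $C$ to itself without fixed points, so these corners come in pairs. For corners on $C$, I will take a component of $\cD$ that is $\tau$-invariant and note that its boundary is a union of $\alpha$-arcs and their $\tau$-images $\beta$-arcs. Traversing the boundary alternates $\alpha$ and $\beta$, and $\tau$ reverses the traversal. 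The fixed corners are exactly the points where an $\alpha$-arc meets its own image, and each boundary cycle meets $C$ in an even number of points, which gives the parity claim.
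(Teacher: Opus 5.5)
There is a genuine gap in your proof that $V_{\{1\}}(\cD)\ge 0$ in item (1), which is the one place where the real setting differs from the classical one. At $x\in\bm\alpha\cap\bm\beta\cap C$, write $t,b$ for the multiplicities of the two quadrants bisected by $C$, and $s$ for the common multiplicity of the two quadrants interchanged by $\tau$. Then $n_x(\cD)=(t+b+2s)/4$.

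At a genuine corner the corner condition is $t+b-2s=\pm 1$, so $n_x=s\pm\frac{1}{4}\in\frac{1}{4}+\frac{1}{2}\Z$. In particular $n_x$ is never $\frac{1}{2}$, so your claim that $n_x\in\{\frac{1}{2},\ldots\}$ is not what the corner condition gives. Moreover, with the $+$ sign the value $n_x=\frac{1}{4}$ (a $90^\circ$ corner in a quadrant bisected by $C$) is consistent with symmetry and positivity. At a point of $\xv\cap C$ it would give the term $n_x-\frac{1}{2}=-\frac{1}{4}<0$.

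So everything depends on which sign occurs at corners in $\xv\cap C$, and your argument never decides this: you use Condition~\ref{def:patternnice} only to read off $\sigma(\bm\alpha,x)=+1$. The missing input is that the same local pattern also fixes the corner type, once combined with the orientation of $\Sigma$ and the corner convention for $\pi_2^R(\xv,\yv)$. In the nice pattern, a $90^\circ$ corner in a quadrant bisected by $C$ can only occur at a point of $\yv$. Hence:
\begin{itemize}
\item at a corner in $\xv\cap C$ one has $t+b=2s-1$, so $s\ge 1$ and $n_x\ge\frac{3}{4}$ (a $270^\circ$ corner), contributing at least $\frac{1}{4}$;
\item at a corner in $\yv\cap C$ one has $n_y\ge\frac{1}{4}$, contributing at least $\frac{3}{4}$.
\end{itemize}
This is exactly the bound $n_x\ge\frac{3}{4}$ that the paper invokes. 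It is also where the values $\frac{1}{4}$ and $\frac{3}{4}$ in the later tables come from.

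The same local computation matters in (3). For $p\in\xv\cap\yv\cap C$ you observed in (1) that $t+b=2s$ and $n_p=s\in\Z$, but in (3) you only use $n_p\ge\frac{1}{2}$. That is too weak. It gives a contribution $2n_p\ge 1$ to $V_{\{1\}}$, while the corners at points of $\xv\Delta\yv$ can contribute as little as $1$ in total. For example, one corner in $\xv\cap C$ and one in $\yv\cap C$ contribute $\frac{1}{4}+\frac{3}{4}$. With $e=0$ you would then only reach $\mu_R\ge 1$, not a contradiction. Integrality gives $n_p\ge 1$, hence a contribution $\ge 2$, and then $\mu_R>1$.

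Two smaller points in (2):
\begin{itemize}
\item $V_{\Z_2}$ is not ``even'' (it can equal $\frac{1}{2}$). You do not need this, since $e\in\Z$ already follows from bigons pairing off under $\tau$.
\item When $e=2$, a positive domain with no corners is a positive periodic domain. That is excluded by admissibility, not by the fact that domains avoid $\partial\Sigma$ (every domain does).
\end{itemize}

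Your argument for (4), via the reflection on $\tau$-invariant boundary circles, is a workable alternative to the paper's count (each component has an even number of corners, and off-$C$ corners pair up). You would still need to check two things: that a component with a corner on $C$ is $\tau$-invariant, and that the fixed points on its boundary are exactly its corners on $C$.
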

\begin{proof}
    (1): It is obvious that $V_{\Z_2}$ and $e(\cD)$ are non-negative. To see that $V_{\{1\}}$ is also non-negative, notice that $\sigma(\bm \alpha,x)/2=1/2$, while $n_{x}(\cD)\ge 3/4$ due to Condition~\ref{def:patternnice} from Definition~\ref{definition: nice diagram}. 

    (2): Since the $\alpha$ and $\beta$-curves meet at right angles,  $e(\cR)=0$ for rectangles $\cR$ and $e(\mathcal{\cB})=1/2$ for bigons $\cB$. Since the real Heegaard diagram is nice and the Euler measure is additive, clearly $e(\cD) \ge 0$ for any positive real domain. Furthermore, by Condition~\ref{def:onlyrectangleonC} of Definition~\ref{definition: nice diagram}, bigons of Euler measure $1/2$ are not allowed to appear in trivial orbits, and therefore only appear in pairs. Hence, domains made up entirely of rectangular regions have Euler measure 0, and those with a pair of bigons have Euler measure 1. There can be no domains with four bigons, as these contribute $2$ to the Euler measure, and $V_{\Z_2}(\cD)+ V_{\{1\}}(\cD) > 0$, making the real Maslov index too large.

    (3): If $n_p(\cD) \neq 0$, it must be that $n_p(\cD)\ge 1/2$, since $p$ belongs to both $\bm x$ and $\bm y$ and $\cD$ is a domain connecting them. If $p$ is not fixed by $\tau$, then $n_{\tau(p)}(\cD) \ge 1/2$ as well, and hence, $V_{\Z_2}(\cD) \ge 4\times 1/2 = 2$, which already exceeds the index of $\cD$. If $p$ is fixed by the involution then $n_p(\cD) \ge 1$, so that $V_{\{1\}}(\cD) \ge 2$, since $p$ belongs to both $\bm x$ and $\bm y$. But $\cD$ must have other corners, so this implies $V_{\{1\}}(\cD) + V_{\Z_2}(\cD) > 2$, a contradiction.

    (4): Clearly there are an even number of vertices off of $C$. Let $\cD_0$ be a component of $\cD$ which has some nonzero multiplicity. By (3), a vertex $v$ of $\cD_0$ appears on the fixed point set, then it must belong to either $\bm x$ or $\bm y$, but not both. Since $\cD_0$ must have an even number of vertices, and vertices off $C$ appear in pairs, there must be an odd number of additional vertices of $\cD_0$ (not including $v$). 
\end{proof}

The differential of real Heegaard Floer complex counts pseudo-holomorphic disks whose homotopy classes $\phi$ have $\mu_R(\phi)=1$. Henceforth, we will focus on the domains of such disks in a nice diagram. This constraint leaves a finite number of possible values for $V_{\Z/2}$ and $V_{\{1\}}$.

Each $x\in \xv\smallsetminus C$ or $y\in \yv\smallsetminus C$ contributes $(2k+1)/4$ to $V_{\Z_2}$ for some $k\in \Z_{\ge0}$. Furthermore, for the domain of a real index 1 disk, we must have that $V_{\Z_2}(\cD) \le 2$ by Equation~\eqref{eq:indexformula2}. Since such vertices appear in pairs we must have that $V_{\Z_2}(\cD)$ takes values according to \Cref{tab:values_of_VZ2}. 

\begin{table}[h!]
\begin{tabular}{l|c}
\text{Corners contributing to $V_{\Z_2}(\cD)$} &  $V_{\Z_2}$ \\\hline
\text{Zero corners}                 & $0$           \\ 
\text{Two $90^\circ$ corners}                    & $1/2$           \\ 
\text{Two $270^\circ$ corners}                & $3/2$ \\ 
\text{Six $90^\circ$ corners}                    & $3/2$ \\ 
\text{Two $90^\circ$ corners and two $270^\circ$ corners}            & $2$ \\ 
\text{Eight $90^\circ$ corners}               & $2$ \\ 
\end{tabular}
\captionof{table}{All possible contributions to and values of $V_{\Z_2}$.}
\label{tab:values_of_VZ2}
\end{table}

Similarly, $V_{\{1\}}$ is constrained by Equation~\eqref{eq:indexformula2}; local computations are shown in Figure~\ref{fig:verticesonC}. It follows that $V_{\{1\}}$ can only take values shown in \Cref{tab:values_of_V1}, provided that $\cD$ is not immersed at vertices on $C$. We will rule out immersion at vertices in Section~\ref{subsubsection:Ruling out immersion at vertices}. 

\begin{table}[h!]
\begin{tabular}{c|c|c}
$|(\bm x \cap C)\smallsetminus \bm y|$ & $|(\bm y \cap C)\smallsetminus \bm x|$ & $V_{\{1\}}$         \\\hline
$0$              & $0$               & $0$           \\ 
$2$              & $0$               & $1/2$           \\ 
$4$              & $0$               & $1$           \\ 
$1$              & $1$               & $1$           \\ 
$6$              & $0$               & $3/2$           \\ 
$3$              & $1$               & $3/2$           \\ 
$0$              & $2$               & $3/2$           \\ 
$8$              & $0$               & $2$           \\ 
$5$              & $1$               & $2$           \\ 
$2$              & $2$               & $2$           \\ 
\end{tabular}
\captionof{table}{All possible contributions to and values of $V_{\{1\}}$, when all the vertices are embedded.}
\label{tab:values_of_V1}
\end{table}

Though, a priori, Equation~\eqref{eq:indexformula2} tells us that the real index of a domain can be an integer or a half integer, for any real domain $\cD$, $\mu_R(\cD)$ must be an integer since it is originally defined as the index of a Fredholm operator. From Equation~\eqref{eq:indexformula2}, we also know that any real domain for a class contributing to the differential must be of strictly positive usual index. This observation will be useful in Section~\ref{sub:Finding holomorphic representatives}.

In \Cref{sec:cylindrical}, we recalled a cylindrical reformulation of real Heegaard Floer homology. We will make frequent use of it  going forward. We will also make frequent use of the following formula.

\begin{proposition}[{\cite[Proposition 4.2]{Lipshitz2005ACR}}]\label{prop:chi(S)}
    Using the notation as above, \[\chi(S)=m-n_{\xv}(\DCAL)-n_{\yv}(\DCAL)+e(\DCAL),\] where $m=\vert \bm \alpha\vert =\vert \bm \beta \vert$. 
\end{proposition}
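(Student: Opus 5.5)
This is the unreal statement of \cite[Proposition~4.2]{Lipshitz2005ACR}. The real structure plays no role: the formula concerns only the underlying curve $u\colon S\to W$ and its domain $\DCAL$. So the plan is to re-derive Lipshitz's argument and note that nothing equivariant is needed. Throughout, $u$ is an embedded holomorphic curve satisfying (M0)--(M6) in a class with domain $\DCAL$. We use the metric on $\Sigma$ in which the $\alpha$ and $\beta$-curves meet at right angles, as in \Cref{lem:facts_V_1_V_Z2}.

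\textbf{Step 1.} Compare $\chi(S)$ with $e(\DCAL)$ through the projection $\pi_\Sigma\circ u\colon S\to\Sigma$.
\begin{itemize}
\item Compactify $S$ by filling in the $2m$ punctures $\bm p\cup\bm q$. Each puncture becomes a convex corner that $\pi_\Sigma\circ u$ sends to a $90^\circ$ quadrant at a point of $\xv$ or $\yv$.
\item Give $S$ the pulled-back metric, modified near the branch points. Gauss--Bonnet with geodesic boundary and $2m$ right-angled corners gives Euler measure $e(S)=\chi(S)-2m/4=\chi(S)-m/2$.
\item Euler measure is additive and multiplicative under unbranched covers, so $e(\DCAL)$ equals the pushforward of the Euler measure of $S$ corrected at ramification. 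Each simple interior branch point of $\pi_\Sigma\circ u$ contributes $+1$, by local Riemann--Hurwitz. This gives
$$e(\DCAL)=\chi(S)-\tfrac m2+\mathrm{br}(\pi_\Sigma\circ u).$$
\item Branching on the boundary is excluded by (M1) and (M3), since each boundary arc maps diffeomorphically onto a single $\alpha_i\times\{1\}\times\R$ or $\beta_i\times\{0\}\times\R$.
\end{itemize}

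\textbf{Step 2.} Show that $\mathrm{br}(\pi_\Sigma\circ u)=n_{\xv}(\DCAL)+n_{\yv}(\DCAL)-m/2$. This is the main obstacle. Following Lipshitz, I would count ramification points as zeros of $d(\pi_\Sigma\circ u)$, viewed as a section of $\mathrm{Hom}(TS,u^*T\Sigma)$. By holomorphicity these zeros are positive, so their number is a relative first Chern number, and this number can be computed by an intersection count.
\begin{itemize}
\item Push $u$ off itself in a direction tangent to $\Sigma$: translate slightly along the $\alpha$-curves at $s=1$, along the $\beta$-curves at $s=0$, and generically in between.
\item Embeddedness (M6) implies that the intersections of $u$ with this push-off are exactly the branch points of $\pi_\Sigma\circ u$, each counted once.
\item The boundary conditions at $t=\pm\infty$ force the push-off to be asymptotic to translated copies of the corners at $\xv$ and $\yv$.
\item The algebraic intersection count can then be read off from the local multiplicities of $\DCAL$ near these corners. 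This gives $n_{\xv}+n_{\yv}$ minus a correction of $1/4$ for each of the $2m$ corners of $S$ itself.
\end{itemize}
The delicate part is making the asymptotic intersection count at $t=\pm\infty$ rigorous. I would use the averaged point measures $n_{\xv}$ and $n_{\yv}$, which are defined via the four quadrants at each corner, to absorb the choice of translation direction. This is exactly the device in \cite{Lipshitz2005ACR}.

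\textbf{Step 3.} Substituting Step 2 into Step 1 gives
$$\chi(S)=e(\DCAL)+\tfrac m2-\bigl(n_{\xv}+n_{\yv}-\tfrac m2\bigr)=m-n_{\xv}(\DCAL)-n_{\yv}(\DCAL)+e(\DCAL).$$
As a consistency check, Riemann--Hurwitz for the $m$-fold branched cover $\pi_\D\circ u\colon S\to[0,1]\times\R$ gives $\chi(S)=m-\mathrm{br}(\pi_\D\circ u)$. This recovers the same identity together with Lipshitz's index formula $\ind=e+n_{\xv}+n_{\yv}$. I would cite this cross-check, rather than reprove it, if Step 2 becomes unwieldy.
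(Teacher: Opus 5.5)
The paper gives no argument of its own here; it imports \cite[Proposition~4.2]{Lipshitz2005ACR}. Your outline has the same architecture as Lipshitz's proof: Gauss--Bonnet for $\pi_\Sigma\circ u$, then the branch points of $\pi_\Sigma\circ u$ computed as a self-intersection number read off at the ends. However, Step~2 fails as written, because you push off in the wrong direction. By (J5), $T\Sigma$ is $J$-invariant, so a nonzero vector $v\in T\Sigma$ lies in the complex line $T_pu$ only if $T_pu=T\Sigma$, that is, only where $d(\pi_\D\circ u)_p=0$. So a push-off tangent to $\Sigma$ meets $u$ at the critical points of $\pi_\D\circ u$, together with the zeros of the push-off field. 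It does not meet $u$ at the branch points of $\pi_\Sigma\circ u$, and these counts are unrelated in general. For example, take an L-shaped hexagon with one $270^\circ$ corner and $m=3$: then $\mathrm{br}(\pi_\D\circ u)=m-\chi(S)=2$, while the quantity you want is $n_{\xv}+n_{\yv}-m/2=1/2$.

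The push-off that works is translation $\tau_\epsilon$ in the $\R$-direction of $[0,1]\times\R$:
\begin{itemize}
\item By (J4), $\partial_t\in T_pu$ forces $T_pu=\mathrm{span}(\partial_s,\partial_t)$, which is exactly a critical point of $\pi_\Sigma\circ u$.
\item By (J3), $\tau_\epsilon(u)$ is again $J$-holomorphic, so every intersection counts positively.
\item Since $C_\alpha$ and $C_\beta$ are $\R$-invariant, the translate satisfies the same boundary conditions. This is what lets you deform $\epsilon$ to a large $R$.
\end{itemize}
For large $R$, intersections occur only where the ends of one copy run through the body of the other. That is where the averaged multiplicities $n_{\xv},n_{\yv}$ and the $1/4$ correction per corner of $S$ enter, so your corner bookkeeping only makes sense for this translate.

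Step~1 also contains a false claim. Conditions (M1) and (M3) only force $\pi_\D\circ u$ to be unbranched along $\partial S$. They do not stop $\pi_\Sigma\circ u$ from folding back along an $\alpha$- or $\beta$-arc. Such boundary branch points, with local model $z\mapsto z^2$ on $\HH$, do occur; they are the ``cuts'' that appear throughout this paper's analysis of polygons and annuli. In the hexagon example above, the source is a disk and $\pi_\Sigma\circ u$ folds back at the end of the cut from the $270^\circ$ corner. So $\mathrm{br}(\pi_\Sigma\circ u)=1/2$ is realized by one boundary branch point and no interior ones, which your Step~1 formula cannot accommodate. These are boundary cone points of angle $2\pi$, so they enter Gauss--Bonnet with weight $1/2$. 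Correspondingly, they produce boundary intersections of $u$ with $\tau_\epsilon(u)$, which must also be weighted by $1/2$. If you define $\mathrm{br}$ as the number of interior branch points plus half the number of boundary ones, both of your displayed formulas hold.

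Finally, your fallback in Step~3 is circular. In \cite{Lipshitz2005ACR} the formula $\mathrm{ind}=e+n_{\xv}+n_{\yv}$ is deduced from this very proposition combined with the Riemann--Roch count $\mathrm{ind}=m-\chi(S)+2e(\DCAL)$. Riemann--Hurwitz for $\pi_\D\circ u$ by itself does not yield the identity.
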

Of course, $S$ may contain disk components which are mapped to $\Sigma$ by a constant map, and each of them contributes $1$ to $\chi(S)$. These contributions are uninteresting to us, so we will focus on the components of $S$ which are mapped non-constantly into $\Sigma$; the union of these components will be denoted by $F$.

The following lemma regarding the Euler measure of bigons will be useful moving forward.

\begin{lemma}\label{lem:bigon}
A bigon (with two right angles) cannot be decomposed into a union of rectangles, i.e., if a bigon is seen in $\DCAL$ or $F$, then it must contribute to the Euler measure.
\end{lemma}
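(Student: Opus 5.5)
The plan is to argue via the Euler measure. Since $e$ is additive and determined by corner angles, for a region $P$ with geodesic-like edges meeting the $\alpha$ and $\beta$-curves at right angles we have
\[
e(P)=\chi(P)-\tfrac{\#\text{convex corners}}{4}+\tfrac{\#\text{concave corners}}{4}.
\]
First I would fix a metric on $\Sigma$ in which all $\alpha$ and $\beta$-curves are geodesics meeting at right angles, as in the proof of Lemma~\ref{lem:facts_V_1_V_Z2}. With this normalization a rectangle with four $90^\circ$ corners has Euler measure $0$, and a bigon with two $90^\circ$ corners has Euler measure $1-\tfrac{2}{4}=\tfrac12$.

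The main step is then a one-line additivity argument. Suppose a bigon $B$ were written as a union (as 2-chains, i.e.\ a sum with nonnegative multiplicities) of rectangles $R_1,\dots,R_k$. By additivity of the Euler measure,
\[
e(B)=\sum_i e(R_i).
\]
The right-hand side is $0$ and the left-hand side is $\tfrac12$, which is a contradiction. More generally, the same computation shows that whenever a bigon appears as part of $\DCAL$ or of the image of $F$, it contributes $\tfrac12$ to $e(\DCAL)$ that cannot be cancelled by the rectangular pieces. This is the sense in which ``it must contribute to the Euler measure''.

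The only point needing care, and the step I expect to be the main obstacle, is making sure the decomposition is compatible with the corner structure. A subdivision of a bigon by further $\alpha$/$\beta$ arcs introduces new corners, and these must be accounted for. Interior subdivision points of the edges give $180^\circ$ angles on one side, which contribute nothing net. Interior crossings give four $90^\circ$ corners summing to one full point, which is balanced in the Euler measure by the change in $\chi$. I would handle this by appealing directly to additivity of $e$ under cutting along arcs (each cut changes $\chi$ and the corner count in exactly compensating ways), rather than tracking individual corners. Alternatively, I would simply note that $e$ is a measure given by integrating curvature plus boundary turning. Either way, $e(B)=\tfrac12$ regardless of how $B$ is cut, while every right-angled rectangle has $e=0$.
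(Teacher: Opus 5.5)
Your argument is correct, but it takes a different route from the paper. You use the Euler measure: in a metric where the $\alpha$- and $\beta$-curves are geodesics meeting orthogonally, $e$ is $\frac{1}{2\pi}\int K\,dA$, so it is additive. Gauss--Bonnet then gives $e=\chi-\frac{\#\text{convex}}{4}+\frac{\#\text{concave}}{4}$, hence $e(B)=\tfrac12$ while every right-angled rectangle has $e=0$.

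The paper never mentions the Euler measure in the proof. It does a purely combinatorial double count on the putative tiling. Let $f$ be the number of rectangles, $e_1,e_2$ the numbers of boundary and interior edges, and $v_i$ the number of vertices lying in exactly $i$ tiles. The proof uses $v_1=2$, $v_3=0$, $v_2=e_1-2$, together with two incidence counts:
\begin{itemize}
\item the face--edge count $4f=e_1+2e_2$;
\item the face--vertex count $4f=4v_4+2(e_1-2)+2$.
\end{itemize}
Combining these gives $4f\equiv 2\pmod 4$, a contradiction. This is a discrete, mod-$4$ shadow of the same Gauss--Bonnet identity. Its advantage is that it needs no metric and no additivity statement. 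It also never uses $\chi$ of the region, so it applies verbatim to the pulled-back tiling on a source $F$ with the given corner data, whatever its topology.

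Your version is shorter and more conceptual. It also gives the sharper statement $0=\chi-\frac{v_1}{4}+\frac{v_3}{4}$ for any rectangularly tiled region, i.e.\ $v_1-v_3=4\chi$, rather than only the congruence $v_1\equiv v_3 \pmod 4$ that the paper's count yields. Its only cost is the additivity of $e$ under subdivision, which you correctly flag as the point needing care. That additivity is standard from the definition of the Euler measure in \cite{Lipshitz2005ACR}.
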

\begin{proof}
    Assume towards a contradiction that the bigon has a tiling consisting solely of rectangles. Suppose that there are $f$ rectangles in $F$, $e_1$ edges on $\partial F$ and $e_2$ edges in $\mathrm{int}(F)$, and $v=v_1+v_2+v_3+v_4$ vertices; here, $v_i$ is the number of vertices that are shared by exactly $i$ rectangles in $F$.  In the case of a bigon, we have $v_1=2$, $v_3=0$ and $v_2=e_1-2$. We have the following equations relating the Euler characteristic and contributions to vertices and edges:
    \begin{align*}
    4f&=e_1+2e_2\\
    4f&=4v_4+2(e_1-2)+2.
    \end{align*}
\noindent Equating these two equations shows 
\begin{align*}
    e_1+2e_2 = 4v_4+2(e_1-2)+2 \iff e_1 = 2e_2 -4v_4 +2.
\end{align*}
Substituting this expression into the second equation yields 
\begin{align*}
    4f=e_1+2e_2 = (2e_2 -4v_4 +2) + 2e_2 = 4e_2 - 4v_4 + 2 \equiv 2 \mod 4,
\end{align*}
which is impossible.
\end{proof}

\subsubsection{Tiling structures}

In \cite{SarkarWang2010}, Sarkar and Wang made repeated use of the following observation: if $u: F \ra \Sigma$ is an immersion, then the $\alpha$- and $\beta$-curves in $\Sigma$ can be pulled back to 1-manifolds (which are also called $\alpha$- and $\beta$-arcs). These arcs divide the source curve $F$ into elementary regions, which will be called a \emph{tiling of $F$}. This structure is useful in their proof that all holomorphic curves which contribute to the differential in a nice diagram are embedded. We will make use of the same strategy, though the analysis is more involved, as there are many more (and more complicated) domains that appear in real nice diagrams. For this reason, we formally define tiling structures, and a few useful operations on tilings. 

\begin{definition}\label{def:tiling-structure}
    A \emph{tiling structure} $T=\{\bm\alpha^a, \bm\beta^a\}$ on a source $S$ is a pair of sets $\bm\alpha^a$ and $ \bm\beta^a$ each consisting of pair-wise disjoint, properly embedded arcs in $S$ such that each connected component of $S\setminus(\bm\alpha^a\cup \bm\beta^a)$ is a bigon or a rectangle. A \emph{rectangular tiling} is one without bigons.
    
    We say $T'=\{\bm\alpha^{a'}, \bm\beta^{a'}\}$ is a \emph{refinement} of $T=\{\bm\alpha^a, \bm\beta^a\}$, if $\bm\alpha^{a}\subset \bm\alpha^{a'}$ and $\bm\beta^{a}\subset \bm\beta^{a'}$. We say that a tiling structure is \emph{real} if there is an involution on $S$ which interchanges $\bm\alpha^a$ and  $\bm\beta^a$. 
    A tiling is \emph{minimal} if it is not a refinement of another tiling.
\end{definition}

There are two operations that we will use to construct new tilings from old ones. Firstly, we can push tilings forward under diffeomorphisms of $S$. In particular, if $S$ has non-trivial fundamental group and $c \in \pi_1(S)$ is non-trivial, we can consider $D_c \in \mathrm{MCG}(S,\partial S)$, the Dehn-twist along $c$. Given a real tiling $T= \{\bm\alpha^a, \bm\beta^a = \tau(\bm\alpha^a)\}$ of $S$, we define 
\begin{align*}
    D_c(T):= \{D_c(\bm\alpha^a), D_{\t(c)}(\bm\beta^a) = \tau(D_c(\bm\alpha^a))\}.
\end{align*}
Second, if $\bm a$ is a collection of arcs belonging to $\bm\alpha^a$ (possibly listed with multiplicity), we define 
\begin{align*}
    P_{\bm a}(T):= \{\bm\alpha^a\cup \bm a', \bm\beta^a\cup \tau(\bm a')\}
\end{align*}
where $\bm a'$ consists of small parallel push-offs of the arcs in $\bm a$; if an arc $a$ appears with multiplicity $m$, the collection $\bm a'$ contains $m$ parallel push-offs of $a$. 

\begin{definition}\label{def:winding-simple}
    Let $T$ and $T'$ be tilings of a surface $S$ . We say that $T'$ is obtained by \emph{winding $T$} if there is a curve $c \sub S$ so that $T' = D_c(T).$

    We will say that a tiling $T$ is \emph{simple} if $|\bm\alpha^a\cap \bm\beta^a| \le |D_c(\bm\alpha^a)\cap D_{\t(c)}(\bm\beta^a)|$ for every essential curve $c$ in $S$.
\end{definition}

Given a tiling structure $T$ on $S$, we define $\overline{ \bm\alpha^a}$ to be the union of $\bm\alpha^a$ with the $\alpha$-arcs on $\partial S$ and define $\overline{\bm\beta^a}$ similarly. We define a length function 
\begin{align}\label{eqn:tiling-lenth}
    l\colon \overline{\bm\beta^a} \cup \overline{ \bm\alpha^a}\to \Z_{>0}, \quad 
    \begin{cases}
        l(\beta)= \vert \beta\cap\overline{\bm\alpha^a} \vert-1 & \beta\in \overline{ \bm\beta^a}\\
        l(\alpha)= \vert \alpha\cap\overline{\bm\beta^a} \vert-1 & \alpha\in \overline{ \bm\alpha^a}.
    \end{cases} 
\end{align}

In the classical setting, all domains are embedded, i.e. for a curve $u:\D \ra  \Sigma \times [0,1] \times \R$ with domain $\cD$, the composition $u_\Sigma = \pi_\sigma \circ u$ is an embedding into $\Sigma$. In the real setting, this is no longer always the case. However, in \Cref{subsub: A summary of results for immersions}, we will prove that while immersed domains do appear, they are relatively well-behaved.

\subsubsection{Ruling out immersion at vertices}\label{subsubsection:Ruling out immersion at vertices} 

As noted above, not all domains are embedded in real nice diagrams. However, they are always embedded in neighborhoods of their vertices.

\begin{figure}[h]
\def\svgwidth{.8\linewidth}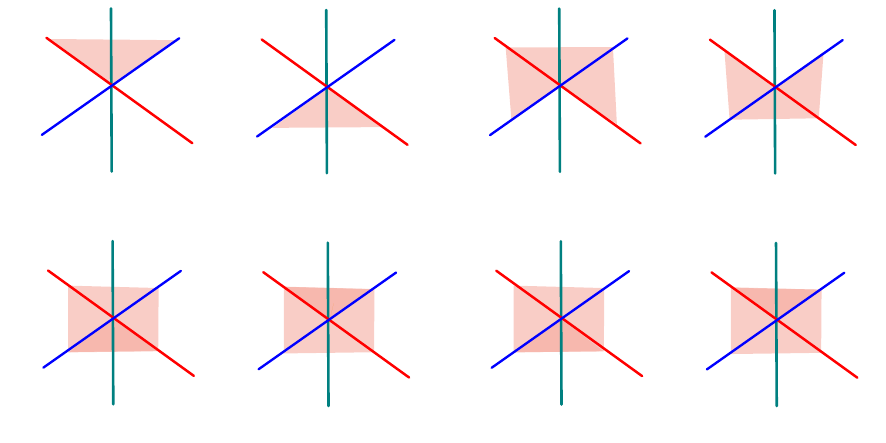
\caption{Possible configurations of multiplicities at vertices on $C$ organized by their contribution to $V_{\{1\}}$. For example, the first frame shows a domain with multiplicity $1/4$ and triple Maslov index $1/2$ at an outgoing vertex $y$, which therefore contributes $\frac{1}{4} +\frac{1}{2}$ to $V_{\{1\}}$.}
    \label{fig:verticesonC}
\end{figure}

\begin{lemma}
    $\mathcal{D}$ is embedded in a neighborhood of its vertices.
\end{lemma}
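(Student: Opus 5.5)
The plan is to show that near each vertex the local multiplicity of $\mathcal{D}$ is one of the embedded values. For a vertex off $C$ these values are $0$, $1/4$ or $3/4$; for a vertex on $C$ they are the configurations in Figure~\ref{fig:verticesonC}. Any immersed (multiply covered) corner contributes too much to the index formula \eqref{eq:indexformula2}. The argument is an index count using Lemma~\ref{lem:facts_V_1_V_Z2}, sorted by the location of the vertex.

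First, recall what happens at a corner $p$ of a domain connecting $\bm x$ to $\bm y$. The four quadrants around $p$ carry multiplicities $a,b,c,d$ with $a+c=b+d\pm1$, where the sign depends on whether $p\in\bm x$ or $p\in\bm y$. The domain is embedded near $p$ exactly when $\{a,b,c,d\}$ is $\{1,0,0,0\}$ or $\{1,1,1,0\}$, up to rotation. Any other non-negative solution has average multiplicity $n_p(\mathcal{D})\ge 5/4$, because the next smallest solutions are, for instance, $(2,1,1,1)$ and $(1,0,1,1)$-type patterns with a branch point.

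\textbf{Vertex off $C$.} Here $p$ comes paired with $\tau(p)$, which has the same local multiplicity. If $n_p\ge 5/4$, then $V_{\Z_2}(\mathcal{D})\ge 5/2$. Since $V_{\{1\}}$ and $e$ are non-negative, this forces $\mu_R>1$, which is a contradiction.

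\textbf{Vertex on $C$.} Here $p\in(\bm x\cup\bm y)\cap C$, and by Lemma~\ref{lem:facts_V_1_V_Z2}(3) it lies in only one of $\bm x,\bm y$. By condition~\ref{def:patternnice} of Definition~\ref{definition: nice diagram}, $\sigma(\bm\alpha,p)=+1$. The contribution of $p$ to $V_{\{1\}}$ is therefore $n_p-1/2$ if $p\in\bm x$, and $n_p+1/2$ if $p\in\bm y$. The $\tau$-symmetry of $\mathcal{D}$ forces the quadrants swapped by the reflection across $C$ to have equal multiplicities, which constrains the patterns further. A non-embedded pattern gives $n_p\ge 5/4$, so the contribution is at least $3/4$ for $p\in\bm x$ and at least $7/4$ for $p\in\bm y$. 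These bounds alone do not finish the argument. I then use Lemma~\ref{lem:facts_V_1_V_Z2}(4): every non-constant component of $\mathcal{D}$ contains an even number of vertices on $C$, so there is at least one further vertex on $C$. That vertex contributes at least $1/4$ if it lies in $\bm x$, and at least $3/4$ if it lies in $\bm y$.

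\textbf{The remaining case.} If $p\in\bm y$, the total already exceeds $2$. If $p\in\bm x$, I also need the parity and integrality of $\mu_R$ together with the corner-count relation coming from the boundary of $\mathcal{D}$. This relation says that $\alpha$-arcs and $\beta$-arcs alternate, so the number of $\bm x$-corners balances the number of $\bm y$-corners on each boundary component. It should force either another $\bm y$-vertex or additional off-$C$ corners, pushing $V_{\Z_2}+V_{\{1\}}+e$ above $2$.

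The main obstacle is exactly this $p\in\bm x\cap C$ case. A multiplicity-$5/4$ corner there costs only $3/4$, so a careful enumeration is needed, in the spirit of Table~\ref{tab:values_of_V1}, combining the lower bounds with the constraint that $\mu_R$ is an integer. In that enumeration I would also use Lemma~\ref{lem:bigon} and the fact that pairs of bigons carry Euler measure $1$. These rule out the borderline totals of exactly $2$ when $e=0$ with no other corners.
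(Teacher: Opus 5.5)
Your treatment of the vertices on $C$ does not close, and the difficulty is not where you locate it.

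First, the claim that the case $p\in\bm y$ ``already exceeds $2$'' is an arithmetic slip. A non-embedded $\bm y$-vertex on $C$ contributes $n_p+\tfrac12\ge\tfrac74$. The companion corner forced by corner balance can be a single embedded $\bm x$-vertex on $C$, contributing exactly $\tfrac14$. Then $V_{\Z_2}+V_{\{1\}}+e=2$, which is precisely $\mu_R=1$.

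Second, the case $p\in\bm x$ is no harder. Write $a,c$ for the multiplicities of the two quadrants at $p$ that contain rays of $C$, and $b$ for the common multiplicity of the two quadrants swapped by $\tau$. The nice pattern makes the $\bm x$-corner relation read $a+c-2b=-1$; this is why embedded $\bm x$-corners on $C$ have $n=\tfrac34$. So a non-embedded $\bm x$-vertex on $C$ has $b\ge 2$, hence $n_p\ge\tfrac74$, and it contributes at least $\tfrac54$, not $\tfrac34$.

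Carrying out your enumeration correctly leaves exactly three configurations, all with $e=0$:
\begin{enumerate}
\item contributions $\tfrac74+\tfrac14$ (an immersed $\bm y$-vertex and an embedded $\bm x$-vertex on $C$);
\item contributions $\tfrac54+\tfrac34$ (an immersed $\bm x$-vertex and an embedded $\bm y$-vertex on $C$);
\item contributions $\tfrac54+\tfrac14$ from two $\bm x$-vertices on $C$, together with a pair of off-$C$ $90^\circ$ $\bm y$-corners ($V_{\Z_2}=\tfrac12$).
\end{enumerate}
Each has balanced $\bm x$/$\bm y$ corners, an even number of vertices on $C$, and real index exactly $1$. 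So no integrality, parity or corner-balancing argument can exclude them. Likewise \Cref{lem:bigon} only tells you that, since $e=0$, there are no bigons; by itself it gives no contradiction.

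The missing ingredient is topological information about the source curve. The paper applies \Cref{prop:chi(S)}, $\chi(S)=m-n_{\bm x}(\mathcal D)-n_{\bm y}(\mathcal D)+e(\mathcal D)$. In each configuration, the shared points of $\bm x\cap\bm y$ give constant components of $S$, and discarding them yields $\chi(F)=-1$.

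In the first configuration, every boundary component of $F$ carries a nonzero even number of punctures. So $F$ is connected with a single boundary circle carrying the two corners. Because $e=0$, the rectangle decomposition of $\mathcal D$ pulls back to a rectangular tiling of $F$. Now count faces, edges and vertices: one corner vertex lies in a single tile, the other in three tiles, and there are $v_2=e_1-2$ further boundary vertices. The incidence relations give $f=e_2-v_4$, whereas $\chi(F)=-1$ gives $f=e_2-v_4-1$, a contradiction. The other two configurations are handled the same way.

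You need an argument of this kind, comparing the Euler characteristic forced by the index data with what the corner structure of $F$ permits. The index formula \eqref{eq:indexformula2} alone cannot rule out these cases.
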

\begin{proof}

Note that it is implicit from the calculation in Section \ref{subsub:Preliminary computations} that if $p\in \xv\Delta \yv$ is a point away from $C$, then $n_p(\cD) = n_{\tau(p)}(\cD) \le 1$, since $\cD$ has real index 1. Hence, $\cD$ must be embedded at vertices off the fixed point set.

To show that $\cD$ is embedded at vertices on $C$, we assume for contradiction that there is some $x$ or $y$ on $C$ which has multiplicity greater than one.  Under this assumption, $V_{\{1\}}$ must be strictly greater than 1, so $e(\mathcal{D})=0$. By considering \Cref{tab:values_of_VZ2} and \Cref{tab:values_of_V1}, we observe that there are only two possibilities: either $(V_{\{1\}},V_{\Z_2},e) = (2, 0, 0)$ or $(V_{\{1\}},V_{\Z_2},e) = (3/2,1/2,0)$. \Cref{fig:verticesonC} contains all possible ways that points $x, y\in (\xv \Delta \yv )\cap C$ can contribute to $V_{\{1\}}$ under our restriction on the triple Maslov index. For points $x \in \xv\cap C$ and $y \in \yv \cap C$, we have that $n_x(\cD) - \sigma(\bm \alpha, x)/2 \in \{1/4, 5/4\}$ and $n_y(\cD) + \sigma(\bm \alpha, y)/2 \in \{3/4, 7/4\}$, so there are only a few ways that $V_{\{1\}}$ can take values in $\{3/2, 2\}$. The possible cases are listed in Table \ref{tab:V1_3/2_or_2}.

\begin{table}[h!]
\begin{tabular}{c|c|c}
$n_x - \sigma(\bm \alpha, x)/2$ & $n_y + \sigma(\bm \alpha, y)/2$ & $V_{\{1\}}$         \\\hline
$1/4$              & $7/4$               & $2$           \\ 
$5/4$              & $3/4$               & $2$           \\ 
$5/4$              & $1/4$               & $3/2$ \\ 
\end{tabular}
\captionof{table}{All possible contributions to $V_{\{1\}}$ together with the associated values of $V_{\{1\}}$ when there is an immersion at a vertex on $C$.}
\label{tab:V1_3/2_or_2}
\end{table}

When $(V_{\{1\}},V_{\Z_2},e)=(7/4+1/4,0,0)$, it must be that $n_x=5/4$ and $n_y=3/4$. 
Only these two vertices can contribute to the vertex multiplicity, so $S$ must have $m-1$ trivial components. Therefore, we have that  \[\chi(F)=m-2-(m-1)=-1\] according to Proposition \ref{prop:chi(S)}. According to \cite[Condition (M3)]{Lipshitz2005ACR}, each component of $S$ must be open and each component of $\partial S$ must have an even (and nonzero) number of punctures. This means $F$ (the union of nontrivial components of $S$) is connected and has connected boundary (with two punctures on it).

Arguing as in~\cite[Theorem 3.3]{SarkarWang2010}, the decomposition of $\DCAL$ into a union of rectangles produces a rectangular tiling of $F$. Here, $\cD$ contains no bigons, as we have assumed that $e(\cD) = 0$ (see \Cref{lem:bigon}). Reusing the notation of \Cref{lem:bigon}, we assume that there are $f$ rectangles in $F$, $e_1$ edges on $\partial F$ and $e_2$ edges in $\mathrm{int}(F)$, and $v=v_1+v_2+v_3+v_4$ vertices; here, $v_i$ is the number of vertices that are shared by exactly $i$ rectangles in $F$. Since $n_y=5/4$, it must be that $v_1 = 1$ and since $n_x=3/4$, it must  be that $v_3=1$ (we emphasize that these counts occur in the source, $F$, not in $\cD$). Note that vertices of type $v_2$ and edges of type $e_1$ only appear on $\partial F$, from which it follows that $v_2=e_1-2$. 

Considering the Euler characteristic of $F$ as well as the fact that each rectangle has four edges and four vertices, we have the system of equations:
\begin{align*}
\begin{cases}
    f-(e_1 + e_2)+(1 + (e_1-2) + 1 + v_4)=-1 \\
    4f= e_1+2e_2\\
    4f=4v_4+3 + 2(e_1-2)+ 1\\
\end{cases}
\end{align*}
The first equation simplifies to \[f+v_4-e_2=-1.\]
The last equation is equivalent to \[2f=2v_4+e_1,\] which, together with second, tells us that \[f=e_2-v_4.\] This is a contradiction, as $f$ cannot be equal to both $e_2-v_4$ and $e_2-v_4-1$. We may therefore rule out the case that $(V_{\{1\}},V_{\Z_2},e)=(7/4+1/4,0,0)$. 

The cases $(V_{\{1\}},V_{\Z_2},e)=(5/4+3/4,0,0)$ and $(V_{\{1\}},V_{\Z_2},e)=(1/4+5/4,1/2,0)$ can be ruled out using exactly the same argument. Therefore, no vertex can appear with multiplicity greater than one.  This in turn implies that the map $F\to \Sigma$ must be embedded near each vertex. 

\end{proof}

\subsubsection{Domains without vertices on $C$}\label{subsub:Domains with no vertex on C}
In this subsection, we deal with real domains such that $V_{\{1\}}=0$, i.e., $(\xv\Delta\yv)\cap C=\emptyset$.

By assumption, we have no contribution to the real index from $V_{\{1\}}$. Using the calculation from Section \ref{subsub:Preliminary computations}, we know that only the three cases in Table~\ref{tab:V_Z2 possibilities when V_1=0} can occur for the real index:

\begin{table}[h!]
\begin{tabular}{c|c|c|c|c|c}
 \text{Case}   & $V_{\Z_2}$                       & $e(\mathcal{D})$ & $n_{\xv}+n_{\yv}$ & $\chi(S)$ & $\chi(F)$ \\ \hline
(1) & $\frac{1}{2}\cdot 2$        &  1             & 1                                                                     & $m$       & 2         \\
(2) & $\frac{1}{2} + \frac{3}{2}$ & 0                & 2                                                                     & $m-2$     & 0         \\
(3) & $\frac{1}{2}\cdot 4$        & 0                & 2                                                                     & $m-2$     & 2        
\end{tabular}
\captionof{table}{All possible contributions to and values of $\mu_R$ when  $V_{\{1\}}$ is zero.}
\label{tab:V_Z2 possibilities when V_1=0}
\end{table}

As we shall see, the restrictions above imply that the source curve $F$ is either a pair of disks interchanged by the involution, or an annulus.

\begin{lemma}\label{lem:domains off C}
In a nice real diagram, a real index 1 domain $\cD$ satisfying $V_{\{1\}}(\cD) = 0$ must take one of the following forms:
    \begin{enumerate}[label=(\alph*)]
        \item a pair of bigons with $90^\circ$ corners: $D_1 \sqcup D_2$ with $\tau(D_1) = - D_2$;
        \item a pair of rectangles with $90^\circ$ corners: $D_1 \sqcup D_2$ with $\tau(D_1) = - D_2$;
        \item a free-boundary reflection annulus $A$ with two $90^\circ$ corners and two $270^\circ$ corners;
        \item an antipodal annulus $A$ with two $90^\circ$ corners and two $270^\circ$ corners.
    \end{enumerate}
\end{lemma}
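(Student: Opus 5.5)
We go through the three rows of \Cref{tab:V_Z2 possibilities when V_1=0}. In each row we use \Cref{prop:chi(S)} to get $\chi(F)$, then the rectangular/bigon tiling of $F$ from the Sarkar--Wang argument to pin down the shape of $F$ and of its image. Since $V_{\{1\}}=0$, the domain has no vertices on $C$. All corners lie off $C$ and are swapped in pairs by $\tau$. By the lemma on immersion at vertices, every corner is a genuine $90^\circ$ or $270^\circ$ corner of multiplicity $1/4$ or $3/4$.

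\emph{Case (1).} Here $V_{\Z_2}=1$ and $e(\cD)=1$, so $\cD$ has four $90^\circ$ corners and contains a symmetric pair of bigons. We have $\chi(F)=2$. Each component of $F$ is an open surface with punctured boundary, so $\chi$ of a component is at most $1$. Hence $F$ is two disks, swapped by the induced involution $\sigma$, since $F$ must be $\sigma$-invariant. Each disk has two corners and Euler measure $1/2$, so it is a bigon. Niceness forbids interior bigons other than elementary ones, so the Sarkar--Wang argument shows each disk is embedded and is a single bigon region. This gives form (a).

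\emph{Case (3).} Here $e=0$ and there are eight $90^\circ$ corners, and again $\chi(F)=2$. So $F$ is two $\sigma$-swapped disks, each with four $90^\circ$ corners and Euler measure $0$. By \Cref{lem:bigon}, no bigons appear in the tiling. The standard argument that an index-one rectangle in a nice diagram is an embedded rectangle then gives form (b).

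\emph{Case (2).} This is the main work. Here $V_{\Z_2}=2$ with two $90^\circ$ and two $270^\circ$ corners, $e=0$, and $\chi(F)=0$. The first step is to rule out $F$ being disconnected, i.e.\ a disk together with a surface of Euler characteristic $-1$, or two disks plus other pieces. For this we use the requirement that every component of $\partial F$ carries an even nonzero number of punctures, together with the invariance of $F$ under $\sigma$. A $\sigma$-swapped pair of components would need equal $\chi$, which is incompatible with $\chi(F)=0$ and four punctures unless both are annuli with two punctures each, giving eight punctures in total. That contradicts the count of four. So $F$ is connected with $\chi=0$, hence an annulus, and $\sigma$ is an orientation-reversing involution on it.

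Next we classify $\sigma$ using Dugger's classification quoted earlier. It cannot be a $\{1\}$-reflection annulus, because that would force fixed arcs hitting the boundary and hence corners on $C$. So $\sigma$ is a free-boundary reflection or the antipodal map. In both cases $\sigma$ must swap the two corners on each boundary circle compatibly with the corner angles, giving forms (c) and (d).

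The step I expect to be the main obstacle is showing the annulus has exactly the stated corner pattern and maps as claimed. Concretely, the two $270^\circ$ corners must lie on one boundary circle and the two $90^\circ$ corners on the other, exactly as in the classical annular index-one domains. For this I would run the $v_i$/$e_i$ counting of \Cref{lem:bigon} on the rectangular tiling of the annulus, where $v_1=2$ and $v_3=2$. I would combine this with the requirement that each boundary circle has an even number of punctures, to force the distribution of corners. Finally I would invoke the classical argument (Sarkar--Wang, Lipshitz) that such an annular tiling is determined by a cut along an $\alpha$- or $\beta$-arc.
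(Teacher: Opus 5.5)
Your overall skeleton is the paper's: you read $\chi(F)$ off the rows of the table, apply Sarkar--Wang in the split cases, and exclude the $\{1\}$-reflection annulus because its fixed arcs would meet $\partial F$. However, the step you flag as the main obstacle is false, and it is not needed.

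Once $\sigma$ is a free-boundary reflection or the antipodal map, it \emph{interchanges} the two boundary circles of $F$; it does not swap the corners within a circle. It also preserves corner angles, since the multiplicities of $\cD$ are $\tau$-symmetric. There are exactly two $90^\circ$ and two $270^\circ$ corners, and two punctures on each circle. Hence each boundary circle carries one $90^\circ$ and one $270^\circ$ corner; these are the annuli ``with two bigon boundaries'' of the later tiling analysis. No counting argument will put both $270^\circ$ corners on one circle. Nothing of this kind is required anyway: the lemma asserts only the numbers and angles of corners, and these are already given by row (2) ($V_{\Z_2}=\tfrac12+\tfrac32$, $e=0$). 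Similarly, in row (1) Sarkar--Wang gives an embedded bigon, not necessarily a single elementary region, and the statement needs no more.

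The actual gap is in the component analysis. One fact should drive all three rows, but you use it only to exclude the $\{1\}$-reflection annulus. Because $(\xv\Delta\yv)\cap C=\emptyset$, $\sigma$ has no fixed points on $\partial F$:
\begin{itemize}
\item fixed points of $\underline{\tau}$ have $s=1/2$, while $\partial F$ maps to $s\in\{0,1\}$;
\item a $\sigma$-fixed puncture would be a corner on $C$.
\end{itemize}
An orientation-reversing involution of a circle has two fixed points, so no boundary circle is $\sigma$-invariant, and hence no component with a single boundary circle is $\sigma$-invariant.

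This fact, not the $\sigma$-invariance of $F$, is what makes the two disks in rows (1) and (3) swapped. It is also what rules out, in row (2), a $\sigma$-invariant disk together with a $\sigma$-invariant once-punctured torus. You list that configuration, but your swapped-pair argument does not address it.

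In row (1) you also need that each boundary circle carries an even, nonzero number of punctures, to bound the number of components. In row (3), $\chi(F)=2$ with eight punctures still allows a swapped pair of two-cornered disks plus a $\sigma$-invariant annulus carrying four corners. Exclude this using $e(\cD)=0$: by \Cref{lem:bigon}, each two-cornered disk contains a bigon tile and so contributes at least $\tfrac12$ to the Euler measure.

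The paper sidesteps this enumeration. If $\cD=\cD_1-\tau(\cD_1)$, it applies Sarkar--Wang to each half. Otherwise, splitting $F$ into $\sigma$-invariant pieces would give positive real domains each of positive real index, contradicting $\mu_R=1$. So $F$ is connected, $\chi(F)\neq 2$ leaves only row (2), and the boundary-pairing argument above yields the two annulus types.
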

Examples of such real domains are shown in \Cref{fig:domains_without_vertex_on_C}. Here we are using the terminology introduced in~\Cref{subsec:simpledecomps}.
\begin{figure}[h]
\def\svgwidth{.8\linewidth}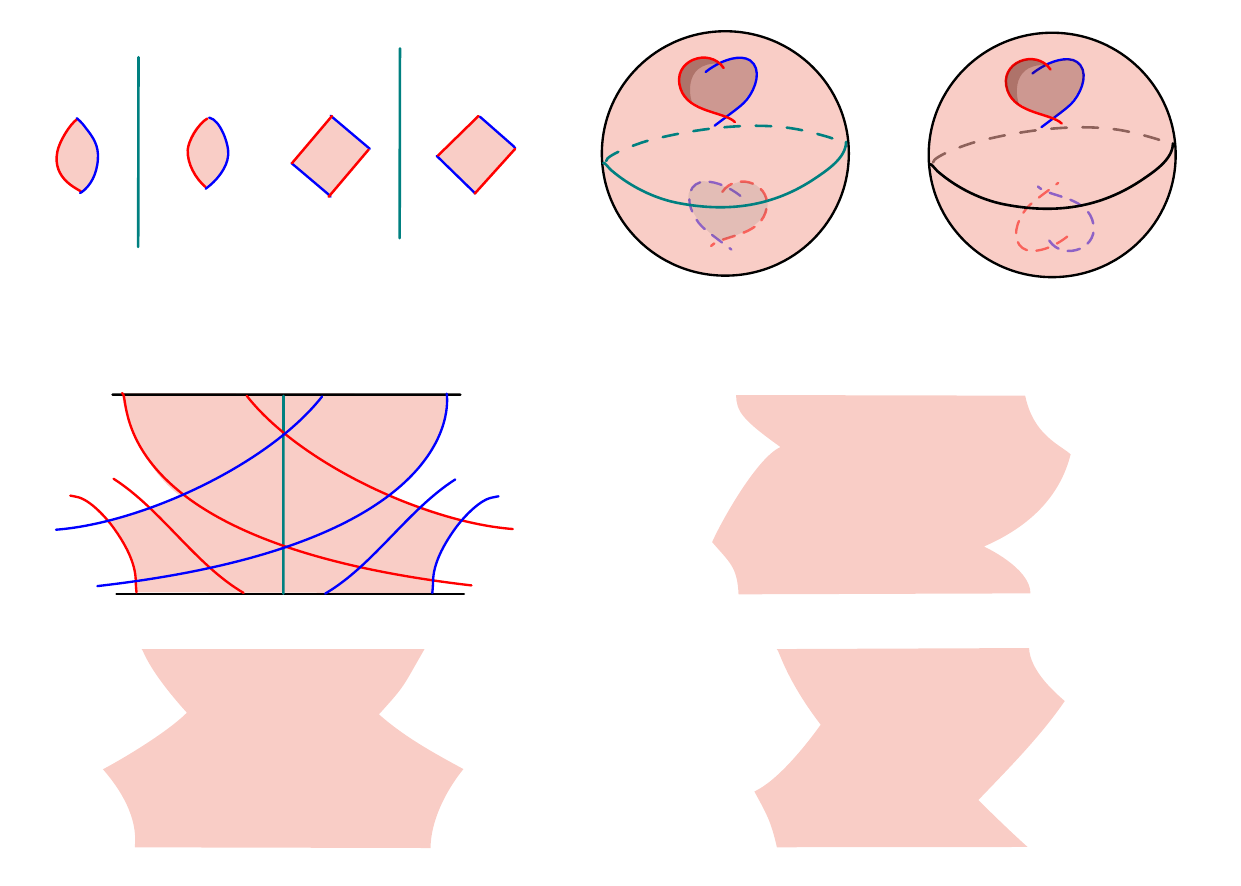
\caption{Symmetric domains which may appear in a nice diagram with no vertices on $C$. The figure also depicts tilings of the annular domains into rectangles. The left pictures in the second and third row are for case $(c)$, while the right ones are for $(d)$. In the lower rows, the top and bottom edges are identified.}
    \label{fig:domains_without_vertex_on_C}
\end{figure}
\begin{proof}
There are two cases: either $\cD$ can be written as a sum $\cD_1 + \cD_2$, where ${\cD_2 = -\tau(\cD_1)}$, or $\cD$ is connected and sent to itself by the involution. 

First, suppose that $\cD$ is a real index 1 domain which can be written as a sum $\cD_1 + \cD_2$, where $\cD_2 = -\tau(\cD_1)$. The contributions to $V_{\Z_2}$ and $e$ from  $\cD_1$ and $\cD_2$ must be equal, so both $\cD_1$ and $\cD_2$ must be a domain of usual index $1$. Then~\cite[Theorem 3.3]{SarkarWang2010} tells us that they are a pair of embedded bigons or rectangles with $90^\circ$ corners. (Individually, $\cD_1$ and $\cD_2$ are embedded, but their image in $\Sigma$ may intersect.) In \Cref{tab:V_Z2 possibilities when V_1=0}, case (1) corresponds to a pair of bigons and case (3) corresponds to a pair of rectangles.

If $\cD$ cannot be written as such a sum, then $F$ is be connected. Otherwise, we could decompose $F=F_1\sqcup F_2$, and the images of $F_1$ and $F_2$ would each individually be positive real domains in $\Sigma$. However, in a nice diagram, any positive real domain has positive real index, implying that $\mu_R(\phi|_{F_1 \sqcup F_2}) > 1$, contradicting our assumption that $\mu_R(\cD)=1$. This rules out cases (1) and (3) from \Cref{tab:V_Z2 possibilities when V_1=0}, since each component of $F$ has non-empty boundary, so that $\chi(F)\neq 2$. 

Since $\cD$ has no vertex on $C$, the boundary components of $F$ have no fixed points. Since the only orientation-reversing involution on $S^1$ is the reflection, we know that components of $\partial F$ appear in pairs. Hence, in case (2), $F$ must be an annulus. 

There are precisely two types of annuli contribute to this case: the antipodal annuli and free-boundary reflection annuli. See Figure \ref{fig:domains_without_vertex_on_C} for examples of decompositions of these annuli into rectangles. It is easy to check that the condition that $\mu_R(\cD) = 1$ forces the angles at the corners to match the description in the statement of the lemma.
\end{proof}

\begin{remark}
In the case that the fixed point set $C$ contains no closed components, the preceding analysis is sufficient to understand the Floer homology combinatorially. This applies, for example, in the case of computing the sutured Floer homology of complements of strongly invertible knots in $S^3$. To see this, observe that for each arc component of the fixed set, we can perform a $\{1\}$-stabilization together with an appropriate sequence of handleslides to ensure that each arc contains exactly one intersection point. See \Cref{fig:remove_C_intersections}. Since each component of $C$ is an arc, it is straightforward to see that no domain can contain regions adjacent to $C$, and in particular, only those domains discussed in this subsection can appear. 
\end{remark}

\subsubsection{Domains with nontrivial $V_{\{1\}}$}\label{subsection:A list of possible $F$:naive version}

In following subsections, we investigate domains with $V_{\{1\}}\ne0$. First, we recall a few useful facts: 
\begin{enumerate}
    \item A pair of vertices not on $C$ contributes $1/2$ or $3/2$ to $V_{\Z_2}(\cD)$. A full list of cases has been given in \Cref{tab:values_of_VZ2}.
    \item Condition~\ref{def:patternnice} in Definition \ref{definition: nice diagram} implies that $x\in (\xv\cap C)\setminus\yv$ can only contribute $1/4$ to $V_{\{1\}}(\cD)$ and that $y\in (\yv\cap C)\setminus\xv$ can only contribute $3/4$ to $V_{\{1\}}(\cD)$. All possible values of $V_{\{1\}}(\cD)$ for a real index $1$ domain are listed in \Cref{tab:values_of_V1}.
    \item Necessarily $\vert\xv\setminus (\xv\cap\yv)\vert=\vert\yv\setminus (\xv\cap\yv)\vert$.
\end{enumerate}
      
We have at most eight vertices on our domain, so facts (2) and (3) above rule out the cases with more than four $1/4$ contributions to $V_{\{1\}}$. The cases $V_{\{1\}}=3/2=3/4\times 2$ and $V_{\{1\}}=1=1/4\times 4$ are excluded similarly.

We introduce the following notation to help list the remaining cases in Table~\ref{tab:domain restrictions}. Given a domain $\cD$, let $C_{1/4}(\cD)$ denote the set of vertices of $\cD$ which contribute $1/4$ to the quantity $V_{\{1\}}$, and $C_{3/4}(\cD)$ denote the set of vertices of $\cD$ which contribute $3/4$ to the quantity $V_{\{1\}}$. We also let $C_{1/4,1/4}(\cD)$ and $C_{3/4,3/4}(\cD)$ denote the set of pairs of vertices of $\cD$ that contribute two $1/4$'s or two $3/4$'s to the quantity $V_{\Z_2}$, respectively.

\begin{table}
\begin{tabular}{|l|c|c|c|c|c|c|c|c|}
\hline
   \text{Case} & \multicolumn{1}{l|}{$|C_{1/4}(\cD)|$} & \multicolumn{1}{l|}{$|C_{3/4}(\cD)|$} & \multicolumn{1}{l|}{$|C_{1/4,1/4}(\cD)|$} & \multicolumn{1}{l|}{$|C_{3/4,3/4}(\cD)|$} & \multicolumn{1}{l|}{$e(\cD)$} & \multicolumn{1}{l|}{$n_{\bm x}+n_{\bm y}$} & \multicolumn{1}{l|}{$\chi(S)$} & \multicolumn{1}{l|}{$\chi(F)$} \\ \hline
(1) & 0                              & 2                              & 1                                  & 0                                  & 0                             & 1                                          & $m-1$                          & 1                              \\ \hline
(2) & 1                              & 1                              & 0                                  & 0                                  & 1                             & 1                                          & $m-1$                          & 1                              \\ \hline
(3) & 1                              & 1                              & 2                                  & 0                                  & 0                             & 2                                          & $m-2$                          & 1                              \\ \hline
(4) & 2                              & 0                              & 3                                  & 0                                  & 0                             & 3                                          & $m-4$                          & 1                              \\ \hline
(5) & 2                              & 0                              & 1                                  & 0                                  & 1                             & 2                                          & $m-2$                          & 1                              \\ \hline
(6) & 2                              & 2                              & 0                                  & 0                                  & 0                         & 2                                          & $m-2$                          & 0                              \\ \hline
(7) & 3                              & 1                              & 1                                  & 0                                  & 0                             & 3                                          & $m-3$                          & 0                              \\ \hline
(8) & 4                              & 0                              & 2                                  & 0                                  & 0                             & 4                                          & $m-4$                          & 0                              \\ \hline
(9) & 2                              & 0                              & 0                                  & 1                                  & 0                             & 3                                          & $m-3$                          & $-1$                           \\ \hline
\end{tabular}
\captionof{table}{All possible contributions such that $\mu_R=1$ when $V_{\{1\}}\ne 0$.}
\label{tab:domain restrictions}
\end{table}

Note that the unique orientation-reversing involution on $S^1$, given by the reflection, has two fixed points. This implies that if $\DCAL$ has $2n$ vertices on $C$, then the corresponding surface $F$ should have at least $n$ distinct boundary components. It follows that for the cases listed in Table~\ref{tab:domain restrictions}, if $\chi(F)=1$ then $F$ is a disk, if $\chi(F)=0$ then $F$ is an annulus, and if $\chi(F)=-1$ then $F$ is a punctured torus.

\subsubsection{Disk Domains}\label{subsub:disk domains}

Consider the domains described in each of the first five rows of Table~\ref{tab:domain restrictions}; these domains have $\chi(F) = 1$, so their sources must be disks. 

\begin{figure}[h!]
\def\svgwidth{.8\linewidth}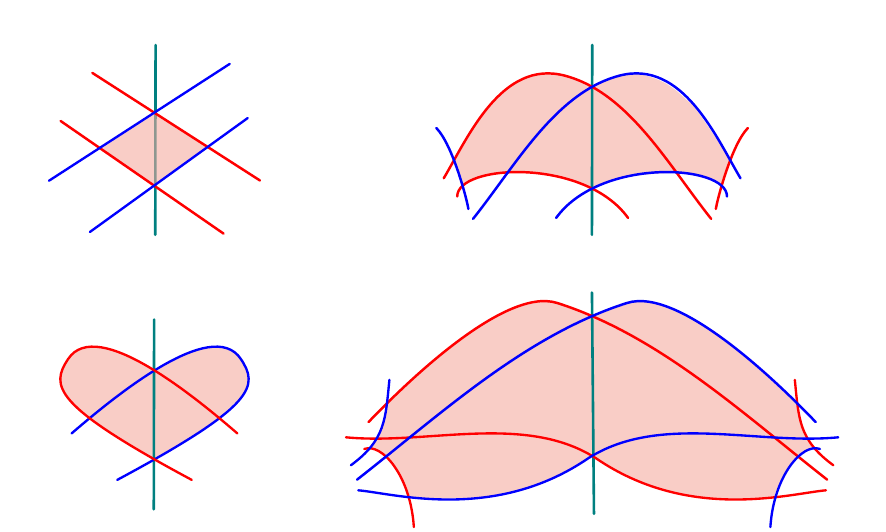
\caption{Symmetric disk domains which may appear in a nice diagram with at least two vertices on $C$.}
    \label{fig:disk_domains}
\end{figure}

\begin{figure}[h]
\def\svgwidth{.4\linewidth}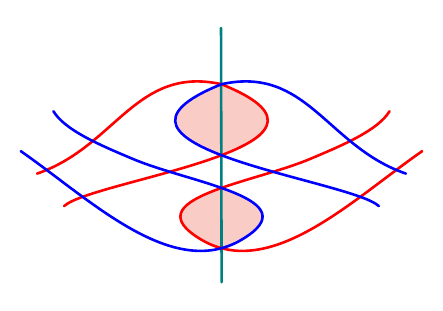
\caption{A symmetric disk domain which cannot appear in a nice diagram.}
    \label{fig:ruled_out_disk}
\end{figure}

These five cases correspond to the following domains:
\begin{enumerate}
    \item $F$ is a disk with a rectangular boundary. Its boundary has two $90^\circ$ corners interchanged by the reflection and two $90^\circ$ corners on the preimage of $C$ in $F$. See Case (1) in Figure~\ref{fig:disk_domains}.
    
    \item $F$ is a disk with a bigon boundary. Its boundary has one $90^\circ$ corner and one $270^\circ$ corner both in the preimage of $C$ in $F$. Two bigons appear in the decomposition of $\DCAL$ (counting with multiplicities), thus there are exactly two bigons appear in the tiling of $F$. See Case (2) in Figure~\ref{fig:disk_domains} for an example.
    
    \item $F$ is a disk with hexagonal boundary. The boundary has two pairs of $90^\circ$ corners interchanged by the reflection and one $270^\circ$ corner and one $90^\circ$ corners on the preimage of $C$ in $F$. Such a disk has a unique rectangular tiling up to refinement, as we shall see in~\Cref{lem:classify-polygon-tilings}. The simple and minimal tiling is shown in Case (3) of Figure~\ref{fig:disk_domains}.

    \item $F$ is a disk with octagonal boundary. The boundary has three pairs of $90^\circ$ corners interchanged by the reflection and two $270^\circ$ corners on the preimage of $C$ in $F$. Such a disk has a unique rectangular tiling up to refinement, as we shall see in~\Cref{lem:classify-polygon-tilings}. The simple and minimal tiling is shown in case (4) of Figure~\ref{fig:disk_domains}.

    \item $F$ is a disk with a rectangular boundary. Its boundary has two $90^\circ$ corners interchanged by the reflection and two $270^\circ$ corners on the preimage of $C$ in $F$. Part~\ref{def:patternnice} of Definition~\ref{definition: nice diagram} forces $F$ to be as shown in Figure~\ref{fig:ruled_out_disk}. Consider how the open ends of the $\alpha$ and $\beta$-curves in $F$ can exit or close up. Let $a$ be the $\alpha$-arc intersection point on $C$ and let $a'$ be the arc forming the upper most intersection point on $C$. The ends of these arcs exit $\cD$ through either of the two $\beta$-arcs contained in the boundary of $\cD$. If $a$ and $a'$ exit the $\beta$-arcs $\t(a)$ and $\t(a')$, respectively, four bigons (in two pairs) are created, contributing $1$ to the real index, forcing $V_{\{1\}} = V_{\Z/2} = 0$ which is impossible. Finally, if $a$ and $a'$ exit the $\beta$-arcs $\t(a')$ and $\t(a)$ respectively, two bigons are created on $C$, which once again contradicts Condition~\ref{def:onlyrectangleonC} of Definition~\ref{definition: nice diagram}. See \Cref{fig:ruled_out_disk} for an example of the final case. Therefore, these real domains don't appear in nice real diagrams.

\end{enumerate}

\subsubsection{Annular Domains}\label{subsub:Annulus domain}
Now, we consider rows 6,7 and 8 of Table~\ref{tab:domain restrictions}, where $F$ is an annulus.

\begin{figure}[h]
\def\svgwidth{.8\linewidth}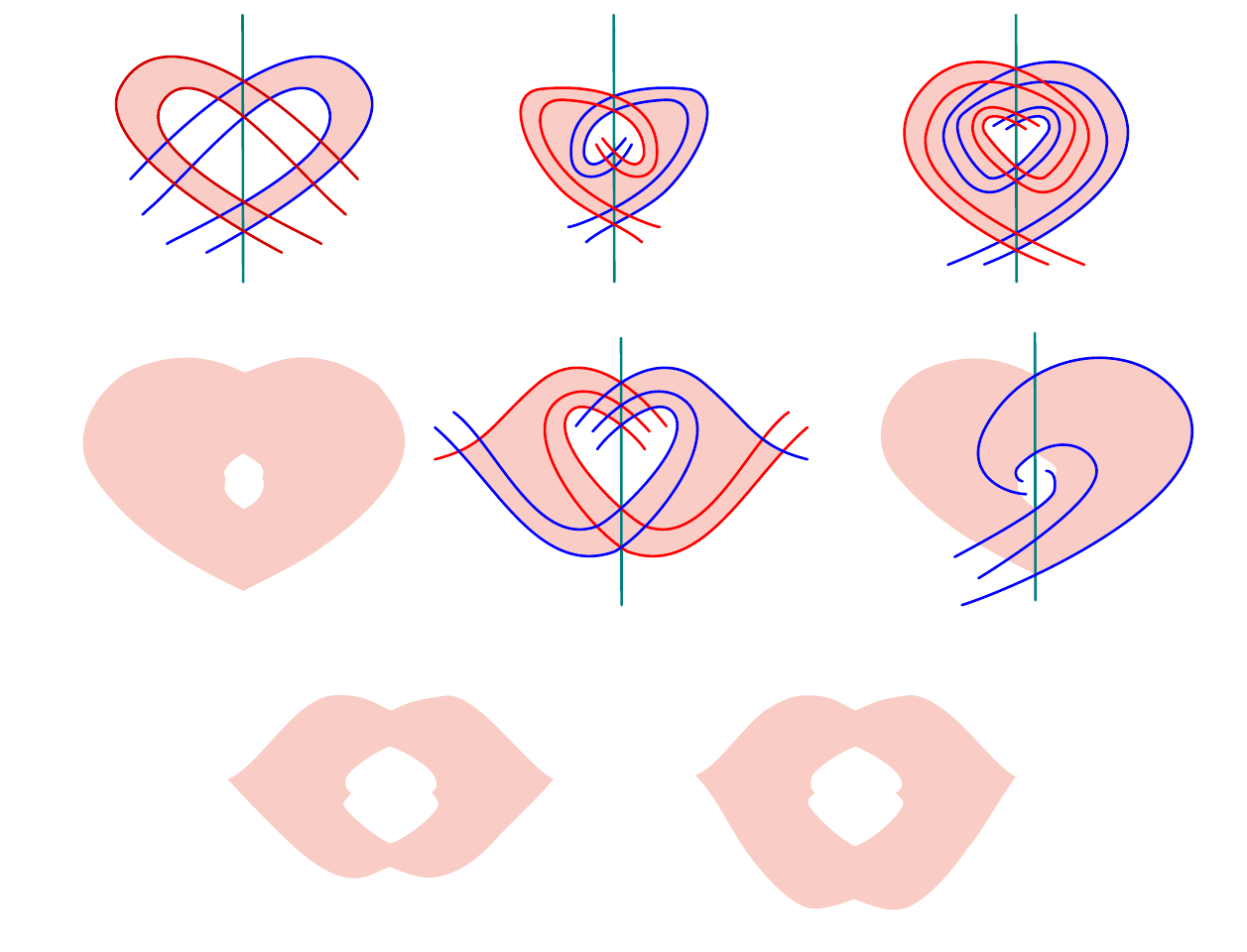
\caption{Symmetric annular domains which may appear in a nice diagram with at least two vertices on $C$.}
    \label{fig:annular_domains}
\end{figure}

\begin{enumerate}
\item[(6)] $F$ has two bigon boundaries. Each boundary component has one $90^\circ$ corner and one $270^\circ$ corner both on the preimage of $C$ in $F$. Condition~\ref{def:onlyrectangleonC} and \ref{def:patternnice} in Definition~\ref{definition: nice diagram} implies that $F$ takes one of the two forms shown in left and middle frame in the first row of Figure~\ref{fig:annular_domains}. The tiling structure on $F$ is only unique up to winding and refinement, see Figure~\ref{fig:annular_domains} for an example. We shall see in Section~\ref{subsection:Immersion v.s. Embedding} that only the tiling with zero winding number may be immersed; for all other cases, the map $F\to \Sigma$ is an embedding. 

\item[(7)] $F$ has boundary components a bigon and a rectangle. In total, we have one $90^\circ$ corner and three $270^\circ$ (interior) corners along the preimage of $C$ and one pair of $270^\circ$ (interior) corners interchanged by the involution. There are two tiling structures on $F$ modulo winding and refinement, as shown in Figure~\ref{fig:annular_domains}. Such an annulus must be embedded if the tiling structure has nonzero winding. See Section~\ref{subsection:Immersion v.s. Embedding} for details. 
\item[(8)] $F$ may have two rectangle boundaries or one bigon boundary and one hexagon boundary. In the former case, the two boundary components each have a pair of $90^\circ$ corners interchanged by the reflection and a pair of $270^\circ$ corners on the preimage of $C$ in $F$. The remark on tilings from items above works in this case as well. Examples are shown in the bottom row of Figure~\ref{fig:annular_domains}. The later case can be ruled out since on the bigon boundary we must have some component of $\bm{x}$ that lie on $C$, but such a component contributes $3/4$ to $V_{\{1\}}$, contradicting to our assumption. 
\end{enumerate}

\subsubsection{Toroidal Domains}\label{subsub:Toroidal Domains}
Finally, we consider the last row of \Cref{tab:domain restrictions}.
\begin{figure}[h]
\def\svgwidth{.8\linewidth}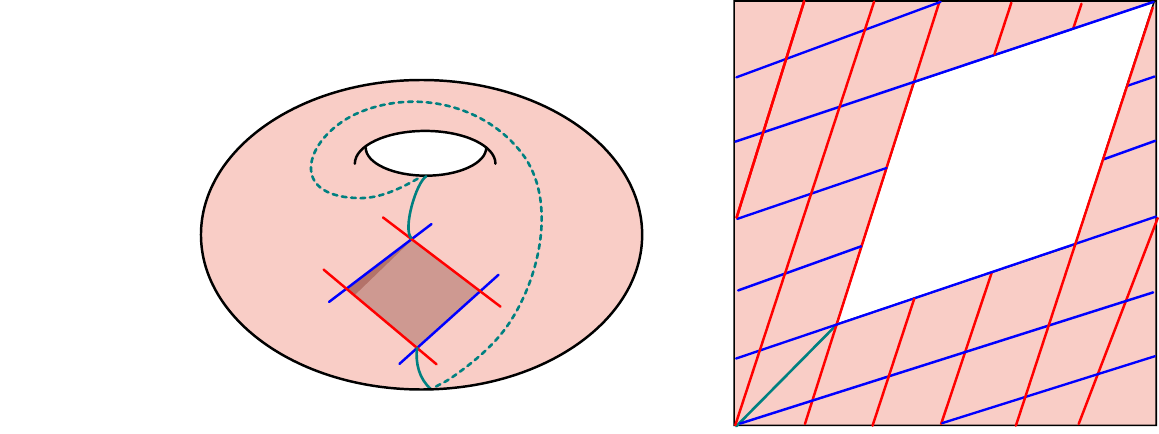
\caption{A symmetric toroidal domain which may appear in nice diagrams with at least two vertices on $C$.}
    \label{fig:toroidal_real_domain}
\end{figure}
\begin{enumerate}
    \item[(9)] $F$ is a punctured torus with rectangular boundary with four $270^\circ$ corners. 
    See \Cref{fig:toroidal_real_domain} for an example of the source and a tiling structure.
\end{enumerate}

\begin{remark}
    We note that while a pair of pants $P$ does have $\chi(P) = -1$, it cannot be the source of any index 1 curve in a nice diagram. According to~\Cref{tab:domain restrictions}, a domain in this case must have four corners. The only way $P$ can be decorated with four corners is by placing two points on one boundary component (where is therefore mapped to itself) and one on each of the other boundary components, which are therefore swapped. These two corners would have multiplicity $1/2$, which is not possible in a nice diagram.
\end{remark}

\begin{proof}[Proof of \Cref{prop:a list of all possible real domains}]
    This follows immediately from the analyses in Sections ~\ref{subsub:disk domains}, \ref{subsub:Annulus domain}, and \ref{subsub:Toroidal Domains}.
\end{proof}

In the real setting (and in contrast to the usual one), it is possible that $u_{\Sigma}:F\to \Sigma$ fails to be an embedding, i.e. we may witness domains which are merely immersed in $\Sigma$, rather than embedded. This issue is taken up in the next subsection.

\subsection{Immersions and embeddings}\label{subsection:Immersion v.s. Embedding}
In this subsection, we explicate the ways in which regions in $\Sigma$ can be multiply covered by $u_{\Sigma}: S\to \Sigma$. Our strategy follows that of \cite[Theorem 3.3]{SarkarWang2010}. 

We first summarize our results and then give a classification of tiling structures on all real domains from \Cref{prop:a list of all possible real domains}. Finally, we make use of the tiling structure to provide a careful analysis of the types of immersions which may appear in a real nice Heegaard diagram.

\subsubsection{A summary of results}\label{subsub: A summary of results for immersions}
For disconnected domains, the source $S$ is always a pair of disks $S_1\sqcup S_2$. The tiling structure is unique up to refinement. Each $S_i$ must be embedded as can be seen from \cite[Theorem~3.1]{SarkarWang2010}, but the image of $S_1$ and $S_2$ may intersect each other along rectangles. 

For disk domains, the tiling structure is again unique up to refinement. The rectangles from row (1) of Table~\ref{tab:domain restrictions} must be embedded, which also follows from~\cite[Theorem~3.1]{SarkarWang2010}. For bigons, hexagons and octagons, immersions may occur in general. Fortunately, the regions of multiplicity bigger than one must take a standard form.

For annular domains, the situation is more complicated. When an annulus has two bigon boundaries, the tiling structure is unique up to winding and refinement. This covers the annuli from Section~\ref{subsub:Domains with no vertex on C} and one from \Cref{tab:domain restrictions}, namely (6). For the remaining annuli from \Cref{tab:domain restrictions}, namely (7) and (8), there are two tiling structures up at most one tiling structure on $S$ that admits an immersion up to refinement. Those with non-simple tilings must be embedded. 

For the toroidal domain, the tiling structure is unique up to winding and refinement. In this case, each tiling structure admits an immersion. Finally, we remark that for annuli and tori, the higher multiplicity regions also take a standard form. This will be useful when we prove the decomposition formula in \Cref{sec:Applications of nice diagrams}.

\subsubsection{Classification of tiling structures}\label{subsub:Classification of tiling structures}

\begin{lemma}\label{lem:rectangle-refine-pushoffs}
    If $T = \{\bm \alpha^a, \bm \beta^a\}$ is a rectangular tiling of $S$, then any rectangular refinement $T'$ is given by 
    \begin{align*}
        T' = P_{\bm a}P_{\bm b}(T)
    \end{align*}
    for some multi-sets $\bm a$ and $\bm b$ consisting of arcs in $\bm \alpha^a$ and $\bm\beta^a$. 
\end{lemma}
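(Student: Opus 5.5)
The plan is to show that every arc of $T'$ not already in $T$ is isotopic, rel the tiling $T$, to a push-off of an arc of $T$ of the same colour. Fix a rectangular refinement $T'=\{\bm\alpha^{a'},\bm\beta^{a'}\}$ of $T$, and let $a\in\bm\alpha^{a'}\setminus\bm\alpha^a$ be a new $\alpha$-arc. Since $a$ is disjoint from all old $\alpha$-arcs, it meets each rectangle $R$ of $T$ in a union of sub-arcs. Each sub-arc runs between the two $\beta$-sides of $R$, or between a $\beta$-side and an $\alpha$-side or corner, or returns to the same side.

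The first step is to rule out everything except ``parallel'' crossings. A sub-arc joining a side to itself cuts off a bigon-like region inside $R$. That region would have to be tiled by rectangles of $T'$, and this is impossible by the same Euler-characteristic count as in \Cref{lem:bigon}. A sub-arc ending on an $\alpha$-side is excluded because $\alpha$-arcs are pairwise disjoint. A sub-arc ending on a boundary $\alpha$-arc is excluded because properly embedded $\alpha$-arcs end on $\beta$-arcs of $\partial S$, with the corners of the original rectangle being the only allowed endpoints. Hence every sub-arc of $a$ in $R$ runs from one $\beta$-side of $R$ to the opposite $\beta$-side, and so it is isotopic within $R$, rel the sides, to a parallel copy of the $\alpha$-sides of $R$.

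The second step globalizes this. Starting from an endpoint of $a$ on a boundary $\beta$-arc, we follow $a$ through the sequence of rectangles of $T$ it crosses. Because each crossing is parallel to the $\alpha$-sides, the chain of rectangles visited is exactly the chain of rectangles adjacent along a single $\alpha$-arc $\alpha_0\in\overline{\bm\alpha^a}$, on one side of it. So $a$ is isotopic to a push-off of $\alpha_0$. We do the same for all new $\beta$-arcs. Distinct new arcs parallel to the same $\alpha_0$ are disjoint, so they are nested push-offs, which accounts for the multiplicity in the multiset $\bm a$. Therefore $T'=P_{\bm a}P_{\bm b}(T)$ up to isotopy. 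In the real setting, $\bm\beta^{a'}=\tau(\bm\alpha^{a'})$ forces $\bm b=\tau(\bm a)$, which is consistent with the definition of $P$. Since $P_{\bm a}$ already adds the $\tau$-images, we should take the two multisets compatibly, or simply note that $P_{\bm a}P_{\bm b}$ covers both.

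The main obstacle is the case where $\alpha_0$ lies on $\partial S$, together with the corners. At a corner of $S$, a new arc could a priori cross into a rectangle through a vertex. We handle this by noting that refinement requires new arcs to be transverse to old ones away from the vertices. We also need the ``no self-return'' argument to be a genuine tiling contradiction, which means checking that the region cut off is tiled only by $T'$-rectangles with exactly two convex corners. I would try to apply \Cref{lem:bigon} directly to that region.
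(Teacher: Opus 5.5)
Your argument is correct. It is the routine verification that the paper compresses into ``This is clear''. Inside each tile of $T$, \Cref{lem:bigon} forces every new sub-arc to run between opposite sides. Following a new arc through the tiles it crosses then exhibits it as a parallel copy of an old arc.

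Four points should be tightened.

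\textbf{Boundary arcs.} As you implicitly do by working with $\overline{\bm\alpha^a}$, the multisets $\bm a,\bm b$ must be allowed to contain arcs of $\partial S$. Otherwise even the trivial tiling of a rectangle has rectangular refinements not of the stated form. Also, at a $270^\circ$ corner, the ``single arc'' $\alpha_0$ along which a new arc runs may be a boundary arc concatenated with its interior completion.

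\textbf{Joint position of new arcs.} Showing that each new arc is \emph{individually} isotopic to a push-off does not yet identify $T'$ with $P_{\bm a}P_{\bm b}(T)$ as a tiling. You also need that, inside each tile of $T$, every new $\alpha$-sub-arc meets every new $\beta$-sub-arc exactly once. This follows from the same tool. If they met more than once, an innermost disk cut out by one $\alpha$-segment and one $\beta$-segment would be a union of rectangles of $T'$ with exactly two convex corners, contradicting \Cref{lem:bigon}. With single crossings, the configuration in each tile is the standard grid.

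\textbf{Vertices of $T$.} New arcs avoid the vertices of $T$, but not because of transversality. Every such vertex lies on an arc of $\overline{\bm\alpha^a}$ and on an arc of $\overline{\bm\beta^a}$, so a new arc through it would meet an arc of its own colour.

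\textbf{The real case.} Since $P_{\bm a}$ already adjoins $\tau(\bm a')$, a real refinement is $P_{\bm a}(T)$, i.e.\ take $\bm b=\emptyset$. Taking $\bm b=\tau(\bm a)$, as you suggest, would add every $\beta$-push-off twice.
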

\begin{proof}
    This is clear.
\end{proof}

\begin{lemma}\label{lem:classify-polygon-tilings}
    Suppose that $S$ is a symmetric disk domain which is a $2n$-gon for $n \in \{1,2,3,4\}$ as in Cases (1)-(4) of Section~\ref{subsub:disk domains}. 
    \begin{enumerate}
        \item For a symmetric bigon $B$, every tiling is obtained from a minimal one, $T_B$, as $P_{a,b}^k(T_B)$, where $a$ and $b$ are the boundary arcs of $B$;
        \item For a symmetric $2n$-gon $D$ for $n \in \{2,3,4\}$, every rectangular tiling is obtained from a minimal one $T_P$ as $P_{\bm a, \bm b}(T_P)$.
    \end{enumerate}
\end{lemma}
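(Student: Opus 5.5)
Every tiling of a disk with the prescribed corners is built from two kinds of arcs: arcs parallel to a boundary edge, and arcs forced by the corners. I will classify the arcs first and then reassemble the tiling.

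Let $T=\{\bm\alpha^a,\bm\beta^a\}$ be a tiling of the $2n$-gon $S$. Each arc in $\bm\alpha^a$ is properly embedded, with endpoints on the $\beta$-edges of $\partial S$. Since the arcs in $\bm\alpha^a$ are pairwise disjoint, and every complementary region is a bigon or rectangle, each $\alpha$-arc separates $S$ into two pieces. Each piece must itself admit a tiling, so each piece is a polygon with alternating $\alpha$/$\beta$ edges whose convex and concave corners are consistent with the tiling. I will use the Euler-characteristic bookkeeping of \Cref{lem:bigon}, namely $4f=e_1+2e_2$ together with vertex counts by valence, applied to each piece. The count forces the following: a piece cut off by an $\alpha$-arc either is a rectangle (or bigon) strip, in which case the arc is parallel to a boundary $\alpha$-edge, or it contains one of the $270^\circ$ corners. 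The $\beta$-arcs behave in the same way, and the involution $\tau$ exchanges the two families, so it suffices to understand the $\alpha$-arcs.

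Bigon case. A bigon $B$ has one $90^\circ$ and one $270^\circ$ corner on $C$. At the $270^\circ$ corner the boundary $\alpha$- and $\beta$-edges must each continue into the interior as an arc; this is what makes the corner locally rectangular in the source. These two continuations, one arc and its $\tau$-image, give the minimal tiling $T_B$, which consists of two bigons and rectangles. This matches \Cref{lem:bigon} and the count of two bigons noted in Case (2). By the classification above, any other $\alpha$-arc cuts off a strip and is therefore parallel to $a$, and likewise every other $\beta$-arc is parallel to $b$. Because the tiling is real, the number of parallel copies is the same $k$ for both families. This gives $P^k_{a,b}(T_B)$.

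Cases $n\in\{2,3,4\}$. Here the rectangle has only convex corners, so its minimal tiling is the trivial one. For the hexagon and octagon, each $270^\circ$ corner, whether it lies on $C$ or comes in an interchanged pair, again forces an interior extension of its two boundary edges. These forced arcs, taken together with their $\tau$-images, define $T_P$, which I will draw as in \Cref{fig:disk_domains}. I will then check that $T_P$ is already rectangular, using the same $4f$ count. Every additional $\alpha$-arc lies in a rectangle of $T_P$ and runs from one $\beta$-side to the other, so it is parallel to an existing $\alpha$-arc or $\alpha$-edge. The same holds for additional $\beta$-arcs. Then \Cref{lem:rectangle-refine-pushoffs} gives $T=P_{\bm a,\bm b}(T_P)$.

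Main obstacle. The hard step is showing that the forced extension at each $270^\circ$ corner is unique. In particular, I must rule out that the extending arc exits through a $\beta$-edge other than the expected one. My plan is to enumerate the possible exit edges. For each option I will compute the corner data of the two resulting pieces and show that the $4f$ congruence of \Cref{lem:bigon} fails for one of them, or that the option produces a bigon abutting $C$. The latter is forbidden by Condition~\ref{def:onlyrectangleonC} when it is pushed to $\Sigma$. Since $n\le4$, this is a finite check.
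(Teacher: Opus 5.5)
Your route matches the paper's except in one step. Every tiling contains the extensions of the edges through the $270^\circ$ corners, these extensions can be completed in only one way, and for $n\in\{2,3,4\}$ the rectangular $T_P$ together with \Cref{lem:rectangle-refine-pushoffs} finishes the proof. (There, an extra arc generally crosses several rectangles of $T_P$, running from one $\beta$-side to the opposite one in each, rather than lying in a single rectangle; that is what the lemma packages.)

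The step that fails is the bigon refinement, ``any other $\alpha$-arc cuts off a strip and is therefore parallel to $a$.'' In $B$ the only $\alpha$-edge is $a$, and it ends at the $270^\circ$ corner. A non-forced $\alpha$-arc has both endpoints in the interior of $b$. So one side of it contains $a$ and both corners. The other side is bounded by the arc and a sub-segment of $b$ alone: a bigon-shaped region of Euler measure $\tfrac12$ containing exactly one bigon tile, never a rectangular strip along $a$. The ``(or bigon)'' branch of your dichotomy therefore gives no parallelism to any $\alpha$-edge.

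The conclusion is also incomplete. Let $Q$ be the bigon tile of $T_B$ bounded by the forced $\alpha$-extension and the segment of $b$ between the $270^\circ$ corner and the extension's endpoint. You can add an $\alpha$-arc from $b$ to $b$ nested in $Q$ and parallel to the extension, together with its $\tau$-image nested in the other bigon tile. Each bigon tile splits into a rectangle and a smaller bigon, so this is a valid real tiling, and the new arc is not parallel to $a$.

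What is true is that each non-forced $\alpha$-arc is parallel to the completed $\alpha$-curve ($a$ followed by its extension), on one side or the other. On the side facing $b$ it is nested in $Q$ and parallels the extension alone. On the other side it runs alongside $a$, crosses the forced $\beta$-extension twice, and then runs alongside the $\alpha$-extension. Which case occurs is decided by which bigon tile of $T_B$ the cut-off region meets.

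Showing these are the only possibilities needs more than your corner dichotomy. The paper gets it by restricting the tiling to the three tiles of $T_B$ (two bigons and a rectangle) and invoking the Sarkar--Wang classification of tilings of bigons and rectangles \cite{SarkarWang2010}. This is what its proof means by parallel pushoffs of the two completed arcs.

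Your uniqueness enumeration is fine, but keep the bigon test. In the octagon, corner data alone allows two landing edges for each forced arc. The wrong one is excluded only because the arc then crosses its $\tau$-image, and the two enclose a bigon containing part of the fixed axis at the $270^\circ$ corner.
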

\begin{proof}
    For the symmetric bigon, we must complete the $\alpha$ and $\beta$-arcs on the boundary which meet at the $270^\circ$ angle. Since a symmetric bigon has Euler measure 1, any tiling must consist of exactly two bigons; it  follows that the only way the boundary arcs can be completed is as in the bottom left frame of \Cref{fig:disk_domains}. In particular, any other tiling must be a refinement of this minimal one, $T_B$. This bigon is composed of two bigons and a rectangle; tilings of these pieces were classified in~\cite{SarkarWang2010}, from which it follows that the only possible refinements of $T_B$ are those given by adding (symmetric) parallel pushoffs of the two completed arcs in $T_B$.

    The remaining polygons are similar. Each of them has Euler measure 0, and therefore only admits rectangular tilings. It is straightforward to verify that for each of these domains, the arcs meeting $270^\circ$ angles can be completed in a unique way, shown in the remaining frames in \Cref{fig:disk_domains}. These simple tilings are rectangular, so all other rectangular tilings are refinements obtained by taking parallel pushoffs by \Cref{lem:rectangle-refine-pushoffs}.
\end{proof}

There is one hiccup in our setting: the map $u_\Sigma$ is not always an embedding, but it is not even always an immersion. A priori, we cannot pullback the $\alpha$- and $\beta$-arcs in $\Sigma$ to a tiling structure of the source. As we shall see, in the case of an antipodal  or a free-boundary reflection annulus, the domain admits a pair of symmetric cuts, which result in boundary branched points with local model $\HH \to\C$, $z\mapsto z^2$. When this happens, we glue $\partial S$ back together to undo the cut, still call the resulting surface surface $S$. Since $u_{\Sigma}$ is an embedding near vertices, this won't change topological type of $S$ and can be done in the smooth (even holomorphic) category. On this new $S$, the tiling structure defined at the end of Section~\ref{subsub:Preliminary computations} can still be obtained and the multiplicity $n_p$ that remains unchanged. Thus, we will focus on this modified map, which is an immersion, rather than the original curve.

\begin{lemma}\label{lem:classify-annuli-tilings}
Suppose that $S$ is a symmetric annular domain appearing in \Cref{subsub:Domains with no vertex on C} or in Cases (6), (7), or (8) of \Cref{subsub:Annulus domain}.
    \begin{enumerate}
        \item For a free-boundary reflection annulus, there is a simple minimal real tiling $T_{\Z/2}$ such that every other real rectangular tiling is obtained as a refinement of $D_c^k(T_{\Z/2})$ for $k \ge 0$, where $c$ is the core curve of the annulus;
        \item For an antipodal annulus, there is a simple minimal real tiling $T_{a}$ such that every other real rectangular tiling is obtained as a refinement of $D_c^k(T_{a})$ for $k \ge 0$, where $c$ is the core curve of the annulus;
        \item For a $(2,2)$ annulus, there is a simple minimal real tiling $T_{2,2}$ such that every other real rectangular tiling is obtained as a refinement of $D_c^k(T_{2,4})$ for $k \ge 0$, where $c$ is the core curve of the annulus;
        \item For a $(2,4)$ annulus, there are two simple minimal real tilings $T_{2,4}$ and $T_{2,4}'$ such that every other real rectangular tiling is obtained as a refinement of $D_c^k(T_{2,4})$ or $D_c^\ell(T_{2,4}')$ for $k,\ell \ge 0$, where $c$ is the core curve of the annulus.
        \item For a $(4,4)$ annulus, there are two simple minimal real tilings $T_{4,4}$ and $T_{4,4}'$ such that every other real rectangular tiling is obtained as a refinement of $D_c^k(T_{4,4})$ or $D_c^\ell(T_{4,4}')$ for $k,\ell \ge 0$, where $c$ is the core curve of the annulus.
    \end{enumerate}
\end{lemma}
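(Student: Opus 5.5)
The proof follows the pattern of \Cref{lem:classify-polygon-tilings}: fix the boundary combinatorics of $S$, find the forced or nearly forced ways of completing the arcs that leave the $270^\circ$ corners, and then show that every other real rectangular tiling differs from one of these completions only by twisting around the core curve $c$ and adding parallel push-offs. There are three steps.

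\textbf{Step 1 (Euler measure).} Each annulus in question has Euler measure $0$, by \Cref{tab:V_Z2 possibilities when V_1=0} and \Cref{tab:domain restrictions}. Since bigons contribute $1/2$ and rectangles contribute $0$, every tiling is rectangular. By \Cref{lem:bigon}, any bigon that appeared would be visible in $e$, so this is consistent.

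\textbf{Step 2 (arcs from the $270^\circ$ corners).} At each $270^\circ$ corner, the $\alpha$-arc and the $\beta$-arc on $\partial S$ continue into the interior of $S$ as properly embedded arcs of $\bm\alpha^a$ and $\bm\beta^a$. By realness, the $\beta$-arcs are the $\tau$-images of the $\alpha$-arcs. So the work is to classify the isotopy classes, relative to $\partial S$, of a real system of such $\alpha$-arcs with two properties. First, the arcs do not create bigons. Second, they cut $S$ into rectangles together with regions that are already tiled by the remaining boundary arcs.

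On an annulus, a properly embedded arc with endpoints on the two distinct boundary components is determined, up to isotopy rel endpoints, by an integer winding number around $c$. A boundary-parallel arc would cut off a region with the wrong corner count: it would give a bigon or a non-rectangle. So every interior arc joining the two boundary components is determined by its endpoints and a winding number. The realness constraint forces the winding of an $\alpha$-arc and that of its $\tau$-image $\beta$-arc to be compatible. For reflection annuli $\tau$ reverses $c$, and for antipodal annuli $\tau$ preserves $c$; in both cases $D_c$ on the $\alpha$ side is paired with $D_{\tau(c)}$ on the $\beta$ side, exactly as in the definition of $D_c(T)$.

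So the choices reduce to a choice of endpoints (combinatorial data) together with a single twist parameter $k$. Requiring the intersection number $|\bm\alpha^a\cap\bm\beta^a|$ to be minimal in the sense of \Cref{def:winding-simple} picks out the simple tilings. Replacing $k$ by $-k$, or absorbing a sign via the involution and the orientation of $c$, gives the normalization $k\ge 0$.

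\textbf{Step 3 (case check and refinements).} The endpoint choices are then enumerated case by case.
\begin{itemize}
\item For the free-boundary reflection annulus, the antipodal annulus and the $(2,2)$ annulus, Condition~\ref{def:onlyrectangleonC} and Condition~\ref{def:patternnice} of \Cref{definition: nice diagram} leave a unique endpoint pattern. This gives $T_{\Z/2}$, $T_a$ and $T_{2,2}$.
\item For the $(2,4)$ and $(4,4)$ annuli, the arc emanating from the extra corner on the rectangular boundary component can exit through one of two $\beta$-edges. This gives the two tilings $T$ and $T'$.
\item Any other endpoint choice produces a bigon, or a bigon meeting $C$. Both are excluded, as in case (5) of \Cref{subsub:disk domains}.
\end{itemize}
Once the arcs through the $270^\circ$ corners are fixed, the complement is a union of rectangles. \Cref{lem:rectangle-refine-pushoffs} then shows that any further real rectangular tiling is obtained by adding parallel push-offs, so it is a refinement of $D_c^k(T)$ or $D_c^k(T')$.

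I expect the main obstacle to be this enumeration for the $(2,4)$ and $(4,4)$ annuli. One has to show that exactly two endpoint patterns survive, and that these are not related by winding. My approach is to draw the universal cover strip $\R\times[0,1]$ with its deck translation and the lifted involution, and to track where each corner arc lands. I would check non-equivalence by comparing which boundary corners are joined by a single $\alpha$-arc, since this is invariant under $D_c$.
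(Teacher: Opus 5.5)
Your overall route is the paper's. Euler measure $0$ forces a rectangular tiling; you complete the $\alpha$- and $\beta$-arcs issuing from the $270^\circ$ corners; these completions are forced up to real twisting about $c$, with a binary choice of exit edge for the $(2,4)$ and $(4,4)$ annuli; and refinements are handled by \Cref{lem:rectangle-refine-pushoffs}.

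The step that fails is the one producing the one-sided family $k\ge 0$ and the base tiling. The sign of $k$ cannot be normalized away. A Dehn twist depends only on the unoriented curve and the orientation of $S$, and $\tau$ has already been used to pair the twist of the $\alpha$-arcs with that of the $\beta$-arcs. So nothing identifies $D_c^{k}(T)$ with $D_c^{-k}(T)$: they are different tilings, and the content of the lemma is exactly that twisting in one direction is impossible.

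Minimizing $|\bm\alpha^a\cap\bm\beta^a|$ does not settle this either. In the free-boundary reflection annulus, two completions differing by a single real twist both have four interior crossings, and only one of them is a rectangular tiling.

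The missing ingredient is the lens argument the paper uses. At a $270^\circ$ corner, the continuations of the $\alpha$-edge and of the $\beta$-edge leave the corner in a fixed cyclic order. If, in the universal cover, they land on the far boundary in the opposite order, they must cross. The first crossing, together with the corner, then bounds a disk with exactly two $90^\circ$ corners. Such a disk has Euler measure $1/2$, so it is not a union of rectangles (\Cref{lem:bigon}).
\begin{itemize}
\item For corners on $C$, the two continuations are $\tau$-images of each other. This is the paper's remark that the arc cannot travel across $C$ without meeting its symmetric partner and forming a bigon.
\item For the free-boundary reflection and antipodal annuli, this is why the base tiling winds once and further winding is possible in only one direction.
\end{itemize}
This argument has to replace your normalization step.

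There are two smaller omissions. First, the continuation of a $\beta$-edge past a $270^\circ$ corner could a priori end at the other $270^\circ$ corner, joining the continuation of the $\tau$-image $\beta$-edge. No bigon arises here. The paper excludes it because the two corners are interchanged by $\tau$, so they lie in the same generator, and would then lie on a single $\beta$-curve. Your claim that every other endpoint choice produces a bigon does not cover this case.

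Second, for the free-boundary reflection and antipodal annuli there are no vertices on $C$. So Conditions~\ref{def:onlyrectangleonC} and~\ref{def:patternnice} of \Cref{definition: nice diagram} are not what forces the endpoint pattern there; the bigon and generator arguments are. Relatedly, $\tau$ does not reverse $c$ on a free-boundary reflection annulus, since $c$ can be taken to be the fixed circle; this does not affect your argument.
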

\begin{proof}
    Consider the case of a free-boundary reflection annulus drawn on the left side of \Cref{fig:domains_without_vertex_on_C}. Let $\alpha_1$ be the $\alpha$-arc on the left hand boundary and $\alpha_2$ the $\alpha$-arc on the right hand boundary. Let $\beta_1 = \t(\alpha_2)$ and $\beta_2 = \t(\alpha_1)$. This domain has Euler measure 0, and therefore must be tiled by rectangles. We must complete the curve labeled $\beta_1$. The curve $\beta_1$ cannot exit through $\alpha_1$, else it would create a bigon. Hence, it must cross $C$ and exit through $\alpha_2$ (it cannot be glued to $\beta_2$, else $\cD$ does not connect valid generators). The curve $\beta_2$ is forced to cross $C$ in the opposite direction and exit $\alpha_1$. We let $T_{\Z/2}$ be the corresponding tiling in which $\beta_1$ winds around the core of the annulus exactly once. This is the tiling shown in the center left frame of \Cref{fig:domains_without_vertex_on_C}. All other tilings are obtained by winding $\beta_1$ more times around the annulus. 

    The case of an antipodal annulus, which is drawn on the right side of \Cref{fig:domains_without_vertex_on_C} is nearly identical. As above, we write $\alpha_1$ and $\beta_1$ for the left hand arcs, and $\alpha_2$, $\beta_2$ for the right hand arcs. $\alpha_1$ cannot exit $\beta_1$ without creating a bigon, and therefore must exit through $\beta_2$. We take $T_a$ to be the tiling produced when $\beta_1$ winds exactly once around the annulus. 
    
    Next, we consider with the annulus with two bigon boundary components in the top of \Cref{fig:annular_domains}. This annulus has Euler measure 0 and therefore can only admit a rectangular tiling. It has two $270^\circ$ corners, and the $\alpha$ and $\beta$-arcs meeting at those corners must be completed. Consider the $\alpha$-arc on the outer boundary. This arc must exit the domain through the inner $\beta$-arc, else it would create a bigon. Similarly, it cannot travel counterclockwise across $C$, as else it would form a bigon with the symmetric $\beta$-arc. Hence, it must travel clockwise; let $T_{2,4}$ be the rectangular tiling where this arc does not wind around the annulus at all. Note that the closure of the inner $\alpha$-arc is forced by our choice of the outer $\alpha$-arc and the condition that there are no bigons. If we allow the $\alpha$-arc to wind around the annulus, the resulting tilings are exactly given by $D_c^k(T_{2,4})$.

    The second and third cases are nearly identical, so we consider the second. Again, the Euler measure is zero so any tiling must be rectangular. There are now three sets of $\alpha$ and $\beta$-arcs which meet at $270^\circ$ corners which we must complete. As in the previous case, if we complete the outer $\alpha$-arc, $a$, there will be a unique way to complete the inner ones without creating bigons. Call the two inner $\beta$-arcs $b_1$ and $b_2$. Like above, the outer $\alpha$-arc $a$ must exit the domain through either $b_1$ or $b_2$ and we take $T_{2,4}$ to be the tiling obtained by allowing $a$ to exit through $b_1$ without winding around the annulus and $T_{2,4}'$ the tiling obtained by allowing $a$ to exit through $b_2$ without winding around the annulus. The only other possibilities are that $a$ winds around the annulus some number of times before leaving through $b_1$ or $b_2$ and these correspond to the tilings $D_c^k(T_{2,4})$ or $D_c^\ell(T_{2,4}')$.
\end{proof}

Finally, we turn to the toroidal domains from \Cref{fig:toroidal_real_domain}.

\begin{lemma}\label{lem:classify-toriodal-tilings}
Suppose that $S$ is the symmetric toroidal domain appearing in Case (9) of Section~\ref{subsub:Toroidal Domains}. Then, there are three real rectangular tilings $T_+$, $T_-$, and $T_0$ which are simple, minimal and have the property that any other real tiling is a refinement of $D_{c_1}\hdots D_{c_k}(T_i)$ where $c_i$ is curve representing either of the two generators of $H_1(T^2\smallsetminus D^2)$.
\end{lemma}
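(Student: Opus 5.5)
We follow the template of the annular case, \Cref{lem:classify-annuli-tilings}: first use the Euler measure to force rectangular tilings, then show that the arcs emanating from the $270^\circ$ corners must be completed in essentially one way up to winding. Since $e(\cD)=0$ in Case (9), \Cref{lem:bigon} shows that any real tiling of $S$ contains no bigons, so every tiling is rectangular. By \Cref{lem:rectangle-refine-pushoffs}, every rectangular tiling is a refinement of one in which each interior arc is forced. It therefore suffices to classify minimal real rectangular tilings.

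\textbf{Step 1: the boundary data.} The boundary of $S$ is a single rectangle with four $270^\circ$ corners: two on the preimage of $C$, and two interchanged by the involution. The quotient of $S$ by the involution is a M\"obius band. At each $270^\circ$ corner exactly one $\alpha$-arc and one $\beta$-arc enter the interior of $S$. By symmetry, the $\beta$-arcs are the $\tau$-images of the $\alpha$-arcs, so it suffices to specify how the two interior $\alpha$-arcs starting at the corners are completed. I would work in the quotient M\"obius band, or equivalently cut $S$ open along the preimage of $C$ to obtain a fundamental domain, and track where each $\alpha$-arc exits through a $\beta$-edge of $\partial S$.

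\textbf{Step 2: the exit constraints.} As in the annular case, an arc may not exit through the $\beta$-edge adjacent to its own corner, since that would create a bigon, contradicting $e=0$. An arc also may not cross $C$ in the direction that produces a bigon with its $\tau$-image; this uses Condition~\ref{def:patternnice} of Definition~\ref{definition: nice diagram} near $C$. Once the outer $\alpha$-arc is completed, the remaining arcs are forced by the no-bigon and real conditions. A combinatorial case check should then show exactly three admissible exit patterns, modulo the homotopy class of the completed arc in $S$ relative to the boundary. I expect these to be distinguished by which of the two remaining $\beta$-edges the first arc exits through, together with a sign for the direction in which it crosses $C$. These three patterns are $T_+$, $T_-$ and $T_0$; within each pattern I would choose the representative with the fewest $\alpha\cap\beta$ intersections, which makes it simple and minimal.

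\textbf{Step 3: winding.} Arcs with the same endpoints on $\partial S$ differ by an element of the mapping class group of $(S,\partial S)$. For a once-punctured torus this group is generated by Dehn twists about curves $c_1,c_2$ representing generators of $H_1(T^2\smallsetminus D^2)$. Applying $D_c$ to the $\alpha$-arcs and $D_{\tau(c)}$ to the $\beta$-arcs preserves reality, so every real tiling is a refinement of some $D_{c_1}\cdots D_{c_k}(T_i)$. I expect the main obstacle to be Step 2, namely that the case analysis on the punctured torus is exhaustive and yields exactly three classes. The difficulty is that arcs can wrap around both handles before exiting, so "up to winding" must be handled with care. I would first resolve it by lifting to the universal cover of the torus and enumerating exit edges there.
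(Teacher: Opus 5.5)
Your outline matches the paper's route: Euler measure zero forces rectangular tilings, you complete the arcs issuing from the $270^\circ$ corners, and winding is absorbed by Dehn twists. The gap is that Step 2, which carries all the content of the lemma, is only asserted (``should then show'', ``I expect''). The structure you predict for it would also not give the right list. The paper fixes one distinguished arc $a$, the upper $\alpha$-arc meeting $C$, and enumerates four candidate completions:
\begin{itemize}
\item $a^0$, in which $a$ connects to the other boundary $\alpha$-segment rather than exiting through a $\beta$-edge;
\item $a^+$, which exits through the $\beta$-curve opposite it;
\item $a^-$ and $\tilde a$, which both run through the band bounded by the two $\alpha$-arcs before exiting through the opposite $\beta$-curve.
\end{itemize}
The first three determine the remaining arcs up to winding and give $T_0$, $T_+$, $T_-$. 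Your parametrization by ``which $\beta$-edge $a$ exits through, together with a crossing sign'' has no slot for $a^0$, yet $a^0$ yields one of the three tilings you must find.

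More seriously, the paper does not discard the fourth candidate $\tilde a$ by any constraint on $a$ itself. It discards it by looking at the arc $\tilde a$ forces next. The rightmost $\beta$-arc $b$ must either exit through the adjacent $\alpha$-arc or cross $\tilde a$, and either way a bigon appears, contradicting $e=0$ via \Cref{lem:bigon}. Your rules (no exit through the $\beta$-edge adjacent to the corner, no crossing of $C$ in the bad direction) are local conditions on where a single $\alpha$-arc goes. They do not by themselves capture this propagation argument. So you have not shown that exactly three classes survive; you need this step, or an equivalent exhaustive check such as your universal-cover enumeration actually carried out.

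A smaller correction to Step 1: there are four $270^\circ$ corners, hence four $\alpha$-continuations into the interior. The involution $\tau$ pairs each of them with a $\beta$-continuation, not with another $\alpha$-arc. The initial data is therefore four $\alpha$-arcs, which the forcing argument reduces to the single arc $a$, not two. There is also no ``outer'' arc here as there is in the annular case.
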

\begin{proof}
    This proceeds as the previous two cases. The domain has Euler measure 0, so we only need to consider rectangular tilings. There are now four pairs of $\alpha$ and $\beta$-arcs meeting at $270^\circ$ angles. Let $a$ be the upper $\alpha$-arc intersecting $C$. In completing $a$, there are four possibilities: 
    \begin{enumerate}
        \item The arc $a$ connects to its other boundary component. We call this completion $a^0$.
        \item The arc $a$ exits the torus through the $\beta$-curve opposite it near its other boundary component. We call this completion $a^+$.
        \item The arc $a$ traverses the band bounded by the two $\alpha$-arcs and then exits through the opposite $\beta$-curve. We call this completion $a^-$.
        \item The arc $a$ traverses the band bound by the two $\alpha$-arcs and then exits through the opposite $\beta$-curve. We call this completion $\Tilde{a}$.
    \end{enumerate}
    See \Cref{fig:torus_tiling_possibilities} for a picture. It is easy to see that (up to winding) the first three completions determine the completions for the other $\alpha$-arcs and give valid tilings, which we call $T_0$, $T_-$, and $T_+$. The fourth completion, $\tilde{a}$, does not give rise to a rectangular tiling. Consider the rightmost $\beta$-curve $b$. This curve can either exit through the adjacent $\alpha$-curve, making a bigon which is prohibited, or it can cross $\tilde{a}$ which also creates a bigon. This completes the proof.
\end{proof}

\begin{figure}[h]
\def\svgwidth{.9\linewidth}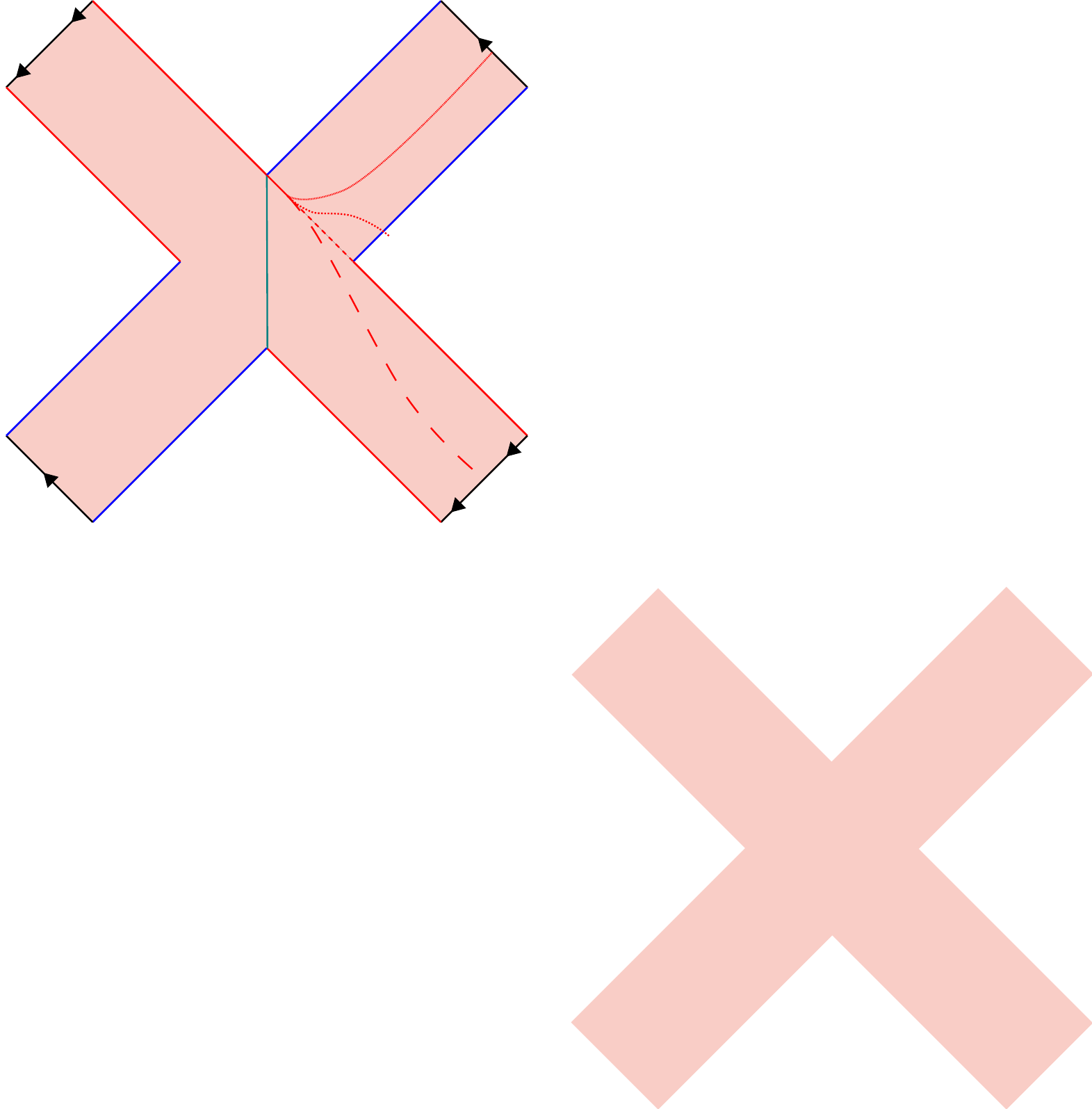
\caption{In the top left frame, we show the four possible start paths which can be taken by $a$. The remaining frames show the minimal completions of these paths in the cases indicated.}
    \label{fig:torus_tiling_possibilities}
\end{figure}

\begin{figure}[h]
\def\svgwidth{.5\linewidth}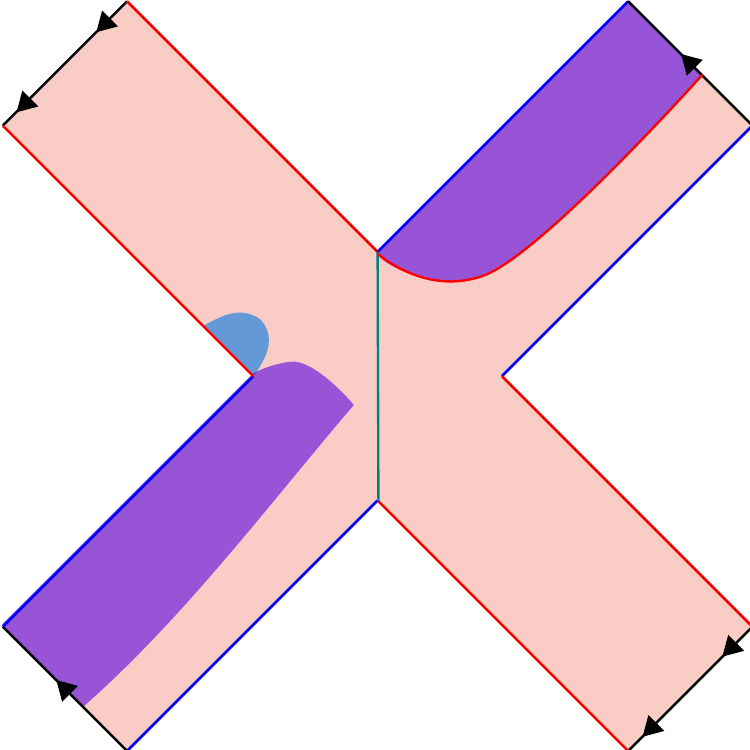
\caption{A minimal completion of the path $\tilde a$. The $\beta$-curve shown must create a bigon, which is prohibited.}
    \label{fig:torus_impossible_tiling}
\end{figure}

\subsubsection{Analysis of immersions}
We now state the possible immersions in a real nice diagram using the tiling structures classified in the previous subsection. The main result is the following. 

\begin{proposition}\label{prop:tiling structure and standard immersion}
Let $D$ be any of the real index 1 domains from \Cref{prop:a list of all possible real domains}. If $D$ is not embedded, then the induced tiling is simple. Moreover, if $D = \sum_i a_i D_i$ is written as a linear combination of elementary domains $D_i$, then $a_i \le 2$ for each $i$, and $\sum_{i:\,a_i = 2} D_i$ is a union of rectangles. This is the standard form to which we will refer in Section~\ref{sec:Applications of nice diagrams}.
\end{proposition}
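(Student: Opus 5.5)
We argue case by case through the list in \Cref{prop:a list of all possible real domains}, in each case pulling the $\alpha$- and $\beta$-curves back along the (modified, immersed) map $u_\Sigma\colon F\to\Sigma$ to obtain a real rectangular (or bigon-containing) tiling of $F$. By \Cref{lem:classify-polygon-tilings}, \Cref{lem:classify-annuli-tilings} and \Cref{lem:classify-toriodal-tilings}, such a tiling has the form $P_{\bm a}P_{\bm b}(D_{c_1}\cdots D_{c_k}(T_0))$, where $T_0$ is one of the finitely many simple minimal tilings. The key observation, as in \cite[Theorem~3.3]{SarkarWang2010}, is that $u_\Sigma$ restricted to each tile is an embedding onto an elementary domain. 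Two tiles mapping to the same elementary domain therefore force the $\alpha$- and $\beta$-arcs through them to be identified under $u_\Sigma$, and we track this combinatorially.

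\textbf{Step 1: non-simple tilings give embeddings.} Suppose the tiling has been wound nontrivially, so it is $D_c^k(T_0)$ with $k\ge 1$ (or a winding in the torus case). We claim the pulled-back arc that winds around the core $c$ must map to a single $\alpha$-curve segment crossing $C$ or crossing its $\tau$-image more than once. Following Sarkar--Wang, we use the length function $l$ of \eqref{eqn:tiling-lenth}. A winding arc has length exceeding the number of distinct rectangles it can traverse in $\Sigma$ before revisiting an $\alpha$- or $\beta$-segment. A revisit would produce a periodic-type closed loop in $\bm\alpha$ or $\bm\beta$ bounding a positive subdomain of $\cD$. That subdomain would have positive real index by \eqref{eq:indexformula2} and \Cref{lem:facts_V_1_V_Z2}(1), contradicting $\mu_R=1$. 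We conclude that $u_\Sigma$ is injective on the interior, so the domain is embedded. This gives the first assertion by contraposition.

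\textbf{Step 2: multiplicity bound for simple tilings.} For simple tilings, we work directly with the explicit pictures (\Cref{fig:disk_domains}, \Cref{fig:domains_without_vertex_on_C}, \Cref{fig:annular_domains}, \Cref{fig:toroidal_real_domain}). Any overlap of $u_\Sigma$ comes only from a completed arc (from a $270^\circ$ corner) and its $\tau$-image crossing each other, or crossing $C$. Each completed arc in a simple minimal tiling crosses its partner at most once. So the region where two sheets overlap is a union of tiles bounded on all sides by the completed arcs and their parallel push-offs $P_{\bm a},P_{\bm b}$. These tiles are rectangles: bigons occur only at the original corners, and by \Cref{lem:bigon} and the Euler measure count they cannot lie in an overlap. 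A third sheet would need a completed arc to cross two others. This would give $V_{\Z_2}+V_{\{1\}}+e>2$, using that vertices are embedded (the lemma of \Cref{subsubsection:Ruling out immersion at vertices}) together with \eqref{eq:indexformula2}. Hence $a_i\le 2$, and $\sum_{a_i=2}D_i$ is a union of rectangles.

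\textbf{Main obstacle.} The hardest step is Step 1 for the $(2,4)$ and $(4,4)$ annuli and for the torus. There, several completed arcs interact, and winding along $c_1,c_2$ in $T^2\smallsetminus D^2$ can be mixed. For these cases we would first try an explicit enumeration over $T_0,T_\pm$ and over $T_{2,4},T'_{2,4}$ (and similarly for $(4,4)$). In each case we would show that a single winding already forces some arc to cross its own $\tau$-image twice, producing an extra positive closed subdomain, which contradicts index $1$.
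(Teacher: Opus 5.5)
Your proposal has a genuine gap. Neither step gives a valid way to locate or bound the self-overlaps of $u_\Sigma$, and the index-theoretic substitutes you use cannot work.

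In Step~1, a ``revisit'' of an $\alpha$- or $\beta$-segment does not produce a loop bounding a positive subdomain of $\mathcal{D}$ with an index you can use. In a balanced diagram, no nonzero $2$-chain vanishing near $\partial\Sigma$ has boundary supported on the $\alpha$-curves alone. A positive piece of $\mathcal{D}$ cut out by arcs of $\bm\alpha\cup\bm\beta$ is not a domain between generators, so \eqref{eq:indexformula2} assigns it no index. Even if it had positive index, that would not contradict $\mu_R(\mathcal{D})=1$ unless $\mathcal{D}$ split as a sum of two positive real domains through an intermediate generator.

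In Step~2, a completed arc crossing its $\tau$-image in $F$ is just a vertex of the tiling, not a source of overlap in $\Sigma$. Your bound $a_i\le 2$ via \eqref{eq:indexformula2} also fails. An elementary rectangle covered three or more times contributes $0$ to $e(\mathcal{D})$ and nothing to $V_{\Z_2}$ or $V_{\{1\}}$, which only see multiplicities at the corners; those are already controlled by the vertex-embedding lemma. The index formula does not see interior multiplicities of rectangles at all. Similarly, two bigon tiles on one elementary bigon would still give $e=1$. What actually excludes this is that such tiles are $\tau$-partners, so that bigon would be $\tau$-invariant and meet $C$, contradicting Condition~(2) of niceness.

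The missing idea is the Sarkar--Wang double-point argument combined with the length function \eqref{eqn:tiling-lenth}, which is what the paper uses.
\begin{itemize}
\item If $u_\Sigma$ is not an embedding, choose tiling vertices $p\neq q$ with $u_\Sigma(p)=u_\Sigma(q)$. One may take $p$ on a $\beta$-edge of $\partial F$, and $p$ is not a corner.
\item Follow the $\beta$-arc through $p$ in both directions, and simultaneously the $\beta$-arc $\beta_q$ through $q$. Since corners are embedded, $q$ must reach $\partial F$ before $p$ reaches a corner. Hence $l(\beta_p)>l(\beta_q)$, and $q$ lies in the interior of $F$.
\item Express every possible $l(\beta_{p_i})$ and $l(\beta_{q_j})$ in terms of the numbers of parallel push-offs in the refinement. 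For the octagon, for example, an immersion requires $a>b+c+1$. From these inequalities one reads off that the doubly covered set is a union of rectangles in the pictured standard position.
\item The first assertion follows because winding leaves every boundary length $l(\beta_{p_i})$ unchanged while strictly increasing every interior length $l(\beta_{q_j})$. So the necessary inequality never holds for a non-simple tiling.
\end{itemize}
Your Steps~1 and~2 should be replaced by this length comparison, carried out for each tiling type from \Cref{lem:classify-polygon-tilings}, \Cref{lem:classify-annuli-tilings} and \Cref{lem:classify-toriodal-tilings}.
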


\begin{proof}
We will carefully prove the proposition in the cases of octagons, $(2,4)$-annuli, free-boundary reflection annuli and toroidal domains. The other cases are similar and will be left to the reader. Paradigmatic examples of the remaining cases are illustrated in \Cref{fig:immersion_examples}.\\

\noindent\textbf{Octagonal Domains:}
A refinement of the octagon's simple tiling structure is shown in the left of Figure \ref{fig:octagon_tiling_and_immersion}. There are three kinds of $\alpha$ and $\beta$-arcs that could appear in the interior of $F$. We assume that there are $a$, $b$, $c$ of each kind, as labeled in the figure. Following the convention in \cite[Theorem 3.3]{SarkarWang2010}, we call intersection points of the preimages of $\alpha$ and $\beta$-curves in $F$ vertices in this subsection. If $u_{\Sigma}:F\to \Sigma$ is not an embedding, then there is a pair of vertices $p\ne q$ with $u_{\Sigma}(p)=u_{\Sigma}(q)$. As shown in the proof of~\cite[Theorem 3.3]{SarkarWang2010}, we can assume at least one of $p$, $q$ lies on $\partial F$. Without loss of generality, we assume $p\in \partial F$ and $p$ lies on a $\beta$-edge, say $\beta_p$. Since all of the corners of the octagon are embedded, $p$ cannot be a corner of $F$. Starting from $p$, we can move along a path in $\partial F$ in two directions along $\beta$. The preimage of this path starting at $q$ is another $\beta$-segment, $\beta_q$ in $F$. Suppose that there is a direction on $\beta_p$ such that when we travel in it, $p$ hits a corner of $\partial F$ before $q$ hits $\partial F$. Then we would have an immersed corner contradicting the result from Section~\ref{subsubsection:Ruling out immersion at vertices}. Thus, for an immersion to occur, when we travel along the $\beta$-segment from $p$, $q$ in either direction, $q$ must hit the boundary before $p$ reaches a corner. In terms of the length function defined in ~\Cref{eqn:tiling-lenth}, this means $l(\beta_{p})>l(\beta_q)$. If $p$, $q$ are both on $\partial F$, we always get a contradiction by possibly interchanging the roles of them in the argument above. Thus, we assume that $q$ lies in the interior. The same analysis carried out in this paragraph works for any other source curve $F$. 

\begin{figure}[h]
\def\svgwidth{1\linewidth}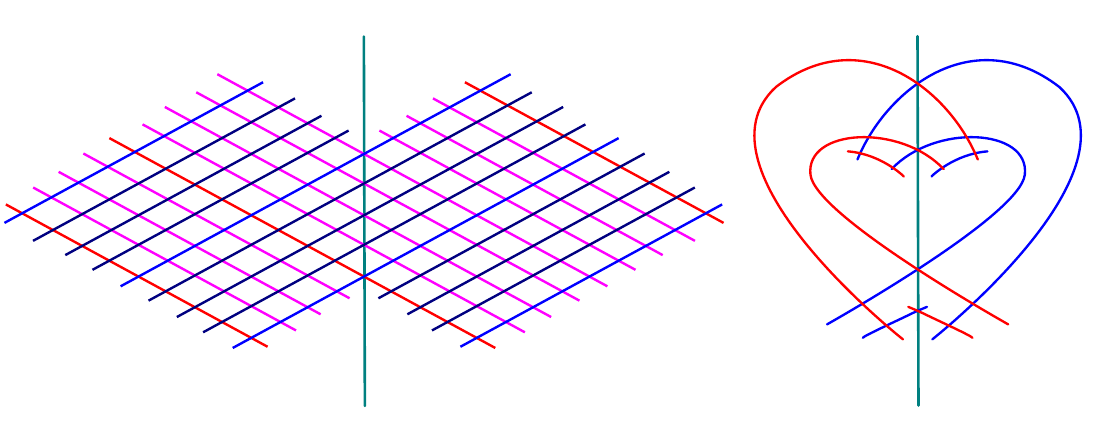
\caption{A tiling of the octagon on the left and a possible immersion on the right.}
    \label{fig:octagon_tiling_and_immersion}
\end{figure}

In \Cref{fig:octagon_tiling_and_immersion}, we also mark all possible segments of $\alpha$ and $\beta$ to which $p$ and $q$ may belong. We have four choices for $p$ and three choices for $q$. We calculate that $l(\beta_{p_1})=a+1$, $l(\beta_{p_2})=b+c+2$, $l(\beta_{p_3})=c+1$, $l(p_4)=a+b+2$ while $l(\beta_{q_1})=b+c+2$,  $l(\beta_{q_2})=a+b+c+3$, $l(\beta_{q_3})=a+b+2$. The only cases for which the desired length inequality could hold are $l(p_1)>l(q_1)$, $l(p_2)>l(q_3)$, $l(p_3)>l(q_3)$, or $l(p_4)>l(q_1)$. These require that $a+1>b+c+2$, $b+c+2>a+b+2$, $c+1>a+b+2$, or $a+b+2>b+c+2$ respectively. The roles of $a$ and $c$ are symmetric in this discussion, so without loss of generality, we assume $a\ge c$. This rules out $b+c+2>a+b+2$, and $c+1>a+b+2$. Next, we can assume that there is an immersion between a $\beta$-arc of type $\beta_{q_1}$ and a $\beta$-arc of type $\beta_{p_j}$ for $j\in \{1,4\}$. After turning to $\alpha$-arcs at the corners given by image of $\beta_{q_1}$ on $\beta_{p_j}$, we see that for the immersion to happen, $l(p_1)>l(q_1)$ and $l(p_4)>l(q_1)$ must hold simultaneously. Such an example is shown in the right frame of \Cref{fig:octagon_tiling_and_immersion}. Continuing this argument, one can see that for an immersion to happen, we need $a>b+c+1$. The ratio $a/ b+c+1$ gives an upper bound on how many times the immersion can happen.\\

\noindent\textbf{Annuli of type (2,4):} The argument for $(2,4)$ annuli is similar to the proof for octagons and we will use the same notation. A refinement of the simple tiling structure is shown in the left of Figure~\ref{fig:immersion_2_4_annulus}. In the figure, we have indicated the number of each kind of curves and all possible segments of $\alpha$ and $\beta$ to which $p$ and $q$ may belong. For the sake of concreteness, we consider the case shown in Figure~\ref{fig:immersion_2_4_annulus}, where there are three possible choices of $p$ and three possible choices of $q$. We compute that $l(\beta_{p_1})=a+c+2$, $l(\beta_{p_2})=b+1$, $l(\beta_{p_3})=a+b+c+3$ while $l(\beta_{q_1})=2a+2b+c+5$,  $l(\beta_{q_2})=a+b+2$, $l(\beta_{q_3})=2a+b+c+4$. The only cases the desired length inequality could happen are that $l(p_1)>l(q_2)$ or $l(p_3)>l(q_2)$. $l(p_3)>l(q_2)$ always holds true, while $l(p_1)>l(q_2)$ requires $c>b$. By a discussion on the length of $\alpha$-segments we further turn to $\alpha$-arcs after $q$ hits $\partial F$, one can see that immersion happens only when $c>a+b+2$ and the number of such immersions is bounded by how large $c$ is relative to $a+b$. An example of such immersion is shown in the right of \Cref{fig:immersion_2_4_annulus}.  

If we wind the tiling structure, the lengths $l_{p_i}$ are unchanged, while each $l_{q_i}$ increases. The relative length argument above applies to show that for each tiling structure with non-trivial winding, immersions cannot occur.\\
\begin{figure}[h]
\def\svgwidth{1\linewidth}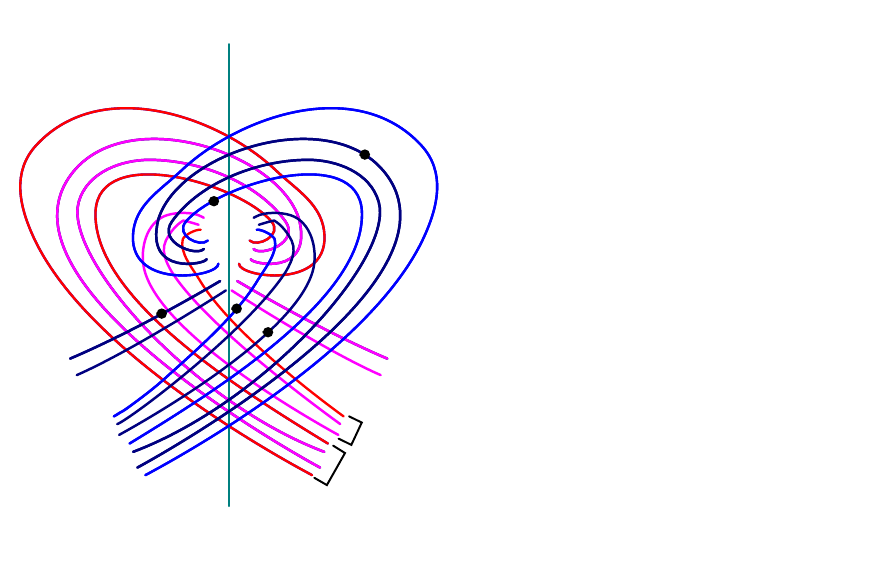
\caption{A tiling on the (2,4)-annulus and a possible immersion of such a domain.}
    \label{fig:immersion_2_4_annulus}
\end{figure}

\noindent\textbf{Free-boundary reflection annuli:} A similar argument as above will show that the domain of a free-boundary reflection annulus can never be immersed. More precisely, in \Cref{fig:immersion_reflection_annulus}, a refinement of the simple tiling structure on a free-boundary reflection annulus is drawn. As usual, we indicate the number of each kind of curves and mark all possible segments of $\alpha$ and $\beta$ to which $p$ and $q$ may belong. Then one can count that $l(p_1)=l(p_2)=a+b+1$ while $l(q_1)=l(q_2)=l(q_4)=a+2b+2$, $l(q_3)=2a+3b+4$, from which we see that $l(p_i)<l(q_j)$ for all $i$ and $j$. This implies that equipping with a refinement of the simple tiling structure, a free-boundary reflection annulus can never be immersed. Note that applying Dehn twists will increase $l(q_j)$ but keep $l(p_i)$ unaffected, so the domains of free-boundary reflection annuli are never immersed. As in the previous case, winding is ruled out by the observation that lengths $l_{p_i}$ are unchanged, while each $l_{q_i}$ increases. \\

\begin{figure}[h]
\def\svgwidth{.9\linewidth}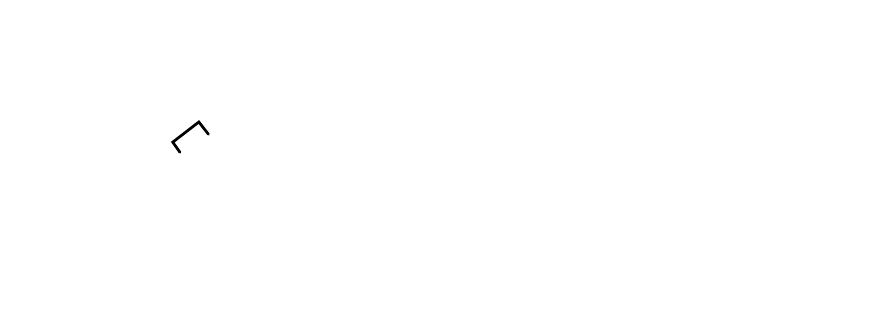
\caption{A tiling on a free-boundary reflection-annulus and a possible immersion of such a domain.}
    \label{fig:immersion_reflection_annulus}
\end{figure}

\noindent\textbf{Toroidal domains:} We now turn to the toroidal domain. A refinement of the tiling $a^-$ is shown in Figure~\ref{fig:toroidal_domain_tiling_immersion}, on which we indicate the numbers of each kind of curve and all possible  segments of $\alpha$ and $\beta$ to which $p$ and $q$ may belong. In this case, we have two kinds of $p$ and three kinds of $q$. It can be seen that $l(\beta_{p_1})=a+b+c+3=l(\beta_{p_2})$, while $l(\beta_{q_1})>a+b+c+3$,  $l(\beta_{q_2})>a+b+c+3$, and $l(\beta_{q_3})=b+c+2$. Thus, an immersion can occur along a rectangle whose $\beta$-side boundaries lead to overlap between two $\beta_{q_3}$ types arcs with $\beta_{p_1}$ and $\beta_{p_2}$. Such an example is shown in the right of Figure~\ref{fig:toroidal_domain_tiling_immersion}, in which the immersed regions are shaded green. Winding can be ruled out as before.

Arguing in this fashion works for all of the sources and tiling structures in the previous subsection. We have shown some examples of immersions for other kinds of real domains in Figure~\ref{fig:immersion_examples}. 

\begin{figure}[h]
\def\svgwidth{.9\linewidth}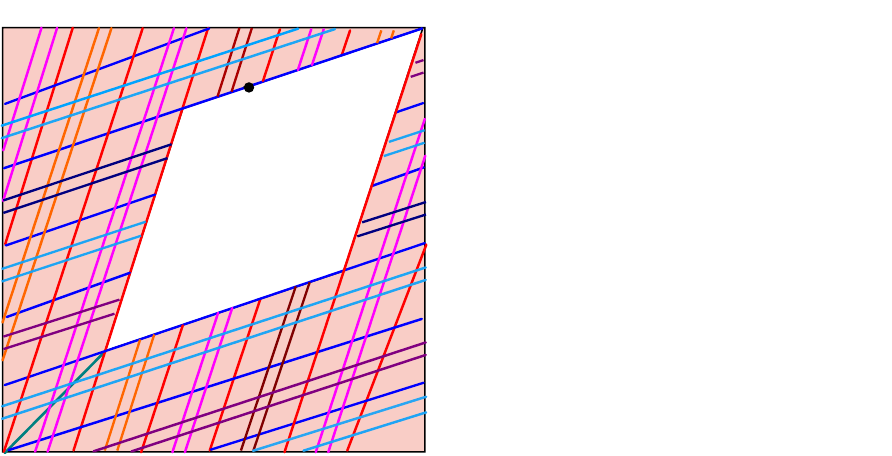
\caption{Left: A tiling on the toroidal domain. Right: A possible immersion of a toroidal domain; here, the two turquoise rectangles are identified.}
    \label{fig:toroidal_domain_tiling_immersion}
\end{figure}
\end{proof}

\begin{corollary}
\label{lem:standard pattern for immersed real domain}
Suppose $\sum_{i=1}^k m_i D_i$ is the domain of a real pseudo-holomorphic $\phi\in \pi_2(\bm{x},\bm{y})$ with $\mu_R(\phi)=1$. If $m_i>1$ then $D_i$ is a rectangle, $m_i=2$, and $\partial D_i\cap\{\bm{x}\cup\bm{y}\}=\emptyset$. Moreover, each connected component of $\sum_{i=1}^k (m_i-1) D_i$ is a rectangle.
\end{corollary}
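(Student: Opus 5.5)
The corollary is essentially a repackaging of \Cref{prop:tiling structure and standard immersion}, together with the vertex analysis of \Cref{subsubsection:Ruling out immersion at vertices}. The plan is as follows. Let $u$ be a real pseudo-holomorphic representative of $\phi$ with $\mu_R(\phi)=1$, and write its domain as $\cD=\sum_i m_iD_i$. By \Cref{prop:a list of all possible real domains}, $\cD$ is one of the listed disk, annular or toroidal domains, or a pair of disks exchanged by $\tau$.

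We first dispose of the embedded case. If $u_\Sigma\colon F\to\Sigma$ is an embedding, every $m_i\le 1$ and there is nothing to prove. For a disconnected domain $\cD_1+\cD_2$ with $\cD_2=-\tau(\cD_1)$, each piece is an embedded bigon or rectangle by \cite[Theorem 3.3]{SarkarWang2010}. Their overlap is then an intersection of two convex polygons in a nice diagram, which is a union of rectangles with multiplicity $2$. Since the corners of each piece are $90^\circ$ and are not interior to the other piece, the overlap cannot contain generator points on its boundary as corners. I would check this last point directly from the local pictures.

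For the connected domains, we invoke \Cref{prop:tiling structure and standard immersion}. It gives $a_i\le 2$, and it shows that the multiplicity-two part $\sum_{a_i=2}D_i$ is a union of rectangles, each connected component of which is a rectangle. This handles the claims that $m_i=2$ and that $D_i$ is a rectangle. It also shows that each component of $\sum(m_i-1)D_i$, which is exactly the multiplicity-two locus, is a rectangle, provided we verify that the components are single rectangles and not larger rectangular unions. I would read this off from the length-function argument: the doubly covered region is swept out by a $\beta_q$-segment overlapping a $\beta_p$-segment and then turning along $\alpha$-arcs. This produces a strip bounded by two $\alpha$-arcs and two $\beta$-arcs, which is a single (possibly subdivided) rectangle in $\Sigma$.

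The remaining claim is that $\partial D_i\cap(\bm x\cup\bm y)=\emptyset$ whenever $m_i=2$. For this we use the lemma that $\cD$ is embedded near its vertices, together with \Cref{lem:facts_V_1_V_Z2}(3). Suppose some point $p\in\bm x\cup\bm y$ lies on $\partial D_i$ with $m_i=2$. If $p$ is a corner of $\cD$, then $\cD$ has local multiplicity at least $2$ in a quadrant at $p$, contradicting embeddedness near vertices, since corner multiplicities are $1/4$ or $3/4$. If $p$ is not a corner, then $p\in\bm x\cap\bm y$ and $n_p(\cD)\ge 1/2>0$, contradicting \Cref{lem:facts_V_1_V_Z2}(3).

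The main obstacle is making precise, uniformly over all domain types, that the multiplicity-two locus is a single rectangle per component. The proposition is only carried out in detail for some of the cases. I would argue generally from the tiling picture: an immersion identifies a $\beta$-segment of type $q$, interior to $F$, with part of a boundary segment of type $p$, so the overlap propagates only along a band of parallel arcs and terminates when the $q$-segment reaches $\partial F$.
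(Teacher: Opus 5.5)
Your proposal is correct and follows the paper's route: the paper's proof reads the statement off the case-by-case classification of tilings and immersions in Proposition~\ref{prop:tiling structure and standard immersion}. Your extra argument for $\partial D_i\cap(\bm x\cup\bm y)=\emptyset$, via embeddedness near vertices and Lemma~\ref{lem:facts_V_1_V_Z2}(3), spells out what the paper leaves implicit. One correction to your justification at a corner: the contradiction comes from the embeddedness lemma itself, not from $n_p\in\{1/4,3/4\}$. Off $C$, a corner with $n_p=3/4$ could a priori carry a doubly covered quadrant (local multiplicities $2,1,0,0$).
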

\begin{proof}
    This follows from our explicit classification of tilings. In each case, the immersed domain has precisely this form.
\end{proof}

\begin{figure}[h]
\def\svgwidth{1.05\linewidth}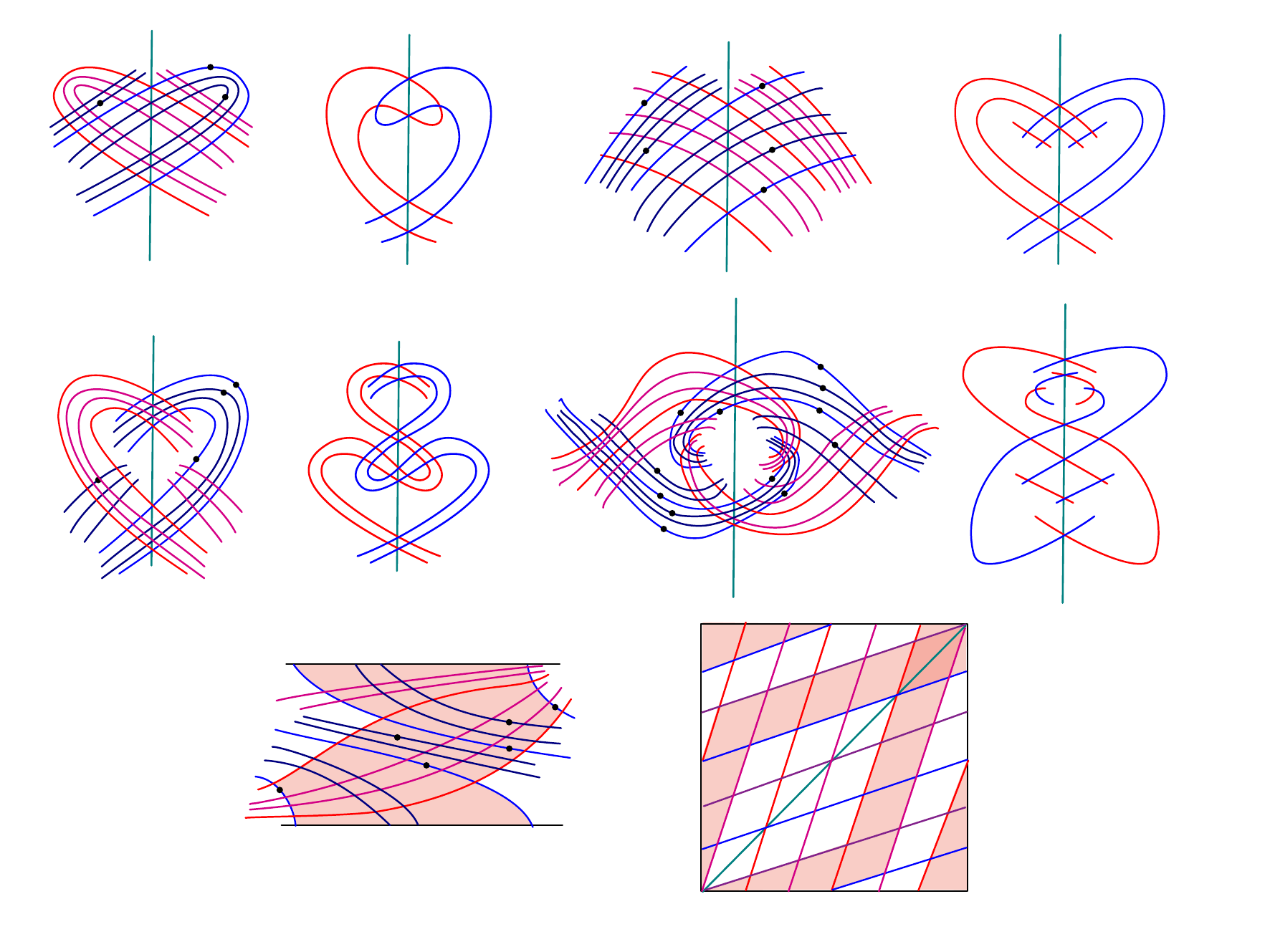
\caption{Each frame consists of two pictures; the left one shows a tiling structure in which all possible choices of $p$ and $q$, the right one shows an example of immersion of that structure in a real Heegaard diagram. Top left: bigon; Top right: hexagon; Middle left: (2,2)-boundary $\{1\}$-reflection annulus; Middle right: (4,4)-boundary $\{1\}$-reflection annulus; Bottom: antipodal annulus.}
    \label{fig:immersion_examples}
\end{figure}

\subsection{Holomorphic representatives}\label{sub:Finding holomorphic representatives}
In this section, we count the number of pseudo-holomorphic disks which have domains of prescribed types. This is the final step in showing that the differential can be computed combinatorially for nice real Heegaard diagrams. As in \cite[Proposition 3.9]{ozsvath2004holomorphic}, we will fix a split almost complex structure and perturb the $\alpha$-curves to achieve transversality.

\begin{proposition}\label{prop:each real domain has odd reps}
    For any real nice Heegaard diagram, if $\phi \in \pi_2^R(\bm x, \bm y)$ is a real index 1 class which is represented by one of the domains appearing in \Cref{prop:a list of all possible real domains}, then 
    \[
    \# \widehat{\cM}_R(\phi) \equiv 1 \mod 2.
    \]
    In particular, all domains appearing in \Cref{prop:a list of all possible real domains} contribute to the differential.
\end{proposition}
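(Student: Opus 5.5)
We work in the cylindrical setting of \Cref{sec:cylindrical} and use the tautological correspondence of \Cref{prop:real-taut-corr}. This turns the count $\#\widehat{\cM}_R(\phi)$ into a count of real holomorphic maps $u\colon S\to\Sigma\times[0,1]\times\R$ with $u\circ\sigma=\ut\circ u$. By \Cref{prop:a list of all possible real domains}, the source $F$ is a pair of disks, a disk, an annulus, or a punctured torus. By \Cref{prop:tiling structure and standard immersion} and \Cref{lem:standard pattern for immersed real domain}, the projection $u_\Sigma$ is an immersion onto the domain, with multiplicity at most two and only along rectangles. Following \cite[Proposition~3.9]{ozsvath2004holomorphic} and \cite{SarkarWang2010}, we fix a split symmetric almost complex structure $j_\Sigma\times j_\D$. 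With this choice, a holomorphic representative is a pair $(j_F,u_\Sigma)$ together with a branched cover $F\to\D$, and $u_\Sigma$ is determined by a real conformal structure on $F$ for which $u_\Sigma$ is holomorphic. Perturbing the $\alpha$-curves (and the $\beta$-curves symmetrically) gives transversality.

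We treat the cases in order of increasing topology.

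\textbf{Disconnected domains (D1), (D2).} These are $D_1\sqcup\tau(D_1)$. A real curve is determined by its restriction to $D_1$, which is an unrestricted usual holomorphic bigon or rectangle. So $\#\widehat{\cM}_R(\phi)=\#\widehat{\cM}(\phi_1)\equiv1$ by \cite{SarkarWang2010}.

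\textbf{Disk domains (D3)--(D6).} The source is a disk with an antiholomorphic involution, so its quotient is a half-disk. By Riemann mapping, the real maps correspond to holomorphic maps from the quotient polygon with one extra boundary edge mapping to $C$. The requirement is that the polygon with the prescribed angles maps onto the domain, and the real index constraint leaves a single real modulus, which is killed by the $\R$-action. We argue that this modulus space is a point. For a bigon or rectangle this is immediate. For the hexagon and octagon, the $270^\circ$ corners on $C$ force cuts along the completed arcs given by \Cref{lem:classify-polygon-tilings}. In the real quotient, the only freedom is the cut length, and it is fixed by the conformal condition. An intermediate value argument in the cut parameter, as in \cite[Lemma~3.5]{SarkarWang2010} or the rectangle/annulus count in \cite[Appendix]{Lipshitz2005ACR}, gives an odd count.

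\textbf{Annular and toroidal domains.} We use a one-parameter family argument. The moduli space of real conformal annuli (respectively real punctured tori) with the prescribed boundary corners and symmetric cuts is one-dimensional after quotienting by $\R$. The holomorphic representatives are the zeros of a continuous function measuring the mismatch between the conformal modulus of the cut domain and that of the source. We examine the two ends of the cut parameter: one cut degenerates to length $0$, and the other cut reaches a vertex or its image under $\tau$. We show the mismatch function has opposite signs at these ends, which forces an odd number of zeros. This is the argument for annuli in \cite[Appendix]{Lipshitz2005ACR} and \cite{ozsvath2004holomorphic}, now performed on the quotient, which is an annulus, a M\"obius band, or a disk with a boundary segment on $C$. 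For the reflection annuli of types (2,2), (2,4) and (4,4), the quotient is a disk with corners on $C$. Here the two cuts are interchanged by $\tau$, so there is a single real cut parameter. For the punctured torus, the quotient is a M\"obius band with boundary corners, and we again expect one real cut parameter.

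I expect the main obstacle to be the toroidal case and the immersed annuli. There we must check that only one cut parameter survives, and that the immersed regions allowed by \Cref{prop:tiling structure and standard immersion} do not introduce extra boundary degenerations that flip the parity. To handle this, I would first reduce the torus to the annulus case by cutting along the $\tau$-invariant arc through the fixed points, and then verify the endpoint signs by comparing with the embedded minimal tiling, where the count is explicit.
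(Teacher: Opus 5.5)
\paragraph{Verdict.} The argument has a genuine gap, mainly for one family of annuli.

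\paragraph{What is fine.} Your reduction of the paired bigons and rectangles to Sarkar--Wang is correct. Your symmetric-cut, intermediate-value argument is essentially the paper's proof for the free-boundary reflection annulus.

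\paragraph{The gap: $\{1\}$-reflection annuli.} The uniform annulus argument fails for the annuli of types $(2,2)$, $(2,4)$ and $(4,4)$.
\begin{itemize}
\item In all three types, every $270^\circ$ corner lies on $C$ and every corner off $C$ is a $90^\circ$ corner. So there is no pair of cuts interchanged by $\tau$, contrary to your claim.
\item A cut from a $270^\circ$ corner $x\in C$ runs along the $\alpha$-arc (or $\beta$-arc) through $x$. Its $\tau$-image is a cut from the same corner along the other curve. Hence a symmetric source must have all such cuts of length zero.
\item The real source is therefore conformally rigid. There is no parameter along which to run a sign-change argument, and you give no reason why this rigid symmetric annulus admits an odd number of compatible real branched covers (of degree $3$ or $4$ in the $(2,4)$ and $(4,4)$ cases).
\end{itemize}
The paper fills this step by stretching the neck along the $\tau$-invariant core circle. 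This identifies the moduli space with a fibre product of two real polygon moduli spaces, which are controlled by Proposition~\ref{prop:polygon-moduli-space}. The count is then transferred to any adapted generic $J$ via Lemma~\ref{lem:real-hd-acs-independent}. The immersed case is handled by pushing the almost complex structure forward through the immersion. An alternative offered in the paper is Lipshitz's argument: realize the annulus as one end of a one-parameter family of symmetric cuts in a larger domain whose other end is a pair of rectangles.

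\paragraph{The same misconception in the disk case.} For the hexagon and octagon, the cut at a $270^\circ$ corner on $C$ is not ``fixed by the conformal condition''; every unreal cut length gives an unreal solution. Symmetry forces the cut length to be zero. Uniqueness then comes from the explicit real branched cover $z\mapsto\prod(z-a_i)/\prod(z-b_i)$, which is unique up to rescaling. No intermediate-value argument is involved.

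\paragraph{Antipodal annuli and punctured tori.} Here you do have one symmetric pair of cuts, so an endpoint-sign argument is at least meaningful. However, you do not carry out the degeneration analysis. Your proposed reduction of the torus to an annulus, by cutting along the $\tau$-invariant arc through the fixed points, is not a valid move: cutting the source of a holomorphic curve does not produce a curve in another moduli problem without a neck-stretching and gluing argument. The paper avoids this analysis entirely:
\begin{itemize}
\item It takes a nice real diagram of a pointed real lens space whose labelled index-one real domains are antipodal annuli and tori, plus two other arrows.
\item This diagram has $12$ generators. Another diagram of the same manifold has $4$ generators and no real domains, so the rank is $4$.
\item If neither the annuli nor the tori had odd counts, the rank would be $8$. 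If only one type did, $\partial^2\neq 0$.
\item Combined with $J$-independence for indecomposable domains and the reduction of a domain's count to a model diagram with the same source, this forces both counts to be odd.
\end{itemize}
Your proposal contains no substitute for this step.
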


We will break the proof of \Cref{prop:each real domain has odd reps} into a series of lemmas, organized by the topological type of the source.

\subsubsection{Polygons}
For polygons, we prove the following proposition, which is inspired by the proof of \cite[Proposition 2.5]{Hanselman_2016}. See also \cite[Proposition 11.3.2]{OSSunfinished}.

\begin{proposition}\label{prop:polygon-moduli-space}
Let $\cP$ be a 2n-gon with edges numbered consecutively. Endow $\cP$ with a real structure given by reflection across an axis interchanging even and odd edges. See Figure~\ref{fig:polygon}. Suppose that there is a real immersion $u_{\Sigma}:\cP\to \Sigma$ such that $\cD(u_{\Sigma})$ has no $270^\circ$-corners away from $C$. Then, $\vert \widehat{\cM}^J_R(\cD(u))\vert=1$ for any generic path of symmetric almost complex structures $J$. 
    
\end{proposition}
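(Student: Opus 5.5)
We plan to follow the strategy of \cite[Proposition 2.5]{Hanselman_2016}: use the Riemann mapping theorem with boundary degenerations, now carried out equivariantly. By \Cref{lem:tautcor} and \Cref{prop:real-taut-corr}, a real holomorphic representative of $\cD(u)$ is a pair $(S,\sigma)$ with a real branched cover $S\to\D$ and a real holomorphic map $\hat u:S\to\Sigma$. Since $\cP$ is a disk, $S$ has one non-constant component, a disk with $2n$ boundary punctures. The condition that $\cD(u)$ has no $270^\circ$ corners off $C$ says that the only obtuse corners of the immersed polygon are the (at most two) ones on $C$. Hence the map to $\D$ has branching only at the points forced by those obtuse corners, where the local model is $z\mapsto z^2$ as noted before \Cref{lem:classify-annuli-tilings}.

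First we fix a complex structure on $\Sigma$ anti-commuting with $\tau_*$ and pull it back along $u_\Sigma$ to $\cP$. This is possible because $u_\Sigma$ is an immersion, and $\cP$ is then a disk with $2n$ boundary marked points, real with respect to the reflection. Holomorphic representatives of $\cD(u)$ with this split structure correspond to conformal structures on $\cP$ (fixed by $u_\Sigma$) together with a degree one holomorphic map $\cP\to[0,1]\times\R$. This map must send the $\alpha$-edges to $\{1\}\times\R$, the $\beta$-edges to $\{0\}\times\R$, the corners in $\bm x$ to $-\infty$ and those in $\bm y$ to $+\infty$. At a $270^\circ$ corner on $C$ the edge is allowed to be cut, and the cut slides along the $\alpha$- or $\beta$-arc.

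Unwinding this data gives the moduli space as the set of cut lengths such that the conformal cross-ratios of the resulting marked disk match those of a strip with marked ends. Reality forces the cut parameters at the two fixed corners to be determined by the reflection. The real moduli space is then the fixed locus of the induced involution on the usual moduli space, and after quotienting by $\R$ it is parametrized by a single real parameter. That parameter is the position of the cut at the $C$-corner, or, if there are no obtuse corners, the unique conformal modulus.

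For the count itself, we will show the relevant function, a cross-ratio difference, is monotone in the cut parameter and has opposite limits at the two ends. The two ends are cutting nothing and cutting all the way to the opposite corner. From this we get exactly one solution, so $|\widehat\cM_R|=1$ for the split $J$. For a general generic symmetric path $J$, we then invoke the cobordism and continuation argument: the count mod $2$ is invariant, and since there are no boundary degenerations in the sutured setting it is in fact exactly one, as in \cite{OSSunfinished}. We also need equivariant transversality at the solution. For this we use \Cref{prop:cyl_transversality} and argue that the linearized real operator on a disk with real index $1$ is surjective by automatic regularity: the real index equals the quotient index, and the cokernel vanishes for a Riemann–Hilbert problem with these Maslov indices.

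The main obstacle is the monotonicity and endpoint analysis. When $\cD(u)$ is only immersed, as permitted by \Cref{prop:tiling structure and standard immersion}, cuts can pass over the doubly covered rectangles. We also need that the real cut parameter at a $270^\circ$ corner on $C$ behaves like the classical one. We would try to reduce to the quotient $\cP/\text{reflection}$, a half-polygon with a free boundary on $C$, and run Hanselman's degree argument there. This reduction requires checking that the doubly covered regions, which by \Cref{lem:standard pattern for immersed real domain} are rectangles avoiding $\bm x\cup\bm y$, do not affect the limiting configurations.
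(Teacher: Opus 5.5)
There is a genuine gap. The mechanism you propose for getting exactly one solution is not what happens, and the family you would analyze consists of non-real curves.

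\textbf{The strip map.} The map $\cP\to[0,1]\times\R$ must be an $n$-fold branched cover, not degree one. Its $n-1$ branch points are interior. The $z\mapsto z^2$ model you cite describes boundary branch points of $u_\Sigma$ at the end of a cut, not branching of the strip map. More importantly, once the conformal structure on a disk with $2n$ alternating boundary punctures is fixed, there is no cross-ratio condition to satisfy. Identify the source equivariantly with $\HH$, with real structure $z\mapsto-\bar z$. Place the two fixed corners at $0$ and $\infty$, and the remaining $\bm x$- and $\bm y$-corners at points $a_i$, $b_j$ of $\R$. Then
\[
g(z)=\frac{\prod_i (z-a_i)}{\prod_j (z-b_j)},
\]
with the product over finite corners only, has the following properties.
\begin{itemize}
\item Up to sign, it is an $n$-fold branched cover of $\HH$ sending the $\bm x$-corners to $0$, the $\bm y$-corners to $\infty$, and the $\alpha$- and $\beta$-edges to the two halves of $\R$.
\item It is unique up to positive rescaling, which is $\R$-translation of the strip.
\item It is automatically real, because the corner sets are symmetric under $t\mapsto -t$.
\end{itemize}
So every candidate source carries exactly one strip map modulo translation. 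Classically, the moduli space modulo $\R$ of such a polygon is therefore just the space of cut parameters, one for each $270^\circ$ corner.

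\textbf{What the real structure does.} It does not impose an equation on a cut parameter; it eliminates it. At a $270^\circ$ corner on $C$, the two arcs entering the domain are an $\alpha$-arc and its $\tau$-image. So $\tau$ carries a cut of length $t$ along $\alpha$ to a cut of length $t$ along $\beta$. On the one-parameter family of cuts at that corner it acts by $t\mapsto -t$, whose only fixed point is $t=0$. Thus a symmetric representative has no cut at any obtuse corner. Its source is $\cP$ with the structure pulled back by $u_\Sigma$, and $g$ gives the unique point of $\widehat{\cM}_R$. No monotonicity or endpoint analysis is needed.

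\textbf{Where your plan fails.} You treat the cut length at the $C$-corner as a free parameter of symmetric candidates, to be cut down by one equation. But every nonzero value gives a non-real curve, so there is no real family on which your ``cross-ratio difference'' could be monotone. Likewise, when there are no obtuse corners there is no conformal modulus to vary, since $u_\Sigma$ already fixes it. Finally, continuation in $J$ only controls the count modulo $2$. The absence of boundary degenerations does not upgrade this to an exact count of one for an arbitrary generic $J$.
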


Here, $\vert -\vert$ counts the total number of points in a finite set. It follows from this proposition that each real index $1$ domain $\cD$ in a nice real Heegaard diagram with $F$ a polygon contributes to the differential.

\begin{figure}[h]
\def\svgwidth{.4\linewidth}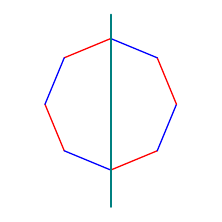
\caption{A real polygon.}
    \label{fig:polygon}
\end{figure}

\begin{proof} 
    
    Let $\HH$ denote the upper half plane in $\C$ and let $\D$ denote the standard unit disk in $\C$. The formula $a+ib\mapsto -a+ib$ gives real structures $c$ on $\HH$ and $\D$. Note that the function $f:\HH\to \D$ defined by \[z\mapsto -i\frac{z-i}{z+i}\] is an equivariant holomorphic equivalence. The tautological correspondence, ~\Cref{lem:tautcor} allows us to analyze $\vert\cM^J_R(u)\vert$ in terms of pseudo-holomorphic $n$-fold branched coverings $\cP$ to $\D$. We find it convenient to further replace $\D$ with $\HH$.

    Endow $\cP$ with the holomorphic structure lifted via $\pi_{\Sigma}\circ u$. This holomorphic structure is real by definition. $\cP$ is essentially a disk with some real holomorphic structure, so it is real bi-holomorphic to $\HH$. Suppose $\cD(u_{\Sigma})\in \pi_2^R(\xv,\yv)$ for generators $\xv$ and $\yv$. Label the corners of $\cP$ by $x_1$, $\ldots$, $x_n$ and $y_1$, $\ldots$, $y_n$ so that $u_\Sigma(x_i)\in \xv,u_\Sigma(j_i)\in \yv$. We require that $x_1\in u_{\Sigma}^{-1}(\xv\cap C)$ and when walking counterclockwise around $\partial\cP$ starting from $x_1$, we meet $x_2$, $y_2$, $\ldots$, $x_n$, $y_n$ in this order. We can find a bi-holomorphic map $h: \cP\to\HH$ taking $x_1$ to $0$ and $y_{k+1}$ to $\infty$ when $n=2k+1$ is odd, or taking $x_1$ to $0$ and $x_{k+1}$ to $\infty$ when $n=2k$ is even. Let $a_i=h(x_i)$ and $b_i=h(y_i)$ be the images of the corners of $\cP$. We know that $\{a_i\}\cup \{b_i\}\subset \R$ is fixed by $c$.

    First, we consider the case the (unparametrized) unreal moduli space is rigid, i.e., we assume that $\cP$ has no $270^\circ$ corners. Define a map $g:\HH\to \HH$ by \[z\to \frac{(z-a_1)\ldots(z-a_n)}{(z-b_1)\ldots(z-b_n)}.\] 
    By inspection, we have:
    \begin{itemize}
        \item $g$ is real, i.e., $c\circ g=g\circ c$;
        \item $g$ is a $n$-fold branched cover from $\HH$ to itself;
        \item $g\circ h$ sends $x_i$ to $0$ and $y_i$ to $\infty$ for all $i$.
    \end{itemize} 

    The last condition tells us that $u_{\D}=f\circ g\circ h$, together with $u_{\Sigma}$, gives rise to a map $$u:\cP\to \Sigma\times [0,1]\times \R$$ contributing to the differential, i.e., $u\in \cM^J_R(\cD(u))$. We also know that any other map $g':\HH\to \HH$ satisfying the conditions above differs from $g$ by a rescaling. Thus, we can conclude that the equivalence class of $u=(u_{\Sigma},u_{\D})$ is the unique element in $\widehat{\cM}^J_R(\cD(u))$. 

    In the case that there are $270^\circ$ corners on the fixed set, the moduli space of unreal holomorphic disks is parametrized by the length of the cuts into the interior of the domain. However, any nonzero cut length breaks the symmetry of the holomorphic curve. Therefore, even in these cases, there is a unique symmetric holomorphic representative. 
    \end{proof}

\subsubsection{Annuli and tori}

To help count pseudo-holomorphic representatives with domains given by annuli or tori, we introduce the following notion, a real analogue of one found in~\cite[Section 9]{OSSunfinished}. 
\begin{definition}
   Fix a real Heegaard diagram $(\Sigma,\bm\alpha,\bm\beta,\t)$. Given $z\in \Sigma\setminus (\bm\alpha\cup \bm\beta)$, a path of symmetric almost complex structure $J$ on $\mathrm{Sym}^m(\Sigma)$ is said to be \emph{adapted to $z$} if for any pair of generators $\xv$, $\yv$ and $\cD\in \pi_2^R(\xv,\yv)$, the moduli space $\cM^J_R(\cD)=\emptyset$ whenever $n_{z}(\cD)<0$.

   We say a path of symmetric almost complex structure $J$ on $\mathrm{Sym}^m(\Sigma)$ is \emph{adapted} if it is $z$-adapted for all $z\in \Sigma\setminus (\bm\alpha\cup \bm\beta)$.
\end{definition}

The transversality result of \Cref{prop:cyl_transversality} tells us that a regular value of the projection $\cM_R \ra \cJ_R$ is adapted. We refer to a path of symmetric almost complex structures that is adapted and within which all of the low-dimension moduli spaces are cut out transversely as \emph{adapted and generic}. We have a residual subset of such paths in $\cJ_R$, according to \Cref{prop:cyl_transversality}.

Recall that a real domain $\cD\in \pi_2^R(\xv,\yv)$ is called \emph{positive} if it is nonzero and only has non-negative coefficients as a linear combination of elementary domains. 

\begin{definition}
We say a real domain $\cD\in\pi_{2}^R(\xv,\zv)$ with $\mu_R(\cD)=1$ is \emph{indecomposable} if for any decomposition ${\cD=\cD_1+\cD_2}$, with $\cD_1\in \pi^R_2(\xv,\yv)$ and $\cD_2\in \pi^R_2(\yv,\zv)$ for any intermediate $\yv$, then either $\mu_R(\cD_1)<0$ or $\mu_R(\cD_2)<0$.
\end{definition}

\begin{lemma}
    In a nice real Heegaard diagram, for any pair of generators $(\xv,\yv)$,  all positive real domains in $\pi_{2}^R(\xv,\yv)$ with real index $1$ are indecomposable.
\end{lemma}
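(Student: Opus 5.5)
Suppose for contradiction that a positive real index $1$ domain $\cD\in\pi_2^R(\xv,\zv)$ admits a decomposition $\cD=\cD_1+\cD_2$ with $\cD_1\in\pi_2^R(\xv,\yv)$, $\cD_2\in\pi_2^R(\yv,\zv)$ and $\mu_R(\cD_1),\mu_R(\cD_2)\ge 0$. Since $\mu_R$ is additive under concatenation, $\mu_R(\cD_1)+\mu_R(\cD_2)=1$. As $\mu_R$ is integer-valued, one of the two summands, say $\cD_i$, has $\mu_R(\cD_i)=0$ and the other has index $1$. The plan is to show that in a nice real diagram a real domain of real index $0$ must be trivial, so the decomposition is trivial and the claim follows.

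The main tool is the non-negativity recorded in Lemma~\ref{lem:facts_V_1_V_Z2} together with formula \eqref{eq:indexformula2},
\[
\mu_R(\cD_i)=\tfrac12\bigl(V_{\Z_2}(\cD_i)+V_{\{1\}}(\cD_i)+e(\cD_i)\bigr).
\]
Lemma~\ref{lem:facts_V_1_V_Z2} is stated for positive domains, so I first need to check that the relevant summand is positive. The decomposition need not be into positive pieces. I would therefore not reduce to the case where both pieces are positive. Instead I would argue directly at the level of local multiplicities: since $\cD$ is positive, every elementary domain appears in $\cD$ with non-negative coefficient.

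Here is how I would handle the possibility that $\cD_1$ or $\cD_2$ has negative coefficients. Take $\cD_1$ and $\cD_2$ with $\mu_R\ge 0$. Write $\cD_2=\cD-\cD_1$. At each vertex $p$ of $\yv$, the corner multiplicity bookkeeping gives $n_{\yv}(\cD_1)+n_{\yv}(\cD_2)=n_{\yv}(\cD)+(\text{terms at }\yv\text{ that cancel})$. Combined with Proposition~\ref{prop:chi(S)} and the fact that $e$ is additive, this yields $\mu_R(\cD_1)+\mu_R(\cD_2)=\mu_R(\cD)+\bigl(\text{an extra non-negative contribution } \tfrac12(n_{\yv}\text{-type terms})\bigr)$. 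That contribution vanishes only when $\yv$ agrees with $\xv$ or $\zv$ away from the corners of $\cD$.

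The key steps, in order, are as follows.
\begin{enumerate}
\item Establish additivity of $\mu_R$ and reduce to the situation where one piece has index $0$.
\item Show that any index-$0$ real class between generators in a nice real diagram is supported away from $\partial\Sigma$. If it is nonzero, it is either a nonzero real periodic-type domain or forces $V_{\Z_2}=V_{\{1\}}=e=0$.
\item Use the fact, already invoked in the proof of Lemma~\ref{lem:domains off C}, that in a nice diagram every positive real domain has positive real index. Combined with positivity of $\cD$ and of the pieces obtained by restricting to the support of $\cD$, this shows the index-$0$ piece is the constant domain with $\yv$ equal to $\xv$ or $\zv$.
\end{enumerate}

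The main obstacle is the non-positive case. When a piece has negative coefficients, the non-negativity of $V_{\Z_2}$, $V_{\{1\}}$ and $e$ fails, because the contributions $-\sigma/2$ at vertices on $C$ interact with negative corner multiplicities. To handle this, I would first try to replace $\cD_1$ by $\cD_1$ plus a real periodic domain. I would then argue via Sarkar–Wang's trick, comparing the combined corner counts at $\yv$, that the total $\mu_R(\cD_1)+\mu_R(\cD_2)$ exceeds $1$ unless the decomposition is trivial.
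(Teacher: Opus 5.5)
There is a genuine gap, and it lies where you decide not to assume the pieces are positive. The lemma is only true when the decompositions $\cD=\cD_1+\cD_2$ in the definition of indecomposable range over nonzero \emph{positive} real domains. That is also how it is used in Lemma~\ref{lem:real-hd-acs-independent}, where $\cD_1,\cD_2$ are domains of holomorphic curves for an adapted $J$.

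For arbitrary classes the statement is false. Take any $\cD_1\in\pi_2^R(\xv,\yv)$ with $\mu_R(\cD_1)\in\{0,1\}$ and set $\cD_2:=\cD-\cD_1\in\pi_2^R(\yv,\zv)$; then neither index is negative. Such a $\cD_1$ exists, for instance, whenever some generator $\yv$ lies in the same real $\mathrm{Spin}^c$ structure and relative real grading as $\xv$ or $\zv$, or whenever there is a nonzero real periodic domain of real index $0$.

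For the same reason your proposed repairs cannot work:
\begin{itemize}
\item $\mu_R$ is exactly additive, as you yourself use in step~1. So there is no ``extra non-negative contribution'' at $\yv$.
\item $\mu_R(\cD_1)+\mu_R(\cD_2)$ is always exactly $1$, so it can never ``exceed $1$ unless the decomposition is trivial''.
\item Without positivity, a real class of index $0$ need not be trivial.
\item Restricting a non-positive piece to the support of $\cD$ does not produce an element of $\pi_2^R$.
\end{itemize}

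The fix is to restrict to positive pieces. Then the fact you cite in step~3 finishes the proof at once; this is what the paper's one-line appeal to the index formula amounts to.
\begin{itemize}
\item For a positive real domain in a nice diagram, $V_{\Z_2}$, $V_{\{1\}}$ and $e$ are all non-negative (Lemma~\ref{lem:facts_V_1_V_Z2}).
\item If such a domain is nonzero with distinct endpoints, it has a corner. That corner contributes at least $1/4$ to $V_{\Z_2}$, or to $V_{\{1\}}$ by Condition~\ref{def:patternnice} of Definition~\ref{definition: nice diagram}.
\item A nonzero positive periodic piece (the case $\yv\in\{\xv,\zv\}$) is excluded by admissibility of the nice diagrams in use.
\end{itemize}
By Equation~\eqref{eq:indexformula2} and integrality, each nonzero positive piece therefore has $\mu_R\ge 1$. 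Hence $\mu_R(\cD_1)+\mu_R(\cD_2)\ge 2>1=\mu_R(\cD)$, so no decomposition into nonzero positive pieces exists at all. No analysis of negative coefficients is needed, and you should drop the non-positive case entirely.
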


\begin{proof}
    This follows from the real index formula, Equation~\eqref{eqn:combinatorial index}.
\end{proof}

\begin{lemma}\label{lem:real-hd-acs-independent}
Suppose that $\cD$ is an indecomposable real domain with $\mu_R(\cD)=1$. Then for any choice of an adapted and generic path $J$, the quantity $\#\widehat{\cM}^J_R(\cD)$ is independent of $J$.
\end{lemma}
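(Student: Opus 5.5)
We use the standard continuation (cobordism) argument, adapted to the real, adapted setting in the manner of \cite[Section 9]{OSSunfinished}. Fix two adapted and generic paths $J_0,J_1\in\cJ_R$ and join them by a generic one-parameter family $\{J_t\}_{t\in[0,1]}$ of symmetric almost complex structures, each of which is $z$-adapted for every $z$. We consider the parametrized moduli space
\[
\cM_R^{\{J_t\}}(\cD):=\bigcup_{t\in[0,1]}\widehat{\cM}^{J_t}_R(\cD)\times\{t\}.
\]
By \Cref{prop:cyl_transversality}, applied in its parametric version, this space is a smooth $1$-manifold with boundary $\widehat{\cM}^{J_0}_R(\cD)\sqcup\widehat{\cM}^{J_1}_R(\cD)$, away from its non-compact ends. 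The goal is to show that the compactification adds no further ends. Then the mod $2$ counts at $t=0$ and $t=1$ agree, which gives independence of $J$.

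We next enumerate the possible degenerations in Gromov compactness.

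\begin{enumerate}
\item \emph{Strip breaking.} A limit is a broken curve $\cD=\cD_1+\cD_2$ with $\cD_1\in\pi_2^R(\xv,\yv)$ and $\cD_2\in\pi_2^R(\yv,\zv)$, possibly together with an index $-1$ component occurring at some isolated time $t$. Each level must have a nonempty moduli space for an adapted structure. Adaptedness forces each $\cD_i$ to have non-negative multiplicities in every region, so each $\cD_i$ is a positive real domain. In a nice diagram, positive real domains have non-negative real index; this follows from Equation~\eqref{eq:indexformula2} together with \Cref{lem:facts_V_1_V_Z2}(1). Indecomposability of $\cD$ says that in any splitting one of the pieces has $\mu_R<0$, so no such splitting can have both pieces positive. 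Hence strip breaking cannot occur. This includes the index $(-1,2)$ splittings that might otherwise appear at isolated times in a one-parameter family.
\item \emph{Boundary degenerations and sphere bubbles.} These are excluded as in \Cref{lem:d^2 = 0}. In the sutured setting every such degeneration would have to cover a region adjacent to $\partial\Sigma$, but domains have multiplicity zero there.
\item \emph{Real-specific degenerations.} In the real setting there are additional codimension-one phenomena: disk bubbles along the fixed locus $M^R$, and degenerations where the curve passes through $C$. A symmetric bubble carries a positive real domain of nonnegative index, and it has no corners. By the index formula, it would have to have positive $e$ and therefore, by \Cref{lem:facts_V_1_V_Z2}(2), carry all of the index. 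This would leave the main component with nonpositive index in a positive class, which forces it to be constant. Such a configuration then contradicts the fact that $\cD$ has corners at $\xv\Delta\zv$.
\end{enumerate}

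Once these ends are excluded, $\cM_R^{\{J_t\}}(\cD)$ is a compact $1$-manifold whose boundary is exactly $\widehat{\cM}^{J_0}_R(\cD)\sqcup\widehat{\cM}^{J_1}_R(\cD)$. It therefore has an even number of boundary points, and so $\#\widehat{\cM}^{J_0}_R(\cD)\equiv\#\widehat{\cM}^{J_1}_R(\cD)\pmod 2$.

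The main obstacle is the first ingredient: producing a one-parameter family that is both generic and adapted at every time. Adaptedness is what keeps every broken level positive, and this is the property that indecomposability exploits. We would follow \cite[Section 9]{OSSunfinished}. The first step is to note that the set of adapted symmetric structures contains the residual set of regular values of $\cM_R\to\cJ_R$. The second step is to use the same averaging argument as in \Cref{prop:cyl_transversality} to achieve parametric transversality within symmetric families. Finally, at the isolated times where an index $-1$ negative class might acquire a representative, we would invoke adaptedness to rule this out directly: no class with a negative multiplicity is represented.
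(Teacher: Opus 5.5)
Your architecture matches the argument the paper relies on: it defers to \cite[Lemma 11.1.3]{OSSunfinished}. That argument uses a parametrized moduli space over a generic path of adapted symmetric structures. Adaptedness makes every level of a broken limit a positive domain, indecomposability removes the broken ends, and the sutured condition removes boundary degenerations. The gap is in where you get the index bound on the broken pieces. You argue that each piece is positive and that ``in a nice diagram positive real domains have $\mu_R\ge 0$.'' But the lemma is stated for an arbitrary real diagram, and that inequality is \Cref{lem:facts_V_1_V_Z2}(1), which genuinely uses niceness: in general the triple-index terms can contribute negatively. So as written you only prove the lemma for nice diagrams. (There the point is moot anyway: a nonzero positive real domain has $\mu_R\ge 1$, so an index one positive domain admits no splitting into two nonzero positive pieces, which is what the paper records just before this lemma.) The bound should come from genericity of the family instead. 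For a generic path $\{J_t\}$, the parametrized space of curves in a real class $\phi$ has dimension $\ind_R(\phi)+1$ with $\ind_R(\phi)\le\mu_R(\phi)$, and nonconstant curves carry a free $\R$-action. Hence only classes with $\mu_R\ge 0$ are ever represented, and those of index $0$ only at isolated times. The possible breakings of an index one class are therefore of type $(0,1)$ and $(1,0)$, not $(-1,2)$ as you write. Indecomposability, applied to the pieces (which are positive by adaptedness), forces a piece of negative index into each such breaking, which is impossible.

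Your item (3) also needs to be replaced. Disk bubbles on $M^R$ or on $\T_\alpha$ are not new phenomena requiring an index argument. Doubling with the involution turns them into a holomorphic sphere in $\Sym^m(\Sigma)$, respectively a symmetric pair of $\alpha$- and $\beta$-boundary degenerations. The sphere has as domain a $2$-cycle in $\Sigma$, hence zero because every component of $\Sigma$ has boundary. The boundary degenerations have as domain a positive combination of components of $\Sigma\setminus\bm\alpha$ (or $\Sigma\setminus\bm\beta$), each of which meets $\partial\Sigma$ by balancedness. So both are excluded exactly as in your item (2). Your argument instead invokes \Cref{lem:facts_V_1_V_Z2}(2), which concerns index one domains in nice diagrams and says nothing about a bubble, and a curve ``passing through $C$'' is not a degeneration at all. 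Finally, adaptedness along the whole family is not the obstacle you describe, and your proposed remedy (invoking adaptedness at the bad times) is circular. By (J2), each $\{z_i\}\times[0,1]\times\R$ is $J_t$-holomorphic for every $t$, so positivity of intersections gives $n_{z_i}\ge 0$ for every $J_t$-curve. In other words, every structure satisfying (J1)--(J5) is automatically adapted.
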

\begin{proof}
    The proof of \cite[Lemma 11.1.3]{OSSunfinished} works in the real case without change.
\end{proof}

\begin{proposition}\label{prop:moduli space of 1-reflection annuli}
Let $F$ be a $\{1\}$-reflection annulus appearing in Section~\ref{subsub:Annulus domain} and $\cD$ be a corresponding domain in $\Sigma$. For any adapted and generic path of symmetric almost complex structures $J$, $\# \widehat{\cM}^J_R(\cD)\equiv 1 \mod 2$.
\end{proposition}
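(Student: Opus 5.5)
We would prove this by reducing the count to a standard model and then counting there. Recall that a $\{1\}$-reflection annulus of type $(2,2)$, $(2,4)$, or $(4,4)$ has quotient a disk. By Lemma~\ref{lem:real-hd-acs-independent} and the preceding lemma, every positive real index $1$ domain in a nice diagram is indecomposable. So $\#\widehat{\cM}^J_R(\cD)$ is independent of the adapted, generic path $J$, and we may compute it with any convenient choice. We also need that the count depends only on the combinatorial type of the source and its tiling, not on the ambient diagram. To obtain this, we would pass, via the tautological correspondence (Lemma~\ref{lem:tautcor} and Proposition~\ref{prop:real-taut-corr}), to a local model diagram $\cH_F$ obtained by pulling back the $\alpha$- and $\beta$-arcs to $F$. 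By Proposition~\ref{prop:tiling structure and standard immersion}, the immersion $F\to\Sigma$ factors through $F$ with a simple (or, if embedded, possibly wound) tiling. In the spirit of \cite[Section~9]{OSSunfinished}, holomorphic representatives of $\cD$ correspond to holomorphic representatives of the tautological domain in $\cH_F$.

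Next we would reduce the model count to the quotient. A real holomorphic annulus $u:F\to\Sigma\times[0,1]\times\R$ with anti-holomorphic involution $\sigma$ fixing two boundary-meeting arcs (the reflection) is determined by its restriction to a fundamental domain $F/\sigma$. This fundamental domain is a disk whose boundary consists of $\alpha$-arcs together with arcs mapping to the fixed set $C$. The moduli space of such real maps is then identified with maps from a polygon with ``free'' boundary on $C$. Concretely, we would uniformize $F$ as the standard annulus $\{1\le|z|\le R\}$ with $\sigma(z)=\bar z$; the conformal modulus $R$ is the only continuous parameter. The branched-cover map $F\to\HH$ (after identifying $[0,1]\times\R\cong\HH$ as in Proposition~\ref{prop:polygon-moduli-space}) must send the $\bm x$-corners to $0$, send the $\bm y$-corners to $\infty$, and commute with the real structures. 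For fixed $R$ and fixed corner positions, such a map exists precisely when the divisor of zeros minus poles is principal on the doubled surface. That condition is a single real equation, namely the vanishing of the Abel–Jacobi-type period $\sum\log|a_i|-\sum\log|b_i|$ around the core circle. The free parameters are $R$, the positions of the corners along the boundary circles modulo the real conformal automorphisms, and the cut lengths at the $270^\circ$ corners.

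The main obstacle is showing that this one-dimensional family, after quotienting by $\R$, has an odd number of solutions. We would first try a degeneration argument in the style of \cite[Lemma~11.1.3 and Section~9]{OSSunfinished}. Vary the cut parameter at one $270^\circ$ corner on $C$ and examine the ends of the resulting one-parameter family:
\begin{enumerate}
\item as the cut length tends to zero, the annulus degenerates to a configuration with $R\to\infty$ or $R\to 1$;
\item in the limit, the annulus pinches its core and breaks into a real bigon or polygon glued to a simpler piece, each counted with multiplicity one by Proposition~\ref{prop:polygon-moduli-space}.
\end{enumerate}
The end count then shows that the mod $2$ count equals the count of a single real polygon, hence $1$. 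Symmetry forces any cut along one $270^\circ$ corner to be accompanied by its mirror. The one-parameter family is therefore really parametrized by a single cut length, which lets us carry out the unreal $1$-dimensional argument in the real setting. We would treat types $(2,2)$, $(2,4)$, and $(4,4)$ uniformly. For the wound tilings $D^k_c(T)$, we would check that winding changes only the conformal data, not the parity of the end count.
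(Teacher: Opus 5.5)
Your preliminary reductions match the paper: \Cref{lem:real-hd-acs-independent} makes the count independent of the adapted generic $J$, and the immersed case can be handled by lifting through the source $F$. The gap is in the step that is supposed to produce the count. You propose to vary a cut at a $270^\circ$ corner on $C$ and to symmetrize it by adding its mirror. For every $\{1\}$-reflection annulus, all $270^\circ$ corners lie on $C$; the off-$C$ corners in types $(2,4)$ and $(4,4)$ are $90^\circ$. Such a corner $x$ is therefore fixed by $\sigma$, and $\sigma$ carries a slit along the extension of the $\alpha$-edge at $x$ to a slit along the extension of the $\beta$-edge at the \emph{same} point $x$. Making both slits separates the three local quadrants and gives the source three $90^\circ$ corners over $x$. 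That is not an allowed source, since it has the wrong number of punctures. Making only one slit breaks the symmetry.

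So, exactly as the paper notes at the end of the proof of \Cref{prop:polygon-moduli-space}, every real representative has cut length zero at each corner on $C$. There is no one-parameter family of real curves on which to run your end count. The ends you list are also not right even in the unreal argument. Cut length $0$ is the undeformed annulus, not a degenerate end. You never identify what the other end contributes, so ``the count equals that of a single real polygon'' is asserted rather than derived. In the split setting you appeal to, the conformal structure of $F$ is pulled back from $\Sigma$ and is rigid once no cuts are allowed. Hence $R$ is not a free parameter in your ``one real equation'' count either.

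What is missing is a one-parameter degeneration compatible with the real structure. The paper gets it by stretching the neck along the $\sigma$-invariant core circle of $F$. For a sufficiently stretched symmetric $J$, the annulus pinches into two real polygons. The moduli space is then identified with a fiber product $\widehat{\cM}_R(\cD_1)\times_\rho\widehat{\cM}_R(\cD_2)$ of real polygon moduli spaces, each with a marked point and subject to a matching condition, and each controlled by \Cref{prop:polygon-moduli-space}. This yields a unique representative, and $J$-independence transfers the count to every adapted generic $J$. An alternative is Lipshitz's argument in the remark following the proof. There one realizes $\cD$ as one end of a one-parameter family of symmetric cuts in a larger domain whose other end is a pair of rectangles with known counts. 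Parity of ends then forces $\#\widehat{\cM}^J_R(\cD)$ to be odd.
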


\begin{figure}[h]
\def\svgwidth{.8\linewidth}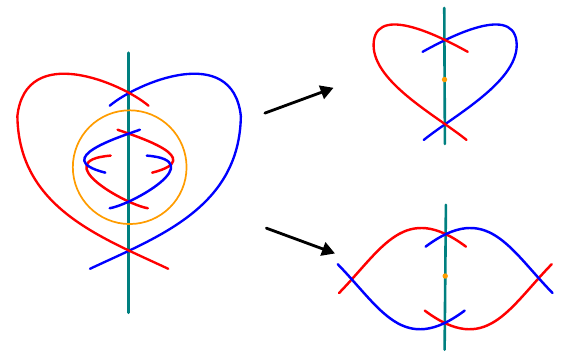
    \caption{Breaking an annulus into a pair of polygons via neck stretching.}
\label{fig:neck_stretching}
\end{figure}

\begin{proof}

Let $(\Sigma,\bm\alpha,\bm\beta,\t)$ be the real Heegaard diagram bearing the real domain $\cD$. Using the cylindrical interpretation of real sutured Floer homology from~\Cref{sec:cylindrical}, $\cD$ is the image of a holomorphic embedding $F\to \Sigma\times [0,1]\times \R$ followed by the projection to $\Sigma$. First, let us assume that this composition is an embedding. Let $C_0$ be the core of the annulus $F$, which can be arranged to be fixed by the involution. By stretching along $C_0$, we can deduce that the moduli space of real curves in the homology class $\cD$ has a fiber product description $\widehat{\cM}_R(\cD_1)\times_\rho \widehat{\cM}_R(\cD_2)$, where $\cD_1$ and $\cD_2$ are real polygons which satisfy a matching condition. 
Both moduli spaces are 1-dimensional by \Cref{prop:polygon-moduli-space}, parametrized by the position of the marked point. Hence, $\widehat{\cM}_R(\cD_1)\times_\rho \widehat{\cM}_R(\cD_2)$ is 1-dimensional. Hence, for a sufficiently stretched symmetric almost complex structure, we can find a holomorphic representative for $\cD$, unique up the rescaling. But, as we are in a nice real Heegaard diagram, the moduli spaces involved are independent of the choice of a symmetric almost complex structure (see \Cref{lem:real-hd-acs-independent}). Hence, the proposition follows in the case that $\cD$ is embedded. 

If $\pi_\Sigma \circ u: F \ra \Sigma$ is immersed, by \Cref{lem:classify-annuli-tilings}, the self-intersections of $A = \pi_\Sigma \circ u(F)$ are rectangles. By identifying pairs of rectangles, we obtain an immersion $g: F \ra A$. By the argument above, we can find a holomorphic map $u: F \ra F \times [0,1] \times \R$. The map $g$ induces a map $F \times [0,1] \times \R \ra \pi_\Sigma^{-1}(A) \times [0,1] \times \R$, which of course includes into $\Sigma \times [0,1] \times \R$. This produces a commutative diagram:
\begin{align*}
    \begin{tikzcd}[ampersand replacement = \&]
        \& F \times [0,1] \times \R \ar[r]\ar[d] \& F \ar[d] \\
        \& \pi_\Sigma^{-1}(A) \times [0,1] \times \R \ar[r]\ar[d,hook] \& A \ar[d,hook] \\
         F \ar[r]\ar[ur,in = 180]\ar[uur, out = 90, in = 180,"u"] \& \Sigma\times [0,1]\times \R \ar[r] \& \Sigma.
    \end{tikzcd}
\end{align*}
Since the vertical maps are immersions, we can push forward a symmetric almost complex structure on $F \times [0,1]\times \R$ with respect to which $u$ is $J$-holomorphic. With respect to this almost complex structure, the map obtained by composing $u$ with these immersions is holomorphic. Conversely, if $u: F \ra \Sigma\times [0,1]\times \R$ is holomorphic, we can restrict to obtain a holomorphic map $u: F \ra F\times [0,1]\times \R$. In particular, there is a one-to-one correspondence between the moduli space of real curves into $\Sigma$ with domain $\cD$ and those into $F$ with the same domain. Therefore, in the immersed case, we also have that the mod 2 count is 1. \end{proof}

\begin{remark}
If one prefers not to resort to neck stretching, Proposition~\ref{prop:moduli space of 1-reflection annuli} can also can be proved by an argument communicated to us by Robert Lipshitz. Consider the domain from $ac$ to $bd$ shown in the center of Figure~\ref{fig:finding_holomorphic_rep_without_using_neck_stretching}. There is a 1-parameter family of symmetric cuts which can be made from the point $c$; on one end of the moduli space, we cut out to the points $x$ and $\tau(x)$, and the curve breaks into a pair of rectangles, each of which clearly admits a holomorphic representative. On the other end, we cut to the point $b$, and the curve breaks into a bigon (which we know has a unique symmetric holomorphic representative) and the annulus in question. Since this curve appears at the end of this 1-parameter family, it must therefore also admit an odd number of representatives.

All other $\{1\}$-reflection annuli which appear in nice real diagrams can be dealt with in the same way. See \cite{LO_Real_bordered}.
\end{remark}

\begin{figure}[h]
\def\svgwidth{1\linewidth}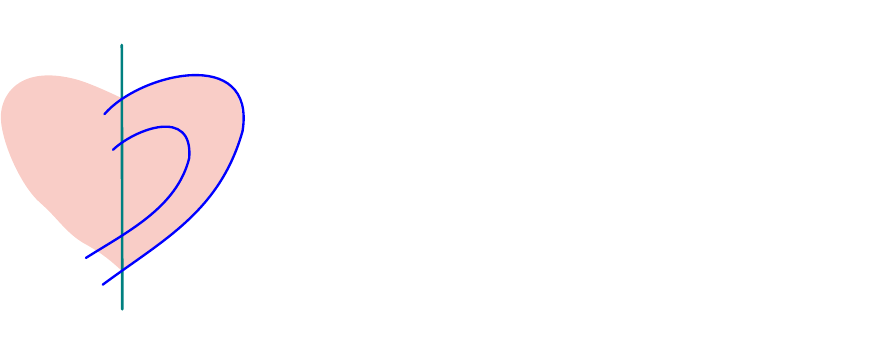
\caption{A 1-parameter family of holomorphic curves from $ac$ to $cd$.}
\label{fig:finding_holomorphic_rep_without_using_neck_stretching}
\end{figure}

\begin{proposition}
Let $F$ be a free-boundary reflection annulus from Section~\ref{subsub:Domains with no vertex on C}, and let $\cD$ be a corresponding domain in $\Sigma$. For any adapted and generic path of  symmetric almost complex structures $J$, we have that $\# \widehat{\cM}^J_R(\cD)=1$. 
\end{proposition}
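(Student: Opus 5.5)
The plan follows the strategy of Proposition~\ref{prop:moduli space of 1-reflection annuli}, with changes for a fixed-point-free boundary. First I reduce to the embedded case. By Proposition~\ref{prop:tiling structure and standard immersion} and the argument for free-boundary reflection annuli there, any such domain is embedded. So $\pi_\Sigma\circ u\colon F\to\Sigma$ is an embedding up to the boundary branch points. These come from the pair of symmetric cuts, which we glued back as in the discussion before Lemma~\ref{lem:classify-annuli-tilings}. By Lemma~\ref{lem:real-hd-acs-independent}, the count $\#\widehat{\cM}^J_R(\cD)$ does not depend on the adapted generic path $J$. Hence it suffices to compute it for one convenient $J$, for instance a sufficiently neck-stretched one, or one pushed forward from $F$ itself as in the immersed part of the previous proof.

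Next I use the unreal moduli space. Forgetting the real structure, $\cD$ is an annulus with two $90^\circ$ and two $270^\circ$ corners. By the relation between $\mu$ and $\mu_R$ with no triple-index terms, since $V_{\{1\}}=0$, it has ordinary index $2\mu_R=2$. Its unreal holomorphic representatives form a one-parameter family after quotienting by $\R$. Following \cite[Lemma 9.4]{OS3mnfldspropex} and the annulus analysis in \cite{OSSunfinished}, I parametrize this family by the lengths of the two cuts starting at the $270^\circ$ corners. The conformal modulus of the resulting branched-cover data must match the modulus of $F$, and this gives one equation in the two cut parameters $(s,t)$.

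The involution $\tau$ swaps the two $270^\circ$ corners, since they are interchanged off $C$. Hence it acts on this solution curve by $(s,t)\mapsto(t,s)$, and real solutions are exactly the points with $s=t$. I would then show that the conformal-modulus function restricted to the diagonal is strictly monotone. At one end the symmetric cuts reach the opposite boundary and the annulus degenerates into a pair of rectangles swapped by $\tau$, giving modulus tending to $0$ or $\infty$. At the other end the cuts have length zero and the modulus tends to the opposite extreme. An intermediate value argument then gives at least one real solution, and monotonicity gives exactly one, so $\#\widehat{\cM}^J_R(\cD)=1$ rather than merely $1 \bmod 2$.

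The main obstacle is this monotonicity (uniqueness) claim. Parity would follow more cheaply from a Lipshitz-style one-parameter family argument. For the exact count I would model the annulus by an explicit symmetric Schwarz--Christoffel-type slit domain and compute the modulus as an explicit function of the common cut length. I would also check transversality of the real solution: symmetric perturbations average to transversality by Proposition~\ref{prop:cyl_transversality}, so a generic $J$ realizes this count.
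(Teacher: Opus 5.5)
You have a genuine gap: the intermediate-value argument on the diagonal is set up incorrectly, and as stated it does not produce a solution. The preliminary reductions are fine: embeddedness, independence of the adapted generic $J$, ordinary index $2$, and real solutions being the symmetric cut configurations $s=t$.

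\textbf{You track the wrong quantity.} Whether the annulus $F_\ell$ with symmetric cuts of length $\ell$ admits a degree-two holomorphic map to $[0,1]\times\R$ with the prescribed boundary behaviour is not a condition on its modulus. Uniformize $F_\ell$ as $\{1\le |z|\le r\}$. The harmonic function equal to $1$ on the $\alpha$-arcs and $0$ on the $\beta$-arcs has a single-valued conjugate exactly when the angles $\theta_1(\ell),\theta_2(\ell)$ swept out by the $\beta$-arcs on the two boundary circles agree, whatever $r$ is. On the real family the involution forces $\theta_1+\theta_2=2\pi$, so the condition reads $\theta_1=\pi$. There is no target value for the modulus, so its monotonicity would prove nothing.

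\textbf{The ends are misidentified.} Cut length zero is not an end. $F_0$ is just $\cD$ with its pulled-back conformal structure, a non-degenerate annulus. The real family continues through it into the other pair of symmetric cut directions: the $\alpha$-extension at $y_1$ with the $\beta$-extension at $y_2$, versus the reverse. The symmetric cuts also cannot reach the opposite boundary and split $F$ into a pair of rectangles. Any arc from one boundary circle to the other crosses the fixed core circle, and a cut and its $\tau$-image cross it at the same point, so the two cuts collide on $C$ first. Both ends of the real family are degenerations at the fixed set.

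\textbf{What is needed.} This is what the paper does. Parametrize the symmetric cuts by $t\in\R$, with $t\ge 0$ for one pair of directions and $t\le 0$ for the other. From the degenerate limits, show that $\theta_1\to 0$ and $\theta_2\to 2\pi$ as $t\to+\infty$, and $\theta_1\to 2\pi$ and $\theta_2\to 0$ as $t\to-\infty$. Hence $\theta_1-\theta_2$ changes sign and has an odd number of zeros, i.e. $\#\widehat{\cM}^J_R(\cD)\equiv 1 \pmod 2$.

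This argument gives only the parity. The strict monotonicity you flag as the main obstacle is not proved in the paper either, and your plan gives no argument for it. So your proposal does not establish the exact count $1$, as opposed to an odd count.
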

The following argument was suggested to us by Robert Lipshitz. It is a modification of a standard argument in Heegaard Floer theory; see \cite[Section~11.4]{OSSunfinished} for instance. 
\begin{proof}
Any possible tiling structure on $F$ admits two possible symmetric cuts. See Figure~\ref{fig:cuts_on_Z_2_reflection_annulus} for an example. 
For a fixed tiling structure, such cuts yield a one parameter family of conformal structure $\{F_t\}_{t\in\R}$ on $F$. Parametrizing the trajectories of cuts using $[0,\infty)$, for $t\ge 0$, we make a symmetric cut starting from $y_1$ and $y_2$ of length $t$ along the arc shown in the top-left frame of \Cref{fig:cuts_on_Z_2_reflection_annulus}; for $t\le 0$, we make a cut of length $t$ along the arc shown in the top-right frame of \Cref{fig:cuts_on_Z_2_reflection_annulus}. Identifying the annulus $F_t$ with a standard annulus $\{z\in \C|1\le \vert z\vert \le r\}$ for some $r>1$, we use $\theta_1: \R\to [0,2\pi]$ to record the angle swept out by the $\beta$-segment in the outer boundary and use $\theta_2$ to do the same for the inner boundary. Now, we claim that: \begin{enumerate}
    \item \label{ttoinf} as $t\to \infty$, $\theta_1(t)\to 0$, $\theta_2(t)\to 2\pi$;
    \item \label{tto-inf} while as $t\to -\infty$, $\theta_2(t)\to 0$, $\theta_1(t)\to 2\pi$.
\end{enumerate}
This follows from the argument of \cite[Proposition~11.4.5]{OSSunfinished}. We show the Case~\ref{ttoinf}, and Case~\ref{tto-inf} follows similarly. As $t\to \infty$, the annulus degenerates into a disk with a nodal point equipped with an involution that interchanges the two subdisks connected at the nodal point. See the bottom pictures in Figure~\ref{fig:cuts_on_Z_2_reflection_annulus} for an example and for our notation of vertices. Viewing the disk as $[0,1]\times \R$ with reflection $(s,t)\mapsto (1-s,t)$, our disk has $\{1/2\}\times \R$ (thus the fixed set) collapses into the nodal point. From this point of view, we can see that $[y_1,x_1]$ is a compact interval in $\{1\}\times \R$ which shares the same length as $[y_2,x_2]$ due to the symmetry. Note that the $\beta$-segment on inner boundary is given by $(-\infty,y_1]\cup[x_1,+\infty)$, while the $\beta$-segment on outer boundary is given by $[y_2,x_2]$. The first has bounded complement in $\R$, while the second is itself bounded. The claim follows from this observation.

It is a standard result that the annulus with conformal structure $F_t$ appears as a branched double cover of the disk if and only if $\theta_1(t)=\theta_2(t)$. Then the claim shows that $\theta_1-\theta_2$ has an odd number of zeros, from which the claimed modulo $2$ count of elements of the moduli space follows easily. 
\end{proof}

\begin{figure}[h]
    \def\svgwidth{.8\linewidth}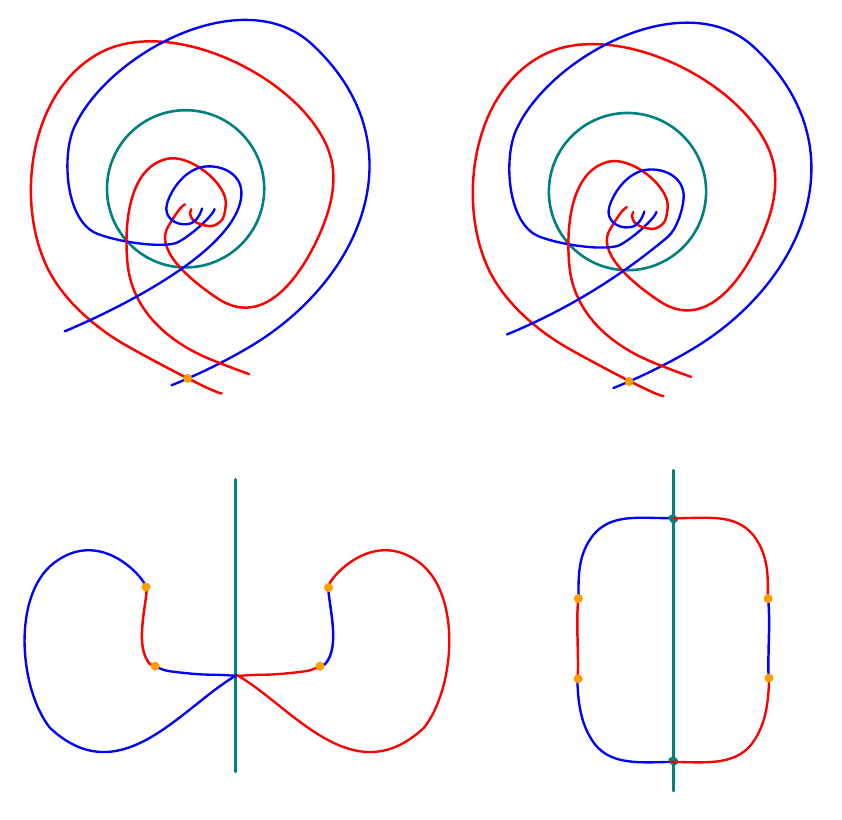
    \caption{Top: cuts on a $\Z_2$-reflection annulus. Bottom: a degenerated annulus and its identification with the standard disk.}
    \label{fig:cuts_on_Z_2_reflection_annulus}
    \end{figure}

Finally, we deal with the antipodal annuli from Section~\ref{subsub:Domains with no vertex on C} and the tori from Section~\ref{subsub:Toroidal Domains}. Each has an odd number of symmetric holomorphic representatives, which will be deduced by computing $\widehat{\HFR}$ for a stabilized lens space, and observing that both domains must admit representatives, else we would have a failure of $\partial^2 = 0$.

\begin{proposition}\label{prop: antipodal annulus has representative}
Let $\cD$ be the domain of an antipodal annulus appearing in \Cref{prop:a list of all possible real domains}. For any adapted and generic path of symmetric almost complex structures $J$, we have that $\# \widehat{\cM}^J_R(\cD)=1$. 
\end{proposition}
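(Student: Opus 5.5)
The plan is the indirect argument the paper announces just before the statement. We will not count antipodal annuli directly. Instead, we build a specific nice real Heegaard diagram in which an antipodal annulus is forced to contribute to the differential, because otherwise $\partial^2\neq 0$.

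First, note that by \Cref{lem:real-hd-acs-independent} and the indecomposability lemma, $\#\widehat{\cM}^J_R(\cD)$ depends only on the domain $\cD$ (for adapted generic $J$). The count should also depend only on the combinatorial type of $\cD$, meaning the source $F$ and its tiling up to winding and refinement from \Cref{lem:classify-annuli-tilings}. To justify this, we localize: exactly as in the immersed case of \Cref{prop:moduli space of 1-reflection annuli}, the curves with domain $\cD$ correspond bijectively to curves into $F\times[0,1]\times\R$. This correspondence uses the standard form of immersions in \Cref{prop:tiling structure and standard immersion} to pass to $F$. So it suffices to compute the count for one model antipodal annulus (or one per winding class) in any convenient diagram.

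Second, we exhibit a test diagram: a genus-one (possibly stabilized) real Heegaard diagram for a lens space $L(p,q)$ equipped with a free involution (no fixed set), made nice by \Cref{thm: existence of nice diagram}. We choose it so that among the index-1 domains the only ones not already counted by \Cref{prop:polygon-moduli-space}, \Cref{prop:moduli space of 1-reflection annuli} and the free-boundary reflection annulus proposition are antipodal annuli. Using \Cref{lem:domains off C}, in the absence of fixed points every index-1 domain is either a $\tau$-pair of bigons or rectangles, which always count $1$ since each half is a Sarkar--Wang domain, or one of the two annuli. We then look for generators $\bm x,\bm z$ and an index-2 real class from $\bm x$ to $\bm z$ with the following property: its decompositions into two index-1 positive classes consist of exactly one decomposition through an antipodal annulus, together with an odd number of decompositions through domains whose counts are already known to be odd. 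Since $\partial^2=0$, the total count of broken pairs is even. This forces the antipodal annulus count to be odd, and since the fiber-product/cut argument bounds it by one, it equals $1$.

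The main obstacle is constructing this test diagram and carrying out the combinatorial bookkeeping. We must verify that all index-2 decompositions have been enumerated, which uses the classification of \Cref{prop:a list of all possible real domains} applied to each half, and that the parity really isolates a single antipodal annulus. As a fallback, we would imitate the free-boundary reflection annulus proof. The antipodal annulus also carries a one-parameter family of symmetric cuts (now exchanged by the antipodal map rather than reflection). Tracking the angles $\theta_1,\theta_2$ swept by the $\beta$-segments on the two boundaries as the cut length goes to $\pm\infty$, using the nodal degenerations as in \cite[Proposition~11.4.5]{OSSunfinished}, we would show that $\theta_1-\theta_2$ changes sign. Hence it has an odd number of zeros, giving an odd count of branched double-cover representatives.
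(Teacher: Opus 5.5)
\textbf{There is a genuine gap: the step that carries the whole argument is never carried out.}

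Your first step matches what the paper does. Localizing to the source $F$ makes the count depend only on the combinatorial type of the antipodal annulus, so it can be computed in any convenient diagram. The second step, however, is not done. You never produce the test diagram, and your plan asks $\partial^2=0$ alone to isolate a single antipodal annulus against an odd number of already-counted pairs. Nothing you say guarantees that such an index-2 class exists. Applying the nice-diagram existence theorem to a lens space gives you no control over which domains appear, and in the free setting the only known-count index-1 domains are pairs of polygons, so you would need a relation of the precise form $A\circ P$ plus an odd number of $P'\circ P''$.

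The paper's own test diagram shows this is not automatic. It is a real lens space with nonempty fixed set and twelve generators. There, the only nontrivial relations coming from $\partial^2=0$ have the shape $A\circ A+T\circ P$, where $A$ is an antipodal annulus, $T$ is the toroidal domain, and $P$ is a domain already known to count. So $\partial^2=0$ yields only $\#A\equiv\#T\pmod 2$, and says nothing about whether either is nonzero.

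\textbf{The missing ingredient is invariance.} The paper handleslides the same diagram to one with four generators and no real index-1 domains, so $\widehat{\HFR}$ has rank $4$. If neither $A$ nor $T$ counted, the twelve-generator complex would have homology of rank $8$. Hence at least one of them counts, and $\partial^2=0$ then forces both to count. This is also why the paper works with a fixed set present and settles the antipodal annulus and the punctured torus at the same time (the torus has corners on $C$).

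Your fallback via symmetric cuts is only a sketch. For the antipodal annulus there is no fixed core that collapses to a node, so the degenerations at the ends of the cut family and the asymptotics of $\theta_1,\theta_2$ would have to be worked out from scratch. The branched-double-cover criterion for the symmetric family would also need its own proof. Finally, your claim that a fiber-product or cut argument bounds the count by one is unsupported, though only the count mod $2$ is needed.
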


\begin{proposition}\label{prop: punctured torus has representative}
Let $\cD$ be the domain of the torus appearing in \Cref{prop:a list of all possible real domains}. For any adapted and generic path of symmetric almost complex structures $J$, we have that $\# \widehat{\cM}^J_R(\cD)=1$. 
\end{proposition}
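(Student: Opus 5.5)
The plan follows the strategy the paper announces just before Proposition~\ref{prop: antipodal annulus has representative}. We compute $\widehat{\HFR}$ (equivalently $\RSFH$ of the once-punctured manifold, via Example~\ref{ex:closedhatrecovery}) for a suitable real manifold in two ways. The first is an abstract computation. The second uses a specific nice real Heegaard diagram in which the toroidal domain of Case (9) appears as a real index $1$ class. Comparing the two, either $\partial^2=0$ or the known rank forces $\#\widehat{\cM}^J_R(\cD)$ to be odd. Lemma~\ref{lem:real-hd-acs-independent} applies because positive real index one domains in nice diagrams are indecomposable, so the count is independent of the adapted generic $J$. Because of this, it suffices to exhibit a single diagram and a single $J$.

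\textbf{Step 1: a local diagram.} Every toroidal domain has the combinatorics fixed by Lemma~\ref{lem:classify-toriodal-tilings}: a punctured torus quotienting to a M\"obius band, with two $270^\circ$ corners on $C$ and a swapped pair of $270^\circ$ corners. Corollary~\ref{lem:standard pattern for immersed real domain} and Proposition~\ref{prop:tiling structure and standard immersion} say that any immersion is a standard doubling along rectangles. As in the immersed annulus case of Proposition~\ref{prop:moduli space of 1-reflection annuli}, we pass from $\Sigma$ to the abstract source and reduce to the case where $\cD$ is embedded. Pulling back through the immersion identifies the moduli spaces. Next, cut out a neighborhood of $\cD$ together with its real structure. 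We build a model nice real diagram on a genus-one (stabilized) real surface, i.e.\ a real diagram for a lens space $L(p,q)$ with a $\{1\}$-stabilization, in which the same domain appears. By the independence above, the count for $\cD$ is determined by the count in the model, provided the $J$'s agree near $\cD$. Equivalently, we rely on the indecomposability argument of \cite[Lemma 11.1.3]{OSSunfinished}.

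\textbf{Step 2: enumerate the model complex.} In the model diagram, we list all real generators $(\T_\alpha\cap\T_\beta)^R$ and all real index $1$ domains between them, using Proposition~\ref{prop:a list of all possible real domains}. Every domain other than the torus (and possibly an antipodal annulus, treated in the companion Proposition~\ref{prop: antipodal annulus has representative}) is a polygon, a $\{1\}$-reflection annulus, or a free-boundary reflection annulus. The counts for these are already known to be $1$ mod $2$ by Proposition~\ref{prop:polygon-moduli-space}, Proposition~\ref{prop:moduli space of 1-reflection annuli} and the free-boundary proposition. So the differential is known except for one unknown coefficient $\epsilon\in\F$ attached to the toroidal class.

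\textbf{Step 3: pin down $\epsilon$.} We design the model so that there is a pair of generators $\xv,\zv$ with an index-$2$ real domain whose only two-story decompositions are (torus) $+$ (known domain) and (known domain) $+$ (known domain). Then $\langle\partial^2\xv,\zv\rangle=\epsilon+1$, forcing $\epsilon=1$. As a consistency check, we compare with the rank of $\widehat{\HFR}$ of the stabilized lens space, which is known from \cite{guth2025real} (or from Hendricks's spectral sequence together with the $d_1=(1+\iota_*\tau_*)\theta$ description). The main obstacle is this step: actually producing an explicit nice real genus-one diagram in which the torus domain occurs, and verifying by hand that no other unknown index-one domains enter the relevant $\partial^2$ coefficient. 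My first attempt would be the real grid-type diagram of a lens space with a single reflection-fixed circle, winding the $\beta$ curve once across $C$ to create the $270^\circ$ corners.
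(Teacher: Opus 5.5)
Your overall strategy matches the paper's: reduce to a model diagram for a punctured real lens space, then extract the count from $\partial^2=0$ together with a rank computation. The step that carries all the weight, Step 3, is asserted rather than carried out. You never produce a nice real diagram in which some coefficient of $\partial^2$ pairs the toroidal class only with domains of known count, and there is no a priori reason such a diagram exists. The paper's model (\Cref{fig:real_lens_spaces_for_torus_antipodal_annuli}) does not have this property. The only relations involving the tori are $\langle\partial^2(y\tau(y)),t\tau(t)\rangle$ and $\langle\partial^2(z\tau(z)),u\tau(u)\rangle$, and in each the competing two-story decomposition is an antipodal annulus followed by an antipodal annulus. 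So $\partial^2=0$ alone gives only $\epsilon_T=\epsilon_A^2=\epsilon_A$, and $\epsilon_T=\epsilon_A=0$ satisfies this.

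In the paper, what rules this out is the rank, and it is an essential input rather than a consistency check. After a handleslide the diagram has four generators and no real domains, so the rank is $4$. If all $A$- and $T$-counts vanished, the twelve-generator complex would have homology of rank $8$, since its only known arrows are $y\tau(y)\to cd$ and $z\tau(z)\to ad$. Consequently the two propositions have to be proved together. Citing \Cref{prop: antipodal annulus has representative} as already known, as your Step 2 does, would be circular, since the paper proves it by this same joint argument.

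This route also needs all antipodal annuli (respectively all tori) in the model to share a common count. That is exactly what your Step 1 reduction to the abstract source has to supply. Without it, setting only the $w\tau(w)\to t\tau(t)$ and $x\tau(x)\to u\tau(u)$ counts to $1$ would be consistent with both $\partial^2=0$ and rank $4$, while the torus counts stay $0$.

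To finish, you have two options. Either exhibit and verify by hand a model with the isolation property you want. Or adopt the paper's model and argue jointly for the annuli and tori, using rank $4$ versus $8$ together with the two $\partial^2$ relations.
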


\begin{figure}[h]
\def\svgwidth{.6\linewidth}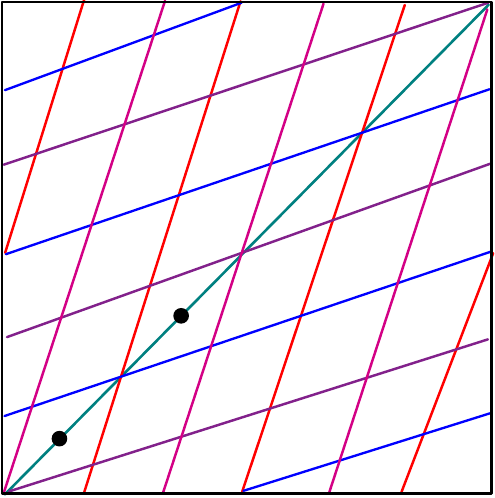
\caption{A real Heegaard diagram for a real lens space containing toroidal and antipodal annular domains.}
 \label{fig:real_lens_spaces_for_torus_antipodal_annuli}
    \end{figure}

\begin{proof}
As in the proof of Proposition~\ref{prop:moduli space of 1-reflection annuli}, we can reduce the problem of analyzing the moduli space from the same source to a specific nice real Heegaard diagram.

Consider the diagram shown in~\Cref{fig:real_lens_spaces_for_torus_antipodal_annuli}. After a handleslide, we obtain an admissible diagram for a real lens space $(Y, \tau)$ with an extra basepoint. There are four generators and no real domains. Hence, $\widehat{\HFR}(Y, \tau)$ has rank 4. 

Let us now compute the Floer homology for the diagram in \Cref{fig:real_lens_spaces_for_torus_antipodal_annuli}. There are twelve generators:
    \begin{align*}
        ab, ad, bc, cd, s\tau(s), t\tau(t), u\tau(u), v\tau(v), w\tau(w), x\tau(x), y\tau(y), z\tau(z).
    \end{align*}
    Further, the index 1 real domains are given as follows:
\[\begin{tikzcd}[ampersand replacement = \&]
        y\tau(y) \ar[r,"A", dashed] \ar[d] \& v \tau(v)\ar[d,"A", dashed] \& \\
        cd \ar[r,"T", dashed] \& t \tau(t) \& w\tau(w) \ar[l,"A", dashed]\\
        \end{tikzcd}
        \quad
        \begin{tikzcd}[ampersand replacement = \&]
             z\tau(z) \ar[r] \ar[d,"A", dashed] \& ad\ar[d,"T", dashed] \& \\
        s\tau(s) \ar[r,"A", dashed] \& u \tau(u) \& x\tau(x) \ar[l,"A", dashed]\\
        ab \& bc \&
        \end{tikzcd}\]

Here, arrows labeled by ``A'' are annular domains (with involution given by the antipodal map) and arrows labeled by ``T'' are toroidal. If none of these domains admit holomorphic representatives, the rank of the homology would be eight, rather than four. Hence, some of these domains must have holomorphic representatives. If either the annuli or the tori had representatives, but not both, we would have a violation of $\partial^2 = 0$. Hence, both must satisfy $\# \widehat{\cM}_R^J(\cD) = 1 \mod 2$ for any symmetric almost complex structure $J$. 
\end{proof}

\subsection{Combinatorial computation of real Heegaard Floer homology}
\begin{theorem}\label{thm:RSFHcombin}
For any real balanced sutured manifold $(Y,\gamma,\tau)$, $\RSFH(Y,\gamma,\tau)$ can be computed combinatorially from a nice real Heegaard diagram.
\end{theorem}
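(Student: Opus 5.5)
The plan is to assemble the results of this section. First, starting from any real balanced sutured manifold $(Y,\gamma,\tau)$, Proposition~\ref{prop:existence of a real balanced sutured diagram} gives a real balanced sutured Heegaard diagram. By Remark~\ref{rmk:weak admissiblity}, a real isotopy (winding) makes it weakly admissible. Then Theorem~\ref{thm: existence of nice diagram} turns it, through real isotopies and handleslides, into a nice real diagram $\mathcal{H}$ that is still admissible. By invariance, $\RSFC(\mathcal{H})$ computes $\RSFH(Y,\gamma,\tau)$. The generators $(\T_\alpha\cap\T_\beta)^R$ are the symmetric tuples of intersection points, which are read off directly from the diagram.

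Next I have to show the differential is combinatorial. For generators $\xv,\yv$, the real index $\mu_R(\phi)$ of a class $\phi\in\pi_2^R(\xv,\yv)$ is computed from its domain by Equation~\eqref{eqn:combinatorial index}. Since $\sigma(\bm\alpha,x)=+1$ at every fixed intersection point in a nice diagram, this is a local count. A class contributes only if it has a holomorphic representative, so its domain is positive, and it has zero multiplicity at boundary-adjacent regions. Proposition~\ref{prop:a list of all possible real domains} lists all positive real-index-$1$ domains in a nice diagram: pairs of bigons or rectangles, the four polygon types, the five annulus types, and the punctured torus. Every item on this list is recognized by its local shape, namely its corners, its multiplicities, and Corollary~\ref{lem:standard pattern for immersed real domain}. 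Because the diagram is admissible and the domain must avoid the boundary regions, only finitely many candidate domains occur, and they can be enumerated.

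Then I would invoke Proposition~\ref{prop:each real domain has odd reps}, which says each such domain has $\#\widehat{\cM}_R(\phi)\equiv 1 \pmod 2$ for an adapted generic symmetric $J$. Its proof is split over the earlier results: Proposition~\ref{prop:polygon-moduli-space} handles polygons and pairs of polygons, with the disconnected pairs reducing to the classical count of Sarkar--Wang. Proposition~\ref{prop:moduli space of 1-reflection annuli} and the free-boundary-annulus proposition handle reflection annuli. The lens-space computation handles antipodal annuli and tori. Lemma~\ref{lem:real-hd-acs-independent} makes these counts independent of the adapted generic $J$, and such $J$ exist by Proposition~\ref{prop:cyl_transversality}. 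It follows that $\partial\xv$ is the sum of those $\yv$ for which some domain from the list connects $\xv$ to $\yv$, counted mod $2$, and that sum is combinatorial.

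The main obstacle is making sure that the list in Proposition~\ref{prop:a list of all possible real domains} really is exhaustive and that every listed domain has an odd count. This is hardest for the immersed domains and for the nonsimple windings of annuli. I would rely on Proposition~\ref{prop:tiling structure and standard immersion} together with the source-to-source reduction used in Proposition~\ref{prop:moduli space of 1-reflection annuli}, which transfers the count for an immersed domain to the count for the embedded model on the same source. A second point to check is that the chosen $J$ is simultaneously adapted, generic, and of the split type needed for Proposition~\ref{prop:polygon-moduli-space}. The independence lemma handles this, since all real index-$1$ positive domains are indecomposable.
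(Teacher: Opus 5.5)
Your proposal is correct and follows essentially the same route as the paper: a real diagram from Proposition~\ref{prop:existence of a real balanced sutured diagram}, winding for admissibility, niceness via Theorem~\ref{thm: existence of nice diagram}, the classification of positive real index~$1$ domains in Proposition~\ref{prop:a list of all possible real domains}, and odd counts from Proposition~\ref{prop:each real domain has odd reps}. Your extra remarks only spell out points the paper's short proof leaves implicit. These are the finiteness of positive domains under admissibility and the independence of the counts from the adapted generic symmetric $J$.
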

\begin{proof}
By Proposition \ref{prop:existence of a real balanced sutured diagram}, $(Y,\gamma,\tau)$ admits a real balanced sutured diagram. Using the usual technique of winding curves, we can make this diagram admissible. Then, Theorem~\ref{thm: existence of nice diagram} provides us with a nice admissible real sutured diagram. \Cref{prop:a list of all possible real domains} provides us with the list of real index $1$ domains that could appear in a nice real diagram. In \cref{prop:each real domain has odd reps}, we showed that each domain in the list satisfies $\# \widehat{\cM}^R_{J}(\DCAL)=1$ for any generic symmetric almost complex structure that achieves transversality. This concludes the proof. 
\end{proof}

\begin{corollary}
Let $(Y,\tau)$ be a real manifold. For a basepoint $w\in C$, $\widehat{\HFR}(Y,\tau,w)$ can be computed combinatorially. If $C$ is connected and $L$ is an equivariant link equipped with some auxiliary data $\fra$ then $\widehat{\HFLR}(Y,K,\fra)$ can likewise be computed combinatorially.
\end{corollary}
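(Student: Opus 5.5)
The plan is to reduce both statements to \Cref{thm:RSFHcombin} by realizing each invariant as the real sutured Floer homology of an associated real balanced sutured manifold. For the first claim, let $w\in C$ lie on a component $C_i$ of the fixed set. By \Cref{ex:closedaspunctured}, removing a small equivariant ball $N$ around $w$ and taking an invariant equatorial suture $\gamma_0$ through the two fixed points on $\partial N$ produces a real balanced sutured manifold $(Y-N,\gamma_0,\tau|_{Y-N})$. Its real diffeomorphism type does not depend on the choices. By \Cref{ex:closedhatrecovery}, its real sutured Floer homology is $\widehat{\HFR}(Y,\tau,w)$. We then apply \Cref{thm:RSFHcombin} to this sutured manifold: take a real sutured Heegaard diagram from \Cref{prop:existence of a real balanced sutured diagram}, make it admissible by winding as in \Cref{rmk:weak admissiblity}, and make it nice using \Cref{thm: existence of nice diagram}. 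The differential is then given by counting exactly the domains listed in \Cref{prop:a list of all possible real domains}, each with coefficient $1$ by \Cref{prop:each real domain has odd reps}.

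For the link statement, we follow \Cref{ex:invertible link complements as real sutured manifold}. Given the equivariant link $L$ and the auxiliary data $\fra$, which amounts to a choice of basepoints and their types as in \cite{guth2025real}, we remove an equivariant neighborhood of $L$ and place pairs of meridional sutures on each boundary torus as described there. We also puncture at a point of the connected fixed set $C$ to account for the basepoint data. The main point to check is that the resulting real balanced sutured manifold has $\RSFH$ equal to $\widehat{\HFLR}(Y,L,\fra)$. To see this, start from a real multi-pointed Heegaard diagram for $(Y,L)$ and delete equivariant disk neighborhoods of the basepoints. Basepoints off $C$ come in $\tau$-interchanged pairs, and basepoints on $C$ are deleted by equivariant half-disk models. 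The result is a real balanced sutured diagram whose chain complex coincides generator by generator with the hat link complex, because domains crossing basepoints are exactly the ones excluded in both theories. With this identification, \Cref{thm:RSFHcombin} applies again.

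The main obstacle is this diagrammatic identification. We need the sutures to match the basepoint data $\fra$, and we need the assumption that $C$ is connected to guarantee that the sutured diagram is balanced. Here \Cref{lem:well-definedness of distance function} supplies the surjectivity $\pi_0(\partial\Sigma)\to\pi_0(\Sigma\setminus\bm\alpha)$ inherited from the link diagram. We also have to verify that the real $\SpinC$ and Alexander gradings are not needed for the ungraded combinatorial computation. Once these points are in place, both claims follow formally from \Cref{thm:RSFHcombin}.
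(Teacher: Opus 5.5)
Your argument for $\widehat{\HFR}(Y,\tau,w)$ is the paper's argument: Examples~\ref{ex:closedaspunctured} and~\ref{ex:closedhatrecovery}, then Theorem~\ref{thm:RSFHcombin}. The link part, however, contains a step that fails as written. You additionally puncture the exterior at a point of $C$ ``to account for the basepoint data.'' The pairs of meridional sutures from Example~\ref{ex:invertible link complements as real sutured manifold} already account for every basepoint of the link. Your own diagrammatic check, which deletes neighborhoods of the basepoints of a real multi-pointed diagram for $(Y,L)$, produces the \emph{unpunctured} sutured exterior.

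An extra puncture changes the invariant. If $L$ meets $C$, then $C\cap(Y\setminus\nu(L))$ consists of arcs, and puncturing an arc tensors $\RSFH$ with $\F\oplus\F$ by Proposition~\ref{prop:cases that rank doubles obviously}. If $L$ misses $C$, you are puncturing a closed fixed circle, and the change is governed by Proposition~\ref{prop:quasistab formula}. That change is not even a uniform factor: in Example~\ref{ex:puncturing link complements} the rank goes from $4$ to $6$ for the unlink but from $4$ to $8$ for the Hopf link. So the identification $\RSFH\cong\widehat{\HFLR}(Y,L,\fra)$ you assert for the punctured manifold is false. Drop the puncture and your argument becomes the paper's: $\widehat{\HFLR}$ is $\RSFH$ of the sutured link exterior, and Theorem~\ref{thm:RSFHcombin} applies.

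Your account of where connectedness of $C$ enters is also wrong. It is not what makes the sutured diagram balanced. Lemma~\ref{lem:well-definedness of distance function} says nothing about $\pi_0(\Sigma\setminus\bm\alpha)$: it concerns surjectivity onto $\pi_0(\Sigma-(C-\nu(\partial\Sigma)))$ and is used only to define the distance function for nice diagrams. Balancedness follows because a multi-pointed diagram has a basepoint in every component of $\Sigma\setminus\bm\alpha$, and by symmetry of $\Sigma\setminus\bm\beta$.

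Connectedness of $C$ is part of the setting of \cite{YXHFLR} in which $\widehat{\HFLR}$ and the auxiliary data $\fra$ are defined. That data is roughly an orientation of the link together with a decoration of the sutures (for a knot, an orientation and a choice of axis). It is not a choice of basepoint types from \cite{guth2025real}, and it affects only the real Maslov and Alexander gradings.
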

See \cite{YXHFLR} for the precise definitions of real link Floer homology. The auxiliary data $\fra$ is used to define the real Maslov and Alexander gradings on $\widehat{\HFLR}$.

\begin{proof}
These follow from Theorem~\ref{thm:RSFHcombin} and the following observations. Let $(Y,\tau,w)$ be a pointed closed real $3$-manifold. $\widehat{\HFR}(Y,\tau,w)$ can be recovered from the real sutured Heegaard Floer homology of an appropriate real sutured manifold by~\Cref{ex:closedhatrecovery}. In the same vein, the real knot Floer homology of an equivariant link in a closed real $3$-manifold can be regarded as the real sutured Floer homology of the link's exterior equipped with canonical meridional sutures as in Example~\ref{ex:invertible link complements as real sutured manifold}. In this setting, the auxiliary data roughly corresponds to an orientation of the link and a choice decoration of sutures.  
\end{proof}

\section{A Real Surface Decomposition Formula}\label{sec:Applications of nice diagrams}

In this section, we prove a real surface decomposition formula for real sutured Floer homology, as advertised in the introduction (Theorem~\ref{thm:decomposition formula for RSFH}).

\subsection{Preliminaries}\label{sub:Preparations for a decomposition formula}
In this subsection, we review some definitions and results from \cite[Section 4, 5]{juhasz2008floer} that are important in the surface decomposition formulae for both $\SFH$ and $\RSFH$. Throughout this subsection and the next, we assume that our real sutured manifolds are strongly balanced.

The following is a natural generalization of~\cite[Definition 4.3]{juhasz2008floer}.

\begin{definition}\label{def:decomposingdiagram}
    A balanced real Heegaard diagram \emph{adapted} to a real decomposing surface $S$ in $(Y,\gamma,\tau)$ is a 5-tuple $(\Sigma,\bm \alpha,\bm\beta,P,\t)$ satisfying the following conditions:\begin{enumerate}
        \item $(\Sigma,\bm\alpha,\bm\beta,\t)$ is a balanced real Heegaard diagram for $(Y,\gamma,\tau)$.
        \item \label{quasipoly} $P\subset \Sigma$ is a real quasi-polygon (i.e., a closed subsurface of $\Sigma$ with boundary a union of polygons) such that $\t$ fixes $P$ as a set and $P\cap \partial \Sigma$ is exactly the vertices of $P$.
        \item\label{decompquasi}There is a decomposition $\partial P= A\cup B$ where $A$ and $B$ are disjoint unions of edges of $P$. 
        \item \label{actionpoly} $\t$ interchanges $A$ and $B$ and $A\cap\bm \beta= B\cap \bm \alpha = \emptyset$. 
        \item    $S$ is given up to equivalence by equivariantly smoothing the corners of the surface \[(P\times\{0\})\cup (A\times [0,1])\cup (B\times [-1,0]).\]
    
    \end{enumerate}

\end{definition}

Here, two real decomposing surfaces are \emph{equivalent} if they are isotopic through real decomposing surfaces. This is a real analogue of the corresponding notion in the unreal case defined in~\cite{juhasz2008floer}.

Given a real Heegaard diagram $(\Sigma,\bm\alpha,\bm\beta,\t)$ and a quasi-polygon $P$ embedded in $\Sigma$ satisfying conditions \ref{quasipoly}, \ref{decompquasi}, and~\ref{actionpoly} in~\Cref{def:decomposingdiagram}. Taking ${(P\times\{0\})\cup (A\times [0,1])\cup (B\times [-1,0])}$ and equivariantly smoothing the corners and edges, we obtain a real decomposing surface for the real sutured manifold specified by $(\Sigma,\bm\alpha,\bm\beta,\t)$.

\begin{proposition}\label{prop:decomposingdiagramsexist}
    Suppose that $S$ is a real decomposing surface in a real sutured manifold $(Y,\gamma,\tau)$. If the boundary of each component of $S$ intersects both $R_+(\gamma)$ and $R_-(\gamma)$ and $\partial S$ has no closed component lying entirely in $\gamma$, then there exists a real Heegaard diagram of $(Y,\gamma,\tau)$ adapted to $S$.
\end{proposition}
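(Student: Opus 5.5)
We will adapt the proof of \cite[Proposition~4.4]{juhasz2008floer}, which builds a sutured Morse function by starting near $S$, and carry it out equivariantly, as in the proofs of Lemma~\ref{lem:realmorseffunctionsexist} and Proposition~\ref{prop:product disk decomposition}.

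\textbf{Step 1 (local model near $S$).} Take a closed $\tau$-equivariant product neighborhood $\nu(S)\cong S\times[-1,1]$. Here $\tau$ acts by $(s,t)\mapsto(\tau|_S(s),-t)$; this is consistent because $\tau$ reverses the orientation of $S$ while preserving that of $Y$, so it swaps the two sides of $S$. The hypotheses say every component of $\partial S$ meets both $R_+(\gamma)$ and $R_-(\gamma)$, and no closed component of $\partial S$ lies in $\gamma$. Hence each component of $\partial S$ is cut by $\gamma$ into arcs alternating between $R_+$ and $R_-$. Set $A'=\partial S\cap R_+(\gamma)$ and $B'=\partial S\cap R_-(\gamma)$; then $\tau(A')=B'$. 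On $S$ choose a real function $h\colon S\to[-1,1]$ with $h^{-1}(1)=A'$, $h^{-1}(-1)=B'$, no critical points, linear across the arcs $\partial S\cap\gamma$, and satisfying $h\circ\tau=-h$. Such an $h$ exists by the same partition-of-unity averaging $h\mapsto\tfrac12(h-h\circ\tau)$ used before. One must check that averaging creates no critical points. Near $C\cap S$ this follows from the local models of Lemma~\ref{lem:Classification of real structure on solid torus}: $\mathrm{fix}(\tau|_S)$ is $1$-dimensional, $h$ vanishes on it, and its normal derivative can be chosen nonzero.

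\textbf{Step 2 (extend to a real sutured Morse function).} Extend $h$ to a real function $f_0$ on $\nu(S)$ with $f_0^{-1}(0)\cap\nu(S)\supset P:=h^{-1}([-\epsilon,\epsilon])\times\{0\}$ after a small equivariant adjustment. The goal is for $f_0^{-1}(0)$ to meet $\nu(S)$ in a quasi-polygon whose edges on $\partial\nu(S)$ are arcs of $\partial\Sigma$, following Juh\'asz. Then extend $f_0$ to a real sutured Morse function $f$ on all of $Y$, exactly as in the proof of Lemma~\ref{lem:realmorseffunctionsexist}. Arrange that $f$ has no critical points in $\nu(S)$, and make $f$ self-indexing by Remark~\ref{rmk:selfindexingrealmorse}, changing it only away from $\nu(S)$. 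Pick a generic $\tau$-invariant metric that is a product metric on $\nu(S)$. Then $S$ is a union of gradient flowlines away from a collar, and the $\alpha$- and $\beta$-curves of the resulting diagram (Proposition~\ref{prop:existence of a real balanced sutured diagram}) satisfy $\beta_i=\tau(\alpha_i)$.

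\textbf{Step 3 (read off $P$, $A$, $B$).} Set $P=S\cap\Sigma$ after isotoping $S$ through real decomposing surfaces. Flowing the part of $S$ above $\Sigma$ down to $\Sigma$, and the part below up, exhibits $S$ as $P\times\{0\}\cup A\times[0,1]\cup B\times[-1,0]$. Here $A$ consists of the edges of $\partial P$ swept up to $R_+$ and $B$ of those swept down to $R_-$. The unstable manifolds of index-$1$ points avoid $\nu(S)$ above $\Sigma$, so $A\cap\bm\beta=\emptyset$, and symmetrically $B\cap\bm\alpha=\emptyset$. The relation $\tau(A)=B$ is forced by $f\circ\tau=-f$.

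\textbf{Main obstacle.} The hardest step is the equivariant genericity in Step 2. We need simultaneously that $f$ is Morse, the metric is Morse--Smale (via \cite{bao2025morsehomologyequivariance}), and both remain compatible with the fixed local model on $\nu(S)$, particularly along $\mathrm{fix}(\tau)\cap S$ and at the fixed points of $\partial S\cap s(\gamma)$. To handle this, we will perturb only on the complement of $\nu(S)$ by symmetric pairs of perturbations on balls away from $C$, together with real linear perturbations on balls meeting $C$. The gradient flow on $\nu(S)$ itself is already a product, so no perturbation is needed there.
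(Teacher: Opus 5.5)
\textbf{There is a genuine gap: the function $h$ in Step~1 does not exist for most surfaces the proposition covers.} Suppose $h\colon S\to[-1,1]$ has no critical points, has $h^{-1}(1)=A'$ and $h^{-1}(-1)=B'$, and is monotone along the arcs of $\partial S\cap\gamma$. Flowing along a gradient-like field tangent to those arcs then identifies $S\cong B'\times[-1,1]$. So every component of $S$ would be a product disk or a product annulus, and in particular $\chi(S)=\chi(B')$.

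This fails already for a disk meeting $R_+(\gamma)$ and $R_-(\gamma)$ in two arcs each, where $\chi(S)-\chi(B')=-1$ forces a saddle. It also fails for an equivariant Seifert surface of positive genus in a sutured knot exterior. Both satisfy the hypotheses of the proposition, so your argument only handles product-type surfaces.

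Steps~2 and~3 inherit the problem:
\begin{itemize}
\item If $f_0|_S=h$ and the metric on $\nu(S)$ is a product, then $S$ is a union of flowlines. Hence $S\cap\Sigma=h^{-1}(0)$ is one-dimensional and cannot be the quasi-polygon $P$. This is also inconsistent with asking that $f_0\equiv0$ on $h^{-1}([-\epsilon,\epsilon])\times\{0\}$.
\item \emph{Flowing the part of $S$ above $\Sigma$ down to $\Sigma$} is not defined across the stable manifolds of index-$2$ critical points. So it does not produce the form $(P\times\{0\})\cup(A\times[0,1])\cup(B\times[-1,0])$.
\end{itemize}

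\textbf{The missing idea, from Juh\'asz's proof, is to make $S$ \emph{horizontal} along $P$ and vertical only in thin collars at its boundary.} This is the route the paper follows.
\begin{itemize}
\item Let $K_i$ be the closures of the components of $\partial S\setminus s(\gamma)$. They come in pairs swapped by $\tau$, since $\tau$ exchanges the two sides of $s(\gamma)$.
\item Choose $\tau$-equivariant arcs or circles $L_i\subset S$ cobounding bigons or annuli $D_i$ with the $K_i$. Let $P$ be the closure of $S\setminus\bigcup_i D_i$; it carries all the topology of $S$.
\item Set $f\equiv0$ on $P$, let $f$ rise monotonically to $\pm1$ across each $D_i$, and crease $S$ along the $L_i$ so that each $D_i$ is a union of flowlines.
\end{itemize}
Then $A$ and $B$ are the unions of the $L_i$ whose $K_i$ lie on the $R_+$ side, respectively the $R_-$ side, and $\tau(A)=B$ is automatic. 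The condition $A\cap\bm{\beta}=\emptyset$ comes from the upward flow from $A$ running inside the $D_i$ to $R_+(\gamma)$ without meeting an index-$2$ point. It does not come from the unstable manifolds of index-$1$ points, which govern $\bm{\alpha}$.

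Finally, making the extension self-indexing cannot in general be done only away from $\nu(S)$, because $P$ must stay inside $f^{-1}(0)$. This is exactly where the paper, following \cite[Proposition~4.4]{juhasz2008floer}, uses finger moves and handleslides of the $\alpha$-curves and $B$. These are done equivariantly, with $\tau$ dictating the corresponding moves of the $\beta$-curves and $A$.
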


\begin{proof}[Proof Sketch]
    The proof of~\cite[Proposition 4.4]{juhasz2008floer} extends to the real case. Since the original proof is quite long, we only point out the parts where extra care is required. We refer the reader to~\cite[Proposition 4.4]{juhasz2008floer} for terminologies and notations, which we use freely. The key step in the proof is to construct a real self-indexing sutured Morse function $f$ on $(Y,\gamma,\tau)$ satisfying appropriate properties.

    Define $f$ to take the constant value $\pm 1$ on $R_{\pm}(\gamma)$ and to be of the form $s(\gamma) \times [-1,1]\to[-1,1]$, on $\gamma$, after fixing some identification $ \varphi: \gamma \to s(\gamma) \times [-1,1]$. 
    
    We also require that $\varphi$, post composed with projection onto the $[-1,1]$ factor, maps each component of $S\cap \gamma$ to a point. This can be achieved by modifying $\varphi$ equivariantly.

    We construct a quasi-polygon $\partial P$ and decomposition $\partial P =A\cup B$ as in the proof of~\cite[Proposition 4.4]{juhasz2008floer}. The arcs $K_{i}$ --- the closures of components in $\partial S\setminus s(\gamma)$ --- appear in pairs that are interchanged by $\tau$.
    If $K_{i}$ and $K_{j}$ are an interchanged pair, choose arcs or circles $L_i$ and $L_j$ as in the proof of~\cite[Proposition 4.4]{juhasz2008floer}, so that they are interchanged by $\tau$. Let $D_i$ and $D_j$ be the associated closed bigons or annuli interchanged by $\tau$, and let $d_i\sqcup d_j$ be an equivariant diffeomorphism, each component of which is as in~\cite[Proposition 4.4]{juhasz2008floer}, giving local models for equivariant pairs of bigons or annuli. For notational convenience, we replace the interval $[-1,4]$ (as used as the target of $f$ in~\cite[Proposition 4.4]{juhasz2008floer}) by $[-1,1]$, so that when $D_i$ is an annulus, the model $J_i$ for the height on $D_i\cong S^1 \times J_i$ becomes $[-1,0]$ or $[0,1]$ according to whether $K_i$ belongs $R_-(\gamma)$ or $R_+(\gamma)$.

    As in the proof of~\cite[Proposition 4.4]{juhasz2008floer}, choose $P$ to be the closures of the components of $S-\partial P$ that are disjoint from $\partial S$. Define $f$ to be zero on $P$ and crease $S$ along the edges $L_i$ to make the $f|_S$ smooth. The creasing operation can be done equivariantly since $S$ is real and the modification only depends on the normal vector of $S$ restricted to $P$. 
 
    Next, we extend $f$ to a real sutured Morse function $f_0$ satisfying the necessary extra conditions on neighborhoods of $D_i$. Since we chose $D_i$ equivariantly with respect to $\tau$, these extra conditions are readily achieved. We then modify $f_0$ to become a self-indexing real sutured Morse function. The finger moves and handleslides used in the proof of~\cite[Proposition 4.4]{juhasz2008floer} can be done equivariantly. The key observation here is that while the proof of~\cite[Proposition 4.4]{juhasz2008floer} describes modifications of the $\alpha$-curves and states that the similar modifications can be made for the $\beta$-curves. In the real case, we can modify the $\alpha$-curves and $B$ and allow $\tau$ to dictate the modification of the $\beta$-curves and $A$. The final step of canceling index $0$ and $3$ critical points follows as in the proof of Lemma~\ref{lem:realmorseffunctionsexist}.
    \end{proof}

To an adapted diagram $(\Sigma,\bm\alpha,\bm\beta,P,\t)$, we associate a tuple $D(P)=(\Sigma',\bm\alpha',\bm\beta',P_A,P_B,\t',p)$, where $
(\Sigma',\bm\alpha',\bm\beta',\t')$  is a real balanced Heegaard diagram, $P_A$ and $P_B$ are closed subsurfaces of $\Sigma'$, and $p$ is real map from $\Sigma'$ to $\Sigma$. Observe that the cut and paste operation described at the beginning of~\cite[Section 5]{juhasz2008floer} applies in the real case without change. For the reader's convenience, we review this construction and show how to endow $D(P)$ with a real structure.

\begin{figure}[h]       \def\svgwidth{.6\linewidth}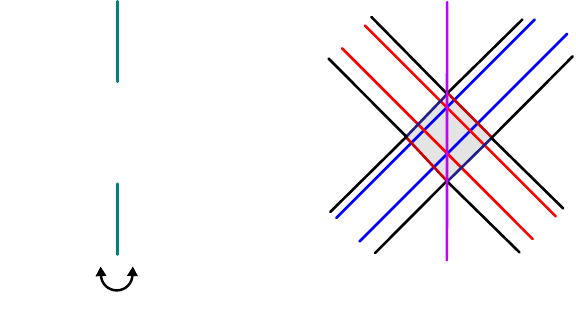
    \caption{The projection map from $D(P)$ --- shown on the left --- to a real Heegaard diagram adapted to $P$. Note that the green axis in the left hand diagram is not contained in the $3$-manifold, and is for illustrative purposes only. This same is trur for the upper and lower portions of the pink axis in the right hand figure.}
    \label{fig:surface_diagram_projection}
\end{figure}

Take two copies of $P$, $P_A$ and $P_B$. Fix ``identity'' diffeomorphisms ${p_A:P_A\to P}$ and $p_B:P_B\to P$. Cut $\Sigma$ along $\partial P$, then glue $P_A$ to $A$ using $p_A^{-1}$ and glue $P_B$ to $B$ using $p_B^{-1}$. The resulting surface is $\Sigma'$. There is a natural projection $p:\Sigma' \to \Sigma$, which restricts to $p_A$, $p_B$ and the identity on $P_A$, $P_B$ and $\Sigma'-(P_A\cup P_B)$, respectively. Define $\bm\alpha'=\{p^{-1}(\alpha_i)-P_B:\alpha_i\in \bm\alpha\}$ and $\bm\beta'=\{p^{-1}(\beta_j)-P_A:\beta_j\in\bm\beta\}$. See Figure~\ref{fig:surface_diagram_projection}.

For the real structure, recall that $\t$ fixes $P$ setwise and interchanges edges in $A$ and $B$, so that $\t|_{\Sigma-\partial P}$ is a real structure. Extend this to $\t':\Sigma'\to \Sigma'$ by letting $\t'$ interchange $P_A,P_B$. More precisely, $\t'$ restricts to $P_A$ and $P_B$ as $p_B^{-1}\circ \t|_{P} \circ p_A$ and $p_A^{-1}\circ \t|_{P} \circ p_B$.

By construction, $p:\Sigma'\to\Sigma$ is real and one-to-one on ${\Sigma'\setminus(P_A\cup P_B)}$. It maps each of $P_A$, $P_B$ diffeomorphically onto $P$. When $\Sigma'$ is endowed with a complex structure obtained from $\Sigma$ by cutting and pasting, $p$ is conformal.  

\begin{lemma}
    Let $(Y,\gamma,\tau)\overset{S}{\rightsquigarrow}(Y',\gamma',\tau')$ be a real surface decomposition and $(\Sigma,\bm\alpha,\bm\beta,P,\t)$ be a real Heegaard diagram adapted to $S$, then the underlying real sutured Heegaard diagram $(\Sigma',\bm\alpha',\bm\beta',\t')$ from $D(P)$ is a real sutured Heegaard diagram for $(Y',\tau',\gamma')$.
    
\end{lemma}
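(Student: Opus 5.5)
The plan is to reduce to Juh\'asz's unreal statement~\cite[Lemma 5.1]{juhasz2008floer} and then check compatibility with the involutions. Forgetting $\tau$, the tuple $(\Sigma,\bm\alpha,\bm\beta,P)$ is a balanced Heegaard diagram adapted to $S$ in the sense of~\cite[Definition 4.3]{juhasz2008floer}. Conditions~\ref{quasipoly}--\ref{actionpoly} of \Cref{def:decomposingdiagram} specialize to the unreal conditions: $A\cap\bm\beta=B\cap\bm\alpha=\emptyset$, and $S$ is the smoothing of $(P\times\{0\})\cup(A\times[0,1])\cup(B\times[-1,0])$. The cut-and-paste construction of $(\Sigma',\bm\alpha',\bm\beta')$ is literally the one in~\cite[Section 5]{juhasz2008floer}. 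So the unreal lemma gives that $(\Sigma',\bm\alpha',\bm\beta')$ is a balanced sutured diagram for $(Y',\gamma')$. Concretely, there is an embedding of $\Sigma'$ into $Y'=Y\setminus\nu(S)$, obtained by pushing $P_A$ to the $S_+$ side and $P_B$ to the $S_-$ side of $\nu(S)$, such that the $\alpha$- and $\beta$-compression bodies are the restrictions of those for $\Sigma$.

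It then remains to show that this identification is real, i.e.\ that the involution $\tau'=\tau|_{Y'}$ restricts on the embedded $\Sigma'$ to the involution $\tau'$ defined on $D(P)$, and that $\tau'(\alpha_i')=\beta_i'$. I would proceed in three steps.

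\textbf{Step 1.} Take the embedding of $\Sigma$ in $Y$ to be $\tau$-invariant, coming from a real sutured Morse function as in \Cref{prop:decomposingdiagramsexist}. Choose the neighborhood $\nu(S)$ equivariantly, with collar coordinate $t\in[-1,1]$ satisfying $\tau(x,t)=(\tau(x),-t)$. This is possible because $\tau$ reverses the orientation of $S$, so it swaps the positive and negative push-offs $S_\pm$.

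\textbf{Step 2.} Place $P_A$ in $S_+$ and $P_B$ in $S_-$ using the push-off maps. Under this choice, $\tau|_{Y'}$ carries $P_A$ to $P_B$ via $p_B^{-1}\circ\tau|_P\circ p_A$, which is exactly the formula for $\tau'$ on $D(P)$. Away from $P_A\cup P_B$, $\tau'$ agrees with $\tau|_{\Sigma-\partial P}$. One also has to check that the gluing along $\partial P$ is compatible with the involution. This holds because $\tau(A)=B$, so the seam where $P_A$ is glued to $A$ is carried to the seam where $P_B$ is glued to $B$.

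\textbf{Step 3.} Verify the curve condition. Since $\tau(\alpha_i)=\beta_i$ and $\tau$ exchanges $P_A$ and $P_B$,
\[\tau'\bigl(p^{-1}(\alpha_i)-P_B\bigr)=p^{-1}(\beta_i)-P_A.\]
This gives $\tau'(\bm\alpha')=\bm\beta'$ index-by-index. The fixed set of $\tau'$ is $\mathrm{fix}(\tau)\setminus P$ together with nothing from $P_A\cup P_B$, since these two pieces are swapped; hence it is still a properly embedded $1$-manifold. In addition, $\tau'$ is orientation-reversing on $\Sigma'$, and $\tau'$ exchanges $R_\pm(\gamma')$ because $S_+$ and $S_-$ are swapped.

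The main obstacle I expect is making Step 1 precise. One must ensure that the embedding of $\Sigma'$ in $Y'$ produced by Juh\'asz's proof can be chosen equivariantly, and that the equivariant smoothing of corners in $Y'$ does not disturb this. To handle it, I would redo Juh\'asz's isotopy of $(P\times\{0\})\cup(A\times[0,1])\cup(B\times[-1,0])$ symmetrically. Each operation would be performed on the $A$-side and transported to the $B$-side by $\tau$, as in the proof sketch of \Cref{prop:decomposingdiagramsexist}, and averaging choices wherever a symmetric one is needed.
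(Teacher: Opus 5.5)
Your proposal is correct and follows the paper's argument. The paper also obtains that $(\Sigma',\bm\alpha',\bm\beta')$ is a diagram for $(Y',\gamma')$ from Juh\'asz's unreal result, which it cites as \cite[Proposition 5.2]{juhasz2008floer}. It then simply observes that the involution on $D(P)$ swaps $P_A$ and $P_B$, just as $\tau'$ swaps the two sides $\partial_\pm\nu(S)$. Your Steps 1--3 spell out details the paper leaves implicit: the equivariant collar with $\tau(x,t)=(\tau(x),-t)$, the compatibility of the seams via $\tau(A)=B$, and the check that $\tau'(\bm\alpha')=\bm\beta'$.
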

\begin{proof}
 $(\Sigma',\bm\alpha',\bm\beta')$ is a Heegaard diagram for $(Y',\gamma')$ by~\cite[Proposition 5.2]{juhasz2008floer}. Observe that the real structure on $\Sigma'$ from $D(P)$ induces $\t'$, since it interchanges $P_A$, and $P_B$, while in $Y'$, $\tau'$ interchanges $\partial_+ \nu(S)$ and $\partial_-\nu(S)$ by definition.
\end{proof}

\begin{definition}\label{def:boundary-coherent}
A real decomposing surface $S$ is called \emph{nice} if it has no closed components and for every component $V$ of $R(\gamma)$ the set of closed components of $S\cap V$ consists of parallel coherently oriented and boundary-coherent simple closed curves. Here, a curve $C\subset V$ is called \emph{boundary-coherent} if $[C]\ne 0$ in $H_1(V;\mathbb{Z})$ or $[C]= 0$ and $C$ is oriented as the boundary of its interior.
    
\end{definition}

\begin{definition}\label{def:outer real spinc}
 Let $(Y,\gamma,\tau)$ be a real balanced sutured manifold and $(S,\partial S)\subset (Y,\partial Y)$ be a properly embedded oriented real surface. A real $\SpinC$ structure $\s^R$ is called \emph{outer} with respect to $S$ if the underlying $\SpinC$ structure is outer with respect to $S$. 
Let $O_S^R$ denote the set of real $\SpinC$ structures that are outer with respect to $S$.
\end{definition}

\subsection{Nice diagrams for real surface decomposition}\label{subsub:Remarks on surface diagram}

As in the unreal case \cite{juhasz2008floer}, the procedure for obtaining nice diagrams can be modified to apply for real surface diagrams as well. Following, Juh\'asz, we define \emph{permissible moves} of real such diagrams as isotopies or handleslides of the $\alpha$-curves in $\Sigma-B$ or the $\beta$-curves in $\Sigma -A$~\cite[Definition 6.2]{juhasz2008floer}. For the definition of a surface diagram, see \cite[Definition~4.3]{juhasz2008floer}, and Definition~\ref{def:decomposingdiagram} for its real analogue. We adapt the definition to the real setting as follows.
     
\begin{definition}
Let $(\Sigma, \bm \alpha, \bm \beta, P, \tau)$ be a balanced real Heegaard diagram adapted to a real decomposing surface $S$. A \emph{real permissible move} is a real handleslide or a real finger move such that the trajectory of $\alpha$ is disjoint from $B$ and the trajectory of $\beta$ is disjoint from $A$. 
\end{definition}
We note that a real sutured Heegaard diagram can be viewed as a real surface diagram with $P=A=B=\emptyset$.

Recall that a decomposing surface is \emph{good} if it is open and each component of $\partial S$ intersects both $R_+(\gamma)$ and $R_-(\gamma)$, while a surface diagram (See \Cref{def:decomposingdiagram}) is called \emph{good} if $A$ and $B$ have no closed components. A surface diagram is called \emph{nice} if every component of $\Sigma-(\bm\alpha\cup\bm\beta\cup A\cup B)$ whose closure is away from $\partial\Sigma$ is a bigon or a rectangle. In \cite[Section 6]{juhasz2008floer}, Juh\'asz not only proved that any balanced sutured Heegaard diagram can be made nice, but also proved that any good surface diagram can be made nice using permissible moves. This is an important ingredient in the combinatorial proof of the surface decomposition formula for sutured Floer homology. We will show that the proof in the previous subsections can be upgraded to the setting of good real surface diagrams. 

\begin{proposition}\label{prop:existence of nice surface diagram}
Let $(\Sigma,\bm\alpha,\bm\beta,P,\t)$ be a good real surface diagram.
There is a finite sequence of real permissible moves on this diagram after which it becomes nice.
\end{proposition}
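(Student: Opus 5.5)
We adapt the proof of \Cref{thm: existence of nice diagram}, following the strategy of \cite[Section 6]{juhasz2008floer} for surface diagrams. The only change is that the $\alpha$-curves are now confined to $\Sigma-B$ and the $\beta$-curves to $\Sigma-A$. Since $\tau$ interchanges $A$ and $B$ and carries $\bm\alpha$ to $\bm\beta$, the condition ``the trajectory of $\alpha$ avoids $B$'' is carried by $\tau$ to ``the trajectory of $\beta$ avoids $A$''. Hence, whenever a move of an $\alpha$-curve is permissible, the paired $\tau$-image move of the corresponding $\beta$-curve is automatically permissible, and every symmetric pair of moves we perform is a real permissible move.

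\textbf{Step 1 (disks).} We treat $A\cup B$ as additional boundary-type arcs of the tiling: regions are now components of $\Sigma-(\bm\alpha\cup\bm\beta\cup A\cup B)$. Since the diagram is good, $A$ and $B$ have no closed components, so every edge of $P$ ends on $\partial\Sigma$. As in \cite[Lemma 6.4]{juhasz2008floer}, finger moves of $\alpha$-curves along arcs disjoint from $B$ increase the Euler characteristic of non-disk regions, and we perform the $\tau$-symmetric finger moves of the $\beta$-curves. Fingers may cross edges of $A$ but never edges of $B$. Because $\tau$ swaps $A$ and $B$, we need the analogue of \Cref{lem:well-definedness of distance function}: every interior region can be joined to $\partial\Sigma$ by a path in $\Sigma-C$ that crosses only $\alpha$, $\beta$, $A$ and $B$ arcs in an admissible way. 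This should follow from the same Dugger-classification argument, since the edges of $A\cup B$ end on $\partial\Sigma$.

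\textbf{Step 2 (near $C$).} We isotope the $\beta$-curves in $\nu(C)$ as in \Cref{fig:windingnearC}, together with the $\tau$-induced isotopy of the $\alpha$-curves. Some care is needed at points where $C$ meets $\partial P$. Since $\tau$ preserves $P$ and swaps $A$ and $B$, $C\cap\partial P$ consists of vertices of $P$ on $\partial\Sigma$, or of points where an $A$-edge and a $B$-edge meet. We therefore choose $\nu(C)$ small enough that the winding avoids $\partial P$ away from $\partial\Sigma$, so that the moves remain permissible and Condition~\ref{def:patternnice} is achieved.

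\textbf{Steps 3--4 (complexity).} We define badness, distance and the complexity $c_{d_0}$ as in \Cref{subsub:nice digram step 3}. Distance is now measured by paths crossing $\bm\alpha\cup\bm\beta\cup A\cup B$, following the convention of \cite[Section 6]{juhasz2008floer}, in which $A$ and $B$ count as walls that the respective curves cannot cross. We then run the case analysis of \Cref{prop:reducing complexity of a diagram}. Without loss of generality, the edge $b_*$ separating $D_m$ from $D_*$ is a $\beta$-edge; if it is an $A$- or $B$-edge, Juhász's argument shows we can instead choose a different neighbor or move the opposite curve type. The fingers and handleslides used in that proof travel through $D_m\cup R_i^1\cup\dots\cup R_i^{k_i}$, and the sequences $R_i^j$ are stopped when they would hit an edge of the forbidden type. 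This is exactly how Juhász's permissible version of \cite[Lemma 6.6]{juhasz2008floer} proceeds.

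The main obstacle is checking that the extra real cases survive these walls. These are case (3) of \Cref{prop:reducing complexity of a diagram}, where the finger from $D_m'$ enters $D_m$, and the lengthened fingers along $\tau\circ\delta'$. A lengthened $\beta$-finger might be forced across a $B$-edge, or its $\alpha$-partner across an $A$-edge. I would handle this by choosing $\delta'$ inside $D_m^1$ (or parallel to $a_1$), which is a region bounded only by $\alpha$- and $\beta$-edges adjacent to $D_*$. If $\partial D_m$ contains an edge of $A\cup B$, we instead terminate the finger there, treating that edge as boundary. This creates a region adjacent to $\partial$-type arcs, which only lowers distance. The decrease of $c_{d_0}$ is then verified exactly as before, and termination follows from well-ordering of the complexities.
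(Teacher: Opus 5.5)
Your outline matches the paper's proof. It does Step 1 symmetrically. It notes that $C$ meets $\partial P$ only in $\partial\Sigma$: if $p\in C\cap A$ then $p=\tau(p)\in A\cap B\subset\partial\Sigma$, so the winding near $C$ is permissible. It then defers to the permissible version of \cite[Lemma 6.6]{juhasz2008floer}, handling the extra real case by lengthening the finger along $\delta'\subset D_m^1$. Your remark that $\tau$ carries permissibility of an $\alpha$-move to permissibility of the paired $\beta$-move is correct and is used implicitly in the paper. Two of your sub-arguments, however, are wrong as written.

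First, the existence of paths that avoid $A\cup B$ does not come from Dugger's classification, which says nothing about those arcs. The paper reroutes instead. Take the $C$-avoiding path from Lemma~\ref{lem:well-definedness of distance function}, and at its first point on an edge of $A\cup B$, follow that edge to $\partial\Sigma$. This uses three facts:
\begin{itemize}
\item goodness, so the edges are arcs ending on $\partial\Sigma$;
\item $A^\circ\cap B^\circ=\emptyset$;
\item $C\cap(A\cup B)\subset\partial\Sigma$.
\end{itemize}
Distance is then the minimal number of $\bm\alpha\cup\bm\beta$ crossings over paths that never cross $A\cup B$. With this convention $D_*$ is always reached across an $\alpha$- or $\beta$-edge, so $b_*$ is never an edge of $A\cup B$. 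Your contingency for that case therefore never arises, and it could not be handled by moving $b_*$ anyway, since $A$ and $B$ are fixed.

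Second, your fallback in the ``main obstacle'' paragraph fails. Adjacency to an edge of $A\cup B$ does not lower distance. If it did, interior bad regions adjacent to $A\cup B$ would have distance $0$, and the reduction step, which acts only when $d_0>0$, would never remove them. A finger terminated inside $D_m^1$ leaves the pieces of $D_m$ with total badness $b(D_m)$, so $c_{d_0}$ need not decrease.

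The fallback is also unnecessary. $D_m^1$ is a piece of an elementary domain of $\Sigma\setminus(\bm\alpha\cup\bm\beta\cup A\cup B)$, so its interior misses $A\cup B$ whatever edges bound it. It is connected and has part of $b_*$ on its boundary. Hence an arc $\delta'$ in its interior from the finger tip to $b_*$, disjoint from $\delta$, always exists. The extension along $\delta'$ is permissible, and its $\tau$-image is permissible by your equivariance remark. The same holds for the arc parallel to $a_1$ in case B1, which stays inside $D_m$. This is exactly the paper's Step 4'.
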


Let $(\Sigma,\bm\alpha,\bm\beta,P,\t)$ be a good surface diagram for some good real decomposing surface $S$ in $(Y,\gamma,\tau)$. 
The proof of this proposition is more involved than that of Theorem \ref{thm: existence of nice diagram}, since segments from $A$ and $B$ appear in the boundary of elementary domains. By an elementary domain, we mean a connected component in $\Sigma-(\bm\alpha\cup \bm\beta \cup A\cup B)$.

We will modify the proof of Theorem \ref{thm: existence of nice diagram} step by step to prove the proposition. We use the same notation for domains in real Heegaard diagrams as in Sections~\ref{subsub:nice digram step 1}-\ref{subsub:nice diagram step 4}.

{\bf Step 1'.}  
We again apply Juh\'asz's argument symmetrically to keep the diagram real, just as in Section~\ref{subsub:nice digram step 1}.

{\bf Step 2'.} Note that $P\subset \Sigma$ is a subsurface, and that $P\cap \partial \Sigma=A\cap B$ consists of the vertices of $P$. Thus $A\cap B\cap C^{\circ}=\emptyset$. We can therefore apply the same operations as in Section~\ref{subsub:nice digram step 2}.

{\bf Step 3'.} The definition of badness carries through, but the distance function needs some extra care. By Lemma \ref{lem:well-definedness of distance function}, we know that each interior elementary domain can be connected to $\partial \Sigma$ using a path $\phi:[0,1]\to \Sigma$ in the complement of $C$. However, we also need the image of $\phi$ to be disjoint from $A\cup B$, except perhaps at $\phi(1)$, just as in \cite[Definition 6.5]{juhasz2008floer}. We claim that for any interior elementary domain $D$, a path satisfying this extra assumption always exists. To see this, first observe that there is a path $\phi$ from $D$ to $\partial\Sigma$ that possibly intersects $A\cup B$. We can find another path $\phi'$ that agrees with the portion of $\phi$ until it intersects $A$ or $B$, then continued by a path parallel to the edge of $A$ or $B$ on which the first intersection lies till it hits $\partial\Sigma$. The arc $\phi'$ is then a path from $D$ to $\partial\Sigma$ that doesn't intersect the any $A$ or $B$-edges because $\partial A\cup \partial B\subset \partial\Sigma$ and $A^\circ\cap B^\circ=\emptyset$.

In the present context, we alter the definition of distance for domains and Heegaard diagram as follows:

\begin{enumerate}
    \item If $D$ is an interior elementary domain that is not in $\cR_C$, we define $d(D)$ as 
    \begin{align*}
        \min \{ |\phi \cap (\bm \alpha \cup \bm \beta)| : \phi: [0,1] \ra \Sigma\setminus BC, \phi(0) \in D, \phi(1) \in \partial \Sigma,  \phi \cap (A\cup B) \in \partial \Sigma\}.  
    \end{align*}
    Here, intersections $\phi \cap (\bm \alpha \cap \bm \beta)$ are counted with multiplicity two.
    \item Define the \emph{distance}, $d_0$, of the Heegaard diagram to be the maximal distance of all bad interior elementary domains.
    \item For $D\in \cR_C$,  set $d(D)=d_0+5$.
\end{enumerate}

We define the complexity $c_{d_0}$ of $\cH$ just as in Section \ref{subsub:nice digram step 3}.

{\bf Step 4'.} We argue that the complexity of $\cH$ can always be strictly decreased provided it is nonzero, as in Proposition \ref{prop:reducing complexity of a diagram}. The only substantial change from the proof of Proposition~\ref{prop:reducing complexity of a diagram} is that we are only allowed to apply finger moves to the $\alpha$ and $\beta$-curves; new intersections of $A$ and $B$ segments with $\alpha$ or $\beta$-curves can be introduced, but $A$ and $B$ themselves cannot themselves be moved. The procedure given in the proof of~\cite[Lemma 6.6]{juhasz2008floer} already dealt with this subtlety, so the first half of {\bf Step 4} generalizes to the present context. Note that the portion of the Heegaard diagram near $C$ has already been made nice in {\bf Step 2'} and we have kept this part nice in {\bf Step 4'}. 

In the case of finger moves from $D_m$ to $D_m'$, we can proceed in the surface diagram case much in the same way as with ordinary nice diagrams (following the proof of \cite[Lemma 6.6]{juhasz2008floer}): we can choose $b_*$ and $\t(b_*)$ to be on $\alpha$ or $\beta$-circles, since only intersections with $\alpha$ and $\beta$-curves contribute to the distance, not intersections with $A$ and $B$. As in the previous section, we may need to extend the finger moves in this case. Thus, the second half of {\bf Step 4} also works for surface diagrams.

This concludes the proof of Proposition~\ref{prop:existence of nice surface diagram}. This proposition will be used to prove the surface decomposition formula for $\RSFH$ in Section~\ref{sec:Applications of nice diagrams}.

\begin{lemma}\label{lem:reduce to good surface}
Let $(Y,\gamma,\tau)\overset{S}{\rightsquigarrow}(Y',\gamma',\tau')$ be a real surface decomposition such that for every component $V$ of $R(\gamma)$ the set of closed components of $S\cap V$ consists of parallel boundary-coherent simple closed curves. Then $S$ is equivariantly isotopic to a real good decomposing surface $S'$ with the property that the decomposition along $S'$ also gives $(Y',\gamma',\tau')$.
\end{lemma}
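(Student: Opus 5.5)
This is the real analogue of \cite[Lemma~4.5]{juhasz2008floer}. In that lemma Juh\'asz isotopes a decomposing surface $S$ whose closed components of $S\cap R(\gamma)$ are parallel and boundary-coherent into a good one: it becomes open, and each component of $\partial S$ meets both $R_+(\gamma)$ and $R_-(\gamma)$. The isotopy consists of finger moves of $\partial S$ across the sutures, and the decomposition result is unchanged. My plan is to run the same construction, making every modification $\tau$-equivariantly and using the fact that $\tau$ swaps $R_+(\gamma)$ with $R_-(\gamma)$ and reverses the orientation of $S$.

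First I will arrange that $S$ has no closed components. The standing hypothesis of the decomposition theorem is that $S$ is open, so I will assume this here as well; closed components play no role in the isotopy below.

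Next, consider a boundary component $c$ of $S$ lying entirely in $R_+(\gamma)$. Then $\tau(c)$ is a component of $\partial S$ lying in $R_-(\gamma)$, and $\tau(c)\neq c$. Following Juh\'asz, I will choose an arc $\delta\subset R_+(\gamma)$ from $c$ to $s(\gamma)$ whose interior avoids $\partial S$. Such an arc exists because every component of $R(\gamma)$ meets $\gamma$, the manifold being balanced. I will take $\delta$ to avoid the finitely many fixed points of $\tau|_{\partial Y}$, which lie on $s(\gamma)$, so that $\delta$ and $\tau(\delta)$ are disjoint.

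I will then push a finger of $S$ along $\delta$ across $\gamma$ into $R_-(\gamma)$, and at the same time push the mirror finger along $\tau(\delta)$. This modification is supported near $\delta\cup\tau(\delta)$, which is a pair of disjoint regions, so the new surface $S'$ is again real and is equivariantly isotopic to $S$.

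After the move, $c$ and $\tau(c)$ each meet both $R_\pm$. The new intersections with $\gamma$ are pairs of arcs interchanged by $\tau$, which is case (2) of the classification following Definition~\ref{def:realdecomposing surface}. Hence $S'$ is still a real decomposing surface. I will repeat this for each such pair $\{c,\tau(c)\}$, choosing the arcs disjoint from earlier fingers. Since $\partial S$ has finitely many components, the process terminates.

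It remains to show that the decomposition along $S'$ gives the same real sutured manifold $(Y',\gamma',\tau')$. Juh\'asz's argument uses boundary-coherence and parallelism of the closed curves $S\cap V$ to show that the finger move changes $Y'$ only by a product region, and that the sutures get slid back to their original position. I will apply that argument locally at $\delta$. The equivariant counterpart at $\tau(\delta)$ follows by applying $\tau$, and $\tau'$ is the restriction of $\tau$ in both cases, so the identifications match.

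The main obstacle is this last verification. In the unreal case the choice of the direction of the finger, pushing into $R_-$ versus back into $R_+$, depends on the orientation of $c$ relative to $s(\gamma)$. Here I must check that the orientation-reversing property of $\tau$ on $S$ makes the mirrored finger at $\tau(\delta)$ the coherent choice as well. This should follow because $\tau$ preserves the orientation of $Y$ while exchanging $R_\pm$, so the relevant sign condition is carried to itself.
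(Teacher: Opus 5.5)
Your strategy is the one the paper uses: equivariant versions of Juh\'asz's finger moves of $\partial S$ across the sutures, paired with mirror fingers along $\tau(\delta)$. As written, though, it has two genuine gaps.

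\textbf{The choice of $\delta$.} A finger move leaves the decomposition unchanged only when it crosses $s(\gamma)$ through a \emph{positive} tangency, meaning the unit tangent vectors of $\partial S$ and $s(\gamma)$ agree. In that case you attach a collar $\partial Y\times I$ and the trace $\bigcup_t s_t\times\{t\}$ of the isotopy, and decomposing $(\widetilde Y,\gamma\times\{1\})\cong(Y,\gamma)$ along $\widetilde S$ returns $(Y',\gamma')$; $\tau$ extends over the collar. With a negative tangency this fails: $c(S,t)$ changes, so the outer spin$^c$ structures change, and in general so does the decomposed manifold.

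The tip of a finger is parallel to $c$ with the same orientation. So the sign is decided by which side of $c$ the finger leaves from, not by ``pushing into $R_-$ versus back into $R_+$.'' You take $\delta$ to be any arc from $c$ to $s(\gamma)$, justified only by balancedness, so nothing rules out a negative tangency.

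Choosing the side is exactly where boundary-coherence enters. If $[c]=0$ in $H_1(V)$, only the outside of the subsurface bounded by $c$ reaches $\partial V$, and $c$ being oriented as its boundary makes that side the positive one. If $[c]\neq 0$, then $c$ is non-separating or both pieces of $V\setminus c$ contain components of $\partial V$, so an arc can leave from the positive side.

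Parallelism is used to take one arc meeting each curve of a parallel family $C_1^+,\dots,C_k^+\subset V$ once and to move the whole family simultaneously. Before the outer curves are moved, no arc from an inner curve to $s(\gamma)$ has interior disjoint from $\partial S$.

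By contrast, the equivariant part of your ``main obstacle'' is a one-line check. $\tau$ reverses the orientations of both $\partial S$ and $s(\gamma)$, so it carries positive tangencies to positive tangencies.

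\textbf{Boundary components inside the sutures.} You only treat boundary components of $S$ lying in $R_\pm(\gamma)$. A real decomposing surface may also have boundary components that are simple closed curves in suture annuli, homologous to $s(\gamma)$ (cases (3)--(5) of the classification after the definition of a real decomposing surface). These meet neither $R_+$ nor $R_-$, so $S'$ is not good until they are moved too.

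Case (3) does not fit your pairing $\{c,\tau(c)\}$ with $\tau(c)\neq c$, because such a curve is $\tau$-invariant and passes through both fixed points of $s(\gamma_0)$. The paper handles a $\tau$-invariant annulus $\gamma_0$ as follows. It chooses an arc $\delta$ with $\delta\cap\tau(\delta)=\emptyset$ that meets each such curve $\sigma_i$ and $s(\gamma_0)$ once and ends in $R_+$. It then pushes fingers from the \emph{same} curves along $\delta$ into $R_+$ and along $\tau(\delta)$ into $R_-$. Because the $\sigma_i$ are homologous to $s(\gamma_0)$ as oriented curves, any crossing of $s(\gamma_0)$ is through a positive tangency. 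For two annuli exchanged by $\tau$, one performs Juh\'asz's move on $\gamma_0$ and its mirror on $\tau(\gamma_0)$.

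The paper's proof also records $O^R_S=O^R_{S'}$, which is what the decomposition formula needs after replacing $S$ by $S'$. This too rests on all tangencies being positive.
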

\begin{proof}
   This is a real version of~\cite[Lemma 4.5]{juhasz2008floer}, modified to keep $S'$ in the category of real surfaces. Recall that a tangency between two curves is called \emph{positive} if their unit tangent vectors coincide at the tangency point. A key observation in the proof of~\cite[Lemma 4.5]{juhasz2008floer} is the following. Isotope a small arc of $\partial S$ on $\partial Y$ using a finger move such that during the isotopy we have a positive tangency between $\partial S$ and $s(\gamma)$. Let the resulting isotopic copies of $\partial S$ be $\{s_t:0\le t\le 1\}$. Attach a collar $\partial Y\times I$ to $Y$ and attach $\cup_{t\in I}(s_t\times{t})$ to $S$. Call the resulting $3$-manifold $\widetilde{Y}$ and surface $\widetilde{S}$. Decomposing \[(\widetilde{Y},\gamma\times \{1\})\approx (Y,\gamma)\] along $\widetilde{S}$, we get $(Y',\gamma')$. Note that the real structure extends naturally to $\widetilde{Y}$, and $\widetilde{S}$ is real isotopic to $S$. 

   Let $\gamma_0$ be a component of $\gamma$ such that $\gamma_0\cap \partial S$ consists of closed curves $\sigma_1,\ldots\sigma_k$. We distinguish two cases: \begin{enumerate}
       \item\label{9.8case1} $\gamma_0$ is fixed setwise by $\tau$.
       \item\label{9.8case2} there is another component $\gamma_0'$ of $\gamma$ such that $\tau$ interchanges $\gamma_0$ and $\gamma_0'$.
   \end{enumerate}
    If we are in the Case~\ref{9.8case2}, we can simultaneously apply the operation in the second paragraph of the proof of \cite[Lemma 4.5]{juhasz2008floer} to $\gamma_0$ and $\gamma_0'$, preserving the real structure. We only need to ensure that: \begin{itemize}
        \item  the isotopy is small enough that $\sigma_i\cap \sigma_j'=\emptyset$ for any $i,j$, where $\sigma_j'=\tau(\sigma_j)$ and \item the arc $\delta$ intersects its image $\tau(\delta)$ emptily. \end{itemize} 
        For Case~\ref{9.8case1}, we choose an oriented arc $\delta$ so that $\delta\cap\tau(\delta)=\emptyset$ and $\delta$ intersects each of $\sigma_i$ and $s(\gamma_0)$ exactly once and ends in $R_+$. We perform two finger moves on $\sigma_1\ldots,\sigma_k$ along $\delta$ and $\tau(\delta)$ simultaneously. We get one positive tangency between each $\sigma_i$ and $\gamma_0$. Repeat this for each component $\gamma_0$ of $\gamma$ such that $\gamma_0\cap \partial S$ contains closed curves.
    
  Components of $R(\gamma)$ appear in pairs interchaged by $\tau$. 
    Let $V_{\pm}$ be such a pair containing two families of parallel closed components $V_+\cap \partial S=\{C_i^+|1\le i\le k\}$ and $V_-\cap \partial S=\{C_i^-|1\le i\le k\}$. 
    If $[C_i^+]\ne 0$ in $H_1(V_+;\Z)$, we can choose an arc $\delta\subset \partial Y$ so that $\delta\cap \tau(\delta)=\emptyset$, $\partial\delta=\delta\cap s(\gamma)$, $\vert \delta\cap C_i^+\vert=1$ and \[\delta\cap \partial S=\delta\cap (C_1^+\cup\ldots\cup C_k^+).\]  
    We perform finger moves on $C_i^+$ and $C_i^-$ along $\delta$ and $\tau(\delta)$ simultaneously, creating positive tangency between $C_i^{\pm}$ with $s(\gamma)$. 
    If $[C_i^+]=0$ in $H_1(V_+;\Z)$, choose an arc $\delta$ similarly using the assumption that $C_i^+$'s are oriented as boundary of the surface they bound and perform finger moves to create positive tangencies. Perform this to each pair $V_{\pm}$  such that $V_{\pm}\cap \partial S$ contains some closed curves.

    Call the resulting surface $S'$. It can be seen easily that each component of $\partial S'$ intersects both $R_+$ and $R_-$. The argument in the proof of \cite[Lemma 4.5]{juhasz2008floer} shows that:\begin{itemize}
        \item $O_{S_1}^R=O_{S_2}^R$ when $S_1$ and $S_2$ differ by an isotopy near boundary, and
        \item $O_{S_1}^R=O_{S_2}^R$ when $S_2$ is related to $S_1$ by a small finger move of $\partial S_1$ that crosses $s(\gamma)$ through a positive tangency.
    \end{itemize}
    The surfaces $S$ and $S'$ differ exactly by these two kinds of moves, so we can conclude that ${O_S^R=O_{S'}^R}$.
\end{proof}

\begin{definition}
    Let $(\Sigma,\bm\alpha,\bm\beta,P,\t)$ be a real surface diagram. We call an intersection point $\xv\in (\TT_\alpha\cap\TT_\beta)^R$ \emph{outer} if $\xv\cap P=\emptyset$. Let $O_P^R$ denote the set of real outer intersection points.
\end{definition}

\begin{lemma}\label{lem:bijection on generators using adapted diagram}
Let $(Y,\gamma,\tau)\overset{S}{\rightsquigarrow}(Y',\gamma',\tau')$ be a real surface decomposition and suppose that  $(\Sigma,\bm\alpha,\bm\beta,P,\t)$ is a real surface diagram adapted to $S$. Take any $\xv\in (\TT_\alpha\cap\TT_\beta)^R$, then $\xv\in O_P^R$ if and only if $\s^R(\xv)\in O_S^R$. Furthermore, if $(\Sigma',\bm\alpha',\bm\beta',\t')$ is the resulting diagram for $(Y',\gamma',\tau')$ which has a canonical projection $p:\Sigma'\to \Sigma$, then $p$ induces a bijection between $(\TT_\alpha\cap\TT_\beta)^R$ and $O_P^R$.
\end{lemma}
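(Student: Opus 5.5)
The plan is to reduce both assertions to their unreal counterparts, \cite[Lemma 5.4]{juhasz2008floer} (together with the surrounding discussion in \cite[Section 5]{juhasz2008floer}), and to check that each identification is compatible with the involutions. For the first assertion, recall from Definition~\ref{def:outer real spinc} that $\s^R\in O_S^R$ exactly when the underlying $\SpinC$ structure is outer with respect to $S$. So I need the forgetful map $\rspinc(Y,\gamma,\tau)\to\SpinC(Y,\gamma)$ to send $\s^R(\xv)$ to $\s(\xv)$. This holds because the vector field $v_{\xv}$ constructed in Section~\ref{sec:Real Sutured Heegaard Floer Homology} is an equivariant version of Juh\'asz's construction: it comes from the gradient of a real sutured Morse function, modified near the (symmetric) multi-trajectory $\gamma_{\xv}$. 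Forgetting the real structure, it represents $\s(\xv)$. Since $(\Sigma,\bm\alpha,\bm\beta,P)$ is an ordinary surface diagram adapted to $S$, \cite[Lemma 5.4]{juhasz2008floer} then gives $\xv\cap P=\emptyset\iff \s(\xv)\in O_S \iff \s^R(\xv)\in O_S^R$.

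For the second assertion, I read the statement as a bijection between $(\TT_{\alpha'}\cap\TT_{\beta'})^R$ for $\cH'$ and $O_P^R$. The unreal argument works as follows. Since $\bm\alpha'\cap P_B=\emptyset$ and $\bm\beta'\cap P_A=\emptyset$, every point of $\bm\alpha'\cap\bm\beta'$ lies in $\Sigma'\setminus(P_A\cup P_B)$, where $p$ is injective. Hence $p$ maps $\TT_{\alpha'}\cap\TT_{\beta'}$ bijectively onto the generators of $\cH$ disjoint from $P$. It remains to match the real (that is, $R$-fixed) generators. I will use that $p$ is real, $p\circ\tau'=\tau\circ p$, which follows from the construction of $\tau'$ as $p_B^{-1}\circ\tau|_P\circ p_A$ on $P_A$ (and symmetrically on $P_B$) and as $\tau$ elsewhere. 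Also, $p$ intertwines the $\alpha$-to-$\beta$ pairing: $\tau'(\alpha_i')=\beta_i'$ maps under $p$ into $\tau(\alpha_i)=\beta_i$.

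So the condition that a generator $\xv'=\{x_1',\dots,x_m'\}$ is fixed by $R'$ translates exactly into the condition that $p(\xv')$ is fixed by $R$. In detail, $\xv'$ is $R'$-fixed when $\tau'(\xv')=\xv'$ with the induced matching of $\alpha_i'\cap\beta_{\sigma(i)}'$ points. Because $p$ is injective on the relevant region and equivariant, this corresponds to $\tau(p(\xv'))=p(\xv')$. Conversely, an $R$-fixed $\xv$ disjoint from $P$ lifts uniquely, and its lift is $R'$-fixed.

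The main subtlety is the behaviour along the fixed set $C$. Points of $\xv$ on $C$ must stay on the fixed set of $\tau'$. A point of $\bm\alpha\cap\bm\beta\cap C$ outside $P$ has a unique preimage, lying in $\Sigma'\setminus(P_A\cup P_B)$, where $\tau'=\tau$, so this causes no problem. I also have to make sure that $\tau'$, which swaps $P_A$ and $P_B$, does not create new fixed intersection points. Its new fixed points lie only on the seams $p^{-1}(\partial P)$, which carry no generators, since $A\cap\bm\beta=B\cap\bm\alpha=\emptyset$ and, in $\Sigma'$, $\bm\alpha'$ avoids $P_B$ while $\bm\beta'$ avoids $P_A$. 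Combining the bijection with the first assertion identifies $(\TT_{\alpha'}\cap\TT_{\beta'})^R$ with the generators in outer real $\SpinC$ structures, as needed later.
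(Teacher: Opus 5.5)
Your proposal is correct and is essentially the paper's argument. The paper's proof is the one line ``as in the proof of \cite[Lemma 5.5]{juhasz2008floer}''; you make that reduction explicit in two steps. First, $\s^R(\xv)$ forgets to $\s(\xv)$, so, by the definition of $O_S^R$, the first claim is exactly Juh\'asz's unreal statement. Second, $p$ is equivariant ($p\circ\tau'=\tau\circ p$) and injective on $\Sigma'\setminus(P_A\cup P_B)$, which contains $\bm\alpha'\cap\bm\beta'$, so $R'$-fixed generators correspond to $R$-fixed generators disjoint from $P$. You also read the target of the bijection correctly as $(\TT_{\alpha'}\cap\TT_{\beta'})^R$; this is how the lemma is used in Proposition~\ref{prop:admissibility of D(P) and identification of chain complex with outer}.
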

\begin{proof}
    This can be shown as in the proof of~\cite[Lemma 5.5]{juhasz2008floer}.
\end{proof}

We note in passing that if $\s^R$ is outer with respect to $S$, then $[\s^R]_{C_i}=[S]_{C_i}$ for each component $C_i$ of the fixed point set that is contained in $S$. Here, $[S]_{C_i}$ is relative mod $2$ framing of $C_i$ induced by $S$. In particular there is a relationship between the notion of outer above and that discussed in Section~\ref{sec:Guided 2-Handle attachment and Deletion}.

\subsection{The real surface decomposition formula}\label{sub:A decomposition formula for real sutured Heegaard Floer homology}

An important application of nice real diagrams is the following surface decomposition formula for real sutured Floer homology. 

\begin{theorem}\label{thm:decomposition formula for RSFH}
Let $(Y,\gamma,\tau)\overset{S}{\rightsquigarrow}(Y',\gamma',\tau')$ be a real surface decomposition with $(Y,\gamma,\tau)$ strongly balanced. Suppose that $S$ is open and for every component $V$ of $R(\gamma)$, the set of closed components of $S\cap V$ consists of parallel boundary-coherent simple closed curves, that is $S$ is nice. Then
\begin{align*}
        \RSFH(Y',\gamma',\tau') \cong \bigoplus_{\frak{s}^R\in O_S^R} \RSFH(Y,\gamma,\tau,\s^R).
\end{align*}

    \noindent Moreover, there are maps $p:H^R_1(Y',\tau')\to H^R_1(Y,\tau)$ and ${p':\RSFH(Y',\gamma',\tau')\to \RSFH(Y,\gamma,\tau)}$ such that $p(\epsilon^R(\bm{x},\bm{y}))=\epsilon^R(p'(\bm{x}),p'(\bm{y}))$ for homogeneous elements $\bm{x},\bm{y}\in\RSFH(Y',\gamma',\tau')$, viewed as a relatively $H^R_1(Y',\tau')$-graded module.
\end{theorem}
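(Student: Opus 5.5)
We follow the structure of Juh\'asz's proof of \cite[Theorem~1.3]{juhasz2008floer}, working equivariantly throughout. First, by Lemma~\ref{lem:reduce to good surface}, we replace $S$ by a real good decomposing surface $S'$ that is real isotopic to $S$, yields the same $(Y',\gamma',\tau')$, and satisfies $O_S^R=O_{S'}^R$. So we may assume $S$ is good. Proposition~\ref{prop:decomposingdiagramsexist} then gives a real surface diagram $(\Sigma,\bm\alpha,\bm\beta,P,\t)$ adapted to $S$; it is good because $A$ and $B$ have no closed components. Applying Proposition~\ref{prop:existence of nice surface diagram}, we perform real permissible moves until the diagram is nice. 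We also arrange weak admissibility by equivariant winding away from $A\cup B$, and check that the associated $(\Sigma',\bm\alpha',\bm\beta',\t')$ from $D(P)$ is then a nice, admissible real Heegaard diagram for $(Y',\gamma',\tau')$.

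Second, we compare chain complexes. By Lemma~\ref{lem:bijection on generators using adapted diagram}, $p$ induces a bijection between the generators of $\RSFC(\Sigma',\bm\alpha',\bm\beta',\t')$ and the outer generators $O_P^R$, and these are exactly the generators with $\s^R(\xv)\in O_S^R$. Since $\RSFC(\cH)$ splits over real $\SpinC$ structures, the span of $O_P^R$ is a direct summand subcomplex. It therefore suffices to show that $p$ induces a bijection of real index one domains, with matching mod~$2$ counts, between outer generators. By Theorem~\ref{thm:RSFHcombin} and Proposition~\ref{prop:each real domain has odd reps}, in nice diagrams every positive real index one domain of the types in Proposition~\ref{prop:a list of all possible real domains} contributes exactly $1 \bmod 2$. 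So the comparison reduces to a combinatorial statement about domains.

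Third, we carry out the domain comparison, which is the main obstacle. Pushforward $p_*$ sends real domains in $\Sigma'$ connecting $\xv',\yv'$ to real domains in $\Sigma$ connecting $p(\xv'),p(\yv')$. It preserves positivity, the Euler measure, and the vertex multiplicities, since $p$ is a local diffeomorphism near the generators, which avoid $P$. The real index is therefore preserved via \eqref{eq:indexformula2}, and so is the triple-index term, since $A\cap B\cap C^\circ=\emptyset$.

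The converse requires showing that any positive real index one domain $\cD$ in $\Sigma$ between outer generators lifts uniquely. In the unreal case this uses the fact that in a nice surface diagram, an index one domain between outer generators is an embedded bigon or rectangle, which cannot cross $\partial P$ in a way incompatible with the lifting.

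Here we instead use the full list in Proposition~\ref{prop:a list of all possible real domains}, together with Corollary~\ref{lem:standard pattern for immersed real domain}: multiplicities are at most $2$, and the doubled part is a union of rectangles avoiding the corners. Consider each tile of the source $F$. Its image meets $P$ only through $\alpha$ or $\beta$ edges lying over $A$ or $B$. The condition $A\cap\bm\beta=B\cap\bm\alpha=\emptyset$ then forces each tile in $P$ to lift to a single copy, $P_A$ or $P_B$, consistently along the tiling. Equivariance of this choice follows because $\t'$ swaps $P_A$ and $P_B$ exactly as $\t$ swaps $A$ and $B$. We will check the annular and toroidal cases tile-by-tile, including the case where the domain is immersed over $P$. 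We expect the main risk to lie here, specifically in immersed annuli and tori straddling $\partial P$.

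Finally, for the grading statement, $p$ is induced by the inclusion $Y'\hookrightarrow Y$ and $p'$ by the generator bijection $p$. Under this inclusion, multi-paths in $\bm\alpha'$ push forward to multi-paths in $\bm\alpha$, so $\epsilon^R$ is carried to $\epsilon^R$, exactly as in the proof of Theorem~\ref{Prop:adapteddecompfomrula}.
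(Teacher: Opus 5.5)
Your outline follows the paper's proof step for step: reduction to a good surface (Lemma~\ref{lem:reduce to good surface}), a nice admissible adapted surface diagram, the doubled diagram $D(P)$, the generator bijection (Lemma~\ref{lem:bijection on generators using adapted diagram}), a bijection of positive real index one domains between outer generators, each domain counting $1 \bmod 2$, and the grading via $Y'\hookrightarrow Y$. The gap is in the step that carries the real content, namely the lift of domains from $\Sigma$ to $\Sigma'$, and the reason you give for it is not sufficient.

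Let $X$ be a component of $u^{-1}(P)$ in the source $F$. The corners lie outside $P$, $\bm\alpha$ meets $\partial P$ only in $A$, $\bm\beta$ only in $B$, and $\mathcal{D}$ vanishes near $A\cap B\subset\partial\Sigma$. From these facts you only get that each \emph{boundary circle} of $X$ is of pure type: either frontier over $A$ together with $\alpha$-arcs of $\partial F$, or frontier over $B$ together with $\beta$-arcs.

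When $F$ is a disk this is enough, because $X$ is then a disk. An inner boundary circle of $X$ would lie in $\mathrm{int}\,F$ and would have to cover a component of $\partial P$, which meets $\partial\Sigma$. This is why nothing more is needed in Juh\'asz's setting. For annular and toroidal sources, however, $X$ can a priori be an essential annulus whose two boundary circles have different types. Such an $X$ lifts to neither $P_A$ nor $P_B$, so $h\colon F\to\Sigma'$ does not exist.

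Equivariance does not rule this out. Take a free-boundary reflection annulus with $X$ containing the fixed core. Then $\sigma$ exchanges the two boundary circles of $X$ while $\tau$ exchanges $A$ and $B$, so a mixed-type $X$ is fully compatible with the real structure.

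This is exactly the configuration the paper isolates, and you need to supply the same exclusion for your tile-by-tile lift to be consistent. The paper proceeds as follows:
\begin{itemize}
\item Multiplicities jump only by $\{0,1\}$ or $\{1,2\}$ across $\partial\mathcal{C}$ (Corollary~\ref{lem:standard pattern for immersed real domain}). Using this, it writes each component $\mathcal{C}$ of $\mathcal{D}\cap P$ as $\mathcal{C}_A+\mathcal{C}_B$ with $\partial\mathcal{C}_A\subset\bm\alpha\cup A$ and $\partial\mathcal{C}_B\subset\bm\beta\cup B$.
\item The remaining danger is an annular $\mathcal{C}$ with one boundary circle in $\bm\alpha\cup A$ and the other in $\bm\beta\cup B$. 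The paper excludes this via the tiling classification of Lemmas~\ref{lem:classify-annuli-tilings} and~\ref{lem:classify-toriodal-tilings}.
\item This yields the explicit list: an invariant component is an immersed sum of two rectangles, and a swapped pair consists of bigons or rectangles, or of unions of two rectangles.
\item Only then does $\mathcal{D}\cap P$ split as $D_A\sqcup D_B$. The map $h$ is then well defined, continuous, and holomorphic (since $p$ is conformal), and $r_0\circ p_0=\mathrm{id}$ can be verified.
\end{itemize}
Deferring this exclusion, as your proposal does, leaves the proof incomplete at its crux.
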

 
Before proving this theorem, we need some further preparation based on the nice real diagrams we have introduced in Section~\ref{sec:Nice Real Heegaard Diagrams and Combinatorial Computation}.

\begin{proposition}
 If a real surface diagram $(\Sigma,\bm\alpha,\bm\beta,P,\t)$ is nice and $(\Sigma,\bm\alpha,\bm\beta,\t)$ is admissible, then $(\Sigma,\bm\alpha,\bm\beta,\t)$ is nice.
\end{proposition}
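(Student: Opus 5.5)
The strategy is to reduce to the unreal statement \cite[Proposition 6.3]{juhasz2008floer} (or its proof), and then check the extra real conditions of Definition~\ref{definition: nice diagram}.

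\textbf{Condition \ref{def:rectangleorbigon}.} Each elementary domain of $\Sigma-(\bm\alpha\cup\bm\beta)$ is a union of elementary domains of $\Sigma-(\bm\alpha\cup\bm\beta\cup A\cup B)$, glued along arcs of $A\cup B$. Take an interior elementary domain $D$ of $\Sigma-(\bm\alpha\cup\bm\beta)$ that meets $A\cup B$.
\begin{itemize}
\item Every arc of $A\cup B$ has both endpoints on $\partial\Sigma$, since $P\cap\partial\Sigma$ consists exactly of the vertices of $P$.
\item An arc of $A\cup B$ crossing $D$ must therefore leave $D$ through $\bm\alpha\cup\bm\beta$.
\item Hence $D$ is cut into pieces that are bigons or rectangles of the surface diagram, glued along edge segments of $A$ or $B$.
\item Gluing polygons along full edges with $90^\circ$ corners on $\bm\alpha\cup\bm\beta$ does not change the number of $\alpha/\beta$ corners.
\item Juh\'asz's counting therefore shows $D$ has at most four convex corners, so $D$ is a bigon or a rectangle.
\end{itemize}
The admissibility hypothesis rules out the remaining possibility, that $D$ is an annulus or a non-disk union. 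Such a region, with no corners and bounded only by $\alpha$- and $\beta$-arcs, would be a positive periodic domain. Admissibility forbids this, as in the unreal proof.

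\textbf{Conditions \ref{def:onlyrectangleonC} and \ref{def:patternnice}.} These concern only a neighborhood of $C$. By Step 2$'$ of the proof of Proposition~\ref{prop:existence of nice surface diagram}, the neighborhood $\nu(C)$ of the interior of $C$ is disjoint from $A\cap B$. So I would argue that near $C$ the surface-diagram niceness already gives the local pattern of Figure~\ref{fig:triple_index}, and that domains crossing $C$ are rectangles.

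The concern is that a rectangle $R$ of the surface diagram containing a segment of $C$ might, after deleting $A\cup B$, merge into a bigger domain on $C$. Here I would use the symmetry:
\begin{itemize}
\item Since $\tau(A)=B$, an $A$-edge of $R$ is paired with a $B$-edge of $R$ mirrored across $C$.
\item Removing both edges glues $R$ to a pair of symmetric neighbors.
\end{itemize}
I need to show this gluing still yields a rectangle. It does if the neighbors are rectangles whose remaining edges are $\alpha$/$\beta$-edges continuing straight through. This is the step I expect to be the main obstacle. I would handle it by case analysis on the edges of $R$.
\begin{itemize}
\item \emph{Fewer than two $\alpha/\beta$ edges on $R$.} Then $A$- or $B$-arcs would cross $C$ inside $R$, forcing $A\cap B\cap C^\circ\neq\emptyset$, which is excluded.
\item \emph{Otherwise.} The glued region has exactly four $\alpha/\beta$ corners, so it is a rectangle. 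The local crossing pattern at $C$ is untouched, because no $\alpha$/$\beta$-curve is moved.
\end{itemize}
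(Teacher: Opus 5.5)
Your treatment of condition~\ref{def:rectangleorbigon} is the paper's argument: the paper's entire proof is the citation of \cite[Proposition 7.5]{juhasz2008floer}. The mechanism there is additivity of Euler measure, not a corner count preserved under gluing as you phrase it. Each piece of $D\setminus(A\cup B)$ is a bigon or rectangle, so $e(D)\ge 0$, while $e(D)=\chi(D)-\tfrac14\#\{\text{corners of }D\}$. Admissibility then excludes cornerless annuli (or disks) bounded by full $\alpha$/$\beta$-curves. Condition~\ref{def:patternnice} is indeed inherited verbatim, since it involves only $\bm\alpha$, $\bm\beta$ and $C$. For this, ``nice real surface diagram'' must be read as including the real conditions near $C$, which the construction in Proposition~\ref{prop:existence of nice surface diagram} preserves.

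Your handling of condition~\ref{def:onlyrectangleonC} does not work as written. The key claim ``the glued region has exactly four $\alpha/\beta$ corners'' is only asserted. Your first case is also incoherent: $R$ is a component of $\Sigma\setminus(\bm\alpha\cup\bm\beta\cup A\cup B)$, so no arc of $A\cup B$ crosses its interior.

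The missing observation is that the merging you worry about never happens. $A$ is a disjoint union of arcs with $A\cap\bm\beta=\emptyset$ and $A\cap B\subset\partial\Sigma$. Hence every $A$-edge of an interior region has $\alpha$-edges on both sides, and symmetrically every $B$-edge has $\beta$-edges on both sides. A region $R$ containing a segment of $C$ is $\tau$-invariant, so an $A$-edge $e$ of $R$ comes with the $B$-edge $\tau(e)$. The cyclic edge sequence of $R$ would then contain the non-overlapping patterns $\alpha A\alpha$ and $\beta B\beta$, forcing at least six edges, which is impossible for a bigon or rectangle. So $\partial R\subset\bm\alpha\cup\bm\beta$, and $R$ is already an elementary domain of $\Sigma\setminus(\bm\alpha\cup\bm\beta)$. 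Nothing is glued, and condition~\ref{def:onlyrectangleonC} passes over unchanged.

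Alternatively, condition~\ref{def:onlyrectangleonC} follows from conditions~\ref{def:rectangleorbigon} and~\ref{def:patternnice} alone. A $\tau$-invariant bigon has both corners on $C$, in $\tau$-invariant quadrants. One of these corners turns from $\beta$ to $\alpha$ and the other from $\alpha$ to $\beta$. However, the two invariant quadrants at a point of $C$ are opposite, hence of the same kind, and that kind is exactly the sign $\sigma(\bm\alpha,x)$, which condition~\ref{def:patternnice} makes uniform.
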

\begin{proof}
    This follows from \cite[Proposition 7.5]{juhasz2008floer}.
\end{proof}
\begin{proposition}\label{prop:admissibility of D(P) and identification of chain complex with outer}
Consider a good, nice, admissible real surface diagram $\cS=(\Sigma,\bm\alpha,\bm\beta,P,\t)$. If $D(P)=(\Sigma',\bm\alpha',\bm\beta',P_A,P_B,\t',p)$ is the doubled Heegaard diagram then $(\Sigma',\bm\alpha',\bm\beta',\t')$ is admissible and \[\RSFC(\Sigma',\bm\alpha',\bm\beta',\t')\cong(O_P^R,\partial|_{O_P^R}).\]
\end{proposition}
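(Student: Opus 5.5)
This follows the strategy of \cite[Proposition~7.6]{juhasz2008floer}, adapted to the real setting using the classification of real index $1$ domains in \Cref{prop:a list of all possible real domains} and the standard-form result \Cref{lem:standard pattern for immersed real domain}. The proof has two parts: admissibility of the doubled diagram, and identification of differentials on generators.

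\textbf{Admissibility.} I would start from a real periodic domain $\cP'$ in $(\Sigma',\bm\alpha',\bm\beta',\t')$. Its pushforward $p_*(\cP')$ is a real periodic domain in $(\Sigma,\bm\alpha,\bm\beta,\t)$. The key observation is that $\cP'$ has multiplicity zero near $\partial\Sigma'$. Since $P_A$ and $P_B$ meet $\partial\Sigma'$ at their vertices, and $P_A$ contains no $\beta'$-arcs while $P_B$ contains no $\alpha'$-arcs, the multiplicity of $\cP'$ is constant on each of $P_A$ and $P_B$. It is therefore zero there, because these regions touch the boundary. So $\cP'$ is supported on $\Sigma'\setminus(P_A\cup P_B)$, and there $p$ is injective. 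Hence $p_*(\cP')$ has the same coefficients as $\cP'$. Admissibility of $(\Sigma,\bm\alpha,\bm\beta,\t)$ then forces mixed signs, and this transfers back to $\cP'$. By \Cref{rmk:weak admissiblity}, real and unreal admissibility coincide, so one may argue with the underlying diagrams directly.

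\textbf{Generators.} By \Cref{lem:bijection on generators using adapted diagram}, $p$ gives a bijection between $(\TT_{\alpha'}\cap\TT_{\beta'})^R$ and $O_P^R$. It therefore remains to show three things:
\begin{enumerate}
\item $O_P^R$ spans a subcomplex;
\item real index $1$ domains between outer generators in $\Sigma$ correspond bijectively, via $p_*$, to real index $1$ domains in $\Sigma'$;
\item the holomorphic counts agree.
\end{enumerate}

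For (1) and (2), first note that by the previous proposition $(\Sigma,\bm\alpha,\bm\beta,\t)$ is itself nice. So every real index $1$ positive domain $\cD$ from an outer $\xv$ is one of the types in \Cref{prop:a list of all possible real domains}, with all multiplicities at most $2$ and the multiplicity-$2$ part a union of rectangles avoiding $\xv\cup\yv$. Next, the surface diagram is nice, so every elementary domain of $\Sigma\setminus(\bm\alpha\cup\bm\beta\cup A\cup B)$ is a bigon or rectangle. Following Juh\'asz, the goal is to show that if $\cD$ meets $P$, then some corner of $\cD$ lies in $P$, and that corner must be a $\yv$-corner, since $\xv\cap P=\emptyset$. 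One then argues that $\partial\cD$ would have to cross $A$ or $B$ in a way incompatible with $A\cap\bm\beta=B\cap\bm\alpha=\emptyset$.

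Concretely, the argument runs as follows. Restrict $\partial\cD$ to an edge of $A$. The $\alpha$-boundary of $\cD$ can cross $A$, but the $\beta$-boundary cannot. Hence the multiplicity of $\cD$ on the two sides of each $B$-edge agrees unless the $\alpha$-boundary passes through. Using the local shapes (disks, annuli, the punctured torus, and the standard immersed pattern), I would check that $\cD$ lifts to a domain $\cD'$ in $\Sigma'$ with the same index: rectangles inside $P$ lift to $P_A$ or $P_B$ according to which boundary arcs they carry. The lift should be possible exactly when $\yv$ is also outer. This establishes both the subcomplex claim and the bijection of domains; the real structure is automatic because $\t'$ swaps $P_A$ and $P_B$ compatibly.

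For (3), $p$ is conformal and real. So a symmetric almost complex structure on $\Sigma$ pulls back to one on $\Sigma'$, and the cylindrical moduli spaces from \Cref{sec:cylindrical} correspond. Alternatively, both counts equal $1$ mod $2$ by \Cref{prop:each real domain has odd reps}, since $\cD'$ is of the same combinatorial type as $\cD$. In either case the differentials agree, giving $\RSFC(\Sigma',\bm\alpha',\bm\beta',\t')\cong(O_P^R,\partial|_{O_P^R})$.

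The main obstacle is the case analysis in step (2) for the genuinely real domains: the immersed annuli and the punctured torus, and domains with $270^\circ$ corners on $C$. Here Juh\'asz's embedded-polygon argument does not apply directly. I would first try to reduce to the standard-form statement of \Cref{lem:standard pattern for immersed real domain}, treating the doubly covered rectangles separately, and use that $P\cap C$ meets $\partial\Sigma$ only at vertices.
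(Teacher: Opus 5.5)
Your overall plan matches the paper's, adapted from \cite[Proposition 7.6]{juhasz2008floer}: use the generator bijection of \Cref{lem:bijection on generators using adapted diagram}, then a bijection between positive real index-one domains in $\Sigma'$ and in $\Sigma$, then compare counts. However, two steps fail as written.

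In the admissibility part, your claim that a periodic domain $\cP'$ is constant on $P_A$ and $P_B$ is false. $\bm\alpha'$ contains the copies in $P_A$ of the arcs of $\bm\alpha\cap P$, which close up across the $A$-edges, and $P_B$ likewise contains $\beta'$-arcs. So the multiplicity of $\cP'$ jumps inside $P_A$ and $P_B$, and $\cP'$ need not be supported where $p$ is injective. The fix is easy:
\begin{itemize}
\item $p_*$ commutes with $\partial$ and carries $\alpha_i'$ onto $\alpha_i$ and $\beta_j'$ onto $\beta_j$;
\item it preserves realness and vanishing near the boundary;
\item $n_z(p_*\cP')=\sum_{z'\in p^{-1}(z)}n_{z'}(\cP')$.
\end{itemize}
Hence a nonzero $\cP'\geq 0$ would give a nonzero nonnegative periodic domain in the admissible diagram $(\Sigma,\bm\alpha,\bm\beta,\t)$.

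The genuine gap is step (2), which is the heart of the proof and which you leave open. The first line of reasoning you sketch cannot work. Domains between outer generators routinely cross $P$ without having a corner there; that is exactly why pieces must be lifted to $P_A$ and $P_B$. Your remark about multiplicities across $B$-edges carries no information, because $\partial\cD\subset\bm\alpha\cup\bm\beta$ forces $\cD$ to be continuous across all of $A\cup B$. You also do not need any such argument for (1): $O_P^R$ is a direct summand by the $\SpinC$ characterization in \Cref{lem:bijection on generators using adapted diagram}.

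What must be shown is that for every positive real index-one $\cD$ between outer generators, $\cD\cap P=D_A\sqcup D_B$ with $\partial D_A\subset\cA=\bm\alpha\cup A$ and $\partial D_B\subset\cB=\bm\beta\cup B$. The paper does this as follows.
\begin{itemize}
\item By \Cref{lem:standard pattern for immersed real domain}, the multiplicities across the boundary of a component $\cC$ of $\cD\cap P$ are only $\{0,1\}$ or $\{1,2\}$. So Juh\'asz's splitting $\cC=\cC_A+\cC_B$ still applies.
\item The a priori possible annular components, with one boundary circle in $\cA$ and the other in $\cB$, are excluded using the tiling classification of \Cref{subsub:Classification of tiling structures} together with the fact that $A\cap B\subset\partial\Sigma$ misses $\cD$. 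Invariant components turn out to be immersed sums of two rectangles.
\end{itemize}

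Even then, lifting 2-chains is not enough. You must define the lift on the source, $h\colon F\to\Sigma'$ with $p\circ h=u$, and check that it is continuous and holomorphic. You must also prove $r_0\circ p_0=\mathrm{id}$ by building a conformal equivalence $F\to F'$ that extends across the doubly covered rectangles.

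This curve-level correspondence is also what justifies (3). Your alternative of quoting \Cref{prop:each real domain has odd reps} directly in $\Sigma'$ presupposes that $\Sigma'$ is itself a nice real diagram, which you have not established.
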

We follow the strategy used in the proof of~\cite[Proposition 7.6]{juhasz2008floer}.
\begin{proof}
    The admissibility of $(\Sigma',\bm\alpha',\bm\beta',\t')$ follows from \cite[Proposition 7.6]{juhasz2008floer}. According to Lemma~\ref{lem:bijection on generators using adapted diagram}, $p$ induces a bijection $p_*:(\TT_{\alpha'}\cap \TT_{\beta'})^R\to O_P^R$. We will show that $p_*$ induces an isomorphism of chain complexes. 

    For any $\xv',\yv'\in (\TT_{\alpha'}\cap \TT_{\beta'})^R$, let $\xv=p_*(\xv')$, $\yv=p_*(\yv')$. Then $\xv,\yv\in O_P^R$. Take any real positive domain $\cD'\in\pi_2^R(\xv',\yv')$ such that $\mu_R(\cD')=1$ and let $\cD=p(\cD')$. Note that $n_{\xv}(\cD)=n_{\xv'}(\cD')$, $n_{\yv}(\cD)=n_{\yv'}(\cD')$, $e(\cD)=e(\cD')$. And for each $x'\in\xv'$ or $y'\in \yv'$, $\sigma(\bm\alpha,p(x'))=\sigma(\bm\alpha',x')$ or $\sigma(\bm\alpha,p(y'))=\sigma(\bm\alpha',y')$ holds. So $\cD\in \pi_2^R(\xv,\yv)$ is also a real domain of real index $1$. It follows that $p$ induces a map $p_0: L'\to L$ where \[L':=\{\cD'\in \pi_2^R(\xv',\yv')|\mu_R(\cD')=1, \cD'\ge 0 \}\] and \[L:=\{\cD\in \pi_2^R(\xv,\yv)|\mu_R(\cD)=1, \cD\ge 0 \}.\] We claim that $p_0$ is a bijection. We will show this by constructing its inverse, $r_0$.

    Consider $\cA=\bm \alpha\cup A$ and $\cB=\bm\beta\cup B$. Any $\cD\in L$ is an immersed disk, an immersed annulus or an embedded punctured torus. Let $\cC$ be a component of $\cD\cap P$. In the unreal setting $\cC$ is an embedded rectangle or bigon and $\partial \cC$ must lie entirely in $\cA$ or $\cB$~\cite[Proposition 7.6]{juhasz2008floer}. In the real case, the situation is more complicated. Define $\partial \cC$ to be the union of segments of $\cA\cup \cB$, on the two sides of which the multiplicities of $\cC$ are unequal. Since $P$ is embedded and the immersion of $F\to \cD$ cannot be too bad, as we seen from~\Cref{lem:standard pattern for immersed real domain}
    the only possible pairs of multiplicities on either side of a component of $\partial{\mathcal{C}}$ are $\{0,1\}$ or $\{1,2\}$. It follows from the proof of~\cite[Proposition 7.6]{juhasz2008floer} that $ \cC$ can be written as a sum of two subdomains $ \cC_A\cup \cC_B$ such that $\partial \cC_A\subset \cA$, $\cC_B\subset\cB$ and $\partial \cC_A\cup \partial \cC_B =\partial\cC$.  Since $\xv$ and $\yv$ are outer with respect to $S$, $\partial \cC$ can be decomposed into a disjoint union of circles $(\partial\cC)_A \cup (\partial\cC)_B$ so that $(\partial\cC)_A\subset \cA$ and  $(\partial\cC)_B\subset \cB$. A priori, $\cC$ might be annulus with two boundaries in $\cA$, $\cB$ respectively.
    
    We claim that such annuli can be ruled out using the tiling structure on $\cD$. Recall that $A$ and $B$ consist of arcs with $A\cap B=\partial A=\partial B\subset \partial \Sigma$. In particular, we have that $A\cap B\cap \cD=\emptyset$. So, arcs in $A$ and $B$ can only appear in the tiling structure in a restricted way. We have classified all tiling structures of the relevant annuli in Section~\ref{subsub:Classification of tiling structures}. It follows that: \begin{itemize}
        \item if a component $\cC$ of $\cD\cap P$ is invariant under $\t$, then it must be immersed and can be written as a sum of two rectangles $\cC_A+\cC_B$, as shown in the middle picture of Figure~\ref{fig:analyzing_surface_diagram};
        \item if $(\cC,\cC')$ is a pair of components of $\cD\cap P$ interchanged by $\t$, then either they are a pair of bigons or rectangles with boundary in $\cA$, $\cB$, respectively or each of them is a union of two rectangles, as the pair shown in the left picture in Figure~\ref{fig:analyzing_surface_diagram}.
    \end{itemize} 
   
\begin{figure}[h]       \def\svgwidth{1\linewidth}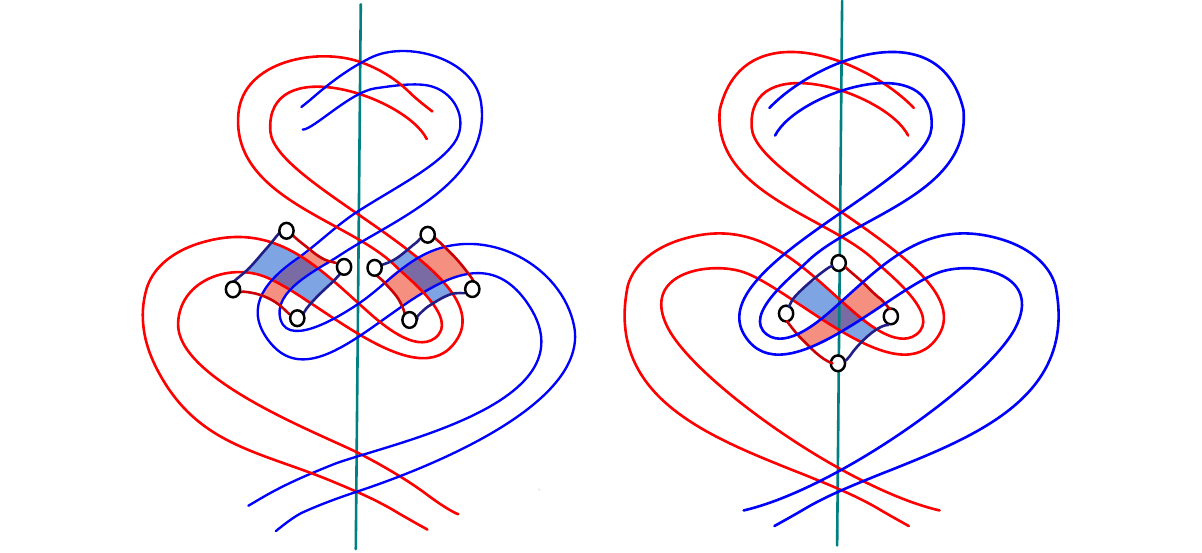
    \caption{Model cases of immersions of real domains in a surface diagram.}
    \label{fig:analyzing_surface_diagram}
\end{figure}
    From the previous paragraph, we can conclude that $\cD\cap P$ can be written as a union $D_A\sqcup D_B$, for $D_A$ the disjoint union of rectangles and bigons with boundary in $\cA$ and similar for $D_B$. 
    
    Let $F$ denote the source of the map $(u:F\to \cD)\in \pi_2^R(\xv,\yv)$ studied in~\Cref{sec:Nice Real Heegaard Diagrams and Combinatorial Computation}. Recall that $F$ may be an annulus, a polygon or a once punctured torus; see~\Cref{prop:a list of all possible real domains}.  Define a map $h:F\to \Sigma'$ by 
    \begin{align*}
       h(x):= \begin{cases}
           p^{-1}(x)&\text{ if }u(x)\in \cD\setminus P\\  p^{-1}(x)\cap P_A&\text{ if }u(x)\in D_A\\
         p^{-1}(x)\cap P_B&\text{   if }u(x)\in D_B.
        \end{cases}
    \end{align*} It can be checked that $h$ is continuous map as in the proof of~\cite[Proposition 7.6]{juhasz2008floer}.  Since $p$ and $u$ are conformal, $h$ is holomorphic. Although $h$ and $u$ may not be embeddings, we still have $h(F)\in L'$ and $p\circ h=u$. Set $r_0(\cD):=h(F)$. It follows directly that $p_0\circ r_0=\mathrm{id}_L$.

    To conclude the proof, it remains to show that $r_0\circ p_0=\mathrm{id}_{L'}$. Take any $\cD'\in L'$, let $\cD=p_0(\cD')$, and let $h$ be the map $F\to \Sigma'$ we associated to $\cD$ above. Let $u:F'\to \cD'$ be the map coming from $\cD'\in \pi_2^R(\xv',\yv')$. We want to show that $F$ and $F'$ can be identified and $h=u'$ under this identification.
    
To see this, we construct a map $h':F\to F'$ as follows. If $x\in F$ is an embedded point of $u$, then it is also an embedded point of $h$ by construction, so the region of $h(x)$ in $\cD'$ has multiplicity one, and we can uniquely determine its preimage in $F'$. We define $h'(x)$ to be this preimage. If $x\in F$ is not embedded, then there is exactly one other $x'\in F$ with $u(x)=u(x')$. When $u(x)=u(x')\in P\cap \cD$, $h(x)\ne h(x')$ in $\Sigma'$, so the preimage in $F'$ again can be uniquely determined. When $u(x)=u(x')\in \cD\setminus P$, $h(x)=h(x')$ is an immersion value of $h$ in $\cD'$. Since our immersions always locally appear as in Figure~\ref{fig:analyzing_surface_diagram}, we can extend $h'$ over immersion rectangles by continuity. This leads to a conformal equivalence $h':F\to F'$. (We can check this is a bijection directly by counting multiplicities of regions in $\cD'$. It is conformal since $p$, $u$, $u'$ are all conformal maps and local diffeomorphisms.) Finally, we can trace through the definitions of $h$ and $h'$ to see that $h=u'\circ h'$ as maps $F\to \Sigma'$ with image $\cD'$(counting with multiplicities). 
    
Thus $p_0$ is a bijection. We already have a one-to-one correspondence between generators. We also know from Section~\ref{sub:Finding holomorphic representatives} that for any pair of generators $\xv,\yv$ in a nice diagram, each real index $1$ domain in $\pi_2^R(\xv,\yv)$ contribute non-trivially to the differential. These three facts together tell us that $p_*$ is an isomorphism of chain complexes.\end{proof}

\begin{proof}[Proof of Theorem~\ref{thm:decomposition formula for RSFH}]
It suffices to prove the case in which $S$ is a good real decomposing surface, by Lemma \ref{lem:reduce to good surface}. We duly assume this henceforth. Proposition~\ref{prop:existence of nice surface diagram} provides us with a nice real surface diagram $\cS=(\Sigma,\bm\alpha,\bm\beta,P,\t)$ adapted to $S$. We may assume that the starting real surface diagram is admissible, so that $\cS$ is also admissible. Now, we consider ${D(P)=(\Sigma',\bm\alpha',\bm\beta',P_A,P_B,\t',p)}$ where $(\Sigma',\bm\alpha',\bm\beta',\t')$ is a real Heegaard diagram for $(Y',\gamma',\tau')$. By Proposition~\ref{prop:admissibility of D(P) and identification of chain complex with outer}, we know that  $(\Sigma',\bm\alpha',\bm\beta',\t')$  is admissible and \[\RSFH(Y',\gamma',\tau')=\RSFH(\Sigma',\bm\alpha',\bm\beta',\t')=H(O_P^R,\partial|_{O_P^R}).\] 

\noindent Finally, Proposition~\ref{prop:admissibility of D(P) and identification of chain complex with outer} also implies that $(O_P^R,\partial|_{O_P^R})$ is the summand of $\RSFC(\Sigma,\bm \alpha,\bm\beta,\t)$ generated by $\xv\in (\TT_{\alpha}\cap \TT_{\beta})^R$ with $\s^R(\xv)\in O_S^R$. So \[H(O_P^R,\partial|_{O_P^R})\approx \bigoplus_{\s^R\in O_S^R} \RSFH(Y,\gamma,\tau,\s^R),\] concluding the proof of the ungraded statement.

To upgrade to the graded statement, one need only take $p$ to be the map induced by the inclusion $(Y',\gamma')\inj (Y,\gamma)$ and $p'$ to be the map defined as the linear extension of the canonical map $\T_{\bm{\alpha}'}\cap\T_{\bm{\beta}'}\inj \T_{\bm{\alpha}}\cap\T_{\bm{\beta}}$. The result follows immediately.\end{proof}

%% file: triple_index.pdf_tex
\begingroup%
  \makeatletter%
  \providecommand\color[2][]{%
    \errmessage{(Inkscape) Color is used for the text in Inkscape, but the package 'color.sty' is not loaded}%
    \renewcommand\color[2][]{}%
  }%
  \providecommand\transparent[1]{%
    \errmessage{(Inkscape) Transparency is used (non-zero) for the text in Inkscape, but the package 'transparent.sty' is not loaded}%
    \renewcommand\transparent[1]{}%
  }%
  \providecommand\rotatebox[2]{#2}%
  \newcommand*\fsize{\dimexpr\f@size pt\relax}%
  \newcommand*\lineheight[1]{\fontsize{\fsize}{#1\fsize}\selectfont}%
  \ifx\svgwidth\undefined%
    \setlength{\unitlength}{623.62204724bp}%
    \ifx\svgscale\undefined%
      \relax%
    \else%
      \setlength{\unitlength}{\unitlength * \real{\svgscale}}%
    \fi%
  \else%
    \setlength{\unitlength}{\svgwidth}%
  \fi%
  \global\let\svgwidth\undefined%
  \global\let\svgscale\undefined%
  \makeatother%
  \begin{picture}(1,0.4)%
    \lineheight{1}%
    \setlength\tabcolsep{0pt}%
    \put(0,0){\includegraphics[width=\unitlength,page=1]{triple_index.pdf}}%
    \put(0.28998479,0.32386783){\color[rgb]{0,0,0}\makebox(0,0)[lt]{\smash{\begin{tabular}[t]{l}{\small$T_x \alpha_i$}\end{tabular}}}}%
    \put(0.33047201,0.17194479){\color[rgb]{0,0,0}\makebox(0,0)[lt]{\smash{\begin{tabular}[t]{l}{\small$T_x C$}\end{tabular}}}}%
    \put(0.17030277,0.3811465){\color[rgb]{0,0,0}\makebox(0,0)[lt]{\smash{\begin{tabular}[t]{l}{\small$J\cdot T_x C$}\end{tabular}}}}%
    \put(0,0){\includegraphics[width=\unitlength,page=2]{triple_index.pdf}}%
    \put(0.58644402,0.32365264){\color[rgb]{0,0,0}\makebox(0,0)[lt]{\smash{\begin{tabular}[t]{l}{\small$T_x \alpha_i$}\end{tabular}}}}%
    \put(0.92628458,0.16889503){\color[rgb]{0,0,0}\makebox(0,0)[lt]{\smash{\begin{tabular}[t]{l}{\small$T_x C$}\end{tabular}}}}%
    \put(0.76972844,0.38105208){\color[rgb]{0,0,0}\makebox(0,0)[lt]{\smash{\begin{tabular}[t]{l}{\small$J\cdot T_x C$}\end{tabular}}}}%
  \end{picture}%
\endgroup%

%% file: extra_finger_move_1.pdf_tex
\begingroup%
  \makeatletter%
  \providecommand\color[2][]{%
    \errmessage{(Inkscape) Color is used for the text in Inkscape, but the package 'color.sty' is not loaded}%
    \renewcommand\color[2][]{}%
  }%
  \providecommand\transparent[1]{%
    \errmessage{(Inkscape) Transparency is used (non-zero) for the text in Inkscape, but the package 'transparent.sty' is not loaded}%
    \renewcommand\transparent[1]{}%
  }%
  \providecommand\rotatebox[2]{#2}%
  \newcommand*\fsize{\dimexpr\f@size pt\relax}%
  \newcommand*\lineheight[1]{\fontsize{\fsize}{#1\fsize}\selectfont}%
  \ifx\svgwidth\undefined%
    \setlength{\unitlength}{765.44998169bp}%
    \ifx\svgscale\undefined%
      \relax%
    \else%
      \setlength{\unitlength}{\unitlength * \real{\svgscale}}%
    \fi%
  \else%
    \setlength{\unitlength}{\svgwidth}%
  \fi%
  \global\let\svgwidth\undefined%
  \global\let\svgscale\undefined%
  \makeatother%
  \begin{picture}(1,0.52910054)%
    \lineheight{1}%
    \setlength\tabcolsep{0pt}%
    \put(0,0){\includegraphics[width=\unitlength,page=1]{extra_finger_move_1.pdf}}%
    \put(0.04014536,0.36283281){\color[rgb]{0,0,0}\makebox(0,0)[lt]{\smash{\begin{tabular}[t]{l}{\small$D_m^2$}\end{tabular}}}}%
    \put(0.0916847,0.38701502){\color[rgb]{0,0,0}\makebox(0,0)[lt]{\smash{\begin{tabular}[t]{l}{\small$D_m^1$}\end{tabular}}}}%
    \put(0.04677283,0.05291657){\color[rgb]{0,0,0}\makebox(0,0)[lt]{\smash{\begin{tabular}[t]{l}{\small$D_m^2$}\end{tabular}}}}%
    \put(0.08528552,0.09313988){\color[rgb]{0,0,0}\makebox(0,0)[lt]{\smash{\begin{tabular}[t]{l}{\small$D_m^1$}\end{tabular}}}}%
  \end{picture}%
\endgroup%

%% file: extra_finger_move_2.pdf_tex
\begingroup%
  \makeatletter%
  \providecommand\color[2][]{%
    \errmessage{(Inkscape) Color is used for the text in Inkscape, but the package 'color.sty' is not loaded}%
    \renewcommand\color[2][]{}%
  }%
  \providecommand\transparent[1]{%
    \errmessage{(Inkscape) Transparency is used (non-zero) for the text in Inkscape, but the package 'transparent.sty' is not loaded}%
    \renewcommand\transparent[1]{}%
  }%
  \providecommand\rotatebox[2]{#2}%
  \newcommand*\fsize{\dimexpr\f@size pt\relax}%
  \newcommand*\lineheight[1]{\fontsize{\fsize}{#1\fsize}\selectfont}%
  \ifx\svgwidth\undefined%
    \setlength{\unitlength}{619.72499084bp}%
    \ifx\svgscale\undefined%
      \relax%
    \else%
      \setlength{\unitlength}{\unitlength * \real{\svgscale}}%
    \fi%
  \else%
    \setlength{\unitlength}{\svgwidth}%
  \fi%
  \global\let\svgwidth\undefined%
  \global\let\svgscale\undefined%
  \makeatother%
  \begin{picture}(1,0.94154667)%
    \lineheight{1}%
    \setlength\tabcolsep{0pt}%
    \put(0,0){\includegraphics[width=\unitlength,page=1]{extra_finger_move_2.pdf}}%
    \put(0.02357076,0.79690475){\color[rgb]{0,0,0}\makebox(0,0)[lt]{\smash{\begin{tabular}[t]{l}{\small$a_1$}\end{tabular}}}}%
    \put(0.10233164,0.77566415){\color[rgb]{0,0,0}\makebox(0,0)[lt]{\smash{\begin{tabular}[t]{l}{\small$D_m^2$}\end{tabular}}}}%
    \put(0.02884633,0.70270595){\color[rgb]{0,0,0}\makebox(0,0)[lt]{\smash{\begin{tabular}[t]{l}{\small$D_m^1$}\end{tabular}}}}%
    \put(0.15779634,0.65923089){\color[rgb]{0,0,0}\makebox(0,0)[lt]{\smash{\begin{tabular}[t]{l}{\small$D_m^2$}\end{tabular}}}}%
    \put(0.18157112,0.7341094){\color[rgb]{0,0,0}\makebox(0,0)[lt]{\smash{\begin{tabular}[t]{l}{\small$a_1$}\end{tabular}}}}%
    \put(0.19652684,0.80358117){\color[rgb]{0,0,0}\makebox(0,0)[lt]{\smash{\begin{tabular}[t]{l}{\small$a_2$}\end{tabular}}}}%
    \put(0,0){\includegraphics[width=\unitlength,page=2]{extra_finger_move_2.pdf}}%
    \put(0.11748633,0.7006702){\color[rgb]{0,0,0}\makebox(0,0)[lt]{\smash{\begin{tabular}[t]{l}{\small$b_*$}\end{tabular}}}}%
    \put(0.02257715,0.48193118){\color[rgb]{0,0,0}\makebox(0,0)[lt]{\smash{\begin{tabular}[t]{l}{\small$a_1$}\end{tabular}}}}%
    \put(0.10133804,0.46069057){\color[rgb]{0,0,0}\makebox(0,0)[lt]{\smash{\begin{tabular}[t]{l}{\small$D_m^2$}\end{tabular}}}}%
    \put(0.02785272,0.38773239){\color[rgb]{0,0,0}\makebox(0,0)[lt]{\smash{\begin{tabular}[t]{l}{\small$D_m^1$}\end{tabular}}}}%
    \put(0.15680274,0.34425731){\color[rgb]{0,0,0}\makebox(0,0)[lt]{\smash{\begin{tabular}[t]{l}{\small$D_m^2$}\end{tabular}}}}%
    \put(0.18057751,0.41913584){\color[rgb]{0,0,0}\makebox(0,0)[lt]{\smash{\begin{tabular}[t]{l}{\small$a_1$}\end{tabular}}}}%
    \put(0.19230401,0.4761875){\color[rgb]{0,0,0}\makebox(0,0)[lt]{\smash{\begin{tabular}[t]{l}{\small$a_2$}\end{tabular}}}}%
    \put(0,0){\includegraphics[width=\unitlength,page=3]{extra_finger_move_2.pdf}}%
    \put(0.11649273,0.38569662){\color[rgb]{0,0,0}\makebox(0,0)[lt]{\smash{\begin{tabular}[t]{l}{\small$b_*$}\end{tabular}}}}%
    \put(0.0253875,0.17221702){\color[rgb]{0,0,0}\makebox(0,0)[lt]{\smash{\begin{tabular}[t]{l}{\small$a_1$}\end{tabular}}}}%
    \put(0.10133804,0.14676093){\color[rgb]{0,0,0}\makebox(0,0)[lt]{\smash{\begin{tabular}[t]{l}{\small$D_m^2$}\end{tabular}}}}%
    \put(0.02785272,0.07380275){\color[rgb]{0,0,0}\makebox(0,0)[lt]{\smash{\begin{tabular}[t]{l}{\small$D_m^1$}\end{tabular}}}}%
    \put(0.15680274,0.03032767){\color[rgb]{0,0,0}\makebox(0,0)[lt]{\smash{\begin{tabular}[t]{l}{\small$D_m^2$}\end{tabular}}}}%
    \put(0.18057751,0.1052062){\color[rgb]{0,0,0}\makebox(0,0)[lt]{\smash{\begin{tabular}[t]{l}{\small$a_1$}\end{tabular}}}}%
    \put(0.19553323,0.17467792){\color[rgb]{0,0,0}\makebox(0,0)[lt]{\smash{\begin{tabular}[t]{l}{\small$a_2$}\end{tabular}}}}%
    \put(0,0){\includegraphics[width=\unitlength,page=4]{extra_finger_move_2.pdf}}%
    \put(0.11649273,0.07176694){\color[rgb]{0,0,0}\makebox(0,0)[lt]{\smash{\begin{tabular}[t]{l}{\small$b_*$}\end{tabular}}}}%
  \end{picture}%
\endgroup%

%% file: extra_finger_move_3a.pdf_tex
\begingroup%
  \makeatletter%
  \providecommand\color[2][]{%
    \errmessage{(Inkscape) Color is used for the text in Inkscape, but the package 'color.sty' is not loaded}%
    \renewcommand\color[2][]{}%
  }%
  \providecommand\transparent[1]{%
    \errmessage{(Inkscape) Transparency is used (non-zero) for the text in Inkscape, but the package 'transparent.sty' is not loaded}%
    \renewcommand\transparent[1]{}%
  }%
  \providecommand\rotatebox[2]{#2}%
  \newcommand*\fsize{\dimexpr\f@size pt\relax}%
  \newcommand*\lineheight[1]{\fontsize{\fsize}{#1\fsize}\selectfont}%
  \ifx\svgwidth\undefined%
    \setlength{\unitlength}{626.47499084bp}%
    \ifx\svgscale\undefined%
      \relax%
    \else%
      \setlength{\unitlength}{\unitlength * \real{\svgscale}}%
    \fi%
  \else%
    \setlength{\unitlength}{\svgwidth}%
  \fi%
  \global\let\svgwidth\undefined%
  \global\let\svgscale\undefined%
  \makeatother%
  \begin{picture}(1,0.47886987)%
    \lineheight{1}%
    \setlength\tabcolsep{0pt}%
    \put(0,0){\includegraphics[width=\unitlength,page=1]{extra_finger_move_3a.pdf}}%
    \put(0.09334703,0.3437434){\color[rgb]{0,0,0}\makebox(0,0)[lt]{\smash{\begin{tabular}[t]{l}{\tiny$D_m^3$}\end{tabular}}}}%
    \put(0.0590287,0.26948544){\color[rgb]{0,0,0}\makebox(0,0)[lt]{\smash{\begin{tabular}[t]{l}{\tiny$D_m^1$}\end{tabular}}}}%
    \put(0.19790979,0.44799385){\color[rgb]{0,0,0}\makebox(0,0)[lt]{\smash{\begin{tabular}[t]{l}{\tiny$R^1_2$}\end{tabular}}}}%
    \put(0.09439055,0.25259845){\color[rgb]{0,0,0}\makebox(0,0)[lt]{\smash{\begin{tabular}[t]{l}{\tiny$D_m^2$}\end{tabular}}}}%
    \put(0.10930501,0.30399537){\color[rgb]{0,0,0}\makebox(0,0)[lt]{\smash{\begin{tabular}[t]{l}{\tiny$D'_*$}\end{tabular}}}}%
    \put(0.2042147,0.36464614){\color[rgb]{0,0,0}\makebox(0,0)[lt]{\smash{\begin{tabular}[t]{l}{\tiny$D_m^4$}\end{tabular}}}}%
    \put(0.1882713,0.34412884){\color[rgb]{0,0,0}\makebox(0,0)[lt]{\smash{\begin{tabular}[t]{l}{\tiny$D_m^5$}\end{tabular}}}}%
    \put(0.17823794,0.32146087){\color[rgb]{0,0,0}\makebox(0,0)[lt]{\smash{\begin{tabular}[t]{l}{\tiny$D_m^6$}\end{tabular}}}}%
    \put(0,0){\includegraphics[width=\unitlength,page=2]{extra_finger_move_3a.pdf}}%
    \put(0.10632732,0.47234724){\color[rgb]{0,0,0}\makebox(0,0)[lt]{\smash{\begin{tabular}[t]{l}{\tiny$R^2_2$}\end{tabular}}}}%
    \put(0.00925677,0.44833686){\color[rgb]{0,0,0}\makebox(0,0)[lt]{\smash{\begin{tabular}[t]{l}{\tiny$R^3_2$}\end{tabular}}}}%
    \put(-0.00240542,0.31868079){\color[rgb]{0,0,0}\makebox(0,0)[lt]{\smash{\begin{tabular}[t]{l}{\tiny$R^4_2$}\end{tabular}}}}%
    \put(0.13961555,0.28113761){\color[rgb]{0,0,0}\makebox(0,0)[lt]{\smash{\begin{tabular}[t]{l}{\tiny$a_1$}\end{tabular}}}}%
    \put(0.16687459,0.39373573){\color[rgb]{0,0,0}\makebox(0,0)[lt]{\smash{\begin{tabular}[t]{l}{\tiny$a_2$}\end{tabular}}}}%
    \put(0.10698314,0.28520984){\color[rgb]{0,0,0}\makebox(0,0)[lt]{\smash{\begin{tabular}[t]{l}{\tiny$a_{n}$}\end{tabular}}}}%
    \put(0.09334703,0.0779706){\color[rgb]{0,0,0}\makebox(0,0)[lt]{\smash{\begin{tabular}[t]{l}{\tiny$D_m^3$}\end{tabular}}}}%
    \put(0.0590287,0.00371267){\color[rgb]{0,0,0}\makebox(0,0)[lt]{\smash{\begin{tabular}[t]{l}{\tiny$D_m^1$}\end{tabular}}}}%
    \put(0.19790979,0.18222107){\color[rgb]{0,0,0}\makebox(0,0)[lt]{\smash{\begin{tabular}[t]{l}{\tiny$R^1_2$}\end{tabular}}}}%
    \put(0.09439055,-0.01317433){\color[rgb]{0,0,0}\makebox(0,0)[lt]{\smash{\begin{tabular}[t]{l}{\tiny$D_m^2$}\end{tabular}}}}%
    \put(0.10930501,0.03822261){\color[rgb]{0,0,0}\makebox(0,0)[lt]{\smash{\begin{tabular}[t]{l}{\tiny$D'_*$}\end{tabular}}}}%
    \put(0.2042147,0.10366205){\color[rgb]{0,0,0}\makebox(0,0)[lt]{\smash{\begin{tabular}[t]{l}{\tiny$D_m^4$}\end{tabular}}}}%
    \put(0.19066566,0.08314472){\color[rgb]{0,0,0}\makebox(0,0)[lt]{\smash{\begin{tabular}[t]{l}{\tiny$D_m^5$}\end{tabular}}}}%
    \put(0.17823794,0.05568809){\color[rgb]{0,0,0}\makebox(0,0)[lt]{\smash{\begin{tabular}[t]{l}{\tiny$D_m^6$}\end{tabular}}}}%
    \put(0,0){\includegraphics[width=\unitlength,page=3]{extra_finger_move_3a.pdf}}%
    \put(0.10632732,0.20657447){\color[rgb]{0,0,0}\makebox(0,0)[lt]{\smash{\begin{tabular}[t]{l}{\tiny$R^2_2$}\end{tabular}}}}%
    \put(0.00925677,0.18256408){\color[rgb]{0,0,0}\makebox(0,0)[lt]{\smash{\begin{tabular}[t]{l}{\tiny$R^3_2$}\end{tabular}}}}%
    \put(-0.00240542,0.05290801){\color[rgb]{0,0,0}\makebox(0,0)[lt]{\smash{\begin{tabular}[t]{l}{\tiny$R^4_2$}\end{tabular}}}}%
    \put(0.13961555,0.01536483){\color[rgb]{0,0,0}\makebox(0,0)[lt]{\smash{\begin{tabular}[t]{l}{\tiny$a_1$}\end{tabular}}}}%
    \put(0.10698314,0.01943708){\color[rgb]{0,0,0}\makebox(0,0)[lt]{\smash{\begin{tabular}[t]{l}{\tiny$a_{n}$}\end{tabular}}}}%
    \put(0.13768838,0.4017396){\color[rgb]{0,0,0}\makebox(0,0)[lt]{\smash{\begin{tabular}[t]{l}{\tiny$a_p$}\end{tabular}}}}%
    \put(0.16266788,0.13117171){\color[rgb]{0,0,0}\makebox(0,0)[lt]{\smash{\begin{tabular}[t]{l}{\tiny$a_2$}\end{tabular}}}}%
    \put(0.13348168,0.1391756){\color[rgb]{0,0,0}\makebox(0,0)[lt]{\smash{\begin{tabular}[t]{l}{\tiny$a_p$}\end{tabular}}}}%
  \end{picture}%
\endgroup%

%% file: extra_finger_move_3b.pdf_tex
\begingroup%
  \makeatletter%
  \providecommand\color[2][]{%
    \errmessage{(Inkscape) Color is used for the text in Inkscape, but the package 'color.sty' is not loaded}%
    \renewcommand\color[2][]{}%
  }%
  \providecommand\transparent[1]{%
    \errmessage{(Inkscape) Transparency is used (non-zero) for the text in Inkscape, but the package 'transparent.sty' is not loaded}%
    \renewcommand\transparent[1]{}%
  }%
  \providecommand\rotatebox[2]{#2}%
  \newcommand*\fsize{\dimexpr\f@size pt\relax}%
  \newcommand*\lineheight[1]{\fontsize{\fsize}{#1\fsize}\selectfont}%
  \ifx\svgwidth\undefined%
    \setlength{\unitlength}{626.47499084bp}%
    \ifx\svgscale\undefined%
      \relax%
    \else%
      \setlength{\unitlength}{\unitlength * \real{\svgscale}}%
    \fi%
  \else%
    \setlength{\unitlength}{\svgwidth}%
  \fi%
  \global\let\svgwidth\undefined%
  \global\let\svgscale\undefined%
  \makeatother%
  \begin{picture}(1,0.47886987)%
    \lineheight{1}%
    \setlength\tabcolsep{0pt}%
    \put(0,0){\includegraphics[width=\unitlength,page=1]{extra_finger_move_3b.pdf}}%
    \put(0.13961555,0.28113761){\color[rgb]{0,0,0}\makebox(0,0)[lt]{\smash{\begin{tabular}[t]{l}{\tiny$a_1$}\end{tabular}}}}%
    \put(0.13961555,0.01536483){\color[rgb]{0,0,0}\makebox(0,0)[lt]{\smash{\begin{tabular}[t]{l}{\tiny$a_1$}\end{tabular}}}}%
    \put(0.2042147,0.36464614){\color[rgb]{0,0,0}\makebox(0,0)[lt]{\smash{\begin{tabular}[t]{l}{\tiny$D_m^4$}\end{tabular}}}}%
    \put(0.09334703,0.34134905){\color[rgb]{0,0,0}\makebox(0,0)[lt]{\smash{\begin{tabular}[t]{l}{\tiny$D_m^3$}\end{tabular}}}}%
    \put(0.0590287,0.26948544){\color[rgb]{0,0,0}\makebox(0,0)[lt]{\smash{\begin{tabular}[t]{l}{\tiny$D_m^1$}\end{tabular}}}}%
    \put(0.19790979,0.44799385){\color[rgb]{0,0,0}\makebox(0,0)[lt]{\smash{\begin{tabular}[t]{l}{\tiny$R^1_2$}\end{tabular}}}}%
    \put(0.09439055,0.25259845){\color[rgb]{0,0,0}\makebox(0,0)[lt]{\smash{\begin{tabular}[t]{l}{\tiny$D_m^2$}\end{tabular}}}}%
    \put(0.10930501,0.30399538){\color[rgb]{0,0,0}\makebox(0,0)[lt]{\smash{\begin{tabular}[t]{l}{\tiny$D'_*$}\end{tabular}}}}%
    \put(0.1882713,0.34412884){\color[rgb]{0,0,0}\makebox(0,0)[lt]{\smash{\begin{tabular}[t]{l}{\tiny$D_m^5$}\end{tabular}}}}%
    \put(0.17823794,0.32146087){\color[rgb]{0,0,0}\makebox(0,0)[lt]{\smash{\begin{tabular}[t]{l}{\tiny$D_m^6$}\end{tabular}}}}%
    \put(0,0){\includegraphics[width=\unitlength,page=2]{extra_finger_move_3b.pdf}}%
    \put(0.10632732,0.47234724){\color[rgb]{0,0,0}\makebox(0,0)[lt]{\smash{\begin{tabular}[t]{l}{\tiny$R^2_2$}\end{tabular}}}}%
    \put(0.00925677,0.44833686){\color[rgb]{0,0,0}\makebox(0,0)[lt]{\smash{\begin{tabular}[t]{l}{\tiny$R^3_2$}\end{tabular}}}}%
    \put(-0.00240542,0.31868079){\color[rgb]{0,0,0}\makebox(0,0)[lt]{\smash{\begin{tabular}[t]{l}{\tiny$R^4_2$}\end{tabular}}}}%
    \put(0.16687459,0.39373573){\color[rgb]{0,0,0}\makebox(0,0)[lt]{\smash{\begin{tabular}[t]{l}{\tiny$a_2$}\end{tabular}}}}%
    \put(0.10698314,0.28520984){\color[rgb]{0,0,0}\makebox(0,0)[lt]{\smash{\begin{tabular}[t]{l}{\tiny$a_{n}$}\end{tabular}}}}%
    \put(0.13768838,0.4017396){\color[rgb]{0,0,0}\makebox(0,0)[lt]{\smash{\begin{tabular}[t]{l}{\tiny$a_p$}\end{tabular}}}}%
    \put(0.09334703,0.0779706){\color[rgb]{0,0,0}\makebox(0,0)[lt]{\smash{\begin{tabular}[t]{l}{\tiny$D_m^3$}\end{tabular}}}}%
    \put(0.0590287,0.00371267){\color[rgb]{0,0,0}\makebox(0,0)[lt]{\smash{\begin{tabular}[t]{l}{\tiny$D_m^1$}\end{tabular}}}}%
    \put(0.19790979,0.18222107){\color[rgb]{0,0,0}\makebox(0,0)[lt]{\smash{\begin{tabular}[t]{l}{\tiny$R^1_2$}\end{tabular}}}}%
    \put(0.09439055,-0.01317433){\color[rgb]{0,0,0}\makebox(0,0)[lt]{\smash{\begin{tabular}[t]{l}{\tiny$D_m^2$}\end{tabular}}}}%
    \put(0.10930501,0.03822261){\color[rgb]{0,0,0}\makebox(0,0)[lt]{\smash{\begin{tabular}[t]{l}{\tiny$D'_*$}\end{tabular}}}}%
    \put(0.2042147,0.10366205){\color[rgb]{0,0,0}\makebox(0,0)[lt]{\smash{\begin{tabular}[t]{l}{\tiny$D_m^4$}\end{tabular}}}}%
    \put(0.19066566,0.08314472){\color[rgb]{0,0,0}\makebox(0,0)[lt]{\smash{\begin{tabular}[t]{l}{\tiny$D_m^5$}\end{tabular}}}}%
    \put(0.17823794,0.05568809){\color[rgb]{0,0,0}\makebox(0,0)[lt]{\smash{\begin{tabular}[t]{l}{\tiny$D_m^6$}\end{tabular}}}}%
    \put(0,0){\includegraphics[width=\unitlength,page=3]{extra_finger_move_3b.pdf}}%
    \put(0.10632732,0.20657447){\color[rgb]{0,0,0}\makebox(0,0)[lt]{\smash{\begin{tabular}[t]{l}{\tiny$R^2_2$}\end{tabular}}}}%
    \put(0.00925677,0.18256408){\color[rgb]{0,0,0}\makebox(0,0)[lt]{\smash{\begin{tabular}[t]{l}{\tiny$R^3_2$}\end{tabular}}}}%
    \put(-0.00240542,0.05290801){\color[rgb]{0,0,0}\makebox(0,0)[lt]{\smash{\begin{tabular}[t]{l}{\tiny$R^4_2$}\end{tabular}}}}%
    \put(0.10698314,0.01943708){\color[rgb]{0,0,0}\makebox(0,0)[lt]{\smash{\begin{tabular}[t]{l}{\tiny$a_{n}$}\end{tabular}}}}%
    \put(0.1674566,0.13117171){\color[rgb]{0,0,0}\makebox(0,0)[lt]{\smash{\begin{tabular}[t]{l}{\tiny$a_2$}\end{tabular}}}}%
    \put(0.13348168,0.1391756){\color[rgb]{0,0,0}\makebox(0,0)[lt]{\smash{\begin{tabular}[t]{l}{\tiny$a_p$}\end{tabular}}}}%
  \end{picture}%
\endgroup%

%% file: vertices_on_C.pdf_tex
\begingroup%
  \makeatletter%
  \providecommand\color[2][]{%
    \errmessage{(Inkscape) Color is used for the text in Inkscape, but the package 'color.sty' is not loaded}%
    \renewcommand\color[2][]{}%
  }%
  \providecommand\transparent[1]{%
    \errmessage{(Inkscape) Transparency is used (non-zero) for the text in Inkscape, but the package 'transparent.sty' is not loaded}%
    \renewcommand\transparent[1]{}%
  }%
  \providecommand\rotatebox[2]{#2}%
  \newcommand*\fsize{\dimexpr\f@size pt\relax}%
  \newcommand*\lineheight[1]{\fontsize{\fsize}{#1\fsize}\selectfont}%
  \ifx\svgwidth\undefined%
    \setlength{\unitlength}{425.25bp}%
    \ifx\svgscale\undefined%
      \relax%
    \else%
      \setlength{\unitlength}{\unitlength * \real{\svgscale}}%
    \fi%
  \else%
    \setlength{\unitlength}{\svgwidth}%
  \fi%
  \global\let\svgwidth\undefined%
  \global\let\svgscale\undefined%
  \makeatother%
  \begin{picture}(1,0.5)%
    \lineheight{1}%
    \setlength\tabcolsep{0pt}%
    \put(0,0){\includegraphics[width=\unitlength,page=1]{vertices_on_C.pdf}}%
    \put(0.08555419,0.2789036){\color[rgb]{0,0,0}\makebox(0,0)[lt]{\smash{\begin{tabular}[t]{l}{$\frac{1}{4}+\frac{1}{2}$}\end{tabular}}}}%
    \put(0.33566115,0.2789036){\color[rgb]{0,0,0}\makebox(0,0)[lt]{\smash{\begin{tabular}[t]{l}{$\frac{1}{4}+\frac{1}{2}$}\end{tabular}}}}%
    \put(0.59597649,0.2789036){\color[rgb]{0,0,0}\makebox(0,0)[lt]{\smash{\begin{tabular}[t]{l}{$\frac{3}{4}-\frac{1}{2}$}\end{tabular}}}}%
    \put(0.83485415,0.2789036){\color[rgb]{0,0,0}\makebox(0,0)[lt]{\smash{\begin{tabular}[t]{l}{$\frac{3}{4}-\frac{1}{2}$}\end{tabular}}}}%
    \put(0.08555419,0.01788066){\color[rgb]{0,0,0}\makebox(0,0)[lt]{\smash{\begin{tabular}[t]{l}{$\frac{5}{4}+\frac{1}{2}$}\end{tabular}}}}%
    \put(0.33566115,0.01788066){\color[rgb]{0,0,0}\makebox(0,0)[lt]{\smash{\begin{tabular}[t]{l}{$\frac{5}{4}+\frac{1}{2}$}\end{tabular}}}}%
    \put(0.59597649,0.01788066){\color[rgb]{0,0,0}\makebox(0,0)[lt]{\smash{\begin{tabular}[t]{l}{$\frac{7}{4}-\frac{1}{2}$}\end{tabular}}}}%
    \put(0.83485415,0.01788066){\color[rgb]{0,0,0}\makebox(0,0)[lt]{\smash{\begin{tabular}[t]{l}{$\frac{7}{4}-\frac{1}{2}$}\end{tabular}}}}%
    \put(0.14938903,0.39945192){\color[rgb]{0,0,0}\makebox(0,0)[lt]{\smash{\begin{tabular}[t]{l}{$y$}\end{tabular}}}}%
    \put(0.39630262,0.39945192){\color[rgb]{0,0,0}\makebox(0,0)[lt]{\smash{\begin{tabular}[t]{l}{$y$}\end{tabular}}}}%
    \put(0.6432162,0.35006921){\color[rgb]{0,0,0}\makebox(0,0)[lt]{\smash{\begin{tabular}[t]{l}{$x$}\end{tabular}}}}%
    \put(0.88660244,0.44883464){\color[rgb]{0,0,0}\makebox(0,0)[lt]{\smash{\begin{tabular}[t]{l}{$x$}\end{tabular}}}}%
    \put(0.14938903,0.13490166){\color[rgb]{0,0,0}\makebox(0,0)[lt]{\smash{\begin{tabular}[t]{l}{$y$}\end{tabular}}}}%
    \put(0.39630262,0.13490166){\color[rgb]{0,0,0}\makebox(0,0)[lt]{\smash{\begin{tabular}[t]{l}{$y$}\end{tabular}}}}%
    \put(0.6432162,0.08551897){\color[rgb]{0,0,0}\makebox(0,0)[lt]{\smash{\begin{tabular}[t]{l}{$x$}\end{tabular}}}}%
    \put(0.88660244,0.18428438){\color[rgb]{0,0,0}\makebox(0,0)[lt]{\smash{\begin{tabular}[t]{l}{$x$}\end{tabular}}}}%
  \end{picture}%
\endgroup%

%% file: domains_without_vertex_on_C.pdf_tex
\begingroup%
  \makeatletter%
  \providecommand\color[2][]{%
    \errmessage{(Inkscape) Color is used for the text in Inkscape, but the package 'color.sty' is not loaded}%
    \renewcommand\color[2][]{}%
  }%
  \providecommand\transparent[1]{%
    \errmessage{(Inkscape) Transparency is used (non-zero) for the text in Inkscape, but the package 'transparent.sty' is not loaded}%
    \renewcommand\transparent[1]{}%
  }%
  \providecommand\rotatebox[2]{#2}%
  \newcommand*\fsize{\dimexpr\f@size pt\relax}%
  \newcommand*\lineheight[1]{\fontsize{\fsize}{#1\fsize}\selectfont}%
  \ifx\svgwidth\undefined%
    \setlength{\unitlength}{595.35bp}%
    \ifx\svgscale\undefined%
      \relax%
    \else%
      \setlength{\unitlength}{\unitlength * \real{\svgscale}}%
    \fi%
  \else%
    \setlength{\unitlength}{\svgwidth}%
  \fi%
  \global\let\svgwidth\undefined%
  \global\let\svgscale\undefined%
  \makeatother%
  \begin{picture}(1,0.71428571)%
    \lineheight{1}%
    \setlength\tabcolsep{0pt}%
    \put(0,0){\includegraphics[width=\unitlength,page=1]{domains_without_vertex_on_C.pdf}}%
    \put(0.09970579,0.44597769){\color[rgb]{0,0,0}\makebox(0,0)[lt]{\smash{\begin{tabular}[t]{l}{(a)}\end{tabular}}}}%
    \put(0.311346,0.44597769){\color[rgb]{0,0,0}\makebox(0,0)[lt]{\smash{\begin{tabular}[t]{l}{(b)}\end{tabular}}}}%
    \put(0.57337674,0.44597769){\color[rgb]{0,0,0}\makebox(0,0)[lt]{\smash{\begin{tabular}[t]{l}{(c)}\end{tabular}}}}%
    \put(0.83540751,0.44597769){\color[rgb]{0,0,0}\makebox(0,0)[lt]{\smash{\begin{tabular}[t]{l}{(d)}\end{tabular}}}}%
    \put(0,0){\includegraphics[width=\unitlength,page=2]{domains_without_vertex_on_C.pdf}}%
  \end{picture}%
\endgroup%

%% file: disk_domains.pdf_tex
\begingroup%
  \makeatletter%
  \providecommand\color[2][]{%
    \errmessage{(Inkscape) Color is used for the text in Inkscape, but the package 'color.sty' is not loaded}%
    \renewcommand\color[2][]{}%
  }%
  \providecommand\transparent[1]{%
    \errmessage{(Inkscape) Transparency is used (non-zero) for the text in Inkscape, but the package 'transparent.sty' is not loaded}%
    \renewcommand\transparent[1]{}%
  }%
  \providecommand\rotatebox[2]{#2}%
  \newcommand*\fsize{\dimexpr\f@size pt\relax}%
  \newcommand*\lineheight[1]{\fontsize{\fsize}{#1\fsize}\selectfont}%
  \ifx\svgwidth\undefined%
    \setlength{\unitlength}{425.25bp}%
    \ifx\svgscale\undefined%
      \relax%
    \else%
      \setlength{\unitlength}{\unitlength * \real{\svgscale}}%
    \fi%
  \else%
    \setlength{\unitlength}{\svgwidth}%
  \fi%
  \global\let\svgwidth\undefined%
  \global\let\svgscale\undefined%
  \makeatother%
  \begin{picture}(1,0.6)%
    \lineheight{1}%
    \setlength\tabcolsep{0pt}%
    \put(0,0){\includegraphics[width=\unitlength,page=1]{disk_domains.pdf}}%
    \put(0.12905047,0.301236){\color[rgb]{0,0,0}\makebox(0,0)[lt]{\smash{\begin{tabular}[t]{l}{Case (1):}\end{tabular}}}}%
    \put(0.6228776,0.301236){\color[rgb]{0,0,0}\makebox(0,0)[lt]{\smash{\begin{tabular}[t]{l}{Case (3):}\end{tabular}}}}%
    \put(0.12905047,-0.01622429){\color[rgb]{0,0,0}\makebox(0,0)[lt]{\smash{\begin{tabular}[t]{l}{Case (2):}\end{tabular}}}}%
    \put(0.6228776,-0.01622429){\color[rgb]{0,0,0}\makebox(0,0)[lt]{\smash{\begin{tabular}[t]{l}{Case (4):}\end{tabular}}}}%
    \put(0,0){\includegraphics[width=\unitlength,page=2]{disk_domains.pdf}}%
  \end{picture}%
\endgroup%

%% file: ruled_out_disk.pdf_tex
\begingroup%
  \makeatletter%
  \providecommand\color[2][]{%
    \errmessage{(Inkscape) Color is used for the text in Inkscape, but the package 'color.sty' is not loaded}%
    \renewcommand\color[2][]{}%
  }%
  \providecommand\transparent[1]{%
    \errmessage{(Inkscape) Transparency is used (non-zero) for the text in Inkscape, but the package 'transparent.sty' is not loaded}%
    \renewcommand\transparent[1]{}%
  }%
  \providecommand\rotatebox[2]{#2}%
  \newcommand*\fsize{\dimexpr\f@size pt\relax}%
  \newcommand*\lineheight[1]{\fontsize{\fsize}{#1\fsize}\selectfont}%
  \ifx\svgwidth\undefined%
    \setlength{\unitlength}{212.625bp}%
    \ifx\svgscale\undefined%
      \relax%
    \else%
      \setlength{\unitlength}{\unitlength * \real{\svgscale}}%
    \fi%
  \else%
    \setlength{\unitlength}{\svgwidth}%
  \fi%
  \global\let\svgwidth\undefined%
  \global\let\svgscale\undefined%
  \makeatother%
  \begin{picture}(1,0.7)%
    \lineheight{1}%
    \setlength\tabcolsep{0pt}%
    \put(0,0){\includegraphics[width=\unitlength,page=1]{ruled_out_disk.pdf}}%
    \put(0.41369362,0.00792107){\color[rgb]{0,0,0}\makebox(0,0)[lt]{\smash{\begin{tabular}[t]{l}{Case (5):}\end{tabular}}}}%
    \put(0,0){\includegraphics[width=\unitlength,page=2]{ruled_out_disk.pdf}}%
  \end{picture}%
\endgroup%

%% file: annular_domains.pdf_tex
\begingroup%
  \makeatletter%
  \providecommand\color[2][]{%
    \errmessage{(Inkscape) Color is used for the text in Inkscape, but the package 'color.sty' is not loaded}%
    \renewcommand\color[2][]{}%
  }%
  \providecommand\transparent[1]{%
    \errmessage{(Inkscape) Transparency is used (non-zero) for the text in Inkscape, but the package 'transparent.sty' is not loaded}%
    \renewcommand\transparent[1]{}%
  }%
  \providecommand\rotatebox[2]{#2}%
  \newcommand*\fsize{\dimexpr\f@size pt\relax}%
  \newcommand*\lineheight[1]{\fontsize{\fsize}{#1\fsize}\selectfont}%
  \ifx\svgwidth\undefined%
    \setlength{\unitlength}{603.33750916bp}%
    \ifx\svgscale\undefined%
      \relax%
    \else%
      \setlength{\unitlength}{\unitlength * \real{\svgscale}}%
    \fi%
  \else%
    \setlength{\unitlength}{\svgwidth}%
  \fi%
  \global\let\svgwidth\undefined%
  \global\let\svgscale\undefined%
  \makeatother%
  \begin{picture}(1,0.76356516)%
    \lineheight{1}%
    \setlength\tabcolsep{0pt}%
    \put(0,0){\includegraphics[width=\unitlength,page=1]{annular_domains.pdf}}%
    \put(-0.07393895,0.66024391){\color[rgb]{0,0,0}\makebox(0,0)[lt]{\smash{\begin{tabular}[t]{l}{Case (6):}\end{tabular}}}}%
    \put(0,0){\includegraphics[width=\unitlength,page=2]{annular_domains.pdf}}%
    \put(-0.07393895,0.38676512){\color[rgb]{0,0,0}\makebox(0,0)[lt]{\smash{\begin{tabular}[t]{l}{Case (7):}\end{tabular}}}}%
    \put(0,0){\includegraphics[width=\unitlength,page=3]{annular_domains.pdf}}%
    \put(-0.07393895,0.12571717){\color[rgb]{0,0,0}\makebox(0,0)[lt]{\smash{\begin{tabular}[t]{l}{Case (8):}\end{tabular}}}}%
    \put(0,0){\includegraphics[width=\unitlength,page=4]{annular_domains.pdf}}%
  \end{picture}%
\endgroup%

%% file: toroidal_real_domain.pdf_tex
\begingroup%
  \makeatletter%
  \providecommand\color[2][]{%
    \errmessage{(Inkscape) Color is used for the text in Inkscape, but the package 'color.sty' is not loaded}%
    \renewcommand\color[2][]{}%
  }%
  \providecommand\transparent[1]{%
    \errmessage{(Inkscape) Transparency is used (non-zero) for the text in Inkscape, but the package 'transparent.sty' is not loaded}%
    \renewcommand\transparent[1]{}%
  }%
  \providecommand\rotatebox[2]{#2}%
  \newcommand*\fsize{\dimexpr\f@size pt\relax}%
  \newcommand*\lineheight[1]{\fontsize{\fsize}{#1\fsize}\selectfont}%
  \ifx\svgwidth\undefined%
    \setlength{\unitlength}{556.3125bp}%
    \ifx\svgscale\undefined%
      \relax%
    \else%
      \setlength{\unitlength}{\unitlength * \real{\svgscale}}%
    \fi%
  \else%
    \setlength{\unitlength}{\svgwidth}%
  \fi%
  \global\let\svgwidth\undefined%
  \global\let\svgscale\undefined%
  \makeatother%
  \begin{picture}(1,0.37371084)%
    \lineheight{1}%
    \setlength\tabcolsep{0pt}%
    \put(0.00966366,0.17100627){\color[rgb]{0,0,0}\makebox(0,0)[lt]{\smash{\begin{tabular}[t]{l}{Case (9):}\end{tabular}}}}%
    \put(0,0){\includegraphics[width=\unitlength,page=1]{toroidal_real_domain.pdf}}%
  \end{picture}%
\endgroup%

%% file: torus_tiling_possibilities.pdf_tex
\begingroup%
  \makeatletter%
  \providecommand\color[2][]{%
    \errmessage{(Inkscape) Color is used for the text in Inkscape, but the package 'color.sty' is not loaded}%
    \renewcommand\color[2][]{}%
  }%
  \providecommand\transparent[1]{%
    \errmessage{(Inkscape) Transparency is used (non-zero) for the text in Inkscape, but the package 'transparent.sty' is not loaded}%
    \renewcommand\transparent[1]{}%
  }%
  \providecommand\rotatebox[2]{#2}%
  \newcommand*\fsize{\dimexpr\f@size pt\relax}%
  \newcommand*\lineheight[1]{\fontsize{\fsize}{#1\fsize}\selectfont}%
  \ifx\svgwidth\undefined%
    \setlength{\unitlength}{754.01574803bp}%
    \ifx\svgscale\undefined%
      \relax%
    \else%
      \setlength{\unitlength}{\unitlength * \real{\svgscale}}%
    \fi%
  \else%
    \setlength{\unitlength}{\svgwidth}%
  \fi%
  \global\let\svgwidth\undefined%
  \global\let\svgscale\undefined%
  \makeatother%
  \begin{picture}(1,1.01503759)%
    \lineheight{1}%
    \setlength\tabcolsep{0pt}%
    \put(0,0){\includegraphics[width=\unitlength,page=1]{torus_tiling_possibilities.pdf}}%
    \put(0.32647963,0.87011109){\color[rgb]{0,0,0}\makebox(0,0)[lt]{\smash{\begin{tabular}[t]{l}{\small{$\tilde{a}$}}\end{tabular}}}}%
    \put(0.3326455,0.81881094){\color[rgb]{0,0,0}\makebox(0,0)[lt]{\smash{\begin{tabular}[t]{l}{\small{$a^+$}}\end{tabular}}}}%
    \put(0.31248425,0.7936428){\color[rgb]{0,0,0}\makebox(0,0)[lt]{\smash{\begin{tabular}[t]{l}{\small{$a^0$}}\end{tabular}}}}%
    \put(0.34660807,0.63220797){\color[rgb]{0,0,0}\makebox(0,0)[lt]{\smash{\begin{tabular}[t]{l}{\small{$a^-$}}\end{tabular}}}}%
    \put(0,0){\includegraphics[width=\unitlength,page=2]{torus_tiling_possibilities.pdf}}%
    \put(0.83399843,0.78740433){\color[rgb]{0,0,0}\makebox(0,0)[lt]{\smash{\begin{tabular}[t]{l}{\small{$a^0$}}\end{tabular}}}}%
    \put(0,0){\includegraphics[width=\unitlength,page=3]{torus_tiling_possibilities.pdf}}%
    \put(0.34925237,0.29386208){\color[rgb]{0,0,0}\makebox(0,0)[lt]{\smash{\begin{tabular}[t]{l}{\small{$a^+$}}\end{tabular}}}}%
    \put(0,0){\includegraphics[width=\unitlength,page=4]{torus_tiling_possibilities.pdf}}%
    \put(0.8507364,0.08646617){\color[rgb]{0,0,0}\makebox(0,0)[lt]{\smash{\begin{tabular}[t]{l}{\small{$a^-$}}\end{tabular}}}}%
    \put(0,0){\includegraphics[width=\unitlength,page=5]{torus_tiling_possibilities.pdf}}%
  \end{picture}%
\endgroup%

%% file: torus_impossible_tiling.pdf_tex
\begingroup%
  \makeatletter%
  \providecommand\color[2][]{%
    \errmessage{(Inkscape) Color is used for the text in Inkscape, but the package 'color.sty' is not loaded}%
    \renewcommand\color[2][]{}%
  }%
  \providecommand\transparent[1]{%
    \errmessage{(Inkscape) Transparency is used (non-zero) for the text in Inkscape, but the package 'transparent.sty' is not loaded}%
    \renewcommand\transparent[1]{}%
  }%
  \providecommand\rotatebox[2]{#2}%
  \newcommand*\fsize{\dimexpr\f@size pt\relax}%
  \newcommand*\lineheight[1]{\fontsize{\fsize}{#1\fsize}\selectfont}%
  \ifx\svgwidth\undefined%
    \setlength{\unitlength}{360bp}%
    \ifx\svgscale\undefined%
      \relax%
    \else%
      \setlength{\unitlength}{\unitlength * \real{\svgscale}}%
    \fi%
  \else%
    \setlength{\unitlength}{\svgwidth}%
  \fi%
  \global\let\svgwidth\undefined%
  \global\let\svgscale\undefined%
  \makeatother%
  \begin{picture}(1,1)%
    \lineheight{1}%
    \setlength\tabcolsep{0pt}%
    \put(0,0){\includegraphics[width=\unitlength,page=1]{torus_impossible_tiling.pdf}}%
    \put(0.72110598,0.65141429){\color[rgb]{0,0,0}\makebox(0,0)[lt]{\smash{\begin{tabular}[t]{l}{\small{$\tilde{a}$}}\end{tabular}}}}%
    \put(0,0){\includegraphics[width=\unitlength,page=2]{torus_impossible_tiling.pdf}}%
  \end{picture}%
\endgroup%

%% file: octagon_tiling_and_immersion.pdf_tex
\begingroup%
  \makeatletter%
  \providecommand\color[2][]{%
    \errmessage{(Inkscape) Color is used for the text in Inkscape, but the package 'color.sty' is not loaded}%
    \renewcommand\color[2][]{}%
  }%
  \providecommand\transparent[1]{%
    \errmessage{(Inkscape) Transparency is used (non-zero) for the text in Inkscape, but the package 'transparent.sty' is not loaded}%
    \renewcommand\transparent[1]{}%
  }%
  \providecommand\rotatebox[2]{#2}%
  \newcommand*\fsize{\dimexpr\f@size pt\relax}%
  \newcommand*\lineheight[1]{\fontsize{\fsize}{#1\fsize}\selectfont}%
  \ifx\svgwidth\undefined%
    \setlength{\unitlength}{531.5625bp}%
    \ifx\svgscale\undefined%
      \relax%
    \else%
      \setlength{\unitlength}{\unitlength * \real{\svgscale}}%
    \fi%
  \else%
    \setlength{\unitlength}{\svgwidth}%
  \fi%
  \global\let\svgwidth\undefined%
  \global\let\svgscale\undefined%
  \makeatother%
  \begin{picture}(1,0.4)%
    \lineheight{1}%
    \setlength\tabcolsep{0pt}%
    \put(0,0){\includegraphics[width=\unitlength,page=1]{octagon_tiling_and_immersion.pdf}}%
    \put(0.03651446,0.26356555){\color[rgb]{0,0,0}\makebox(0,0)[lt]{\smash{\begin{tabular}[t]{l}{\small{$a$}}\end{tabular}}}}%
    \put(0.1383367,0.31825645){\color[rgb]{0,0,0}\makebox(0,0)[lt]{\smash{\begin{tabular}[t]{l}{\small{$b$}}\end{tabular}}}}%
    \put(0.34511564,0.31406632){\color[rgb]{0,0,0}\makebox(0,0)[lt]{\smash{\begin{tabular}[t]{l}{\small{$c$}}\end{tabular}}}}%
    \put(0,0){\includegraphics[width=\unitlength,page=2]{octagon_tiling_and_immersion.pdf}}%
    \put(0.39701748,0.28523524){\color[rgb]{0,0,0}\makebox(0,0)[lt]{\smash{\begin{tabular}[t]{l}{\small{$p_1$}}\end{tabular}}}}%
    \put(0,0){\includegraphics[width=\unitlength,page=3]{octagon_tiling_and_immersion.pdf}}%
    \put(0.16798991,0.27367676){\color[rgb]{0,0,0}\makebox(0,0)[lt]{\smash{\begin{tabular}[t]{l}{\small{$p_2$}}\end{tabular}}}}%
    \put(0,0){\includegraphics[width=\unitlength,page=4]{octagon_tiling_and_immersion.pdf}}%
    \put(0.24753943,0.09078087){\color[rgb]{0,0,0}\makebox(0,0)[lt]{\smash{\begin{tabular}[t]{l}{\small{$p_3$}}\end{tabular}}}}%
    \put(0,0){\includegraphics[width=\unitlength,page=5]{octagon_tiling_and_immersion.pdf}}%
    \put(0.47870896,0.1057389){\color[rgb]{0,0,0}\makebox(0,0)[lt]{\smash{\begin{tabular}[t]{l}{\small{$p_4$}}\end{tabular}}}}%
    \put(0,0){\includegraphics[width=\unitlength,page=6]{octagon_tiling_and_immersion.pdf}}%
    \put(0.2936711,0.23288215){\color[rgb]{0,0,0}\makebox(0,0)[lt]{\smash{\begin{tabular}[t]{l}{\small{$q_1$}}\end{tabular}}}}%
    \put(0,0){\includegraphics[width=\unitlength,page=7]{octagon_tiling_and_immersion.pdf}}%
    \put(0.19100462,0.23458193){\color[rgb]{0,0,0}\makebox(0,0)[lt]{\smash{\begin{tabular}[t]{l}{\small{$q_2$}}\end{tabular}}}}%
    \put(0,0){\includegraphics[width=\unitlength,page=8]{octagon_tiling_and_immersion.pdf}}%
    \put(0.36965108,0.16302135){\color[rgb]{0,0,0}\makebox(0,0)[lt]{\smash{\begin{tabular}[t]{l}{\small{$q_3$}}\end{tabular}}}}%
  \end{picture}%
\endgroup%

%% file: immersion_2_4_annulus.pdf_tex
\begingroup%
  \makeatletter%
  \providecommand\color[2][]{%
    \errmessage{(Inkscape) Color is used for the text in Inkscape, but the package 'color.sty' is not loaded}%
    \renewcommand\color[2][]{}%
  }%
  \providecommand\transparent[1]{%
    \errmessage{(Inkscape) Transparency is used (non-zero) for the text in Inkscape, but the package 'transparent.sty' is not loaded}%
    \renewcommand\transparent[1]{}%
  }%
  \providecommand\rotatebox[2]{#2}%
  \newcommand*\fsize{\dimexpr\f@size pt\relax}%
  \newcommand*\lineheight[1]{\fontsize{\fsize}{#1\fsize}\selectfont}%
  \ifx\svgwidth\undefined%
    \setlength{\unitlength}{425.25bp}%
    \ifx\svgscale\undefined%
      \relax%
    \else%
      \setlength{\unitlength}{\unitlength * \real{\svgscale}}%
    \fi%
  \else%
    \setlength{\unitlength}{\svgwidth}%
  \fi%
  \global\let\svgwidth\undefined%
  \global\let\svgscale\undefined%
  \makeatother%
  \begin{picture}(1,0.65)%
    \lineheight{1}%
    \setlength\tabcolsep{0pt}%
    \put(0,0){\includegraphics[width=\unitlength,page=1]{immersion_2_4_annulus.pdf}}%
    \put(0.38279939,0.10997606){\color[rgb]{0,0,0}\makebox(0,0)[lt]{\smash{\begin{tabular}[t]{l}{\small{$a$}}\end{tabular}}}}%
    \put(0.41064031,0.15284303){\color[rgb]{0,0,0}\makebox(0,0)[lt]{\smash{\begin{tabular}[t]{l}{\small{$b$}}\end{tabular}}}}%
    \put(0,0){\includegraphics[width=\unitlength,page=2]{immersion_2_4_annulus.pdf}}%
    \put(0.45749187,0.21729616){\color[rgb]{0,0,0}\makebox(0,0)[lt]{\smash{\begin{tabular}[t]{l}{\small{$c$}}\end{tabular}}}}%
    \put(0.15920411,0.30805148){\color[rgb]{0,0,0}\makebox(0,0)[lt]{\smash{\begin{tabular}[t]{l}{\small{$q_3$}}\end{tabular}}}}%
    \put(0.46980493,0.3477408){\color[rgb]{0,0,0}\makebox(0,0)[lt]{\smash{\begin{tabular}[t]{l}{\small{$p_3$}}\end{tabular}}}}%
    \put(0.2790808,0.28153499){\color[rgb]{0,0,0}\makebox(0,0)[lt]{\smash{\begin{tabular}[t]{l}{\small{$q_2$}}\end{tabular}}}}%
    \put(0.39763987,0.48980003){\color[rgb]{0,0,0}\makebox(0,0)[lt]{\smash{\begin{tabular}[t]{l}{\small{$q_1$}}\end{tabular}}}}%
    \put(0.23688358,0.40030682){\color[rgb]{0,0,0}\makebox(0,0)[lt]{\smash{\begin{tabular}[t]{l}{\small{$p_2$}}\end{tabular}}}}%
    \put(0.26229565,0.3237342){\color[rgb]{0,0,0}\makebox(0,0)[lt]{\smash{\begin{tabular}[t]{l}{\small{$p_1$}}\end{tabular}}}}%
  \end{picture}%
\endgroup%

%% file: immersion_reflection_annulus.pdf_tex
\begingroup%
  \makeatletter%
  \providecommand\color[2][]{%
    \errmessage{(Inkscape) Color is used for the text in Inkscape, but the package 'color.sty' is not loaded}%
    \renewcommand\color[2][]{}%
  }%
  \providecommand\transparent[1]{%
    \errmessage{(Inkscape) Transparency is used (non-zero) for the text in Inkscape, but the package 'transparent.sty' is not loaded}%
    \renewcommand\transparent[1]{}%
  }%
  \providecommand\rotatebox[2]{#2}%
  \newcommand*\fsize{\dimexpr\f@size pt\relax}%
  \newcommand*\lineheight[1]{\fontsize{\fsize}{#1\fsize}\selectfont}%
  \ifx\svgwidth\undefined%
    \setlength{\unitlength}{429bp}%
    \ifx\svgscale\undefined%
      \relax%
    \else%
      \setlength{\unitlength}{\unitlength * \real{\svgscale}}%
    \fi%
  \else%
    \setlength{\unitlength}{\svgwidth}%
  \fi%
  \global\let\svgwidth\undefined%
  \global\let\svgscale\undefined%
  \makeatother%
  \begin{picture}(1,0.3611014)%
    \lineheight{1}%
    \setlength\tabcolsep{0pt}%
    \put(0,0){\includegraphics[width=\unitlength,page=1]{immersion_reflection_annulus.pdf}}%
    \put(0.20037835,0.21897926){\color[rgb]{0,0,0}\makebox(0,0)[lt]{\smash{\begin{tabular}[t]{l}{\small{$b$}}\end{tabular}}}}%
    \put(0.47875251,0.00338548){\color[rgb]{0,0,0}\makebox(0,0)[lt]{\smash{\begin{tabular}[t]{l}{\small{$a$}}\end{tabular}}}}%
    \put(0,0){\includegraphics[width=\unitlength,page=2]{immersion_reflection_annulus.pdf}}%
    \put(0.36568299,0.1598765){\color[rgb]{0,0,0}\makebox(0,0)[lt]{\smash{\begin{tabular}[t]{l}{\small{$p_1$}}\end{tabular}}}}%
    \put(0.75350803,0.28095692){\color[rgb]{0,0,0}\makebox(0,0)[lt]{\smash{\begin{tabular}[t]{l}{\small{$p_2$}}\end{tabular}}}}%
    \put(0.40225622,0.1973299){\color[rgb]{0,0,0}\makebox(0,0)[lt]{\smash{\begin{tabular}[t]{l}{\small{$q_2$}}\end{tabular}}}}%
    \put(0.33785277,0.25222998){\color[rgb]{0,0,0}\makebox(0,0)[lt]{\smash{\begin{tabular}[t]{l}{\small{$q_1$}}\end{tabular}}}}%
    \put(0.53927911,0.14072079){\color[rgb]{0,0,0}\makebox(0,0)[lt]{\smash{\begin{tabular}[t]{l}{\small{$q_3$}}\end{tabular}}}}%
    \put(0.53927911,0.14072079){\color[rgb]{0,0,0}\makebox(0,0)[lt]{\smash{\begin{tabular}[t]{l}{\small{$q_3$}}\end{tabular}}}}%
    \put(0.54647832,0.1057409){\color[rgb]{0,0,0}\makebox(0,0)[lt]{\smash{\begin{tabular}[t]{l}{\small{$q_4$}}\end{tabular}}}}%
  \end{picture}%
\endgroup%

%% file: toroidal_domain_tiling_immersion.pdf_tex
\begingroup%
  \makeatletter%
  \providecommand\color[2][]{%
    \errmessage{(Inkscape) Color is used for the text in Inkscape, but the package 'color.sty' is not loaded}%
    \renewcommand\color[2][]{}%
  }%
  \providecommand\transparent[1]{%
    \errmessage{(Inkscape) Transparency is used (non-zero) for the text in Inkscape, but the package 'transparent.sty' is not loaded}%
    \renewcommand\transparent[1]{}%
  }%
  \providecommand\rotatebox[2]{#2}%
  \newcommand*\fsize{\dimexpr\f@size pt\relax}%
  \newcommand*\lineheight[1]{\fontsize{\fsize}{#1\fsize}\selectfont}%
  \ifx\svgwidth\undefined%
    \setlength{\unitlength}{428.02500916bp}%
    \ifx\svgscale\undefined%
      \relax%
    \else%
      \setlength{\unitlength}{\unitlength * \real{\svgscale}}%
    \fi%
  \else%
    \setlength{\unitlength}{\svgwidth}%
  \fi%
  \global\let\svgwidth\undefined%
  \global\let\svgscale\undefined%
  \makeatother%
  \begin{picture}(1,0.51025058)%
    \lineheight{1}%
    \setlength\tabcolsep{0pt}%
    \put(0,0){\includegraphics[width=\unitlength,page=1]{toroidal_domain_tiling_immersion.pdf}}%
    \put(0.27466753,0.38974824){\color[rgb]{0,0,0}\makebox(0,0)[lt]{\smash{\begin{tabular}[t]{l}{\small{$p_1$}}\end{tabular}}}}%
    \put(0,0){\includegraphics[width=\unitlength,page=2]{toroidal_domain_tiling_immersion.pdf}}%
    \put(0.33561533,0.20846749){\color[rgb]{0,0,0}\makebox(0,0)[lt]{\smash{\begin{tabular}[t]{l}{\small{$p_2$}}\end{tabular}}}}%
    \put(0,0){\includegraphics[width=\unitlength,page=3]{toroidal_domain_tiling_immersion.pdf}}%
    \put(0.06603838,0.40459448){\color[rgb]{0,0,0}\makebox(0,0)[lt]{\smash{\begin{tabular}[t]{l}{\small{$q_1$}}\end{tabular}}}}%
    \put(0,0){\includegraphics[width=\unitlength,page=4]{toroidal_domain_tiling_immersion.pdf}}%
    \put(0.06681977,0.2741036){\color[rgb]{0,0,0}\makebox(0,0)[lt]{\smash{\begin{tabular}[t]{l}{\small{$q_3$}}\end{tabular}}}}%
    \put(0,0){\includegraphics[width=\unitlength,page=5]{toroidal_domain_tiling_immersion.pdf}}%
    \put(0.05052017,0.11479278){\color[rgb]{0,0,0}\makebox(0,0)[lt]{\smash{\begin{tabular}[t]{l}{\small{$q_2$}}\end{tabular}}}}%
    \put(0,0){\includegraphics[width=\unitlength,page=6]{toroidal_domain_tiling_immersion.pdf}}%
    \put(0.42912636,0.51322777){\color[rgb]{0,0,0}\makebox(0,0)[lt]{\smash{\begin{tabular}[t]{l}{\small{$c$}}\end{tabular}}}}%
    \put(0,0){\includegraphics[width=\unitlength,page=7]{toroidal_domain_tiling_immersion.pdf}}%
    \put(0.35202806,0.51322777){\color[rgb]{0,0,0}\makebox(0,0)[lt]{\smash{\begin{tabular}[t]{l}{\small{$b$}}\end{tabular}}}}%
    \put(0,0){\includegraphics[width=\unitlength,page=8]{toroidal_domain_tiling_immersion.pdf}}%
    \put(0.26792083,0.51322777){\color[rgb]{0,0,0}\makebox(0,0)[lt]{\smash{\begin{tabular}[t]{l}{\small{$a$}}\end{tabular}}}}%
  \end{picture}%
\endgroup%

%% file: immersion_examples.pdf_tex
\begingroup%
  \makeatletter%
  \providecommand\color[2][]{%
    \errmessage{(Inkscape) Color is used for the text in Inkscape, but the package 'color.sty' is not loaded}%
    \renewcommand\color[2][]{}%
  }%
  \providecommand\transparent[1]{%
    \errmessage{(Inkscape) Transparency is used (non-zero) for the text in Inkscape, but the package 'transparent.sty' is not loaded}%
    \renewcommand\transparent[1]{}%
  }%
  \providecommand\rotatebox[2]{#2}%
  \newcommand*\fsize{\dimexpr\f@size pt\relax}%
  \newcommand*\lineheight[1]{\fontsize{\fsize}{#1\fsize}\selectfont}%
  \ifx\svgwidth\undefined%
    \setlength{\unitlength}{850.5bp}%
    \ifx\svgscale\undefined%
      \relax%
    \else%
      \setlength{\unitlength}{\unitlength * \real{\svgscale}}%
    \fi%
  \else%
    \setlength{\unitlength}{\svgwidth}%
  \fi%
  \global\let\svgwidth\undefined%
  \global\let\svgscale\undefined%
  \makeatother%
  \begin{picture}(1,0.75)%
    \lineheight{1}%
    \setlength\tabcolsep{0pt}%
    \put(0,0){\includegraphics[width=\unitlength,page=1]{immersion_examples.pdf}}%
  \end{picture}%
\endgroup%

%% file: polygon.pdf_tex
\begingroup%
  \makeatletter%
  \providecommand\color[2][]{%
    \errmessage{(Inkscape) Color is used for the text in Inkscape, but the package 'color.sty' is not loaded}%
    \renewcommand\color[2][]{}%
  }%
  \providecommand\transparent[1]{%
    \errmessage{(Inkscape) Transparency is used (non-zero) for the text in Inkscape, but the package 'transparent.sty' is not loaded}%
    \renewcommand\transparent[1]{}%
  }%
  \providecommand\rotatebox[2]{#2}%
  \newcommand*\fsize{\dimexpr\f@size pt\relax}%
  \newcommand*\lineheight[1]{\fontsize{\fsize}{#1\fsize}\selectfont}%
  \ifx\svgwidth\undefined%
    \setlength{\unitlength}{106.3125bp}%
    \ifx\svgscale\undefined%
      \relax%
    \else%
      \setlength{\unitlength}{\unitlength * \real{\svgscale}}%
    \fi%
  \else%
    \setlength{\unitlength}{\svgwidth}%
  \fi%
  \global\let\svgwidth\undefined%
  \global\let\svgscale\undefined%
  \makeatother%
  \begin{picture}(1,1)%
    \lineheight{1}%
    \setlength\tabcolsep{0pt}%
    \put(0,0){\includegraphics[width=\unitlength,page=1]{polygon.pdf}}%
    \put(0.55672179,0.88944244){\color[rgb]{0,0,0}\makebox(0,0)[lt]{\smash{\begin{tabular}[t]{l}{$C$}\end{tabular}}}}%
  \end{picture}%
\endgroup%

%% file: neck_stretching.pdf_tex
\begingroup%
  \makeatletter%
  \providecommand\color[2][]{%
    \errmessage{(Inkscape) Color is used for the text in Inkscape, but the package 'color.sty' is not loaded}%
    \renewcommand\color[2][]{}%
  }%
  \providecommand\transparent[1]{%
    \errmessage{(Inkscape) Transparency is used (non-zero) for the text in Inkscape, but the package 'transparent.sty' is not loaded}%
    \renewcommand\transparent[1]{}%
  }%
  \providecommand\rotatebox[2]{#2}%
  \newcommand*\fsize{\dimexpr\f@size pt\relax}%
  \newcommand*\lineheight[1]{\fontsize{\fsize}{#1\fsize}\selectfont}%
  \ifx\svgwidth\undefined%
    \setlength{\unitlength}{276.4125bp}%
    \ifx\svgscale\undefined%
      \relax%
    \else%
      \setlength{\unitlength}{\unitlength * \real{\svgscale}}%
    \fi%
  \else%
    \setlength{\unitlength}{\svgwidth}%
  \fi%
  \global\let\svgwidth\undefined%
  \global\let\svgscale\undefined%
  \makeatother%
  \begin{picture}(1,0.61538462)%
    \lineheight{1}%
    \setlength\tabcolsep{0pt}%
    \put(0,0){\includegraphics[width=\unitlength,page=1]{neck_stretching.pdf}}%
  \end{picture}%
\endgroup%

%% file: finding_holomorphic_rep_without_using_neck_stretching.pdf_tex
\begingroup%
  \makeatletter%
  \providecommand\color[2][]{%
    \errmessage{(Inkscape) Color is used for the text in Inkscape, but the package 'color.sty' is not loaded}%
    \renewcommand\color[2][]{}%
  }%
  \providecommand\transparent[1]{%
    \errmessage{(Inkscape) Transparency is used (non-zero) for the text in Inkscape, but the package 'transparent.sty' is not loaded}%
    \renewcommand\transparent[1]{}%
  }%
  \providecommand\rotatebox[2]{#2}%
  \newcommand*\fsize{\dimexpr\f@size pt\relax}%
  \newcommand*\lineheight[1]{\fontsize{\fsize}{#1\fsize}\selectfont}%
  \ifx\svgwidth\undefined%
    \setlength{\unitlength}{425.25bp}%
    \ifx\svgscale\undefined%
      \relax%
    \else%
      \setlength{\unitlength}{\unitlength * \real{\svgscale}}%
    \fi%
  \else%
    \setlength{\unitlength}{\svgwidth}%
  \fi%
  \global\let\svgwidth\undefined%
  \global\let\svgscale\undefined%
  \makeatother%
  \begin{picture}(1,0.4)%
    \lineheight{1}%
    \setlength\tabcolsep{0pt}%
    \put(0,0){\includegraphics[width=\unitlength,page=1]{finding_holomorphic_rep_without_using_neck_stretching.pdf}}%
    \put(0.15456423,0.28143813){\color[rgb]{0,0,0}\makebox(0,0)[lt]{\smash{\begin{tabular}[t]{l}{\small$a$}\end{tabular}}}}%
    \put(0.15456423,0.13190294){\color[rgb]{0,0,0}\makebox(0,0)[lt]{\smash{\begin{tabular}[t]{l}{\small$c$}\end{tabular}}}}%
    \put(0.15414239,0.0891099){\color[rgb]{0,0,0}\makebox(0,0)[lt]{\smash{\begin{tabular}[t]{l}{\small$d$}\end{tabular}}}}%
    \put(0.15414239,0.23255604){\color[rgb]{0,0,0}\makebox(0,0)[lt]{\smash{\begin{tabular}[t]{l}{\small$b$}\end{tabular}}}}%
    \put(0,0){\includegraphics[width=\unitlength,page=2]{finding_holomorphic_rep_without_using_neck_stretching.pdf}}%
    \put(0.51639418,0.28143813){\color[rgb]{0,0,0}\makebox(0,0)[lt]{\smash{\begin{tabular}[t]{l}{\small$a$}\end{tabular}}}}%
    \put(0.51639418,0.13190294){\color[rgb]{0,0,0}\makebox(0,0)[lt]{\smash{\begin{tabular}[t]{l}{\small$c$}\end{tabular}}}}%
    \put(0.51597237,0.0891099){\color[rgb]{0,0,0}\makebox(0,0)[lt]{\smash{\begin{tabular}[t]{l}{\small$d$}\end{tabular}}}}%
    \put(0.51597237,0.23255604){\color[rgb]{0,0,0}\makebox(0,0)[lt]{\smash{\begin{tabular}[t]{l}{\small$b$}\end{tabular}}}}%
    \put(0,0){\includegraphics[width=\unitlength,page=3]{finding_holomorphic_rep_without_using_neck_stretching.pdf}}%
    \put(0.8782241,0.28143813){\color[rgb]{0,0,0}\makebox(0,0)[lt]{\smash{\begin{tabular}[t]{l}{\small$a$}\end{tabular}}}}%
    \put(0.8782241,0.13190294){\color[rgb]{0,0,0}\makebox(0,0)[lt]{\smash{\begin{tabular}[t]{l}{\small$c$}\end{tabular}}}}%
    \put(0.87780223,0.0891099){\color[rgb]{0,0,0}\makebox(0,0)[lt]{\smash{\begin{tabular}[t]{l}{\small$d$}\end{tabular}}}}%
    \put(0.87780223,0.23255604){\color[rgb]{0,0,0}\makebox(0,0)[lt]{\smash{\begin{tabular}[t]{l}{\small$b$}\end{tabular}}}}%
    \put(0,0){\includegraphics[width=\unitlength,page=4]{finding_holomorphic_rep_without_using_neck_stretching.pdf}}%
  \end{picture}%
\endgroup%

%% file: cuts_on_Z_2_reflection_annulus.pdf_tex
\begingroup%
  \makeatletter%
  \providecommand\color[2][]{%
    \errmessage{(Inkscape) Color is used for the text in Inkscape, but the package 'color.sty' is not loaded}%
    \renewcommand\color[2][]{}%
  }%
  \providecommand\transparent[1]{%
    \errmessage{(Inkscape) Transparency is used (non-zero) for the text in Inkscape, but the package 'transparent.sty' is not loaded}%
    \renewcommand\transparent[1]{}%
  }%
  \providecommand\rotatebox[2]{#2}%
  \newcommand*\fsize{\dimexpr\f@size pt\relax}%
  \newcommand*\lineheight[1]{\fontsize{\fsize}{#1\fsize}\selectfont}%
  \ifx\svgwidth\undefined%
    \setlength{\unitlength}{403.9875bp}%
    \ifx\svgscale\undefined%
      \relax%
    \else%
      \setlength{\unitlength}{\unitlength * \real{\svgscale}}%
    \fi%
  \else%
    \setlength{\unitlength}{\svgwidth}%
  \fi%
  \global\let\svgwidth\undefined%
  \global\let\svgscale\undefined%
  \makeatother%
  \begin{picture}(1,0.97368421)%
    \lineheight{1}%
    \setlength\tabcolsep{0pt}%
    \put(0,0){\includegraphics[width=\unitlength,page=1]{cuts_on_Z_2_reflection_annulus.pdf}}%
    \put(0.17150896,0.94020093){\color[rgb]{0,0,0}\makebox(0,0)[lt]{\smash{\begin{tabular}[t]{l}{\small$y_2$}\end{tabular}}}}%
    \put(0.23289488,0.79347361){\color[rgb]{0,0,0}\makebox(0,0)[lt]{\smash{\begin{tabular}[t]{l}{\small$y_1$}\end{tabular}}}}%
    \put(0.22316297,0.7391229){\color[rgb]{0,0,0}\makebox(0,0)[lt]{\smash{\begin{tabular}[t]{l}{\small$x_1$}\end{tabular}}}}%
    \put(0.2104366,0.49552727){\color[rgb]{0,0,0}\makebox(0,0)[lt]{\smash{\begin{tabular}[t]{l}{\small$x_2$}\end{tabular}}}}%
    \put(0.70174742,0.93720649){\color[rgb]{0,0,0}\makebox(0,0)[lt]{\smash{\begin{tabular}[t]{l}{\small$y_2$}\end{tabular}}}}%
    \put(0.76013891,0.78898196){\color[rgb]{0,0,0}\makebox(0,0)[lt]{\smash{\begin{tabular}[t]{l}{\small$y_1$}\end{tabular}}}}%
    \put(0.75040699,0.7391229){\color[rgb]{0,0,0}\makebox(0,0)[lt]{\smash{\begin{tabular}[t]{l}{\small$x_1$}\end{tabular}}}}%
    \put(0.73768065,0.49552727){\color[rgb]{0,0,0}\makebox(0,0)[lt]{\smash{\begin{tabular}[t]{l}{\small$x_2$}\end{tabular}}}}%
    \put(0.18948848,0.19848832){\color[rgb]{0,0,0}\makebox(0,0)[lt]{\smash{\begin{tabular}[t]{l}{\small$y_1$}\end{tabular}}}}%
    \put(0.19246088,0.27794881){\color[rgb]{0,0,0}\makebox(0,0)[lt]{\smash{\begin{tabular}[t]{l}{\small$x_1$}\end{tabular}}}}%
    \put(0.35678018,0.19805729){\color[rgb]{0,0,0}\makebox(0,0)[lt]{\smash{\begin{tabular}[t]{l}{\small$y_2$}\end{tabular}}}}%
    \put(0.34912537,0.27214313){\color[rgb]{0,0,0}\makebox(0,0)[lt]{\smash{\begin{tabular}[t]{l}{\small$x_2$}\end{tabular}}}}%
    \put(0.64790243,0.17625579){\color[rgb]{0,0,0}\makebox(0,0)[lt]{\smash{\begin{tabular}[t]{l}{\small$y_1$}\end{tabular}}}}%
    \put(0.65087482,0.25571627){\color[rgb]{0,0,0}\makebox(0,0)[lt]{\smash{\begin{tabular}[t]{l}{\small$x_1$}\end{tabular}}}}%
    \put(0.88511322,0.17264869){\color[rgb]{0,0,0}\makebox(0,0)[lt]{\smash{\begin{tabular}[t]{l}{\small$y_2$}\end{tabular}}}}%
    \put(0.87745841,0.24673454){\color[rgb]{0,0,0}\makebox(0,0)[lt]{\smash{\begin{tabular}[t]{l}{\small$x_2$}\end{tabular}}}}%
    \put(0,0){\includegraphics[width=\unitlength,page=2]{cuts_on_Z_2_reflection_annulus.pdf}}%
  \end{picture}%
\endgroup%

%% file: real_lens_spaces_for_torus_antipodal_annuli.pdf_tex
\begingroup%
  \makeatletter%
  \providecommand\color[2][]{%
    \errmessage{(Inkscape) Color is used for the text in Inkscape, but the package 'color.sty' is not loaded}%
    \renewcommand\color[2][]{}%
  }%
  \providecommand\transparent[1]{%
    \errmessage{(Inkscape) Transparency is used (non-zero) for the text in Inkscape, but the package 'transparent.sty' is not loaded}%
    \renewcommand\transparent[1]{}%
  }%
  \providecommand\rotatebox[2]{#2}%
  \newcommand*\fsize{\dimexpr\f@size pt\relax}%
  \newcommand*\lineheight[1]{\fontsize{\fsize}{#1\fsize}\selectfont}%
  \ifx\svgwidth\undefined%
    \setlength{\unitlength}{237.1875bp}%
    \ifx\svgscale\undefined%
      \relax%
    \else%
      \setlength{\unitlength}{\unitlength * \real{\svgscale}}%
    \fi%
  \else%
    \setlength{\unitlength}{\svgwidth}%
  \fi%
  \global\let\svgwidth\undefined%
  \global\let\svgscale\undefined%
  \makeatother%
  \begin{picture}(1,1.00632411)%
    \lineheight{1}%
    \setlength\tabcolsep{0pt}%
    \put(0,0){\includegraphics[width=\unitlength,page=1]{real_lens_spaces_for_torus_antipodal_annuli.pdf}}%
    \put(-0.05338839,0.02764791){\color[rgb]{0,0,0}\makebox(0,0)[lt]{\smash{\begin{tabular}[t]{l}{\small$a$}\end{tabular}}}}%
    \put(0.24951404,0.21446113){\color[rgb]{0,0,0}\makebox(0,0)[lt]{\smash{\begin{tabular}[t]{l}{\small$b$}\end{tabular}}}}%
    \put(0.49698458,0.46966147){\color[rgb]{0,0,0}\makebox(0,0)[lt]{\smash{\begin{tabular}[t]{l}{\small$c$}\end{tabular}}}}%
    \put(0.07547118,0.66433332){\color[rgb]{0,0,0}\makebox(0,0)[lt]{\smash{\begin{tabular}[t]{l}{\small$s$}\end{tabular}}}}%
    \put(0.32030343,0.90022189){\color[rgb]{0,0,0}\makebox(0,0)[lt]{\smash{\begin{tabular}[t]{l}{\small$t$}\end{tabular}}}}%
    \put(0.19397446,0.52347093){\color[rgb]{0,0,0}\makebox(0,0)[lt]{\smash{\begin{tabular}[t]{l}{\small$u$}\end{tabular}}}}%
    \put(0.43545284,0.78395453){\color[rgb]{0,0,0}\makebox(0,0)[lt]{\smash{\begin{tabular}[t]{l}{\small$v$}\end{tabular}}}}%
    \put(0.06876346,0.15454564){\color[rgb]{0,0,0}\makebox(0,0)[lt]{\smash{\begin{tabular}[t]{l}{\small$w$}\end{tabular}}}}%
    \put(0.30577002,0.40273172){\color[rgb]{0,0,0}\makebox(0,0)[lt]{\smash{\begin{tabular}[t]{l}{\small$x$}\end{tabular}}}}%
    \put(0.55211544,0.64979986){\color[rgb]{0,0,0}\makebox(0,0)[lt]{\smash{\begin{tabular}[t]{l}{\small$y$}\end{tabular}}}}%
    \put(0.80253743,0.91531425){\color[rgb]{0,0,0}\makebox(0,0)[lt]{\smash{\begin{tabular}[t]{l}{\small$z$}\end{tabular}}}}%
    \put(0.93054335,0.7738929){\color[rgb]{0,0,0}\makebox(0,0)[lt]{\smash{\begin{tabular}[t]{l}{\small$\tau(z)$}\end{tabular}}}}%
    \put(0.6778854,0.53353247){\color[rgb]{0,0,0}\makebox(0,0)[lt]{\smash{\begin{tabular}[t]{l}{\small$\tau(y)$}\end{tabular}}}}%
    \put(0.42737965,0.27770224){\color[rgb]{0,0,0}\makebox(0,0)[lt]{\smash{\begin{tabular}[t]{l}{\small$\tau(x)$}\end{tabular}}}}%
    \put(0.19259704,0.03817655){\color[rgb]{0,0,0}\makebox(0,0)[lt]{\smash{\begin{tabular}[t]{l}{\small$\tau(w)$}\end{tabular}}}}%
    \put(0.69062076,0.04450066){\color[rgb]{0,0,0}\makebox(0,0)[lt]{\smash{\begin{tabular}[t]{l}{\small$\tau(s)$}\end{tabular}}}}%
    \put(0.91986977,0.26584453){\color[rgb]{0,0,0}\makebox(0,0)[lt]{\smash{\begin{tabular}[t]{l}{\small$\tau(t)$}\end{tabular}}}}%
    \put(0.54674724,0.15122002){\color[rgb]{0,0,0}\makebox(0,0)[lt]{\smash{\begin{tabular}[t]{l}{\small$\tau(u)$}\end{tabular}}}}%
    \put(0.80366424,0.40181291){\color[rgb]{0,0,0}\makebox(0,0)[lt]{\smash{\begin{tabular}[t]{l}{\small$\tau(v)$}\end{tabular}}}}%
  \end{picture}%
\endgroup%

%% file: surface_diagram_projection.pdf_tex
\begingroup%
  \makeatletter%
  \providecommand\color[2][]{%
    \errmessage{(Inkscape) Color is used for the text in Inkscape, but the package 'color.sty' is not loaded}%
    \renewcommand\color[2][]{}%
  }%
  \providecommand\transparent[1]{%
    \errmessage{(Inkscape) Transparency is used (non-zero) for the text in Inkscape, but the package 'transparent.sty' is not loaded}%
    \renewcommand\transparent[1]{}%
  }%
  \providecommand\rotatebox[2]{#2}%
  \newcommand*\fsize{\dimexpr\f@size pt\relax}%
  \newcommand*\lineheight[1]{\fontsize{\fsize}{#1\fsize}\selectfont}%
  \ifx\svgwidth\undefined%
    \setlength{\unitlength}{276.1875bp}%
    \ifx\svgscale\undefined%
      \relax%
    \else%
      \setlength{\unitlength}{\unitlength * \real{\svgscale}}%
    \fi%
  \else%
    \setlength{\unitlength}{\svgwidth}%
  \fi%
  \global\let\svgwidth\undefined%
  \global\let\svgscale\undefined%
  \makeatother%
  \begin{picture}(1,0.5425662)%
    \lineheight{1}%
    \setlength\tabcolsep{0pt}%
    \put(0,0){\includegraphics[width=\unitlength,page=1]{surface_diagram_projection.pdf}}%
    \put(0.20467725,-0.00533546){\color[rgb]{0,0,0}\makebox(0,0)[t]{\smash{\begin{tabular}[t]{c}{$\t$}\end{tabular}}}}%
    \put(0,0){\includegraphics[width=\unitlength,page=2]{surface_diagram_projection.pdf}}%
    \put(0.78010363,-0.00533546){\color[rgb]{0,0,0}\makebox(0,0)[t]{\smash{\begin{tabular}[t]{c}{$\t'$}\end{tabular}}}}%
    \put(0,0){\includegraphics[width=\unitlength,page=3]{surface_diagram_projection.pdf}}%
  \end{picture}%
\endgroup%

%% file: analyzing_surface_diagram.pdf_tex
\begingroup%
  \makeatletter%
  \providecommand\color[2][]{%
    \errmessage{(Inkscape) Color is used for the text in Inkscape, but the package 'color.sty' is not loaded}%
    \renewcommand\color[2][]{}%
  }%
  \providecommand\transparent[1]{%
    \errmessage{(Inkscape) Transparency is used (non-zero) for the text in Inkscape, but the package 'transparent.sty' is not loaded}%
    \renewcommand\transparent[1]{}%
  }%
  \providecommand\rotatebox[2]{#2}%
  \newcommand*\fsize{\dimexpr\f@size pt\relax}%
  \newcommand*\lineheight[1]{\fontsize{\fsize}{#1\fsize}\selectfont}%
  \ifx\svgwidth\undefined%
    \setlength{\unitlength}{576.66751099bp}%
    \ifx\svgscale\undefined%
      \relax%
    \else%
      \setlength{\unitlength}{\unitlength * \real{\svgscale}}%
    \fi%
  \else%
    \setlength{\unitlength}{\svgwidth}%
  \fi%
  \global\let\svgwidth\undefined%
  \global\let\svgscale\undefined%
  \makeatother%
  \begin{picture}(1,0.46027388)%
    \lineheight{1}%
    \setlength\tabcolsep{0pt}%
    \put(0,0){\includegraphics[width=\unitlength,page=1]{analyzing_surface_diagram.pdf}}%
  \end{picture}%
\endgroup%

%% file: adjunction_inequalities.tex
Taut sutured manifolds play key roles in Gabai's classical theory of sutured manifolds~\cite{gabai1983foliations}. We define an analogue of tautness in the real setting, in Section~\ref{subsec:realtaut}. This notion turns out to be equivalent to the classical notion; see~\Cref{prop:tautrealtautequiv}. Along the way, we construct real representatives of appropriate homology classes, in Section~\ref{subsec:realsurfacereps}. We then provide some applications of the real surface decomposition formula,~\Cref{thm:decomposition formula for RSFH}, and some vanishing results Section~\ref{subsec:moredecomps}. We end this section by investigating various real sutured manifolds constructed from the knot $8_3$ with a strong inversion, in Section~\ref{sec:8_3}.

\subsection{Real surface representatives}\label{subsec:realsurfacereps} We begin with a construction or real surfaces.
\begin{proposition}\label{prop: existence of real surface representative}
Let $(Y,\tau)$ be a real $3$-manifold and $F$ be a codimension $0$ real submanifold of $\partial Y$. Any class $\zeta \in H_2(Y, F;\Z)^{-\tau_*}$ can be represented by a real embedded surface.
\end{proposition}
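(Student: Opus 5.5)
We represent $\zeta$ by an equivariant map to the circle and take a regular preimage. Since $\tau$ is orientation-preserving on $Y$ and we want $\tau|_S$ to reverse the orientation of $S$, the coorientation of $S$ must also be reversed. So we look for a smooth map $f\colon Y\to S^1$ with
\[
f\circ\tau=\overline{f},
\]
where $\overline{\cdot}$ denotes complex conjugation, and with $f$ constant equal to $1$ on a neighborhood of $F$. The fixed points of conjugation on $S^1$ are $\pm1$, and we take $S=f^{-1}(-1)$ for a suitably generic equivariant $f$. Then $S$ is $\tau$-invariant, and $\tau$ reverses the coorientation, hence reverses the orientation of $S$, because $\tau$ preserves the orientation of $Y$.

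First, via Lefschetz duality we identify $\zeta$ with a class $\hat\zeta\in H^1(Y,\partial Y\setminus F;\Z)$; working in the relative setting, we want $f\equiv 1$ near $F$. The condition $-\tau_*\zeta=\zeta$ translates into $\tau^*\hat\zeta=-\hat\zeta$, since $\tau$ preserves the fundamental class. Next we construct the equivariant map. Start with any $f_0\colon Y\to S^1$ representing $\hat\zeta$, chosen to equal $1$ near $F$. The map $g=\overline{f_0\circ\tau}$ is homotopic to $f_0$ relative to $F$, because $\tau^*\hat\zeta=-\hat\zeta$. So $h=f_0\cdot\overline{g}$ is null-homotopic, and we can write $h=e^{i\theta}$ for a real function $\theta$, which we may take to vanish near $F$. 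A direct computation shows that $h\circ\tau=\overline{h}$, so $\theta\circ\tau=-\theta+2\pi k$. Evaluating on a $\tau$-invariant piece such as $F$, or at a fixed point, forces $k=0$ on each $\tau$-orbit of components, after replacing $\theta$ by $\tfrac12(\theta-\theta\circ\tau)$ if necessary. Now set
\[
f=f_0\,e^{-i\theta/2}.
\]
Then $f\circ\tau=\overline{f}$, the map $f$ is still homotopic to $f_0$, and $f\equiv1$ near $F$.

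Second, we make $-1$ a regular value while keeping equivariance. Perturb $f$ by $e^{i\psi}$ with $\psi\circ\tau=-\psi$ and $\psi$ supported away from $F$. Away from $C$ this can be done freely on a fundamental domain. At $C$, note that $f$ maps $C$ into $\{\pm1\}$. In an equivariant chart, $\tau$ acts as $(y,x_1,x_2)\mapsto(y,-x_1,-x_2)$, so anti-invariant functions such as $ax_2$ have nonvanishing derivative at $C$. Adding such a term, as in the proof of Lemma~\ref{lem:realmorseffunctionsexist}, makes $f$ transverse to $-1$ along $C$. The preimage $S=f^{-1}(-1)$ is then a properly embedded oriented surface disjoint from $F$. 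It represents the Poincaré--Lefschetz dual of $\hat\zeta$, namely $\zeta$, and it is real.

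I expect the main obstacle to be the averaging step. The issue is the well-definedness of the half-angle $\theta/2$ and the integer $k$ on components where $\tau$ has no fixed points and the components are not interchanged in an obvious way; in the free case a genuine $\Z/2$ obstruction could appear. I would handle it by noting that the ambiguity changes $f$ by the constant $-1$ on a $\tau$-invariant component. This only swaps the roles of $\pm1$, so taking the preimage of $+1$ instead still yields a real surface, possibly after adding a parallel copy of $\partial$-components, which does not change the class.
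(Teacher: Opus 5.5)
Your strategy (average an arbitrary representative $f_0$ using the group structure of $S^1$, then take an equivariantly transverse preimage of $-1$) is sound. However, the key computation is wrong, and the step built on it fails as written.

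Since $f_0\circ\tau=\overline{g}$ and $g\circ\tau=\overline{f_0}$, you have $h=f_0\cdot(f_0\circ\tau)$, and therefore $h\circ\tau=h$, not $\overline{h}$. What you need is a $\tau$-\emph{invariant} lift $\theta$ of $h$. Then $f=f_0e^{-i\theta/2}$ satisfies $f\cdot(f\circ\tau)=h\,e^{-i\theta}=1$, i.e.\ $f\circ\tau=\overline{f}$. Anti-symmetrizing $\theta$ to $\tfrac12(\theta-\theta\circ\tau)$, as you propose, destroys $e^{i\theta}=h$ and does not give a real map.

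Concretely, take $Y=S^1\times S^2$ with the free involution $\tau(z,x)=(-\overline{z},-x)$ and $f_0(z,x)=z$. Then $h\equiv-1$ and $\theta\equiv\pi$, and your recipe returns $f=z$. This map is not real, and neither $f^{-1}(1)=\{1\}\times S^2$ nor $f^{-1}(-1)$ is $\tau$-invariant, so the fallback in your last paragraph does not rescue it either. The corrected recipe gives $f=-iz$, with $f^{-1}(-1)=\{-i\}\times S^2$ a real sphere.

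Two further corrections are needed.

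\emph{Normalization.} Lefschetz duality identifies $H_2(Y,F)$ with $H^1(Y,G)$, where $G=\overline{\partial Y\setminus F}$. Classes in $H^1(Y,G)$ are represented by maps that are constant on $G$. So you should take $f_0\equiv 1$ and $\theta\equiv 0$ near $G$, not near $F$; then $S=f^{-1}(-1)$ misses $G$ and has $\partial S\subset F$. As written, for $F=\partial Y$ you would only ever produce closed surfaces.

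\emph{No obstruction.} The $\Z/2$-obstruction you anticipate does not exist. With the correct invariance, $\theta\circ\tau-\theta=2\pi k$ is locally constant. On a $\tau$-invariant component, pick $p$ and a path $\gamma$ from $p$ to $\tau(p)$. Then $2\pi k$ is the total change of a lift of $f_0$ around the loop $\gamma\cdot\tau(\gamma)$, so $k=\langle\hat\zeta,[\gamma\cdot\tau(\gamma)]\rangle$. This loop class is $\tau_*$-invariant while $\hat\zeta$ is anti-invariant, so $k=-k$ and hence $k=0$, whether or not $\tau$ has fixed points there. On two components exchanged by $\tau$, the defects are $\pm k_0$; shifting $\theta$ by $2\pi k_0$ on one of them removes them. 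If those components meet $G$, then $k_0=0$ already, since $\theta$ vanishes there.

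With these corrections, your argument proves the proposition by a route genuinely different from the paper's. The paper replaces a cocycle by an anti-invariant one on an equivariant cell structure and builds the real map to $S^1$ skeleton by skeleton. It then smooths this map via Wasserman's equivariant approximation before perturbing. Your square-root averaging instead produces a smooth real map directly from any smooth representative. It treats free and non-free components uniformly, since the vanishing of $k$ is exactly the statement that an anti-invariant class pairs trivially with invariant loops, and it avoids the cell-level bookkeeping. The final equivariant transversality step, near $C$ and on free pairs of balls, is the same in both arguments.
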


\begin{proof}
For notational simplicity, we prove the case in which $F=\partial Y$. The general case can be proved similarly. The Poincar\'e dual of $\zeta$ can be represented by a map $f: Y \ra S^1$. Identify $S^1$ with $\{z\in\C:|z|=1\}$. We claim that $f$ can be chosen to be real; i.e., so that $f \circ \tau = r  \circ f$, where $r$ is complex conjugation.
    
Let $\phi\in C^1(Y;\Z)$ be a cocycle representing $PD(\zeta)$. A priori, $\phi$ may not be $-\tau^{*}$-invariant; we simply assume that $\phi + \tau^*\phi = \delta \eta$ is the coboundary of a 0-cochain $\eta$. But $\phi$ is cohomologous to an honest $-\tau^{*}$-invariant class. Indeed, fix a strong $\Z/2$-equivariant cell complex structure for $Y$. See, for example, ~\cite{MR696520} for a definition and the proof of the existence of such structures. Let $\{f_i\}_i\cup \{e_i, \tau(e_i)\}$ be the 1-cells of $Y$; here the cells $f_i$ are fixed by $\tau$. Furthermore, we assume that $\partial f_i = 0$. This assumption forces $\phi(f_i) = 0$, since $2\phi(f_i)=\phi(f_i) + \phi(\tau(f_i))= \eta(\partial f_i) = 0 \in \Z$.  
    Define a cochain
    \begin{align*}
        \psi(f_i) &:= 0\\
        \psi(e_i) &:= \phi(e_i) \\
        \psi(\tau(e_i)) &:= -\phi(e_i).
    \end{align*}
    By construction, $\psi$ is $-\tau^{*}$-invariant. Moreover, $\psi$ is homologous to $\phi$:
    \begin{align*}
        (\phi - \psi)(\tau(f_i)) &= (\phi - \psi)(f_i) = \phi(f_i) = 0\\
        (\phi - \psi)(e_i) &= (\phi - \phi)(e_i) = 0 \\
        (\phi - \psi)(\tau(e_i)) &= \tau^*\phi(e_i) -\psi(\tau(e_i)) = \tau^*\phi(e_i) + \phi(e_i) = \delta \eta(e_i).
    \end{align*}
    Hence, we may assume that $\phi$ is $-\tau^*$-invariant on the nose.

Now, we use $\psi$ to construct an equivariant function $f: Y \ra S^1$. The cocycle $\phi$ determines a map from the 1-skeleton of $Y$ to $S^1$. The 2-cells of $Y$ come in pairs. Since $\phi$ is a cocycle, we can extend $f$ over each 2-cell. Indeed, we can do so equivariantly. 
By \cite[Corollary 1.12]{wasserman}, $f$ can be approximated by a smooth equivariant map $g$. Therefore, it suffices to prove that $g$ can be perturbed to be transverse to either $+1$ or $-1$ in $S^1$. First, we claim that we may equivariantly perturb $g$ so that $dg$ is surjective at each point of $\nu(C)$, an equivariant tubular neighborhood of $C$. Fix a component $C_0$ of $C$. Since $g$ is equivariant, each component of $C$ is mapped by $f$ either to $\pm 1$, and by shrinking $\nu(C_0)$, we may assume $f|_{\nu(C_0)}$ maps to a small neighborhood of say, $1$, in $S^1$. Generically, such a function is Morse; this follows much as in \cite{GPdifferentialtopology} and \cite{bao2025morsehomologyequivariance}. Locally, $g$ is a map from $\R^3 \ra \R$ where the involutions are given by $(x, y, z) \mapsto(x,-y,-z)$ and $x \mapsto -x$, respectively. For a generic element $(0, a, b) \in \langle x \rangle^\perp$, the function $g(x, y, z) + ay + bz$ is Morse and real. But, as noted above, real Morse functions do not have critical points on the fixed set. Hence, $g$ can be equivariantly perturbed by an arbitrarily small amount to an equivariant function which restricts to a submersion in this neighborhood. Using a partition of unity, we can perturb $g$ along $C$ to eliminate critical points. 

Away from $C$, the action is free so we simply perturb $g$ equivariantly on pairs of balls covering $Y\smallsetminus \nu(C)$ and patch these together using a partition of unity. 

This produces a smooth equivariant map $g$ transverse to $1$; hence $g^{-1}(1)$ is a real surface representing $[\phi]$.
\end{proof}

\begin{rem}\label{rem:compwith4d}
Proposition~\ref{prop: existence of real surface representative} fails in $4$-dimensions; in \cite{baraglia2025adjunctioninequalityrealembedded}, Baraglia shows that only those $-\tau_*$-invariant classes satisfying additional conditions can be represented by real surfaces. Our strategy relied on reducing the problem to showing that real $S^1 \times D^2 \ra \R$ could be equivariantly perturbed to be transverse to $0$. In 4-dimensions, we can represent the Poincar\'e duals of real surfaces by equivariant maps $X \ra \mathbb{CP}^2$. We might try to pull  $\mathbb{CP}^1\subset \mathbb{CP}^2$ back to obtain a representative, but guaranteeing equivariant transversality seems not always achievable in this context.
\end{rem}

\subsection{Real-tautness}\label{subsec:realtaut}

We begin by introducing real analogues of two important notions from classical $3$-manifold topology:

\begin{definition}\label{def:realcompressible}
We say a real surface $S$ in a real sutured manifold is \emph{real-compressible} if either there is a single $\tau$-invariant compressing disk for $S$, or there is a pair of disjoint compressing disks for $S$ that are interchanged by $\tau$. Otherwise, we say $S$ is \emph{real-incompressible}.
\end{definition}

Recall that the \emph{Thurston norm} of a surface $S$, $\chi_-(S)$, is the summation of $\mathrm{max}(0, -\chi(S_i))$ over all connected
components $S_i$ of $S$.
\begin{definition}\label{def:real Thurston norm}
The \emph{real Thurston norm} $x_\tau(\alpha)$ of a class $\alpha\in H_2(Y;\Z)^{-\tau_*}$ is the minimum of $\chi_-(S)$ ranging over all real surfaces $S$ representing $\alpha$.
\end{definition}
This definition readily generalizes to a norm on $H_2(Y,N;\Z)^{-\tau_*}$, where $N$ is a real codimension $0$ submanifold of $\partial Y$.

Proposition \ref{prop: existence of real surface representative} shows that this norm is well-defined (i.e., finite). Note that unlike in the classical Thurston norm~\cite{thurston1986norm}, we make no attempt to extend the definition of $x_\tau$ to $H_2(Y;\Q)^{-\tau_*}$. Indeed, as we will see from examples in Section~\ref{sec:8_3}, $x_\tau$ does not intertwine with the action of $\Z$-multiplication, so the usual strategy for doing so does not apply.

\begin{lemma}\label{lem:doublenormreal}
 Suppose $(Y,\gamma,\tau)$ is a real sutured manifold with $Y$ irreducible, and $R(\gamma)$ incompressible. Suppose $S$ is a properly embedded, incompressible, Thurston norm-minimizing representative of $0\ne \alpha\in H_2(Y,\partial Y)^{-\tau_*}$ in $Y$ without spherical components, such that $\partial S\cap \gamma=\emptyset$. We can isotope $S$ so that $DCS(S,-\tau(S))$ is Thurston norm-minimizing in the class $2\alpha$. 
\end{lemma}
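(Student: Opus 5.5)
We will use the classical double curve sum (Gabai's oriented cut-and-paste) together with the fact that the Thurston norm is subadditive and that $\chi_-$ is additive under double curve sums when no disk or sphere components appear. Since $\alpha\in H_2(Y,\partial Y)^{-\tau_*}$, the surface $-\tau(S)$ also represents $\alpha$, is properly embedded, incompressible, Thurston norm minimizing, has no sphere components, and satisfies $\partial(-\tau(S))\cap\gamma=\emptyset$, because $\tau$ preserves $\gamma$. In particular $\chi_-(S)=\chi_-(-\tau(S))=x(\alpha)$.

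The first step is to isotope $S$ so that $S$ and $\tau(S)$ are in general position. We perturb $S$ generically; $\tau(S)$ then moves along with it, and for a generic small isotopy the intersection $S\cap\tau(S)$ is a transverse collection of arcs and circles. This genericity must be checked near the fixed set $C$, where $S$ and $\tau(S)$ meet $C$ at the same points. There we first isotope $S$ off $C$, or at least make it transverse to $C$ at points where $\tau(S)$ is transverse to $S$; this is possible by a local model computation in an equivariant tubular neighborhood of $C$. Next we remove inessential intersections. Because $Y$ is irreducible and both surfaces are incompressible, any innermost trivial circle of $S\cap\tau(S)$ bounds disks in both surfaces, and these disks cobound a ball. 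The standard innermost-disk argument then isotopes $S$ to remove such circles; for boundary-parallel arcs we use the incompressibility of $R(\gamma)$ together with $\partial$-incompressibility, the latter following from norm minimization and the absence of disk components. After this, no component of $S\cap\tau(S)$ bounds a disk in either surface.

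Now form $DCS(S,-\tau(S))$ by oriented cut-and-paste along the double curves. It represents $\alpha+\alpha=2\alpha$, and $\chi$ is additive: $\chi(DCS)=\chi(S)+\chi(-\tau(S))$. To get $\chi_-(DCS)=2x(\alpha)$ we must rule out disk and sphere components of the double curve sum; this is exactly what the removal of trivial intersection curves guarantees, via Gabai's lemma that double curve sums of norm-minimizing surfaces in general position with no inessential curves produce no disks or spheres. Then $\chi_-(DCS)\le 2x(\alpha)$, and since Thurston norm is linear on integer multiples, $x(2\alpha)=2x(\alpha)$, so $DCS$ is norm minimizing.

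The main obstacle is the first step: ensuring that the isotopy removing inessential curves of $S\cap\tau(S)$ can be carried out while $\tau(S)$ moves along with $S$. Removing one circle by isotoping $S$ also moves its mirror image, and this can create new trivial intersections. We would handle this by choosing a complexity, the number of intersection curves, and showing it strictly decreases. The innermost disk $D\subset S$ and its image $\tau(D)\subset\tau(S)$ are either disjoint, in which case we perform both moves equivariantly, or they coincide with an invariant ball, in which case we push across it equivariantly. If this fails, we would fall back on applying the classical argument to $S$ and an arbitrary isotopic copy of $-\tau(S)$; the statement only asserts that $DCS$ is norm minimizing, not that it is real.
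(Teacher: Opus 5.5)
This is essentially the paper's argument: innermost-disk isotopies, using incompressibility of $S$ and $R(\gamma)$ and irreducibility of $Y$, rule out sphere and disk components of $DCS(S,-\tau(S))$, and then $\chi_-(DCS(S,-\tau(S)))=-2\chi(S)=2x(\alpha)=x(2\alpha)$. You are in fact more careful than the paper about arcs, general position along $C$, and the equivariance of the isotopy.

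Your case split does settle the equivariance worry. For $c$ innermost, $D\cap\tau(D)\subset c\cap\tau(c)$, so either the two pushed-off disks are disjoint, or $D\cup\tau(D)$ bounds an invariant ball and the move amounts to resolving the crossing along $c$. In both cases the number of double curves strictly drops and no new ones are created. You should therefore drop the fallback: it would not prove the lemma as stated, because its later uses need $DCS(S,-\tau(S))$ itself, which is real.
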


\begin{proof}
   We seek to isotope $S$ so that $DCS(S,-\tau(S))$ contains no sphere or disk components. To that end, without loss of generality, we assume that $\partial S=\partial R_+(\gamma)$, so it is immediate that $DCS(S,-\tau(S))$ contains no disk components. Suppose that $DCS(S,-\tau(S))$ contains a sphere component, $T$. Observe that outside of a neighborhood of a collection of embedded circles, $T$ is a disjoint union of closed subsets of $S$ and $\tau(S)$, at least one of which in each of $\tau(S)$ and $S$ must be a disk. Let $D$ be such a disk in $\tau(S)$. Since $S$ is incompressible, $\partial D$ bounds a disk $D'\subset S$. $D'\cup D$ is a sphere, which bounds a $3$-ball since $Y$ is irreducible. We can isotope $S$ across this $3$-ball to obtain a new surface $S'$, such that $|S'\cap\tau(-S')|<|S\cap\tau(-S)|$. Indeed, $DCS(S',-\tau(S'))$ contains strictly fewer spherical components. Proceeding recursively, we obtain a surface $S$ such that $DCS(S,-\tau(S))$ contains no spherical components. Since $R(\gamma)$ is incompressible and $\partial S\cap \gamma=\emptyset$, disk components of $DCS(S,-\tau(S))$ do not contribute to the homology class, and we are free to remove them. We then have that $$\chi_-(DCS(S,-\tau(S)))=-\chi(DCS(S,-\tau(S)))=-2\chi(S)=2\chi_-(S),$$as required. 
\end{proof}

\begin{definition}\label{def:real-irreducible} 
We say a real sutured manifold $(Y,\gamma,\tau)$ is \emph{real-irreducible} if: 
\begin{enumerate}
    \item every embedded (setwise)-invariant sphere bounds a ball, and
    \item every pair of disjoint spheres $(S,\tau(S))$ bounds a pair of disjoint balls.
\end{enumerate}
\end{definition}

\begin{proposition}\label{prop:realnormproperties}
Let $(Y,\tau)$ be a real-irreducible, real sutured manifold. For any $k\in\Z^{> 0}$, and $\alpha,\beta\ \in H_2(Y,\partial Y;\Z)^{-\tau_*}$, we have:
   \begin{enumerate}
       \item $x_\tau(k\alpha)\le k\cdot x_\tau(\alpha)$;
       \item $x_\tau(\alpha+\beta)\leq x_\tau(\alpha)+x_\tau(\beta)$;
       \item $x_\tau(2\alpha)=2x(\alpha)$.
   \end{enumerate}
\end{proposition}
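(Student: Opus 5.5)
Parts (1) and (2) will come from explicit constructions of real surfaces; part (3) will come from a two-sided comparison with the classical Thurston norm $x$.

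For (1), I take a real surface $S$ realizing $x_\tau(\alpha)$. I then form $k$ parallel copies of $S$ inside an equivariant product neighborhood $S\times[-1,1]$, on which $\tau$ acts by $(p,t)\mapsto(\tau(p),-t)$. Choosing the heights $t_1,\dots,t_k$ symmetric about $0$ makes the union $\tau$-invariant. For odd $k$ the middle copy is $S$ itself, and every copy is reversed in orientation by $\tau$. The union represents $k\alpha$ and has $\chi_-$ equal to $k\,\chi_-(S)$, so the inequality follows.

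For (2), I take real representatives $S_\alpha$ and $S_\beta$ that minimize the norm and are in general position, made equivariantly transverse by a perturbation as in the proof of Proposition~\ref{prop: existence of real surface representative}. I then perform the oriented double-curve sum along $S_\alpha\cap S_\beta$. The intersection set is $\tau$-invariant, and since $\tau$ reverses both orientations, the oriented cut-and-paste at each arc or circle is sent by $\tau$ to the oriented resolution at its image. The resulting surface is therefore real. The double-curve sum preserves $\chi$, and I can discard sphere and disk components equivariantly, either in invariant single pieces or in $\tau$-pairs, using real-irreducibility. This gives $\chi_-\le \chi_-(S_\alpha)+\chi_-(S_\beta)$.

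For (3), one direction is $x(\alpha)\le x_\tau(\alpha)$, which is trivial. Combined with (1) and the classical identity $x(2\alpha)=2x(\alpha)$, I get
\[
2x(\alpha)=x(2\alpha)\le x_\tau(2\alpha).
\]
For the reverse inequality I use Lemma~\ref{lem:doublenormreal}. I choose a classical norm-minimizing, incompressible representative $S$ of $\alpha$ without sphere components, and isotope it as in that lemma. Then $DCS(S,-\tau(S))$ represents $\alpha-\tau_*\alpha=2\alpha$ and has $\chi_-=2\chi_-(S)=2x(\alpha)$. This surface is real, because $\tau$ maps $S\cup -\tau(S)$ to itself, swapping the two pieces, and the oriented resolution is equivariant. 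Hence $x_\tau(2\alpha)\le 2x(\alpha)$.

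The main obstacle is meeting the hypotheses of Lemma~\ref{lem:doublenormreal}: irreducibility of $Y$, incompressibility of $R(\gamma)$, and $\partial S\cap\gamma=\emptyset$. I would first reduce real-irreducibility to ordinary irreducibility by an equivariant sphere theorem (Meeks–Yau style). I would then arrange the boundary condition by isotoping $\partial S$ off $\gamma$. If $R(\gamma)$ is compressible, I would instead run the sphere/disk removal directly in the double-curve sum. A second point needing care is making the double-curve sum genuinely $\tau$-equivariant when intersection curves meet the fixed set $C$. There I would use the local model of $\tau$ near $C$ to check that the oriented resolution commutes with $\tau$.
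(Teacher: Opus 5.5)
Your route is the paper's: $k$ parallel copies for (1), an equivariant double curve sum of real norm-minimizers for (2), and \Cref{lem:doublenormreal} together with $2x(\alpha)=x(2\alpha)\le x_\tau(2\alpha)$ for (3). The gap is in how you control $\chi_-$ of a double curve sum. Let $p(F)$ be the number of disk components of $F$ plus twice the number of sphere components. Then $\chi_-(F)=-\chi(F)+p(F)$, and additivity of $\chi$ gives
\[
\chi_-\bigl(DCS(S_\alpha,S_\beta)\bigr)=\chi_-(S_\alpha)+\chi_-(S_\beta)+p\bigl(DCS(S_\alpha,S_\beta)\bigr)-p(S_\alpha)-p(S_\beta).
\]
Discarding disk and sphere components afterwards changes nothing, since they contribute $0$ to $\chi_-$. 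Each new disk or sphere created by the resolution has already made the remaining components more negative. What you need is to prevent such components from being created, i.e.\ to put $S_\alpha$ and $S_\beta$ in good position before resolving. One way to do this:
\begin{enumerate}
\item Take both surfaces real-incompressible; equivariant compressions do not increase $\chi_-$.
\item Let $E\subset S_\beta$ be an innermost disk bounded by a double curve. Then $E$ and $\tau(E)$ are either equal or disjoint. Real-incompressibility of $S_\alpha$ therefore forces $\partial E$ to bound a disk $E'\subset S_\alpha$.
\item Swap $E'$ for a push-off of $E$, simultaneously at the $\tau$-image. Real-irreducibility shows that the resulting invariant sphere, or swapped pair of spheres, is null-homologous, so the class is unchanged.
\item Handle double arcs by the analogous outermost-arc argument.
\end{enumerate}
This is what the paper's appeal to additivity of the norm under double curve sums relies on, and your ``discard'' step does not substitute for it.

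The same issue resurfaces in (3), and your plan for meeting the hypotheses of \Cref{lem:doublenormreal} fails at one point: $\partial S$ cannot in general be isotoped off $\gamma$. Take the Seifert class in the exterior of a strongly invertible knot with meridional sutures. Every representative has boundary pairing to $\pm1$ with each meridional suture, so it meets every suture annulus. The sutures play no role in the norm on $H_2(Y,\partial Y)$. What that hypothesis does in the lemma is let you push $\partial S$ into $R_+(\gamma)$. Then $\partial S$ and $\partial\tau(S)\subset R_-(\gamma)$ are disjoint, $S\cap\tau(S)$ consists only of circles, and the innermost-circle isotopy handles everything. For a general class, $\partial S$ meets $\partial\tau(S)$ and the double set contains arcs. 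You then need an equivariant outermost-arc argument to keep $DCS(S,-\tau(S))$ free of new disks. Your fallback of running ``the sphere/disk removal directly in the double curve sum'' has exactly the defect described above. Your reduction from real-irreducible to irreducible is fine. You can take it from the innermost-circle argument at the start of the proof of \Cref{prop:tautrealtautequiv} rather than invoking Meeks--Yau.
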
 

The inequalities in (1), and (2) can be strict; see Section~\ref{sec:8_3}. (3) implies that the real Thurston norm can only contain information distinct from the classical Thurston norm for classes in $H_2(Y,\partial Y)^{-\tau_*}$ that are not divisible by 2.

\begin{proof}[Proof of Proposition~\ref{prop:realnormproperties}]

We prove each statement in turn. For (1), let $\Sigma$ be a real Thurston norm minimizing representative of $\alpha$. We can take an appropriate $k$ parallel copies of $\Sigma$ as a real representative of $k\alpha$. Denote this real surface by $\Sigma^k$. It follows that $x_{\tau}(k\alpha)$ is at most $\chi_-(\Sigma^k)$. Since $\chi_-(\Sigma^k)=k\chi_-(\Sigma)$, $x_{\tau}(k\alpha)\le k\cdot x_{\tau}(\alpha)$, as desired.

For (2), consider real Thurston norm realizing surfaces representing $\alpha$ and $\beta$, respectively. Their double curve sum is a real representative for the class $\alpha+\beta$. Since the Thurston norm of surfaces is additive under double curve sum~\cite{thurston1986norm}, we can conclude that $x_\tau(\alpha+\beta)\leq x_\tau(\alpha)+x_\tau(\beta)$.

(3) follows from Lemma~\ref{lem:doublenormreal}. 
\end{proof}

Recall that we have oriented $R_\pm(\gamma)$ so that the homology class of their union is in $H_2(Y,\gamma)^{-\tau_*}$.
\begin{definition}\label{def:real-taut}
A real sutured manifold $(Y,\gamma,\tau)$ is \emph{real-taut} if: 
\begin{itemize}
    \item $Y$ is real-irreducible, and 
    \item $R(\gamma)$ is real-incompressible and real Thurston norm minimizing in $H_2(Y,\gamma;\Z)^{-\tau^*}$.
\end{itemize}
\end{definition}
It turns out that this notion is equivalent to the unreal notion of tautness. 
\begin{proposition}\label{prop:tautrealtautequiv}
Let $(Y,\gamma,\tau)$ be a real sutured manifold. $(Y,\gamma,\tau)$ is real-taut if and only if $(Y,\gamma)$ is taut. 
\end{proposition}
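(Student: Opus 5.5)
We prove the two implications separately, matching each of the three conditions in Definition~\ref{def:real-taut} (irreducibility, incompressibility, norm minimization) with its classical counterpart.

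\textbf{Taut implies real-taut.} This direction is formal, since each real condition is a special case of the classical one. If $Y$ is irreducible, then every invariant sphere bounds a ball. For a pair of disjoint spheres $(S,\tau(S))$, each bounds a ball; these balls are either disjoint or nested. They cannot be nested: $\tau$ would map the ball $B$ bounded by $S$ to the ball bounded by $\tau(S)$, so nesting $B\subset\tau(B)$ would give $\tau(B)\subset\tau^2(B)=B$, hence $B=\tau(B)$, contradicting disjointness of the spheres. So $Y$ is real-irreducible. Similarly, a real compressing disk (or pair of disks) is in particular a compressing disk, so incompressibility of $R(\gamma)$ implies real-incompressibility. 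Finally, real surfaces representing $[R(\gamma)]$ form a subset of all surfaces in that class, so $x_\tau([R(\gamma)])\ge x([R(\gamma)])=\chi_-(R(\gamma))$, and $R(\gamma)$ is itself real. Hence $R(\gamma)$ realizes $x_\tau$.

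\textbf{Real-taut implies taut.} This is the substantive direction, and I would treat the three conditions in order.

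For irreducibility, take an essential sphere in $Y$. By the equivariant sphere theorem (Meeks--Yau, or Dunwoody), $Y$ then contains an essential sphere $S$ that is either invariant or disjoint from $\tau(S)$. Real-irreducibility says such a sphere bounds a ball, a contradiction. One subtlety is that in the disjoint case real-irreducibility gives a pair of disjoint balls, which is consistent with this.

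For incompressibility, if $R(\gamma)$ is compressible, the equivariant Dehn lemma / loop theorem of Meeks--Yau gives a compressing disk $D$ with either $\tau(D)=D$ or $\tau(D)\cap D=\emptyset$. This is a real compression, again a contradiction.

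For norm minimization, suppose $S$ is a surface with $[S]=[R(\gamma)]$ and $\chi_-(S)<\chi_-(R(\gamma))$. We may take $S$ incompressible, without sphere components, and with $\partial S=\partial R_+$. I would then apply Lemma~\ref{lem:doublenormreal} to the class $\alpha=[R(\gamma)]$, which lies in the $-\tau_*$-invariant part; its hypotheses hold since $Y$ is now known to be irreducible and $R(\gamma)$ incompressible. This produces a real representative $DCS(S,-\tau(S))$ of $2\alpha$ with $\chi_- = 2x(\alpha)$. By Proposition~\ref{prop:realnormproperties}(3),
\[x_\tau(2\alpha)=2x(\alpha)<2\chi_-(R(\gamma)).\]
On the other hand, two parallel copies of $R(\gamma)$ give a real representative of $2\alpha$. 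I would then need real-norm minimality of $R(\gamma)$ to imply that these two copies are real-norm minimizing in $2\alpha$. This is the step I expect to be the main obstacle, since $x_\tau$ is not obviously homogeneous.

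To get around this, I would try to work directly with the double-curve-sum surface. One option is to cut and paste $DCS(S,-\tau(S))$ along its intersection with a product neighborhood of $R(\gamma)$, extracting a real surface in the class $\alpha$ whose norm is at most $x(\alpha)$. The alternative is to use the sutured-manifold version of Gabai's argument: since $[R_+]=[R_-]$ relative to $\gamma$, any surface in $2\alpha$ splits into two surfaces, each in class $\alpha$, and the split can be chosen $\tau$-equivariantly by taking a real cut along a fixed level of a real map $Y\to S^1$ as in Proposition~\ref{prop: existence of real surface representative}. Either route yields a real surface in class $\alpha$ with norm strictly less than $\chi_-(R(\gamma))$, contradicting real-tautness.
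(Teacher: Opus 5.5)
Your first direction is fine. In the second direction, your irreducibility and incompressibility steps are also acceptable. The equivariant sphere theorem and equivariant loop theorem are a heavier but legitimate substitute for the paper's elementary innermost-circle arguments. You do need a loop theorem for the invariant subsurface $R(\gamma)\subset\partial Y$, and the fact that a reducing sphere is $\pi_2$-essential.

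The genuine gap is the norm-minimization step, exactly where you flagged it. Doubling a better representative of $\alpha=[R(\gamma)]$ only gives a small real surface in $2\alpha$, and real-tautness says nothing about $2\alpha$. Workaround (a) just restates what must be shown. Workaround (b) assumes that a real surface in $2\alpha$ always splits equivariantly into two real surfaces in class $\alpha$. That would give $x_\tau(2\alpha)=2x_\tau(\alpha)$ for every $\alpha$, and then, with Proposition~\ref{prop:realnormproperties}(3), $x_\tau=x$. The paper shows $x_\tau$ is not homogeneous: the strongly invertible $8_3$ of Section~\ref{sec:8_3} has equivariant genus $2$ but genus $1$. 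So (b) cannot hold as a general principle, and you give no argument specific to $[R(\gamma)]$.

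The missing idea is that no doubling is needed, because $[R(\gamma)]$ is already even. With the paper's orientation conventions, $[R_-(\gamma)]=[R_+(\gamma)]$ in $H_2(Y,\gamma)$ and $-\tau_*[R_+(\gamma)]=[R_-(\gamma)]$, so $[R(\gamma)]=2[R_+(\gamma)]$. Work with $R_+(\gamma)$ instead:

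\begin{itemize}
\item Suppose $R_+(\gamma)$ is not norm minimizing. Take a sphere-free, incompressible representative $S$ of $[R_+(\gamma)]$ with $\partial S\subset R_+(\gamma)$ and $\chi_-(S)<\chi_-(R_+(\gamma))$.
\item Lemma~\ref{lem:doublenormreal} applies, since irreducibility and incompressibility are already established. It makes $DCS(S,-\tau(S))$ a real surface in the class $[S]+[-\tau(S)]=2[R_+(\gamma)]=[R(\gamma)]$ itself.
\item This surface has $\chi_-=2\chi_-(S)<2\chi_-(R_+(\gamma))=\chi_-(R(\gamma))$, contradicting real norm minimality of $R(\gamma)$ directly.
\end{itemize}

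This is the paper's argument, and it never needs homogeneity of $x_\tau$.
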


\begin{proof}
Suppose that $(Y,\gamma,\tau)$ is a real sutured manifold. If $(Y,\gamma)$ is taut, then clearly $(Y,\gamma,\tau)$ is real-taut. For the other direction, suppose that $(Y,\gamma,\tau)$ is real-taut. We first show that $Y$ is irreducible. Suppose otherwise, i.e., that there is a properly embedded sphere $S$ in $Y$ which does not bound a $3$-ball. We will modify $S$ and $S'=\tau(S)$ until we obtain a pair of real reducing spheres or an invariant reducing sphere, contradicting our assumption that $(Y,\gamma,\tau)$ is real-irreducible. To do so, we use an innermost circle argument to show that whenever $S\cap S'$ is non-empty, we can modify $S$ to simplify $S\cap S'$. 

After perturbing $S$, we may assume that $S$ and $S'$ meet transversely in a collection of disjoint circles that intersect the fixed point set at a finite collection of points. Pick a circle $c\subset S\cap S'$ such that $c$ divides $S$ and $S'$ into $D_1\cup D_2$ and $D_3\cup D_4$ where $D_1$ is innermost. It follows that $S_1=D_1\cup D_4$ and $S_2=D_1\cup D_3$ are embedded spheres. 

If $S_1$ and $S_2$ both bound 3-balls---say $B_1$ and $B_2$, respectively---then there is a ball bounded by $S'$ provided by $B_1\cup B_2$ if $B_1$ and $B_2$ have disjoint interiors or $B_i\setminus (\mathring{B_j}\cup D_1)$ if $B_j$ is contained in $B_i$. Thus, at least one of the spheres $S_1$ and $S_2$ does not bound a ball. Without loss of generality, take $S_1$ to be such a sphere. Abusing notation, let $S_1$ denote $S_1$ pushed off from $D_1$ in the direction of $D_4$, and let $D_1$ and $D_4$ be the images of $D_1$ and $D_4$ in this new copy of $S_1$. Consider the pair $S_1$ and $S_1'=\tau(S_1)=\tau(D_1)\cup \tau(D_4)$. Observe that \begin{align*}S_1\cap S_1'= (D_1\cap \tau(D_1)) \cup (D_1\cap \tau(D_4)) \cup (D_4\cap \tau(D_1))\cup (D_4\cap \tau(D_4)).\end{align*} We analyze each component of this iterated union in turn:
\begin{itemize}
    \item $D_1\cap \tau(D_1)=\emptyset$, since $D_1$ is innermost and $\tau(D_1)\subset S'$, so ${D_1\cap \tau(D_1)\subset D_1^\circ\cap S'=\emptyset}$.
    \item $D_1\cap \tau(D_4)=\emptyset$, since $D_1\subset S$ and $\tau(D_4)\subset S$ (more precisely, a parallel push-off copy of S).
    \item $D_4\cap \tau(D_1)=\emptyset$, for the same reason as the item above.
    \item $D_4\cap \tau(D_4)\subset D_4\cap S-\{c\}$.
\end{itemize}
It follows that $\vert S_1\cap S_1'\vert <\vert S\cap S'\vert$. Applying this argument iteratively, we eventually obtain a pair of disjoint reducing spheres interchanged by $\tau$ or an invariant reducing sphere, a contradiction.

Next, we show that $R(\gamma)$ is incompressible. If it is not, then there is a compressing disk $D$ for $R(\gamma)$. Observe that $\tau$ interchanges $R_\pm(\gamma)$, so that $\partial D$ and $\tau(\partial D)$ are disjoint. If necessary, perturb $D$ so that $D$ and $\tau(D)$ intersect transversely in a finite collection of embedded circles that intersect the fixed point set in a finite collection of points. Let $c$ be an innermost circle in $\tau(D)$ that bounds a sub-disk $D_1$ of $D$. Let $D_2$ be the sub-disk of $\tau(D)$ bounded by $c$. Since we have already shown that $Y$ is irreducible, the sphere $D_2\cup D_1$ bounds a ball, $B$. We can isotope $D$ across $B$ so that the resulting disk $D'$ has the property that $D'\cap\tau(D)$ has strictly fewer components than $D\cap\tau(D)$. Since $D'\cap\tau(D')$ has at most as many components as $D'\cap\tau(D)$, we have successfully decreased the number of components in $D\cap\tau(D)$. Repeating this argument, we obtain a compressing disk $D$ with $D\cap\tau(D)=\emptyset$, a contradiction.

Finally, we show that $R_\pm(\gamma)$ are Thurston norm minimizing. We again argue by contradiction. Without loss of generality, suppose that $[R_+(\gamma)]$ is not Thurston norm minimizing in $H_{2}(Y,\gamma;\Z)$. Then we can find a norm-minimizing representative $S$ of $[R_+(\gamma)]\in H_2(Y,\gamma)$. We can remove any spherical components of $S$ without changing the homology class, since $Y$ is irreducible. If the resulting surface contains a disk component then, since $R(\gamma)$ is in compressible, we have that $(Y,\gamma)$ has a component $(Y_1,\gamma_1)$ which is a ball with a single suture. $R_\pm(\gamma)$ is Thurston norm minimizing if and only if $R_+(\gamma)\cap(Y\setminus Y_1)$ is Thurston norm minimizing, so it suffices to prove the case in which $S$ has no disk components. We duly assume that $S$ doesn't contain any disk or sphere components for the remainder of the proof.

Now isotope $S$ so that $\partial S\subset R_+(\gamma)$, and compress $S$ until it becomes incompressible. An application of~\Cref{lem:doublenormreal} then shows that we can further isotope $S$ so that $DCS(S,\tau(S))$ is Thurston norm minimizing for $[2S]=[R(\gamma)]$. But then $$\chi_-(DCS(S,\tau(S)))=2\chi_-(S)<-2\chi(R_+(\gamma))=-\chi(R(\gamma)),$$ a contradiction.\end{proof}

\begin{proposition}\label{prop:adjunctioninequality}
    Let $(Y,\gamma,\tau)$ be a real sutured manifold such that $R(\gamma)$ is either non-real Thurston norm minimizing or real-compressible. Then $\RSFH(Y,\gamma,\tau)=0$.
\end{proposition}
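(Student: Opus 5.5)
The plan is to mimic Juhász's proof that non-taut balanced sutured manifolds have vanishing $\SFH$ (\cite[Proposition 9.18]{juhasz2006holomorphic}). There, a real surface witnessing non-tautness is used to build a real surface decomposition to which the decomposition formula applies, and one shows that the output is empty. The argument splits into two cases, and in each case the reduction is to \Cref{thm:decomposition formula for RSFH}. We may first add real pairs of product $1$-handles to make $(Y,\gamma,\tau)$ strongly balanced; by \Cref{prop:product disk decomposition} this does not change the rank.

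\emph{Compressible case.} Suppose there is a $\tau$-invariant compressing disk $D$, or a pair $(D,\tau(D))$ of disjoint compressing disks, for $R(\gamma)$, say with $\partial D\subset R_+(\gamma)$. Let $S$ be $D$ (respectively $D\sqcup\tau(D)$), pushed so that its boundary runs once across the sutures with a positive tangency, as in \Cref{lem:reduce to good surface}; we choose the orientation so that the closed curves $\partial S$ are boundary-coherent. Then $S$ is a nice real decomposing surface. The decomposed manifold $(Y',\gamma',\tau')$ acquires a component of $R(\gamma')$ which is a disk containing no suture, so the manifold is not balanced in the appropriate sense. Equivalently, its real Heegaard diagram has a closed region of $\Sigma'\setminus\bm\alpha'$ with no boundary, so it has no generators and $\RSFH(Y',\gamma',\tau')=0$. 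Alternatively, one uses that every real $\SpinC$ structure fails to be outer with respect to one of the two orientations of $S$.

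To conclude in this case, decompose along $S$ and along $-S$. Both are nice, and they yield diagrams with no generators. Hence by \Cref{thm:decomposition formula for RSFH} we have $\RSFH(Y,\gamma,\tau,\s^R)=0$ for all $\s^R\in O^R_S\cup O^R_{-S}$. Exactly as in \cite{juhasz2006holomorphic}, we would check that $O^R_S\cup O^R_{-S}$ is all of $\rspinc$. This check goes through because outerness is defined via the underlying $\SpinC$ structure, which reduces it to the classical statement.

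\emph{Non-norm-minimizing case.} Let $S$ be a real surface in the class $[R(\gamma)]\in H_2(Y,\gamma)^{-\tau_*}$ with $\chi_-(S)<\chi_-(R(\gamma))$; such an $S$ exists by \Cref{prop: existence of real surface representative}. We may assume $R(\gamma)$ is real-incompressible, since otherwise we are in the previous case. Following Juhász, we push $\partial S$ into $\gamma$ and take the surface $S'$ obtained by joining $S$ and a parallel copy of $R_+(\gamma)\cup R_-(\gamma)$. We arrange this equivariantly: the join is performed with the $\tau$-symmetric copy of $R(\gamma)$, and all double-curve sums are done $\tau$-equivariantly, which is possible because $S$ is real. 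Decomposing along $S$ then produces a real sutured manifold $(Y',\gamma',\tau')$. We then argue via \Cref{thm:decomposition formula for RSFH} together with the relation $\chi(R_+(\gamma'))-\chi(R_-(\gamma'))\neq 0$, or more precisely via the first Chern class evaluation $\langle c_1(\s^R,t),[S]\rangle$ restricted to outer structures. The aim is to show that every real $\SpinC$ structure supporting $\RSFH$ would have to satisfy an adjunction-type equality violated by $\chi(S)>\chi(R_+)$. Concretely, outer structures for $S$ must satisfy $\langle c_1(\s),[S]\rangle=c(S,t)$, and the hypothesis forces this to be incompatible with the value forced by $R(\gamma)$.

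\emph{Main obstacle.} I expect the main obstacle to be the last step of the non-norm-minimizing case: making Juhász's Thurston-norm argument equivariant while keeping $S$ nice and real. If needed, I would fall back on reducing to the classical statement. By \Cref{prop:tautrealtautequiv}-type arguments, real non-minimality implies classical non-minimality. One could then hope to transfer vanishing via Hendricks's spectral sequence, but that only works for closed manifolds. So the equivariant decomposition route is the primary plan.
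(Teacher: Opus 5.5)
There is a genuine gap. \Cref{thm:decomposition formula for RSFH} cannot deliver global vanishing in either of your cases. It only identifies $\RSFH(Y',\gamma',\tau')$ with the sum over $O^R_S$. By \cite[Lemma~3.10]{juhasz2008floer} (as used in the proof of \Cref{prop:decomposition along separating taut surfaces}), $O^R_S$ is the level set $\langle c_1(\s^R,t),[S]\rangle=c(S,t)$. So $S$ and $-S$ together see at most two level sets, and nothing in the hypotheses makes them exhaust $\rspinc(Y,\gamma,\tau)$. When $[S]=0$ the level set is all-or-nothing, and even when it is everything you still need $\RSFH(Y',\gamma',\tau')=0$.

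That input is not justified either. Decomposing along a compressing pair gives a balanced manifold, not a diagram without generators: in a balanced diagram every region of $\Sigma'\setminus\bm\alpha'$ meets $\partial\Sigma'$. For example, take $S^1\times D^2$ with meridional sutures at $\theta=0,\pi$ and the involution $c_1$ of \Cref{lem:Classification of real structure on solid torus}. The meridian disks at $\theta=\pm\pi/2$ form a real compressing pair with $[S]=2[D]\neq 0$. Decomposing along them yields a real product ball and a $\tau$-invariant ball with three sutures. The vanishing of the latter is itself an instance of the proposition you are trying to prove.

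The non-norm-minimizing case is worse. Every real $\SpinC$ structure is outer for a push-off of $R(\gamma)$. So a real $S$ in the same class, with the same boundary but larger Euler characteristic, has $c(S,t)\neq\langle c_1(\s^R,t),[S]\rangle$ for all $\s^R$. Hence $O^R_S=\emptyset$ and the formula reads $0\cong 0$. Appealing to an adjunction-type statement such as \Cref{thm:betteradjunctionopen} would be circular, since its proof relies on exactly this vanishing.

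The missing idea is that \cite[Proposition 9.18]{juhasz2006holomorphic} is a direct generator count, not a decomposition-formula argument, and this is what the paper adapts.

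1. Take a separating real surface $S$ with $[S]=[R(\gamma)]$ in $H_2(Y,\gamma;\Z)$ that either has smaller norm or is a real compression of a pushed-in copy of $R(\gamma)$.
2. Cutting along $S$ gives $Y_1\sqcup Y_2$, where $Y_1$ consists of two components interchanged by $\tau$.
3. The imbalance you point to is only componentwise, since $\chi(R_+(\gamma'))=\chi(R_-(\gamma'))$ holds globally by $\tau'$-symmetry. A sutured Morse function on one component of $Y_1$ has strictly more index-$1$ than index-$2$ critical points, and the reverse holds on its $\tau$-image.
4. Glue real sutured Morse functions on $Y_1$ and $Y_2$ with $S$ as level sets. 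The resulting real diagram has the property that the $\alpha$-curves $\bm\alpha^1_1$ of that component meet only the $\beta$-curves $\bm\beta^1_1$ of the same component, while $|\bm\alpha^1_1|>|\bm\beta^1_1|$.
5. Hence $\TT_\alpha\cap\TT_\beta=\emptyset$. This persists after symmetric winding for admissibility if the winding arcs are chosen suitably.

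This handles the compressible and non-minimizing cases uniformly, with no $\SpinC$ bookkeeping.
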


\begin{proof}
We follow the proof of~\cite[Theorem 9.18]{juhasz2006holomorphic}. If $[R(\gamma)]$ is not real Thurston norm minimizing or is real compressible, then we can find a properly embedded separating real surface $S$ in $(Y,\gamma,\tau)$ such that $[S,\partial S]=[R(\gamma),\partial R(\gamma)]\in H_2(Y,\gamma;\Z)$ and either ${x(S)<x(R(\gamma))}$ or $S$ is obtained by performing a real compression of a push-off of $R(\gamma)$ into the interior of $Y$. We can also suppose that any real subsurface of $S$ (that is, a union of components of $S$ that is fixed setwise by $\tau$) is non-trivial in $H_2(Y,\gamma;\Z)$.
Consider the sutured manifold obtained by decomposing $(Y,\gamma,\tau)$ along $S$. Observe that the resulting real sutured manifold is not connected, so we write it as a disjoint union $(Y_1,\gamma_1,\tau_1)\sqcup(Y_2,\gamma_2,\tau_2)$, where $Y_1$ has two disjoint components interchanged by $\tau_1$.

Pick real sutured Morse functions $f_i$ on $(Y_i,\gamma_i,\tau_i)$. Observe that each of $f_1$,  $f_2$ has the same number of index $1$ and index $2$ critical points. Note, however, that the number of index $1$ and index $2$ critical points of $f_1$ restricted to either of the two disjoint components of $(Y_1,\gamma_1,\tau_1)$ will not agree. On one  component, $f_1$ has strictly more index $1$ critical points than index $2$ critical points and vice-versa for the other. This implies that $\RSFH(Y_1,\gamma_1,\tau_1)=0$, because the underlying chain complex has no generators (even after modifying the real sutured Morse function to ensure admissibility). Rescale $f_1$ and $f_2$ and patch them together to obtain a real sutured Morse function on $(Y,\gamma,\tau)$ with $S$ as a pair of level surfaces. We can further modify the real sutured Morse function so that it is self-indexing. This self indexing real sutured Morse function induces a real Heegaard diagram $(\Sigma',\bm\alpha_1\cup\bm\alpha_2, \bm\beta_1\cup\bm\beta_2,\t)$ for $(Y,\gamma,\tau)$. Here, $\bm\alpha_i$ and $\bm\beta_i$ are the attaching circles corresponding to the critical points induced by $f_i$. Write $\bm{\alpha}_1^j$ and $\bm{\beta}_1^j$ for the set of $\alpha$ and $\beta$-curves in the $j$-th component of $(Y_1,\gamma_1,\tau_1)$, where $j\in\{1,2\}$. Observe that up to relabeling  $|\bm{\alpha}_1^1|>|\bm{\beta}_1^1|$ and $|\bm{\alpha}_1^2|<|\bm{\beta}_1^2|$. If $\alpha\in \bm\alpha_1^1$ and $\beta\in \bm\beta_2\cup\bm\beta_1^2$ then $\alpha\cap\beta=\emptyset$. 

By choosing the winding arcs appropriately, this still holds after we wind the $\alpha$ and $\beta$-curves symmetrically to achieve admissibility. The only $\beta$-curves that intersect $\alpha\in\bm\alpha_i^1$ are elements of $\bm\beta_i^1$. However,  $|\bm\alpha_1^1|>|\bm\beta_1^1|$ so $\TT_\mathbf{\alpha}\cap \TT_\mathbf{\beta}=\emptyset$. It follows in turn that $\RSFH(Y,\gamma,\tau)=0$.\end{proof}

As a consequence of the equivalence of the real and unreal notions of tautness, we have the following real analogue of~\cite[Theorem 4.1]{juhasz2010sutured};

\begin{theorem}\label{thm:betteradjunctionopen}
Suppose that $(Y,\gamma,\tau)$ is a strongly balanced real sutured manifold and $S$ is a nice taut surface with $-\tau_*([S])=[S]\in H_2(Y,\partial Y)$. Let $t$ be a trivialization of $v_0^{\perp}$ on $\partial Y$. If $\s^R\in \rspinc(Y,\gamma,\tau)$ with $\langle c_1(\s^R,t),[S]\rangle < c(S,t)$, then $\RSFH(Y,\gamma,\tau,\s^R)=0$. Here $c(S,t)$ is an invariant of decomposing surface defined in \cite[Definition~3.8]{juhasz2008floer}.
\end{theorem}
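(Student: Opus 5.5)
We follow the proof of \cite[Theorem~4.1]{juhasz2010sutured}, replacing Juh\'asz's surface decomposition formula by its real counterpart, \Cref{thm:decomposition formula for RSFH}. The statement says: if $\s^R$ has nonzero $\RSFH$, then $\langle c_1(\s^R,t),[S]\rangle \ge c(S,t)$. Since $S$ is nice and real, the decomposition $(Y,\gamma,\tau)\overset{S}{\rightsquigarrow}(Y',\gamma',\tau')$ satisfies the hypotheses of \Cref{thm:decomposition formula for RSFH}. That theorem identifies $\RSFH(Y',\gamma',\tau')$ with the sum of $\RSFH(Y,\gamma,\tau,\s^R)$ over the outer structures $\s^R\in O^R_S$.

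The key input is Juh\'asz's characterization of outer $\SpinC$ structures (\cite[Lemma~3.10]{juhasz2008floer} together with \cite[Section~3]{juhasz2010sutured}): an outer structure $\s$ satisfies $\langle c_1(\s,t),[S]\rangle = c(S,t)$. By \Cref{def:outer real spinc}, $\s^R$ is outer exactly when its underlying $\SpinC$ structure is outer. The Chern class $c_1(\s^R,t)$ is defined via the underlying vector field, so it equals $c_1$ of the underlying structure. Hence every $\s^R\in O^R_S$ pairs with $[S]$ to give exactly $c(S,t)$.

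To obtain an inequality rather than an equality on a single summand, we mimic Juh\'asz's trick of replacing $S$ by a modified surface $S'$. This $S'$ is obtained by adding to $S$ copies of boundary-parallel pieces, or by isotoping $\partial S$ through negative tangencies with $s(\gamma)$. Such modifications keep $S'$ nice, taut and decomposing, do not change $[S]$, and lower $c(S',t)$ in controlled steps. These modifications must be performed equivariantly. Where Juh\'asz uses a single finger move of $\partial S$ along an arc $\delta$, we perform the pair of moves along $\delta$ and $\tau(\delta)$, exactly as in the proof of \Cref{lem:reduce to good surface}. This keeps $S'$ real, with $\tau$ reversing its orientation, and it changes $c(S',t)$ by the sum of the two individual contributions.

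Now suppose $\RSFH(Y,\gamma,\tau,\s^R)\neq 0$ with $\langle c_1(\s^R,t),[S]\rangle<c(S,t)$. We choose an equivariantly modified surface $S'$ in the class $[S]$ for which $\s^R$ is outer. Then \Cref{thm:decomposition formula for RSFH} makes $\RSFH$ of the decomposed manifold nonzero, so the decomposed manifold is taut; tautness is equivalent to real-tautness by \Cref{prop:tautrealtautequiv}. Juh\'asz's argument then derives a contradiction from $c(S',t)=\langle c_1,[S]\rangle < c(S,t)$, because a taut decomposition along $S'$ would contradict the norm-minimality and tautness of $S$.

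The main obstacle is the parity constraint introduced by the doubled finger moves. Each equivariant modification changes $c(S',t)$ by an even amount, or by a fixed amount coming from fixed points on the sutures. So we must check that $\langle c_1(\s^R,t),[S]\rangle$ still lies in the range reachable by real modifications. We will handle this with the anti-invariance $c_1(\s^R)\in H^2(Y,\partial Y)$ paired against a $-\tau_*$-invariant class: this should force $\langle c_1(\s^R,t),[S]\rangle \equiv c(S,t)$ modulo the step size, so that equivariant moves suffice. If this parity argument fails, the fallback is to forget the involution. Then $\s^R\mapsto\s$ together with Hendricks's spectral sequence, or \Cref{ex:recoversclassical}, gives $\RSFH(Y,\gamma,\tau,\s^R)\neq0\Rightarrow$ a nonvanishing classical group, to which \cite[Theorem~4.1]{juhasz2010sutured} applies directly.
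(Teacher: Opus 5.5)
Your proof has a genuine gap. It rests on ``since $S$ is nice and real'', but the theorem only assumes $-\tau_*[S]=[S]$. The surface $S$ need not be $\tau$-invariant, and in the intended applications it cannot be: for the strongly invertible $8_3$ the genus-one Seifert surfaces are nice and taut, while the equivariant genus is $2$. So you cannot apply \Cref{thm:decomposition formula for RSFH} to $S$, nor make paired finger moves along $\delta,\tau(\delta)$ on it.

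Even for a real $S$, the step you flag really does fail. Each stabilization lowers $c$ by $2$, so an equivariant pair lowers it by $4$ and you reach only $c(S,t)-4k$. Meanwhile $\langle c_1(\s^R,t),[S]\rangle$ can be any value in $c(S,t)+2\Z$. (A stabilization also makes the decomposition non-taut, so your claim that the modifications keep $S'$ taut is wrong; non-tautness is exactly what produces the vanishing.) Anti-invariance gives no congruence mod $4$: once $\tau^*c_1=-c_1$ and $\tau_*[S]=-[S]$, the relation $\langle \tau^*c_1,[S]\rangle=\langle c_1,\tau_*[S]\rangle$ is a tautology.

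The fallback is not available as written. Hendricks's spectral sequence \cite{hendricks2025noterealheegaardfloer} is stated for closed manifolds. You would need a sutured version that splits compatibly along $\rspinc(Y,\gamma,\tau)\to\SpinC(Y,\gamma)$; with that, the result would follow from \cite[Theorem~4.1]{juhasz2010sutured}, but that input is not established here. \Cref{ex:recoversclassical} concerns a doubled manifold with the swapping involution and says nothing relating $\RSFH(Y,\gamma,\tau,\s^R)$ to $\SFH(Y,\gamma,\s)$.

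The missing idea is to run the classical argument first and pass to a real surface only afterwards, by doubling.
\begin{itemize}
\item Write $\langle c_1(\s^R,t),[S]\rangle=c(S,t)-2g$ with $g>0$. Let $S'$ be a $g$-fold, non-equivariant stabilization of $S$. It is nice, $c(S',t)=c(S,t)-2g$, and the decomposition along $S'$ is not taut, so $\bigoplus_{\s\in O_{S'}}\SFH(Y,\gamma,\s)=0$.
\item Set $V=DCS(S',-\tau(S'))$. It is real, represents $2[S]$, and satisfies $c(V,t)=2c(S',t)$. It can be made nice by adding bands near non-fixed points of the sutures, which preserves $c$. By \cite[Lemma~3.10]{juhasz2008floer}, both $O_V$ and $O_{S'}$ are cut out by $\langle c_1,[S]\rangle=c(S',t)$, so $O_V=O_{S'}$.
\item Hence $\SFH(Y_V,\gamma_V)=0$, so $(Y_V,\gamma_V)$ is not taut. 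By \Cref{prop:tautrealtautequiv} it is not real-taut, so $\RSFH(Y_V,\gamma_V,\tau_V)=0$.
\item Applying \Cref{thm:decomposition formula for RSFH} to the real surface $V$ then gives $\RSFH(Y,\gamma,\tau,\s^R)=0$ whenever $\langle c_1(\s^R,t),[S]\rangle=c(S',t)$.
\end{itemize}
Doubling removes both of your obstacles at once. The surface $S$ never has to be real, and because the stabilizing is done before doubling, every value in $c(S,t)-2\Z_{>0}$ is reached.
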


Recall here that if $ (Y,\gamma)\overset{S}{\rightsquigarrow}(Y',\gamma')$ is a surface decomposition where $(Y,\gamma)$ is taut, we call $S$ \emph{taut} if the decomposed manifold $(Y',\gamma')$ is taut. Note that taut nice representatives of any class in $H_2(Y,\partial Y)$ exist by \cite[Theorem~1.4]{scharlemann1989sutured}. We emphasize that we do not require $S$ to be real here. This adjunction inequality is not tight in general, as we will see in Section~\ref{sec:8_3}.

\begin{proof}[Proof of~\Cref{thm:betteradjunctionopen}]
  
Let $S$ be as in the statement of the theorem. If $\s^R$ is a real $\SpinC$ structure with $\langle c_1(\s^R,t),[S]\rangle < c(S,t)$, then we can write $\langle c_1(\s^R,t),[S]\rangle=c(S,t)-2g$ for some $g\in\Z^{>0}$. Let $S'$ be a $g$-fold stabilization of $S$, so that $c(S',t)=c(S,t)-2g$. Observe that $S'$ is nice. Isotope $S'$ so that $S'$ and $\tau(S')$ intersect transversely in a finite collection of circles and arcs. Consider the surface $V:=DCS(S',-\tau(S'))$. This surface is real, represents $[2S]$ in $H_2(Y,\partial Y;\Z)$, and has $c(V,t)=2c(S',t)$. $V$ need not be nice, but we can modify it to be nice (and stays real) by attaching appropriate bands in neighborhoods of sutures which do not contain any fixed points, each of which increases the quantity $I(V)$ by one and decreases the quantity $r(V,t)$ by one, thereby preserving $c(V,t)$. We denote the resulting surface by $V$, in a mild abuse of notation. Observe that a $\SpinC$ structure is outer with respect to $S'$ if and only if it is outer with respect to $V$; i.e., $c(S',t)=\langle c_1(\s^R,t),[S]\rangle$ if and only if $c(V,t)=\langle c_1(\s^R,t),2[S]\rangle$. Let $(Y_{S'},\gamma_{S'})$ and $(Y_V,\gamma_V)$ be the sutured manifolds obtained by decomposing $(Y,\gamma)$ along $S'$ and $V$, respectively. As $S'$ is obtained by stabilizing $S$ it is not taut. It follows that $$\SFH(Y_V,\gamma_V)\cong \SFH(Y_{S'},\gamma_{S'})=0.$$ Hence $\SFH(Y_V,\gamma_V)=0$, whence $(Y_V,\gamma_V)$ is not taut by~\cite[Theorem 1.4]{juhasz2008floer}. It follows from~\Cref{prop:tautrealtautequiv} that $(Y_V,\gamma_V,\tau_V)$ is not real-taut, where $\tau_V$ is the real structure inherited from $(Y,\gamma,\tau)$ via the real surface decomposition along $V$. Consequently $$\RSFH(Y_V,\gamma_V,\tau_V)\cong\underset{\s^R\in O^R_V}\bigoplus{\RSFH}(Y,\gamma,\tau,\s^R)=0.$$
\noindent  The result then follows from the fact that $\s^R\in O^R_V$ if and only if $c(V,t)=\langle c_1(\s^R,t),2[S]\rangle$ i.e., $c(S',t)=c(S,t)-2g=\langle c_1(\s^R,t),[S]\rangle$.
\end{proof}

In the special case of real sutured link exteriors, \Cref{thm:betteradjunctionopen} is readily seen to be equivalent to the following:
\begin{corollary}\label{cor:HFLRadjunction}
    Suppose $L$ is a real link and $\s^R$ is a real $\SpinC$-structure such that $|\langle c_1(\s,t),[\alpha]\rangle|>x(\alpha)$ for some $\alpha\in H_2(Y\setminus(L),\partial(\nu(L)))^{-\tau_*}$. Then $\widehat{\HFLR}{(Y,L,\mathfrak{a},\s^R)}=0$.
\end{corollary}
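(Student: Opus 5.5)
The plan is to realize the real link exterior as a strongly balanced real sutured manifold and apply \Cref{thm:betteradjunctionopen} directly. By Example~\ref{ex:invertible link complements as real sutured manifold}, $\widehat{\HFLR}(Y,L,\mathfrak{a},\s^R)$ is the summand $\RSFH(Y(L),\gamma_L,\tau,\s^R)$ of the real sutured Floer homology of the exterior $Y(L)=Y\setminus\nu(L)$ equipped with pairs of oppositely oriented meridional sutures. Each boundary torus has $R_\pm$ annuli, so $Y(L)$ is strongly balanced. I take the trivialization $t$ of $v_0^\perp$ to be the one given by the meridional direction on each annulus; the auxiliary data $\mathfrak{a}$ fixes how the relative Chern class is matched with the real Alexander grading. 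The class $\alpha\in H_2(Y\setminus\nu(L),\partial\nu(L))^{-\tau_*}$ is identified with a class in $H_2(Y(L),\partial Y(L))^{-\tau_*}$.

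Next, choose a norm-minimizing surface $S$ representing $\alpha$ (or $-\alpha$, to handle both signs of the absolute value). Using \cite[Theorem~1.4]{scharlemann1989sutured}, as in the remark after \Cref{thm:betteradjunctionopen}, $S$ can be chosen to be a nice taut decomposing surface. Its boundary consists of curves on the boundary tori, and I isotope these curves to meet the meridional sutures minimally. Then compute $c(S,t)$ using \cite[Definition~3.8]{juhasz2008floer}: $c(S,t)=\chi(S)+I(S)-r(S,t)$. The aim is to show that $c(S,t)$ equals $-x(\alpha)$ up to the normalization absorbed by $\mathfrak{a}$, as in Juh\'asz's derivation of the link Floer adjunction inequality. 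The rotation contribution vanishes for the meridional trivialization. The term $I(S)$ counts intersections with the sutures, and it is exactly balanced by the shift between $c_1(\s,t)$ and the symmetric Alexander grading.

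Finally, suppose $\langle c_1(\s,t),[\alpha]\rangle<-x(\alpha)$. After this normalization, this is the hypothesis $\langle c_1(\s^R,t),[S]\rangle<c(S,t)$, so \Cref{thm:betteradjunctionopen} gives vanishing. The opposite sign follows by applying the same argument to $-S$, which also represents an anti-invariant class. If $Y(L)$ is not taut, both sides vanish or the statement is vacuous: in the reducible or compressible case, \Cref{prop:adjunctioninequality} together with \Cref{prop:tautrealtautequiv} gives vanishing outright.

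The main obstacle is the bookkeeping identifying $c(S,t)$ with $-x(\alpha)$ and matching $c_1(\s^R,t)$ with the normalization of $c_1(\s,t)$ coming from $\mathfrak{a}$. I would follow Juh\'asz's computation \cite{juhasz2010sutured} for knot and link exteriors verbatim. The real structure plays no role there, because \Cref{thm:betteradjunctionopen} does not require $S$ itself to be real.
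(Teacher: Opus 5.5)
Your route, realizing the exterior with meridional sutures and specializing \Cref{thm:betteradjunctionopen}, is what the paper intends; it only says the corollary is ``readily seen'' from that theorem. But the step carrying all the content fails. You claim that $c(S,t)$ equals $-x(\alpha)$ once the index term is absorbed into the normalization of $c_1$ or into $\mathfrak{a}$. This is false.

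By \cite[Lemma~3.10]{juhasz2008floer}, the difference $\langle c_1(\mathfrak{s}^R,t),[S]\rangle-c(S,t)$ does not depend on $t$: changing $t$ shifts both terms equally. The data $\mathfrak{a}$ only relabels gradings. So nothing can absorb a term that varies with $\alpha$, and the index term does vary with $\alpha$, nonlinearly. If $\partial S$ meets the meridional sutures minimally, then $I(S)=-\tfrac12|\partial S\cap s(\gamma)|=-\sum_i|\langle\partial\alpha,\mu_i\rangle|$. You are right that $r(S,t)=0$ for your meridional $t$. Hence $c(S,t)=\chi(S)-\sum_i|\langle\partial\alpha,\mu_i\rangle|$, which is at most $-x(\alpha)-\sum_i|\langle\partial\alpha,\mu_i\rangle|$ when $S$ has no disk components. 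The most \Cref{thm:betteradjunctionopen} can give, applied to taut nice representatives of $\pm\alpha$, is therefore
\[
|\langle c_1(\mathfrak{s}^R,t),\alpha\rangle|>x(\alpha)+\sum_i|\langle\partial\alpha,\mu_i\rangle|\ \Longrightarrow\ \widehat{\HFLR}(Y,L,\mathfrak{a},\mathfrak{s}^R)=0.
\]
This is the real analogue of the Ozsv\'ath--Szab\'o link Floer bound, phrased with the sutured norm rather than $x$.

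With $x$ the classical norm, as literally written, no bookkeeping can close the gap. Take a fibered strongly invertible knot of genus $g\ge1$, such as the trefoil; it is real-fibered. The decomposition formula along a real fiber $S$ and along $-S$ shows that real $\SpinC$ structures lying over Alexander gradings $g$ and $-g$ both carry nonzero $\RSFH$. Their pairings with $\alpha=[S]$ are $c(S,t)$ and $-c(-S,t)$, which differ by $4g$. The window $|\langle c_1,\alpha\rangle|\le x(\alpha)$ has width only $4g-2$, so one of them violates it. You should aim for the corrected inequality above.

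Your fallback for non-taut exteriors is also wrong. \Cref{thm:betteradjunctionopen} needs a nice \emph{taut} surface. Such surfaces are only produced, via \cite[Theorem~1.4]{scharlemann1989sutured}, when the sutured exterior is itself taut. \Cref{prop:adjunctioninequality} gives vanishing only when $R(\gamma)$ is real-compressible or not real-norm-minimizing. It says nothing when the exterior is reducible but $R(\gamma)$ is incompressible, as for a split real link, and there $\RSFH$ need not vanish: the strongly invertible two-component unlink in Example~\ref{ex:puncturing link complements} has $\dim\RSFH=4$. So you must either assume the sutured link exterior is taut, as the paper implicitly does by invoking \Cref{thm:betteradjunctionopen}, or handle reducible exteriors by a separate argument.
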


Here $x$ is the classical Thurston norm. We also have a real adjunction inequality for closed surfaces.

\begin{theorem}\label{thm:adjnuctioninequalityclosed}
Suppose $(Y,\gamma,\tau)$ is a strongly balanced real sutured manifold. Let $\alpha\in H_2(Y)^{-\tau_*}$ be a class with a closed Thurston norm-minimizing representative. If $\s^R$ is a real $\SpinC$ structure with $\langle c_1(\s^R,t),\alpha\rangle< -x(\alpha)$, then ${\RSFH(Y,\gamma,\tau,\s^R)=0}$.
\end{theorem}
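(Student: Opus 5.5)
Our plan is to mimic the proof of \Cref{thm:betteradjunctionopen}, replacing the open decomposing surface by a closed one and working with the doubled real surface. Let $S$ be a closed Thurston norm-minimizing representative of $\alpha$. We may assume $S$ has no sphere components: $Y$ may be taken irreducible, since otherwise the statement follows by a K\"unneth-type reduction. We may also assume that $S$ is incompressible. Since $\alpha$ is fixed by $-\tau_*$, the class $2\alpha=\alpha-\tau_*\alpha$ is represented by $V:=DCS(S,-\tau(S))$ after isotoping $S$ to be transverse to $\tau(S)$. The surface $V$ is real and closed. By the innermost-disk argument of \Cref{lem:doublenormreal}, we can arrange that $V$ has no sphere components, so that $\chi_-(V)=2x(\alpha)$ and $V$ is norm-minimizing in the class $2\alpha$.

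Next we would reinterpret the hypothesis. Since $V$ is closed and $\langle c_1(\s^R,t),[V]\rangle$ does not depend on $t$ for closed classes, we have $\langle c_1(\s^R),2\alpha\rangle<-2x(\alpha)=\chi(V)$. We would then apply the closed-surface analogue of the decomposition argument. Remove a real neighborhood of $V$ by real surface decomposition, after first tubing $V$ to $\partial Y$ equivariantly so that it becomes open. Concretely, attach equivariant pairs of tubes along arcs from $V$ to sutures avoiding $C$, or a single invariant tube along an arc of $C$ where necessary. This produces a nice real decomposing surface $V'$ with the same homology class in $H_2(Y,\partial Y)$.

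For the modified surface $V'$ we would track $c(V',t)$ exactly as in \cite{juhasz2010sutured}, where the closed adjunction inequality is deduced from the open one. The idea is that $\s^R$ violating the closed inequality forces $\langle c_1(\s^R,t),[V']\rangle<c(V',t)$, or the analogous inequality after reversing the orientation of $V'$. We would then conclude in one of two ways. One route is to invoke \Cref{thm:betteradjunctionopen}, noting that its hypothesis allows a non-real taut surface; this lets us apply it directly to the tubed version of $S$, with the realness of $\s^R$ used only through the real decomposition formula inside that proof. The other route is to rerun its argument: stabilize to obtain a non-taut surface, pass to its double curve sum, use \Cref{prop:tautrealtautequiv} to deduce that the decomposed manifold is not real-taut, hence has vanishing $\RSFH$ by \Cref{prop:adjunctioninequality}, and use \Cref{thm:decomposition formula for RSFH} to conclude that the outer summands vanish.

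The main obstacle is the bookkeeping converting the closed inequality into the open one. We must check that tubing $S$ to the boundary preserves tautness and changes $c(\cdot,t)$ by exactly the amount needed, and that the tubes can be chosen equivariantly when $S$ meets $C$. Our first attempt will be to tube $S$ to an $R_-$ component and its image to $R_+$, using the strongly balanced hypothesis to control the rotation term $r(\cdot,t)$ in the formula for $c(\cdot,t)$.
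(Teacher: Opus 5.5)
There is a genuine gap. The closed-to-open step that you defer as ``bookkeeping'' does not work as planned, and the two facts you hope to check there are both false.

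\emph{Why the tubing step fails.} Since $[V']$ comes from $H_2(Y)$, every component of $\partial V'$ is an inessential circle in $\partial Y$. Niceness therefore forces each such circle to be oriented as the boundary of the disk it cuts off in $R(\gamma)$. This happens exactly when a tube leaving the positive side of $V$ ends on $R_-(\gamma)$; its $\tau$-image then leaves the negative side and ends on $R_+(\gamma)$. Tubes ending on $s(\gamma)$, as in your construction, reduce to this case by a positive-tangency move.

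For such a coherent tube, $\chi$ drops by one and $r(\cdot,t)$ increases by one, so $c$ drops by $2$ per tube. For an equivariant pair this gives $c(V',t)=\chi(V)-4=-2x(\alpha)-4$. As a sanity check in the unreal setting, take $T^2\subset T^3\setminus B^3$. The decomposed manifold is $T^2\times I$ with one trivial suture on each end, which is not taut. Since the hat Floer homology of $T^3$ lives at $c_1=0$, the outer value must be $-2$, not $0$.

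Consequences:
\begin{itemize}
\item $O^R_{V'}$ is exactly $\{\s^R:\langle c_1(\s^R,t),\alpha\rangle=-x(\alpha)-2\}$. Using the parity $\langle c_1,\alpha\rangle\equiv x(\alpha)\bmod 2$, your hypothesis gives only $\langle c_1(\s^R,t),[V']\rangle\le c(V',t)$, not the strict inequality. So \Cref{thm:betteradjunctionopen} misses precisely the critical value.
\item $V'$ is never taut once $V$ has no sphere components, so tubing does not preserve tautness. The component of $R_\pm(\gamma')$ containing a copy of $V$ acquires an extra boundary circle at the end of a finger, and this circle bounds a disk in $\partial Y'$. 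Capping it off and pushing in gives a surface homologous rel $\gamma'$ with $\chi_-$ smaller by one.
\item Hence \Cref{thm:betteradjunctionopen}, which needs a nice \emph{taut} surface, applies neither to $V'$ nor to the tubed $S$. This rules out your route 1.
\item The opposite tubing direction does give $c=\chi(V)$, but then $\partial V'$ is not boundary-coherent. The decomposition leaves a boundary component lying entirely in $R_\pm(\gamma')$, and no decomposition formula is available.
\end{itemize}

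\emph{How to repair it, and how the paper argues.} Route 2 can be saved by making the non-tautness the engine instead of trying to avoid it. Apply \Cref{thm:decomposition formula for RSFH} directly to the coherently tubed $V'$, and also to the surfaces obtained by first adding $k-1$ $\tau$-pairs of handles to $V$; each pair lowers $c$ by $4$. Every decomposed manifold is non-taut. Given irreducibility, it is then not real-taut by \Cref{prop:tautrealtautequiv}, so its $\RSFH$ vanishes by \Cref{prop:adjunctioninequality}. This kills the summands with $\langle c_1(\s^R,t),\alpha\rangle=-x(\alpha)-2k$ for all $k\ge 1$.

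The paper does not pass through open surfaces at all; it follows \cite[Theorem 7.1]{OS3mnfldspropex} diagrammatically.
\begin{itemize}
\item It decomposes along the closed real surface $T=DCS(S,-\tau(S))$ and rebuilds a real Heegaard diagram for $Y$ containing $T$ by tubing.
\item It performs $n$ pairs of $\Z/2$-stabilizations along $T$.
\item By \cite[Proposition 7.5]{OS3mnfldspropex}, a generator in the critical real $\SpinC$ structure would have to avoid the interior of the periodic domain $\mathcal{P}$. That is impossible, because $\mathcal{P}$ contains all the stabilization curves.
\end{itemize}

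Finally, your reduction to irreducible $Y$ ``by a K\"unneth-type reduction'' is not available in the real setting; compare \Cref{ex:punturing S1timesS2}. The paper's own appeal to \Cref{lem:doublenormreal} carries the same irreducibility caveat.
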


Note that we do not require that $\alpha$ admits a Thurston norm minimizing representative that is real. We follow a modified version of the proof of~\cite[Theorem 7.1]{OS3mnfldspropex}. Note that this result yields adjunction inequalities for closed real $3$-manifolds by Example~\ref{ex:recoversclassical}. In particular, \Cref{intro-thm: adjunction inequality for closed surface} follows from~\Cref{thm:adjnuctioninequalityclosed} and Part 1 of~\Cref{prop:realnormproperties}. 

\begin{proof}[Proof of~\Cref{thm:adjnuctioninequalityclosed}]
Let $S$ be a Thurston norm minimizing representative of $\alpha$. We do not require that $S$ is real. 
Lemma~\ref{lem:doublenormreal} implies that after an isotopy of $S$, $T:=DCS(S,-\tau(S))$ is a Thurston norm minimizing representative for the class $2[S]$. Observe that ${\langle c_1(\s^R,t),[T]\rangle=2\langle c_1(\s^R,t),\alpha\rangle}$. Since $\langle c_1(\s^R,t),\alpha\rangle\in x(\alpha)+2\Z$ for any real $\SpinC$ structure $\s^R$, it suffices to show that if $$\langle c_1(\s^R,t),[T]\rangle+x([T])\in 4\Z^{<0}$$\noindent then ${\RSFH(Y,\gamma,\tau,\s^R)=0}$.

We first obtain a Heegaard diagram $\mathcal{H}$ for $(Y,\gamma,\tau)$ that plays the role of the Heegaard diagram constructed in~\cite[Lemma 7.3]{OS3mnfldspropex}. Consider the real sutured manifold $(Y',\gamma',\tau')$ obtained by decomposing $(Y,\gamma,\tau)$ along $T$. Note here we are momentarily expanding our definition of real sutured manifold to allow for closed components of $R_\pm(\gamma)$. $(Y',\gamma',\tau')$ admits an embedded real Heegaard diagram $(\Sigma,\bm{\alpha}',\bm{\beta}',\tau'))$ by Proposition~\ref{prop:existence of a real balanced sutured diagram}.

To obtain a real Heegaard diagram for $(Y,\gamma,\tau)$, we view $\Sigma'$ as a surface in $Y$ and consider the surface $\Sigma'\cup T\subset Y$. Note that $\Sigma'\cup T$ is separating. Suppose that $T$ has $m$ connected components. Pick a minimal collection of disjoint oriented arcs $\{a_i\}_{1\leq i\leq m}$ in $(Y,\Sigma'\cup T)$, whose interiors are disjoint from $\Sigma'\cup T$, and such that $\Sigma'\cup T\cup\{a_i\}$ is connected. After a perturbation, we may assume that the collection of arcs $\{a_i\}\cup\{\tau(a_i)\}$ are disjoint. Tube $T\cup\Sigma'$ along the arcs $\{a_i\}\cup\{\tau(a_i)\}$ to obtain a new surface $\Sigma$ of genus at most $g(\Sigma')+g(T)+m$. $\Sigma$ will serve as the real Heegaard surface for our real Heegaard diagram $\mathcal{H}$ of $(Y,\gamma,\tau)$. The $\alpha$-curves for $\mathcal{H}$ are given by $\bm{\alpha}'$, together with:
   
   \begin{itemize}
       \item meridians $\alpha_{\mu_i}$ of arcs $a_i$ with the property that $a_i\cap \Sigma'\neq \emptyset$, and
       \item a collection of curves $\bm{\alpha}''$ that compress $\widetilde{\Sigma}$ to $R_+(\gamma)$, where $\widetilde{\Sigma}$ is obtained by compressing $\Sigma$ along $\bm{\alpha}'\cup\{\alpha_{\mu_i}\}_{1\le i\le m}$.
   \end{itemize}
   
\noindent The $\beta$-curves are then given by the images of $\alpha$-curves under $\tau$.

Now consider the Heegaard diagram $\mathcal{H}^n$ obtained by performing $\Z/2$ stabilizations of $\Sigma$ at $n$ pairs of points in $T$, each interchanged pairwise by $\tau$. Denote the collection of $2n$ $\alpha$-curves coming from the stabilizations by $\bm{\alpha}'''$. Let $\mathcal{P}$ be the periodic domain in $\mathcal{H}^n$ containing the stabilization of $S$ with boundary consisting of the $\alpha_{\mu_i}$-curves and the $\tau(\alpha_{\mu_i})$-curves. $\mathcal{P}$ is a representative of $[S]$.  We can perform real finger moves of the $\alpha$ and $\beta$-curves supported in $\Sigma'$ to obtain an admissible real Heegaard diagram for $(Y,\gamma,\tau)$ in real $\SpinC$ structure $\s^R$ with $\langle c_1(\s^R,t),[T]\rangle=x([T])-4n$. It follows from~\cite[Proposition 7.5]{OS3mnfldspropex} that a generator $\bm{x}$ satisfies $\langle c_1(\s^R(\xv),t),[T]\rangle=x([T])-4n$ if and only if $\bm{x}$ contains no intersection points in $\mathring{\mathcal{P}}$, the interior of $\mathcal{P}$. However, there are no such intersection points, since curves from $\bm{\alpha}'''$ are all contained in $\mathring{\mathcal{P}}$. Thus, $\RSFH(Y,\gamma,\tau,\s^R)=0$, as claimed.
\end{proof}

\subsection{More decompositions}\label{subsec:moredecomps}
In this subsection, we prove more results concerning real surface and real arc decompositions. In particular, we will provide real analogue of \cite[Proposition~8.6]{juhasz2008floer} and a bordered Floer homology free proof of \cite[Proposition~9.12]{LO_Real_bordered}.

\begin{proposition}\label{prop:decomposition along separating taut surfaces}
Let $(Y,\gamma,\tau)$ be a real-taut, strongly balanced, connected sutured manifold with $\rspinc(Y,\gamma,\tau)\neq\emptyset$. Suppose that \[(Y,\gamma,\tau)\overset{S}{\rightsquigarrow}(Y',\gamma',\tau')\] is a real sutured decomposition where $S$ is a connected, open, real decomposing surface such that for each component $V$ of $R(\gamma)$, the set of closed component of $V\cap S$ consists of parallel oriented boundary-coherent simple closed curves. Further suppose that $[S]=0$ in $H_2(Y,\partial Y)$ and $(Y',\gamma',\tau')$ is still taut. Then $(Y',\gamma',\tau')$ is the union of two components $(Y_1,\gamma_1)\sqcup(Y_2,\gamma_2)$ interchanges by $\tau'$. Moreover, we have isomorphisms \[\SFH(Y_1,\gamma_1)\cong \SFH(Y_2,\gamma_2)\cong \RSFH(Y,\gamma,\tau).\]
\end{proposition}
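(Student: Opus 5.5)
We will first establish the topological claim, then identify $\RSFH$ using the real surface decomposition formula together with Example~\ref{ex:recoversclassical}.

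\emph{The decomposed manifold splits into two swapped components.} Since $[S]=0$ in $H_2(Y,\partial Y)$ and $S$ is connected, $S$ separates $Y$. So $Y'=Y\setminus\nu(S)$ has exactly two components $Y_1,Y_2$, lying on the positive and negative sides of $S$. Because $\tau$ preserves $S$ setwise and reverses its orientation, while preserving the orientation of $Y$, it exchanges the two sides of $S$. Hence $\tau'$ interchanges $Y_1$ and $Y_2$. By Example~\ref{ex:recoversclassical}, $\RSFH(Y',\gamma',\tau')\cong\SFH(Y_1,\gamma_1)$. Since $\tau'$ restricts to a diffeomorphism $(Y_1,\gamma_1)\to(Y_2,-\gamma_2)$, and $\SFH$ is insensitive to reversing the sutures in rank (and up to the usual duality), we get $\SFH(Y_1,\gamma_1)\cong\SFH(Y_2,\gamma_2)$.

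\emph{Identifying the outer structures.} Theorem~\ref{thm:decomposition formula for RSFH} applies, since $S$ is open, the boundary-coherence hypothesis holds, and $(Y,\gamma,\tau)$ is strongly balanced. It gives
\[\RSFH(Y',\gamma',\tau')\cong\bigoplus_{\s^R\in O_S^R}\RSFH(Y,\gamma,\tau,\s^R).\]
It therefore suffices to show that every real $\SpinC$ structure is outer with respect to $S$, i.e.\ $O_S^R=\rspinc(Y,\gamma,\tau)$. Recall that $\s$ is outer for $S$ if and only if $\langle c_1(\s,t),[S]\rangle=c(S,t)$. Since $[S]=0$, the left-hand side vanishes for every $\s$. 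Thus $O_S$ is either empty or all of $\SpinC(Y,\gamma)$, according to whether $c(S,t)\neq0$ or $c(S,t)=0$.

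\emph{Showing $c(S,t)=0$.} This is the main obstacle. I would use tautness. Since $(Y,\gamma)$ is taut by Proposition~\ref{prop:tautrealtautequiv}, $\SFH(Y,\gamma)\neq0$ by \cite[Theorem 1.4]{juhasz2008floer}. Since $(Y',\gamma')$ is taut, Juh\'asz's decomposition theorem gives $\bigoplus_{\s\in O_S}\SFH(Y,\gamma,\s)\cong\SFH(Y',\gamma')\neq0$. Hence $O_S\neq\emptyset$, which forces $c(S,t)=0$ and therefore $O_S=\SpinC(Y,\gamma)$. Because real outerness is defined through the underlying $\SpinC$ structure (Definition~\ref{def:outer real spinc}), we conclude $O_S^R=\rspinc(Y,\gamma,\tau)$. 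Note that $\rspinc(Y,\gamma,\tau)\neq\emptyset$ is assumed, so this set is nonempty.

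\emph{Conclusion.} Combining the steps,
\[\RSFH(Y,\gamma,\tau)=\bigoplus_{\s^R\in\rspinc(Y,\gamma,\tau)}\RSFH(Y,\gamma,\tau,\s^R)\cong\RSFH(Y',\gamma',\tau')\cong\SFH(Y_1,\gamma_1).\]
One detail to check is that $c(S,t)$ is computed with the same trivialization $t$ used in the decomposition formula, which is standard (\cite[Lemma 3.10]{juhasz2008floer}). Another is that the separating argument uses the connectedness of $Y$, which is part of the hypotheses.
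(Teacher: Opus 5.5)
Your proposal is correct and follows essentially the same route as the paper. Both arguments apply the real decomposition formula (Theorem~\ref{thm:decomposition formula for RSFH}) and Juh\'asz's classical one. Both then combine \cite[Lemma 3.10]{juhasz2008floer} with $[S]=0$ and $\SFH(Y',\gamma')\neq 0$ to show that $c(S,t)=0$, so every (real) $\SpinC$ structure is outer. Finally, both identify $\RSFH(Y',\gamma',\tau')$ with $\SFH(Y_1,\gamma_1)$ through the doubled diagram of Example~\ref{ex:recoversclassical}. Your explicit deduction of $c(S,t)=0$ from $O_S\neq\emptyset$ spells out a step the paper compresses into a single sentence.
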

\begin{proof}    
It follows from \cite[Theorem 1.3]{juhasz2008floer} and Theorem \ref{thm:decomposition formula for RSFH} and the taut assumption, that \[\RSFH(Y',\gamma',\tau')\cong\bigoplus_{\s^R \in O_S^R} \RSFH(Y,\gamma,\tau,\s^R)\] and  \[0\ne \SFH(Y',\gamma',\tau')\cong\bigoplus_{\s \in O_S} \SFH(Y,\gamma,\tau,\s).\]

By \cite[Lemma~3.10]{juhasz2008floer}, we know that $\s^R\in O_S^R$ (resp. $\s\in O_S$) is equivalent to $\left \langle c_1(\s^R,t),[S] \right \rangle=c(S,t)$ (resp. $\left \langle c_1(\s,t),[S] \right \rangle=c(S,t)$). Since $[S]=0$ and $\rspinc(Y,\gamma,\tau)\neq\emptyset$, there is some $\s^R$ (resp. $\s$) satisfying the equality, so for any real (unreal) $\SpinC$ structure on $(Y,\gamma,\tau)$, the respective equality holds. Thus, we have isomorphism
\begin{align}\label{eq:iso10.13}\RSFH(Y',\gamma',\tau')\cong \RSFH(Y,\gamma,\tau)\end{align}
and \[0\ne \SFH(Y',\gamma')\cong \SFH(Y,\gamma).\] 

Since $S$ is a connected, separating decomposing surface, $(Y',\gamma',\tau')$ has exactly two connected components, say  $(Y_1,\gamma_1)\sqcup(Y_2,\gamma_2)$. Note that $\partial_+\nu(S)$ and $\partial_-\nu(S)$ belong to different components and they are interchanged by $\tau'$, so $(Y_1,\gamma_1)$, $(Y_2,\gamma_2)$ are interchanged by $\tau'$ (thus, they are homeomorphic as sutured manifolds) and each of them is balanced and taut. We can construct a  real Heegaard diagram of $(Y',\gamma',\tau')$ by taking two identical copies of Heegaard diagram of $(Y_1,\gamma_1)$ and interchanging the roles of $\alpha$ and $\beta$-curves in one of them and reversing the orientation of one of the Heegaard surfaces (cf. Example~\ref{ex:recoversclassical}). These diagrams can be assumed to be admissible, so that \[\SFH(Y_1,\gamma_1)\cong \RSFH(Y',\gamma',\tau'),\]  since each homology group can be computed from isomorphic chain complexes. Combining this with ~\Cref{eq:iso10.13}, we have the desired conclusion.  \end{proof} 
By a similar argument, we have the following:

\begin{proposition}\label{prop:decomposition along separating taut surfaces-disconnected case.}
    Let $(Y,\gamma,\tau)$ be a real-taut, strongly balanced real sutured manifold. Suppose that \[(Y,\gamma,\tau)\overset{S}{\rightsquigarrow}(Y',\gamma',\tau')\] is a real sutured decomposition where $S$ is  an open real decomposing surface consisting of two components $S_1\sqcup S_2$ interchanged by $\tau$. Suppose that for each component $V$ of $R(\gamma)$, the set of closed component of $V\cap S$ consists of parallel oriented boundary-coherent simple closed curves. Further, suppose that $[S]=0$ and $[S_1]=[-S_2]\ne 0$ in $H_2(Y,\partial Y)$ and that $(Y',\gamma',\tau')$ is taut. Then $(Y',\gamma',\tau')\cong(Y_1,\gamma_1,\tau_1)\sqcup(Y_2,\gamma_2,\tau_2)$  and there are isomorphisms \[\RSFH(Y,\gamma,\tau)\cong\RSFH(Y',\gamma',\tau')\cong\RSFH(Y_1,\gamma_1,\tau_1)\otimes \RSFH(Y_2,\gamma_2,\tau_2).\]

\end{proposition}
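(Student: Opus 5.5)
The plan is to imitate the proof of \Cref{prop:decomposition along separating taut surfaces}, with the decomposition formula \Cref{thm:decomposition formula for RSFH} doing the main work and the vanishing of $[S]$ controlling which real $\SpinC$ structures are outer. First, $S=S_1\sqcup S_2$ is open and nice, and $(Y,\gamma,\tau)$ is strongly balanced. So \Cref{thm:decomposition formula for RSFH} applies and gives
\[
\RSFH(Y',\gamma',\tau')\cong\bigoplus_{\s^R\in O_S^R}\RSFH(Y,\gamma,\tau,\s^R).
\]
By \cite[Lemma~3.10]{juhasz2008floer}, a real $\SpinC$ structure lies in $O_S^R$ exactly when $\langle c_1(\s^R,t),[S]\rangle=c(S,t)$. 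Since $[S]=0$, the left side of this condition is $0$ for every $\s^R$, so the condition does not depend on $\s^R$. Hence $O_S^R$ is either all of $\rspinc(Y,\gamma,\tau)$ or empty.

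To rule out $O_S^R=\emptyset$, I would use tautness. Because $(Y',\gamma')$ is taut, \cite[Theorem~1.3]{juhasz2008floer} gives $0\neq\SFH(Y',\gamma')\cong\bigoplus_{\s\in O_S}\SFH(Y,\gamma,\s)$, so $c(S,t)=0$ and the equality holds for every $\SpinC$ structure. Then every real $\SpinC$ structure is outer by \Cref{def:outer real spinc}, and we obtain $\RSFH(Y,\gamma,\tau)\cong\RSFH(Y',\gamma',\tau')$. If $\rspinc(Y,\gamma,\tau)=\emptyset$, both sides vanish and this step is vacuous.

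Next I would describe $(Y',\gamma',\tau')$ and prove the tensor splitting. Since $[S_1]=-[S_2]\neq 0$ while $[S]=0$, the surface $S$ is separating as a whole but neither component is separating on its own. So cutting along $S$ yields two components, $Y_1$ and $Y_2$. The involution $\tau$ swaps $S_1$ and $S_2$ and reverses their orientations, so it sends $\partial_\pm\nu(S_1)$ to $\partial_\mp\nu(S_2)$.

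I would check that each $Y_i$ is preserved by $\tau'$ rather than swapped. A piece whose boundary contains $\partial_+\nu(S_1)$ also contains $\partial_-\nu(S_2)$, because crossing $S_1$ and then $S_2$ is what returns to the starting side. Its image under $\tau$ then contains $\partial_-\nu(S_2)$ again, so the piece is invariant. This is the step I expect to be the main obstacle: I need to make the orientation bookkeeping precise, probably by working with a $\tau$-equivariant map $Y\to\R$ whose level sets pick out the sides.

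Once each $(Y_i,\gamma_i,\tau_i)$ is a real sutured manifold, the splitting follows from the observation already used in the proof of \Cref{kunnethformulae}: for a disjoint union, $\RSFC(Y_1\sqcup Y_2)\cong\RSFC(Y_1)\otimes\RSFC(Y_2)$ at the chain level, computed from disjoint union diagrams. This gives $\RSFH(Y',\gamma',\tau')\cong\RSFH(Y_1,\gamma_1,\tau_1)\otimes\RSFH(Y_2,\gamma_2,\tau_2)$. Finally, I need admissibility, which survives disjoint union, and strong balancedness of each piece, which is inherited from the decomposition.
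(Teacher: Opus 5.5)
You have a genuine gap, at the step you flagged, and it is not a matter of bookkeeping. Under these hypotheses $\tau'$ \emph{interchanges} $Y_1$ and $Y_2$; it does not preserve them.

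Your first half is fine. Using \Cref{thm:decomposition formula for RSFH}, noting that $[S]=0$ makes the outer condition independent of $\s^R$, and using tautness of $(Y',\gamma')$ to rule out $O_S^R=\emptyset$ is exactly the ``similar argument'' the paper invokes. It does give $\RSFH(Y,\gamma,\tau)\cong\RSFH(Y',\gamma',\tau')$.

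Here is why the invariance claim fails.
\begin{itemize}
\item Because $[S]=0$, algebraic intersection with $S$ defines a locally constant function $Y\setminus S\to\Z$ that increases by one across $S$ in the positive normal direction.
\item Since $Y\setminus\nu(S)$ has exactly two components, this function takes two consecutive values. So the positive sides of $S_1$ and of $S_2$ lie in the same piece, say $Y_2$.
\item Your loop argument has the sign backwards. The return crossing of $S_2$ must be negative, from $\partial_+\nu(S_2)$ to $\partial_-\nu(S_2)$. So the piece containing $\partial_+\nu(S_1)$ contains $\partial_+\nu(S_2)$, not $\partial_-\nu(S_2)$.
\item $\tau$ preserves the orientation of $Y$ and $\tau(S_1)=-S_2$, so it sends $\partial_+\nu(S_1)$ to $\partial_-\nu(S_2)\subset Y_1$. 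Hence $\tau'(Y_2)=Y_1$, exactly as in the connected case.
\end{itemize}

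A concrete instance: take $Y=A\times S^1$ with $A$ an annulus, $\tau(x,\theta)=(x,\theta+\pi)$, sutures $\partial A\times\{\pm\pi/2\}$, $S_1=A\times\{0\}$ and $S_2=-(A\times\{\pi\})$. Every hypothesis holds; the decomposed manifold is two solid tori, each with four longitudinal sutures. Yet $\tau'$ swaps $A\times[0,\pi]$ with $A\times[\pi,2\pi]$.

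Consequently the K\"unneth step has nothing to act on, because the $(Y_i,\gamma_i)$ do not inherit involutions of their own. What your argument actually proves is
\[
\RSFH(Y,\gamma,\tau)\cong\RSFH(Y',\gamma',\tau')\cong\SFH(Y_1,\gamma_1),
\]
via \Cref{ex:recoversclassical}, just as in \Cref{prop:decomposition along separating taut surfaces}. The paper's one-line proof does not address this either. The tensor-product conclusion cannot be reached from the stated hypotheses.

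Invariant pieces occur when the positive side of $S_1$ and the negative side of $S_2$ face the same piece. An example is two parallel push-offs of a real surface, as in the proof of \Cref{prop:computingasummand}. But then $[S_1]=[S_2]$, so $[S]=2[S_1]$, which is nonzero whenever $[S_1]\neq 0$. Your reason for every real $\SpinC$ structure being outer then disappears, and the first isomorphism would need a separate argument.
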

\qed

We now proceed to some results without obvious analogues in the unreal setting:
\begin{lemma}\label{prop:computingasummand}
Suppose that $(Y,\gamma,\tau)$ is a real-taut sutured manifold containing a real once punctured torus, $T$, which contains an arc component $C_i$ of the fixed point set.
Further suppose that $\partial T$ is contained in a spherical boundary component of $\partial Y$, and $C_i$ is not contained in any once punctured real sphere. Set $Z:=Y\setminus \nu(T)$, and $\tau_Z:=\tau|_Z$. Let $\gamma_Z$ be the real sutured structure inherited from $\gamma$, save for on the toroidal boundary component of $Z$ corresponding to $T$ where we equivariantly add a pair of sutures with slope induced by $C_i$. Then $(Z,\gamma_Z,\tau_Z)$ is a real sutured manifold and $$\RSFH(Y,\gamma,\tau,[T]_{C_i}+1)\cong \RSFH(Z,\gamma_Z,\tau_Z).$$
\end{lemma}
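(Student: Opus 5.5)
The plan is to reduce to the real arc decomposition formula, Theorem~\ref{Prop:adapteddecompfomrula}, together with the real surface decomposition formula, Theorem~\ref{thm:decomposition formula for RSFH}. The key observation is that the punctured torus $T$ contains the arc $C_i$. Cutting $T$ along $C_i$ gives an annulus $A = T\setminus\nu(C_i)$ on which $\tau$ acts, and removing a neighborhood of $T$ should be equivalent to two operations performed in sequence. The first is a real arc decomposition along the flipping disk obtained by thickening $C_i$ inside $T$. Its framing $\xi_D$ is exactly the framing $[T]_{C_i}$ induced by $T$. The second is a decomposition along a real product-type annulus coming from $A$.

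I would first build a real Heegaard diagram adapted to $C_i$, as in Lemma~\ref{lem:framingsrealized}, but chosen so that $T$ lies in a standard position. Concretely, $T$ should appear as a neighborhood of $C_i$ together with a real band. The real arc recomposition adapted to $(D,c)$ should then be chosen so that $c$ is a meridian of the thickened $C_i$ and $O_{D,c}$ is the set of $\s^R$ with $[\s^R]_{C_i}=[T]_{C_i}$.

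Theorem~\ref{Prop:adapteddecompfomrula} then shows that the complementary summand $\RSFH(Y,\gamma,\tau,[T]_{C_i}+1)$ is the homology of the arc-decomposed manifold $(Y'',\gamma'',\tau'')$. This step needs a careful identification of which side of the formula carries which framing. The lemma selects the $[T]_{C_i}+1$ summand precisely because the arc decomposition keeps the non-outer structures.

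Next, in $(Y'',\gamma'',\tau'')$ the remainder of $T$ becomes a real annulus $A$ whose two boundary circles run across the new sutures created near $C_i$. I would check that $A$ is a reflecting or antipodal product annulus, or a $\tau$-invariant product disk after a further cut. Decomposing along it then yields exactly $(Z,\gamma_Z,\tau_Z)$, with the two new sutures on the torus boundary having the slope of $C_i$. I would then invoke Proposition~\ref{prop:decomposition along product annuli}, which preserves $\RSFH$, or Proposition~\ref{prop:product disk decomposition} together with real tautness from Proposition~\ref{prop:tautrealtautequiv}. The homological hypotheses of Proposition~\ref{prop:decomposition along product annuli} should follow from two facts: $\partial T$ lies on a spherical boundary component, and $C_i$ lies in no once-punctured real sphere. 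Together these should ensure that the boundary curves of $A$ are essential, or boundary-coherent, in $R(\gamma'')$.

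The main obstacle is this topological identification: that arc decomposition followed by decomposition along $A$ really produces $(Z,\gamma_Z,\tau_Z)$ with the stated sutures, and that the needed $H_2$ and essentiality hypotheses hold. If Proposition~\ref{prop:decomposition along product annuli} does not apply directly, I would instead view $A\cup D$ as a nice real decomposing surface. I would then apply Theorem~\ref{thm:decomposition formula for RSFH}, computing via Lemma~\ref{lem:bijection on generators using adapted diagram} that every real $\SpinC$ structure with $[\s^R]_{C_i}=[T]_{C_i}+1$ is outer. This check would be done diagrammatically, using an adapted surface diagram $P$ whose interior avoids the intersection point $x$ on $C_i$, in the spirit of Lemma~\ref{lem:diagramaticouterguided}.
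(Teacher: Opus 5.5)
Your reduction matches the paper's up to the last step. You choose a real arc decomposition whose flipping disk induces $[T]_{C_i}$, so that Theorem~\ref{Prop:adapteddecompfomrula} keeps exactly the $[T]_{C_i}+1$ summand. You then decompose along the product flipping disk containing $C_i$ (Proposition~\ref{prop:product disk decomposition}), and finally cut along $A=T\setminus\nu(C_i)$ to reach $Z$. One small correction: the arc decomposition alone creates no sutures near $C_i$, because the tubes $\partial\nu(a)$ and $\partial\nu(\tau(a))$ lie in $R_+$ and $R_-$. The sutures of slope $C_i$ appear only after the disk decomposition, and $\partial A$ is then parallel to them rather than crossing them.

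The gap is in justifying that cutting along $A$ loses nothing. Proposition~\ref{prop:decomposition along product annuli} is stated under the hypothesis $H_2(Y;\Z)=0$. This need not hold for your intermediate manifold; for example, it fails as soon as $Y$ has another boundary component. Neither of the two facts you cite implies it. Those facts are also not what is at stake for essentiality, since that is automatic: after the disk decomposition the components of $\partial A$ are cores of the annular components of $R_\pm$. What actually has to be shown is that every real $\SpinC$ structure supporting $\RSFH$ is outer for $A$, i.e.\ satisfies $\langle c_1(\s^R,t),[A]\rangle=c(A,t)$. Your fallback does not address this. The point $x\in C_i$ only detects the $C_i$-framing, which the arc decomposition has already used, and it has nothing to do with the pairing against $[A]$.

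The missing ingredient, which is how the paper closes the argument, is a tautness statement: $(Z,\gamma_Z,\tau_Z)$ is real-taut.
\begin{itemize}
\item Norm minimality of $R(\gamma_Z)$, real-incompressibility of its non-annular components, and real-irreducibility are all inherited from $(Y,\gamma,\tau)$.
\item The new annular components are incompressible precisely because $C_i$ lies in no once punctured real sphere.
\end{itemize}
One then obtains $Z$, together with a real product piece, by decomposing along a $\tau$-interchanged pair of parallel copies of $A$, and applies Proposition~\ref{prop:decomposition along separating taut surfaces-disconnected case.}. The mechanism there is that the decomposing surface has trivial class, so the pairing with $c_1$ is the same for every real $\SpinC$ structure. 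Tautness of the decomposed manifold gives nonvanishing $\SFH$, which forces some, and hence every, structure to be outer. This is where the once punctured sphere hypothesis is genuinely used. Without an argument of this kind, the isomorphism between your intermediate manifold's $\RSFH$ and $\RSFH(Z,\gamma_Z,\tau_Z)$ is not established.
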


Here, $\RSFH(Y,\gamma,\tau,[T]_{C_i}+1)$ is the summand of $\RSFH(Y,\gamma,\tau)$ indexed by real $\SpinC$ structures that do not induce the same framing of $C_i$ as $T$.

Observe that in the case that $C_i$ is contained in a once punctured real sphere, then $(Y,\gamma,\tau)$ contains a real product disk, decomposing along which yields a real sutured manifold with identical real sutured Floer homology by~\Cref{prop:product disk decomposition}.

\begin{proof}

Pick a real pair of arcs and flipping disk inducing the same relative mod $2$ framing of $C_i$ as that induced by $T$ using~\Cref{lem:framingsrealized}. Consider the real sutured manifold obtained by performing a real arc decomposition along these arcs followed by a real product disk decompositions along the resulting real product disk containing $C_i$. Let $(Y',\gamma',\tau')$ denote the resulting real sutured manifold. Theorem~\ref{Prop:adapteddecompfomrula} and Proposition~\ref{prop:product disk decomposition} imply that $$\RSFH(Y,\gamma,\tau,[T]_{C_i}+1)\cong \RSFH(Y',\gamma',\tau').$$

Observe that $(Y',\gamma',\tau')$ contains a real product annulus --- namely an appropriate real isotopy of  $A:=T\setminus\nu(C_i)$ --- and that decomposing $(Y',\gamma',\tau')$ along $A$ yields the real sutured manifold $(Z,\gamma_Z,\tau_Z)$. Indeed, decomposing along the real surface given by two parallel push-offs of $A$ yields $(Z,\gamma_Z,\tau_Z)$ and a real product sutured manifold.

We claim that $(Z,\gamma_Z,\tau_Z)$  is real-taut. To see this, first observe that $R(\gamma_Z)$ is Thurston norm minimizing in the class $[R(\gamma_Z)]\in H_2(Z,\gamma_Z)$ since any representative of $[R(\gamma_Z)]\in H_2(Z,\gamma_Z)$ leads to a representative of $[R(\gamma)]\in H_2(Y,\gamma)$. The annular components of $\gamma_Z$ cannot be compressible as else $C_i$ would be contained in a real once punctured sphere. The remaining components of $R(\gamma_Z)$ also cannot by real-compressible as else $R(\gamma)$ would be too, contradicting our assumption that $(Y,\gamma,\tau)$ is real-taut. Likewise, if $(Z,\gamma_Z,\tau_Z)$ is reducible, then so too would be $(Y,\gamma,\tau)$, leading to a contradiction. Proposition~\ref{prop:decomposition along separating taut surfaces-disconnected case.} now implies that ${\RSFH(Y',\gamma',\tau')\cong \RSFH(Z,\gamma_Z,\t_Z)}$, concluding the proof. 
\end{proof}

\subsection{More vanishing results}
We now prove some vanishing results that do not arise from an adjunction inequality for surfaces. We begin with a simple but illustrative example.
\begin{proposition}\label{prop:periodicfree}
    Suppose $L$ is a periodic link in $(S^3,\tau)$, where $\tau$ is the free involution, and $L$ has an even number of components. Then $\RSFH(Y(L),\gamma(L),\tau)=0$.
\end{proposition}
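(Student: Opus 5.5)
The plan is to show that $(Y(L),\gamma(L),\tau)$ admits no real $\SpinC$ structure. Every generator $\xv\in(\TT_\alpha\cap\TT_\beta)^R$ determines a real $\SpinC$ structure $\s^R(\xv)$, so if $\rspinc(Y(L),\gamma(L),\tau)=\emptyset$ then no real Heegaard diagram has any generators, and $\RSFH=0$. Since $\mathrm{fix}(\tau)=\emptyset$, Proposition~\ref{prop:classification of real spinc structure} does not apply. The obstruction therefore has to be found by hand.

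\textbf{Step 1.} First I would check that a pure generator count does not already settle the matter. Because $\tau$ is free on $\Sigma$, a generator has the form $\{x,\tau(x)\}$, and it determines a fixed-point-free involution of $\{1,\dots,m\}$; this forces $m$ to be even. In a balanced diagram for a link exterior with two meridional sutures per component, however, $R_-(\gamma)$ is a union of annuli. Hence $\chi(\Sigma)+m=0$, and $m$ is always even. So parity of $m$ alone gives nothing, which is why I would argue through real $\SpinC$ structures instead.

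\textbf{Step 2.} Next I would find an invariant real surface. Up to conjugacy, $\tau$ is the antipodal map, with quotient $\RP^3$. Take an equatorial sphere $S^2\subset S^3$ lifting $\RP^2\subset\RP^3$, put it transverse to $L$, and set $P:=S^2\cap Y(L)$. Then $P$ is an invariant punctured sphere on which $\tau$ acts freely and reverses orientation, so $P$ is a real decomposing surface. Because $L$ is periodic, its components are either invariant (case (2) of Example~\ref{ex:invertible link complements as real sutured manifold}) or swapped in pairs. From the hypothesis that the number of components is even, I would compute the parity of the number of punctures $|L\cap S^2|$, and the arrangement of the $\partial P$ curves relative to the meridional sutures.

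\textbf{Step 3.} This is the main step. Suppose $v$ is a non-vanishing real vector field extending $v_0$. Along $P$, write $v=f\,n+w$, where $n$ is the unit normal and $w$ is tangent to $P$. Since $\tau$ preserves orientation on $Y$ and reverses it on $P$, $\tau$ preserves $n$. The condition $\tau^*v=\overline v$ then forces $f\circ\tau=-f$ and $\tau^*w=-w$. Generically $w$ has isolated zeros, none of them at points where $f=0$, and they come in $\tau$-orbits. The anti-invariance should force the two zeros in each orbit to have equal local index. The total index is given by Poincar\'e--Hopf relative to the boundary data $v_0$ on $\partial P$. That relative count is fixed by the sutures and equals $\chi(P)$ corrected by winding terms along the punctures, and Step 2 should show it is odd. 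An odd total is incompatible with zeros occurring in equal-index pairs, so no such $v$ exists and $\rspinc=\emptyset$.

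I expect Step 3 to be the main obstacle. The delicate parts are the index bookkeeping for $v_0$ along the meridional sutures, and confirming that paired zeros really have equal index rather than opposite index. If this parity bookkeeping fails, my fallback is to work directly in a real Heegaard diagram adapted to $P$, using Proposition~\ref{prop:decomposingdiagramsexist}. There I would show that every generator must be outer for $P$ and $-P$ simultaneously, which would contradict Lemma~\ref{lem:bijection on generators using adapted diagram}.
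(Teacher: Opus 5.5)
Your argument has a genuine gap. Step~3 cannot produce a contradiction, and the route you set aside in Step~1 is the one that actually works.

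\textbf{Step~1.} Compressing $\Sigma$ along one $\alpha$-curve raises the Euler characteristic by $2$, not by $1$. So the correct relation is $\chi(\Sigma)+2m=\chi(R_-(\gamma))=0$, i.e.\ $m=g+|L|-1$ with $g$ the genus of $\Sigma$. By itself this says nothing about the parity of $m$; for two interchanged unknots there is a genus-$0$ diagram with $m=1$. The paper's proof supplies the missing input. Since $\tau$ is free there are no $\{1\}$-stabilizations, so every real Heegaard diagram for $(S^3,\tau)$ arises from the antipodal $S^2$ by real Heegaard moves in which only $\Z/2$-stabilizations change the genus, each by $2$. Hence $g$ is even. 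A diagram for the link exterior is such a diagram with $|L|-1$ extra $\alpha$-curves, so $m$ is odd when $|L|$ is even. Your own observation that a generator induces a fixed-point-free involution of $\{1,\dots,m\}$ then shows there are no generators at all.

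\textbf{Step~3.} First a sign correction: $\tau$ interchanges the two sides of the equatorial sphere, so $d\tau(n)=-n$ and $f\circ\tau=f$, while $w$ is anti-invariant. The real problem is that the count you need is not odd. The boundary circles of $P$ are meridians, which generically lie in $R_\pm(\gamma)$, where $w$ points out of or into $P$. So Poincar\'e--Hopf gives total index $\chi(P)=2-|L\cap S^2|$, which is even: an invariant component meets $S^2$ in $2 \bmod 4$ points and an interchanged pair in $0 \bmod 4$. Zeros of $w$ in equal-index pairs are perfectly consistent with this.

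Choosing a different invariant surface would not help. Since $\tau$ is free, a real vector field along $P$ is a section of a rank-$3$ bundle over the $2$-complex $P/\tau$, and such a section always extends non-vanishingly from the boundary. So no invariant surface can detect the obstruction. The obstruction is genuinely $3$-dimensional: it is a $\Z/2$-valued twisted Euler class on $Y/\tau$, twisted by the line bundle of the double cover. Evaluating it on the real gradient of a real sutured Morse function, whose $2m$ zeros form $m$ free orbits, gives $m \bmod 2$. That is exactly the parity you set aside in Step~1. So $\rspinc(Y(L),\gamma(L),\tau)=\emptyset$ is true, but this count is what proves it.

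\textbf{Fallback.} This does not supply a mechanism either. Since $[P]=0$ in $H_2(Y,\partial Y)$, outerness with respect to $\pm P$ holds for all real $\SpinC$ structures or for none, so it cannot single out a contradiction with the bijection lemma.
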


Since the sutured exteriors of non-split links in $S^3$ are taut, this proposition provides counterexamples to the converse of~\Cref{prop:adjunctioninequality}.

\begin{proof}[Proof of~\Cref{prop:periodicfree}]

We claim that any real Heegaard diagram for such a link has an odd number of $\alpha$-curves. To see this, first observe that $S^2$ equipped with the antipodal map is a real Heegaard diagram for $S^3$ equipped with its unique free involution, $\tau$. Since any other Heegaard diagram for $S^3$ is obtained by real Heegaard moves by~\Cref{prop:relationsbetweenHDs}, and only free-(de)stabilization can change the genus of a real Heegaard diagram, it follows that any real Heegaard diagram for $(S^3,\tau)$ has even genus, and so an even number of $\alpha$-curves. Observe that a real Heegaard diagram for $L$ yields a real Heegaard diagram for $(S^3,\tau)$ equipped with an extra $|L|-1$ curves $\alpha$-curves. Since $|L|$ is even the claim follows.

Now, since the real Heegaard surface has empty fixed point set, the real sutured Floer chain complex has no generators. It follows that $\RSFH(Y(L),\gamma(L),\tau)$ is trivial.
\end{proof}

\Cref{prop:periodicfree} can be extended as follows. Note that if $\tau$ is free on $Y$, any two real Heegaard surfaces for $(Y,\tau)$ have genera of the same parity as in the proof of~\Cref{prop:periodicfree}. Denote this parity by $p(Y)$. If $p(Y)+|L|\equiv 0\mod 2$ then it can be argued that $\RSFH(Y(L),\gamma(L),\tau)=0$ as in the proof of~\Cref{prop:periodicfree}. 

The following result is conceptually similar to~\Cref{prop:periodicfree}, but accounts for the relative $\Z/2$-gradings corresponding to connected components of the fixed point set:

\begin{proposition}\label{prop:generalizedvanishing}

Suppose that $(Y,\gamma,\tau)$ is a connected real sutured manifold such that each boundary component of $C$ lies on a spherical component of $\partial Y$. Let $k$ and $m$ be the number of closed and open components of $C$, respectively.

Suppose that $S$ is a real surface separating $Y$ into two components such that $C\cap S_i$ is non-separating for each component $S_i$ of $S$, and $\partial S$ is contained in the $m$ spherical boundary components of $\partial Y$ that intersect the fixed point set. If either:\begin{enumerate}
    \item\label{case:921}$\partial Y$ contains a non-spherical component and $k+\frac{3m}{2}+\frac{1}{4}\chi(R(\gamma))+g(S)$ is odd, or
    \item\label{case:922}$\partial Y$ consists of spheres and $k+\frac{3m}{2}+\frac{1}{4}\chi(R(\gamma))+g(S)$ is even,
\end{enumerate}
\noindent then \begin{align*}{\underset{\s^R: [\s^R]_{C}=[S]_{C}+\bm{1}}{\bigoplus}\RSFH(Y,\gamma,\tau, \s^R)=0}.\end{align*}
\end{proposition}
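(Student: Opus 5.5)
Throughout, write $\mathcal{S}:=\{\s^R:[\s^R]_C=[S]_C+\bm 1\}$. The plan mirrors the proof of~\Cref{prop:periodicfree}. It is a parity count showing that any real Heegaard diagram adapted to $S$ has no real generators in the real $\SpinC$ structures in $\mathcal{S}$.

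\textbf{Step 1: build an adapted diagram.} Let $S$ cut $Y$ into $Y_1\sqcup Y_2$ with $\tau(Y_1)=Y_2$. Build a real sutured Morse function with $S$ as a level set, in the spirit of~\Cref{prop:decomposingdiagramsexist} and the proof of~\Cref{prop:adjunctioninequality}. Then refine it arc by arc along each component of $C$. For each arc component, use~\Cref{lem:framingsrealized}, i.e.\ $\{1\}$-stabilizations plus handleslides, so that $C_i$ meets the $\alpha$-curves in exactly one point $x_i$. By~\Cref{lem:diagramaticouterguided}, a generator lies in a structure with $[\s^R]_{C_i}\neq[S]_{C_i}$ if and only if it contains $x_i$; here the framing of $C_i$ is normalized to agree with that of $S$. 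For closed components of $C$, arrange similarly that the framing condition forces a prescribed odd number of fixed intersection points on each circle. The non-separating hypothesis on $C\cap S_i$ is what allows this normalization to be made compatibly with $S$.

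\textbf{Step 2: count curves and forced points.} Any real generator consists of pairs of points interchanged by $\tau$ together with fixed points on $C\cap\Sigma$. So the number $|\bm\alpha|$ must have the same parity as the number of fixed coordinates. A generator in a structure in $\mathcal{S}$ is forced to use exactly one fixed point per arc and an odd number per closed component. Hence its number of fixed coordinates is congruent to $k+m$ modulo $2$.

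Next compute $|\bm\alpha|\bmod 2$. From the Euler characteristic of the Heegaard surface one gets $|\bm\alpha|=g(\Sigma)+|\partial\Sigma|-1$ in the connected case. Express $\chi(\Sigma)$ through $\chi(R_+)=\tfrac12\chi(R(\gamma))$ and the contribution of $S$, which is a doubled copy of $S$ through $P$, contributing $2g(S)$ plus boundary terms. Each of the $m$ spherical components containing fixed arcs contributes a fixed correction; this is where the $\tfrac{3m}{2}$ should come from. The outcome is
\[
|\bm\alpha|\equiv \tfrac14\chi(R(\gamma))+g(S)+\tfrac{m}{2}+\epsilon \pmod 2,
\]
where $\epsilon\in\{0,1\}$ records whether $\partial Y$ has a non-spherical component. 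That dichotomy changes the relation between the number of boundary components of $\Sigma$ and of $\gamma$, which explains cases~\eqref{case:921} and~\eqref{case:922}.

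\textbf{Step 3: conclude.} In either case the hypotheses say exactly that $|\bm\alpha|\not\equiv k+m \pmod 2$. So no real generator lies in a structure in $\mathcal{S}$, even after symmetric winding for admissibility. Therefore that summand of $\RSFC$ is zero and the direct sum vanishes.

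\textbf{Main obstacle.} I expect the hard step to be the exact bookkeeping in Step 2: getting the constants $\tfrac{3m}{2}$ and $\tfrac14\chi(R(\gamma))$ right. I would first check them on a basic example, such as the punctured $S^1\times S^2$ of~\Cref{ex:punturing S1timesS2}. After that I would compute the parity change under each real Heegaard move; $\Z/2$-stabilizations preserve it and $\{1\}$-stabilizations shift both sides equally. This reduces the claim to one model diagram per case.
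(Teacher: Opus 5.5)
Your mechanism is the one the paper uses. Since $C\subset\Sigma$ and each fixed coordinate of a generator flips the mod $2$ framing (\Cref{lem:diagramaticouterguided}), take any diagram whose Heegaard surface induces $[S]_{C_i}$ on every $C_i$. There, a generator with $[\s^R]_C=[S]_C+\bm 1$ has an odd number of coordinates on each component of $C$, so none exists once $|\bm\alpha|\not\equiv k+m\pmod 2$.

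\textbf{The gap is Step 2.} It is the entire content of the proposition, and you assert it rather than derive it. The relation you start from, $|\bm\alpha|=g(\Sigma)+|\partial\Sigma|-1$, is false in general; it presumes every component of $R_-(\gamma)$ is a disk. The correct relation is $2|\bm\alpha|=\chi(R_-(\gamma))-\chi(\Sigma)$, i.e.\ $|\bm\alpha|=\tfrac14\chi(R(\gamma))+g(\Sigma)-1+\tfrac12|s(\gamma)|$. So what you need is $g(\Sigma)\bmod 2$ for some real Heegaard surface agreeing with $S$ along $C$, and you never produce one.

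Step 1 does not supply it. A $\tau$-invariant level set of a real function is necessarily all of $f^{-1}(0)$, so $S$ is a level set only in the special case that $S$ is itself a Heegaard surface. The $\{1\}$-stabilizations you then add change both $|\bm\alpha|$ and $[\Sigma]_{C_i}$. Symmetric winding can also create new points of $C\cap\bm\alpha$, which spoils the ``exactly one $x_i$'' bookkeeping. The parity form $[\s^R(\bm x)]_{C_i}=[\Sigma]_{C_i}+|\bm x\cap C_i| \pmod 2$ avoids this, and no normalization is needed.

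Your fallback does not close the gap either. Checking invariance under real Heegaard moves only shows that $g(\Sigma)+\#\{i:[\Sigma]_{C_i}\neq[S]_{C_i}\}\bmod 2$ is an invariant. That reduces the claim to computing it on one diagram of the given manifold, not to one model diagram per case, and that computation is the missing step. Your explanation of the case split is also wrong: $|\partial\Sigma|=|s(\gamma)|$ always. In the paper, case (2) differs because one added $\alpha$-curve is dropped to keep the $\alpha$-curves homologically independent.

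\textbf{The missing idea is the paper's explicit surface.} Fill the $m$ fixed spheres and write $Y'=Z\cup_{S'}\tau(Z)$. Take the doubled diagram on $\Sigma_Z\cup-\tau(\Sigma_Z)$ with $\alpha$-curves $\bm\alpha_Z\cup\tau(\bm\beta_Z)$, and tube a copy of $S$ onto it, two tubes per component. Then add $m$ boundary-parallel curves, tube meridians, and $2g(S_i)$ curves on each $S_i$, chosen (using that $C\cap S_i$ is non-separating) so that each component of $C$ meets one $\alpha$-curve. Since $\Sigma\supset S$ near $C$, you get $[\Sigma]_C=[S]_C$ for free.

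\textbf{Be warned about the count itself.} For this surface $g(\Sigma)=2g(\Sigma_Z)+g(S)+n-1$, with $n$ the number of components of $S$, and $|s(\gamma)|=m+2|s(\gamma_Z)|$. The formula above then gives $|\bm\alpha|\equiv\tfrac14\chi(R(\gamma))+\tfrac m2+g(S)+n+|s(\gamma_Z)|\pmod 2$. This agrees with the congruence you wrote when $\partial Y$ is just the $m$ fixed spheres and $S$ is connected, but not in general. Your congruence is the same one the paper's proof reaches, whose step $|\bm\alpha_D|\equiv\tfrac12\chi(R(\gamma_Z))$ tacitly requires $\chi(R_+(\gamma_Z))$ even.

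For a concrete failure, let $Y$ be $B^3$ with the rotation minus a $\tau$-swapped pair of balls, and let $S$ be a $\{1\}$-stabilized equatorial disk. Then $k=0$, $m=1$, $\chi(R(\gamma))=6$ and $g(S)=1$, so case (2) applies. Now take a genus-zero real diagram whose surface contains the equatorial disk near $C$ and whose $\alpha$-curves miss $C$. Every generator has no fixed coordinates, so all of them lie in framing $[S]_C+1$. Meanwhile $\RSFH(Y,\gamma,\tau)\cong\F^2$ by \Cref{prop:cases that rank doubles obsviously}. So the bookkeeping you flag is not just unchecked: done honestly it picks up the terms $n+|s(\gamma_Z)|$, and the statement needs extra hypotheses.
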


Here, by $[\s^R]_{C}=[S]_{C}+\bm{1}$ we mean that $[\s^R]_{C_i}=[S]_{C_i}+1$ for any connected components $C_i$ of $C$. Note that while any real sutured manifold admits a real separating surface --- a real Heegaard surface gives an example --- the condition that each boundary component of $C$ lies on a spherical component of $\partial Y$ is more restrictive, excluding the case that $S$ is a real Heegaard surface apart from in some special cases (for example, if $Y$ has a single spherical boundary component). The condition that $C\cap S_i$ is non-separating can always be arranged at the expense of performing $\{1\}$-stabilizations on $S$.

We claim that the quantity $k+\frac{3m}{2}+\frac{1}{4}\chi(R(\gamma))+g(S)$ is an integer. To see this, let $(Y',\gamma',\tau')$ be the real sutured manifold obtained by filling in the $m$ spherical boundary components which intersect $C$. Note --- assuming that the spherical boundary components each have a single suture, so that the statement of the proposition is not vacuously true --- that $\chi(R(\gamma))=\chi(R(\gamma'))+2m$. Thus we can write
\begin{align*}
 \frac{3m}{2}+\frac{1}{4}\chi(R(\gamma))&=2m+\frac{1}{4}\chi(R(\gamma')),
\end{align*}

It thus suffices to show that $\chi(R(\gamma'))$ is divisible by $4$. To see this, write \begin{align*}{\chi(R(\gamma'))=\sum_V(2-2g(V)-|\partial V|)},\end{align*} where the sum is taken over all connected components $V$ of $R(\gamma')$. Note that $\tau$ acts freely on $R(\gamma')$, so that $\sum_V(2-2g(V))$ is divisible by 4. To see that $\sum_V |\partial V|$ is also divisible by 4, note that $|\partial V|=2|\gamma'|$, and that $\tau$ acts freely on $\gamma'$, so that $\vert \gamma'\vert$ is divisible by two. This proves the claim.

Observe that if $R(\gamma)$ is incompressible and $\partial Y$ consists of spheres then ${\chi(R(\gamma))=2|\partial Y|}$.

\begin{proof}[Proof of~\Cref{prop:generalizedvanishing}]
We only prove Case~\ref{case:921}, since the proof of Case~\ref{case:922} is essentially the same. As above, let $(Y',\gamma',\tau')$ be the real sutured manifold obtained by filling in the $m$ spherical boundary components which intersect $C$. Write $Y'=Z\cup_{S'}\tau(Z)$, where $Z$ is connected, and $S'$ is obtained from $S$ by capping off boundary components with disks. $Z$ comes with a sutured structure $\gamma_Z$, from decomposing $(Y',\gamma',\tau')$ along $S'$. Take a real Heegaard diagram $\mathcal{H}_D:=(\Sigma_D,\bm{\alpha}_D,\bm{\beta}_D,\t)$ for $Z\sqcup \tau(Z)$, where $\Sigma_D=\Sigma_Z\cup-\tau(\Sigma_Z)$, $\bm{\alpha}_D=\bm{\alpha}_Z\cup\tau(\bm{\beta}_Z)$, $\bm{\beta}_D=\bm{\beta}_Z\cup\tau(\bm{\alpha}_Z)$. Here, $(\Sigma_Z,\bm{\alpha}_Z,\bm{\beta}_Z)$ is a Heegaard diagram for $Z$ with connected $\Sigma_Z$. Observe that \begin{align*}
 \chi(R_+(\gamma_Z))-2|\bm{\alpha}_Z|+2|\bm{\beta}_Z|&=\chi(R_-(\gamma_Z)).\end{align*} 
It follows that
\begin{align*}|\bm{\alpha}_D|&=|\bm{\alpha}_Z|+|\bm{\beta}_Z|\\
   &=2|\bm{\alpha}_Z|+\frac{1}{2}(\chi(R_-(\gamma_Z))-\chi(R_+(\gamma_Z)))\\
   &\equiv \frac{1}{2}(\chi(R_-(\gamma_Z))+\chi(R_+(\gamma_Z)))\mod 2\\
   &\equiv \frac{1}{2}\chi(R(\gamma_Z))\mod 2.
\end{align*} 

We now obtain a real Heegaard diagram $\mathcal{H}:=(\Sigma,\bm{\alpha},\bm{\beta},\t)$ for $(Y,\gamma,\tau)$. $\Sigma$ is formed by tubing a copy of $S$ to $\Sigma'$ (with two tubes per connected component of $S$). We have assumed that any spherical boundary component of $(Y,\gamma,\tau)$ contains a single suture; if this is not the case then the proposition holds vacuously since $(Y,\gamma,\tau)$ is not taut, so in turn that $\RSFH(Y,\gamma,\tau)=0$ by~\Cref{prop:adjunctioninequality}.  
The $\alpha$-curves are obtained by adding the following curves to $\bm{\alpha}_D$:
  \begin{itemize}
  \item $m$ $\alpha$-curves, each parallel to one of the boundary components of $S$.
      \item $n$ $\alpha$-curves corresponding to meridians of one member of each equivariant pair of tubes, where $n$ is the number of components of $S$. 
      \item For each of the $n$ connected components $S_i$ of $S$, adding $2g(S_i)$ $\alpha$-curves corresponding to arcs with endpoints at the end of one of the tubes. We pick these arcs so that each component of $C\cap S_i$ intersects exactly one $\alpha$-curve, and that no arc intersects more than one component of $C\cap S_i$. We are free to do this since $C\cap S_i$ is non-separating in $S_i$ for all $i$.
    
  \end{itemize}

Here, it is important that  $\partial Y$ contains a non-spherical boundary component, as else we would have introduced homological dependence among the $\alpha$-curves. If we are in Case~\ref{case:922}, then we must add one fewer $\alpha$-curves, to ensure that the $\alpha$-curves remain homologically independent; this is the only point at which the proof of Case~\ref{case:922} differs from that of Case~\ref{case:921}. 

Let $\alpha_t'$ denote the unique $\alpha$-curve that is geometrically dual to $C_t$. We perform handleslides of the $m$ boundary parallel $\alpha$-curves over the curves $\alpha_t'$ as in \Cref{fig:remove_C_intersections} so that each component of $C$ intersects a single $\alpha$-curve, $\alpha_t'$.

We claim that we can further modify this Heegaard diagram to make it admissible. We proceed as in a similar step in the proof of~\Cref{Prop:adapteddecompfomrula}. Extend $\{C_i\}$ to a basis for $H_1(\Sigma,\partial\Sigma)$ by adding:\begin{itemize}
    \item Disjoint properly embedded arcs, $\gamma_j$, whose endpoints are not fixed by $\tau$.
    
    \item A pairwise disjoint collection of properly embedded arcs $\{\gamma_t'\}$. We require that $\gamma_t'$ is geometrically dual to a closed component $C_t$ of $C$ and is disjoint from both $\alpha_t'$ and the remaining components from $C\cup \{\gamma_j\}$. We also require the endpoints of each $\gamma_t'$ are not fixed by $\tau$.
\end{itemize} 

Perform zig-zag isotopies near the arcs $\gamma_j,\gamma_t'$ as in the proof of~\cite[Proposition 3.15]{juhasz2006holomorphic}. Now handleslide any newly introduced intersection between a component, $C_t$, of $C$ and $\alpha$-curves over $\alpha_t'$ as in \Cref{fig:remove_C_intersections} so that each component of $C$ intersects a single $\alpha$-curve, $\alpha_t'$. This is our Heegaard diagram, $\mathcal{H}$. 

Let $P$ be a real periodic domain with non-negative multiplicities in every domain. Write $\partial P=A+\tau(A)$ where $A=\sum_ia_i\alpha_i$. We have that $A\cdot \gamma_j=0, A\cdot \gamma_t'=0$ for all $j,t$, just as in the proof of~\cite[Proposition 3.15]{juhasz2006holomorphic}. Let $x\in C\cap\bm{\alpha}$. Since the elementary domains $D_x$ and $D_x'$ adjacent to $x$ containing components of $C$ have the same multiplicities, it follows that $A\cdot C_i=0$ for all $i$. Consequently $a_i=0$ for all $i$, from which the claim follows, since the $\alpha$-curves are non-separating.

Returning to the main proof, observe that \begin{align*}|\bm{\alpha}|&=|\bm{\alpha}_D|+n+m+\underset{i}{\sum}2g(S_i)\\&\equiv  \frac{1}{2}\chi(R(\gamma_Z))+n+m\mod 2.
\end{align*}
We also have that \begin{align*}\chi(R(\gamma_Z))&=\frac{1}{2}(\chi(R(\gamma'))+2\chi(S'))\\&=\frac{1}{2}(\chi(R(\gamma))-2m+2\chi(S)+2m)\\&=\frac{1}{2}\chi(R(\gamma))+\chi(S),\end{align*} 
\begin{align*}
    |\bm{\alpha}|&\equiv n+ m+\frac{1}{4}\chi(R(\gamma))+\frac{1}{2}\chi(S)\mod 2\\
    &\equiv \frac{1}{4}\chi(R(\gamma))+g(S)+\frac{m}{2}\mod 2.
\end{align*}

Now, if $\bm{x}$ is a generator from $\mathcal{H}$ with $[\s^R(\bm{x})]_{C}=[S]_{C}+\bm{1}$ for all $i$, then $\bm{x}$ must contain each of the $m+k$ intersection points $x_i\in C_i\cap\bm{\alpha}$ by a minor generalization of Lemma~\ref{lem:diagramaticouterguided} to the case of closed components of the fixed point set. But, by assumption, we have that
\begin{align*}|\bm{\alpha}|-m-k\equiv k+\frac{3m}{2}+\frac{1}{4}\chi(R(\gamma))+g(S)
\end{align*} 
is odd. There is no such generator, as no remaining intersection points are fixed by $\t$, so must come in pairs. The result follows.
\end{proof}

\Cref{prop:computingasummand} and \Cref{prop:generalizedvanishing} can be combined to give the following generalization of~\cite[Proposition 9.12]{LO_Real_bordered}:

\begin{corollary}\label{cor:genPR}
Suppose that $(Y,\gamma,\tau)$ is a real sutured manifold containing a separating, real once punctured torus, $T$ which contains exactly one component $C_i$ of the fixed point set, $C$. Further suppose that the endpoints of $C_i$ lie on a spherical component of $\partial Y$, which is endowed with a single suture. Write $Y=Z\cup_T -\tau(Z)$ and let $\gamma_{Z}$ be the sutured structure on $Z$ inherited from $Y$, with an additional pair of sutures of slope $C$ on the new toroidal boundary component. Suppose that either:\begin{enumerate}
    \item $\partial Y$ has a non-spherical component and $\chi(R(\gamma))\equiv 6\mod 8$ , or
    \item $\partial Y$ consists only of spheres and $\chi(R(\gamma))\equiv 2\mod 8$. 

\end{enumerate} Then $\RSFH(Y,\gamma,\tau)\cong\SFH(Z,\gamma_{Z})$.
\end{corollary}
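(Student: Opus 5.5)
The plan is to split $\RSFH(Y,\gamma,\tau)$ according to the mod $2$ framing that a real $\SpinC$ structure induces on the single fixed arc $C_i$. Since $C=C_i$ has exactly one component, every $\s^R$ satisfies either $[\s^R]_{C_i}=[T]_{C_i}$ or $[\s^R]_{C_i}=[T]_{C_i}+1$. This gives a decomposition
\[
\RSFH(Y,\gamma,\tau)\cong \RSFH(Y,\gamma,\tau,[T]_{C_i})\oplus\RSFH(Y,\gamma,\tau,[T]_{C_i}+1).
\]
I will show that the first summand vanishes by \Cref{prop:generalizedvanishing}, and that the second is $\SFH(Z,\gamma_Z)$ by \Cref{prop:computingasummand} followed by \Cref{prop:decomposition along separating taut surfaces}. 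If $(Y,\gamma,\tau)$ is not taut, then both sides vanish: the left by \Cref{prop:adjunctioninequality}, and the right because non-tautness transfers to $Z$. So I assume real-tautness throughout, which by \Cref{prop:tautrealtautequiv} is the same as tautness.

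\textbf{The vanishing summand.} I apply \Cref{prop:generalizedvanishing} to $S=T$, which is real and separating with boundary on the spherical component. Here $k=0$ and $m=1$. The arc $C_i\cap T$ is non-separating in the punctured torus, since $C_i$ has endpoints on $\partial T$ and is essential; this essentiality should follow from $C_i$ not lying in a once punctured real sphere, or else I handle that degenerate case separately by \Cref{prop:product disk decomposition}. The genus is $g(T)=1$. The parity quantity is therefore
\[
\tfrac32+\tfrac14\chi(R(\gamma))+1 .
\]
In case (1), $\chi(R(\gamma))=8j+6$, so the quantity equals $2j+4$, which is even. In case (2), $\chi(R(\gamma))=8j+2$, so it equals $2j+3$, which is odd. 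These are the opposite parities from those in \Cref{prop:generalizedvanishing}, so I must be careful about which framing class that proposition kills. Its conclusion concerns $[\s^R]_C=[S]_C+\bm 1$; a generator in that class must contain the fixed intersection point, as in \Cref{lem:diagramaticouterguided}. The complementary class, $[\s^R]_C=[S]_C$, consists of generators avoiding that point, and the parity count then uses $|\bm\alpha|-k$ in place of $|\bm\alpha|-m-k$. This shifts the parity by $m=1$, which is exactly the flip needed. So I would rerun the counting argument in the proof of \Cref{prop:generalizedvanishing} with generators required to omit $x_i$. The result is that the summand $[\s^R]_{C_i}=[T]_{C_i}$ has no generators under the hypothesized congruences.

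\textbf{The surviving summand.} \Cref{prop:computingasummand} gives
\[
\RSFH(Y,\gamma,\tau,[T]_{C_i}+1)\cong\RSFH(Z',\gamma_{Z'},\tau_{Z'}),
\]
where $Z'=Y\setminus\nu(T)=Z\sqcup -\tau(Z)$ because $T$ separates. Here $\tau$ interchanges the two components, and each new toroidal boundary receives the pair of sutures of slope $C_i$, matching $\gamma_Z$. A real sutured manifold consisting of two copies swapped by $\tau$ has real sutured Floer homology equal to $\SFH$ of one copy, by \Cref{ex:recoversclassical} (equivalently, the last step of the proof of \Cref{prop:decomposition along separating taut surfaces}). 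This gives $\SFH(Z,\gamma_Z)$.

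\textbf{Main obstacle.} The main obstacle is the parity bookkeeping in the first step: confirming which framing class \Cref{prop:generalizedvanishing}, as stated, actually kills, and verifying that its diagram construction applies with $S=T$ and the boundary sphere carrying one suture. If the stated parities already match the class $[T]_{C_i}$ directly, then the step is immediate. Otherwise I adapt the count as described above.
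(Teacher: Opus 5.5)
Your argument is correct. For the summand $\RSFH(Y,\gamma,\tau,[T]_{C_i}+1)$ it is exactly the paper's: \Cref{prop:computingasummand} followed by \Cref{ex:recoversclassical}.

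You differ from the paper in how you kill the $[T]_{C_i}$ summand.
\begin{itemize}
\item \emph{The paper} applies \Cref{prop:generalizedvanishing} verbatim, without reopening it. It takes $S$ to be a fixed-point ($\{1\}$-) stabilization of $T$, so $g(S)=2$ and $[S]_C=[T]_C+1$. The class killed by the proposition, $[S]_C+\bm 1$, is then exactly $[T]_C$. Writing $\chi(R(\gamma))=8l+6$ (resp.\ $8l+2$), the parity quantity is $2l+5$ (resp.\ $2l+4$), which matches the stated hypotheses.
\item \emph{You} keep $S=T$ and run the generator count for the complementary class. In the diagram built from $T$, generators with $[\s^R]_{C_i}=[T]_{C_i}$ must omit the unique fixed intersection point. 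All their points therefore come in $\tau$-pairs, which requires $|\bm\alpha|$ to be even. But $|\bm\alpha|\equiv\tfrac14\chi(R(\gamma))+g(T)+\tfrac m2$ (minus one in case (2)) is odd under either hypothesis.
\end{itemize}
These are the same one-step parity flip, realized two ways. The paper's version is a black-box application, but it relies on two facts: a $\{1\}$-stabilization shifts the induced framing by one, and it keeps $C\cap S$ non-separating. Your version avoids stabilization, at the price of re-running the count.

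Your justification that $C_i$ is non-separating in $T$ can be made clean with no extra hypothesis. Near $C_i$, $\tau$ acts as a reflection, so it swaps the two sides of $C_i$. A separating arc in a once-punctured torus cuts it into a disk and a once-punctured torus, and these two pieces cannot be swapped.

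One caveat: your preliminary reduction to the taut case is not justified as written.
\begin{itemize}
\item \Cref{prop:adjunctioninequality} only kills $\RSFH$ when $R(\gamma)$ is not real norm-minimizing or is real-compressible. A reducible $Y$ can have nonzero Floer homology.
\item Non-tautness of $Y$ does not obviously force $\SFH(Z,\gamma_Z)=0$; for instance, a compressing disk for $R(\gamma)$ may cross $T$.
\end{itemize}
The paper makes no such reduction. It simply invokes \Cref{prop:computingasummand}, whose hypotheses (real-tautness, and $C_i$ not lying in a once-punctured real sphere) are tacitly assumed in the corollary. You should do the same: state these as standing assumptions rather than trying to dispose of the non-taut case.
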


Note that if a spherical boundary component of the real sutured manifold$(Y,\gamma,\tau)$ in the statement of the  corollary contained more than one sutures, then $(Y,\gamma,\tau)$ would not be real-taut, so that $\RSFH(Y,\gamma,\tau)=0$. The corollary, in combination with the fact that $\RSFH(-)$ recovers $\widehat{\HFR}(-)$, as in~\Cref{ex:closedhatrecovery}, and the corresponding result in the classical setting, implies~\cite[Proposition 9.12]{LO_Real_bordered}.

Before proving the corollary, we note that $\chi(R(\gamma))\equiv 2$ or $6\mod8$. To see this, let $R(\gamma')$ denote $R(\gamma)$ without the two disk components corresponding to $S$. Then, letting $V$ range over the connected components of $R(\gamma')$ we have that \begin{align*}\chi(R(\gamma))&=\chi(R(\gamma'))+2\\
&=\sum_V(2-2g(V)-|\partial V|)+2\\
&=2|V|-2g(R(\gamma'))-|\partial(R(\gamma'))|+2\\&=2|V|-2g(R(\gamma'))-2|\gamma'|+2.\end{align*}

However, $|V|$, $g(R(\gamma'))$ and $|\gamma'|$ are all even integers since $\tau$ acts freely on $R(\gamma')$ and $\gamma'$, proving the claim.

\begin{proof}[Proof of~\Cref{cor:genPR}]
Suppose that $(Y,\gamma,\tau)$ is a real sutured manifold satisfying the hypotheses of the corollary. Observe that $$\RSFH(Y,\gamma,\tau)\cong \RSFH(Y,\gamma,\tau,[T]_C)\oplus \RSFH(Y,\gamma,\tau,[T]_C+1).$$ We compute each of the two summands in turn. 

For the second summand, observe that~\Cref{prop:computingasummand} implies that  \begin{align*}\RSFH(Y,\gamma,\tau,[T]_C+1)\cong \RSFH(Z',\gamma_{Z'},\tau_{Z'}),\end{align*} where $(Z',\tau_{Z'})$ is the real manifold obtained by removing a neighborhood of $T$ (containing the spherical boundary component), and $\gamma_{Z'}$ consists of the sutures inherited from $\gamma$ along with a pairs of parallel sutures on each of the new toroidal boundary components of $Z'$ each with slope induced by $C$. Since $T$ is a separating real torus, we can write $(Z',\gamma_{Z'})$ as $(Z,\gamma_{Z})\sqcup (\tau(Z),\tau(\gamma_Z))$. It follows from~\Cref{ex:recoversclassical} (in both the case that $(Z,\gamma_Z)$ is balanced or non-balanced) that 
\begin{align*}\RSFH(Y,\gamma,\tau,[T]_C+1)\cong \SFH(Z,\gamma_{Z}).\end{align*}

For the $\RSFH(Y,\gamma,\tau,[T]_C)$ summand, we have two cases according to whether or not $\partial Y$ contains non-spherical components. Suppose it does. Take $S$ in the statement of Proposition~\ref{prop:generalizedvanishing} to be a fixed point stabilization of the genus one, once punctured torus containing a single component of the fixed point set (so that we are in Case~\ref{case:921} with $k=0,m=1, g(S)=2$ and $\chi(R(\gamma))=8l+6$ for some $l\in\Z$). We have that 
\begin{align*}\RSFH(Y,\gamma,\tau,[T]_C)\cong \RSFH(Y,\gamma,\tau,[S]_C+1)\cong 0,\end{align*}

\noindent concluding the proof in this case. The case in which $\partial Y$ consists of spheres follows by the same argument except that we apply Case~\ref{case:922} instead of Case~\ref{case:921} from the statement of Proposition~\ref{prop:generalizedvanishing}.
\end{proof}

It is likewise possible to recover (a generalization of) part of~\cite[Proposition 9.13]{LO_Real_bordered}; namely that if $T$ contains two components of the fixed point set --- as opposed to exactly one as in the statement of the Corollary~\ref{cor:genPR} --- then $\RSFH(Y,\gamma,\tau)$ contains a $\SFH(Z,\gamma_{Z})$ summand.

\subsection{The knot $8_3$}\label{sec:8_3}

Let $K$ be the strongly invertible knot shown in Figure~\ref{fig:8_3_example} whose underlying knot is $8_3$.
It was shown in \cite[Example 3.4]{HirasawaInvariantSeifertsurface} that $K$ has equivariant Seifert genus $2$. A genus $2$ equiavriant Seifert surface $S$ is shown in the right frame of Figure~\ref{fig:8_3_example}. However, it is also known that $8_3$ has Seifert genus $1$. In fact, the surface $S$ is compressible; the blue and orange curves shown in Figure~\ref{fig:8_3_example} bound a pair of compressing disks $D_1,D_2$ (interchanged by the involution) with intersecting boundaries. Decomposing the knot exterior along the surface $S$ yields a manifold that is real compressible with real compressing disks the images of $D_1$ and $D_2$ in the complement of $\nu(S)$.

It follows from the real adjunction inequality (Theorem~\ref{prop:adjunctioninequality}) that $\widehat{\HFKR}(K,\fra,s) = 0$ for $|s|\geq 1$. Here, $\fra$ is a choice of orientation and axis for $K$, required in the definition of $\widehat{\HFKR}$; see \cite{YXHFLR}. Moreover, there is a spectral sequence from $\widehat{\HFKR}(K,\fra)$ to $\widehat{\HFR}(S^3,\tau)\cong\F$, the differentials of which all strictly decrease the value of real Alexander grading (see \cite[Theorem~4.6]{YXHFLR} for details). Here, $\tau$ is the unique real structure on $S^3$ with non-empty fixed point set. It follows that $\widehat{\HFKR}(8_3,0)\cong\F$, matching with the calculation in \cite[Appendix]{YXHFLR}. Consequently, the adjunction inequality in Theorem~\ref{thm:betteradjunctionopen} is not tight.

Recall that Proposition~\ref{prop:periodicfree} gives examples of real sutured manifolds $(Y,\gamma,\tau)$ with vanishing real sutured Floer homology. For those examples, $\tau$ was free, but the following example shows that this hypothesis isn't a necessary condition for having vanishing real sutured Floer homology.

\begin{example}
We can construct examples of real sutured manifolds $(Y,\gamma,\tau)$ with ${\RSFH(Y,\gamma,\tau)=0}$, but where the action of $\tau$ is not free. We can view the right hand side of Figure~\ref{fig:8_3_example} as a real Heegaard diagram 

$\mathcal{H}$, for a component $(Y,\gamma,\tau)$ of the real sutured manifold obtained by decomposing the sutured exterior of $K$ along a parallel pair of genus $1$ Seifert surfaces. Here, the orange and blue curves are the $\alpha$ and $\beta$-curves, respectively and the real Heegaard surface has two sides colored in green and orange. Observe that this real sutured manifold has trivial real sutured Floer homology, since $\mathcal{H}$ is admissible and hence $\RSFH(\mathcal{H})$ has no generators. Nevertheless, $(Y,\gamma,\tau)$ admits a real $\SpinC$ structure; example of such can be constructed as $\s^R(\bm{x})$ for a generator $\bm{x}$ from a Heegaard diagram $\mathcal{H'}$ obtained by performing a real isotopy of $\alpha$ and $\beta$ so that they intersect along the fixed point set. $(Y,\gamma)$ is taut since it is obtained by decomposing a sutured knot exterior along a pair of disjoint minimal genus Seifert surfaces. \Cref{prop:tautrealtautequiv} then implies that $(Y,\gamma,\tau)$ is real-taut. 
\end{example}

\begin{example}\label{ex:closedadjunctionnotight}

In this example, we illustrate the fact that the adjunction inequality in~\Cref{thm:adjnuctioninequalityclosed} is not tight. Consider the pointed closed real manifold $(M,\tau,w)$ obtained as follows. 

Take the real sutured  manifold $(Y,\gamma,\tau)$ with real Heegaard diagram given by a free stabilization of the Heegaard diagram in the right hand side of Figure~\ref{fig:8_3_example} with the stabilized $\alpha$ and $\beta$ curves removed. Cap off $\gamma$ with a thickened flipping disk. Note that $g(R_\pm(\gamma))=2$. Attach a thickened disk $\{z\in\C:|z|\leq 1\}\times[-1,1]$ to $(Y,\gamma)$, with feet at points $x$ and $\tau(x)$. Let $M$ denote the resulting manifold, which comes equipped with an extension of $\tau$ that acts as $(z,t)\mapsto (\overline{z},-t)$ on $\{z\in\C:|z|\leq 1\}\times[-1,1]$.

We claim that we can glue in a genus $4$ handlebody to $(M,\tau)$ in such a way that $\tau$ extends over the resulting manifold. To see this, pick a basis of arcs $\{c_i\}_{1\leq i\leq 4}$ for $(R_+(\gamma)\setminus(\nu(x),\partial\nu(x))$ such that $\partial c_i$ form a two element orbit of $(z,1)$ and $(\bar{z},1)$, under an identification $\partial\nu(x)\cong \{z\in\C:|z|\leq 1\}\times\{1\}$ where $\tau$ corresponds to complex conjugation. Consider now the simple closed curves $\gamma_i=c_i\cup (\{\partial c_i\}\times[-1,1])\cup\tau(c_i)$. Attach a genus $4$ handlebody $H$ to $M$ by identifying any $4$ meridians of $H$ with the $\gamma_i$ curves. $\tau$ extends over $H$ by~\cite[Corollary 5.11]{handlebodyprimer}, proving the claim. Observe that resulting manifold is topologically the quotient of $(Y,\gamma)$ under a diffeomorphism which identifies $R_\pm(\gamma)$.

A computation ---  using immersed curves, say --- shows that $\widehat{\HF}(M,w,\s)\neq 0$ for $\s$ with $\langle c_1(\s),[R_\pm(\gamma)]\rangle=-2$. Indeed, the adjunction inequality~\cite[Theorem 7.1]{OS3mnfldspropex} implies that $\widehat{\HF}(M,w,\s)= 0$ for $\s$ with $\langle c_1(\s),[R_\pm(\gamma)]\rangle<-2$, so Thurston norm detection~\cite[Theorem 1.1]{ozsvath2004genusbounds} implies that $x(R_\pm(\gamma))=2$. On the other hand, we can see that $\underset{\s^R:\langle c_1(\s),[R_\pm(\gamma)]\rangle=-2}{\bigoplus}\widehat{\HFR}(M,\tau,w,\s^R)=0$ by constructing a Heegaard diagram as in the proof of Theorem~\ref{thm:adjnuctioninequalityclosed}, taking $R_\pm(\gamma)$ in this example to be the surface $T$ there.
\end{example}

\begin{figure}[h]
    \def\svgwidth{.8\linewidth}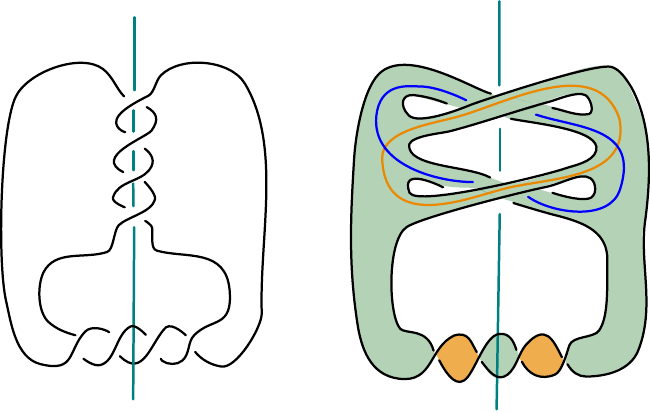
    \caption{The knot $8_3$ is pictured on the left. A minimal genus equivariant Seifert surface for $8_3$ is shown on the right, together the boundaries of a pair of compressing disks for the Seifert surface. The vertical lines are the axes of the standard involution of $S^3$.}
    \label{fig:8_3_example}
\end{figure}

Recall that knot Floer homology detects fibered knots in the sense that $\rank\widehat{\HFK}(K,g(K))=1$ if and only if $K$ is fibered~\cite{ghiggini2008knot,ni2007knot}. According to the proof of \cite[Proposition~1.5]{HirasawaInvariantSeifertsurface}, if a knot in $S^3$ is strongly invertible and also fibered, then it is \emph{real-fibered} in the sense that there is an equivariant projection $\pi: S^3-\nu(K)\to S^1$ intertwining the involution on $S^3-\nu(K)$ and the conjugation action on $S^1$. Moreover, we can require the fiber to contain either choice of direction on $K$ --- i.e., an oriented component of $C\setminus K$. Note that this definition of real fibered is equivalent to the sutured exterior of $K$ admitting a real surface decomposition to a real product sutured manifold. Thus an application of ~\Cref{thm:decomposition formula for RSFH} tells us that if a knot is real-fibered then $\rank\widehat{\HFKR}(K,\fra,g(K))=1$ for any choice of auxiliary data. The converse, however, is false, as can be seen with the following counterexample:

\begin{example} \label{ex:nofiberdetection}

Consider the real sutured exterior of the strongly invertible $8_3$, $(Y,\gamma,\tau)$. Note that $\gamma$ has two annular components. We can form a new real sutured manifold by gluing a real product surface $(\Sigma\times[-1,1],\partial\Sigma\times [-1,1],\tau')$ with $|\partial\Sigma|=3$ to $(Y,\gamma,\tau)$ using an appropriate diffeomorphism which identifies two sutures of the product real surface with the two sutures of $(Y,\gamma,\tau)$. Let $(Y',\gamma',\tau')$ denote the resulting real sutured manifold. Note that $\rank(\SFH(Y',\gamma',\tau'))=\rank(\widehat{\HFK}(8_3))=17$, since there is a decomposition along a pair of product annuli from $(Y',\gamma',\tau')$ to the disjoint union of the sutured exterior of $8_3$ and a product sutured manifold. It follows that $(Y',\gamma',\tau')$ is not a product sutured manifold, and hence not a real product sutured manifold.

We claim that we can glue in a genus $g(R_\pm(\gamma'))$ handlebody to $(Y',\gamma',\tau')$ in such a way that $\tau'$ extends over the resulting manifold. To see this, first glue an annulus $S^1\times[-1,1]$ to $\partial R_\pm(\gamma')$ along its two boundary components.  Here $S^1:=\{z\in\C:|z|=1\}$ and we endow $S^1\times[-1,1]$ with the involution given by conjugation on the first factor and $x\mapsto -x$ on the second factor. Let $(M,\tau)$ denote the resulting real $CW$ complex. Now pick a basis of arcs $\{c_i\}_{1\leq i\leq 4}$ for $(R_+(\gamma'),\partial(R_\pm(\gamma')))$ such that the two elements of $\partial c_i$ form a two element orbit of $S^1$ under the identification $\partial R_+(\gamma')\cong S^1\times\{1\}$. Consider now the simple closed curves $\gamma_i=c_i\cup (\{\partial c_i\}\times[-1,1])\cup\tau(c_i)$. Attach a genus $g(R_\pm(\gamma'))$ handlebody $H$ to $M$ by identifying any four meridians of $H$ with the $\gamma_i$ curves. $\tau'$ extends over $H$ by~\cite[Corollary 5.11]{handlebodyprimer}, proving the claim. Observe that resulting manifold is topologically the quotient of $(Y,\gamma)$ under a diffeomorphism which identifies $R_\pm(\gamma)$.

Let $K$ be a real knot with this manifold as its real sutured exterior. Observe that $$\rank\widehat{\HFKR}(K,\fra,g(K))=1$$ \noindent by an application of~\Cref{thm:decomposition formula for RSFH} for the real surface decomposition along $R_\pm(\gamma')$ --- viewed as a properly embedded real surface in the real sutured exterior of $K$ --- that yields $(Y',\gamma',\tau')$. However, as noted above, $(Y,\gamma,\tau)$ is not a real product sutured manifold, showing that $\rank\widehat{\HFKR}(K,\fra,g(K))=1$ is not sufficient to conclude that $K$ is real fibered.\end{example}

%% file: 8_3_example.pdf_tex
\begingroup%
  \makeatletter%
  \providecommand\color[2][]{%
    \errmessage{(Inkscape) Color is used for the text in Inkscape, but the package 'color.sty' is not loaded}%
    \renewcommand\color[2][]{}%
  }%
  \providecommand\transparent[1]{%
    \errmessage{(Inkscape) Transparency is used (non-zero) for the text in Inkscape, but the package 'transparent.sty' is not loaded}%
    \renewcommand\transparent[1]{}%
  }%
  \providecommand\rotatebox[2]{#2}%
  \newcommand*\fsize{\dimexpr\f@size pt\relax}%
  \newcommand*\lineheight[1]{\fontsize{\fsize}{#1\fsize}\selectfont}%
  \ifx\svgwidth\undefined%
    \setlength{\unitlength}{311.70000458bp}%
    \ifx\svgscale\undefined%
      \relax%
    \else%
      \setlength{\unitlength}{\unitlength * \real{\svgscale}}%
    \fi%
  \else%
    \setlength{\unitlength}{\svgwidth}%
  \fi%
  \global\let\svgwidth\undefined%
  \global\let\svgscale\undefined%
  \makeatother%
  \begin{picture}(1,0.63402309)%
    \lineheight{1}%
    \setlength\tabcolsep{0pt}%
    \put(0,0){\includegraphics[width=\unitlength,page=1]{8_3_example.pdf}}%
  \end{picture}%
\endgroup%

%% file: hierarchies.tex
A fundamental result in sutured manifold theory is that taut sutured manifolds admit sutured hierarchies~\cite[Theorem~4.2]{gabai1983foliations}. In this section, we prove a corresponding result in the context of real sutured manifolds.

\begin{proposition}\label{prop:tautdecompositions exist}
Let $(Y,\gamma,\tau)$ be a (real-) taut real sutured manifold. If ${H_1(\partial Y;\Z)^{-\tau_*}\ne 0}$, then there is a class $a \in H_1(\partial Y;\Z)^{-\tau_*}$, so that $a =\partial \alpha$ for some $\alpha \in H_2(Y,\partial Y)^{-\tau_*}$. For any such $a$, we can find a real surface $S'$ such that $\partial[S']=2a$ and the decomposition of $(Y,\gamma)$ (respectively $(Y,\gamma,\tau)$) along $S'$ is (real-) taut.
\end{proposition}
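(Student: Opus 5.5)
The plan has three stages: produce the class $a$, build a taut classical decomposing surface for $2a$, and then symmetrize it into a real surface using the double curve sum trick of Lemma~\ref{lem:doublenormreal} and the proof of Theorem~\ref{thm:betteradjunctionopen}.

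\emph{Existence of $a$.} Consider the long exact sequence
\[
H_2(Y,\partial Y)\xrightarrow{\partial} H_1(\partial Y)\xrightarrow{i_*} H_1(Y),
\]
which is $\tau_*$-equivariant. By the half-lives-half-dies principle, the image of $\partial$ has rank $\tfrac12\operatorname{rank}H_1(\partial Y)$. This image is a $\tau_*$-invariant Lagrangian sublattice for the intersection form on $H_1(\partial Y)$, and $\tau$ is orientation reversing on $\partial Y$ up to the convention $R_\pm$.

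Given a nonzero $b\in H_1(\partial Y)^{-\tau_*}$, I would first try to show that some nonzero anti-invariant element lies in $\operatorname{im}\partial$. If $b$ itself is not in the image, use equivariant duality: $\operatorname{im}\partial$ is the annihilator of itself, and $\tau_*$ respects the form up to sign. This should force $(\operatorname{im}\partial)\otimes\mathbb Q$ to meet the $-1$ eigenspace nontrivially.

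Then take any $\alpha_0$ with $\partial\alpha_0=a_0$ and replace it by $\alpha_0-\tau_*\alpha_0$. Its boundary is $2a_0$, which is anti-invariant, and $\alpha:=\alpha_0-\tau_*\alpha_0$ lies in $H_2(Y,\partial Y)^{-\tau_*}$. So after possibly doubling we get $a$ with $a=\partial\alpha$. This linear algebra is the part I am least sure of. If needed, the statement's existence claim only requires some nonzero such $a$, which the above provides.

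\emph{Taut surface for $2a$.} Given such $a$ and $\alpha$, apply Gabai's theorem (or \cite[Theorem~1.4]{scharlemann1989sutured}) to obtain a nice, taut, not necessarily real decomposing surface $S$ representing $\alpha$. Then, exactly as in Lemma~\ref{lem:doublenormreal} and the proof of Theorem~\ref{thm:betteradjunctionopen}, isotope $S$ so that it meets $-\tau(S)$ transversely with no removable sphere or disk components. Set
\[
S':=DCS(S,-\tau(S)).
\]
This surface is real, has $[S']=\alpha-\tau_*\alpha=2\alpha$, and satisfies $\partial[S']=2a$. Lemma~\ref{lem:doublenormreal} shows it is Thurston norm minimizing in its class. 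If needed, band $S'$ near sutures away from fixed points to make it nice while keeping it real.

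\emph{Tautness of the decomposition (main obstacle).} I would argue via Floer theory rather than directly, using the equivalence of Proposition~\ref{prop:tautrealtautequiv}. Since $S$ is taut, \cite[Theorem~1.3]{juhasz2008floer} gives
\[
\SFH(Y_S)=\bigoplus_{\s\in O_S}\SFH(Y,\gamma,\s)\neq 0.
\]
As in the proof of Theorem~\ref{thm:betteradjunctionopen}, a $\SpinC$ structure is outer for $S$ if and only if it is outer for $V=S'$, because $c(S',t)=2c(S,t)$ and outerness is characterized by $\langle c_1,\cdot\rangle=c(\cdot,t)$. Hence $\SFH(Y_{S'})\cong\SFH(Y_S)\neq 0$.

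Next, the decomposed manifold $Y_{S'}$ is irreducible, since it is obtained from an irreducible manifold by decomposing along a norm-minimizing surface with no sphere components. Juh\'asz's characterization \cite[Theorem~1.4]{juhasz2008floer} then shows that $(Y_{S'},\gamma_{S'})$ is taut. By Proposition~\ref{prop:tautrealtautequiv} it is also real-taut. The delicate points are verifying that $S'$ is nice and that irreducibility survives, which is where I expect the most care to be needed.
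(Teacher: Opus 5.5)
The overall strategy matches the paper's: take a nice taut $S$ in the class $\alpha$, double it against $-\tau(S)$, and detect tautness through Juh\'asz's decomposition formula and $\SFH\neq0$. However, the step you flag as delicate is a genuine gap, and banding does not close it.

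\textbf{Niceness of $S'$.} For an arbitrary nice taut $S$, the arcs and circles of $\partial S\cap R(\gamma)$ will in general cross those of $-\tau(\partial S)$. Moreover, $DCS(S,-\tau(S))$ can have closed components of any genus. Lemma~\ref{lem:doublenormreal} only disposes of spheres and disks, and its hypothesis $\partial S\cap\gamma=\emptyset$ fails for a typical decomposing surface; in any case norm-minimality is not what you need. Resolving the boundary crossings can create closed curves in $R(\gamma)$ that are not parallel, coherently oriented and boundary-coherent, and bands near the sutures do nothing about closed components. This has four consequences:
\begin{itemize}
\item \cite[Theorem~1.3]{juhasz2008floer} does not apply to $S'$.
\item The identity $c(S',t)=2c(S,t)$ is unjustified, since resolving crossings changes the boundary curves from which $I(\cdot)$ and $r(\cdot,t)$ are computed.
\item Decomposing along a closed component does not even produce a balanced sutured manifold.
\item Irreducibility of $Y_{S'}$ is no longer automatic.
\end{itemize}

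The missing idea is to fix the boundary \emph{before} doubling, using Gabai's Lemma~\ref{lem:modifying decomposing surface}. Choose $S$ via \cite{scharlemann1989sutured} so that the lemma's hypotheses hold. You may then replace $\partial S\cap R(\gamma)$ by any homologous family of essential curves meeting each component of $\partial R(\gamma)$ minimally, keeping the decomposition taut and $S$ fixed outside a collar of $\partial Y$. Anti-invariance of $[S]$ gives $[\partial S\cap V_+]=[-\tau(\partial S\cap V_-)]$ in $H_1(V_+,\partial V_+)$. So the closed curves can be made parallel to their $\tau$-images, and the arcs, having zero algebraic intersection with their $\tau$-images, can be made disjoint from them. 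Then $\partial\,DCS(S,-\tau(S))=\partial S\sqcup-\tau(\partial S)$ is boundary-coherent. Discarding the closed components $S''$ leaves a nice real $S'$ with $[S']+[S'']=2\alpha$. This is why the statement only asserts $\partial[S']=2a$; your claim $[S']=2\alpha$ is not what one gets.

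To finish, use $c(-\tau(S),t)=c(S,t)$: a vector field witnessing that $\s$ is outer for both $S$ and $-\tau(S)$ witnesses, after rounding corners, that $\s$ is outer for $S'$. Hence $\SFH(Y_{S'})\cong\bigoplus_{\s\in O_{S'}}\SFH(Y,\gamma,\s)$ contains $\bigoplus_{\s\in O_S}\SFH(Y,\gamma,\s)\neq0$. $Y_{S'}$ is irreducible because $S'$ is open, and Proposition~\ref{prop:tautrealtautequiv} upgrades tautness to real-tautness.

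\textbf{Existence of $a$.} This part also has a gap, and it turns on a sign you have backwards. Because $\tau$ preserves the orientation of $Y$ and carries outward normals to outward normals, $\tau|_{\partial Y}$ is orientation-preserving. So $\tau_*$ \emph{preserves} the intersection form $\omega$ on $H_1(\partial Y;\R)$; the convention $\tau(R_+)=-R_-$ only concerns how $R_\pm$ are oriented. This sign is exactly what makes the argument work. The eigenspaces $H^\pm$ of $\tau_*$ are then $\omega$-orthogonal, so $\omega|_{H^-}$ is non-degenerate. The $\tau$-invariant Lagrangian $L=\ker\iota_*=\operatorname{im}\partial$ splits as $(L\cap H^+)\oplus(L\cap H^-)$. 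Any $z\in H^-$ orthogonal to $L\cap H^-$ is automatically orthogonal to $L\cap H^+$, hence lies in $L^\perp\cap H^-=L\cap H^-$. So $L\cap H^-$ is Lagrangian in $H^-$ and has dimension $\tfrac12\dim H^->0$; this is Lemma~\ref{lem:real half-die half alive}. If $\tau_*$ instead reversed $\omega$, both eigenspaces would be isotropic, and $L=H^+$ would be a $\tau$-invariant Lagrangian missing $H^-$ entirely. So an argument that is agnostic about the sign cannot succeed. Your subsequent step, replacing $\alpha_0$ by $\alpha_0-\tau_*\alpha_0$ to get an integral anti-invariant class with boundary $2a_0$, is fine.
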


Before proving this proposition we recall the following lemma:

\begin{lemma}[{\cite[Lemma 0.6]{gabai1987foliations}}]\label{lem:modifying decomposing surface}
Let $(Y,\gamma)\overset{S}{\rightsquigarrow}(Y',\gamma')$ be a sutured manifold decomposition such that $(Y',\gamma')$ is taut and for each component $\delta$ of $\partial R(\gamma)$, ${\vert\delta\cap \partial S\vert=\vert\left \langle \partial S,\delta \right \rangle\vert}$. If $\lambda$ is a set of pairwise disjoint oriented simple essential curves in $R(\gamma)$ such that for each component $\delta$ of $\partial R(\gamma)$, $\vert\delta\cap \lambda\vert=\vert\left \langle \lambda,\delta \right \rangle\vert$ and $[\lambda]=[\partial S\cap R(\gamma)]\in H_1(R(\gamma),\partial R(\gamma))$, then there exists a sutured manifold decomposition $(Y,\gamma)\overset{T}{\rightsquigarrow}(Y_1,\gamma_1)$ such that $(Y_1,\gamma_1)$ is taut, $T\cap R(\gamma)=\lambda$ and $[T]=[S]\in H_2(Y,\partial Y)$. In fact, $T$ can be chosen so that $T\cap (Y-\nu(\partial Y))=S\cap (Y-\nu(\partial Y))$.      
\end{lemma}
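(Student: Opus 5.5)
This is Gabai's lemma, so I would follow his strategy: leave $S$ unchanged away from a collar of $\partial Y$ and re-route it inside the collar so that its trace on $R(\gamma)$ becomes $\lambda$. Fix a collar $N\cong \partial Y\times[0,1]$ with $\partial Y=\partial Y\times\{0\}$, and isotope $S$ so that it meets $N$ in the vertical product $(\partial S)\times[0,1]$. Then $S\cap(Y-\nu(\partial Y))$ is frozen, which gives the final sentence of the statement for free. All the work is to replace the piece $(\partial S\cap R(\gamma))\times[0,1]$ inside $R(\gamma)\times[0,1]$ by a new surface $W$ whose boundary is $\lambda\times\{0\}\cup(\partial S\cap R(\gamma))\times\{1\}$, suitably oriented, together with vertical arcs over $\partial R(\gamma)$. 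The piece $S\cap(\gamma\times[0,1])$ is kept as it is.

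\textbf{Step 1: matching the endpoints.} The hypothesis $|\delta\cap\partial S|=|\langle\partial S,\delta\rangle|$ says that all points of $\partial S\cap\delta$ have the same sign, and the same holds for $\lambda$. Since $[\lambda]=[\partial S\cap R(\gamma)]$ in $H_1(R(\gamma),\partial R(\gamma))$, the algebraic, and hence geometric, intersection numbers with each $\delta$ agree. After an isotopy of $\lambda$ along $\partial R(\gamma)$ we may therefore assume that $\partial\lambda=\partial(\partial S\cap R(\gamma))$ pointwise. This makes $\lambda-(\partial S\cap R(\gamma))$ a relative cycle whose boundary lies in $\partial R(\gamma)$ only as vertical arcs in $\partial R(\gamma)\times[0,1]$.

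\textbf{Step 2: choosing the connecting surface.} The homology assumption gives an oriented surface $W\subset R(\gamma)\times[0,1]$ realizing this homology. I would take $W$ to be Thurston norm minimizing and incompressible, with no closed components; this is possible because each curve of $\lambda$ is essential. Set $T:=\bigl(S-(\partial S\cap R(\gamma))\times[0,1]\bigr)\cup W$. Then $T\cap R(\gamma)=\lambda$, and $[T]=[S]$ because $T-S$ is supported in the collar and bounds there.

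\textbf{Step 3: tautness of $(Y_1,\gamma_1)$.} I expect this to be the main obstacle. The plan is to cut $Y_1$ along a copy $R'$ of $R(\gamma)$ pushed to level $1$ of the collar. This exhibits $(Y_1,\gamma_1)$ as decomposing, along $R'$ with $R'\cap T=(\partial S\cap R(\gamma))\times\{1\}$, into a disjoint union of two pieces:
\begin{itemize}
\item $(Y',\gamma')$, which is taut by hypothesis;
\item the product $R(\gamma)\times[0,1]$ decomposed along $W$.
\end{itemize}
The second piece is taut by Gabai's theorem that decomposing a product sutured manifold along a norm-minimizing surface with no closed components gives a taut sutured manifold. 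Gabai's converse lemma then says that if the result of a decomposition along a surface like $R'$, parallel to $R(\gamma_1)$ with coherent boundary and no closed inessential components, is taut, then the original manifold is taut. Applying it shows $(Y_1,\gamma_1)$ is taut.

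The delicate points in this step are twofold. First, I must check that $R'$ satisfies the hypotheses of that converse lemma. Second, I must check that irreducibility of $Y_1$ is not destroyed by the gluing, which I would argue with an innermost-disk argument against the incompressible surface $R'$.
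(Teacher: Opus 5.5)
The paper gives no proof of this lemma (it is quoted from Gabai), so your argument has to stand on its own. The overall architecture is sound: freeze $S$ off a collar, modify it only over $R(\gamma)$, cut $Y_1$ along a parallel copy $R'$ of $R(\gamma)$, and invoke Gabai's converse lemma. Steps 2 and 3 fail, however, because you insist that the new piece $W$ lie in $R(\gamma)\times[0,1]$ with $T\cap(\gamma\times[0,1])$ unchanged. In Step 2, the hypothesis $[\lambda]=[\partial S\cap R(\gamma)]$ holds only in $H_1(R(\gamma),\partial R(\gamma))$. After matching endpoints, the cycle $\lambda-(\partial S\cap R(\gamma))$ is homologous in $R(\gamma)$ to a combination of components of $\partial R(\gamma)$, not necessarily to zero. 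For instance, let $V$ be a component of $R(\gamma)$ missed by $\partial S$, of genus two with boundary circles $\delta_1,\delta_2$. Let $\lambda\cap V$ be a separating curve with $\delta_1$ and $\delta_2$ on different genus-one sides. Then $\lambda$ satisfies every hypothesis, but $[\lambda\cap V]=\pm[\delta_1]\neq 0$ in $H_1(V)\cong H_1(V\times[0,1])$. So no $W\subset V\times[0,1]$ with boundary $(\lambda\cap V)\times\{0\}$ exists, and since you froze $S\cap(\gamma\times[0,1])$ there is nowhere to put the missing boundary.

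In Step 3, the ``theorem'' you cite is false: decomposing a product sutured manifold along a norm-minimizing surface without closed components need not give a taut manifold. Take $V$ a component of positive genus missed by $\partial S$, $c\subset V$ non-separating, and $c'$ a parallel copy. Let $A\subset V$ be the annulus they cobound, oriented like $R(\gamma)$, and let $\lambda\cap V=c\cup c'$ be oriented as $-\partial A$. The incompressible, norm-minimizing $W\subset V\times[0,1]$ with this boundary is the annulus parallel to $A$. Because of the orientation, decomposing along it cuts off the solid torus between $A$ and $W$ with its entire boundary in $R_+(\gamma_1)$ or in $R_-(\gamma_1)$ (according as $V\subset R_\pm(\gamma)$) and no suture. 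So $R(\gamma_1)$ is compressible and your $(Y_1,\gamma_1)$ is not taut.

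The lemma still holds in this example, but only via a surface your setup excludes. Push in $V\setminus\mathrm{int}(A)$, oriented like $R(\gamma)$. Its boundary is $\lambda\cap V$ together with $\partial V$, and the latter is pushed into $\gamma$ as circles parallel to, and oriented like, the sutures. Decomposing along it just peels off a product.

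So you need two things. First, let $T\cap\gamma$ acquire such suture-parallel, coherently oriented circles; this also repairs Step 2. Second, choose $W$ so that it cuts the collar into product sutured manifolds, which is stronger than being norm-minimizing. One way to do this:
\begin{itemize}
\item take an integral $2$-chain $F$ on $R(\gamma)$ with $\partial F=\lambda-(\partial S\cap R(\gamma))$ modulo components of $\partial R(\gamma)$;
\item add copies of components of $R(\gamma)$ until all coefficients of $F$ are non-negative with respect to the orientation of $R(\gamma)$;
\item realize $F$ by horizontal sheets oriented like $R(\gamma)$, joined by vertical strips, and resolve crossings by oriented cut-and-paste.
\end{itemize}
Once $W$ has this property, your cut along $R'$ and the converse lemma finish the argument.
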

\begin{proof}[Proof of Proposition \ref{prop:tautdecompositions exist}]
The first statement follows from Lemma \ref{lem:real half-die half alive}. Fix an $a$ as in the statement and an $\alpha \in H_2(Y,\partial Y)^{-\tau_*}$ so that $a=\partial \alpha$. Pick a nice decomposing surface $S$ in class $\alpha$ such that the decomposition of $(Y,\gamma)$ along $S$ is taut. Using \cite[2.5-2.6]{scharlemann1989sutured}, we know that $S$ can be chosen so that assumptions in Lemma \ref{lem:modifying decomposing surface} are satisfied.

\noindent\textbf{Step 1:} We first find modify $S$ in its homology class so that $DCS(S,\tau(S))$ is also nice after deleting any closed components. Our strategy is to modify $S$ by iterated applications of Lemma \ref{lem:modifying decomposing surface}.
    
\noindent\textbf{Step 1.1:} Let $V_+$ be a component of $R_+$ and $V_-:=\tau(V_+)$. Let $C_\pm$ be the sets of simple closed curves in $\partial S\cap V_\pm$. Since $[S]$ is $-\tau_*$-invariant, we know that $[C_+]=[-\tau(C_-)]\in H_1(V_+,\partial V_+)$. If $C_+$ and $-\tau(C_-)$ are disjoint, we are done. If not, choose a collection of parallel circles $C_-'\subset V_-$ such that $[C'_-]=[C_-]$ and $-\tau (C_-)$ is parallel to $C_+$. Note that $C_-'\cup (\partial S\cap R(\gamma)-C_-)$ always satisfies the condition on $\lambda$ in Lemma \ref{lem:modifying decomposing surface} since we do not modify $S$ in a neighborhood of $\partial R(\gamma)$. We modify $S$ using Lemma \ref{lem:modifying decomposing surface} to get a new surface, which in a minor abuse of notation we still call $S$. Apply this procedure with the roles of $\pm$ interchanged and for each pair of $V_\pm$. After these modifications, $S$ is still nice and simple closed curve components of $ \partial S\cap R(\gamma)$ are disjoint from simple closed curve components of $-\tau(\partial S) \cap R(\gamma)$.
    
\noindent\textbf{Step 1.2:} Let $V_+$ be a component of $R_+$ and $V_-$ be the corresponding component of $R_-$. Let $A_\pm$ be the union of arcs in $V_\pm\cap \partial S$. We modify $A_-$ using Lemma~\ref{lem:modifying decomposing surface} so that $\tau(A_-)\cap (C_+ \cup A_+)=\emptyset$. We first argue that these classes have algebraic intersection number zero. Assume that there is an arc or circle $a_-$ in $A_-\cup C_-$ so that $\tau(a_-)$ intersects $a_+$, an arc or circle in $A_+\cup C_+$  essentially. Then $[-\tau(a_-)]$ is nonzero in $H_1(R_+(\gamma),\partial R_+(\gamma))$ and, since $[S]$ is a real class in $H_2(Y,\partial Y)$, there must be some $a_+'$ in $A_+\cup C_+$ with $[a_+']=[-\tau(a_-)]\in H_1(R_+(\gamma),\partial(R_+(\gamma)))$. This leads to a contradiction, since $a_+\cap a_+'=\emptyset$. Apply the same argument with the roles of $\pm$ interchanged and do this for each interchanged pair of components $V_\pm$. Every surface admits a symplectic basis of curves that realize their algebraic intersection numbers as their geometric intersection numbers. We use intersection number realizing representative of curves in the class $[\partial S\cap V_\pm]$ to do modify $S$ as described above. 
    
Step 1.1 and 1.2 ensure that $\partial S$ and $\partial(-\tau(S))$ are disjoint, so that $\partial (DCS(S,-\tau(S))=\partial S\cup-\partial(\tau(S))$. Note also that the new $S$ still satisfies the boundary coherent condition, $[S]$ keeps unchanged during this procedure and $DCS(S,\tau(S))$ also satisfies the boundary coherent condition. To observe that the open condition holds, note that if $DCS(S,-\tau(S))$ has some closed component, then we just delete them. Let $S'$ be the resulting surface and let $S''$ be the union of closed components that we have deleted.

\noindent\textbf{Step 2:} We now prove that the decomposition $(Y,\gamma,\tau)\overset{S'}{\rightsquigarrow}(Y_1,\gamma_1,\tau_1)$ is taut. Before modification, $S$ is as provided by \cite[Section 2]{scharlemann1989sutured}, so the decomposition $(Y,\gamma)\overset{S}{\rightsquigarrow}(Y',\gamma')$ is taut. Note that Lemma \ref{lem:modifying decomposing surface} preserves tautness of decomposition, so the decomposition $(Y,\gamma)\overset{S}{\rightsquigarrow}(Y'',\gamma'')$ using the surface $S$ after modification is also taut. Since $S$ is nice, we have that \[0\ne \SFH(Y'',\gamma'')\cong\bigoplus_{\s\in O_S} \SFH(Y,\gamma,\s).\] By \cite[Lemma 3.10]{juhasz2008floer}, there is some $\SpinC$ structure $\s$ such that $\left \langle c_1(\s),[S] \right \rangle=c(S,t)$ and $\SFH(Y,\gamma,\s)\ne 0$. 

Since $S\cup -\tau(S)$ and $S'\cup S''$ are related by a double curve sum, we have $[S']+[S'']=2[S]$. One can see from the definition that $c(S,t)=c(-\tau(S),t)$, so $\s$ is outer for $-\tau(S)$ when it is outer for $S$. Let $\s$ be a $\SpinC$ structure that is outer with respect to $S$ (thus also for $-\tau(S)$). We can choose a vector field $v$ representing $\s$, so that $v_p\ne (\nu_S)_p$ for $p\in S$ and $v_q\ne (\nu_{-\tau(S)})_q$ for $q\in -\tau(S)$. Here, we use $\nu$ to denote the unit normal vector field of an oriented surface in a 3-manifold. By rounding the corners of  $DCS(S,-\tau(S))$ carefully, $v$ can be seen to witness the fact that $\s$ is outer for $S'$. Since $S'$ is nice, we have that \[ \SFH(Y_1,\gamma_1)\cong\bigoplus_{\s\in O_{S'}} \SFH(Y,\gamma,\s)=\bigoplus_{\s\in O_{S}} \SFH(Y,\gamma,\s)\cong\SFH(Y'',\gamma'')\ne 0.\] Recall from~\cite[Proposition 9.18]{juhasz2006holomorphic} that a non-taut (irreducible) sutured sutured manifold has trivial sutured Floer homology. Thus, $(Y_1,\gamma_1)$ is taut. It is then also real-taut by Proposition~\ref{prop:tautrealtautequiv}, concluding the proof.
\end{proof}

\begin{lemma}\label{lem:real half-die half alive}
Let $(Y,\gamma,\tau)$ be a real sutured manifold, then $\tau_*$ acts on $H_*(Y,\partial Y)$, $H_*(\partial Y)$ and $H_*(Y)$ as involutions. Let $\iota:\partial Y\to Y$ be the inclusion, then \[\iota_*: H_1(\partial Y;\R)^{-\tau_*} \to  H_1(Y;\R)^{-\tau_*}\] satisfies \[\mathrm{dim} (\mathrm{ker}(\iota_*))=\mathrm{dim} (\mathrm{im}(\iota_*)).\]
\end{lemma}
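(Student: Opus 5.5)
This is the equivariant version of ``half lives, half dies.'' The plan is to take $-\tau_*$-eigenspaces in the exact sequence of the pair $(Y,\partial Y)$ with real coefficients, and then use Poincar\'e--Lefschetz duality together with the fact that $\tau$ preserves orientation.

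Since $\tau$ is an involution, $\tau_*$ acts on all groups in the long exact sequence
\[
H_2(Y,\partial Y;\R)\xrightarrow{\partial} H_1(\partial Y;\R)\xrightarrow{\iota_*} H_1(Y;\R),
\]
and the maps are $\tau$-equivariant. Over $\R$, taking $(-1)$-eigenspaces is exact: it is the image of the projector $\tfrac12(1-\tau_*)$. So we obtain an exact sequence on $-\tau_*$-invariants, and $\ker(\iota_*|^{-\tau_*}) = \operatorname{im}(\partial|^{-\tau_*})$. It therefore suffices to show that $\dim \partial(H_2(Y,\partial Y)^{-\tau_*}) = \dim \iota_*(H_1(\partial Y)^{-\tau_*})$, or equivalently that $\ker(\iota_*)^{-} := \ker(\iota_*)\cap H_1(\partial Y)^{-\tau_*}$ is exactly half of $H_1(\partial Y)^{-\tau_*}$.

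Next, consider the intersection pairing on $H_1(\partial Y;\R)$. The involution $\tau$ preserves the orientation of $Y$, and hence of $\partial Y$ with its induced boundary orientation. So $\tau_*$ preserves the pairing: $\langle\tau a,\tau b\rangle=\langle a,b\rangle$. For $a$ in the $+1$-eigenspace and $b$ in the $-1$-eigenspace we get $\langle a,b\rangle=-\langle a,b\rangle=0$. Hence $H_1(\partial Y)=H^+\oplus H^-$ is an orthogonal splitting, and the nondegenerate symplectic form restricts to a nondegenerate form on each summand.

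The classical argument shows that $K=\ker\iota_*$ is Lagrangian in $H_1(\partial Y)$. Isotropy follows from the standard fact that boundaries of relative classes pair to zero; maximality follows from duality, since $\dim K=\tfrac12\dim H_1(\partial Y)$. Because $K$ is $\tau$-invariant, it splits as $K=K^+\oplus K^-$, and $K^\pm$ is isotropic in $H^\pm$. Each summand then satisfies $\dim K^\pm\le\tfrac12\dim H^\pm$, while the total is $\dim K=\tfrac12(\dim H^++\dim H^-)$. Both inequalities must therefore be equalities, so $\dim K^-=\tfrac12\dim H^-$. This gives $\dim\ker=\dim\operatorname{im}$ for $\iota_*$ restricted to $H_1(\partial Y;\R)^{-\tau_*}$, whose image lands in $H_1(Y;\R)^{-\tau_*}$.

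The main point to check carefully is the orientation claim, namely that $\tau|_{\partial Y}$ preserves the orientation of $\partial Y$. This holds because $\tau$ is orientation-preserving on $Y$ and maps the outward normal to the outward normal. Also, the pairing on a possibly disconnected $\partial Y$ is a direct sum over components, and $\tau$ may permute these components, but this does not affect the argument.
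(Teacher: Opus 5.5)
Your argument is correct, but it reaches the conclusion by a different route than the paper.

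The paper works entirely inside the anti-invariant parts. It first shows that both $\omega_{\partial Y}$ on $H_1(\partial Y;\R)^{-\tau_*}$ and the Poincar\'e--Lefschetz pairing $H_1(Y;\R)^{-\tau_*}\times H_2(Y,\partial Y;\R)^{-\tau_*}\to\R$ remain perfect, using the same eigenspace-orthogonality observation you make. It then reruns the classical proof to show directly that $\ker(\iota_*)\cap H_1(\partial Y;\R)^{-\tau_*}$ equals its own annihilator in $H_1(\partial Y;\R)^{-\tau_*}$. The equivariant duality on $Y$ is used for the reverse inclusion.

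You instead take the non-equivariant statement, that $K=\ker\iota_*$ is Lagrangian in all of $H_1(\partial Y;\R)$, as a black box, and deduce the equivariant statement by a dimension squeeze. You write $K=K^+\oplus K^-$, where each $K^\pm$ is isotropic in the nondegenerate space $H^\pm$, so $\dim K^\pm\le\tfrac12\dim H^\pm$. Since $\dim K=\tfrac12\dim H$, equality is forced in both.

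What each route buys:
\begin{itemize}
\item Your version is shorter. It needs the orthogonal splitting only on $\partial Y$, not the equivariant duality pairing on $Y$, and it gives the analogous statement for the $+1$-eigenspace for free.
\item The paper's version is self-contained at the level of the anti-invariant subspaces and does not appeal to the classical theorem.
\end{itemize}

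A small point: your first paragraph, on exactness of taking $(-1)$-eigenspaces, is not actually needed. The reduction to showing $\dim K^-=\tfrac12\dim H^-$ is just rank--nullity for $\iota_*$ restricted to $H^-$.
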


Before proving the lemma, we recall the following well known fact.

\begin{lemma}\label{lem:algebra half-die half-alive}
Let $V$ be a finite dimensional vector space over a field $\K$ with $\mathrm{char}(\K)\ne 2$. Suppose that there is a non-degenerate form $\omega$ on $V$. If $L$ is a Lagrangian in $V$, i.e., $L^{\perp}=L$, then $L$ is a half-dimensional subspace such that $\omega$ vanishes on $L$.
\end{lemma}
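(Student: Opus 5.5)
This is a standard linear algebra fact, and I would prove it directly. First I would check the dimension claim: $\omega|_L=0$ follows from $L\subseteq L^\perp$. For the dimension, non-degeneracy says the map $V\to V^*$, $v\mapsto \omega(v,\cdot)$, is an isomorphism. Hence for any subspace $W$, the subspace $W^\perp$ is the preimage of the annihilator $\mathrm{Ann}(W)\subseteq V^*$ under this isomorphism. Therefore $\dim W^\perp=\dim V-\dim W$. Applying this to $W=L$ with $L^\perp=L$ gives $2\dim L=\dim V$, so $L$ is half-dimensional.

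Two small points need care, and I would address each explicitly. The first is that $\omega$ may be only a bilinear form, not necessarily symmetric or skew. In that case $L^\perp$ should be read as the left (or right) orthogonal complement. The argument goes through for either side, since non-degeneracy makes both induced maps $V\to V^*$ isomorphisms; I would state this so the lemma covers both conventions. The second point is the hypothesis $\mathrm{char}(\K)\neq 2$. The argument above does not use it, so I would remark that it only matters in the intended application: there $\omega$ is skew-symmetric, namely the intersection pairing on $H_1(\partial Y;\R)^{-\tau_*}$, and one wants skew-symmetry to imply that $\omega$ vanishes on isotropic lines, i.e. $\omega(v,v)=0$.

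Nothing here is a genuine obstacle. The only step worth writing out is the dimension formula for $W^\perp$ via the isomorphism $V\cong V^*$ induced by $\omega$, and everything else follows by direct substitution.
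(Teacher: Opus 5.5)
Your proof is correct: $\omega|_L=0$ follows from $L\subseteq L^\perp$, and non-degeneracy makes $v\mapsto\omega(v,\cdot)$ an isomorphism $V\to V^*$, so $\dim L^\perp=\dim V-\dim L$ and hence $\dim L=\tfrac12\dim V$. The paper gives no proof of its own, quoting this as a well-known fact before the proof of \Cref{lem:real half-die half alive}, and your argument is the standard one. You are also right that the characteristic hypothesis is never used, and it is not needed in that application either, since there $L=L^\perp$ is verified directly.
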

\begin{proof}[Proof of~\Cref{lem:real half-die half alive}] We make two claims:

\noindent\textbf{Claim 1}: The pairings \[\omega_Y\colon H_1(Y;\R)^{-\tau_*}\times H_2(Y,\partial Y;\R)^{-\tau_*}\to \R\] and \[\omega_{\partial Y}\colon H_1( \partial Y;\R) ^{-\tau_*}\times H_1(\partial Y;\R)^{-\tau_*}\to \R\] are perfect.

\noindent\textbf{Claim 2}: The subspace $L=\mathrm{ker}(\iota_*\colon H_1(\partial Y;\R)^{-\tau_*} \to  H_1(Y;\R)^{-\tau_*})$ is a Lagrangian in $( H_1(\partial Y;\R)^{-\tau_*},\omega_{\partial Y})$.
Applying Lemma \ref{lem:algebra half-die half-alive} to Claim 1 and 2 proves the lemma.

We verify Claim 1 for $\omega_{\partial Y}$. A similar argument works for $\omega_{Y}$. Note that \[\omega_{\partial Y}:H_1( \partial Y;\R)\times H_1(\partial Y;\R)\to \R\] is known to be a perfect pairing, and the naturality of $\tau_*$ implies it is an involution on $H_1(\partial Y;\R)$. Therefore, $H_1(\partial Y;\R)$ splits into $\pm 1$ eigenspaces of $\tau_*$. If $\tau_*a=a$ while $\tau_*b=-b$, then \[\left \langle PD(a),b \right \rangle=\left \langle \tau^*PD(a),b \right \rangle= \left \langle PD(a),\tau_*b \right \rangle=-\left \langle PD(a),b \right \rangle.\] This implies that $\omega_{\partial Y}(a,b)=0$, so $H_1(\partial Y;\R)^{-\tau_*}$ and $H_1(\partial Y;\R)^{\tau_*}$ are orthogonal with respect to $\omega_{\partial Y}$. Hence, $\omega_{\partial Y}$ gives rise to a perfect pairing.

For Claim 2, observe that if $\iota_*a=\iota_*b=0$, say $b=\partial B$ in $H_1(Y;\R)^{-\tau^*}$, then \[\omega_{\partial Y}(a,b)= \omega_{\partial Y}(\iota_* a,B)=0.\] Thus, $L\subset L^{\perp}$. To show the reverse inclusion, assume that $z\notin L$. We will show that $z\notin L^{\perp}$. To see this observe that since $\tau_*z\ne 0$, there is some $w \in H_2(Y,\partial Y;\R)^{-\tau_*}$ such that \[0\ne \omega_{Y}(z,w) =\omega_{\partial Y}(z,\partial w),\] where $\partial$ is the connecting homomorphism in the long exact sequence for the homology of the pair $(Y,\partial Y)$. Since $\partial w\in H_1(Y,\partial Y;\R)^{-\tau_*}$, this shows that $z\notin L^{\perp}$ as claimed.
\end{proof}
    
The following result serves as an analogue of~\cite[Theorem 4.18]{scharlemann1989sutured}. It is a stronger version of~\Cref{intro-thm:real hierarchy}.
\begin{theorem}\label{thm:existence of a partial real sutured hierarchy}
Any real-taut real sutured manifold $(Y,\gamma,\tau)$ admits a sequence of real taut decompositions
\begin{align*}
(Y,\gamma,\tau)\overset{S_1}{\rightsquigarrow}(Y_1,\gamma_1,\tau_1)\overset{S_2}{\rightsquigarrow}\dots \overset{S_n}{\rightsquigarrow} (Y_n,\gamma_n,\tau_n)
\end{align*}
such that $H_1(\partial Y_n)^{-\tau_*}=0$. Moreover, at each stage we have that $S_i$ is either a real product disk, a real product annulus, or a surface representing $2\alpha$ for some class $\alpha\in H_1(\partial Y_{i-1})^{-\tau_*}$ .
\end{theorem}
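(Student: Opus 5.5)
The plan is to induct on a complexity of $(Y,\gamma,\tau)$, applying Proposition~\ref{prop:tautdecompositions exist} repeatedly and interleaving real product disk and annulus decompositions to keep the complexity decreasing. If $H_1(\partial Y)^{-\tau_*}=0$ we are done with $n=0$. Otherwise, by Lemma~\ref{lem:real half-die half alive}, $\ker\iota_*$ on $H_1(\partial Y;\R)^{-\tau_*}$ is half-dimensional and hence nonzero. So there is an integral class $a\in H_1(\partial Y)^{-\tau_*}$ with $a=\partial\alpha$ for some $\alpha\in H_2(Y,\partial Y)^{-\tau_*}$. Proposition~\ref{prop:tautdecompositions exist} then gives a real surface $S_1$ with $\partial[S_1]=2a$ (so $[S_1]=2\alpha$ up to closed classes) whose decomposition is real-taut. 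We replace $(Y,\gamma,\tau)$ by $(Y_1,\gamma_1,\tau_1)$ and iterate. By Proposition~\ref{prop:tautrealtautequiv}, real-tautness and tautness agree at every stage, so classical facts about taut decompositions are available throughout.

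The main obstacle is termination. For this I would adapt the complexity used in \cite[Theorem~4.18]{scharlemann1989sutured} (and Gabai's hierarchy argument), which is built from the genera of the boundary components, the number of sutures, and $\dim H_1(\partial Y)$. The surface in Proposition~\ref{prop:tautdecompositions exist} comes from Scharlemann's construction in class $\alpha$, modified by Lemma~\ref{lem:modifying decomposing surface} and a double curve sum. I would check that the double curve sum $DCS(S,-\tau(S))$ cuts $\partial Y$ along curves realizing the nontrivial class $2a$ in $H_1(\partial Y)^{-\tau_*}$. The aim is to show that decomposing kills $a$ in the new boundary, or at least strictly lowers the $-\tau_*$-part of the boundary homology once capped-off product pieces are discarded. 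In particular, one must rule out the decomposition simply creating new essential real boundary classes.

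Any such new classes should arise from product regions: parallel copies of $R(\gamma)$ and annuli near the sutures. These I would remove using real product disk decompositions (Proposition~\ref{prop:product disk decomposition}) and real product annulus decompositions (Proposition~\ref{prop:decomposition along product annuli}), both of which preserve tautness. This is exactly where the three allowed types of $S_i$ in the statement enter.

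If the naive count fails to decrease, I would first try Scharlemann's lexicographic complexity (sum of $\chi_-$ of $R(\gamma)$, then number of suture components, and so on) restricted to $\tau$-equivariant data. Because every step is equivariant, each quantity splits into an invariant part and a pair-swapped part, which should let the classical monotonicity arguments go through unchanged.
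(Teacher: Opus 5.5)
Your setup matches the paper: iterate Proposition~\ref{prop:tautdecompositions exist} (with Lemma~\ref{lem:real half-die half alive} supplying $a$), and use Proposition~\ref{prop:tautrealtautequiv} to pass between real and classical tautness. The gap is termination, which you correctly identify as the crux but do not resolve.

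Your primary mechanism is that decomposing along $S_1$ kills $a$, or lowers the $-\tau_*$-part of boundary homology once product pieces are discarded. This fails in general, because the new boundary contains $S_+\cup S_-$. For instance, cutting the real exterior of a strongly invertible knot along a genus-$g$ real surface replaces a torus, with $H_1(\partial Y)^{-\tau_*}\cong\Z^2$, by a closed genus-$2g$ surface. By Riemann--Hurwitz, any orientation-preserving involution of that surface has $-\tau_*$-part of rank at least $2g$, so for $g\ge 2$ the count goes up. Nothing you say forces these new classes to be carried by product disks or annuli.

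Your fallback is that Scharlemann's complexity decreases because the classical monotonicity arguments ``go through unchanged'' equivariantly. This omits exactly the hypotheses that drive monotonicity. The complexity reduction of \cite[Theorem~4.17]{scharlemann1989sutured} (Proposition~\ref{prop:complexity reduction}) applies only when the sutured manifold has no index-$0$ $\partial$-compressing disks, i.e.\ no non-trivial product disks. It also requires that the decomposing surface is not a null-homologous $\partial$-parallel surface.

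The missing idea is how to secure the no-product-disk hypothesis when you may only decompose equivariantly. The paper first decomposes along real product disks (flipping disks or pairs) until none remain. It then proves Proposition~\ref{prop:no real product disk=no product disk}: if $Y$ is irreducible and $R(\gamma)$ is incompressible, the absence of non-trivial real product disks implies the absence of any non-trivial product disk. The proof reduces $D\cap\tau(D)$ by an equivariant innermost-circle/outermost-arc argument. The non-obvious case is an arc of $D\cap\tau(D)$ running from $R_+$ to $R_-$. There, surgering $D$ along an outermost subdisk of $\tau(D)$ gives two product disks, at least one non-trivial, meeting $\tau(D)$ in fewer arcs.

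With this in hand, no equivariant version of the complexity is needed. Forget $\tau$, note that the underlying sutured manifold is taut, and apply Scharlemann's theorem directly. The decomposing surface is not null-homologous since $\partial[S_i]=2a\neq 0$, so the complexity strictly drops and the sequence terminates. Product annulus decompositions play no role here. Also, Propositions~\ref{prop:product disk decomposition} and~\ref{prop:decomposition along product annuli} are about the rank of real sutured Floer homology, not about preserving tautness.
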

We will use the following special case of~\cite[Theorem 4.17]{scharlemann1989sutured} (with $\beta=\emptyset$) to prove the theorem.

\begin{proposition}\label{prop:complexity reduction}
Let $(Y,\gamma)$ be a connected taut sutured manifold which has no index 0 $\partial$-compressing disks. Suppose $(S,\partial S)\subset (Y,\partial Y)$ is a connected incompressible surface and that the sutured manifold $(Y',\gamma')$  obtained by decomposing $(Y,\gamma)$ along $S$ is taut. Let $(Y_S,\gamma_S)$ be any of the connected components of $(Y',\gamma')$. Then $\widehat{C}(Y_S,\gamma_S)\le \widehat{C}(Y,\gamma)=C(Y,\gamma)$ and either \begin{itemize}
    \item $S$ is $\partial$-parallel in $Y$ and $[S,\partial S]$ is trivial in $H_2(Y,\partial Y)$ or 
    \item $\widehat{C}(Y_S,\gamma_S)\le \widehat{C}(Y,\gamma)$.
\end{itemize} 
\end{proposition}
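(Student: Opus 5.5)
The plan is to obtain this statement directly from \cite[Theorem~4.17]{scharlemann1989sutured} by specializing that result to the case $\beta=\emptyset$, rather than reproving it. Scharlemann works with \emph{$\beta$-taut} sutured manifolds, where $\beta$ is a properly embedded $1$-complex and the complexities (Thurston norm, compressibility, and the $\partial$-compressing disk index) are computed relative to $\beta$. So the first step is to record that when $\beta=\emptyset$ each of his relative notions reduces to the corresponding absolute one:
\begin{itemize}
\item $\beta$-tautness becomes tautness in the sense of \cite{gabai1983foliations};
\item the index of a $\partial$-compressing disk becomes the index appearing in our hypothesis;
\item his complexities $C$ and $\widehat{C}$ become the ones in the statement.
\end{itemize}
In particular, the equality $\widehat{C}(Y,\gamma)=C(Y,\gamma)$ is exactly the consequence in \cite{scharlemann1989sutured} of having no index $0$ $\partial$-compressing disks.

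Next I will check the hypotheses of \cite[Theorem~4.17]{scharlemann1989sutured} one at a time.
\begin{itemize}
\item $(Y,\gamma)$ is connected, taut, and has no index $0$ $\partial$-compressing disks.
\item $S$ is connected and incompressible.
\item The decomposition along $S$ is taut.
\end{itemize}
One point needs care. Scharlemann states his theorem for an arbitrary component of the decomposed manifold, and this matches our choice of $(Y_S,\gamma_S)$. He also states his dichotomy as ``$S$ is a product disk or $\partial$-parallel'' versus ``strict decrease of $\widehat{C}$''. With $\beta=\emptyset$ and $S$ $\partial$-parallel, the class $[S,\partial S]$ vanishes in $H_2(Y,\partial Y)$, which gives the first alternative in our statement.

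The main obstacle is purely bookkeeping: making sure that our sutured conventions match Scharlemann's. We assume $T(\gamma)=\emptyset$ and orient $R_\pm$ as in Section~\ref{sec:Preliminaries}, and the complexity tuple in \cite{scharlemann1989sutured} must be the one intended here. I would handle this by restating his definition of $C$ and $\widehat{C}$ with $\beta$ removed, and noting that toroidal sutures play no role in it. No real structure enters at any point, since the proposition is entirely classical. The real input will come only later, in the proof of Theorem~\ref{thm:existence of a partial real sutured hierarchy}, where this proposition is combined with Proposition~\ref{prop:tautdecompositions exist} to guarantee that the hierarchy terminates.
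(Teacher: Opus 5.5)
Your proposal matches the paper: the paper does not reprove this statement but simply cites \cite[Theorem~4.17]{scharlemann1989sutured} with $\beta=\emptyset$, and your reduction of the relative notions to absolute ones, together with the remark that a $\partial$-parallel $S$ has $[S,\partial S]=0$, is the bookkeeping that citation needs. You are right to read the second alternative as the strict inequality $\widehat{C}(Y_S,\gamma_S)<\widehat{C}(Y,\gamma)$ (the $\le$ in the displayed statement is a typo). Also, if Scharlemann's dichotomy lists product disks, that case is absorbed by your hypothesis, since a product disk in a manifold with no index $0$ $\partial$-compressing disks is trivial and hence $\partial$-parallel.
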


Here, $C$ and $\widehat{C}$ measure the complexity of sutured manifolds. We refer readers to~\cite{scharlemann1989sutured} for their definitions. For a fixed sutured manifold, these complexities can only be ``reduced" for finitely many times. In our case, the index $0$ $\partial$-compressing disks are \emph{non-trivial product disks} i.e., product disks which cannot be isotoped through product disks into the boundary. Note that non-trivial real product disks can be defined analogously.

\begin{proposition}\label{prop:no real product disk=no product disk}
Suppose $(Y,\gamma)$ is irreducible and $R(\gamma)$ is incompressible. Then there is a non-trivial real product disk in $(Y,\gamma,\tau)$ if and only if there is a non-trivial product disk in $(Y,\gamma)$.
\end{proposition}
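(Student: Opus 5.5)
One direction is immediate: a non-trivial real product disk is either a flipping product disk or one member of a pair of product disks. If such a disk could be isotoped through product disks into $\partial Y$, then it would be trivial in the classical sense. So a non-trivial real product disk yields a non-trivial product disk in $(Y,\gamma)$; for a pair, we take either member. The content of the statement is therefore the converse.

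Suppose $D$ is a non-trivial product disk in $(Y,\gamma)$. We put $D$ and $\tau(D)$ in general position, so that they meet transversely in finitely many circles and arcs, and these meet $C$ in finitely many points. Arcs of $D\cap\tau(D)$ have endpoints on $\partial D\cap\tau(\partial D)$. The arcs $\partial D\cap R_\pm$ and $\tau(\partial D)\cap R_\mp$ live in the same surfaces, and $\partial D\cap\gamma$ consists of two spanning arcs. Our first step is to isotope $\partial D$ within $\partial Y$ through product disks to minimize $|\partial D\cap\tau(\partial D)|$.

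The second step removes circle intersections. We use an innermost-circle argument exactly as in the incompressibility step of Proposition~\ref{prop:tautrealtautequiv}. An innermost circle $c$ in $\tau(D)$ bounds disks $D_1\subset D$ and $D_2\subset\tau(D)$. By irreducibility, the sphere $D_1\cup D_2$ bounds a ball. We isotope $D$ across this ball; this does not move $\partial D$, so $D$ remains a product disk isotopic to the original. The count of intersection components of $D\cap\tau(D)$ strictly drops. Iterating, we may assume $D\cap\tau(D)$ consists only of arcs.

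The third step, which we expect to be the main obstacle, removes arc intersections. We take an outermost arc $a$ of $D\cap\tau(D)$ in $\tau(D)$, cutting off a subdisk $E\subset\tau(D)$ with $E\cap D=a$. Surgery of $D$ along $E$ produces two disks, and we must choose the one that is still a product disk, meaning it meets $s(\gamma)$ in exactly two points. The endpoints of $a$ lie on $\partial Y$. Since $\tau$ swaps $R_\pm$, each endpoint is an intersection of $\partial D$ with $\tau(\partial D)$, and it lies either in $\gamma$ or in $R_\pm$. By incompressibility of $R(\gamma)$ together with the minimality arranged in the first step, the arc $\partial E\setminus a$ cannot cut off a bigon in $R(\gamma)$. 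It must therefore cross $s(\gamma)$ appropriately, so one of the two surgered disks is a product disk with fewer arcs of intersection with its $\tau$-image. We also need this disk to still be non-trivial. If both surgered pieces were trivial, we would argue that $D$ itself is isotopic through product disks into $\partial Y$, as in Gabai's and Scharlemann's analysis of index $0$ disks; this uses irreducibility to fill in the resulting balls.

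At the end, $D\cap\tau(D)$ is either empty or a single $\tau$-invariant arc. In the first case, $(D,\tau(D))$ is a non-trivial pair of product disks. In the second case, we aim to show that $D$ can be replaced by an invariant disk. Since $\tau$ acts on $D\cup\tau(D)$, cutting and regluing along the invariant arc and equivariantly smoothing gives a $\tau$-invariant disk containing an arc of $C$ and meeting $s(\gamma)$ twice. This is a flipping product disk, and its non-triviality follows from the same triviality argument as in the third step.
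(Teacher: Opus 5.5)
Your Steps 2 and 3 follow the paper's outline:
\begin{itemize}
\item innermost circles are removed using irreducibility;
\item outermost arcs with both endpoints in the same $R_\pm$ are eliminated using incompressibility of $R(\gamma)$, where the paper isotopes $D$ across a ball rather than pre-minimizing $\partial D\cap\tau(\partial D)$;
\item arcs from $R_+$ to $R_-$ are handled by compressing $D$ into two product disks and keeping a non-trivial one.
\end{itemize}
Your endgame, however, is wrong.

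Suppose $D\cap\tau(D)$ is a single $\tau$-invariant arc $a$. Then $a\not\subset C$. Along $C$ the map $d\tau$ fixes $TC$ and negates its normal plane, so if $a\subset C$ then $d\tau(TD)=TD$ along $a$, and $D$ and $\tau(D)$ would be tangent there. Hence $\tau|_a$ is the flip of $a$: it has exactly one interior fixed point and interchanges the endpoints of $a$. Write $D=D_a\cup D_a'$ for the two halves of $D$ cut along $a$. The only $\tau$-invariant regluings of $D\cup\tau(D)$ along $a$ are $D_a\cup\tau(D_a)$ and $D_a'\cup\tau(D_a')$. On each of these, $\tau$ swaps the two halves while reversing $a$, so it acts as rotation by $\pi$ with a single fixed point. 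Such a disk contains no arc of $C$, and $\tau$ preserves its orientation. It is therefore neither a flipping product disk nor a real decomposing surface. For a concrete instance, take $\delta\times[-1,1]$ in a real product sutured manifold, where $\delta$ is an arc with $\eta(\delta)=\delta$ crossing $\mathrm{fix}(\eta)$ once.

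The paper never aims for a flipping disk, and none is needed. Pushing a non-trivial flipping product disk off to both sides already gives a non-trivial pair, since $\tau$ reverses its co-orientation. Instead the paper iterates until it reaches a non-trivial product disk disjoint from its $\tau$-image.

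In your invariant-arc situation, the regluing that achieves disjointness is the crossed one. The four half-disks alternate around $a$, so $D_1=D_a\cup\tau(D_a')$ and $\tau(D_1)=D_a'\cup\tau(D_a)$ can be smoothed equivariantly into opposite wedges. They then form a disjoint pair of product disks.

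What is still missing is a reason for $D_1$ to be non-trivial. ``The same triviality argument'' only says that one of the two disks obtained by compressing along an outermost arc is non-trivial.
\begin{itemize}
\item For a non-invariant outermost arc your descent is fine: $E\cap\tau(E)\subset a\cap\tau(a)=\emptyset$, so both $a$ and $\tau(a)$ disappear from $D'\cap\tau(D')$.
\item For an invariant arc, one of the two candidates is the rotation-invariant disk above, which is its own $\tau$-image. If that candidate is the non-trivial one, then neither your claim of ``fewer arcs of intersection with its $\tau$-image'' nor your conclusion follows. The paper's compression step is terse at exactly this point too.
\end{itemize}
You also need to explain why at most one invariant arc survives, since each point of $D\cap C$ lies on its own invariant arc of $D\cap\tau(D)$.
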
 

\begin{proof}

If there is a non-trivial real product disk in $(Y,\gamma,\tau)$, it is immediate that there is a non-trivial product disk in $(Y,\gamma)$.

For the other implication, we will use an innermost disk argument for real irreducibility that implies irreducibility in the proof of Proposition~\ref{prop:tautrealtautequiv} generalizes directly to prove this proposition. Suppose $D$ is a non-trivial product disk in $(Y,\gamma)$. After a perturbation we may assume that $D$ and $\tau(D)$ intersect transversely along a collection of disjoint arcs and circles, which only intersect $\mathrm{fix}(\tau)$ in their interiors. We proceed by an innermost disk argument. We first modify $D$ to reduce the number of circle components of $D\cap\tau(D)$. To that end, pick an innermost disk $d$ with boundary in the interior of $\tau(D)$. This co-bounds a ball $B^3$ with another disk in $D$ by the irreducibility assumption on $Y$. Isotope $D$ in a small neighborhood of $B^3$ to obtain a new disk $D'$ with the property that $D'\cap\tau(D)$ contains strictly fewer closed components that $D\cap\tau(D)$. Indeed, since $d$ is innermost, $D'\cap\tau(D')$ has at most the same number of closed components as $D'\cap\tau(D)$, we are done. Note that $D'$ was obtained by an isotopy from $D$, so it remains a non-trivial product disk. Repeating this argument, we reduce to the case that $D\cap \tau(D)$ has no closed components.

To remove arcs in $D\cap \tau(D)$ we proceed similarly. Pick an innermost disk $d$ with boundary an arc $a$ in $\tau(D)$. We have two cases according to whether both endpoints of this arc lie in $R_\pm(\gamma)$, or one lies in $R_+(\gamma)$ and the other lies in $R_-(\gamma)$.

Suppose both endpoints of $a$ lie in $R_\pm(\gamma)$. Then $d$ co-bounds a ball --- $B^3$ --- with another disk in $D$, along with a portion of $R_\pm(\gamma)$ by the assumption that $R(\gamma)$ is incompressible. Isotope $D$ in a small neighborhood of $B^3$ to obtain a new disk $D'$ with the property that $D'\cap\tau(D)$ contains strictly fewer closed components that $D\cap\tau(D)$. Indeed, since $d$ was innermost, $D'\cap\tau(D)$ has at least as many components as $D'\cap\tau(D')$, so we are done. Note that since $D'$ was obtained by an isotopy from $D$ so it remains a non-trivial product disk.

Suppose now that one endpoint of $a$ lies in $R_+(\gamma)$ and the other lies in $R_-(\gamma)$. Observe that compressing $D$ along $d$ results in two new product disks, $D_1$ and $D_2$. At least one of these must be non-trivial, as else $D$ would be trivial. Let $D'$ be that disk. Observe that $D'\cap \tau(D)$ has strictly fewer components than $D\cap \tau(D)$ by construction. Moreover, $D'\cap \tau(D')$ has fewer components than $D'\cap \tau(D)$, so after repeating this procedure sufficiently many times we obtain a non-trivial product disk that is disjoint from its image under $\tau$, and hence a pair of real product disks.\end{proof}

\begin{proof}[Proof of \Cref{thm:existence of a partial real sutured hierarchy}]
At each stage, first apply finitely many real product disk decompositions until there are no real product disks. If for the resulting manifold $(Y_i,\gamma_i,\tau_i)$, the homology $H_1(\partial Y_i;\Z)^{-\tau_*}$ is already zero, we are done. If not, we use Proposition \ref{prop:tautdecompositions exist} to find a real taut decomposing surface, say $S$. Let $(Y_{i+1},\gamma_{i+1},\tau_{i+1})$ be the real sutured manifold obtained by decomposing $(Y_i,\gamma_i,\tau_i)$ along $S$. By Proposition \ref{prop:no real product disk=no product disk}, we can apply Proposition \ref{prop:complexity reduction} to conclude that $(Y_{i+1},\gamma_{i+1})$ has strictly lower complexity than $(Y_i,\gamma_i)$ as sutured manifolds (forgetting the real structure). The finiteness of allowed complexity reductions implies that the hierarchy must conclude in finitely many steps.  
\end{proof}

The real sutured manifolds at the ends of the real sutured hierarchies as in \Cref{thm:existence of a partial real sutured hierarchy} satisfy strong topological restraints.
\begin{proposition}\label{rem:endsofhierarchies}
Suppose $(Y,\gamma,\tau)$ is a real-taut real sutured manifold with $H_1(\partial Y)^{-\tau*}=0$. Then $(Y,\gamma,\tau)$ is the disjoint union of rotating and interchanging balls, and real-irreducible manifolds with toroidal boundary, on each connected component of which $\tau$ acts by rotation.
\end{proposition}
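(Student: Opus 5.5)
The plan is to analyse the boundary first and then the interior. Write $\partial Y=\bigsqcup V_j$. Since $\tau$ acts on $\pi_0(\partial Y)$, the components fall into two kinds: pairs $V,\tau(V)$ that are interchanged, and single components with $\tau(V)=V$. For an interchanged pair, $H_1(V\sqcup\tau V)^{-\tau_*}\cong H_1(V)$ via $a\mapsto a-\tau_*a$. So the hypothesis $H_1(\partial Y)^{-\tau_*}=0$ forces every interchanged component to be a sphere. For an invariant component $V$, we have $H_1(V;\Q)^{-\tau_*}=0$. Because $\tau|_V$ preserves orientation, it preserves the intersection form on $H_1(V;\Q)$. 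The $\pm1$ eigenspaces are orthogonal (this is the argument of Claim~1 in the proof of \Cref{lem:real half-die half alive}), so each is symplectic. Hence $\tau_*=\mathrm{id}$ on $H_1(V;\Q)$, and in fact on $H_1(V;\Z)$, since that group has no $2$-torsion. By the Riemann--Hurwitz/Lefschetz count, an orientation-preserving involution acting trivially on homology is isotopic to the identity only if it is trivial, and only genus $0$ or $1$ admits a nontrivial such involution. It is a rotation with $2$ fixed points on $S^2$; on $T^2$ it is a translation, which acts trivially on $H_1$. Note that $-\mathrm{id}$ is excluded unless $H_1=0$. So every invariant component is a sphere, or a torus on which $\tau$ acts freely by translation. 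Then I will use real-tautness: by \Cref{prop:tautrealtautequiv} the manifold is taut, so $R(\gamma)$ is incompressible and norm-minimizing.

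Next comes the case analysis per component $Y_0$ of $Y$. If $\partial Y_0$ contains a sphere $V$, then, since $(Y,\gamma)$ is taut and $R(\gamma)$ is incompressible, $V$ carries a single suture and $R_\pm\cap V$ are disks. I push $V$ into the interior and apply irreducibility (real-irreducibility upgraded via \Cref{prop:tautrealtautequiv}). This shows $V$ bounds a ball, which must be $Y_0$ itself, so $Y_0$ is a $3$-ball with one suture. If $Y_0$ is $\tau$-invariant, the involution on a ball is conjugate to a rotation (a linear model, using the resolution of the Smith conjecture in dimension three, or Livesay-type results). Otherwise $Y_0$ and $\tau(Y_0)$ are an interchanged pair of balls. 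If $\partial Y_0$ has no spheres, all its components are invariant tori with $\tau$ acting freely on them. Sutures on a torus component would have to be $\tau$-invariant collections of annuli, and $R_\pm$ would be annuli interchanged by $\tau$. Translations can interchange parallel annuli, so this is allowed. Here I must still check that $\tau$ on $Y_0$ is a ``rotation''. Since $\tau$ reverses the orientation of $R$, I expect the stated conclusion to mean that $\tau$ restricted to each toroidal boundary component is a rotation/translation. Irreducibility is again inherited from tautness.

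The main obstacle is the invariant-ball case and the precise meaning of ``rotation'' in the conclusion. For balls, I would first try the equivariant sphere theorem or the Smith conjecture to linearize $\tau$, using that the fixed set is a properly embedded arc meeting $\partial Y_0$ at the two fixed points on the suture. For the toroidal case, I would cite the classification of involutions of $T^2$ acting trivially on $H_1$, together with the fact that the fixed set of $\tau$ misses such boundary tori.
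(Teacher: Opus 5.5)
Your proposal is correct and follows the paper's proof. Interchanged boundary components must be spheres, and invariant ones are spheres or tori on which $\tau$ acts freely by a rotation; here you replace the paper's citation of Dugger's classification with a Lefschetz count, which works because $\#\mathrm{Fix}(\tau|_V)=2-2g$ once $\tau_*=\mathrm{id}$ on $H_1(V)$. Irreducibility then forces any component with a spherical boundary to be a ball, and your reading of ``rotation'' as a statement about the action on the boundary tori is exactly how the paper's proof concludes.
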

\begin{proof}
If two non-spherical components of $\partial Y$ are interchanged by $\tau$, then ${H_1(\partial Y;\Z)^{-\tau_*}\neq 0}$.  Moreover, the only orientation-preserving involutions, $\tau$, of connected orientable surfaces $S$ with $H_1(S)^{-\tau^*}=0$, are rotations of $S^2$ and $T^2$~\cite[Proposition~5.7]{dugger_involutions_surfaces}. It follows that $\partial Y$ consists of spheres and rotating tori. Moreover, if a connected component $Y'$ of $Y$ has a spherical boundary component, then the irreducibility assumption on $Y$ implies that $Y'$ is a 3-ball. The unique real structure on a $3$-ball that is fixed setwise by $\tau$ is rotation about an axis. There is a unique real structure on a pair of $3$-balls interchanged by $\tau$. The remaining components of $(Y,\tau)$ have boundary given by collections of rotating tori, as required. 
\end{proof}

It is not necessarily the case that real sutured manifolds as in the statement of \Cref{rem:endsofhierarchies} have non-vanishing real sutured Floer homology. This occurs in the case of the real sutured manifolds at the end of any real sutured hierarchy for the real sutured exterior of $8_3$, for instance. See Section~\ref{sec:8_3}. One could ask if one modified the definition of real sutured hierarchy to allow arc decompositions in addition to surface decompositions, would it then be true that there always exists a sutured hierarchy terminating in a real sutured manifold with non-vanishing real sutured Floer homology. Some limited evidence for this is provided by Remark~\ref{rem:nontrivialarc}.